\newtheorem{theorem}[subsection]{Theorem}
\newtheorem{thm}[subsubsection]{Theorem}
\newtheorem{lem}[subsubsection]{Lemma}
\newtheorem{cor}[subsubsection]{Corollary}
\newtheorem{prop}[subsubsection]{Proposition}
\newtheorem{aprop}[subsection]{Proposition}
\newtheorem{defn}[subsubsection]{Definition}
\theoremstyle{remark}
\newtheorem{remark}[subsubsection]{Remark}
\newtheorem{rem}[subsubsection]{Remark}
\newtheorem{aremark}[subsection]{Remark}
\numberwithin{equation}{subsection}
\def\nummultline{\addtocounter{subsubsubsection}{1}\begin{multline}}
\def\anumequation{\addtocounter{subsection}{1}\begin{equation}}
\newif\iffinalrun
  \newcommand{\need}[1]{}
  \newcommand{\mar}[1]{}
  \newcommand{\need}[1]{{\tiny *** #1}}
  \newcommand{\mar}[1]{\marginpar{\raggedright\tiny #1}}
\newcommand{\A}{\AA}
\newcommand{\C}{\CC}
\newcommand{\F}{\FF}
\newcommand{\Q}{\QQ}
\newcommand{\R}{\RR}
\newcommand{\Z}{\ZZ}
\newcommand{\m}{\frakm}
\renewcommand{\AA}{{\mathbb A}}
\newcommand{\CC}{{\mathbb C}}
\newcommand{\FF}{{\mathbb F}}
\newcommand{\QQ}{{\mathbb Q}}
\newcommand{\RR}{{\mathbb R}}
\newcommand{\ZZ}{{\mathbb Z}}
\newcommand{\tor}{\mathrm{tor}}
\renewcommand{\bf}{\ensuremath{\mathbf{f}}}
\newcommand{\cA}{{\mathcal A}}
\newcommand{\cB}{{\mathcal B}}
\newcommand{\cF}{{\mathcal F}}
\newcommand{\cG}{{\mathcal G}}
\newcommand{\cH}{{\mathcal H}}
\newcommand{\cK}{{\mathcal K}}
\newcommand{\cM}{{\mathcal M}}
\newcommand{\cO}{{\mathcal O}}
\newcommand{\cP}{{\mathcal P}}
\newcommand{\cS}{{\mathcal S}}
\newcommand{\cT}{{\mathcal T}}
\newcommand{\cU}{{\mathcal U}}
\newcommand{\cV}{{\mathcal V}}
\newcommand{\frakm}{\mathfrak{m}}
\DeclareMathOperator{\Aut}{Aut}
\DeclareMathOperator{\Gal}{Gal}
\DeclareMathOperator{\GL}{GL}
\DeclareMathOperator{\Hom}{Hom}
\DeclareMathOperator{\SL}{SL}
\DeclareMathOperator{\Spec}{Spec}
\DeclareMathOperator{\Spa}{Spa}
\DeclareMathOperator{\Spf}{Spf}
\DeclareMathOperator{\Lie}{Lie}
\newcommand{\Frob}{\mathrm{Frob}}
\newcommand{\HT}{\mathrm{HT}}
\newcommand{\ur}{\mathrm{ur}}
\newcommand{\tr}{\mathrm{tr}}
\newcommand{\et}{\mathrm{\acute{e}t}}
\newcommand{\toisom}{\buildrel\sim\over\to}
\newcommand{\Fl}{\mathscr{F}\!\ell}
\newcommand{\Spd}{\mathrm{Spd}}
\newcommand{\Spl}{\mathrm{Spl}}
\newcommand{\Ib}{\mathrm{Ig}^b}
\newcommand{\perf}{\mathrm{perf}}
\newcommand{\qproet}{\mathrm{qpro\acute{e}t}}
\newcommand{\cont}{\mathrm{cont}}
\newcommand{\rk}{\mathrm{rk}}
\begin{document}
\title[Generic cohomology of non-compact unitary Shimura varieties]
{On the generic part of the cohomology of non-compact unitary Shimura varieties}

\author{Ana Caraiani} \email{caraiani.ana@gmail.com}
\address{Department of
  Mathematics, Imperial College London,
  London SW7 2AZ, UK}
  
 \author{Peter Scholze}\email{scholze@mpim-bonn.mpg.de}
 \address{Max-Planck Institut f\"ur Mathematik, Bonn, Germany} 

\begin{abstract}We prove that the generic part of the mod l cohomology of Shimura
varieties associated to quasi-split unitary groups of even dimension is concentrated
above the middle degree, extending our previous work to a non-compact case.
The result applies even to Eisenstein cohomology classes coming from the locally
symmetric space of the general linear group, and has been used in joint work with 
Allen, Calegari, Gee, Helm, Le Hung, Newton, Taylor and Thorne
to get good control on these classes and deduce potential automorphy theorems
without any self-duality hypothesis.

Our main geometric result is a computation of the fibers of the Hodge–Tate
period map on compactified Shimura varieties, in terms of similarly compactified
Igusa varieties.
\end{abstract}

\maketitle

\tableofcontents

\section{Introduction}

In this paper, we aim to control the torsion in the cohomology of some non-compact unitary Shimura varieties, extending previous results \cite{caraiani-scholze} to the non-compact case. Before explaining our results, let us explain our motivation.

The original Taylor--Wiles method for proving automorphy lifting theorems, used for example in proving the modularity of elliptic curves over $\Q$, is restricted to settings where a certain numerical criterion holds: these are roughly the settings where the objects on the automorphic side arise from the middle degree cohomology of a Shimura variety. Recently, Calegari--Geraghty~\cite{calegari-geraghty} outlined a strategy for extending the Taylor--Wiles method to $\GL_n$ over a general CM field $F$. Their method requires a detailed understanding of the cohomology of the locally symmetric spaces associated with $\GL_n/F$. Part of their insight was to realize the central role played by torsion classes in the cohomology of these locally symmetric spaces, and another part of their insight was to reinterpret the failure of the Taylor--Wiles numerical criterion in terms of invariants $q_0$, $l_0$ seen on the automorphic side. 

More precisely, for a connected reductive group $G/\Q$, let $l_0:= \rk(G(\R))-\rk(K_\infty)-\rk(A_\infty)$ and $q_0:=\frac{1}{2}(d-l_0)$.\footnote{Here $K_\infty \subseteq G(\R)$ is a maximal compact subgroup, $A_{\infty}$ is the identity component of the $\R$-points of the maximal $\Q$-split torus in the center of $G$, and $d$ is the dimension over $\R$ of the symmetric space for $G$.} The Calegari--Geraghty method works for $\GL_n$ over an arbitrary number field $F$ as long as the following prerequisites are in place:
\begin{enumerate}
\item The construction of Galois representations associated to classes in the cohomology with $\Z_{\ell}$-coefficients of the locally symmetric space for $\GL_n /F$.
\item Local-global compatibility for these Galois representations at all primes of $F$, including at primes above $\ell$.
\item A folklore conjecture that predicts that, under an appropriate non-degeneracy condition, the cohomology with $\Z_{\ell}$-coefficients of the locally symmetric space for $\GL_n /F$ vanishes outside the range of degrees $[q_0, q_0 + l_0]$. 
\end{enumerate}
\noindent From now on, assume that $F$ is a CM field. Then the first problem was solved in~\cite{scholze}, strengthening previous work~\cite{hltt} that applies to $\Q_{\ell}$-coefficients. (See also~\cite{newton-thorne} and~\cite{arizona} for some further refinements.) 

The remaining two problems were within reach with $\Q_{\ell}$-coefficients: see~\cite{varma} for local-global compatibility and~\cite[Theorem 2.4.9]{10Authors}, which builds on~\cite{franke}, for vanishing results. However, both problems remained largely open for torsion classes. In the second problem, local-global compatibility at primes dividing $\ell$ is particularly subtle, because the construction of Galois representations in~\cite{scholze} uses congruences to automorphic forms with arbitrarily deep level at $\ell$. The third problem is known only in low-dimensional cases, such as arithmetic hyperbolic three-manifolds (arising from $\GL_2$ over an imaginary quadratic field $F$).

When the locally symmetric spaces for the group $G$ arise from Shimura varieties, $l_0 = 0$ and $q_0$ is equal to the complex dimension of the Shimura variety. Originally, we were trying to understand the third problem, by approaching it in the easier case of Shimura varieties first; this is a result we obtained in \cite{caraiani-scholze} for compact unitary Shimura varieties. Our hope was that a suitable adaptation of these results to the case of non-compact unitary Shimura varieties, whose compactification contains the locally symmetric spaces for $\GL_n/F$, could give new information in that situation. This turns out to be the case: however, so far not about the third problem, but about the second problem.

This has been taken up in \cite{10Authors}, where the main result of the present paper is used to obtain local-global compatibility in the ordinary and (many) Fontaine--Laffaille cases. The third problem was overcome in the particular setting of~\cite{10Authors} via an alternative argument that reduces it to the case of $\Q_{\ell}$-coefficients, allowing us to implement the Calegari--Geraghty method unconditionally in arbitrary dimensions, and prove the meromorphic continuation of the $L$-function and the Sato--Tate conjecture for elliptic curves over CM fields (among other results).

In conclusion, the results of the present paper are tailored to give interesting information about $\GL_n/F$. For this reason, we restrict attention to the following specific case at hand, although our methods should extend to somewhat more general Shimura varieties. We discuss further (possible) generalizations in Remark~\ref{rem:koshikawa simplification}.

\subsubsection{Locally symmetric spaces} Fix a CM field $F$ and an integer $n\geq 1$. We assume that $F$ contains an imaginary quadratic field $F_0\subset F$; if we let $F^+\subset F$ be the maximal totally real subfield, we have $F=F^+\cdot F_0$. Let $V=F^{2n}$ be equipped with the skew-hermitian form
\[
\langle (x_1,\ldots,x_{2n}),(y_1,\ldots,y_{2n})\rangle = \sum_{i=1}^n (x_i\overline{y}_{2n+1-i}- x_{2n+1-i} \overline{y}_i)
\]
where $\overline{y}$ denotes the complex conjugate of $y$, and consider the associated alternating form
\[
(\cdot, \cdot): V\times V\to \mathbb Q: (x,y) = \tr_{F/\mathbb Q} \langle x,y\rangle\ .
\]
Then $V$ admits $\cO_F$-lattices $L\subset V$ that are self-dual with respect to $(\cdot,\cdot)$; fixing one, we get an alternating perfect pairing
\[
(\cdot, \cdot): L\times L\to \mathbb Z\ .
\]
Let $G$ be the algebraic group over $\mathbb Z$ defined by
\[
G(R):=\left\{g\in \GL_{\mathcal O_F}(L)(R) \mid (gv,gw) = (v,w)\ \forall v,w\in L \right\}\ .
\]
The generic fibre of $G$ is then a quasi-split unitary group.\footnote{In the main text, we will denote by $G$ a unitary similitude group. The translation is explained in \S \ref{sec:Shimura intro}.}

Over $\mathbb R$, we have $G_{\mathbb R} =\mathrm{U}(n,n)^{[F^+:\mathbb Q]}$. 
The associated symmetric space $X=G(\mathbb R)/K_\infty$ for $G(\mathbb R)$ is given by
\[
X=\prod_{\tau: F^+\hookrightarrow \R} X_{\tau,+}
\]
where $X_{\tau,+}$ is the space of positive definite $n$-dimensional subspaces in the split hermitian space $V\otimes_{F^+,\tau} \R\cong \C^{2n}$.

For any neat compact open subgroup $K\subset G(\mathbb A_f)$, we are interested in the double quotient
\[
X_K = G(\mathbb Q)\backslash (X\times G(\mathbb A_f)/K)\ .
\]
As detailed below, this has the structure of a complex manifold of complex dimension $d=[F^+:\mathbb Q]n^2$. The Borel--Serre compactification of $X_K$ includes strata related to the locally symmetric spaces for $\GL_n/F$; this explains our interest in $X_K$.

Assume that $K=\prod_p K_p$ is a product of compact open subgroups $K_p\subset G(\mathbb Q_p)$, and fix a finite set of primes $S$ containing all primes that ramify in $F$ or at which $K_p\neq G(\mathbb Z_p)$. The abstract unramified Hecke algebra
\[
\mathbb T = \mathbb T^S = \bigotimes_{p\not\in S,p\in \Spl_{F_0/\Q}} \mathbb Z[G(\mathbb Q_p)//G(\mathbb Z_p)]
\]
acts naturally on $H^i(X_K,A)$ and $H_c^i(X_K,A)$ for any coefficient module $A$; here we restrict to the primes $\Spl_{F_0/\Q}$ of $\Q$ that split in $F_0$. (We remark that including fewer Hecke operators makes our main theorem stronger.)

If $v$ is a prime of $F$ that divides a prime $p\not\in S$ that splits in the imaginary quadratic field $F_0$, then we get a lift $\mathfrak p|p$ of $p$ in $F_0$, and an isomorphism
\[
G(\mathbb Q_p)\cong \prod_{w|\mathfrak p} \GL_{2n}(F_w) = \GL_{2n}(F_v)\times \prod_{w|\mathfrak p,w\neq v} \GL_{2n}(F_w)\ .
\]
Here $w$ runs over primes of $F$ dividing $\mathfrak p$. For $i=1,\ldots,2n$, we let
\[
T_{i,v}\in \mathbb Z[G(\mathbb Q_p)//G(\mathbb Z_p)]
\]
be the Hecke operator given by the double coset of
\[
\GL_{2n}(\cO_{F_v})\mathrm{diag}(\underbrace{\varpi_v,\ldots,\varpi_v}_i,1,\ldots,1)\GL_{2n}(\cO_{F_v}) 
\times\prod_{w\mid \mathfrak{p}, w\neq v}\GL_{2n}(\cO_{F_w})
\]
inside $G(\Q_p)$. 

Now fix a prime $\ell$ and let $\mathfrak{m}\subset \mathbb{T}$ be a maximal ideal occurring in the support of $H^*(X_K,\F_\ell)$
(i.e.~a system of Hecke eigenvalues occurring in the cohomology of $X_K$), and fix an embedding $\mathbb T/\mathfrak m\to \overline{\F}_\ell$. Enlarge $S$ to include $\ell$. It follows from \cite[Theorem 4.3.1]{scholze} and~\cite[Theorem 2.3.3]{10Authors} (which relies on \cite{shin-basechange}) that there exists a continuous semisimple $2n$-dimensional Galois representation  
\[\overline\rho_{\mathfrak{m}}: \mathrm{Gal}(\overline F/F)\to \GL_{2n}(\overline{\F}_\ell)\]
unramified at all places not dividing a prime of $S$ and such that for every prime $v$ of $F$ as above, the characteristic polynomial of $\overline{\rho}_{\m}(\Frob_v)$ is the reduction of 
\[
X^{2n} - T_{1,v}X^{2n-1}+\dots+ (-1)^iq_v^{i(i-1)/2}T_{i,v}X^{2n-i}+\dots + q_v^{n(2n-1)}T_{2n,v}. 
\]
modulo $\m$, where $q_v$ is the cardinality of the residue field at $v$.

Our main theorem is the following.

\begin{theorem}\label{thm:main} Assume the following conditions.
\begin{enumerate}
\item[{\rm (i)}] $F^+\neq \mathbb Q$;
\item[{\rm (ii)}] $\overline\rho_{\mathfrak m}$ is of length at most $2$;
\item[{\rm (iii)}] there is a prime $p\not = \ell$ that splits completely in $F$ and such that for all primes $v|p$ of $F$, the representation $\overline\rho_{\mathfrak m}$ is unramified at $v$ with eigenvalues $\{\alpha_{1,v},\ldots,\alpha_{2n,v}\}$ of $\overline\rho_{\mathfrak m}(\Frob_v)\in \GL_{2n}(\overline{\F}_\ell)$ satisfying $\alpha_{i,v}\neq p\alpha_{j,v}$ for all $i\neq j$.
\end{enumerate}

Then
\begin{enumerate}
\item If $H^i(X_{K},\mathbb{F}_\ell)_{\mathfrak{m}}\neq 0$ then $i\geq d$.  
\item If $H^i_c(X_{K},\mathbb{F}_\ell)_{\mathfrak{m}}\neq 0$ then $i\leq d$. 
\end{enumerate}
\end{theorem}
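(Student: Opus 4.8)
The plan is to adapt the strategy of \cite{caraiani-scholze} to compactified Shimura and Igusa varieties. By Poincar\'e duality on the smooth oriented $2d$-manifold $X_K$, statements (1) and (2) are interchanged by the Hecke-duality involution $\m\mapsto\m^\vee$ --- for which $\barrho_{\m^\vee}$ is a twist of the contragredient of $\barrho_\m$, so that the hypotheses (i)--(iii) are preserved --- so it suffices to prove (2), that $H^i_c(X_K,\F_\ell)_\m=0$ for $i>d$. Since localization at the maximal ideal $\m$ is exact and commutes with all the cohomology and colimits below, and since Betti agrees with \'etale cohomology, one passes to infinite level at $p$: as $p\neq\ell$ by (iii), Hochschild--Serre for the pro-$p$ group $K_p$ degenerates with $\F_\ell$-coefficients, so it is enough to prove $H^i_c(\mathcal S_{K^p},\F_\ell)_\m=0$ for $i>d$, where $\mathcal S_{K^p}=\varprojlim_{K_p}\mathcal S_{K^pK_p}$ is the perfectoid infinite-level Shimura variety. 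Passing to its minimal (Baily--Borel) compactification $\mathcal S^\ast_{K^p}$, which is perfectoid and \emph{proper}, and writing $j\col\mathcal S_{K^p}\into\mathcal S^\ast_{K^p}$, one has $R\Gamma_c(\mathcal S_{K^p},\F_\ell)=R\Gamma(\mathcal S^\ast_{K^p},j_!\F_\ell)$.

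The main tool is the Hodge--Tate period map $\pi_{\HT}\col\mathcal S^\ast_{K^p}\to\Fl=\Fl_{G,\mu}$, a \emph{proper}, $G(\Q_p)$-equivariant map to a flag variety of dimension $d$; so the task becomes bounding the cohomological degrees of $R\pi_{\HT\ast}(j_!\F_\ell)_\m$ on $\Fl$. Here the central geometric input is used: $\Fl$ carries a Newton (Hodge--Tate) stratification $\Fl=\bigsqcup_b\Fl^b$ indexed by $B(G,\mu)$, and over each stratum $\Fl^b$ the fibres of $\pi_{\HT}$ on the \emph{compactified} Shimura variety are identified --- compatibly with $j$ and with all boundary stratifications --- with the minimally compactified perfectoid Igusa varieties $\mathrm{Ig}^{b,\ast}$. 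Since $\pi_{\HT}$ is proper and $j_!$ commutes with base change, proper base change computes the stalks of $\bigl(R\pi_{\HT\ast}(j_!\F_\ell)\bigr)|_{\Fl^b}$ as $R\Gamma_c(\mathrm{Ig}^b,\F_\ell)$, the compactly supported cohomology of the open Igusa variety. As $\mathrm{Ig}^b$ is a smooth affine variety of dimension $d_b$, Artin vanishing and Poincar\'e duality give $R\Gamma_c(\mathrm{Ig}^b,\F_\ell)\in D^{[d_b,2d_b]}$. Assembling these via the filtration of $R\Gamma(\Fl,-)$ by the Newton strata (using properness of $\Fl$) gives a first estimate; the degrees that still need to be cut down to land in $D^{\le d}$ are exactly those fed by the \emph{top}, i.e.\ super-$d_b$, part of each $R\Gamma_c(\mathrm{Ig}^b,\F_\ell)$ --- which by Poincar\'e duality on $\mathrm{Ig}^b$ is the dual of the \emph{sub}-middle part of $R\Gamma(\mathrm{Ig}^b,\F_\ell)$.

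It is here that conditions (i)--(iii) enter. Condition (iii) --- that $\barrho_\m$ is decomposed generic at $p$ --- is used, via the description of the Hecke action on $H^\ast(\mathrm{Ig}^b,\F_\ell)$ in terms of automorphic representations and the genericity of the associated local Galois parameters, to show that the sub-middle part $H^{<d_b}(\mathrm{Ig}^b,\F_\ell)_\m$ vanishes (a statement of the same flavour as the theorem, but for the simpler Igusa varieties). Conditions (i) $F^+\neq\Q$ and (ii) that $\barrho_\m$ has length $\le 2$ enter through the boundary of $\mathcal S^\ast_K$: its strata are, up to abelian parts, Shimura varieties for Levi subgroups of $G$ --- among them the locally symmetric spaces for $\GL_n/F$ that motivated the paper --- whose cohomology carries Galois representations assembled from proper subquotients of $\barrho_\m$; using length $\le 2$ (so that the boundary components which can support $\m$ are tightly constrained, and the required genericity of the pieces is forced) and $F^+\neq\Q$ (so that the relevant smaller unitary groups are still quasi-split of the expected shape, admitting the same machinery and having nonempty ordinary locus), one runs the argument by induction on the strata and checks that the excision triangle relating $j_!\F_\ell$, the constant sheaf on $\mathcal S^\ast_{K^p}$, and the boundary respects the bound. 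Putting everything together yields $H^i_c(\mathcal S_{K^p},\F_\ell)_\m=0$ for $i>d$, hence the theorem.

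I expect the main obstacle to be the geometric input of the second paragraph: constructing compatible toroidal and minimal compactifications of the Igusa varieties in the perfectoid/diamond setting, extending $\pi_{\HT}$ to the minimal compactification so that its fibres are exactly these compactified Igusa varieties, and verifying all the boundary and stratification compatibilities --- together with descending the whole $\C_p$-geometric picture to a finite level, where the Hecke algebra acts and where (iii) is actually leveraged. Closely tied to this, and also substantial, is the vanishing of the sub-middle cohomology of each (compactified) Igusa variety after localizing at a decomposed-generic $\m$: this requires enough control over the Hecke eigensystems occurring there, and is where most of the representation-theoretic work --- and the interplay with the genericity condition --- will lie.
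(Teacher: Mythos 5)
Your skeleton mirrors the actual strategy (compactify, use $\pi_\HT$ on the minimal compactification, identify fibres with compactified Igusa varieties, affineness/Artin vanishing, a boundary analysis using (ii), and a duality swap between (1) and (2)), but the step where the Hecke hypotheses are supposed to bite has no mechanism behind it. You assert that (iii) kills $H^{<d_b}(\mathrm{Ig}^b,\F_\ell)_{\m}$ ``via the description of the Hecke action on $H^\ast(\mathrm{Ig}^b,\F_\ell)$ in terms of automorphic representations''. No such description of the individual mod $\ell$ cohomology groups exists: the trace formula input (Shin's stable trace formula for Igusa varieties compared with the twisted trace formula --- and this comparison, not the shape of the smaller unitary groups, is where $F^+\neq\Q$ is used, to rule out cuspidal subgroups) computes only the alternating sum $[H_c(\mathrm{Ig}^b,\overline{\Q}_\ell)]$ as a virtual representation. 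To say anything about torsion classes one must first know that the localized cohomology sits in a single degree, and that concentration is precisely what is at stake. The paper breaks this circle by combining three independent inputs: the upper bound $H^i_{c-\partial}(\mathrm{Ig}^b)_{\m}=0$ for $i>d_b$ (affineness of $\mathrm{Ig}^{b,\ast}$), the lower bound $H^i(\mathrm{Ig}^b)_{\m}=0$ for $i<d_b$ for the relevant $b$ (semiperversity of the nearby cycles of $R\pi^\circ_{\HT\ast}\F_\ell$ over formal models of $\Fl$, which comes from the fibre description), and a Pink-type computation of the boundary cohomology of the \emph{Igusa varieties themselves}, which together with Galois representations attached to torsion classes for $\GL_m/F$ and the length $\le 2$ hypothesis shows that $H^i_{c-\partial}\to H^i$ is an isomorphism when $b$ is not ordinary; only then does the virtual computation plus (iii) exclude non-ordinary $b$. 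Your boundary induction is run on the boundary of the Shimura variety rather than of the Igusa varieties and cannot by itself produce the single-degree concentration that makes the automorphic input usable; as written, you are assuming an Igusa-variety analogue of the theorem rather than proving it.

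There are also problems with the sheaf-theoretic bookkeeping in your second paragraph. The stalk of $R\pi^\ast_{\HT\ast}(j_!\F_\ell)$ at $x\in\Fl^b$ is not $R\Gamma_c(\mathrm{Ig}^b,\F_\ell)$: the fibre is (the canonical compactification of) $\mathfrak{Ig}^{b,\ast}_C$, and its intersection with the \emph{open} Shimura variety is strictly larger than the tube over $\mathrm{Ig}^b$, since points whose abelian variety has bad reduction also lie in the open Shimura variety but land in the tubes over the Igusa boundary strata; moreover the amplitude $[d_b,2d_b]$ you quote would require Artin vanishing for the open Igusa variety, whereas what is proved is affineness of $\mathrm{Ig}^{b,\ast}$, giving the bound only for $H_{c-\partial}=H(\mathrm{Ig}^{b,\ast},j_!\F_\ell)$. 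This is exactly why the paper argues in the opposite direction: it proves (1) directly, i.e.\ the degree $\ge d$ bound for ordinary cohomology, using the semiperversity of $R\psi(R\pi^\circ_{\HT\ast}\F_\ell)$ on the good reduction locus together with the dimensions of the Newton strata, and then deduces (2) by Poincar\'e duality applied to $\m^\vee$; the paper explicitly remarks that the $j_!$-variant you propose would only recover the upper bound for $H_{c-\partial}$ and that for deeper strata it requires a nontrivial translation between nearby-cycle perversity over formal models and naive estimates on $\Fl$. So the route you sketch is not a genuinely different proof, but the dual of the paper's argument with its two hardest ingredients (the formal-model perversity formalism and the Igusa boundary/trace-formula interplay) either misstated or missing.
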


Let us discuss the relevance of assumptions (i) -- (iii) in this paper. 

\begin{aremark} Assumption (i) simplifies some trace formula computations (essentially, it implies that certain boundary terms vanish). Roughly, the point is that the geometric side of the trace formula simplifies whenever the test function at two auxiliary places is of a simple form, and here we would like to use two infinite places. We would expect that one could remove this assumption with additional work.
\end{aremark}

\begin{aremark} We use assumption (ii) to ensure that only one boundary stratum (the one of interest, giving rise to $\GL_n/F$) can possibly contribute to the cohomology localized at $\mathfrak m$. This is required to ensure a tight relation between cohomology and compactly supported cohomology after localization at $\mathfrak m$. In order to ensure this tight relation, we also need the existence of Galois representations associated to torsion classes in the cohomology of locally symmetric spaces for $\GL_m/F$ for $m\leq n$, as proved in~\cite[Theorem 5.4.3]{scholze}. This is in contrast to the compact case~\cite{caraiani-scholze}, where we give an alternate proof to some related results that are implicit in~\cite{scholze}. 
(Assumption (ii) can probably be removed using an inductive argument and additional work. However, in light of the new method more recently introduced by Koshikawa~\cite{koshikawa-simplification}, which is discussed further in Remark~\ref{rem:koshikawa simplification}, we have decided not to pursue this here.)

\end{aremark}

\begin{aremark} Assumption (iii) is slightly weaker than a corresponding assumption in \cite[Theorem 1.1]{caraiani-scholze}, which would in addition ask that $\alpha_{i,v}\neq \alpha_{j,v}$. As observed by Koshikawa, this extra assumption is in fact superfluous: The critical \cite[Lemma 6.2.2]{caraiani-scholze} holds (with very minor modifications) in this more general setup, cf.~proof of Corollary~\ref{cor:existence Galois single degree} below, so in fact \cite[Theorem 1.1]{caraiani-scholze} holds in this more general setup. (The condition $\alpha_i\neq \alpha_j$ was also not necessary in Boyer's work, \cite{boyergeneric}.)
\end{aremark}

\begin{aremark} The theorem implies formally that the same conclusion holds for $H^i(X_K,\mathbb Z_\ell)_{\mathfrak m}$ and $H_c^i(X_K,\mathbb Z_\ell)_{\mathfrak m}$, and in addition that $H^d(X_K,\mathbb Z_\ell)_{\mathfrak m}$ is torsion-free. There is an excision long exact sequence relating $H^i_c(X_{K},\mathbb{Z}_\ell)_{\mathfrak{m}}$, $H^i(X_{K},\mathbb{Z}_\ell)_{\mathfrak{m}}$ (which agrees with the cohomology of the Borel--Serre compactification of $X_K$) and the cohomology of the boundary $H^i(\partial X_K,\mathbb{Z}_\ell)_{\mathfrak{m}}$. The cohomology of the boundary is related to the cohomology of $\GL_n/F$, and can contribute in many different degrees, both below and above the middle degree. By the long exact sequence, this cohomology has to be split between $H^i$ and $H_c^i$. The theorem asserts that this is done in the most transparent way:\footnote{It surprises the second author that such a clean picture can possibly be true: in some vague sense the Shimura variety is able to geometrically realize a truncation functor $\tau^{\leq d}$ on the cohomology of the boundary (while generally such truncation functors do not arise geometrically).} Below the middle degree, everything maps into $H_c^i$; above middle degree, everything comes from $H^i$; and one has an exact sequence
\[
0\to H^{d-1}(\partial X_K,\mathbb Z_\ell)_{\mathfrak m}\to H_c^d(X_K,\mathbb Z_\ell)_{\mathfrak m}\to H^d(X_K,\mathbb Z_\ell)_{\mathfrak m}\to H^d(\partial X_K,\mathbb Z_\ell)_{\mathfrak m}\to 0
\]
around the middle degree. In particular, the torsion-free group $H^d(X_K,\mathbb Z_\ell)_{\mathfrak m}$ surjects onto the cohomology $H^d(\partial X_K,\mathbb Z_\ell)_{\mathfrak m}$ of the boundary, yielding control over the various torsion classes there. Using the Hochschild--Serre spectral sequence, we can deduce the theorem and the same consequences also for non-trivial $\Z_{\ell}$-coefficient systems on $X_K$, such as $\cV_{\lambda}$, where $\lambda$ is a highest weight for $G$. See~\cite{10Authors} for applications.
\end{aremark}

\begin{aremark} If $\m$ is non-Eisenstein, i.e. if $\overline{\rho}_{\m}$ is absolutely irreducible, the theorem and the excision long exact sequence with $\F_{\ell}$-coefficients imply that 
\[
H^i_c(X_K, \F_{\ell})_{\m} \toisom H^i(X_K, \F_{\ell})_{\m}
\]
and that these are non-zero only in degree $i=d$. Indeed, if $\m$ is non-Eisenstein,
then $H^i(\partial X_K,\mathbb{F}_\ell)_{\mathfrak{m}} = 0$, which follows from the proof of~\cite[Theorem 2.4.2]{10Authors}. This also implies the same result with $\Z_{\ell}$-coefficients
and that $H^d_c(X_K, \Z_{\ell})_{\m} \toisom H^d(X_K, \Z_{\ell})_{\m}$ is torsion-free. This matches the folklore conjecture stated in (3) above, in the case of the locally symmetric space $X_K$.  
\end{aremark}

\begin{aremark}\label{rem:koshikawa simplification}
Recently, Koshikawa \cite{koshikawa-simplification} has found a different approach to establishing (a strengthening of) Theorem~\ref{thm:main}, and in fact he does not need conditions (i) and (ii). He still makes use of our geometric results, in particular of the semiperversity theorem, Theorem~\ref{thm:semiperversity}, but he does not use any computation of the cohomology of Igusa varieties, thus bypassing any use of trace formulas. Instead Koshikawa relies on the recent work \cite{fargues-scholze} on the geometrization of the local Langlands correspondence, in particular on the excursion operators.

The new method introduced by Koshikawa makes it more feasible to generalize Theorem~\ref{thm:main}, as long as certain prerequisites are in place, including the results on the geometry of the Hodge--Tate period morphism established in this paper. We expect most of these geometric results, e.g. those established in Sections~\ref{sec:compactifications of Igusa varieties} and~\ref{fibers of HT}, to extend to general Shimura varieties of PEL type, at the expense of working with more combinatorially involved objects as in \cite{lan-thesis}, and a more delicate structure of the toroidal compactification (which will involve torsors under abelian schemes in place of abelian schemes). This is the subject of the upcoming PhD thesis of Mafalda Santos~\cite{santos}.\footnote{One could ask about generalizing Theorem~\ref{thm:main} to the Hilbert--Siegel case, for example. In this case, the necessary geometric results can be established with very minor modifications. However, one also needs a way to control the cohomology of the relevant Igusa varieties or Rapoport--Zink spaces and this would require more work.}   
\end{aremark}

\subsubsection{Shimura varieties} The space $X_K$ can be identified, Hecke-equivariantly, with a union of connected components of $\mathscr S_{K(N)}(\mathbb C)$, for a Shimura variety $\mathscr S_{K(N)}$ of PEL type over $\mathbb Z$; see \S~\ref{sec:Shimura intro} for the precise relationship. As in \cite{caraiani-scholze}, we will prove the theorem by using the $p$-adic geometry of $\mathscr S_{K(N)}$ for a prime $p$ as guaranteed by Assumption (iii). More precisely, we will use the Shimura variety with infinite level at $p$ regarded as a perfectoid space, and analyze the fibers of the Hodge--Tate period map in terms of Igusa varieties, including compactifications.

Our geometric results hold for any imaginary CM field $F$ (not necessarily containing an imaginary quadratic field). Let $\Delta_F$ be the discriminant of $F$. From now on, set $K:=K(N)$
to be a principal congruence subgroup in the unitary similitude group corresponding to $G$, for some integer $N\geq 3$. We can define an algebraic variety $\mathscr S_K$ over $\mathbb Z[\tfrac 1{N\Delta_F}]$ whose $S$-valued points parametrize quadruples $(A,\iota,\lambda,\eta)$ where $A$ is an abelian scheme of dimension $[F:\mathbb Q]n$ over $S$ with an action $\iota: \mathcal O_F\to \mathrm{End}(A)$ such that $\Lie A$ is free of rank $n$ over $\mathcal O_F\otimes_{\mathbb Z} \mathcal O_S$, $\lambda: A\cong A^\vee$ is a principal polarization of $A$ whose associated Rosati involution is compatible with complex conjugation on $\mathcal O_F$ via $\iota$, and $\eta$ is an isomorphism $A[N]\cong L/NL$ compatible with the $\mathcal O_F$-action and polarization. Moreover, $\mathscr S_K$ admits a minimal compactification $\mathscr S_K^*$ and a toroidal compactification $\mathscr S_K^\tor$, the latter depending on the choice of a certain family of cone decompositions $\Sigma$, as usual.

Now fix a prime $p$ that is unramified in $F$ and prime to $N$, and an algebraically closed field $k$ of characteristic $p$. Moreover, fix a $p$-divisible group $\mathbb X$ over $k$, with an $\mathcal O_F$-action $\iota$ and a principal polarization $\lambda$, satisfying the same assumptions as the abelian variety above. Then the subset
\[
\{x\in \mathscr S_K\times k\mid A[p^\infty]\times k(\overline{x})\cong \mathbb X\times_k k(\overline{x})\}
\]
of all points $x$ such that $A[p^\infty]$ is isomorphic to $\mathbb X$, compatibly with the extra structures, defines a leaf $\mathscr C^{\mathbb X}\subset \mathscr S_K\times k$, a locally closed smooth subscheme. In \cite{lan-stroh}, Lan--Stroh prove that the leaf $\mathscr C^{\mathbb X}$ is \emph{well-positioned}, which implies that one can define partial minimal and toroidal compactifications $\mathscr C^{\mathbb X,\ast}$, $\mathscr C^{\mathbb X,\tor}$, which have many of the same properties as the ambient Shimura varieties.

Over the leaf $\mathscr C^{\mathbb X}$, we can look at the scheme parametrizing isomorphisms $A[p^\infty]\cong \mathbb X$ compatible with extra structures. This defines an $\underline{\Aut}(\mathbb X)$-torsor
\[
\mathfrak{Ig}^{\mathbb X}\to \mathscr C^{\mathbb X},
\]
where $\mathfrak{Ig}^{\mathbb X}$ is a perfect scheme. Note that in general when $\mathbb X$ is not isoclinic, $\underline{\Aut}(\mathbb X)$ is a highly non-reduced group scheme. Its group of connected components is the profinite group $\Gamma_{\mathbb X} := \underline{\Aut}(\mathbb X)(k)$. Then the map to the perfection
\[
\mathfrak{Ig}^{\mathbb X}\to \mathscr C^{\mathbb X}_\perf
\]
is a $\Gamma_{\mathbb X}$-torsor. One important property of $\mathfrak{Ig}^{\mathbb X}$ is that it depends on $\mathbb X$ only up to isogeny, i.e.~an isogeny $\phi: \mathbb X\to \mathbb X'$ (compatible with the extra structures) induces an isomorphism $\mathfrak{Ig}^{\mathbb X}\cong \mathfrak{Ig}^{\mathbb X'}$.

Our first result is that there are good partial toroidal compactifications of Igusa varieties.

\begin{aprop} The $\Gamma_{\mathbb X}$-torsor $\mathfrak{Ig}^{\mathbb X}\to \mathscr C^{\mathbb X}_\perf$ extends uniquely to a $\Gamma_{\mathbb X}$-torsor
\[
\mathfrak{Ig}^{\mathbb X,\tor}\to \mathscr C^{\mathbb X,\tor}_\perf.
\]
\end{aprop}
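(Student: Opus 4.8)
The plan is to reduce the claim to the construction of a single $p$-divisible group over $\mathscr{C}^{\mathbb{X},\tor}_\perf$ that extends $A[p^\infty]$ together with all its structures and has constant isomorphism type, and then to build that $p$-divisible group near the boundary from the Mumford-type degeneration data underlying the toroidal compactification. The mechanism that makes this work is the vanishing of $\Ext^1(\mathbb{Q}_p/\mathbb{Z}_p,\mu_{p^\infty})$ on a perfect $\mathbb{F}_p$-scheme, which is precisely the point where perfectness of Igusa varieties is used.

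\emph{Reduction and uniqueness.} Just as $\mathfrak{Ig}^{\mathbb X}$ is the torsor of isomorphisms $A[p^\infty]\isomap\mathbb X$ respecting the $\mathcal{O}_F$-action $\iota$ and the polarization $\lambda$, and becomes a $\Gamma_{\mathbb X}$-torsor over $\mathscr{C}^{\mathbb X}_\perf$, it suffices to produce a $p$-divisible group $\mathcal{A}$ over $\mathscr{C}^{\mathbb{X},\tor}_\perf$ with an $\mathcal{O}_F$-action and a principal polarization of the same type, restricting to $A[p^\infty]$ over $\mathscr{C}^{\mathbb X}_\perf$, and with every geometric fibre of $(\mathcal{A},\iota,\lambda)$ isomorphic to $(\mathbb{X},\iota,\lambda)$. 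Granting this, $\mathfrak{Ig}^{\mathbb X,\tor}:=\underline{\mathrm{Isom}}\big((\mathcal{A},\iota,\lambda),(\mathbb{X},\iota,\lambda)\big)$ is a $\Gamma_{\mathbb X}$-torsor over $\mathscr{C}^{\mathbb{X},\tor}_\perf$ by the same argument that makes $\mathfrak{Ig}^{\mathbb X}$ one, and it restricts to $\mathfrak{Ig}^{\mathbb X}$ because $\mathcal{A}|_{\mathscr{C}^{\mathbb X}_\perf} = A[p^\infty]$. Uniqueness --- of the extended torsor, of the $\Gamma_{\mathbb X}$-action, and of all compatibilities --- is then automatic: $\pi_1^{\et}$ is insensitive to perfection (perfection being a universal homeomorphism), and a finite \'etale cover of the normal scheme $\mathscr{C}^{\mathbb{X},\tor}$ is determined by its restriction to the dense open $\mathscr{C}^{\mathbb X}$.

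\emph{Construction of $\mathcal{A}$.} Since extending a torsor is local on the base, I would work in the perfection $\mathfrak{X}$ of the formal completion of $\mathscr{C}^{\mathbb{X},\tor}$ along each boundary stratum, and afterwards glue using the compatibility of the degeneration data along the stratification (Lan, Lan--Stroh). By Lan's toroidal compactification together with Lan--Stroh's partial compactification of the well-positioned leaf $\mathscr{C}^{\mathbb X}$, $\mathfrak{X}$ is obtained --- up to a finite quotient --- from a boundary leaf $\mathscr{C}^{\mathbb{X}_h}$ (for a smaller unitary similitude datum, attached to a $p$-divisible group $\mathbb{X}_h$ carrying the induced $\iota$, $\lambda$) by passing to torus- and abelian-scheme-torsors recording the Raynaud extension datum and then to a relatively affine toric embedding; over $\mathfrak{X}$ the tautological semi-abelian scheme is Mumford's $\tilde A/\iota(X)$, with Raynaud extension $0\to T\to\tilde A\to B\to 0$, $B[p^\infty]\cong\mathbb{X}_h$, $T=\mathbb{G}_m\otimes_{\mathbb{Z}}X^\vee$ with its $\mathcal{O}_F$-action, and period homomorphism $\iota\colon X\to\tilde A$. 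Over $\mathfrak{X}$ I would let $\mathcal{A}$ be the canonical $p$-divisible group attached to this family, namely $\tilde A[p^\infty]$ further extended by $X\otimes_{\mathbb{Z}}\mathbb{Q}_p/\mathbb{Z}_p$ via $\iota$; it has a three-step filtration with graded pieces $\mu_{p^\infty}\otimes_{\mathbb{Z}}X^\vee$, $\mathbb{X}_h$, $X\otimes_{\mathbb{Z}}\mathbb{Q}_p/\mathbb{Z}_p$, carries an $\mathcal{O}_F$-action and a polarization pairing the outer graded pieces and restricting to $\lambda$ on $\mathbb{X}_h$ (these are cusp-label data, matching the corresponding structures on $\mathbb{X}$), and restricts to $A[p^\infty]$ away from the boundary. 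The point is that $\mathcal{A}$ is genuinely defined over all of $\mathfrak{X}$: the only component of the extension datum that is a priori singular along the boundary divisor --- the ``$q$-direction'', lying in $\Ext^1(\mathbb{Q}_p/\mathbb{Z}_p,\mu_{p^\infty})\otimes(\cdots)$ --- vanishes because $\mathfrak{X}$ is a perfect $\mathbb{F}_p$-scheme (Frobenius is invertible, hence $\mathcal{O}^\times/(\mathcal{O}^\times)^{p^n}=0$ and $\Pic[p^n]=0$), while the remaining components are pulled back from the boundary leaf and its torsors and extend regularly. Finally, over an algebraically closed field of characteristic $p$ every $p$-divisible group decomposes canonically as multiplicative $\oplus$ bi-infinitesimal $\oplus$ \'etale, and the cusp label identifies the three summands of $\mathbb{X}_h$ with those of $\mathbb{X}$ after adding $\mu_{p^\infty}\otimes X^\vee$ to the first and $X\otimes\mathbb{Q}_p/\mathbb{Z}_p$ to the last; together with the vanishing of $\Ext^1$ between these three types over an algebraically closed field, this forces every geometric fibre of $(\mathcal{A},\iota,\lambda)$ to be $\cong(\mathbb{X},\iota,\lambda)$, as required.

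\emph{The main obstacle.} The heart of the matter is the construction of $\mathcal{A}$ across the boundary, and in particular the observation that the only potentially singular component of the extension datum is valued in $\Ext^1(\mathbb{Q}_p/\mathbb{Z}_p,\mu_{p^\infty})$, which dies on a perfect base --- perfectness here playing the role that unipotence of the monodromy plays in characteristic zero, and having no analogue for $\ell$-adic coefficients, where the relevant Tate twist $\mathbb{Z}_\ell(1)$ is the nontrivial cyclotomic character. The remaining work is careful but routine bookkeeping: carrying $\iota$ and $\lambda$ through the Lan--Stroh cusp-label formalism; handling the finite-group quotient relating $\mathfrak{X}$ to the honest formal completion; checking that the local $\mathcal{A}$'s glue along the stratification; and recording, in the present PEL generality, that a $p$-divisible group over a perfect scheme with constant geometric fibres gives a $\Gamma_{\mathbb X}$-torsor of trivialisations --- essentially the content of the definition of $\mathfrak{Ig}^{\mathbb X}$, applied verbatim.
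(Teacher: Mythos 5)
Your strategy---extend $(\cA[p^\infty],\iota,\lambda)$ itself to a $p$-divisible group with $G$-structure over $\mathscr C^{\mathbb X,\tor}_\perf$ with constant geometric fibres, then take the torsor of trivializations---is viable and close in spirit to the paper, but the paper's proof of this statement (Theorem~\ref{perfect toroidal igusa is etale}) deliberately avoids extending the full $p$-divisible group across the boundary. What extends integrally, already over the un-perfected $\mathscr C^{\mathbb X,\tor}$, is only the connected part $\cA[p^\infty]^\circ$ of the degenerating semi-abelian scheme (Proposition~\ref{connected part extends}, via $\widehat{\cG}\cong\widehat{\cA}$ on formal completions); the moduli problem (Definition~\ref{defn:perfect toroidal Igusa level structure}) then trivializes only $\cA[p^\infty]^\circ$ together with a $\Z_p^\times$-scalar, the point being that the polarization determines the \'etale-quotient data from the multiplicative data, so this already gives a $\Gamma_{\mathbb X}$-torsor by Proposition~\ref{perfect Igusa} and recovers $\mathfrak{Ig}^{\mathbb X}$ over the open leaf. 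Your mechanism---over a perfect base the connected--\'etale sequence splits and $\Ext^1(\Q_p/\Z_p,\mu_{p^\infty})$ dies because $R^\times/(R^\times)^{p^n}$ and $\Pic(R)[p^n]$ do---is the same phenomenon, and the paper does use it, but only later, in the description of the boundary charts (Proposition~\ref{base of perfect torus torsor}). So your route makes the extended object $\mathcal A\cong\mathbb X$ visible, while the paper's works before perfection and never has to construct the \'etale part at all.

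Two steps in your construction are under-justified. First, the gluing: you build $\mathcal A$ on the open leaf and on the perfection of the formal completion along each boundary stratum and then ``glue along the stratification''; these loci do not form a cover for which descent of $p$-divisible groups is automatic, so this needs a genuine formal-gluing/algebraization argument---or, much better, a global definition, e.g.\ from the global semi-abelian scheme $\cA$ of Theorem~\ref{existence of toroidal compactifications}, taking $\cA[p^\infty]^\circ$ plus the Cartier dual of its multiplicative part, which is in effect the paper's move. Second, the claim that the only a priori singular component of the extension datum lies in $\Ext^1(\Q_p/\Z_p,\mu_{p^\infty})$: the component of the extension of $X\otimes\Q_p/\Z_p$ by $\cG[p^\infty]$ in the direction of the abelian part (in particular its \'etale part) is an extension of \'etale by \'etale objects, which perfectness does not kill; it extends across the boundary not because its class vanishes but because it is pulled back from the non-degenerating abelian-scheme data ($C_Z$ and $f_0$), and one still needs the vanishing of the $\mu_{p^\infty}$-directed $\Ext^1$ to lift that class uniquely to $\cG[p^\infty]$. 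You gesture at this (``pulled back from the boundary leaf and its torsors''), but that is where the real content sits. Once these two points are supplied, the rest---fibrewise constancy via the splitting into multiplicative, biconnected and \'etale parts, the torsor property over a perfect base, and uniqueness by normality of the smooth $\mathscr C^{\mathbb X,\tor}$---coincides with the paper's argument.
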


Moreover, one can describe the local structure of $\mathfrak{Ig}^{\mathbb X,\tor}$ at the toroidal boundary as in the case of Shimura varieties, cf.~e.g.~Theorem~\ref{thm:formal completions perfect Igusa}. As a consequence, we prove that an isogeny $\phi: \mathbb X\to \mathbb X'$ that induces isomorphisms on \'etale quotients induces an isomorphism $\mathfrak{Ig}^{\mathbb X,\tor}\cong \mathfrak{Ig}^{\mathbb X',\tor}$.\footnote{The condition on \'etale quotients is required to ensure that the choice of cone decomposition does not cause trouble.}

One can also define partial minimal compactifications of $\mathfrak{Ig}^{\mathbb X,*}$, for example via normalization. We can describe the boundary components of the minimal compactification explicitly in terms of \emph{Igusa cusp labels}, cf.~Theorem~\ref{thm:min Igusa strata}. Another important result is that partial minimal compactifications are affine:

\begin{aprop} For any $\mathbb X$, the leaf $\mathscr C^{\mathbb X,\ast}$ and the Igusa variety $\mathfrak{Ig}^{\mathbb X,\ast}$ are affine.
\end{aprop}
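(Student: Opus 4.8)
The plan is to reduce the assertion for $\mathfrak{Ig}^{\mathbb X,\ast}$ to the one for $\mathscr C^{\mathbb X,\ast}$, and then to realize $\mathscr C^{\mathbb X,\ast}$ as a closed subscheme of an affine scheme, namely the complement of a Hasse-invariant divisor inside a compactified Newton stratum. For the first (easy) reduction: by the normalization construction of $\mathfrak{Ig}^{\mathbb X,\ast}$, together with the fact that $\mathfrak{Ig}^{\mathbb X}\to \mathscr C^{\mathbb X}_\perf$ is a pro-(finite \'etale) $\Gamma_{\mathbb X}$-torsor, the structure morphism $\mathfrak{Ig}^{\mathbb X,\ast}\to \mathscr C^{\mathbb X,\ast}_\perf$ is integral, hence affine; and the perfection of an affine scheme is affine. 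Thus, once we know that $\mathscr C^{\mathbb X,\ast}$ is affine, it follows that $\mathscr C^{\mathbb X,\ast}_\perf$ is affine and hence that $\mathfrak{Ig}^{\mathbb X,\ast}$ is affine. So it suffices to treat $\mathscr C^{\mathbb X,\ast}$.

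The main point is then that $\mathscr C^{\mathbb X,\ast}$ sits inside a compactified Newton stratum as a closed subscheme. Let $b$ denote the isogeny class of $\mathbb X$, and let $\mathscr S_K^{b}\subseteq \mathscr S_K\times k$ be the corresponding Newton stratum. Newton strata are well-positioned in the sense of Lan--Stroh, so they admit partial minimal compactifications $\mathscr S_K^{b,\ast}$; the closure $Z$ of $\mathscr S_K^{b,\ast}$ inside $\mathscr S_K^{\ast}\times k$ is projective over $k$ and carries the ample Hodge line bundle $\omega$, with $\mathscr S_K^{b,\ast}\subseteq Z$ open. I would then invoke the existence of a (generalized) Hasse invariant $h_b\in H^0(Z,\omega^{\otimes m})$, $m\geq 1$, whose non-vanishing locus is exactly $\mathscr S_K^{b,\ast}$ (for the $\mu$-ordinary $b$ this is the classical Hasse invariant, defined already on the minimal compactification; for deeper $b$ one uses the theory of generalized Hasse invariants, extended across the boundary using normality and Koecher's principle). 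Since $Z$ is projective and $\omega$ is ample, $\mathscr S_K^{b,\ast}=\{h_b\neq 0\}$ is affine. On the other hand, a central leaf is closed in its Newton stratum (Oort), and the Lan--Stroh formation of partial minimal compactifications is functorial for this closed immersion of well-positioned subschemes, so that $\mathscr C^{\mathbb X,\ast}\hookrightarrow \mathscr S_K^{b,\ast}$ is a closed immersion. A closed subscheme of an affine scheme is affine; hence $\mathscr C^{\mathbb X,\ast}$, and with it $\mathfrak{Ig}^{\mathbb X,\ast}$, is affine.

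The hard part, and the part genuinely new to the non-compact setting, is the interface with the boundary. One must check that $\mathscr C^{\mathbb X,\ast}$ is indeed closed in $\mathscr S_K^{b,\ast}$ --- which, cusp by cusp, amounts to comparing the boundary strata of the partial minimal compactification of the leaf with the Newton strata of the boundary Shimura varieties attached to the cusps, and verifying that the former are closed in the latter --- and that a Hasse invariant cutting out $\mathscr S_K^{b,\ast}$ is available already at the level of the (partial) minimal compactification, rather than only on the open Newton stratum. Both are statements about the Lan--Stroh machinery of well-positioned subschemes and their compactifications, the corresponding interior statements being exactly as in the compact case of \cite{caraiani-scholze}; granted these, the argument above is formal.
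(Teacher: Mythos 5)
Your reduction of $\mathfrak{Ig}^{\mathbb X,\ast}$ to $\mathscr C^{\mathbb X,\ast}$ is fine and agrees with what the paper does (cf.~Proposition~\ref{perfect min comp}), but the main step of your argument has a genuine gap: it rests on the existence of a generalized Hasse invariant on the closure of the Newton stratum whose non-vanishing locus is exactly $\mathscr S_K^{b,\ast}$, hence on the affineness of compactified Newton strata. No such Hasse invariants are available: the Goldring--Koskivirta/Boxer theory produces Hasse invariants for Ekedahl--Oort (zip) strata, not for Newton strata, and the statement you would deduce from them is in fact false in this very setting. For instance, the basic Newton stratum of our $\mathrm{U}(n,n)$-type Shimura variety has dimension $d-d_b>0$ for $n\geq 2$, and since a basic (isoclinic) $\mathbb X$ admits no multiplicative or \'etale part, all the sets $Y^{\natural}_Z$ vanish, so its partial minimal compactification is the stratum itself, which is proper of positive dimension and therefore not affine. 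So the chain ``leaf closed in Newton stratum $\subset$ affine compactified Newton stratum'' breaks at the second inclusion for non-ordinary $b$.

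The correct substitute is to work with Ekedahl--Oort strata, where \cite[Theorem C]{boxer} (also \cite{goldring-koskivirta}) does give affineness of partial minimal compactifications --- but then a new problem appears, which your proposal does not address: while every central leaf lies in a single EO stratum, it is only known to be \emph{closed} in its Newton stratum, and the EO stratum containing it need not be contained in that Newton stratum, so closedness of $\mathscr C^{\mathbb X,\ast}$ inside the compactified EO stratum is not automatic. The paper's proof (Theorem~\ref{leaves are affine}) resolves this in two stages: first, by Nie's result \cite[Proposition 1.5, Corollary 1.6]{nie-fundamental} there is a \emph{fundamental} EO stratum entirely contained in the given Newton stratum; choosing $\mathbb X'$ with $\mathbb X'[p]$ of that type, the leaf $\mathscr C^{\mathbb X'}$ is closed in this EO stratum, and Proposition~\ref{closed immersion partial comp} together with Boxer's theorem gives affineness of $\mathscr C^{\mathbb X',\ast}$. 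Second, for an arbitrary $\mathbb X$ in the isogeny class one uses the isogeny invariance of compactified perfect Igusa varieties (Corollary~\ref{cor:correspondence igusa}) to produce a finite-away-from-the-boundary correspondence between $\mathscr C^{\mathbb X',\ast}$ and $\mathscr C^{\mathbb X,\ast}$, and transfers affineness across it via Lemma~\ref{correspondence affine}, with an induction to handle the boundary. This transfer step is essential (affineness is not a pointwise or isogeny-obvious property of leaves) and is entirely missing from your argument; without it, even the corrected EO-stratum version only proves the statement for one special leaf in each isogeny class.
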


To prove this, we first prove it for a special leaf in a given isogeny class (one that is contained in a fundamental Ekedahl--Oort stratum). Such leaves exist in general by a result of Nie, \cite{nie-fundamental}, and in their case we appeal to a result of Boxer, \cite{boxer}, that partial minimal compactifications of Ekedahl--Oort strata are affine. (The latter result was also proved independently by Goldring--Koskivirta~\cite{goldring-koskivirta}.) Then we deduce the general case by using isogenies and the invariance of the partial toroidal compactifications and minimal compactifications under isogenies.

\begin{aremark} When $\mathbb X$ is completely slope divisible the Igusa variety and its compactifications admit non-perfect versions that are sometimes useful to keep the situation more geometric. In particular, one can naturally define versions with finite level structure. For this reason, some of the analysis is carried out only for completely slope divisible $\mathbb X$. As any $\mathbb X$ is isogenous to a completely slope divisible $\mathbb X$ this results in no essential loss of generality.
\end{aremark}

Our main geometric result is a description of the fibres of the Hodge--Tate period map on both minimal and toroidal compactifications. Let
\[
\mathcal{S}_{K(p^\infty N)}^\ast = \varprojlim_m \mathscr{S}_{K(p^m N),\mathbb Q_p}^\ast
\]
and similarly
\[
\mathcal{S}_{K(p^\infty N)}^\tor = \varprojlim_m \mathscr{S}_{K(p^m N),\mathbb Q_p}^\tor,
\]
as ``$p$-adic analytic spaces''.\footnote{Technically, as diamonds, \cite{scholze-diamonds}. It is known that both limits are representable by perfectoid spaces.} There is a Hodge--Tate period map
\[
\pi_\HT^\ast: \mathcal{S}_{K(p^\infty N)}^\ast\to \Fl
\]
where the flag variety $\Fl$ parametrizes totally isotropic $F$-stable subspaces of $V$; by pre-composition with the projection from the toroidal to the minimal compactification, we also get a map
\[
\pi_\HT^\tor: \mathcal{S}_{K(p^\infty N)}^\tor\to \Fl.
\]
Let $C$ be some complete algebraically closed nonarchimedean extension of $\Q_p$, and let $x\in \Fl(C)$ be a point. By \cite{scholze-weinstein}, this is equivalent to a $p$-divisible group $X$ over $\cO_C$ with extra structure as above, and an isomorphism $T_p(X)\cong L$. The special fibre $\mathbb X$ of $X$ gives rise to $\mathfrak{Ig}^{\mathbb X}$ with its partial minimal and toroidal compactification $\mathfrak{Ig}^{\mathbb X,\ast}$ and $\mathfrak{Ig}^{\mathbb X,\tor}$. As these are perfect schemes, they admit canonical lifts to $p$-adic formal schemes over $\cO_C$; let us simply denote these by a subscript $_{\cO_C}$, and then their generic fibres, which are perfectoid spaces over $C$, by a subscript $_C$.

\begin{theorem} There are canonical maps
\[\begin{aligned}
\mathfrak{Ig}^{\mathbb X,\tor}_C&\to (\pi_\HT^\tor)^{-1}(x),\\
\mathfrak{Ig}^{\mathbb X,\ast}_C&\to (\pi_\HT^\ast)^{-1}(x).
\end{aligned}
\]
They are open immersions of perfectoid spaces with the same rank-$1$-points; in fact, the target is the canonical compactification of the source, in the sense of~\cite[Proposition 18.6]{scholze-diamonds}.
\end{theorem}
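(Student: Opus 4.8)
The plan is to construct the two maps by a moduli recipe, show that they are open immersions onto subspaces containing all rank-$1$ points, and then recognize the target as a canonical compactification via the universal property of~\cite[Proposition 18.6]{scholze-diamonds}. I would treat the toroidal case first and deduce the minimal case from it by pushing the whole picture forward along the proper projections $\mathscr S_K^\tor\to\mathscr S_K^\ast$ and $\mathfrak{Ig}^{\mathbb X,\tor}\to\mathfrak{Ig}^{\mathbb X,\ast}$, using that forming canonical compactifications is compatible with proper pushforward and that, by the affineness proposition above, $\mathscr C^{\mathbb X,\ast}$ and $\mathfrak{Ig}^{\mathbb X,\ast}$ are affine, which keeps the minimal picture manageable.

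\emph{Constructing the map.} Over the open leaf this is the argument of~\cite{caraiani-scholze}: since $\mathfrak{Ig}^{\mathbb X}$ is a perfect scheme it has a canonical flat lift $\mathfrak{Ig}^{\mathbb X}_{\cO_C}$ to a $p$-adic formal scheme over $\cO_C$, and I would combine the tautological trivialization $A[p^\infty]\cong\mathbb X\times_k\mathfrak{Ig}^{\mathbb X}$ of the pullback of the universal abelian scheme with the given lift $X/\cO_C$ of $\mathbb X$ to produce, by Serre--Tate deformation theory together with Grothendieck algebraization (using the polarization), a unique abelian scheme $\widetilde A$ over $\mathfrak{Ig}^{\mathbb X}_{\cO_C}$ lifting $A$ with $\widetilde A[p^\infty]\cong X$. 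The data $(\widetilde A,\iota,\lambda)$, the prime-to-$p$ level structure inherited from the leaf, and $T_p\widetilde A[p^\infty]\cong T_pX\cong L$ define a morphism from $\mathfrak{Ig}^{\mathbb X}_{\cO_C}$ to a formal model of $\mathscr S_{K(p^\infty N)}$; on generic fibres, since the Hodge--Tate filtration of $\widetilde A[p^\infty]$ is constantly that of $X$, i.e.\ the point $x$, this gives $\mathfrak{Ig}^{\mathbb X}_C\to(\pi_\HT)^{-1}(x)$ over the open locus. To extend across the toroidal boundary I would compare the formal completion of $\mathscr S_K^\tor$ along a boundary stratum --- a torus embedding over an abelian-scheme torsor attached to a smaller cusp datum --- with the completion of $\mathfrak{Ig}^{\mathbb X,\tor}$ along the corresponding Igusa stratum, which by Theorem~\ref{thm:formal completions perfect Igusa} has the same shape, with the abelian-scheme piece replaced by the canonical lift of a smaller Igusa variety; the construction above glues with these charts to give $\mathfrak{Ig}^{\mathbb X,\tor}_C\to(\pi_\HT^\tor)^{-1}(x)$, the target being correct because the total Hodge--Tate filtration of the associated degeneration is again $x$.

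\emph{Open immersion.} I would check that the map is injective on points and \'etale, both via the same deformation-theoretic input: by~\cite{scholze-weinstein} fixing the Hodge--Tate period $x$ pins down the $p$-divisible group of the (possibly degenerating) abelian scheme to be $X$, or the prescribed degeneration of it, so by Serre--Tate a point of the image determines and is determined by the underlying point of the compactified leaf together with its Igusa trivialization. Likewise the deformations of a point of $(\pi_\HT^\tor)^{-1}(x)$ are the deformations of the abelian scheme with its $p$-divisible group held fixed, which vanish on the open part (matching the perfect Igusa side) and along the boundary are governed by the torus-embedding parameters, which agree on both sides by Theorem~\ref{thm:formal completions perfect Igusa}.

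\emph{Rank-$1$ points and the canonical compactification.} Given a rank-$1$ point $y$ of $(\pi_\HT^\tor)^{-1}(x)$, i.e.\ a map $\Spa(C',\cO_{C'})\to(\pi_\HT^\tor)^{-1}(x)$ with $C'/C$ complete algebraically closed, properness of $\mathscr S_{K(p^m N)}^\tor$ over $\Q_p$ lets me extend it to an $\cO_{C'}$-point, i.e.\ a degenerating family over $\cO_{C'}$; its abelian part is an abelian scheme over $\cO_{C'}$ whose $p$-divisible group is determined by $x$ (via~\cite{scholze-weinstein}), hence by Serre--Tate has special fibre in the relevant smaller leaf with a distinguished Igusa structure, and tracking the degeneration data through Theorem~\ref{thm:formal completions perfect Igusa} shows the $\cO_{C'}$-point is pulled back from $\mathfrak{Ig}^{\mathbb X,\tor}_{\cO_C}$, so $y$ lies in the image. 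Thus $\mathfrak{Ig}^{\mathbb X,\tor}_C\into(\pi_\HT^\tor)^{-1}(x)$ is an open immersion whose image contains every rank-$1$ point; since $(\pi_\HT^\tor)^{-1}(x)$ is proper, hence partially proper, over $\Spa(C,\cO_C)$ (being the fibre of a map of objects proper over $\Q_p$), it is a partially proper perfectoid space containing $\mathfrak{Ig}^{\mathbb X,\tor}_C$ as an open subspace with the same rank-$1$ points, and such a space is precisely the canonical compactification in the sense of~\cite[Proposition 18.6]{scholze-diamonds}. I expect the real work to be the boundary analysis: matching the explicit formal completions of $\mathscr S_K^\tor$ and $\mathfrak{Ig}^{\mathbb X,\tor}$ along their boundary strata compatibly with $\pi_\HT^\tor$ and with the canonical-lift construction, uniformly over cusp labels. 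The open-leaf statement is essentially~\cite{caraiani-scholze}; everything genuinely new lives at the boundary and rests on the compactification theory developed earlier in the paper.
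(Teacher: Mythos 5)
Your toroidal argument tracks the paper closely: construct a map of $p$-adic formal schemes $\mathfrak{Ig}^{\mathbb X,\tor}_{\cO_C}\to\mathscr S^\tor_{K(p^\infty N),\cO_C}$ via Serre--Tate deformation theory and the explicit toroidal boundary charts of Theorem~\ref{thm:formal completions perfect Igusa}, then verify on generic fibres that the image lands in $(\pi_\HT^\tor)^{-1}(x)$, and conclude by matching $(C',\cO_{C'})$-points for all algebraically closed $C'$ and invoking a general diamond lemma (the paper's Lemma~\ref{check isomorphism}). Two remarks on your phrasing. First, you gloss over the point that the tautological trivialization of $A[p^\infty]$ lives over $k$ whereas the deformation problem lives over $\cO_C$; the paper bridges this by Proposition~\ref{prop:trivial mod epsilon}, trivializing $\mathbb X_{\cO_C}$ mod $p^\epsilon$ compatibly with the chosen splitting $\delta$, and then letting rigidity carry the comparison through. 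Second, you propose proving ``open immersion'' by checking injectivity and \'etale-ness. This is not the mechanism the paper uses, and \'etale-ness is not really the right notion here: for a map from a quasicompact separated perfectoid space to a proper diamond, bijectivity on $(C',\cO_{C'})$-points for all $C'$ already yields that the map identifies the target with the canonical compactification of the source (Lemma~\ref{check isomorphism}, resting on \cite[Lemma 11.11]{scholze-diamonds}); no infinitesimal analysis is needed. Your rank-$1$ argument does supply this bijectivity, so this is a stylistic rather than substantive discrepancy.

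The genuine gap is in your reduction of the minimal case to the toroidal case. You assert that ``forming canonical compactifications is compatible with proper pushforward'' and that affineness ``keeps the minimal picture manageable,'' but neither claim, as stated, produces the desired isomorphism. The difficulty is to show that the proper map $(\pi_\HT^\tor)^{-1}(x)\to(\pi_\HT^\ast)^{-1}(x)$ induces an isomorphism on global sections of $\cO$ --- this is not a formal consequence of properness. The paper's Lemma~\ref{lem:global sections} establishes this by combining: (i) the base change theorem for proper pushforward of $\cO^{+}/p^n$ along quasi-pro-\'etale maps of diamonds (\cite[Corollary 16.9]{scholze-diamonds}, applied to the quasi-pro-\'etale immersion of the fibre over $x$); and (ii) the relative primitive comparison theorem at each finite level $K(p^mN)$, giving $\cO^{+a}/p^n\toisom\pi_{K(p^mN)\ast}\cO^{+a}/p^n$ on the minimal compactification, which is then passed to the colimit. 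Neither input is implicit in your outline; the primitive comparison in particular is a substantial external ingredient. Once these global sections are matched and both $\mathfrak{Ig}^{\mathbb X,\ast}_C$ and $(\pi_\HT^\ast)^{-1}(x)$ are known to be affinoid perfectoid (the latter via \cite[Theorem 4.1.1]{scholze} and \cite[Theorem 1.16]{bhatt-scholze-prisms}), the minimal statement follows by taking global sections in the toroidal isomorphism --- but that last step is exactly where you would need to supply the missing lemma.
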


In particular, the cohomology of the fibres of the Hodge--Tate period map agrees with the cohomology of Igusa varieties, even on compactifications. This extends the result of \cite{caraiani-scholze} to the non-compact case.

Finally, let us briefly explain how to deduce Theorem~\ref{thm:main}; we refer to Section~\ref{subsec:mainargument} for more details. The basic strategy is the same as in \cite{caraiani-scholze}: One analyzes the cohomology of the Shimura variety in terms of a Leray spectral sequence along $\pi_\HT$. The goal is to show that, after localization at $\mathfrak m$, the sheaf $(R\pi_{\HT\ast} \mathbb F_\ell)_\m$ is concentrated at the $0$-dimensional Newton stratum $\Fl(\Q_p)\subset \Fl$, and sits in the correct degrees. To prove this, one first proves a semiperversity result for one variant of the sheaf $R\pi_{\HT\ast} \mathbb F_\ell$, see Theorem~\ref{thm:semiperversity}. This is essentially the same argument as in \cite{caraiani-scholze}, using that $\pi_\HT^\ast$ is both partially proper and affine, but the boundary causes some small technical problems. Now one looks at the largest Newton stratum where the sheaf is nonzero. Its stalks there are computed in terms of the cohomology of Igusa varieties by the previous theorem; and semiperversity gives a bound on the range of degrees where cohomology can appear. This bound is in tension with another bound coming directly from Artin vanishing. Analyzing the boundary of Igusa varieties and using assumption (ii), one finds that the boundary does not contribute to the cohomology, and then the two estimates (from semiperversity and Artin vanishing) show that the cohomology would have to be concentrated in the middle degree, and thus the $\mathbb Z_\ell$-cohomology is torsion-free. Now a computation of the $\overline{\mathbb Q}_\ell$-cohomology in terms of automorphic forms shows that it actually has to be zero.

\subsubsection{Organization} Let us now summarize the contents of the different sections, highlighting also some further results not mentioned so far.

In Section~\ref{preparation}, we set up the basic formalism of our Shimura varieties and recall some foundational results, especially about the construction of Igusa varieties. At the end of this section, we also explain the main argument, referring to the key results of the later sections.

In Section~\ref{sec:compactifications of Igusa varieties}, we construct the compactifications of Igusa varieties and prove basic results about their geometry, as stated above.

In Section~\ref{fibers of HT}, we describe the fibres of the compactified Hodge--Tate period maps $\pi_{\HT}^{\tor}$ and $\pi_{\HT}^{\ast}$ in terms of compactified Igusa varieties. We use toroidal compactifications first to write down an explicit map in terms of boundary strata (even integrally). Once we have a map, checking that it is an isomorphism can be done on geometric points by general properties of perfectoid spaces (or even diamonds). The case of minimal compactifications follows rather formally by using that both sides are affine. Finally, we also use these results to deduce a semiperversity result for $R\pi_{\HT\ast}^\circ \mathbb F_\ell$.

In Section~\ref{igusa}, we compute the cohomology of the Igusa varieties $\mathrm{Ig}^b$, as a virtual $\overline{\mathbb Q}_\ell$-representation. This builds on the previous work of Shin, \cite{shin-igusa}, \cite{shin-stable}, \cite{shin-galois}; a similar analysis also appeared in \cite{caraiani-scholze}. The present situation requires us to understand more precisely the boundary terms. It is here that we have to assume that $F^+\neq \Q$, which ensures that $G$ admits no \emph{cuspidal subgroups}, cf. Definition~\ref{cuspidal subgroups}. Essentially, this ensures that we have a special test function at at least two places (the infinite places) which simplifies the geometric of the side trace formula, and allows us to compare it to Shin's trace formula for Igusa varieties. If $F^+=\Q$, then the work of Morel, \cite{morel-trace}, shows that the situation is more complicated even in the case of Shimura varieties, and one can expect a clean answer only for intersection cohomology.

In Section~\ref{pink}, we analyze the cohomology of the boundary of the Igusa varieties in terms of the Igusa varieties for smaller unitary groups and the cohomology of general linear groups. What we prove is essentially a version for Igusa varieties of Pink's formula~\cite{pink}. It is actually somewhat tricky to show that these computations are Hecke-equivariant, and we use some new ideas involving adic spaces to get a transparent argument. As an application, we also construct Galois representations associated to mod $\ell$ systems of Hecke eigenvalues that occur in the cohomology of Igusa varieties (in the setting of Theorem~\ref{thm:main}).  

\subsubsection{Acknowledgments}
We thank Teruhisa Koshikawa for pointing out that the ``decomposed generic'' condition of~\cite{caraiani-scholze} can be slightly weakened. We also thank K\k{e}stutis \v{C}esnavi\v{c}ius, Sophie Morel, and James Newton for helpful discussions. We thank Mingjia Zhang and the anonymous referees for their comments on an earlier version of this paper. 

The first author was supported by a Royal Society University Research Fellowship, a Leverhulme Prize, and by ERC Starting Grant 804176. The second author was supported by a DFG Leibniz Grant and the Hausdorff Center for Mathematics (GZ 2047/1, Projekt-ID 390685813).

\newpage

\section{Quasisplit unitary Shimura varieties}\label{preparation}

In this section, we define the relevant Shimura varieties associated to quasi-split unitary similitude groups and prove some basic related results. Our notation in this section will be slightly different from the notation we used in the introduction. 

\subsection{Shimura varieties}\label{sec:Shimura intro}

As in the introduction, fix an imaginary CM field $F$ with totally real subfield $F^+\subset F$, and an integer $n\geq 1$. Consider the $F$-vector space $V=F^{2n}$ with the skew-hermitian form
\[
\langle (x_1,\ldots,x_{2n}),(y_1,\ldots,y_{2n})\rangle = \sum_{i=1}^n(x_i\overline{y}_{2n+1-i} - x_{2n+1-i}\overline{y}_i)
\]
and consider the induced alternating form
\[
(\cdot,\cdot): V\times V\to \Q: (x,y) = \tr_{F/\Q} \langle x,y\rangle.
\]
Then $V$ admits self-dual $\cO_F$-lattices $L\subset V$ with respect to the pairing $(\cdot,\cdot)$: For example, if $I\subset F$ is the inverse different, then $L=\cO_F^n \oplus \overline{I}^n$ is such a self-dual lattice. Fix any self-dual $\cO_F$-lattice $L\subset V$, and define the group $G$ over $\mathbb Z$ by
\[
G(R) = \{(g,c)\in \GL_{\cO_F}(L)(R)\times \mathbb G_m(R)\mid (gv,gw) = c(v,w)\ \forall v,w\in L\}\ ,
\]
which is a unitary similitude group. 
We let
\[
X=\prod_{\tau: F^+\hookrightarrow \R} X_{\tau,+}\sqcup \prod_{\tau: F^+\hookrightarrow \R} X_{\tau,-}
\]
be the symmetric space for $G(\R)$, where $X_{\tau,+}$ (resp.~$X_{\tau,-}$) is the space of positive (resp.~negative) definite $n$-dimensional subspaces of $V\otimes_{F^+} \R\cong \C^{2n}$. For any neat compact open subgroup $K\subset G(\A_f)$, we consider the double quotient
\[
X_K = G(\Q)\backslash (X\times G(\A_f)/K)\ .
\]

To compare with Theorem~\ref{thm:main}, we will also need to consider the corresponding unitary group $G_0$ over $\Z$:
\[
G_0(R) = \{g\in \GL_{\cO_F}(L)(R) \mid (gv,gw) = (v,w)\ \forall v,w\in L\}\ .
\] 
Then $G_0$ is naturally a subgroup of $G$ (and is the group denoted by $G$ in the introduction). We also consider $X^0= \prod_{\tau: F^+\hookrightarrow \R} X_{\tau,+}$,
which is the symmetric space for $G_0(\R)$ and, for a neat compact open subgroup $K_0\subset G_0(\A_f)$, the corresponding double quotient $X^0_{K_0}$ (this is the space
denoted $X_K$ in the introduction). 

Moreover, for a prime $v$ of $F$ above a prime away from $S$ and that splits in $F_0$, let $T^0_{i,v}$ be the double coset operator
\[
\GL_{2n}(\cO_{F_v})\mathrm{diag}(\underbrace{\varpi_v,\ldots,\varpi_v}_i,1,\ldots,1)\GL_{2n}(\cO_{F_v}) 
\times\prod_{w\mid \mathfrak{p}, w\neq v}\GL_{2n}(\cO_{F_w})
\]
for $G_0$, and $T_{i,v}$ be the double coset operator
\[
\GL_{2n}(\cO_{F_v})\mathrm{diag}(\underbrace{\varpi_v,\ldots,\varpi_v}_i,1,\ldots,1)\GL_{2n}(\cO_{F_v}) 
\times\prod_{w\mid \mathfrak{p}, w\neq v}\GL_{2n}(\cO_{F_w})\times \Z_{p}^\times
\]
for $G$. We let $\mathbb{T}^{0,S}$ denote the abstract unramified Hecke algebra for $G_0$ defined by 
\[
\mathbb{T}^{0,S} = \bigotimes_{p\not \in S, p\in \mathrm{Spl}_{F_0/\Q}} \Z[G_0(\Q_p)//G_0(\Z_p)]. 
\]
This is generated by the $T^0_{i,v}$, as $v$ runs over primes of $F$ as above. We let $\mathbb{T}^S$ denote the subalgebra of
\[
\bigotimes_{p\not \in S, p\in \mathrm{Spl}_{F_0/\Q}} \Z[G(\Q_p)//G(\Z_p)] 
\]
generated by the $T_{i,v}$, as $v$ runs over primes of $F$ as above. 
The inclusion $G_0\hookrightarrow G$ induces a restriction map $\mathbb{T}^S\to \mathbb{T}^{0,S}$ on the level of Hecke algebras  
that identifies $T_{i,v}$ with $T^0_{i,v}$. 

\begin{lem}\label{lem:similitude factor} Assume that $K_0 = K\cap G_0(\A_f)$. The inclusion $G_0\hookrightarrow G$
induces a natural map $X^0_{K_0}\to X_K$, which is an open and closed immersion. The induced map on cohomology and compactly supported cohomology is Hecke-equivariant 
for the restriction map $\mathbb{T}^S\to \mathbb{T}^{0,S}$. 
\end{lem}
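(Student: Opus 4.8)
The plan is to realize the map geometrically from the inclusion of symmetric spaces and finite-adelic cosets, to identify its image with a union of connected components of $X_K$ by tracking the similitude factor, and finally to deduce Hecke-equivariance from the compatibility of this family of maps (over all levels) with the right $G_0(\A_f)$-action. To construct $\phi\colon X^0_{K_0}\to X_K$: by definition $X^0=\prod_\tau X_{\tau,+}$ is one of the two connected components of $X=\prod_\tau X_{\tau,+}\sqcup\prod_\tau X_{\tau,-}$, so $X^0\hookrightarrow X$ is open and closed, and since $K_0=K\cap G_0(\A_f)$ the map of discrete sets $G_0(\A_f)/K_0\to G(\A_f)/K$ is injective. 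Hence $(X^0\times G_0(\A_f)/K_0)\hookrightarrow (X\times G(\A_f)/K)$ is an open embedding, equivariant for $G_0(\Q)\subseteq G(\Q)$, and (as $K$ is neat, so that both quotients are honest complex manifolds) it descends to the desired map $\phi$.

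Next I would show that $\phi$ is an open and closed immersion by bookkeeping the similitude character $c\colon G\to\Gm$, with kernel $G_0$. For any $g\in G(\A_f)$ we have $c(gKg^{-1})=c(K)$, and any compact open subgroup of $\A_f^\times$ is contained in $\widehat{\Z}^\times$ — its image in $\A_f^\times/\widehat{\Z}^\times\cong\Q^\times_{>0}$ is a finite subgroup of a torsion-free group, hence trivial. Since $\Q^\times\cap\widehat{\Z}^\times=\{\pm1\}$, any $\gamma\in G(\Q)$ lying in some conjugate of $K$ has $c(\gamma)\in\{\pm1\}$. On the other hand, an element of $G(\R)$ preserves the component $X^0$ precisely when its similitude factor (a rational number, for elements of $G(\Q)$) is positive, since multiplying the hermitian forms by a negative scalar turns positive-definite subspaces into negative-definite ones. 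Combining, the subgroup of $\gamma\in G(\Q)\cap gKg^{-1}$ that stabilizes the component $X^0$ equals $G_0(\Q)\cap gKg^{-1}$, and this equals $G_0(\Q)\cap gK_0g^{-1}$ because $G_0\trianglelefteq G$ gives $gK_0g^{-1}=gKg^{-1}\cap G_0(\A_f)$. Writing $X^0_{K_0}$, resp.\ $X_K$, as a disjoint union of quotients of $X^0$, resp.\ of $X^0$ and $X^-$, by such arithmetic groups, this shows that $\phi$ sends each connected component of $X^0_{K_0}$ isomorphically onto a connected component of $X_K$; and the same computation shows $\phi$ is injective on $\pi_0$ (if $\gamma\in G(\Q)$ identifies cosets $gK$, $g'K$ with $g,g'\in G_0(\A_f)$ and preserves $X^0$, then $c(\gamma)=1$, so $\gamma\in G_0(\Q)$). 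Hence $\phi$ identifies $X^0_{K_0}$ with a union of connected components of $X_K$, i.e.\ it is an open and closed immersion.

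For Hecke-equivariance I would observe that the maps $\phi$ for varying $K$ are compatible with right translation: for $g\in G_0(\A_f)\subseteq G(\A_f)$, right translation by $g$ fits into a commutative square relating $\phi$ for $(K,K_0)$ and for $(g^{-1}Kg,\,g^{-1}K_0g=g^{-1}Kg\cap G_0(\A_f))$, once more using $G_0\trianglelefteq G$. Thus, for a prime $v$ as in the statement, the Hecke correspondence on $X^0_{K_0}$ computing $T^0_{i,v}$ is obtained by base change along $\phi$ from the one on $X_K$ computing $T_{i,v}$ (the double coset for $G$ is the one for $G_0$ times $\Z_p^\times$, so the two correspondences have the same covering degree and match up compatibly with the open and closed immersions at the relevant levels). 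Since the restriction map $\mathbb{T}^S\to\mathbb{T}^{0,S}$ sends $T_{i,v}$ to $T^0_{i,v}$, it follows that the restriction $\phi^*\colon H^*(X_K,A)\to H^*(X^0_{K_0},A)$ and the extension by zero $\phi_!\colon H^*_c(X^0_{K_0},A)\to H^*_c(X_K,A)$ — projection onto, respectively inclusion of, the direct summand of (co)homology cut out by the open and closed immersion — are equivariant for the restriction map, for any coefficient module $A$.

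The one genuinely substantive point is the similitude computation of the second paragraph: it is what ensures that passing from $G_0(\Q)$ to $G(\Q)$ creates no new identifications, so that the image of $\phi$ is exactly a union of connected components rather than something more complicated. Everything else is formal manipulation of double-coset quotients and Hecke correspondences.
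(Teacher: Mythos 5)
Your proof is correct and uses the same central idea as the paper: the map is constructed from the inclusion of symmetric spaces and adelic cosets, and injectivity comes down to the observation that for $\gamma\in G(\Q)$ appearing in an identification, the similitude factor $c(\gamma)$ lies both in $\Q^\times\cap\widehat{\Z}^\times=\{\pm1\}$ (from the finite-adelic side) and in $\Q^\times_{>0}$ (from preserving $X^0$), forcing $\gamma\in G_0(\Q)$. Your version is a bit more explicit in two places where the paper is terse: you justify that $c(K)\subseteq\widehat{\Z}^\times$ for an arbitrary compact open $K$ (via the torsion-freeness of $\A_f^\times/\widehat{\Z}^\times$, whereas the paper tacitly uses $K\subseteq G(\widehat{\Z})$), and you spell out the Hecke-equivariance — which the paper dismisses as "clear" — by matching the $G$- and $G_0$-double cosets defining $T_{i,v}$ and $T^0_{i,v}$ and checking compatibility of the resulting correspondences with the open and closed immersion.
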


\begin{proof} 
We have a short exact sequence of algebraic groups over $\Z$:
\[
1\to G_0\to G\stackrel{c}{\to} \mathbb{G}_m\to 1. 
\]
The inclusion $G_0\hookrightarrow G$ identifies $X^0$ with a connected component of $X$. The locally symmetric space $X_K$ is a disjoint union of quotients of $X$ by congruence subgroups of $G(\Q)$; the analogous statement also holds true for $X^0_{K_0}$. To prove the lemma, it is enough to check that the natural map $X^0_{K_0}\to X_K$ is injective. Assume that we have pairs $(h_i,g_i)\in X^0\times G_0(\A_f)$ for $i=1,2$ and elements $\gamma\in G(\Q)$ and $k \in K$ such that $(h_1, g_1) = \gamma (h_2,g_2)k$. From $g_1 = \gamma g_2 k$, we deduce that 
\[
c(\gamma) = c(k)^{-1}\in \Q^\times \cap \left(\prod_{p}\Z_p^\times\right) = \{\pm 1\}.
\] 
On the other hand, $h_1 = \gamma h_2$ implies that $c(\gamma)>0$. We deduce that $\gamma\in G_0(\Q)$, $k\in K_0$. The last statement is clear. 
\end{proof}

It will be convenient to know that $G$ satisfies the Hasse principle:

\begin{prop}\label{hasse principle} The group $G$ satisfies the Hasse principle, i.e.~the map
\[
H^1(\Q,G)\to \prod_v H^1(\Q_v,G)
\]
is injective, where $v$ runs over all places of $F$. Equivalently, if $(V',(\cdot,\cdot)')$ is any other $2n$-dimensional $F$-vector space with an alternating form $(\cdot,\cdot)'$ such that $(xv,w)' = (v,\overline{x}w)'$ for all $v,w\in V'$ and $x\in F$, such that there are isomorphisms $(V_v,(\cdot,\cdot)_v)\cong (V'_v,(\cdot,\cdot)'_v)$, identifying the forms up to a scalar, after base change to any local field $\Q_v$ of $\Q$, then there is an isomorphism $(V,(\cdot,\cdot))\cong (V',(\cdot,\cdot)')$, identifying the forms up to a scalar.
\end{prop}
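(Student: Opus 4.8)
The plan is to prove the Hasse principle for $G$ by the standard strategy for unitary similitude groups: realize $G$ as an extension, reduce the vanishing/injectivity of $H^1$ to a cohomological statement about the derived group or about $\SL_1$ of a division algebra, and invoke known Hasse-principle results together with an exact sequence chase. Concretely, I would first pass to the unitary similitude group over the base field: since $F/F^+$ is a CM extension, $G$ is an inner form of a quasi-split group over $F^+$ (indeed it \emph{is} quasi-split by construction, being the similitude group of a split skew-hermitian space), and Galois cohomology over $\Q$ can be analyzed via restriction to $F^+$ using that $\Gal(\overline{F^+}/F^+)$ has finite index in $\Gal(\overline\Q/\Q)$ — or, more cleanly, I would work directly over $\Q$ using the description of $G$ as a similitude group of a hermitian $F$-space together with the similitude character.

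The key steps, in order, would be: (1) Write down the exact sequence $1\to G_0\to G\xrightarrow{c}\Gm\to 1$ already recorded in the proof of Lemma~\ref{lem:similitude factor}. Since $H^1(\Q,\Gm)=0$ and $\Gm$ satisfies the Hasse principle trivially, a diagram chase with the long exact sequences reduces the Hasse principle for $G$ to the Hasse principle for $G_0$, the quasi-split unitary group $\mathrm{U}(V)$ — provided one checks surjectivity of $c$ on the relevant local and global points, which is where the CM hypothesis (every archimedean place of $F^+$ is complex in $F$, so local similitude factors are unconstrained in sign at finite places and the global obstruction $\Q^\times_{>0}\cap\widehat{\Z}^\times=\{1\}$ type computation handles it) enters. (2) For $G_0=\mathrm{U}(V)$ with $V$ a hermitian space over the CM field $F$, invoke the classical fact (Landherr; see also Platonov–Rapinchuk, or Kottwitz's discussion in the PEL context) that unitary groups satisfy the Hasse principle: $H^1(\Q,\mathrm{U}(V))\hookrightarrow\prod_v H^1(\Q_v,\mathrm{U}(V))$. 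Equivalently, hermitian forms over $F$ of the same dimension that are locally isomorphic everywhere are globally isomorphic — this is the classification of hermitian forms over number fields by rank, discriminant, and local invariants at archimedean places. (3) Translate back: a class in $H^1(\Q,G)$ corresponds to a twisted form $(V',(\cdot,\cdot)')$ as in the statement (with the scalar accounting for the similitude twist), local triviality everywhere gives local isomorphisms up to scalar, hence by (1)–(2) a global isomorphism up to scalar, i.e. triviality of the original class. The "Equivalently" clause of the proposition is then just the concrete dictionary between $H^1(\Q,G)$ and isomorphism classes of hermitian $F$-spaces of rank $2n$ with similitude, and requires only noting that all such $V'$ have the same dimension $2n$ and the same signature data forced by the local isomorphisms.

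The main obstacle I anticipate is bookkeeping around the \emph{similitude factor} rather than anything deep: one must ensure that the passage from $\mathrm{U}(V)$ to $\mathrm{GU}(V)$ does not introduce a nonzero Tate–Shafarevich-type obstruction, i.e. that the map $H^1(\Q,G_0)\to H^1(\Q,G)$ is surjective and that the fibers are controlled by $\coker(G(\Q)\xrightarrow{c}\Gm(\Q))$ versus its local analogues. For a CM field this works out because the relevant similitude norms are unconstrained locally at all finite and complex places and the only global sign constraint is automatically satisfied (a point already visible in the proof of Lemma~\ref{lem:similitude factor}, where exactly the group $\Q^\times\cap\prod_p\Z_p^\times=\{\pm1\}$ appeared and was handled via positivity at $\infty$). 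A secondary subtlety is that $F$ need not contain an imaginary quadratic subfield here — but the argument above never used that, only that $F$ is CM, so the proposition holds in the stated generality. I would therefore present the proof as: cite the Hasse principle for quasi-split unitary groups (Landherr / classification of hermitian forms), then deduce the similitude case from the sequence $1\to G_0\to G\to\Gm\to1$ by the standard diagram chase, pausing only to verify the similitude-factor surjectivity using that $F$ is CM.
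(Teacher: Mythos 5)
Your proof is correct, but it follows a genuinely different route from the paper's. The paper invokes the standard Kottwitz argument: $G_{\mathrm{der}}=\mathrm{Res}_{F^+/\Q}\,\mathrm{SU}(V)$ is simply connected, so the Hasse principle for $G$ reduces (by the vanishing of $H^1$ of simply connected groups over local fields and a diagram chase) to the Hasse principle for the cocenter $D=G/G_{\mathrm{der}}$, which they compute explicitly to be $\mathrm{Res}_{F^+/\Q}\ker(\mathrm{Nm}\colon\mathrm{Res}_{F/F^+}\Gm\to\Gm)\times\Gm$ and then check directly for that torus. You instead use $1\to G_0\to G\xrightarrow{c}\Gm\to 1$ and quote Landherr's Hasse principle for the unitary group $G_0=\mathrm{Res}_{F^+/\Q}\,\mathrm{U}(V)$. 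Your chase works once one notes that $c$ is surjective on $\Q_v$-points for every place $v$ (the one real input you need, and it holds because $V$ is split over every finite place and indefinite at every archimedean place of $F^+$); this makes the fiber of the basepoint of $H^1(\Q_v,G)$ inside $H^1(\Q_v,G_0)$ trivial, so a lift $\eta$ of a locally trivial class $\xi$ is itself locally trivial, whence trivial by Landherr, whence $\xi=0$. Global surjectivity of $c$ is not actually used, and the ``$\Q^\times_{>0}\cap\widehat{\Z}^\times$'' remark is not the relevant point. The trade-off is that the paper's route is more self-contained --- it needs only Kneser's vanishing theorem and the Hasse principle for tori --- whereas yours inputs Landherr's classification of hermitian forms as a black box; Landherr's theorem is itself usually proved by precisely the simply-connected-plus-torus argument applied to $\mathrm{SU}\subset\mathrm{U}$, so the paper's proof is in effect one level deeper in the same reduction.
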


\begin{proof} (cf.~\cite[Section 7]{kottwitzpoints}) The derived group $G_{\mathrm{der}}$ of $G$ is simply connected, so $H^1(\Q,G_{\mathrm{der}}) = 0$. It follows that it suffices to see that $D=G/G_{\mathrm{der}}$ satisfies the Hasse principle. One can identify $D$ with the subtorus of $\mathrm{Res}_{F/\Q} \mathbb G_m\times \mathbb G_m$ of all pairs $(z,t)$ such that $\mathrm{Nm}_{F/F^+}(z)=t^{2n}$. Via $(z,t)\mapsto (z/t^n,t)$, this is isomorphic to $\mathrm{Res}_{F^+/\Q} T\times\mathbb G_m$, where $T=\ker(\mathrm{Nm}: \mathrm{Res}_{F/F^+} \mathbb G_m\to \mathbb G_m)$. Both of these tori satisfy the Hasse principle.
\end{proof}

We can now interpret $X_K$ as the $\C$-points of a moduli scheme of abelian varieties with certain extra structures. As we will have much occasion to consider similar kinds of extra structure, let us fix some terminology for this paper.

\begin{defn}\label{def:extra structure}\leavevmode
\begin{enumerate}
\item Let $S$ be a scheme over $\Z[\tfrac 1{\Delta_F}]$. An abelian variety with $G$-structure over $S$ is a triple $(A,\iota,\lambda)$ where $A$ is an abelian scheme of dimension $[F:\Q]n$ over $S$, $\iota: \cO_F\to \mathrm{End}(A)$ is an $\cO_F$-action such that $\Lie A$ is free of rank $n$ over $\cO_F\otimes_{\Z} \mathcal O_S$, and $\lambda: A\cong A^\vee$ is a principal polarization on $A$ whose Rosati involution is compatible with complex conjugation on $\mathcal O_F$ via $\iota$.
\item Let $p$ be a prime that is unramified in $F$ and let $S$ be a scheme on which $p$ is locally nilpotent. A $p$-divisible group with $G$-structure over $S$ is a triple $(X,\iota,\lambda)$ where $X$ is a $p$-divisible group of height $2[F:\Q]n$ and dimension $[F:\Q]n$, $\iota: \cO_F\to \mathrm{End}(X)$ is an $\cO_F$-action such that $\Lie X$ is free of rank $n$ over $\cO_F\otimes_{\Z} \mathcal O_S$, and $\lambda: X\cong X^\vee$ is a principal polarization on $X$ whose Rosati involution is compatible with complex conjugation on $\mathcal O_F$ via $\iota$.
\end{enumerate}
\end{defn} 

Let $N\geq 3$ and let
\[
K=K(N)=\{g\in G(\widehat{\Z})\mid g\equiv 1\mod N\}
\]
be a principal congruence subgroup, automatically neat. In the following definition, we do not invert $N$; we do not claim any nice geometric properties of the resulting scheme at primes dividing $N$. (We need some integral models at ``bad'' places in our discussion of toroidal compactifications.)

\begin{defn}\label{def:shimura variety} Let $\mathscr S_K^{\mathrm{pre}}$ over $\Spec \Z[\tfrac 1{\Delta_F}]$ parametrize over a test scheme $S$ an abelian variety with $G$-structure $\underline{A} = (A,\iota,\lambda)$ together with an $\cO_F$-linear map $L/N\to A[N]$ and a primitive $N$-th root of unity $\zeta_N\in \mu_N(\mathcal O_S)$ such that the diagram
\[\xymatrix{
L/N\times L/N\ar[r]^-{(\cdot,\cdot)}\ar[d] & \Z/N\ar[d]^{\zeta_N}\\
A[N]\times A[N]\ar[r] & \mu_N
}\]
commutes, where the lower map is the Weil pairing on $A[N]$ induced by $\lambda$, and such that the map $L/N\to A[N]$ extends to similar $\cO_F$-linear maps $L/\Delta_F^m N\to A[\Delta_F^m N]$ compatible with the pairing for all $m\geq 1$.
\end{defn}

It is standard that $\mathscr S_K^{\mathrm{pre}}$ is a Deligne--Mumford stack and that $\mathscr S_K^{\mathrm{pre}}\times \Z[\tfrac 1{\Delta_F N}]$ is representable, as it is relatively representable over the Siegel modular variety of principal level $N\geq 3$. In fact, even if $p$ divides $N$, $\mathscr S_K^{\mathrm{pre}}\times \mathbb Z_{(p)}$ is representable as soon as the part of $N$ prime to $p$ is at least $3$; we will always be in such a situation. We will need to normalize $\mathscr S_K^{\mathrm{pre}}$ (because at primes dividing $N$, we have formulated a very bad moduli problem):

\begin{defn} Let $\mathscr S_K$ be the normalization of $\mathscr S_K^{\mathrm{pre}}$ in $\mathscr S_K^{\mathrm{pre}}\times \Z[\tfrac 1{\Delta_F N}]$.
\end{defn}

The reason for the final condition on lifting the level structure will become apparent in the following proof:

\begin{prop}\label{prop:complex unif} There is a natural isomorphism of manifolds
\[
X_K\cong \mathscr S_K(\C).
\]
\end{prop}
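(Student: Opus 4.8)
The plan is to construct the isomorphism $X_K \cong \mathscr S_K(\C)$ in the standard way, via the theory of abelian varieties with complex multiplication and polarization, but taking care of the normalization and the auxiliary level structure conditions in Definition~\ref{def:shimura variety}. First I would recall the classical recipe: a point of $X$ is a complex structure $J$ on $V_\R = V\otimes_\Q \R$ (equivalently, a choice of the subspace $\Fil^0 \subset V_\C$) that is compatible with the $F$-action and for which the form $(v, Jw)$ is symmetric and positive (or negative) definite; together with a point $g\in G(\A_f)$ and the lattice $L\subset V$, this produces a complex abelian variety $A = V_\R / gL$ (using $g$ to transport $L$ inside $V_{\A_f}$ and taking the intersection with $V$), equipped with its $\cO_F$-action from the $F$-action on $V$, and a polarization $\lambda$ coming from $(\cdot,\cdot)$. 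The condition that $\Lie A$ is free of rank $n$ over $\cO_F \otimes_\Z \mathcal O_S$ corresponds exactly to the signature condition defining $X_{\tau,\pm}$, so that $X$ parametrizes precisely the abelian varieties with $G$-structure in the sense of Definition~\ref{def:extra structure}(1) whose rational Betti homology is identified with $V$. The level-$N$ structure $L/N \to A[N]$ comes from the inclusion $gL \subset \tfrac1N gL$ modulo $gL$, and the root of unity and the Weil-pairing compatibility square are built in because the polarization was defined using $(\cdot,\cdot)$.

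Next I would address the passage from $X$ to the double quotient $X_K$. Changing $(J, g)$ by $\gamma \in G(\Q)$ replaces $(A,\iota,\lambda,\eta)$ by an isomorphic datum, and changing $g$ by $k \in K$ leaves the isomorphism class unchanged while permuting level structures within a single $K$-orbit; conversely, any abelian variety with $G$-structure and full level-$N$ structure that arises over $\C$ has rational homology that, by the Hasse principle for $G$ (Proposition~\ref{hasse principle}), is isometric to $V$ up to scalar — this is the key input that guarantees surjectivity onto $\mathscr S_K^{\mathrm{pre}}(\C)$, since a priori the moduli problem allows any hermitian space in the same genus, but the Hasse principle collapses the genus to a single class. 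Injectivity of $X_K \to \mathscr S_K^{\mathrm{pre}}(\C)$ follows from the usual rigidity: an isomorphism of the abelian varieties respecting all structures induces an $F$-linear similitude of homology carrying one marked lattice-plus-level-structure to the other, hence lies in $G(\Q)$ and matches the $K$-cosets. So one gets $X_K \cong \mathscr S_K^{\mathrm{pre}}(\C)$.

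Finally I would explain why this identification is actually with $\mathscr S_K(\C)$ and not merely with $\mathscr S_K^{\mathrm{pre}}(\C)$: the extra condition in Definition~\ref{def:shimura variety}, namely that the level structure $L/N \to A[N]$ lifts to compatible $\cO_F$-linear maps $L/\Delta_F^m N \to A[\Delta_F^m N]$ for all $m$, is precisely what cuts out the normalization. Over $\C$ the normalization map $\mathscr S_K(\C) \to \mathscr S_K^{\mathrm{pre}}(\C)$ is a bijection on the analytic points lying over the locus where $\mathscr S_K^{\mathrm{pre}}$ is already normal, and the constructed map $X_K \to \mathscr S_K^{\mathrm{pre}}(\C)$ automatically lands in $\mathscr S_K(\C)$ because the universal family over $X$ carries, tautologically, the full tower of level structures at all powers of $\Delta_F$ (they all come from the single lattice $L\subset V$); conversely any point of $\mathscr S_K(\C)$ has such a tower by definition, and one checks the tower forces the Betti lattice to be $\cO_F$-stable and self-dual up to scalar, putting us back in the situation analyzed above. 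I expect the main obstacle to be the bookkeeping around this normalization — verifying that the ``lifting of level structures'' condition exactly matches normalization at the relevant (ramified-for-$F$) primes and that the complex-analytic comparison is insensitive to it, rather than the core uniformization argument, which is entirely classical. One also needs to keep track that the map is an isomorphism of complex manifolds (not just a bijection), but this is immediate from the fact that both sides carry the analytic structure transported from the period domain $X$, and the moduli-theoretic map is holomorphic by construction of the universal abelian scheme.
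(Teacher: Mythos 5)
Your overall strategy — construct the map from $X\times G(\A_f)/K$ to the moduli space via the classical uniformization, prove injectivity by rigidity and surjectivity via the Hasse principle — matches the paper's. However, you have misidentified the role of the lifting condition in Definition~\ref{def:shimura variety}, and this leaves a genuine gap in the surjectivity argument.

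You attribute the lifting condition to ``cutting out the normalization'' and suggest the main remaining bookkeeping is about the normalization. But the normalization in Definition~\ref{def:shimura variety} only modifies things in characteristic $p$ for $p\mid\Delta_F$: over $\Z[\tfrac 1{\Delta_F N}]$ the scheme $\mathscr S_K^{\mathrm{pre}}$ is already smooth and hence normal, so $\mathscr S_K(\C) = \mathscr S_K^{\mathrm{pre}}(\C)$ tautologically, and the normalization is simply invisible at the level of $\C$-points. It contributes nothing to this proposition. The real role of the lifting condition is elsewhere: it is a genuine constraint on the moduli problem $\mathscr S_K^{\mathrm{pre}}$ itself, imposed precisely so that the Hasse-principle argument can be run at the primes ramified in $F$. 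Given $(A,\iota,\lambda,\eta)$ over $\C$, the Betti homology $L' = H_1(A,\Z)$ is always $\cO_F$-stable and self-dual up to sign — that much is automatic — but what needs to be checked is that the hermitian space $(L'_\Q,(\cdot,\cdot)')$ is \emph{locally} isomorphic to $(V,(\cdot,\cdot))$ up to scalar at every place of $\Q$, so that Proposition~\ref{hasse principle} can upgrade this to a global isomorphism. At archimedean places this comes from the signature condition on $\Lie A$; at finite primes unramified in $F$ the existence of a self-dual lattice pins down the local isomorphism class; but at primes \emph{ramified} in $F$, self-duality alone does \emph{not} determine the local hermitian form, and without the lifting condition the moduli problem would allow abelian varieties whose Betti hermitian space is a genuine Galois-cohomological ``twist'' of $V$ at those ramified primes — and these would not lift to $X\times G(\A_f)/K$. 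The lifting of the level structure to $L/\Delta_F^m N \to A[\Delta_F^m N]$ for all $m$ is exactly what forces the local isomorphism at ramified primes (and, as the paper notes, it is also needed to ensure the level structure mod $N$ actually comes from an element of $G(\A_f)$). Your proposal says ``a priori the moduli problem allows any hermitian space in the same genus, but the Hasse principle collapses the genus to a single class,'' which tacitly assumes the local isomorphisms already hold; the hard part, at ramified primes, is precisely establishing that one is in the right genus, and this is what you skip.
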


\begin{proof} If $\underline{A}=(A,\iota,\lambda)$ is an abelian variety with $G$-structure over $\C$, then the first homology $L'=H_1(A,\Z)$ is a finite projective $\cO_F$-module of rank $2n$ equipped with a perfect alternating form $(\cdot,\cdot)': L'\times L'\to \Z$ (up to sign). We want to see that when $\underline{A}$ comes from $\mathscr S_K$, there is an $F$-linear isomorphism $L_\Q\cong L'_\Q$ compatible with the forms up to scalar. By the Hasse principle (Proposition~\ref{hasse principle}), it is enough to show that such isomorphisms exist locally. At the archimedean places, both forms are of the same signature by the condition on the dimension of the Lie algebra. At the primes that are unramified in $F$, both lattices are self-dual, which determines the isomorphism class of the form. Finally, at the ramified primes we use the final condition in Definition~\ref{def:shimura variety}.

Now the choice of an isomorphism $L_\Q\cong L'_\Q$ compatible with the form, up to scalar, gives a $G(\Q)$-torsor over $S_K(\C)$. We want to identify this with $X\times G(\A_f)/K$. The element of $X$ comes from Hodge theory -- the (positive or negative) definite $n$-dimensional $F\otimes_{\Q} \C$-subspace is the Hodge filtration -- and the element of $G(\A_f)/K$ is exactly the given level structure. Note again that at ramified primes, we need to use the assumption that the isomorphism lifts to ensure that the isomorphism modulo $N$ comes from an element of $G(\A_f)$.
\end{proof}

\subsection{$p$-divisible groups}

Next, we give some results about $p$-divisible groups with $G$-structure.

Let $(\mathbb X,\iota,\lambda)$ be a $p$-divisible group with $G$-structure over an algebraically closed field $k$ of characteristic $p$. We get a filtration
\[
\mathbb X^{\mu}\subset \mathbb X^\circ\subset \mathbb X
\]
into the multiplicative and connected part. This filtration is $\mathcal O_F$-stable and symplectic with respect to $\lambda$. We let
\[
\mathbb X^{(0,1)} = \mathbb X^\circ/\mathbb X^{\mu}, \mathbb X^{\mathrm{\acute{e}t}} = \mathbb X/\mathbb X^\circ
\]
be the graded pieces for the filtration. As we work over a perfect field, this filtration splits uniquely, so
\[
\mathbb X\cong \mathbb X^{\mu}\oplus \mathbb X^{(0,1)}\oplus \mathbb X^{\mathrm{\acute{e}t}}
\]
under which $\lambda$ decomposes similarly into a direct sum. In particular, up to isomorphism, $\mathbb X$ is determined by $\mathbb X^{(0,1)}$ with its $\mathcal O_F$-action and principal polarization and the finite projective $\cO_F\otimes_{\Z} \Z_p$-module $T_p(\mathbb X^{\mathrm{\acute{e}t}})$. Slightly more generally, we have the following observation.\footnote{The motivation for the result are the Igusa cusp labels introduced later; this also explains the notation.}

\begin{prop}\label{classification pdivisible} Assume that
\[
\mathrm{Z}_{-2}\subset \mathrm{Z}_{-1}\subset \mathbb X
\]
is an $\mathcal O_F$-stable filtration by sub-$p$-divisible groups such that $\mathrm{Z}_{-2}$ is multiplicative, $\mathbb X/\mathrm{Z}_{-1}$ is \'etale, and the polarization identifies $\mathrm{Z}_{-2}$ with $\mathbb X/\mathrm{Z}_{-1}$. Then there is an $\mathcal O_F$-linear splitting
\[
\mathbb X\cong \mathrm{Z}_{-2}\oplus \mathrm{Z}_{-1}/\mathrm{Z}_{-2}\oplus \mathbb X/\mathrm{Z}_{-1}
\]
under which $\lambda$ decomposes similarly into a direct sum.
\end{prop}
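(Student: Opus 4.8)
The plan is to reduce Proposition~\ref{classification pdivisible} to the already-known fact that, over a perfect field, the slope filtration $\mathbb{X}^{\mu}\subset\mathbb{X}^{\circ}\subset\mathbb{X}$ splits canonically. The key observation is that the hypotheses force $\mathrm{Z}_{-2}$ and $\mathbb{X}/\mathrm{Z}_{-1}$ to be closely related to $\mathbb{X}^{\mu}$ and $\mathbb{X}^{\et}$ respectively: since $\mathrm{Z}_{-2}$ is multiplicative it is contained in $\mathbb{X}^{\mu}$, and since $\mathbb{X}/\mathrm{Z}_{-1}$ is \'etale the connected part $\mathbb{X}^{\circ}$ maps to zero in it, i.e. $\mathbb{X}^{\circ}\subset\mathrm{Z}_{-1}$. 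Dually (using that $\lambda$ identifies $\mathrm{Z}_{-2}$ with $\mathbb{X}/\mathrm{Z}_{-1}$, hence is compatible with the two filtrations up to the polarization), one gets $\mathrm{Z}_{-2}\subset\mathbb{X}^{\mu}$ and $\mathbb{X}^{\circ}\subset\mathrm{Z}_{-1}$, while the multiplicative part of $\mathrm{Z}_{-1}/\mathrm{Z}_{-2}$ and the \'etale quotient of $\mathrm{Z}_{-1}/\mathrm{Z}_{-2}$ are controlled by the ``error terms'' $\mathbb{X}^{\mu}/\mathrm{Z}_{-2}$ and $\mathbb{X}/(\mathbb{X}^{\circ}+\mathrm{Z}_{-1})$; by self-duality these two are dual to each other.

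First I would use the canonical splitting $\mathbb{X}\cong\mathbb{X}^{\mu}\oplus\mathbb{X}^{(0,1)}\oplus\mathbb{X}^{\et}$, which is $\mathcal{O}_F$-linear and compatible with $\lambda$. Under this splitting I would track where $\mathrm{Z}_{-2}$ and $\mathrm{Z}_{-1}$ sit. Because $\mathrm{Z}_{-2}$ is multiplicative, $\mathrm{Z}_{-2}\subset\mathbb{X}^{\mu}$; because $\mathbb{X}/\mathrm{Z}_{-1}$ is \'etale, $\mathbb{X}^{\mu}\oplus\mathbb{X}^{(0,1)}=\mathbb{X}^{\circ}\subset\mathrm{Z}_{-1}$. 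So $\mathrm{Z}_{-1}$ is determined by a sub-$p$-divisible group of $\mathbb{X}^{\et}$, namely $\mathrm{Z}_{-1}\cap\mathbb{X}^{\et}=:\mathbb{X}^{\et}_{1}$, and $\mathrm{Z}_{-1}=\mathbb{X}^{\circ}\oplus\mathbb{X}^{\et}_{1}$ (the sum is direct because $\mathbb{X}^{\et}$ is a direct summand and $\mathbb{X}^{\circ}\subset\mathrm{Z}_{-1}$). Dually, $\mathrm{Z}_{-2}\subset\mathbb{X}^{\mu}$ corresponds to a quotient $\mathbb{X}^{\mu}/\mathrm{Z}_{-2}=:\mathbb{X}^{\mu}_{1}$. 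The self-duality hypothesis $\lambda:\mathrm{Z}_{-2}\cong\mathbb{X}/\mathrm{Z}_{-1}=\mathbb{X}^{\et}/\mathbb{X}^{\et}_{1}$ then identifies, via the polarization, the quotient $\mathbb{X}^{\et}/\mathbb{X}^{\et}_{1}$ with the dual of $\mathbb{X}^{\mu}$-type data, forcing a compatibility between $\mathrm{Z}_{-2}$ and $\mathbb{X}^{\et}_{1}$.

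Next I would produce the desired splitting. The graded piece $\mathrm{Z}_{-1}/\mathrm{Z}_{-2}$ sits in a (split) filtration with pieces $\mathbb{X}^{\mu}/\mathrm{Z}_{-2}=\mathbb{X}^{\mu}_{1}$, $\mathbb{X}^{(0,1)}$, and $\mathbb{X}^{\et}_{1}$, and self-duality pairs the first against the third; on the other hand $\mathrm{Z}_{-2}\cong\mathbb{X}/\mathrm{Z}_{-1}$ picks out a complementary multiplicative/\'etale pair $\mathrm{Z}_{-2}\subset\mathbb{X}^{\mu}$ and $\mathbb{X}^{\et}/\mathbb{X}^{\et}_{1}$. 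Assembling the canonical splitting of the multiplicative–connected–\'etale filtration with these identifications gives an $\mathcal{O}_F$-linear isomorphism $\mathbb{X}\cong\mathrm{Z}_{-2}\oplus\mathrm{Z}_{-1}/\mathrm{Z}_{-2}\oplus\mathbb{X}/\mathrm{Z}_{-1}$, and I would check that $\lambda$ respects it by noting that it does so on the multiplicative/connected/\'etale pieces and that the further decomposition of the \'etale (resp.\ multiplicative) part into $\mathbb{X}^{\et}_{1}$ and $\mathbb{X}^{\et}/\mathbb{X}^{\et}_{1}$ (resp.\ $\mathrm{Z}_{-2}$ and $\mathbb{X}^{\mu}_{1}$) is exchanged under $\lambda$, so the cross-terms vanish for type reasons (no nonzero maps between a multiplicative and an \'etale $p$-divisible group, etc.).

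I expect the main obstacle to be bookkeeping rather than conceptual: carefully matching the self-duality hypothesis on $\mathrm{Z}_{\bullet}$ with the canonical self-duality of the slope filtration, so that the ``error terms'' $\mathbb{X}^{\mu}/\mathrm{Z}_{-2}$ and $\mathbb{X}^{\et}/\mathbb{X}^{\et}_{1}$ are genuinely dual and hence can be split off symmetrically. Once the right summands are named, the vanishing of all cross-terms is automatic from $\Hom$ between $p$-divisible groups of disjoint slope support (or more precisely between multiplicative, biconnected, and \'etale $p$-divisible groups over a perfect field) being zero, and the $\mathcal{O}_F$-linearity is inherited from the canonical splitting.
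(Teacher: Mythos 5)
Your approach is essentially the paper's: decompose $\mathbb X$ via the canonical multiplicative--biconnected--\'etale splitting, observe that $\mathrm{Z}_{-2}\subset\mathbb X^\mu$ and $\mathbb X^\circ\subset\mathrm{Z}_{-1}$, reduce to the two residual splitting problems $\mathrm{Z}_{-2}\subset\mathbb X^\mu$ and $\mathbb X^{\et}_1:=\mathrm{Z}_{-1}\cap\mathbb X^{\et}\subset\mathbb X^{\et}$, and note that these are exchanged by Cartier duality under $\lambda$. The paper's one-line argument is exactly the remark that one therefore \emph{chooses} an arbitrary $\mathcal O_F$-linear complement $C$ of $\mathrm{Z}_{-2}$ in $\mathbb X^\mu$ and then takes the dual complement $C^\perp$ of $\mathbb X^{\et}_1$ in $\mathbb X^{\et}$; you should say this explicitly rather than speak of ``assembling the canonical splitting,'' since neither of these two complements is canonical. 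One small but real slip in your last paragraph: $\mathbb X^\mu/\mathrm{Z}_{-2}$ and $\mathbb X^{\et}/\mathbb X^{\et}_1$ are \emph{not} dual under $\lambda$ --- they must be chosen orthogonal (they become $C$ and $C^\perp$). The genuine dual pairs, which you state correctly in your middle paragraph, are $\mathrm{Z}_{-2}\leftrightarrow\mathbb X^{\et}/\mathbb X^{\et}_1$ and $\mathbb X^\mu/\mathrm{Z}_{-2}\leftrightarrow\mathbb X^{\et}_1$; it is this that makes the chosen splitting compatible with $\lambda$.
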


\begin{proof} We can decompose into multiplicative, biconnected, and \'etale parts, which gives us only two splitting problems, on the \'etale and on the multiplicative parts. We only have to arrange that they are dual, so we can make an arbitrary choice at one side and then arrange the other one to be dual.
\end{proof}

\subsubsection{Construction of Igusa covers}

Let $k$ be an algebraically closed field of characteristic $p$ and let $\mathbb{X}/k$
be an isoclinic $p$-divisible group equipped with extra structures of EL type or of PEL type. 
For every $m\in \Z_{\geq 1}$, let $\Gamma_m$ denote the group of automorphisms of $\mathbb{X}[p^m]$
which commute with the extra structures and which lift to automorphisms of $\mathbb{X}[p^{m'}]$ 
for every $m'\geq m$. We let $\Gamma_{m,k}$ denote the finite \'etale group
scheme over $k$ corresponding to $\Gamma_m$. The following
is a consequence of~\cite[Corollary 4.1.10]{caraiani-scholze}. 

\begin{prop}\label{prop:finite etale gp scheme} The finite \'etale group scheme $\Gamma_{m,k}$ represents
the functor 
\[
\mathscr{F}_{\Gamma_m}: \Spec k-\mathrm{Schemes}\to \mathrm{Sets}
\]
for which $\mathscr{F}_{\Gamma_m}(\mathscr{T})$ is the set of $\mathscr{T}$-automorphisms
of $\mathbb{X}_{\mathscr{T}}[p^m]$ which commute with the extra structures and which 
lift (fppf locally on $\mathscr{T}$) to $\mathscr{T}$-automorphisms of $\mathbb{X}_{\mathscr{T}}[p^{m'}]$ for all $m'\geq m$. In fact, for any $m$ there is some $m'\geq m$ so that it is enough to ask for a lifting to $\mathbb X_{\mathscr{T}}[p^{m'}]$ (fppf locally on $\mathscr T$).
\end{prop}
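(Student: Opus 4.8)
The plan is to reduce the statement to the fact that a finite étale group scheme over $\Spec k$ is determined by its group of $k$-points together with the Galois action, and then to verify the functorial description by descent. Since $k$ is algebraically closed, $\Gamma_{m,k}$ is simply the constant group scheme associated to $\Gamma_m$, so $\Gamma_{m,k}(\mathscr T)$ is the set of locally constant functions $\mathscr T\to \Gamma_m$; thus I need to show that $\mathscr F_{\Gamma_m}$ is (represented by) exactly this constant sheaf. First I would set up the comparison map $\Gamma_{m,k}\to \mathscr F_{\Gamma_m}$: an element $\gamma\in\Gamma_m$ is by definition an automorphism of $\mathbb X[p^m]$ over $k$ that lifts to every $\mathbb X[p^{m'}]$, so it base-changes to a $\mathscr T$-automorphism of $\mathbb X_{\mathscr T}[p^m]$ with the required lifting property (the lifts over $k$ pull back to lifts over $\mathscr T$), and this is compatible with disjoint unions, hence extends to locally constant $\Gamma_m$-valued functions. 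The content is that this map is an isomorphism of fppf sheaves.

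The key input is \cite[Corollary 4.1.10]{caraiani-scholze}, which handles the isoclinic case and identifies $\Gamma_{m,k}$ as the automorphism scheme in the appropriate sense; I would combine it with a Noetherian approximation / stabilization argument to produce the uniform $m'$. Concretely, the second step is to prove the last sentence of the proposition: for fixed $m$ there exists $m'\ge m$ such that any $\mathscr T$-automorphism of $\mathbb X_{\mathscr T}[p^m]$ lifting (fppf locally) to $\mathbb X_{\mathscr T}[p^{m'}]$ automatically lifts to $\mathbb X_{\mathscr T}[p^{m''}]$ for all $m''\ge m$. Over the algebraically closed field $k$ this is a statement about the pro-system $\{\Aut(\mathbb X[p^{m'}])\}_{m'}$ of finite groups: the images of the transition maps $\Gamma_{m'}\to \Gamma_m$ stabilize for $m'$ large (a decreasing sequence of subgroups of the finite group $\Aut(\mathbb X[p^m])$), and for isoclinic $\mathbb X$ the relevant automorphism group scheme is smooth, so the obstruction to lifting an automorphism of $\mathbb X_{\mathscr T}[p^{m'}]$ further is controlled and vanishes once $m'$ is past the stabilization point; I would extract this from the structure theory of $\underline\Aut$ used in \cite[§4.1]{caraiani-scholze}. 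This fixed $m'$ then also shows $\mathscr F_{\Gamma_m}$ only depends on finite-level data, so it is an fppf sheaf that is a subsheaf of $\Aut(\mathbb X_{(-)}[p^m])$ cut out by a locally closed condition.

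Given the uniform $m'$, the final step is the isomorphism. Injectivity of $\Gamma_{m,k}\to\mathscr F_{\Gamma_m}$ is clear since $\Gamma_{m,k}$ is étale and separated and the map is the identity on $k$-points. For surjectivity (as fppf sheaves), I take $\mathscr T$ connected and an element of $\mathscr F_{\Gamma_m}(\mathscr T)$, i.e.\ an automorphism $\phi$ of $\mathbb X_{\mathscr T}[p^m]$ lifting fppf-locally to level $p^{m'}$; by \cite[Corollary 4.1.10]{caraiani-scholze} applied fppf-locally, $\phi$ comes locally from $\Gamma_m$, hence $\phi$ defines a section of the constant sheaf $\Gamma_m$ over an fppf cover of $\mathscr T$, and by descent plus connectedness it is a single element of $\Gamma_m$. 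Hence $\mathscr F_{\Gamma_m}$ is represented by $\Gamma_{m,k}$. The main obstacle I anticipate is the stabilization claim yielding the uniform $m'$ — making precise, using the (non-reduced, non-smooth in general, but here isoclinic hence better-behaved) structure of $\underline\Aut(\mathbb X)$, that finitely many truncations suffice to detect liftability of automorphisms over an arbitrary base $\mathscr T$; everything else is formal manipulation with étale sheaves and descent.
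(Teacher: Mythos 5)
Your strategy for the representability statement is essentially the paper's: everything is reduced to \cite[Corollary 4.1.10]{caraiani-scholze} for the isoclinic $\mathbb{X}$, after which $\mathscr{F}_{\Gamma_m}$ is identified with a constant (finite \'etale) sheaf by fppf descent, exactly as in the paper. One inaccuracy to fix: that corollary concerns \emph{endomorphisms} of $\mathbb{X}_{\mathscr{T}}[p^m]$ lifting to all levels, with no invertibility and no compatibility with the extra structures. The paper accordingly introduces the auxiliary functor $\mathscr{F}_{H_m}$ of such endomorphisms, gets from the citation that it is finite \'etale (hence constant over $k$), and then identifies $\mathscr{F}_{\Gamma_m}$ with the clopen subfunctor corresponding to the subset $\Gamma_m\subset H_m$. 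Your surjectivity step (``$\phi$ comes locally from $\Gamma_m$'') should be routed the same way: locally $\phi$ comes from $H_m$, and one checks that the locally constant value lies in $\Gamma_m$, since invertibility, commutation with the extra structures, and liftability to automorphisms can be checked on geometric points and descend along fppf covers. This is harmless, but it is not literally what \cite[Corollary 4.1.10]{caraiani-scholze} says.

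The genuine gap is in your argument for the final clause (one $m'$ suffices). You claim the images of $\Gamma_{m'}\to\Gamma_m$ form a decreasing chain of subgroups of ``the finite group $\Aut(\mathbb{X}[p^m])$'', and then invoke smoothness of the relevant automorphism group scheme to kill lifting obstructions past the stabilization point. Both inputs fail: the automorphism group of a truncated $p$-divisible group over $k$ is the group of $k$-points of a finite-type group scheme that is typically of positive dimension (already for the supersingular $\mathrm{BT}_1$ of height $2$ one has $\Aut \cong \{a+bF : a\in\F_{p^2}^\times,\ b\in k\}$, which is infinite), so there is no finite-group stabilization to appeal to; and automorphism schemes of truncations are very far from smooth, so the obstruction-theoretic step has no support — the only \'etale objects in sight are the functors of endomorphisms lifting to \emph{all} levels, which is the conclusion rather than an input. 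The correct mechanism, and the one the paper intends with ``everything is finitely presented'', is: for each $m'$ the condition of lifting fppf-locally to level $m'$ defines the image of the finitely presented affine scheme $\underline{\End}(\mathbb{X}[p^{m'}])\to\underline{\End}(\mathbb{X}[p^m])$ (cut out by the image subalgebra, whose formation commutes with flat base change, as in the $\overline{A}_m$ argument in the proof of Theorem~\ref{thm:construction of Igusa covers}); these images form a descending chain of closed subschemes of a Noetherian scheme, hence stabilize in $m'$, and the stable value is the finite \'etale scheme furnished by \cite[Corollary 4.1.10]{caraiani-scholze}. With that replacement the rest of your argument goes through.
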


\begin{proof} We consider the functor $\mathscr{F}_{H_m}$ sending a $\Spec k$-scheme $\mathscr{T}$ 
to the set of $\mathscr{T}$-endomorphisms of $\mathrm{X}_{\mathscr{T}}[p^m]$ which lift (fppf locally on $\mathscr{T}$) 
to $\mathscr{T}$-endomorphisms of $\mathrm{X}_{\mathscr{T}}[p^{m'}]$ for all $m'\geq m$ (we do not
require these to be invertible or to commute with the extra structures). Then $\mathscr{F}_{\Gamma_m}$ is a subfunctor of $\mathscr{F}_{H_m}$.
It is enough to see that $\mathscr{F}_{H_m}$ is representable by a finite \'etale scheme $H_{m,k}$. 
This is proved in the second part of~\cite[Corollary 4.1.10]{caraiani-scholze}, which applies because $\mathbb{X}$ is isoclinic.

To see that one $m'$ is enough, use that everything is finitely presented.
\end{proof}

Let $X/k$ be a seminormal scheme. Assume that $\mathscr{G}/X$ is a $p$-divisible group equipped with extra structures of EL type or of PEL type as in \cite[Section 4.2]{caraiani-scholze}, such that for any point $x\in X$ and for any geometric point $\bar{x}$ above $x$ there exist isomorphisms 
\[
\mathscr{G}\times_{X} k(\bar{x}) \toisom \mathbb{X}\times_{k}k(\bar{x})
\]
compatible with the extra structures. The following result is proved in~\cite{mantovan} in the
special case when $X$ is an Oort central leaf in a PEL-type Shimura variety, but the result holds more generally and we will need to appeal to the general result repeatedly later on. 

\begin{thm}\label{thm:construction of Igusa covers} Let $m\in \Z_{\geq 1}$. Consider the functor from $X$-schemes
to sets which sends a scheme $\mathscr{T}/X$ to the set of isomorphisms
\[
\rho_m: \mathscr{G}[p^m]\times_{X}\mathscr{T} \toisom \mathbb{X}[p^m]\times_{k} \mathscr{T}
\]
compatible with the extra structures and which lift (fppf locally on $\mathscr{T}$)
to isomorphisms $\rho_{m'}$ for all $m'\geq m$ (or just for one sufficiently large $m'$ as in the previous proposition). This functor is representable by a $\Gamma_m$-torsor
\[
J_m(\mathscr{G}/X)\to X. 
\]
\end{thm}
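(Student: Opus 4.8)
The plan is to reduce the statement to the already-cited representability result, Proposition~\ref{prop:finite etale gp scheme}, by localizing on $X$ and descending. First I would recall that the functor in question — call it $\mathscr{F}_m$, sending $\mathscr{T}/X$ to the set of $\rho_m\colon \mathscr{G}[p^m]\times_X\mathscr{T}\isomap \mathbb{X}[p^m]\times_k\mathscr{T}$ compatible with the extra structures and liftable (fppf-locally on $\mathscr T$) to some sufficiently large level $m'$ — is manifestly a sheaf for the fppf topology on $X$-schemes, and that if it is nonempty then a choice of base point identifies it with a torsor under its automorphism sheaf. The automorphism sheaf is precisely the functor $\mathscr F_{\Gamma_m}$ of Proposition~\ref{prop:finite etale gp scheme} (pulled back to $X$), which is represented by the finite \'etale group scheme $\Gamma_{m,k}\times_k X$; the uniformity of the single auxiliary level $m'$ provided there is what makes the automorphism sheaf the same on all of $X$. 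So the entire content is showing that $\mathscr F_m$ is fppf-locally (hence, since $\Gamma_m$ is finite, \'etale-locally) nonempty, and then that a torsor under a finite group scheme over $X$ is representable.

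The second point is standard: a torsor under a finite (locally free) group scheme is an affine morphism, hence representable, by fppf descent of affine schemes (or even \'etale descent, since $\Gamma_{m,k}$ is finite \'etale, so the torsor is finite \'etale over $X$). The first point is where the hypotheses on $X$ and $\mathscr G$ enter. By hypothesis every geometric fiber $\mathscr G\times_X k(\bar x)$ is isomorphic to $\mathbb X\times_k k(\bar x)$ compatibly with the structures, so $\mathscr F_m$ is nonempty over each $k(\bar x)$; the task is to spread such an isomorphism out to an \'etale (or fppf) neighborhood of $\bar x$ in $X$. This is exactly the situation treated by Mantovan~\cite{mantovan} in the case $X$ is an Oort central leaf, and the point is that her argument is local and only uses that $\mathscr G$ is everywhere fiberwise geometrically isomorphic to the constant $\mathbb X$ together with seminormality of $X$; I would reproduce it in this generality. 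Concretely: over the henselization (or a suitable \'etale neighborhood) one can choose an isomorphism $\rho_{m'}$ at the closed point for $m'$ large, and then, using that truncated $p$-divisible groups with extra structure are of finite presentation and that $\underline{\mathrm{Isom}}(\mathscr G[p^{m'}],\mathbb X[p^{m'}])$ (with structures) is a scheme of finite presentation over $X$ that is smooth in the relevant sense — or at least has a point after an \'etale cover because seminormality lets one extract the needed sections — one gets $\rho_{m'}$, hence $\rho_m$, over that neighborhood. The seminormality of $X$ is used exactly as in \cite{mantovan} to guarantee that the a priori fppf-local trivialization can be taken \'etale-local, equivalently that the torsor is reduced/seminormal-friendly.

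The main obstacle is this local nonemptiness/spreading-out step: one must check carefully that the only properties of Mantovan's setup that her proof uses are (a) $X$ seminormal, (b) $\mathscr G$ a $p$-divisible group with the relevant EL/PEL structure all of whose geometric fibers are isomorphic to those of the fixed $\mathbb X$, and (c) the uniform-$m'$ phenomenon of Proposition~\ref{prop:finite etale gp scheme} — and in particular that no global geometry of the central leaf (its being locally closed in a Shimura variety, its smoothness, Oort's foliation structure) is secretly invoked. Granting that, the torsor structure and representability are formal. I would therefore organize the proof as: (1) identify $\mathscr F_m$ with its automorphism-sheaf torsor via Proposition~\ref{prop:finite etale gp scheme}; (2) invoke (the proof of) \cite{mantovan} in the stated generality to get fppf-local, then \'etale-local, nonemptiness using seminormality; (3) conclude that $J_m(\mathscr G/X) := \mathscr F_m$ is representable by a finite \'etale $X$-scheme, which is a $\Gamma_m$-torsor by construction.
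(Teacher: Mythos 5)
Your reduction of the statement to ``the functor is a pseudo-torsor under $\Gamma_m$ (by Proposition~\ref{prop:finite etale gp scheme} and the uniform choice of $m'$), so everything comes down to local triviality, and a torsor under a finite \'etale group is representable'' is fine, but it is only the formal shell. The actual content --- local nonemptiness --- is exactly what you do not prove: you defer it to ``reproduce Mantovan's argument in this generality,'' which is circular here, since the point of the theorem is precisely that Mantovan's statement (proved for Oort central leaves inside a PEL Shimura variety) extends to an arbitrary seminormal base with fiberwise constant $\mathscr G$; the paper does not cite her proof but gives a self-contained argument. Concretely, the paper proceeds by: (i) descending from $X_{\perf}$ to $X$ using that $X\to X_{\perf}$ is a universal homeomorphism (so finite \'etale covers descend) and then using Swan's pointwise criterion for seminormal rings ($f\in R_{\perf}$ lies in $R$ iff $f(x)\in k(x)$ for all $x$) to descend the tautological isomorphism $\mathscr G\cong\mathbb X$; (ii) over a perfect base, reducing to strictly henselian local $R$ by showing the image algebras $\overline A_m\subset A_{m'}$ of the finitely presented Isom-schemes are finite flat, a condition checkable locally; (iii) over a strictly henselian perfect local ring, producing first an isogeny $\mathscr G\to\mathbb X\times R$ via the rigidity result \cite[Lemma 4.3.15]{caraiani-scholze}, then upgrading it to an isomorphism using Lemma~\ref{lem:finite subset of RZ} (the locus in the truncated Rapoport--Zink space where the universal group is isomorphic to $\mathbb X$ is a finite set of reduced points) together with connectedness of $\Spec R$, the compatibility with the extra structures then being automatic. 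None of these steps appears in your proposal.

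Moreover, the mechanism you sketch for spreading out a fiberwise isomorphism would not work as stated: the scheme $\underline{\mathrm{Isom}}(\mathscr G[p^{m'}],\mathbb X[p^{m'}])$ with extra structures is \emph{not} smooth over $X$ (truncated Barsotti--Tate groups have non-reduced automorphism schemes in general), so a point over the residue field does not lift over a henselization by any formal smoothness argument; the liftability-to-all-levels condition is precisely what must be controlled, and the paper does this through the finite flat closures $\overline A_m$ and the full trivialization of $\mathscr G$ over strictly henselian perfect local rings. Your account of seminormality is also misplaced: it is not used to pass from fppf-local to \'etale-local triviality (for a torsor under a constant finite group this is automatic), but to descend the moduli data from the perfection $R_{\perf}$ back to $R$ via Swan's theorem. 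So while your outline correctly isolates where the difficulty lies, the proof of that difficulty is missing.
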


\begin{proof} We may assume that $X=\Spec R$ is affine. We first make a reduction to the case that $R$ is perfect, so assume that the result holds true when $R$ is perfect. Our goal is to show that after some faithfully flat cover, $\mathscr G$ is isomorphic to $\mathbb X$; this clearly implies the theorem by faithfully flat descent. Assuming the result for $X_\perf$, we get for any $m\geq 1$ a $\Gamma_m$-torsor $J_m(\mathscr G\times_X X_\perf/X_\perf)\to X_\perf$. As $X\to X_\perf$ is a universal homeomorphism, this $\Gamma_m$-torsor descends uniquely to a $\Gamma_m$-torsor $J_m'\to X$. Let $J' = \varprojlim_m J_m'$, which is faithfully flat over $X$. We claim that the isomorphism $\mathscr G\cong \mathbb X$ that exists tautologically over $J'\times_X X_\perf = \varprojlim_m J_m(\mathscr G\times_X X_\perf/X_\perf)$ is in fact defined over $J'$. Note that this isomorphism is given by a series of isomorphisms between finite locally free group schemes $\mathscr G[p^m]\cong \mathbb X[p^m]$, so this amounts to a countable system of elements of $R_\perf$ being elements of $R$. As $R$ is seminormal, an element $f\in R_\perf$ lies in $R$ if and only if for all $x\in X=\Spec R$, the element $f(x)\in k(x)_\perf$ lies in $k(x)$, cf.~\cite[Theorem 2.6]{swan-seminormal}. Thus, we can reduce to the case that $R$ is a field. In that case $R\to R_\perf$ is faithfully flat, so the theorem follows by faithfully flat descent for $R$; and then we also see the claim, as necessarily $J_m(\mathscr G/X) = J_m'$.

We therefore assume that $X$ is perfect. We further want to reduce to the case that $X$ is strictly henselian local. For this, consider for any $m$ the scheme $J_m''(\mathscr G/X)\to X$ parametrizing isomorphisms $\mathscr G[p^m]\cong \mathbb X[p^m]$ compatible with extra structures; note that this functor is representable by a finitely presented affine scheme 
over $X$. (For this, we think of group scheme homomorphisms $\mathscr{G}[p^m]\times_{X}\mathscr{T}\to \mathbb{X}[p^m]\times_{k} \mathscr{T}$ 
as maps of Hopf algebras over $\mathscr{T}$.) Let $J_m''(\mathscr G/X) = \Spec A_m$. For $m'\geq m$, we get maps $A_m\to A_{m'}$; let $\overline{A}_m$ denote the image of $A_m$ in $A_{m'}$ where $m'$ is chosen large enough as in the previous proposition. Then the formation of $\overline{A}_m$ commutes with flat base change, and $\overline{A}_m$ is still a finitely presented $R$-algebra, as it only depends on $\mathscr G[p^{m'}]$ which is finitely presented. It is enough to show that $\overline{A}_m$ is faithfully finite flat over $R$: Indeed, if this holds for all $m$, then $\varinjlim_m A_m=\varinjlim_m \overline{A}_m$ is faithfully flat over $R$, and we can by flat base change reduce to the case that $\mathscr{G}\cong \mathbb X$, in which case the claim is clear. But checking that $\overline{A}_m$ is faithfully finite flat can be done over the strictly henselian local rings of $R$, so from now on we assume $X=\Spec R$, with $R$ a strictly henselian local ring
that is also perfect. It is then enough to find an isomorphism $\mathscr{G}\toisom \mathbb{X}\times_{k} R$ compatible with extra structures.

From~\cite[Lemma 4.3.15]{caraiani-scholze}, we see that we can construct an isogeny $\mathscr{G}\to \mathbb{X}\times_{k} R$, a priori not compatible with the extra structures. This induces a morphism from $\Spec R$ to the reduced special fiber $\overline{\cM}^{0,d}_{\mathbb{X}}$ of a truncated Rapoport--Zink space for $\mathbb{X}$. Lemma~\ref{lem:finite subset of RZ} below shows that the subset of points of $\overline{\cM}^{0,d}_{\mathbb{X}}$ over which the $p$-divisible group parametrised by the Rapoport--Zink space is isomorphic to $\mathbb{X}$ is finite. As $\Spec R$ is connected, the map from $\Spec R$ to the Rapoport--Zink space factors through one such (closed) point. Modifying the isogeny, we can then find an isomorphism $\rho: \mathscr{G}\toisom \mathbb{X}\times_{k} R$, a priori only compatible with the extra structures over the closed point. But \cite[Lemma 4.3.15]{caraiani-scholze} ensures that it must automatically be compatible with the extra structure over all of $\Spec R$.
\end{proof}

We used the following general lemma: Let $k$ be an algebraically closed field of characteristic $p$ and let $\mathbb{Y}/k$ be a $p$-divisible group. Let $\cM_{\mathbb{Y}}$ be the Rapoport--Zink space for $\mathbb{Y}$ parametrizing quasi-isogenies from $\mathbb Y$ to varying $p$-divisible groups. Consider the truncated Rapoport--Zink space $\cM_{\mathbb{Y}}^{0,d}$, which parametrises isogenies with kernel contained in the $p^d$-torsion subgroup. Let $\overline{\cM}^{0,d}_{\mathbb{Y}}$ be its reduced special fiber. The following lemma is a slight generalisation of~\cite[Lemma 3.4]{mantovan-thesis},
which applies to the special case of an isoclinic, completely slope divisible $p$-divisible group. 

\begin{lem}\label{lem:finite subset of RZ} Let $\mathscr{H}$ be the universal
$p$-divisible group over $\overline{\cM}^{0,d}_{\mathbb{Y}}$. The subset 
\[
Z :=\{x\in \overline{\cM}^{0,d}_{\mathbb{Y}}\mid 
\mathscr{H}\times k(\bar{x})\cong \mathbb{Y}\times_{k} k(\bar{x}) \} \subseteq \overline{\cM}^{0,d}_{\mathbb{Y}}
\]
consists of finitely many reduced points.
\end{lem}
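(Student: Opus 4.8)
The plan is to identify $Z$, as a set, with (a subset of) the orbit space $\Aut(\mathbb{Y})\backslash S_d$, where $S_d$ is the set of self-isogenies of $\mathbb{Y}$ whose kernel is contained in $\mathbb{Y}[p^d]$, and then to deduce finiteness from the fact that $S_d$ is compact while $\Aut(\mathbb{Y})$ is open in $\End(\mathbb{Y})[1/p]^{\times}$. This is exactly where the hypothesis of \cite{mantovan-thesis} that $\mathbb{Y}$ be isoclinic and completely slope divisible is not needed: the only input is that $\End(\mathbb{Y})$ is a finitely generated $\Z_p$-module, which holds for an arbitrary $p$-divisible group.

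First I would make the moduli interpretation explicit. Since $\overline{\cM}^{0,d}_{\mathbb{Y}}$ represents, on reduced test schemes, the functor of isogenies $\mathbb{Y}\to H$ with kernel inside $\mathbb{Y}[p^d]$ (up to isomorphism of $H$), a point $x$ with residue field $L\supseteq k$ is such an isogeny $\rho_x\colon\mathbb{Y}_L\to\mathscr{H}_x$. Suppose $x\in Z$: fix a geometric point $\bar x$ above $x$, with value in $\bar L$, and an isomorphism $\iota\colon\mathscr{H}_{\bar x}\toisom\mathbb{Y}_{\bar L}$; then $\delta_x:=\iota\circ(\rho_x)_{\bar L}$ is a self-isogeny of $\mathbb{Y}_{\bar L}$ with $\ker\delta_x\subseteq\mathbb{Y}_{\bar L}[p^d]$. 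As the endomorphism ring of a $p$-divisible group is unchanged under extension of algebraically closed base fields and $k$ is algebraically closed, $\End(\mathbb{Y}_{\bar L})=\End(\mathbb{Y})$, so $\delta_x\in\End(\mathbb{Y})$; in particular it is already defined over $k$ and determines a $k$-point $y_{\delta_x}$ of $\overline{\cM}^{0,d}_{\mathbb{Y}}$ (classifying the isogeny $\delta_x\colon\mathbb{Y}\to\mathbb{Y}$). The object $(\mathscr{H}_{\bar x},(\rho_x)_{\bar L})\cong(\mathbb{Y}_{\bar L},(\delta_x)_{\bar L})$ is the base change along $k\to\bar L$ of $(\mathbb{Y},\delta_x)$, so the morphism $\bar x$ coincides with the base change of $y_{\delta_x}$; since $\bar x$ also has image $\{x\}$, we get $x=y_{\delta_x}$. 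Thus every point of $Z$ is a closed ($k$-)point, $Z=\{y_\delta : \delta\in S_d\}$, and $y_\delta=y_{\delta'}$ if and only if $\delta'\in\Aut(\mathbb{Y})\delta$; this gives the desired bijection $Z\cong\Aut(\mathbb{Y})\backslash S_d$, and also explains why $Z$ can have no non-closed points.

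It remains to show $\Aut(\mathbb{Y})\backslash S_d$ is finite. Set $B:=\End(\mathbb{Y})[1/p]$. The condition $\ker\delta\subseteq\mathbb{Y}[p^d]$ says precisely that $[p^d]$ factors through $\delta$, i.e. that $p^d\delta^{-1}\in\End(\mathbb{Y})$; hence $S_d=\{\delta\in\End(\mathbb{Y}) : \delta\in B^{\times},\ p^d\delta^{-1}\in\End(\mathbb{Y})\}$. Now $\End(\mathbb{Y})$ is a finitely generated $\Z_p$-module --- for instance because $\End(\mathbb{Y})/p^n\hookrightarrow\End(\mathbb{Y}[p^n])$ for large $n$, the right-hand side is finite, and $\End(\mathbb{Y})$ is $p$-adically separated and complete --- hence compact, and a $\Z_p$-lattice in $B$; consequently $\Aut(\mathbb{Y})=\End(\mathbb{Y})^{\times}$ is an open (compact) subgroup of $B^{\times}$. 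A short limit argument shows $S_d$ is closed in the compact set $\End(\mathbb{Y})$: if $\delta_n\to\delta$ in $S_d$, extract a limit $\eta\in\End(\mathbb{Y})$ of a subsequence of $(p^d\delta_n^{-1})$; then $\delta\eta=p^d$, and since $p^d$ is a non-zero-divisor in $B$ this forces $\delta\in B^{\times}$ and $\eta=p^d\delta^{-1}\in\End(\mathbb{Y})$. So $S_d$ is compact, it is a union of the pairwise disjoint open subsets $\Aut(\mathbb{Y})\delta$, and by compactness only finitely many of them occur. Therefore $Z$ is finite, and each of its points, with its reduced structure, is a copy of $\Spec k$.

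The step I expect to require the most care is the second paragraph: pinning down the moduli-theoretic identification and, in particular, the observation that a point of $Z$ is automatically closed because a $p$-divisible group that is geometrically isomorphic to $\mathbb{Y}$ is already classified by a $k$-point. The remaining ingredient there --- that $\End(\mathbb{Y})$ is a finite $\Z_p$-module, so that the relevant set of self-isogenies is compact and $\Aut(\mathbb{Y})$ is open --- is standard, and the compactness bookkeeping in the last paragraph is then routine.
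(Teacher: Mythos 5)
Your proof is correct, and it shares with the paper's argument the one essential input: the rigidity statement $\End(\mathbb{Y}_{k'})=\End(\mathbb{Y})$ for algebraically closed $k'\supseteq k$, which you use (as the paper does) to show that every point of $Z$ comes from a self-isogeny already defined over $k$ and is therefore a $k$-rational closed point. Where you diverge is the finiteness mechanism. The paper first quotes Oort's result that $Z$ is constructible and then concludes: a constructible subset all of whose points are $k$-valued must be finite. You instead identify $Z$ set-theoretically with $\Aut(\mathbb{Y})\backslash S_d$, where $S_d\subset\End(\mathbb{Y})$ is the set of self-isogenies with kernel in $\mathbb{Y}[p^d]$, and count orbits: $S_d$ is closed in the compact $\Z_p$-module $\End(\mathbb{Y})$, each orbit $\Aut(\mathbb{Y})\delta$ is open, so there are finitely many. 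This is a genuinely different (and arguably more self-contained) route: it avoids the appeal to Oort's constructibility theorem and works for an arbitrary $p$-divisible group with no isoclinic or slope-divisibility hypothesis, at the price of having to set up the orbit identification and the compactness bookkeeping that the paper's one-line "constructible with only closed points" argument sidesteps.

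One local flaw: your parenthetical justification that $\End(\mathbb{Y})$ is a finitely generated $\Z_p$-module does not work as stated. While it is true that $\End(\mathbb{Y})/p^n\hookrightarrow\End(\mathbb{Y}[p^n])$, the right-hand side is in general \emph{not} finite: already $\End(\alpha_p)\cong k$, so whenever $\mathbb{Y}$ has a biconnected part the truncated endomorphism rings are infinite, and the proposed deduction collapses. The fact you need is nevertheless standard and true: $\End^0(\mathbb{Y})$ is a finite-dimensional semisimple $\Q_p$-algebra (Dieudonn\'e--Manin), and $\End(\mathbb{Y})$ is the subring of elements preserving the Dieudonn\'e lattice, hence a bounded, $p$-adically closed $\Z_p$-submodule containing a basis, i.e.\ a $\Z_p$-order; in particular it is free of finite rank over $\Z_p$ and compact. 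With that substitution your compactness argument, and hence the whole proof, goes through.
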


\begin{proof} We know that the subset $Z$ is constructible by~\cite[Corollary 2.5]{oort}. To show that it is finite, it is enough to see that all points are defined over $k$. Consider the scheme $\tilde{Z}\to \overline{\cM}^{0,d}_{\mathbb{Y}}$ parametrizing isomorphisms $\mathscr{H}\cong \mathbb Y$; this surjects onto $Z$. Then for any algebraically closed field $k'$ over $k$, $\tilde{Z}(k')$ is the set of self-isogenies $\mathbb Y_{k'}\to \mathbb Y_{k'}$ of degree $\leq d$. But $\mathrm{End}(\mathbb Y)=\mathrm{End}(\mathbb Y_{k'})$, so we see that $\tilde{Z}(k') = \tilde{Z}(k)$, and so it follows that all points of $Z$ are $k$-valued, and hence $Z$ is finite.
\end{proof}

Note that in the previous lemma, we did not ask that $\mathbb Y$ be isoclinic. In fact, we will now use it to prove a variant of Theorem~\ref{thm:construction of Igusa covers} for non-isoclinic groups.

Let $\mathbb X/k$ be a $p$-divisible group with extra structure of EL or PEL type. Let $\Gamma = \varprojlim_m \Gamma_m = \mathrm{Aut}(\mathbb X)$ be the profinite group of automorphisms of $\mathbb X/k$ compatible with the extra structure. Let $X/k$ be a perfect scheme, and assume that $\mathscr G$ is a $p$-divisible group over $X$ with the same kind of extra structure such that for all geometric points $\bar x$ of $X$, there is an isomorphism $\mathscr G\times k(\bar x)\cong \mathbb X\times_k k(\bar x)$ compatible with extra structures.

\begin{prop}\label{perfect Igusa} The functor on perfect $X$-schemes $\mathscr T$ parametrizing isomorphisms $\mathscr G\times_X \mathscr T\cong \mathbb X\times_k \mathscr T$ is representable by a $\Gamma$-torsor $J(\mathscr G/X)\to X$.
\end{prop}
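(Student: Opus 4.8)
The plan is to reduce Proposition~\ref{perfect Igusa} to Theorem~\ref{thm:construction of Igusa covers} by building up the torsor $J(\mathscr G/X)$ as an inverse limit of the finite-level torsors $J_m(\mathscr G/X)\to X$, once those have been constructed. The subtlety is that Theorem~\ref{thm:construction of Igusa covers} as stated concerns an \emph{isoclinic} $\mathbb X$, whereas here $\mathbb X$ is arbitrary. So the first step is to observe that the key input Lemma~\ref{lem:finite subset of RZ} was deliberately proved without an isoclinicity hypothesis, so that the whole argument of Theorem~\ref{thm:construction of Igusa covers} — the reduction to $R$ seminormal, then perfect, then strictly henselian local, and finally the use of the truncated Rapoport--Zink space to pin down the isogeny class at a single closed point and upgrade an isogeny to an isomorphism via~\cite[Lemma 4.3.15]{caraiani-scholze} — goes through verbatim for general $\mathbb X$, \emph{provided} one knows that the relevant functor at finite level is still representable by a finite \'etale scheme. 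This last point is exactly Proposition~\ref{prop:finite etale gp scheme}, whose proof again only used that $\mathbb X$ is isoclinic through~\cite[Corollary 4.1.10]{caraiani-scholze}; but one can instead appeal to the splitting $\mathbb X\cong \mathbb X^\mu\oplus\mathbb X^{(0,1)}\oplus\mathbb X^\et$ (or more generally the decomposition used in Proposition~\ref{classification pdivisible}) to reduce the representability of $\mathscr F_{H_m}$ to its isoclinic (indeed, multiplicative, biconnected, and \'etale) constituents, where~\cite[Corollary 4.1.10]{caraiani-scholze} applies.

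Concretely, I would argue as follows. First, decompose $\mathbb X$ into its isotypic (slope) pieces; the automorphism functor $\mathscr F_{\Gamma_m}$ on $\Spec k$-schemes is a closed subfunctor of the endomorphism functor $\mathscr F_{H_m}$, and $\mathscr F_{H_m}$ in turn is built from $\Hom$-functors between the slope pieces. Each such $\Hom$-functor, restricted to homomorphisms that lift through all higher truncations, is representable by a finite \'etale scheme over $k$ by~\cite[Corollary 4.1.10]{caraiani-scholze} applied to the isoclinic pieces (homomorphisms between $p$-divisible groups of different slopes that lift to all levels are forced to be zero, so no new phenomenon appears off the diagonal). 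Hence $\mathscr F_{\Gamma_m}$ is representable by a finite \'etale group scheme $\Gamma_{m,k}$, and $\Gamma=\varprojlim_m\Gamma_m$. With this in hand, Theorem~\ref{thm:construction of Igusa covers} — whose proof I would re-run, noting that the only place isoclinicity was used is Proposition~\ref{prop:finite etale gp scheme}, now available in general — produces for each $m$ a $\Gamma_m$-torsor $J_m(\mathscr G/X)\to X$ over the perfect scheme $X$. The transition maps $J_{m'}(\mathscr G/X)\to J_m(\mathscr G/X)$ for $m'\ge m$ are compatible with the maps $\Gamma_{m'}\to\Gamma_m$, essentially by construction.

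Finally, set $J(\mathscr G/X):=\varprojlim_m J_m(\mathscr G/X)$. Since each $J_m(\mathscr G/X)\to X$ is finite \'etale (a torsor under a finite \'etale group scheme) and the transition maps are affine, this inverse limit is representable by a scheme affine over $X$, and it is a $\Gamma$-torsor: locally on $X$ where $\mathscr G\cong\mathbb X$ (such a trivialization exists fppf-locally, indeed after a pro-\'etale cover, by Theorem~\ref{thm:construction of Igusa covers} applied at each finite level) the limit becomes $\varprojlim_m \Gamma_m\times X=\Gamma\times X$. It remains to check that $J(\mathscr G/X)$ represents the stated functor on perfect $X$-schemes: a map $\mathscr T\to J(\mathscr G/X)$ over $X$ is by definition a compatible system of maps to the $J_m$, i.e.\ a compatible system of isomorphisms $\mathscr G[p^m]\times_X\mathscr T\cong\mathbb X[p^m]\times_k\mathscr T$ each of which lifts through all higher truncations; such a system is precisely an isomorphism of $p$-divisible groups $\mathscr G\times_X\mathscr T\cong\mathbb X\times_k\mathscr T$ compatible with the extra structures, because a $p$-divisible group is the colimit of its truncations and a homomorphism is determined by its restrictions to the $p^m$-torsion. (The liftability conditions are automatic for a map coming from an actual isomorphism, so no information is lost.)

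I expect the main obstacle to be purely expository rather than mathematical: making precise, without redoing the whole of Section~4 of~\cite{caraiani-scholze}, that the representability statement Proposition~\ref{prop:finite etale gp scheme} survives the passage from isoclinic to general $\mathbb X$ via the slope decomposition, and that Theorem~\ref{thm:construction of Igusa covers} uses isoclinicity \emph{only} through that proposition. Everything after that — forming the inverse limit and identifying it with the moduli functor — is a formal manipulation with pro-(finite \'etale) schemes over the perfect base $X$.
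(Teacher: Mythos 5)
The paper's proof of this proposition is quite different from yours, and your approach has a genuine gap in its first step.

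The gap: you claim that for general (non-isoclinic) $\mathbb X$, Proposition~\ref{prop:finite etale gp scheme} still holds --- i.e.\ that $\mathscr F_{\Gamma_m}$ is representable by a \emph{finite \'etale} group scheme over $k$ --- by reducing to the slope pieces, since ``homomorphisms between $p$-divisible groups of different slopes that lift to all levels are forced to be zero, so no new phenomenon appears off the diagonal.'' That off-diagonal claim is true on $k$-points, but not as a statement about the group \emph{functor} on all $k$-schemes. If it were, then $\underline{\mathrm{Aut}}(\mathbb X)$ would split as a product of the $\underline{\mathrm{Aut}}(\mathbb X_i)$ with $\mathbb X_i$ isoclinic, and would therefore be pro-(finite \'etale), hence reduced. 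But the paper explicitly records (just before the proposition, citing \cite[Corollary~4.1.11]{caraiani-scholze}) that $\underline{\mathrm{Aut}}(\mathbb X)$ is \emph{highly non-reduced} precisely because $\mathbb X$ is not isoclinic; the non-reduced structure lives exactly in those off-diagonal $\Hom$ group schemes, which have trivial $k$-points but non-trivial nilpotent thickenings. Consequently $\mathscr F_{\Gamma_m}$ is \emph{not} finite \'etale for general $\mathbb X$, the finite-level objects $J_m(\mathscr G/X)\to X$ cannot be expected to be $\Gamma_m$-torsors on all $X$-schemes, and Theorem~\ref{thm:construction of Igusa covers} (stated for isoclinic $\mathbb X$ on seminormal bases) does not extend the way you hope. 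You can't then form $\varprojlim_m J_m$ as a $\Gamma$-torsor. The restriction to \emph{perfect} test schemes in the proposition is precisely what is needed to kill this infinitesimal fuzz; it is not a harmless cosmetic restriction that lets you import the isoclinic finite-level machinery unchanged.

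By contrast, the paper's proof never goes through finite-level torsors at all. It works directly at infinite level: the functor is evidently representable by a scheme affine over $X$ (as a limit of the finitely presented schemes of isomorphisms of truncations, with no lifting conditions imposed), and one only needs to check faithful flatness. Faithful flatness is checked over strictly henselian \emph{perfect} local rings $R$, where two things happen simultaneously: the argument of Theorem~\ref{thm:construction of Igusa covers} (using the Rapoport--Zink space and Lemma~\ref{lem:finite subset of RZ}) shows $\mathscr G_R\cong\mathbb X_R$ is already constant, and \cite[Lemma~4.3.15]{caraiani-scholze} shows that all automorphisms of $\mathbb X_R$ over such a ring are constant, i.e.\ come from $\Gamma$. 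So over $R$ the scheme is just $\Gamma\times\Spec R$, hence faithfully flat, and one concludes by fppf descent to the constant case. If you want to salvage a finite-level picture, the paper's route is to pass to a completely slope divisible $\mathbb X$ in the isogeny class (where finite-level Igusa varieties exist) and compare after perfection, but even that is not needed for this particular statement.
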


\begin{proof} First one verifies the assertion when $\mathscr G$ is constant, for which one uses that if $R$ is a strictly henselian perfect ring then all automorphisms of $\mathbb X_R$ are constant, cf.~\cite[Lemma 4.3.15]{caraiani-scholze}. In general, the functor is evidently representable by a scheme affine over $X$; it is enough to show that it is faithfully flat, as one can then by faithfully flat descent reduce to the case that $\mathscr G$ is constant. Now faithful flatness can be checked on strictly henselian local rings, so we can assume that $X=\Spec R$ where $R$ is a strictly henselian perfect ring. In this case, the argument from Theorem~\ref{thm:construction of Igusa covers} shows that $\mathscr G$ is already constant, so the result follows.
\end{proof}

A variant is the following. Let $\underline{\mathrm{Aut}}(\mathbb X)$ denote the group scheme of automorphisms of $\mathbb X$ compatible with extra structures. This is in general highly non-reduced, with the non-reduced structure related to the failure of $\mathbb{X}$ to be isoclinic (see~\cite[Corollary 4.1.11]{caraiani-scholze}). Let $X/k$ be a regular scheme, and assume as above that $\mathscr G$ is a $p$-divisible group over $X$ with the same kind of extra structure such that for all geometric points $\bar x$ of $X$, there is an isomorphism $\mathscr G\times k(\bar x)\cong \mathbb X\times_k k(\bar x)$ compatible with extra structures.

\begin{prop}\label{perfect Igusa two} The functor on all $X$-schemes $\mathscr T$ parametrizing isomorphisms $\mathscr G\times_X \mathscr T\cong \mathbb X\times_k \mathscr T$ is representable by an $\underline{\mathrm{Aut}}(\mathbb X)$-torsor over $X$.
\end{prop}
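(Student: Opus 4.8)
The plan is to bootstrap from Proposition~\ref{perfect Igusa}: we already know the functor becomes a $\Gamma = \Gamma_{\mathbb X}$-torsor after pulling back to $X_\perf$, and since $X$ is regular, in particular seminormal (indeed reduced), the argument in the proof of Theorem~\ref{thm:construction of Igusa covers} shows this $\Gamma$-torsor already descends to $X$, so the functor restricted to perfect $X$-schemes is represented by a $\Gamma$-torsor $J(\mathscr G/X) \to X$. The point is now to understand the functor on \emph{all} $X$-schemes, not just perfect ones, and to see that the resulting object is an $\underline{\Aut}(\mathbb X)$-torsor rather than merely a $\Gamma$-torsor; the extra non-reduced ``fuzz'' in $\underline{\Aut}(\mathbb X)$ must be matched by the infinitesimal deformations of an isomorphism $\mathscr G[p^m] \cong \mathbb X[p^m]$ along nilpotent thickenings.

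First I would reduce to the case $\mathscr G$ constant. The functor $\mathcal T \mapsto \Isom_{\mathcal T}(\mathscr G_{\mathcal T}, \mathbb X_{\mathcal T})$ is clearly representable by a scheme affine over $X$ (think of isomorphisms of $p$-divisible groups as compatible systems of morphisms of the finite locally free group schemes $\mathscr G[p^m] \to \mathbb X[p^m]$, hence maps of Hopf algebras, with invertibility and compatibility with extra structure being closed conditions). It therefore suffices to show this scheme is faithfully flat over $X$, since then by faithfully flat descent — using that $\underline{\Aut}(\mathbb X)$ is a group scheme over $k$ acting simply transitively on the fibers — we conclude it is an $\underline{\Aut}(\mathbb X)$-torsor, and faithful flatness can be checked on the strictly henselian local rings of $X$. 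So assume $X = \Spec R$ with $R$ a regular strictly henselian local ring, with residue field $k' \supseteq k$; we must produce an isomorphism $\mathscr G \cong \mathbb X \times_k R$ compatible with extra structure, and moreover control the scheme of all such.

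The existence of \emph{some} isomorphism is the analogue of what was done in the perfect case: by the same Rapoport--Zink-space argument (using \cite[Lemma 4.3.15]{caraiani-scholze} to produce an isogeny $\mathscr G \to \mathbb X \times_k R$ not a priori compatible with extra structure, then Lemma~\ref{lem:finite subset of RZ} to see that the induced map $\Spec R \to \overline{\cM}^{0,d}_{\mathbb X}$ hits a single reduced $k$-point since $\Spec R$ is connected, then modifying the isogeny to an isomorphism, and finally invoking \cite[Lemma 4.3.15]{caraiani-scholze} again to get compatibility with extra structure). The one subtlety is that \cite[Lemma 4.3.15]{caraiani-scholze} and Lemma~\ref{lem:finite subset of RZ} as quoted concern reduced/perfect rings, so here I would apply them to $R_{\mathrm{red}} = R/\sqrt 0$ (which, $R$ being regular, equals $R$ — so in fact $R$ is already reduced and we may even work over its perfection-free self, though the cleanest route is: reduce mod nilpotents, get an isomorphism there, then deform). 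Since $R$ is regular, hence an integral domain, and $\Spec R$ is connected, this goes through verbatim.

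Having one isomorphism $\rho_0\colon \mathscr G \cong \mathbb X \times_k R$, the whole $\Isom$-scheme becomes, by translation by $\rho_0$, identified with $\underline{\Aut}(\mathbb X) \times_k R$: indeed for any $R$-algebra $A$, an isomorphism $\mathscr G_A \cong \mathbb X_A$ is $\rho_0 \otimes A$ composed with an automorphism of $\mathbb X_A = (\mathbb X \times_k R) \otimes_R A = \mathbb X \times_k A$ compatible with extra structure, i.e.\ an $A$-point of $\underline{\Aut}(\mathbb X)$. Thus $\Isom(\mathscr G, \mathbb X) \cong \underline{\Aut}(\mathbb X)_R$ as schemes over $R$, which is in particular faithfully flat over $R$ (it is base-changed from the $k$-group scheme $\underline{\Aut}(\mathbb X)$, which is affine and — being a torsor over itself — faithfully flat over $k$, hence after base change faithfully flat over $R$), completing the descent. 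I expect the main obstacle to be purely bookkeeping: making sure the cited lemmas about isogenies and automorphisms (stated for perfect or reduced base) are legitimately applicable after passing to $R_{\mathrm{red}}=R$, and that ``compatible with extra structure'' is preserved throughout — but no genuinely new idea beyond Theorem~\ref{thm:construction of Igusa covers} and Proposition~\ref{perfect Igusa} is needed, since regularity (hence reducedness) of $X$ is exactly what lets us run the same descent while retaining the full non-reduced structure of $\underline{\Aut}(\mathbb X)$ in the fibers.
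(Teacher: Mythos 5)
Your proposal contains a gap at the crucial step, and it misses the input that actually makes the regularity hypothesis do work: Kunz's theorem. The paper's proof is very short and runs as follows: the functor is clearly a quasi-torsor under $\underline{\Aut}(\mathbb X)$, so it suffices to exhibit a section over a faithfully flat cover of $X$; Proposition~\ref{perfect Igusa} gives such a section over $J(\mathscr G\times_X X_\perf/X_\perf)$, which is faithfully flat over $X_\perf$ because it is a $\Gamma$-torsor, and $X_\perf\to X$ is faithfully flat precisely because $X$ is \emph{regular} — this is Kunz's theorem that Frobenius on a Noetherian $\F_p$-scheme is flat if and only if the scheme is regular. The composite is then a faithfully flat cover with a section, and we are done by descent. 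Your proposal never invokes this fact.

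The gap in your route is in the step where you try to produce an isomorphism $\mathscr G\cong\mathbb X\times_k R$ directly over a strictly henselian local regular ring $R$. The Rapoport--Zink argument — producing an isogeny via \cite[Lemma 4.3.15]{caraiani-scholze}, passing to $\overline{\cM}^{0,d}_{\mathbb X}$, invoking Lemma~\ref{lem:finite subset of RZ}, and then upgrading to an isomorphism compatible with extra structure — is, as used in the proofs of Theorem~\ref{thm:construction of Igusa covers} and Proposition~\ref{perfect Igusa}, stated and proved for $R$ \emph{perfect} (strictly henselian local). You acknowledge this but then assert the argument goes through because $R$ is reduced; this conflates ``reduced'' with ``perfect.'' A regular local $\F_p$-algebra of positive dimension is essentially never perfect, and the Dieudonn\'e-theoretic content of \cite[Lemma 4.3.15]{caraiani-scholze} is not available over non-perfect bases without additional work — avoiding exactly this is the point of the paper's detour through $X_\perf$. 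The alternative you sketch (``reduce mod nilpotents, get an isomorphism there, then deform'') also does not close the gap: $R$ is already reduced so there is nothing to reduce mod, and if instead one tried to deform an isomorphism from the closed point of $\Spec R$ upward, one would need the $\mathrm{Isom}$-scheme to be formally smooth over $R$, which fails because $\underline{\Aut}(\mathbb X)$ is highly non-reduced whenever $\mathbb X$ is not isoclinic. The reduction to ``the functor is representable by an affine $X$-scheme, and it suffices to check faithful flatness on strictly henselian local rings'' is fine; what is missing is any legitimate means of trivializing the $\mathrm{Isom}$-scheme over such a non-perfect $R$ that does not ultimately pass through $R\to R_\perf$ being faithfully flat — i.e.\ through Kunz.
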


\begin{proof} It is clearly a quasi-torsor, so it is enough to find a section over a faithfully flat cover. Proposition~\ref{perfect Igusa} gives a section over the faithfully flat $J(\mathscr G\times_X X_\perf/X_\perf)\to X_\perf$, and $X_\perf\to X$ is faithfully flat as $X$ is regular.
\end{proof}

\subsection{Igusa varieties}

Using the results of the previous subsection, we will now define the Igusa varieties related to the Shimura variety $\mathscr S_K$. We assume that $K=K(N)$ is a principal level with $N\geq 3$, and we fix a prime $p$ that is unramified in $F$ and prime to $N$. Let $k$ be an algebraically closed field of characteristic $p$, and let $\mathbb X/k$ be a $p$-divisible group with $G$-structure.

\begin{defn}
The central leaf\footnote{In~\cite{oort}, Oort calls these objects central leaves 
to distinguish them from so-called isogeny leaves. We will
only consider central leaves in this paper, so we will simply call these leaves.} corresponding to
$\mathbb{X}$ is the subset of $\mathscr{S}_{K,k} := \mathscr{S}_K\times_{\F_p}k$ where the fibers of the $p$-divisible group
$\cA[p^\infty]$ at all geometric points are isomorphic to $\mathbb{X}$:
\[
\mathscr{C}^{\mathbb X}:=\left\{x\in \mathscr{S}_{K,k}\mid \cA[p^\infty]\times k(\bar x)\cong \mathbb{X}\times_{k} k(\bar x)\right\}.
\]
(The isomorphisms are understood to be isomorphisms of $p$-divisible groups with $G$-structure.)
\end{defn}
\noindent By the argument in~\cite[Proposition 1]{mantovan}, this is a locally closed subset of $\mathscr{S}_{K,k}$ and becomes a smooth subscheme when endowed with the induced reduced structure.

The results of the previous section imply the following result.

\begin{cor}\label{cor:fpqc torsor} The scheme
\[
\mathfrak{Ig}^{\mathbb X}\to \mathscr C^{\mathbb X}\subset \mathscr S_{K,k}
\]
parametrizing isomorphisms $\cA[p^\infty]\cong \mathbb X$ of $p$-divisible groups with $G$-structure is representable by an $\underline{\mathrm{Aut}}(\mathbb X)$-torsor over $\mathscr C^{\mathbb X}$.
\end{cor}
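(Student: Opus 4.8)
The plan is to deduce the corollary from the general representability results of the previous subsection, specifically Proposition~\ref{perfect Igusa two}, after setting up the hypotheses correctly. First I would recall that $\mathscr C^{\mathbb X}\subset \mathscr S_{K,k}$, endowed with the reduced induced structure, is a smooth (in particular regular) scheme over $k$, by the cited result of Mantovan. Over $\mathscr C^{\mathbb X}$ we have the $p$-divisible group $\mathscr G := \cA[p^\infty]|_{\mathscr C^{\mathbb X}}$, equipped with its $\cO_F$-action and principal polarization coming from $(\iota,\lambda)$ on the universal abelian variety, i.e.~with $G$-structure in the sense of Definition~\ref{def:extra structure}(2) (here $p$ is locally nilpotent on $\mathscr C^{\mathbb X}$ since it lives in characteristic $p$). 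By the very definition of the central leaf, for every geometric point $\bar x$ of $\mathscr C^{\mathbb X}$ there is an isomorphism $\mathscr G\times k(\bar x)\cong \mathbb X\times_k k(\bar x)$ compatible with these extra structures.

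Next I would observe that the functor named in the corollary — sending an $X$-scheme $\mathscr T$ (for $X=\mathscr C^{\mathbb X}$) to the set of isomorphisms $\mathscr G\times_X\mathscr T\cong \mathbb X\times_k \mathscr T$ compatible with $G$-structure — is precisely the functor considered in Proposition~\ref{perfect Igusa two}, once one checks that ``$G$-structure'' (an $\cO_F$-action plus a principal polarization with the Rosati/complex-conjugation compatibility) is an instance of ``extra structure of PEL type'' in the sense used there and in~\cite[Section 4.2]{caraiani-scholze}. This is a matter of unwinding definitions: the $\cO_F$-action is the EL part, the polarization is the additional datum making it PEL, and the Rosati compatibility is exactly the condition that the polarization be compatible with the involution on $\cO_F$. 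With this identification in place, Proposition~\ref{perfect Igusa two} — applied with $X=\mathscr C^{\mathbb X}$, which is regular — immediately gives that the functor is representable by an $\underline{\mathrm{Aut}}(\mathbb X)$-torsor over $\mathscr C^{\mathbb X}$, which is the assertion.

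The only genuine content beyond bookkeeping is verifying the hypotheses of Proposition~\ref{perfect Igusa two}, namely (a) that $\mathscr C^{\mathbb X}$ is regular, and (b) that the fibrewise-isomorphism condition holds with the correct notion of compatibility. Point (a) is the smoothness statement quoted from~\cite{mantovan} (``by the argument in~\cite[Proposition 1]{mantovan}''); point (b) is the definition of $\mathscr C^{\mathbb X}$ as a set, where we took care to specify that the isomorphisms there are of $p$-divisible groups with $G$-structure. Thus I expect the main (mild) obstacle to be purely expository: making sure that the EL/PEL formalism of~\cite[Section 4.2]{caraiani-scholze}, under which Propositions~\ref{perfect Igusa} and~\ref{perfect Igusa two} were stated, specializes correctly to the unitary-similitude $G$-structure relevant here, and that the regularity hypothesis is the one satisfied by central leaves (as opposed to the stronger perfectness or seminormality hypotheses needed in Theorem~\ref{thm:construction of Igusa covers} and Proposition~\ref{perfect Igusa}). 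Once that is granted, there is nothing further to prove.
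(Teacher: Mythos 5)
Your proof is correct and takes exactly the same route as the paper, which simply says ``Apply Proposition~\ref{perfect Igusa two}.'' You have merely spelled out the routine verification of its hypotheses (regularity of the leaf, fibrewise constancy with $G$-structure, and that $G$-structure is a special case of PEL structure), which is precisely what the paper leaves implicit.
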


\begin{proof} Apply Proposition~\ref{perfect Igusa two}.
\end{proof}

Moreover, $\mathfrak{Ig}^{\mathbb X}$ can be reinterpreted in terms of a moduli space of abelian varieties with $G$-structures up to $p$-power isogeny, and isomorphisms of $A[p^\infty]$ with $G$-action up to $p$-power isogeny, cf.~\cite[Lemma 4.3.4]{caraiani-scholze}. This shows in particular that $\mathfrak{Ig}^{\mathbb X}$ is perfect, and that an isogeny $\phi: \mathbb X\to \mathbb X'$ (compatible with extra structures) induces an isomorphism $\mathfrak{Ig}^{\mathbb X}\cong \mathfrak{Ig}^{\mathbb X'}$. In particular, we get pro-finite correspondences
\[
\mathscr C^{\mathbb X}\leftarrow \mathfrak{Ig}^{\mathbb X}\cong \mathfrak{Ig}^{\mathbb X'}\to \mathscr C^{\mathbb X'}
\]
between different leaves in the same isogeny class.

Now we wish to obtain a variant of Igusa varieties that works at finite level. For this, we work with completely slope divisible $p$-divisible groups:

\begin{defn}\leavevmode
Let $\mathscr{T}/\Spec \F_p$ be a scheme and $\mathscr{G}/\mathscr{T}$ a $p$-divisible
group. Let $\mathrm{Frob}_{\mathscr{G}}$ denote the Frobenius morphism relative to $\mathscr{T}$. 
\begin{enumerate} 
\item $\mathscr{G}$ is isoclinic and slope divisible of slope $\lambda\in \Q_{\geq 0}$ if one can write $\lambda=\frac{r}{s}$ so that the quasi-isogeny
\[
p^{-r}\mathrm{Frob}^s_{\mathscr{G}}: \mathscr{G}\to \mathscr{G}^{(p^s)}
\]
is an isomorphism.
\item $\mathscr{G}$ is slope divisible with respect to $\lambda\in \Q_{\geq 0}$ if one can write $\lambda=\frac{r}{s}$ so that the quasi-isogeny
\[
p^{-r}\mathrm{Frob}^s_{\mathscr{G}}: \mathscr{G}\to \mathscr{G}^{(p^s)}
\]
is an isogeny. 
\item $\mathscr{G}$ is completely slope divisible if it has a filtration by closed immersions of $p$-divisible groups
\[
0=\mathscr{G}_{\leq 0}\subset \mathscr{G}_{\leq 1}\subset\dots \subset \mathscr{G}_{\leq r}=\mathscr{G}
\]
such that, for each $i$, $\mathscr{G}_{\leq i}/\mathscr{G}_{\leq i-1}$ is isoclinic and slope divisible
of slope $\lambda_i$ and $\mathscr{G}_{\leq i}$ is slope divisible with respect to $\lambda_i$, where $\lambda_i$ is a strictly decreasing sequence of rational numbers.
\end{enumerate}
\end{defn}

\noindent We note that such slope decompositions are unique if they exist (as when $\mathscr G$ is isoclinic and slope divisible of slope $\lambda$ and $\mathscr G'$ is slope divisible with respect to $\lambda'>\lambda$, then there are no maps $\mathscr G'\to \mathscr G$) and stable under base change. In particular, the property of being completely slope divisible is fpqc local.

We will use repeatedly the following basic result on completely slope 
divisible $p$-divisible groups. 

\begin{lem}\label{lem:generic completely slope divisible} Let $\mathscr{T}/\Spec \F_p$ be a connected 
regular scheme and $\mathscr{G}/\mathscr{T}$ a $p$-divisible
group. Let $\eta$ be the generic point of $\mathscr{T}$, and $\overline{\eta}$ a geometric point above $\eta$. If $\mathscr{G}_{\overline{\eta}}$
is completely slope divisible, then so is $\mathscr{G}$.
\end{lem}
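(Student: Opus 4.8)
The plan is to build the completely slope divisible filtration of $\mathscr{G}$ over all of $\mathscr{T}$ in three steps: descend the filtration of $\mathscr{G}_{\overline\eta}$ to the generic point $\eta$, extend it over $\mathscr{T}$, and verify that the slope-divisibility identities propagate. For the propagation it is worth noting at the outset that each defining condition of ``completely slope divisible'' asserts that a quasi-isogeny of the form $p^{-a}\mathrm{Frob}^b$ (on $\mathscr{G}$, on some $\mathscr{G}_{\leq i}$, or on a graded piece), and its inverse $p^{a-b}V^b$ (with $V$ the Verschiebung), is a genuine morphism of $p$-divisible groups; and $p^{-a}\mathrm{Frob}^b$ is a genuine morphism precisely when $\mathrm{Frob}^b$ annihilates the finite locally free group scheme $\mathscr{G}[p^a]$, i.e.\ when a certain map of finite locally free $\mathcal{O}_{\mathscr{T}}$-modules vanishes. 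Since $\mathscr{T}$ is regular, hence integral, a map of finite locally free modules that vanishes at $\eta$ vanishes identically, so these conditions hold over $\mathscr{T}$ as soon as they hold over $\overline\eta$ (hence over $\eta$). Consequently, once the filtration $\mathscr{G}_{\leq\bullet}$ is known to extend over $\mathscr{T}$, the rest of the argument is automatic.

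To descend to $\eta$ I would use that slope decompositions are unique, as noted right after the definition, so the filtration of $\mathscr{G}_{\overline\eta}$ is $\mathrm{Gal}(\overline{\kappa(\eta)}/\kappa(\eta))$-stable and descends to a filtration $0=\mathscr{G}_{\leq 0,\eta}\subset\cdots\subset\mathscr{G}_{\leq r,\eta}=\mathscr{G}_\eta$ with the same slopes $\lambda_1>\cdots>\lambda_r$. I would extend this over $\mathscr{T}$ by induction on $r$. When $r=1$ there is nothing to extend, and the first paragraph makes $\mathscr{G}$ itself isoclinic and slope divisible of slope $\lambda_1$. When $r>1$, choose a common denominator $s$, so $\lambda_j=r_j/s$ with integers $r_1>\cdots>r_r\geq 0$; since $\mathscr{G}_{\overline\eta}$ is slope divisible with respect to $\lambda_r$, the quasi-isogeny $\phi:=p^{-r_r}\mathrm{Frob}^s\colon\mathscr{G}\to\mathscr{G}^{(p^s)}$ is a genuine isogeny over $\mathscr{T}$ by the generic-point argument, and its degree $p^{s\dim\mathscr{G}-r_r\cdot\mathrm{ht}\,\mathscr{G}}$ is locally constant. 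I would then set $\mathscr{G}_{\leq r-1}:=\bigcup_{N\geq 1}\ker(\phi^{[N]})$, where $\phi^{[N]}\colon\mathscr{G}\to\mathscr{G}^{(p^{Ns})}$ denotes the $N$-fold Frobenius-twisted composite of $\phi$; each $\ker(\phi^{[N]})$ is finite locally free, being the kernel of an isogeny, and the computation on the graded pieces of the slope filtration shows that over $\overline\eta$ this increasing union is exactly $\mathscr{G}_{\leq r-1,\overline\eta}$. Granting that $\mathscr{G}_{\leq r-1}$ is a sub-$p$-divisible group of $\mathscr{G}$, the quotient $\mathscr{G}/\mathscr{G}_{\leq r-1}$ has isoclinic and slope divisible geometric generic fibre of slope $\lambda_r$, hence is isoclinic slope divisible of slope $\lambda_r$ over $\mathscr{T}$ by the case $r=1$, while $\mathscr{G}_{\leq r-1}$ has completely slope divisible geometric generic fibre with $r-1$ slopes, hence is completely slope divisible over $\mathscr{T}$ by the inductive hypothesis. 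Splicing these two filtrations, together with the fact that $\mathscr{G}=\mathscr{G}_{\leq r}$ is slope divisible with respect to $\lambda_r$ (the generic-point argument once more), exhibits $\mathscr{G}$ as completely slope divisible.

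The hard part will be the claim that $\mathscr{G}_{\leq r-1}=\bigcup_N\ker(\phi^{[N]})$ is representable by a $p$-divisible group over $\mathscr{T}$ --- equivalently, that the schematic closure in $\mathscr{G}$ of the sub-$p$-divisible group $\mathscr{G}_{\leq r-1,\eta}$ is flat over $\mathscr{T}$, so that each $\mathscr{G}_{\leq r-1}[p^m]$ is finite locally free of constant rank. This is exactly where regularity of $\mathscr{T}$ enters, and it is the technical core of the lemma. I would first spread the filtration out over a dense open $U\subseteq\mathscr{T}$: schematic closures of the $\mathscr{G}_{\leq i,\eta}[p^m]$ are flat over a dense open, and the finitely many conditions defining the completely slope divisible structure become honest identities over a possibly smaller dense open, using that $\mathscr{T}$ is Noetherian. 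Then I would promote flatness from $U$ to all of $\mathscr{T}$ by an argument on the regular scheme $\mathscr{T}$; alternatively, and more robustly, one can short-circuit the whole extension step by invoking the theory of slope filtrations of $p$-divisible groups over regular schemes in the style of Oort--Zink, which produces the filtration $\mathscr{G}_{\leq\bullet}$ over $\mathscr{T}$ with isoclinic graded pieces directly, after which the first paragraph upgrades it to a completely slope divisible structure.
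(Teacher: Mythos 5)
Your proposal captures the correct overall structure and matches the spirit of what the paper relies on, but the central step is not actually established. The paper does not give a direct proof: it simply cites the first part of the proof of Theorem~7 in Zink's paper for exactly this statement. Your first two paragraphs are sound: once a filtration by sub-$p$-divisible groups is in hand over all of $\mathscr{T}$, the slope-divisibility identities propagate from $\eta$ because they reduce to the vanishing of maps of finite locally free $\mathcal{O}_{\mathscr{T}}$-modules over the integral scheme $\mathscr{T}$; Galois descent of the filtration from $\overline{\eta}$ to $\eta$ follows from uniqueness of slope decompositions; and the $r=1$ case is handled correctly by combining the genuine-morphism claims for $p^{-r_1}\mathrm{Frob}^s$ and $p^{r_1-s}V^s$.

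The genuine gap is exactly where you flag it. You need $\mathscr{H}:=\bigcup_N\ker(\phi^{[N]})$ to be a sub-$p$-divisible group of $\mathscr{G}$, i.e.\ that $\mathscr{H}[p^m]$ is finite locally free of constant rank for each $m$. Knowing that each $K_N:=\ker(\phi^{[N]})$ is finite flat (being the kernel of an isogeny) does not give this by itself: the $p^m$-torsion of a finite flat group scheme need not be flat, and the schematic closure in $\mathscr{G}[p^m]$ of $\mathscr{G}_{\leq r-1,\eta}[p^m]$ is $\mathcal{O}_{\mathscr{T}}$-torsion-free but not automatically flat once $\dim\mathscr{T}>1$. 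Saying ``spread out over a dense open, then promote flatness by an argument on the regular scheme'' is not a proof --- one needs precisely the purity-type argument that Zink's proof supplies. Your proposed fallback, to invoke the Oort--Zink theory of slope filtrations over regular bases, is what the paper does, so as a way of locating the result the lemma is fine; but then the direct construction via $\bigcup_N\ker(\phi^{[N]})$ should be presented as a sketch of what that reference establishes rather than as an independent argument, with the flatness of the extended filtration explicitly flagged as the imported input.
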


\begin{proof} This is shown in the first part of the proof of~\cite[Theorem 7]{zink}. 
\end{proof}

Choose a $p$-divisible group with $G$-structure $\mathbb X$ over an algebraically closed field $k$
of characteristic $p$. Assume that $\mathbb X$ is completely slope divisible (this is a condition without extra structures).
Note that a $p$-divisible group over $\overline{\F}_p$ is completely slope divisible if and only if it is a direct sum of its isoclinic pieces, 
which are defined over a finite field, cf.~\cite[Corollary 1.5]{oort-zink}. One can check that such a choice of $\mathbb{X}$ exists in any isogeny class.

We have
\[
\mathbb{X} = \oplus_{i=1}^r \mathbb{X}_i,
\]
where the $\mathbb{X}_i$ are isoclinic $p$-divisible groups of strictly decreasing slopes $\lambda_i\in [0,1]$.
The polarization $\lambda$ on $\mathbb{X}$ induces isomorphisms $\lambda^i: \mathbb{X}_i\to (\mathbb{X}_j)^\vee$ 
for all $i,j$ with $\lambda_i+\lambda_j = 1$ which satisfy $(\lambda^i)^\vee= - \lambda^j$.  Let $\mathscr G = \mathscr{G}_{\mathbb X}$ 
be the $p$-divisible group of the universal abelian variety $\cA/\mathscr{S}_{K}$ restricted to $\mathscr{C}^{\mathbb X}$. Then $\mathscr G$ is completely slope
divisible, since it is geometrically fiberwise constant. (Note that $\mathscr{C}^{\mathbb X}$ is a regular scheme and the fibers of $\mathscr G$
over every generic point of $\mathscr{C}^{\mathbb X}$ are completely slope divisible, so Lemma~\ref{lem:generic completely slope divisible} 
implies the result.) It has a slope filtration 
\[
0\subset \mathscr{G}_{\leq 1}\subset \dots\subset \mathscr{G}_{\leq r}=\mathscr{G}
\] 
for which the graded pieces $\mathscr{G}_i:=\mathscr{G}_{\leq i}/\mathscr{G}_{\leq i-1}$ are isoclinic of slope $\lambda_i$. The $\cO_F\otimes_{\Z}\Z_p$-action on $\mathscr{G}$ respects this filtration, 
so that each $\mathscr{G}_i$ is endowed with an $\cO_F\otimes_{\Z}\Z_p$-action. Moreover, the polarization on $\mathscr{G}$ induces isomorphisms
$l^i:\mathscr{G}_i\to (\mathscr{G}_j)^\vee$ for all $i,j$ with $\lambda_i+\lambda_j =1$, which satisfy $(l^{i})^\vee = -l^j$. 

\begin{defn}\label{definition of Igusa varieties} The (pro-)Igusa variety is the map 
\[
\mathrm{Ig}^{\mathbb X}\to \mathscr{C}^{\mathbb X}
\]
which over a $\mathscr{C}^{\mathbb X}$-scheme $\mathscr{T}$ parametrizes tuples $(\rho_i)_{i=1}^r$ of isomorphisms
\[
\rho_i :\mathscr{G}_i\times_{\mathscr{C}^{\mathbb X}}\mathscr{T}\toisom \mathbb{X}_i\times_k \mathscr{T}
\]
which are compatible with the $\cO_F\otimes_{\Z}\Z_p$-actions on $\mathscr{G}_i$ and $\mathbb{X}_i$
and commute with the polarizations on $\mathscr{G}$ and $\mathbb{X}$ up to an element of $\Z^\times_p(\mathscr{T})$
that is independent of $i$. 
\footnote{This element of $\Z_{p}^\times(\mathscr T)$ can be identified with an automorphism
of the multiplicative $p$-divisible group $\mu_{p^\infty,\mathscr T}$.}
\end{defn}

\begin{remark}\label{rem:finite level Igusa} We can also define $\mathrm{Ig}^{\mathbb X}_m$ for any $m\geq 0$ as the moduli 
space of isomorphisms on $\mathscr{C}^{\mathbb X}$-schemes $\mathscr{T}$ 
\[
\rho_{i,m}: \mathscr{G}_i[p^m]\times_{\mathscr{C}^{\mathbb X}}\mathscr{T}\toisom \mathbb{X}_i[p^m]\times_k \mathscr{T}
\] 
which (fppf locally on $\mathscr{T}$) lift to arbitrary $m'\geq m$ and which respect the extra structures. Explicitly,
this means that the isomorphisms $\rho_{i,m}$ commute with the $\cO_F\otimes_{\Z}\Z_p$-actions on 
$\mathscr{G}_i[p^m]$ and $\mathbb{X}_i[p^m]$ and for $i,j$ with $\lambda_i+\lambda_j=1$ they fit into a diagram of isomorphisms
\[
\xymatrix{\mathscr{G}_i[p^m]\times_{\mathscr{C}^{\mathbb X}}\mathscr{T}\ar[d]^{l^i}\ar[r]^{\rho_{i,m}} & \mathbb{X}_i[p^m]\times_k\mathscr{T}
\ar[d]^{\lambda^i}\\ (\mathscr{G}_j)^\vee[p^m]\times_{\mathscr{C}^{\mathbb X}}\mathscr{T} & (\mathbb{X}_j)^\vee[p^m]\times_k\mathscr{T}\ar[l]^{\rho_{j,m}^\vee}},
\]
that commutes up to an element of $(\Z/p^m\Z)^{\times}$ that is independent of $i$. By Theorem~\ref{thm:construction of Igusa covers},
$\mathrm{Ig}^{\mathbb X}_m$ is a finite \'etale cover of $\mathscr{C}^{\mathbb X}$, Galois with Galois group $\Gamma_{m,\mathbb X}$, the group of automorphisms of $\mathbb X[p^m]$ that lift to isomorphisms of $\mathbb X$ compatible with extra structure.
\end{remark}

In particular, passing to the inverse limit, we see that
\[
\mathrm{Ig}^{\mathbb X}\to \mathscr C^{\mathbb X}
\]
is a pro-finite \'etale cover with Galois group $\Gamma_{\mathbb X}=\mathrm{Aut}(\mathbb X)$.

\begin{remark} The scheme $\mathfrak{Ig}^{\mathbb X}$ maps naturally to $\mathrm{Ig}^{\mathbb X}$ (as is evident from the moduli description), and then even to its perfection. The resulting map $\mathfrak{Ig}^{\mathbb X}\to \mathrm{Ig}^{\mathbb X}_\perf$ is an isomorphism, as the slope filtration splits uniquely over a perfect base, cf.~\cite[Proposition 4.3.8]{caraiani-scholze}, or simply because it is a map between $\Gamma_{\mathbb X}$-torsors over $\mathscr C^{\mathbb X}_\perf$.
\end{remark}

\subsection{Serre--Tate theory}
Our goal in this subsection is to prove a version of Serre--Tate theory for semi-abelian schemes that are globally 
extensions of abelian schemes by tori. We follow Drinfeld's original proof, cf.~\cite{katzSerreTate}, but see also~\cite{andre}. Note
that we do not assume that we are working over a Noetherian base.

\begin{thm}\label{thm:rigidity of quasi-isogenies}
Let $S'\twoheadrightarrow S$ be a surjection of rings in which $p$ is nilpotent, with nilpotent kernel $I\subset S'$.  
\begin{enumerate}
\item The functor $\mathscr{G}_{S'}\mapsto \mathscr{G}_S:=\mathscr{G}_{S'}\times_{S'}S$ from $p$-divisible groups up to isogeny over $S'$
to $p$-divisible groups up to isogeny over $S$ is an equivalence of categories. 
\item The functor $A_{S'}\mapsto A_S:= A_{S'} \times_{S'}S$ from abelian varieties up to $p$-power isogeny over $S'$
to abelian varieties up to $p$-power isogeny over $S$ is an equivalence of categories. 
\item We now consider the category $\mathcal{R}_{S'}$ of semi-abelian schemes $A_{S'}$ which are globally over $S'$
an extension 
\[
0\to T_{S'}\to A_{S'}\to B_{S'} \to 0
\]
of an abelian scheme $B_{S'}$ by a split torus $T_{S'}$ (necessarily of constant rank over $S'$), 
with morphisms in $\mathcal{R}_{S'}$ given by $\Hom(A_{S'}, A'_{S'})[1/p]$.
Then the functor $\mathcal{R}_{S'}\to \mathcal{R}_{S}$ given by $A_{S'}\mapsto A_{S}:=A_{S'}\times_{S'} S$ is
an equivalence of categories. 
\end{enumerate}
\end{thm}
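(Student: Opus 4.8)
The strategy is to deduce everything from the classical Serre--Tate theorem in the form: for a nilpotent surjection $S'\twoheadrightarrow S$ as above, the category of $p$-divisible groups over $S'$ together with abelian schemes over $S'$ maps to the corresponding objects over $S$ in a way that is fully faithful, with the essential image (in the abelian-scheme direction) controlled by deformations of the $p$-divisible group. Concretely, Drinfeld's proof (as in \cite{katzSerreTate}) shows that for $p$-divisible groups and for abelian schemes the honest (not up-to-isogeny) deformation functors have the rigidity property that homomorphisms lift uniquely after multiplication by a power of $p$, which is exactly what is needed to pass to the isogeny categories.

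First I would treat (1). Full faithfulness: given $p$-divisible groups $\mathscr G_{S'}, \mathscr G'_{S'}$, a quasi-isogeny $\mathscr G_S\to \mathscr G'_S$ is $p^{-N}$ times an honest isogeny $f\colon \mathscr G_S\to\mathscr G'_S$; by the classical theorem the homomorphism $p^M f$ lifts uniquely to $S'$ for $M$ large (rigidity of homomorphisms of $p$-divisible groups under nilpotent thickenings with $p$ nilpotent), and uniqueness of the lift shows the resulting quasi-isogeny over $S'$ is independent of $M$ and is inverse-compatible, giving the isomorphism on Hom-sets after inverting $p$. Essential surjectivity: this is the only substantive point; one uses that any $p$-divisible group over $S$ lifts to $S'$ up to isogeny. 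Since $I$ is nilpotent we may factor $S'\to S$ into square-zero steps and reduce to $I^2=0$; then a $p$-divisible group over $S$ deforms to one over $S'$ because the obstruction lives in a $p$-torsion (indeed $I$-) module killed by multiplication by a suitable power of $p$, and multiplication by $p$ is an isomorphism in the isogeny category — so after an isogeny the obstruction vanishes. (Alternatively, invoke that Rapoport--Zink spaces / deformation spaces of $p$-divisible groups are formally smooth up to isogeny, or cite \cite[Lemma 4.3.15]{caraiani-scholze}-type rigidity used elsewhere in the paper.)

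Part (2) follows from (1) combined with the classical Serre--Tate theorem: the functor $A\mapsto (A[p^\infty], \text{reduction of } A)$ is an equivalence between abelian schemes over $S'$ and the category of triples (abelian scheme over $S$, $p$-divisible group over $S'$, identification over $S$). Up to $p$-power isogeny this says $A_{S'}\mapsto A_S$ is fully faithful exactly as in (1) — a $p$-power quasi-isogeny over $S$ is $p^{-N}$ times an isogeny, and isogenies of abelian schemes lift uniquely after multiplying by a power of $p$ under nilpotent thickenings with $p$ nilpotent — and essential surjectivity reduces via Serre--Tate to lifting the $p$-divisible group up to isogeny, which is (1). For part (3) I would reduce to (2): an object of $\mathcal R_{S'}$ is an extension of an abelian scheme $B_{S'}$ by a split torus $T_{S'}\cong \mathbb G_m^d$. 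Since a split torus has no deformations and $\underline{\Hom}$, $\underline{\Ext}^1$ of a split torus into/out of objects behave rigidly, the data of such an extension over $S'$ is equivalent to the data of $B_{S'}$ together with the extension class in $\Ext^1(B_{S'}, \mathbb G_m^d)(S') = \mathrm{Pic}^0(B_{S'})^d = B^\vee_{S'}(S')^d$, and after inverting $p$ one checks that $\Ext^1(B_{S'},\mathbb G_m^d)[1/p]\to \Ext^1(B_S,\mathbb G_m^d)[1/p]$ is a bijection by the same nilpotent-thickening rigidity (the relative $\mathrm{Pic}^0$ is an abelian scheme, so this is again (2) applied to $B^\vee$, or direct deformation theory of line bundles with $p$ inverted). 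Full faithfulness on morphisms is immediate from full faithfulness for the torus parts (homomorphisms of split tori are rigid, being discrete) and for the abelian parts (part (2)), since a morphism in $\mathcal R$ is determined by its effect on $T$, $B$ and a compatibility.

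The main obstacle is essential surjectivity in (1): showing every $p$-divisible group over $S$ lifts, up to isogeny, to $S'$. Everything else is a formal consequence of classical Serre--Tate rigidity plus the observation that obstruction and lifting-ambiguity groups are $p$-power-torsion and hence die on passing to isogeny categories; the reduction to square-zero extensions, and within those the vanishing-after-isogeny of the deformation obstruction, is where the real content (and the only place one genuinely uses $p$ nilpotent rather than just $I$ nilpotent) sits.
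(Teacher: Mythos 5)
Your proposal captures the right general shape — rigidity of $\Hom$'s after multiplying by a power of $p$ because the kernel of reduction is $p^n$-torsion — but it misidentifies where the work is and proposes an essentially-surjectivity argument for $p$-divisible groups that does not hold together.

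The paper's proof of essential surjectivity in (1) is a one-line citation of Illusie: the deformation theory of $p$-divisible groups along nilpotent thickenings with $p$ nilpotent is \emph{unobstructed}, so every $p$-divisible group over $S$ lifts to an honest $p$-divisible group over $S'$; there is nothing to do in the isogeny category. You instead treat this as ``the only substantive point'' and propose to show that the deformation obstruction dies ``after an isogeny.'' This last step is not a well-defined operation: the obstruction to deforming $\mathscr{G}$ over a square-zero extension is a class in an $\operatorname{Ext}$-group depending on $\mathscr{G}$ and $I$, and replacing $\mathscr{G}$ by an isogenous $\mathscr{G}'$ changes that group, not an element within a fixed group; the asserted transitivity (``multiplication by $p$ is an isomorphism in the isogeny category, so the obstruction vanishes'') does not type-check. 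You should replace this by the actual statement: the obstruction group already vanishes (Illusie), so the lift exists on the nose.

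You also route (2) through the Serre--Tate equivalence. That is logically valid (Serre--Tate is a classical theorem), but it inverts the order of dependence in the paper: here Theorem~\ref{thm:rigidity of quasi-isogenies} is proved directly — full faithfulness by the identical $p^n$-torsion kernel argument applied to $\widehat{A}$, essential surjectivity by standard unobstructed deformation theory of abelian schemes — and Theorem~\ref{thm:Serre-Tate} is deduced \emph{from} it. Likewise, for (3) the paper runs the same formal-Lie-group argument once more (now noting $A_{S'}$ is smooth, so formally smooth, and the kernel of reduction on points is still $\ker(\widehat{A}_{S'}(T')\to\widehat{A}_S(T))$), whereas you decompose a morphism into its torus and abelian parts and argue separately. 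The decomposition is correct (there are no nonzero maps from a torus to an abelian scheme, so any morphism of semi-abelian schemes preserves the toric part), and for essential surjectivity you arrive at the same idea as the paper (lift $B$, then lift the extension class in $B^\vee(S')$). But note you do not need to invert $p$ for that last lifting step: $B^\vee_{S'}$ is formally smooth, so $B^\vee_{S'}(S')\to B^\vee_S(S)$ is already surjective. The uniform formal-group argument of the paper is cleaner and avoids needing to glue the torus and abelian pieces of a morphism back together.
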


\begin{proof} Consider first the case of $p$-divisible groups. We prove that the functor $\mathscr{G}_{S'}\mapsto \mathscr{G}_S$ is faithful. 
Let $\mathscr{G}_{S'}$ be a $p$-divisible group over $S'$. 
Then its formal completion along its identity section $\widehat{\mathscr{G}}_{S'}$ is a formal Lie group. 
Let $T'$ be an affine $S'$-scheme and set $T:=T'\times_{S'}S$. Then
\[
\ker(\mathscr{G}_{S'}(T')\to \mathscr{G}_{S}(T))\toisom \ker(\widehat{\mathscr{G}}_{S'}(T')\to \widehat{\mathscr{G}}_{S}(T))
\]
is killed by $p^n$ for some sufficiently large positive integer $n$ that depends only on the degree of nilpotence of the ideal $(p,I)\subset S'$. This follows from~\cite[Ch. II, \S 4]{messing}. 

Now let $\mathscr{G}_{i,S'}$ be $p$-divisible groups over $S'$ for $i=1,2$ and assume $f: \mathscr G_{1,S'}\to\mathscr G_{2,S'}$ is a map that reduces to $0$ on $S$. Then for all $T'$, the image of $f(T')$ lies in $\ker(\mathscr{G}_{2,S'}(T')\to \mathscr{G}_{2,S}(T))$ which is killed by $p^n$. It follows that $p^n f=0$, which implies that $f=0$ as $\Hom$'s between $p$-divisible groups are torsion-free.

We now prove that the functor for $p$-divisible groups is full. Let $f\in \mathrm{Hom}(\mathscr{G}_{1,S}, \mathscr{G}_{2,S})$. We want to
show that there exists a lifting $\tilde{g}\in \mathrm{Hom}(\mathscr{G}_{1,S'}, \mathscr{G}_{2,S'})$ of $p^n f$,
where $n$ is the sufficiently large positive integer chosen above. Let $x\in \mathscr{G}_{1,S'}(T')$ with image $\bar{x}\in \mathscr{G}_{1,S}(T)$. Since $\mathscr{G}_{2,S'}$ is formally smooth
over $S'$, there exists a lift $\tilde{f}(\bar{x})\in \mathscr{G}_{2,S'}(T')$ of $f(\bar{x})$. Since $p^n$ kills 
\[
\ker(\mathscr{G}_{2,S'}(T')\to \mathscr{G}_{2,S}(T)),
\]
the element $\tilde{g}(x):=p^n \tilde{f}(\bar{x})$ is well-defined and gives rise to an element 
\[
\tilde{g}\in \mathrm{Hom}(\mathscr{G}_{1,S'}(T'), \mathscr{G}_{2,S'}(T'))
\]
lifting $p^n f$.

Essential surjectivity in the case of $p$-divisible groups follows from~\cite[Th\'eor\`eme 4.4]{illusie}. 

We now consider the case of abelian schemes. The full faithfulness is proved in the same way as
for $p$-divisible groups. The key point is that $\ker(\widehat{A}_{S'}(T')\to \widehat{A}_{S}(T))$ is the same as 
$\ker(\widehat{A}_{S'}(T')\to \widehat{A}_{S}(T))$, where $\widehat{A}$ is the formal completion
of $A$ along its identity section and is a formal Lie group. Essential surjectivity for abelian schemes is standard (the deformation theory of abelian schemes is unobstructed).

We now consider the case of semi-abelian schemes. The full faithfulness is also proved in the same way as for $p$-divisible groups: 
note that $A_{S'}$ is still smooth and therefore formally smooth. Moreover, $\ker(A_{S'}(T')\to A_{S}(T))$ can still be identified with 
$\ker(\widehat{A}_{S'}(T')\to \widehat{A}_{S}(T))$, where $\widehat{A}$ is the formal completion
of $A$ along its identity section and is a formal Lie group. For essential surjectivity, note that we can first lift the base abelian scheme $B$; afterwards, lifting the semiabelian scheme is equivalent to lifting a section of the dual abelian scheme, which is possible by (formal) smoothness.
\end{proof}

\begin{thm}\label{thm:Serre-Tate}
Let $S'\twoheadrightarrow S$ be a surjection of rings in which $p$ is nilpotent, with nilpotent kernel $I\subset S'$.  
\begin{enumerate}
\item Consider the category of triples $(A_S,\mathscr{G}_{S'}, \rho)$,
where $A_S$ is an abelian scheme over $S$, $\mathscr{G}_{S'}$ is a $p$-divisible group over $S'$ and
\[
\rho: A_S[p^\infty]\toisom \mathscr{G}_{S'}\times_{S'}S 
\]  
is an isomorphism. The functor $A_{S'}\mapsto (A_{S'}\times_{S'}S, A_{S'}[p^\infty], \mathrm{id})$ induces an equivalence of 
categories between the category of abelian schemes over $S'$ and the category of triples defined above.  

\item Consider the category of triples $(A_S,\mathscr{G}_{S'}, \rho)$,
where $A_S$ is an extension 
\[
1\to T_S\to A_S\to B_S\to 1
\] 
of an abelian scheme $B_S$ by a split torus $T_S$ over $S$ 
(necessarily of constant rank over $S$), $\mathscr{G}_{S'}$ is a $p$-divisible group over $S'$ 
and
\[
\rho: A_S[p^\infty]\toisom \mathscr{G}_S:=\mathscr{G}_{S'}\times_{S'}S 
\]  
is an isomorphism. We let morphisms between these triples be compatible morphisms of triples.  
The functor $A_{S'}\mapsto (A_{S'}\times_{S'}S, A_{S'}[p^\infty], \mathrm{id})$ induces an equivalence of 
categories between the category of extensions 
\[
1\to T_{S'}\to A_{S'}\to B_{S'}\to 1
\]
over $S'$ and the category of triples defined above.  
\end{enumerate}
\end{thm}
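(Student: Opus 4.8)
The plan is to derive Theorem~\ref{thm:Serre-Tate} from the rigidity theorem, Theorem~\ref{thm:rigidity of quasi-isogenies}, together with the existence of deformations: abelian schemes deform (unobstructedly) along $S'\to S$, $p$-divisible groups deform by~\cite{illusie}, and an object $1\to T_S\to A_S\to B_S\to 1$ of the third kind deforms by first lifting the base abelian scheme $B_S$ and then lifting the extension class, a point of the dual abelian scheme, which is possible by formal smoothness. Parts (1) and (2) are treated in parallel; write $A_0=A_S$, and for $A_{S'}$ over $S'$ let $\underline{A_{S'}}=(A_0,A_{S'}[p^\infty],\mathrm{id})$ denote the associated triple.

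\emph{Faithfulness} is formal: if $f\colon A_{1,S'}\to A_{2,S'}$ induces the zero morphism of triples then $f[p^\infty]=0$ over $S'$, and a homomorphism of abelian (resp.\ semi-abelian schemes of the given type) that vanishes on $p$-divisible groups vanishes on every geometric fibre --- over a field a nonzero such homomorphism is nonzero on $p$-divisible groups, by looking at its image --- hence vanishes. \emph{Fullness:} given $f_0\colon A_{1,S}\to A_{2,S}$ and a lift $g'\colon A_{1,S'}[p^\infty]\to A_{2,S'}[p^\infty]$ of $f_0[p^\infty]$ over $S'$, Theorem~\ref{thm:rigidity of quasi-isogenies}(2) (resp.\ (3)) yields $\tilde f\colon A_{1,S'}\to A_{2,S'}$ with $\tilde f\times_{S'}S=p^m f_0$ for some $m\geq 0$. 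The homomorphisms $\tilde f[p^\infty]$ and $p^m g'$ of $p$-divisible groups over $S'$ both reduce to $p^m f_0[p^\infty]$, so their difference reduces to $0$; by the faithfulness half of Theorem~\ref{thm:rigidity of quasi-isogenies}(1) it is killed by some $p^n$, and since $\Hom$-groups of $p$-divisible groups are torsion-free it is $0$. Hence $\tilde f[p^\infty]=p^m g'$ kills $A_{1,S'}[p^m]$, so $\tilde f$ kills $A_{1,S'}[p^m]$, hence is divisible by $p^m$ in $\Hom(A_{1,S'},A_{2,S'})$; the quotient $f':=\tilde f/p^m$ satisfies $f'\times_{S'}S=f_0$ and $f'[p^\infty]=g'$ (again by torsion-freeness of $\Hom$), so it is the required lift.

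\emph{Essential surjectivity.} Given a triple $(A_0,\mathscr G',\rho)$, choose any deformation $\tilde A/S'$ of $A_0$ (of the same kind). Then $\tilde A[p^\infty]$ and $\mathscr G'$ are two deformations of $A_0[p^\infty]$, so by Theorem~\ref{thm:rigidity of quasi-isogenies}(1) the isomorphism $\rho$ lifts to a quasi-isogeny $\phi\colon\tilde A[p^\infty]\to\mathscr G'$; writing $\phi=p^{-m}\psi$ with $\psi$ an isogeny (possible Zariski-locally on $S'$, hence globally after enlarging $m$ uniformly), set $H:=\ker\psi$, a finite locally free subgroup scheme of $\tilde A[p^\infty]\subset\tilde A$ with $H\times_{S'}S=A_0[p^m]$. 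Put $A_{S'}:=\tilde A/H$. Then $A_{S'}[p^\infty]=\tilde A[p^\infty]/H\xrightarrow{\ \sim\ }\mathscr G'$ via $\psi$, while $A_{S'}\times_{S'}S=A_0/A_0[p^m]\cong A_0$ canonically via the isogeny $[p^m]$; chasing these canonical identifications shows that $(\,[p^m],\psi\,)$ is an isomorphism of triples $\underline{A_{S'}}\xrightarrow{\ \sim\ }(A_0,\mathscr G',\rho)$.

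The step I expect to carry the real content --- and the only point at which (2) differs substantively from (1) --- is verifying that $A_{S'}=\tilde A/H$ lies in the correct category. For (1) this is just that the quotient of an abelian scheme by a finite locally free subgroup scheme is an abelian scheme. For (2) one must show that $\tilde A/H$ is still \emph{globally} an extension of an abelian scheme by a split torus, i.e.\ that $H$ meets the torus part $T=\Gm^r\subset\tilde A$ in a finite locally free subgroup. This is where one uses that $\rho$ respects the canonical multiplicative part: since finite multiplicative-type (equivalently, finite \'etale) group schemes are rigid along the nilpotent thickening $S'\to S$, the inclusion $\mu_{p^\infty}^r\hookrightarrow A_0[p^\infty]$ lifts canonically to $\mathscr G'$; one deduces that $\phi$ carries $T[p^\infty]\subset\tilde A[p^\infty]$ into this canonical sublift, so that $T[p^\infty]\cap H$ is of multiplicative type (hence flat) and therefore equal to $\mu_{p^m}^r$. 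Dividing $\tilde A$ first by $\mu_{p^m}^r$ (which leaves the torus part a split torus) and then by the image of $H$ (which now injects into $B_{S'}[p^\infty]$) exhibits $\tilde A/H$ as the desired extension. The analogous point needed in the fullness step for (2), namely that $A_{1,S'}/A_{1,S'}[p^m]$ is again such an extension, is elementary.
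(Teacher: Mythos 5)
Your overall strategy is the same as the paper's: everything is reduced to Theorem~\ref{thm:rigidity of quasi-isogenies}, with fullness proved by lifting $p^mf_0$ and then dividing by $p^m$ after comparing with $p^mg'$ on $p$-divisible groups (this is the paper's argument almost verbatim), and essential surjectivity proved by producing a nearby lift, lifting the comparison map of $p$-divisible groups to an isogeny $\psi$ over $S'$, and quotienting by $H=\ker\psi$. The genuine divergence is in part (2), which is exactly where the content lies. The paper forms the quotient $\tilde A/H$ blindly, as a flat algebraic space whose reduction is $A_S$, and identifies it \emph{a posteriori}: it is a smooth scheme with semi-abelian fibres, the torus $T_S\subset A_S$ deforms uniquely into it by the SGA3 rigidity of multiplicative groups, and the quotient by this torus deforms $B_S$ and hence is an abelian scheme. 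You instead control the kernel \emph{a priori}: using that homomorphisms out of $\mu_{p^\infty}^r$ into a $p$-divisible group are rigid along nilpotent thickenings (by Cartier duality this is the statement that homomorphisms into a constant/\'etale group scheme depend only on the underlying topological space, so your one-line justification should be phrased as rigidity of such Hom's rather than rigidity of the groups themselves), you identify $H\cap T[p^\infty]=\mu_{p^m}^r$ and then perform the quotient in two steps that stay inside the category of extensions throughout. Your route buys an explicit description of the quotient and avoids any discussion of algebraic spaces and of why a flat deformation of a scheme/semi-abelian scheme is again one; the paper's route is shorter and avoids the small extra verifications your route needs (that the canonical lift $\mu_{p^\infty}^r\to\mathscr G'$ is levelwise a closed immersion, so that $\ker(p^m\iota')$ is exactly $\mu_{p^m}^r$ — a Nakayama argument — and that $H/\mu_{p^m}^r$ is finite locally free and meets the new torus trivially). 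Both arguments ultimately hinge on a rigidity statement for multiplicative groups, used at different places.

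One small elision: in your faithfulness step you pass from ``$f$ vanishes on all geometric fibres'' to ``$f=0$''. Over the non-reduced base $S'$ this is not formal; it requires (formal) unramifiedness of the relevant Hom functor, which is essentially the rigidity statement being re-derived. It is easier, and closer to the paper, to note that a zero morphism of triples in particular gives $f\times_{S'}S=0$, and then the injectivity $\Hom(A_{1,S'},A_{2,S'})\hookrightarrow\Hom(A_{1,S},A_{2,S})$ furnished by Theorem~\ref{thm:rigidity of quasi-isogenies} (torsion-freeness plus faithfulness up to isogeny) gives $f=0$ directly; this is not a gap in substance, since the needed statement is already at your disposal, but the fibrewise detour as written is under-justified.
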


\begin{proof}
Consider first the case of abelian schemes. We prove that the functor is fully faithful. 
The faithfulness follows from the fact that we have an injection 
\[
\mathrm{Hom}(A_{1,S'}, A_{2,S'})\hookrightarrow \mathrm{Hom}(A_{1,S}, A_{2,S}),
\]
as proved in Theorem~\ref{thm:rigidity of quasi-isogenies}. To prove that the functor is also full, let $f: A_{1,S}\to A_{2,S}$ and $f^{\infty}: \mathscr{G}_{1,S'}\to \mathscr{G}_{2,S'}$
be compatible with the isomorphisms $\rho_1,\rho_2$. We can lift $p^n f$ to a morphism $\tilde{g}: A_{1,S'}\to A_{2,S'}$,
for $n$ as in the proof of Theorem~\ref{thm:rigidity of quasi-isogenies}. In order to obtain a lift $\tilde{f}: A_{1,S'}\to A_{2,S'}$ of $f$, we need to show that 
$\tilde{g}$ is divisible by $p^n$, in other words that $A_{1,S'}[p^n]\subseteq \ker \tilde{g}$. 
This can be checked on the level of $p$-divisible groups, where it is automatic that $p^n f^\infty$ is divisible by $p^n$.

For essential surjectivity, take a triple $(A_S,\mathscr G_{S'},\rho)$ and pick any abelian scheme $A'_{S'}$ over $S'$ with an isogeny $f: A'_S\to A_S$, which exists by Theorem~\ref{thm:rigidity of quasi-isogenies}. We get an isogeny $\rho f^\infty: A'_S[p^\infty]\to \mathscr G_S$. Up to multiplying $f$ by $p^n$, we may assume that this lifts (necessarily uniquely) to an isogeny $\tilde{\rho}: A'_{S'}[p^\infty]\to \mathscr G_{S'}$. Let $K=\ker \tilde{\rho}\subset A'_{S'}$, which is a finite flat group scheme. Replacing $A'_{S'}$ by $A'_{S'}/K$, we get the result.

Consider now the case of semi-abelian schemes. The fact that the functor is fully faithful is proved in the same way as above. In the proof of essential surjectivity, the only thing to make sure is that the quotient $A'_{S'}/K$ is indeed a semi-abelian scheme of the desired form. But note that this quotient is a flat algebraic space over $S'$ whose base change to $S$ is $A_S$. This implies that it is a scheme, smooth over $S$, with all fibers being semi-abelian schemes, i.e.~it is a semi-abelian scheme. Moreover, by the rigidity of multiplicative groups as in~\cite[Expos\'e IX Th\'eor\`eme 3.6 bis]{SGA3}, the torus $T_S\subset A_S$ deforms uniquely to a torus $T_{S'}\subset A_{S'}$, and then necessarily the quotient is an abelian scheme (as it deforms an abelian scheme), showing that $A_{S'}$ is still an extension of an abelian variety by a split torus $T_{S'}$.
\end{proof}

\subsection{Compactifications}

In this section, we recall the minimal and toroidal compactifications of the Shimura varieties under consideration.

\subsubsection{Degenerations} Before explaining the precise combinatorics of the compactifications, let us quickly describe the $C$-valued points of $\mathscr S_K$ where $C$ is some complete algebraically closed nonarchimedean field with ring of integers $\mathcal O_C$, such that $\Delta_F^{-1}\in \mathcal O_C$.

Let us recall first the structure of principally polarized abelian varieties $(A,\lambda)$ over $C$. By the semistable degeneration theorem, there is a unique semiabelian scheme $\mathcal A$ over $\mathcal O_C$ with generic fibre $A$. The completion $\widehat{\mathcal A}$ of $\cA$ along its special fibre sits in an exact sequence
\[
0\to \widehat{T}\to \widehat{\mathcal A}\to \widehat{\mathcal B}\to 0
\]
where $\widehat{T}$ is the completion of a torus $T$ over $\mathcal O_C$, and $\widehat{\mathcal B}$ is the completion of an abelian variety $\mathcal B$ over $\mathcal O_C$. In fact, this short exact sequence algebraizes uniquely to a short exact sequence
\[
0\to T\to \mathcal G\to \mathcal B\to 0\ ,
\]
where $\mathcal G$ is the so-called Raynaud extension. The polarization induces a line bundle $\mathcal L$ on $A\times A$; its pullback to the generic fibre of $\widehat{\mathcal G}\times \widehat{\mathcal G}$ arises from a line bundle on the generic fibre of $\mathcal B\times \mathcal B$, inducing a principal polarization of the generic fibre of $\mathcal B$, and thus on $\mathcal B$ itself by properness. Thus, $\mathcal B$ carries a canonical principal polarization $\lambda_{\mathcal B}: \mathcal B\cong \mathcal B^\vee$.

Let $X$ be the cocharacter group of $T$ (as $\mathcal O_C$ is strictly henselian, this is simply a finite free abelian group). Then the Raynaud extension is given by a map $X\to \mathcal B^\vee$. Using the principal polarization of $\mathcal B$, this gives a map $f_0: X\to \mathcal B$. On the other hand, the map $\widehat{\mathcal G}\to \widehat{\mathcal A}$ extends to a map between the rigid-analytic generic fibres of $\mathcal G$ and $\mathcal A$; this is a covering map, and identifies $A$ as a rigid-analytic variety with a quotient $\mathcal G_\eta/Y$ for some discrete subgroup $Y\subset \mathcal G(C)$. The principal polarization in fact induces an identification $Y\cong X$ under which the composite $Y\to \mathcal G(C)\to \cB(C)$ is identified with the given map $f_0: X\to \cB(\mathcal O_C) = \cB(C)$. The lift $f: X\to \mathcal G(C)$ of $f_0: X\to \cB(C)$ is equivalent to a section of the pullback of the Poincar\'e bundle over $\cB\times \cB^\vee\cong \cB\times \cB$ to $X\times X$; this section has to be symmetric (recalling that the Poincar\'e bundle is symmetric). Moreover, for all $x\in X$, the section at $(x,x)$ must be topologically nilpotent (this condition is independent of a local integral trivialization of the Poincar\'e bundle).

Conversely, given a symmetric lift $f: X\to \mathcal G(C)$ of the given map $X\to B(C)$ satisfying this topological nilpotence condition, one can form the quotient $\mathcal G_\eta/X$ to get a principally polarized abelian variety $A$ over $C$.

If $(A,\lambda)$ carries in addition an $\mathcal O_F$-action, all objects involved will carry an $\mathcal O_F$-action; note that $X$ will then automatically be a finite projective $\mathcal O_F$-module. Regarding level structures, we note that for any integer $N$ invertible in $C$, the $N$-torsion $A[N]=A(C)[N]$ of $A$ carries a canonical filtration
\[
0\subset T(C)[N]\subset \mathcal G(C)[N]\subset A(C)[N]
\]
with associated gradeds
\[
\mathrm{Gr}_{-2} = T(C)[N], \mathrm{Gr}_{-1} = \mathcal G(C)[N]/T(C)[N] = B(C)[N],
\] 
\[
\mathrm{Gr}_0 = A(C)[N] / \mathcal G(C)[N] = X/NX\ .
\]
Moreover, $T(C)[N]=G(C)[N]^\bot$, and $G(C)[N]=T(C)[N]^\bot$, under the Weil pairing on $A(C)[N]$. Thus, any level structure $\eta: A(C)[N]\cong L/N$ will induce a filtration $\mathrm{Z}_N = \{\mathrm{Z}_{N,-2}\subset \mathrm{Z}_{N,-1}\subset L/N\}$ such that $\mathrm{Z}_{N,-2}=\mathrm{Z}_{N,-1}^\bot$ and $\mathrm{Z}_{N,-1} =\mathrm{Z}_{N,-2}^\bot$ (and the filtration lifts to a similar filtration modulo $M$ for all $N|M$). Let $\mathrm{Gr}^{\mathrm{Z}_N}_i$ denote the associated gradeds for $i=-2,-1,0$. Let us pick an $\mathcal O_F$-linear splitting of this sequence, so

\begin{equation}\label{eq:chosen splitting}
L/N L = \mathrm{Z}_{N,-2}\oplus \mathrm{Z}_{N,-1}/\mathrm{Z}_{N,-2}\oplus (L/N)/\mathrm{Z}_{N,-1} = \bigoplus_{i=-2}^0 \mathrm{Gr}^{\mathrm{Z}_N}_i\ .
\end{equation}

In that case, level structures $\eta: A(C)[N]\cong L/N$ inducing the given filtration $\mathrm{Z}_N$ are in bijection with the following data:
\begin{enumerate}
\item $\mathcal O_F$-linear isomorphisms $\eta_{-2}: T(C)[N]\cong \mathrm{Gr}^{\mathrm{Z}_N}_{-2}$, $\eta_{-1}: B(C)[N]\cong \mathrm{Gr}^{\mathrm{Z}_N}_{-1}$, $\eta_0: X/NX\cong \mathrm{Gr}^{\mathrm{Z}_N}_{0}$ such that $\eta_{-2}$ and $\eta_0$ are dual under the canonical pairing, and $\eta_{-1}$ is compatible with the Weil pairing;
\item an $\mathcal O_F$-linear splitting of the short exact sequence
\[
0\to T(C)[N]\to \mathcal G(C)[N]\to B(C)[N]\to 0\ .
\]
This amounts to giving an extension of the given map $f_0: X\to B(C)$ to a map $\tfrac 1N X\to B(C)$. 
\item An $\mathcal O_F$-linear extension of the map $f:X \to \mathcal G(C)$ to a map $\tfrac 1N X\to \mathcal G(C)$
whose composition with $\mathcal{G}(C)\to B(C)$ gives the extension in (2), which amounts to
a splitting $X/NX \to A(C)[N]$,   
\end{enumerate}
subject to the condition that the induced isomorphism
\begin{equation}\label{eq:compatibility}
L/N\cong \bigoplus_{i=-2}^0 \mathrm{Gr}^{\mathrm{Z}_N}_i\cong T(C)[N]\oplus B(C)[N]\oplus X/NX\cong A(C)[N]
\end{equation}
is compatible with the Weil pairing, and that this data lifts to similar data modulo $M$ for all $N|M$. In~\eqref{eq:compatibility},
the first isomorphism is the splitting chosen in~\eqref{eq:chosen splitting}, the second 
is induced by the $\eta_{i}$ for  $i=-2,-1,0$, and the third is induced by the splittings in (2) and (3). 

\subsubsection{Cusp labels}\label{cusp labels} In our situation (using critically that the alternating form $(\cdot,\cdot)$ on $L$ is perfect), we can define a cusp label to be a pair $(\mathrm{Z},X)$ where
\begin{enumerate}
\item The filtration
\[
\mathrm{Z}=\{\mathrm{Z}_{-2}\subset \mathrm{Z}_{-1}\subset L\otimes_{\Z} \widehat{\Z}\}
\]
is $\mathcal O_F$-stable and symplectic in the sense that $\mathrm{Z}_{-1}=\mathrm{Z}_{-2}^\bot$ and $\mathrm{Z}_{-2} = \mathrm{Z}_{-1}^\bot$ with respect to the perfect alternating form $(\cdot,\cdot)$ on $L$.
\item $X$ is a finite projective $\mathcal O_F$-module with an $\mathcal O_F$-linear isomorphism
\[
X\otimes_{\Z} \widehat{\Z}\to \mathrm{Gr}^Z_0 = L\otimes_{\Z} \widehat{\Z}/\mathrm{Z}_{-1}.
\]
\end{enumerate}

In the following, we write $\mathrm{Gr}^{\mathrm{Z}}_{-2} = \mathrm{Z}_{-2}$, $\mathrm{Gr}^{\mathrm{Z}}_{-1} = \mathrm{Z}_{-1}/\mathrm{Z}_{-2}$ and $\mathrm{Gr}^{\mathrm{Z}}_0 = L\otimes_{\Z} \widehat{\Z}/\mathrm{Z}_{-1}$. Then $\mathrm{Gr}^{\mathrm{Z}}_{-2}$ and $\mathrm{Gr}^{\mathrm{Z}}_0$ are in natural perfect duality, and $\mathrm{Gr}^{\mathrm{Z}}_{-1}$ admits a natural perfect pairing. Note also that in (2), one could equivalently ask for an isomorphism $X^\vee\otimes_{\Z}\widehat{\Z}\cong \mathrm{Z}_{-2}$, where $X^\vee = \Hom(X,\mathbb Z)$ is the dual of $X$.

There is an action of $G(\mathbb A_f)$ on cusp labels $(\mathrm{Z},X)$. Indeed, cusp labels are in bijection with pairs $(\mathrm{Z}_{\mathbb Q},X_{\mathbb Q})$ defined similarly but on the rational level, and there is an obvious action on the latter.

\begin{defn}For $K\subset G(\A_f)$ a compact open subgroup, a cusp label at level $K$ is a $K$-orbit of cusp labels $(\mathrm{Z},X)$. If $H\subset G(\A_f)$ is any closed subgroup, a cusp label at level $H$ is a compatible collection of cusp labels at level $K$ for all open compact subgroups $K\subset G(\A_f)$ containing $H$.
\end{defn}

In case $K=K(N)$ as above, we note that the cusp labels at level $K$ can be identified with pairs $(\mathrm{Z}_N,X)$ where $\mathrm{Z}$ is an $\mathcal O_F$-stable filtration
\[
\mathrm{Z}_{N,-2}\subset \mathrm{Z}_{N,-1}\subset L/N
\]
that admits a lift to a filtration $\mathrm{Z}$ as above (in particular, $\mathrm{Z}_{N,-2} = \mathrm{Z}_{N,-1}^\bot$ and $\mathrm{Z}_{N,-1} = \mathrm{Z}_{N,-2}^\bot$), and $X$ is a finite projective $\mathcal O_F$-module with an $\mathcal O_F$-linear isomorphism $X/N\cong \mathrm{Gr}^{\mathrm{Z}_N}_0$.

Given a cusp label $Z=(\mathrm{Z}_N,X)$ at principal level $K=K(N)$, we define the group
\[
\Gamma_Z=\{g\in \mathrm{GL}_{\mathcal O_F}(X)\mid g\equiv 1\mod N\}\ .
\]
Moreover, we define a smaller Shimura variety $\mathscr S_Z$ associated to $Z$ as follows, via its moduli problem. Let $r$ be the $\mathcal O_F$-rank of $X$. Then $\mathscr S_Z^{\mathrm{pre}}/\mathbb Z[\tfrac 1{\Delta_F}]$ parametrizes over a test scheme $S$ quadruples $(B,\iota,\lambda,\eta)$ where
\begin{enumerate}
\item $B$ is an abelian scheme of dimension $[F:\mathbb Q](n-r)$;
\item $\iota: \mathcal O_F\to \mathrm{End}(B)$ is an action making $\Lie B$ free of rank $n-r$ over $\mathcal O_F\otimes_{\mathbb Z} \mathcal O_S$;
\item $\lambda: B\cong B^\vee$ is a principal polarization whose Rosati involution is compatible with complex conjugation on $\mathcal O_F$ along $\iota$;
\item $\eta: \mathrm{Gr}^{\mathrm{Z}_N}_{-1}\to B[N]$ is a map compatible with $\iota$ and such that the diagram
\[\xymatrix{
\mathrm{Gr}^{\mathrm{Z}_N}_{-1}\times \mathrm{Gr}^{\mathrm{Z}_N}_{-1}\ar[r]\ar[d] & \Z/N\Z\ar[d]^{\zeta_N}\\
B[N]\times B[N]\ar[r] & \mu_N
}\]
commutes for some primitive $N$-th root of unity $\zeta_N\in \mathcal O_S$; here, the upper map is the natural perfect pairing on $\mathrm{Gr}^{\mathrm{Z}_N}_{-1}$ and the lower map is the Weil pairing induced by $\lambda$. Moreover, we demand that \'etale locally there is a lifting to such a level structure modulo $N\Delta_F^m$ for all $m\geq 0$.
\end{enumerate}

\noindent Let $\mathscr S_Z$ be the normalization of $\mathscr S_Z^{\mathrm{pre}}$ in $\mathscr S_Z^{\mathrm{pre}}\times \Z[\tfrac 1{\Delta_F N}]$. Note that this is again one of the unitary Shimura varieties of our setup, with $n$ replaced by $n-r$. We will make use of this observation for some inductive arguments.

\subsubsection{Cone decompositions}\label{cone decompositions} Each cusp label $Z=(\mathrm{Z}_N,X)$ at level $K(N)$ determines an $\R$-vector space $M_Z$ of symmetric pairings 
\[
(\cdot,\cdot): X\otimes \mathbb R\times X\otimes \mathbb R\to \R
\]
such that $(av,w) = (v,\overline{a}w)$ for $v,w\in X\otimes \R$, $a\in F$; these are equivalent to Hermitian pairings
\[
\langle\cdot,\cdot\rangle: X\otimes \mathbb R\times X\otimes \mathbb R\to F\otimes \R\ .
\]
We consider the cones $P^+_Z\subset P_Z\subset M_Z$, where $P^+_Z$ is the cone of positive definite Hermitian pairings in $M_Z$ and $P_Z$ is the cone of positive semidefinite Hermitian pairings with $F$-rational radicals\footnote{In general, one would impose the condition of admissible radicals, but in our case the order $\cO_F$ is maximal and having admissible radicals is equivalent to having rational radicals, cf.~\cite[Remark 6.2.5.5]{lan-thesis}.}. There is an action of $\Gamma_Z$ on $M_Z$ and both $P^+_Z$ and $P_Z$ are stable under this action. 

\begin{remark} In our case we can identify $M_Z$ with $[F^+:\Q]$ copies of the space of Hermitian matrices in $M_r(\C)$, where $r$ is the $\cO_F$-rank of $X$. The subspace $P^+_Z$ is then obtained by taking $[F^+:\Q]$ copies of the space of positive definite Hermitian matrices in $M_r(\C)$, but $P_Z$ does not decompose into a product.
\end{remark}

The cusp label $Z$ also determines the $\Z$-lattice $\mathbb{S}_Z\subset M^\vee_Z$ that is the image of $\frac 1N(X\otimes X)$ under the natural map
\[
\tfrac 1N(X\otimes X)\to M^\vee_Z
\]
sending $v\otimes w\in X\otimes X$ to the map taking the alternating form $(\cdot,\cdot)\in M_Z$ to $(v,w)$. Then $\mathbb S_Z$ is stable under the action of $\Gamma_Z$ on $M_Z^\vee$. A \emph{rational polyhedral cone} $\sigma\subset M_Z$ is a subset of the form 
\[
\sigma = \R_{>0}v_1+\ldots +\R_{>0}v_r
\]
for $v_1,\dots,v_r\in \mathbb{S}^\vee_{Z}$. (In particular, $\{0\}$ is a rational polyhedral cone because it is obtained from the empty sum.) A rational polyhedral cone $\sigma\subset M_Z$ is \emph{non-degenerate} if the closure $\bar{\sigma}$ of $\sigma$ does not contain any non-trivial $\R$-vector subspace of $M_Z$.  A rational polyhedral cone $\sigma\subset M_Z$ is \emph{smooth} if it is of the form   
\[
\sigma = \R_{>0}v_1+\ldots +\R_{>0}v_r
\]  
for $v_1,\dots,v_r\in \mathbb{S}^\vee_{Z}$ which extend to a basis of $\mathbb{S}^\vee_{Z}$. A rational polyhedral cone $\tau$ is a \emph{face} of a rational polyhedral cone $\sigma$ if there exists a linear functional $\lambda: M_Z\to \R$ with $\lambda(\sigma)\subset \R_{\geq 0}$ and $\bar{\tau}=\bar{\sigma}\cap \lambda^{-1}(0)$. (This definition implies that $\sigma$ is always a face of itself.)

A rational polyhedral cone $\sigma\subset M_Z$ determines semigroups 
\[
\sigma^\vee = \left\{l\in \mathbb{S}_Z\mid l(v)\geq 0, \forall v\in \sigma \right\},
\]
\[
\sigma^\vee_0= \left\{l\in \mathbb{S}_Z\mid l(v) >0, \forall v\in \sigma \right\},
\]
\[
\sigma^\perp = \left\{l\in \mathbb{S}_Z\mid l(v) = 0, \forall v\in \sigma \right\}.
\]

A $\Gamma_Z$-\emph{admissible rational polyhedral cone decomposition} is a set $\Sigma_{Z}$ of non-degenerate rational polyhedral cones such that
\begin{enumerate}
\item The cones in $\Sigma_{Z}$ are disjoint and $P_{Z}=\cup_{\sigma \in \Sigma_Z}\sigma$. 
\item For each $\sigma\in \Sigma_{Z}$ and each face $\tau$ of $\sigma$ we have $\tau\in \Sigma_Z$. 
\item The set $\Sigma_Z$ is invariant under $\Gamma_Z$ and the set of orbits $\Sigma_Z/\Gamma_Z$ is finite. 
\end{enumerate}  
Such a $\Sigma_Z$ has a $\Gamma_Z$-stable subset $\Sigma_Z^+$ forming a rational polyhedral cone decomposition of $P^+_Z$. We call $\Sigma_Z$ \emph{smooth} if each cone $\sigma \in \Sigma_Z$ is smooth. 

Let us write $Z\leq Z'$ if $Z=(\mathrm{Z}_N,X),Z'=(\mathrm{Z}_{N'},X')$ are cusp labels at level $K$ that admit lifts $(\mathrm{Z},X)$, $(\mathrm{Z}',X')$ such that $\mathrm{Z}_{-2}\subset \mathrm{Z}'_{-2}$ and $X^\vee\subset (X')^\vee$. In that case, fixing such a lift, the injection $X^\vee\to (X')^\vee$ induces a surjection $X^\prime\to X$ and then inclusions $M_{Z'}\subset M_Z$ and $P_{Z'}\subset P_Z$. We say that two admissible rational polyhedral cone decompositions $\Sigma_Z$ and $\Sigma_{Z'}$ are \emph{compatible} if for each $\sigma\in \Sigma_{Z'}$ we also have $\sigma\in \Sigma_Z$ (via the inclusion $M_{Z'}\subset M_Z$). A \emph{compatible family} of cone decompositions at level $K$ is a collection $\Sigma = \left\{\Sigma_Z\right\}$ of $\Gamma_Z$-admissible rational polyhedral cone decompositions for each cusp label $Z$ at level $K$ that are pairwise compatible.

\begin{remark}\label{trivial stabilizer} We can and do assume that each $\Sigma$ is smooth 
and projective, in the sense of~\cite[Definition 7.3.1.1]{lan-thesis}, and that for each cusp label 
$Z$ and $\sigma\in \Sigma^+_{Z}$, its stabilizer $\Gamma_\sigma$ in $\Gamma_Z$ is trivial. 
The fact that compatible families of cone decompositions satisfying 
these properties exist follows from~\cite[Proposition 7.3.1.4]{lan-thesis}. Moreover, we note that such a cone decomposition at some principal level $K(N)$ induces a cone decomposition with the same properties at principal level $K(M)$ for any $N|M$. For the following, we fix some smooth projective cone decomposition $\Sigma$ (satisfying the assumption that all stabilizers $\Gamma_\sigma$ are trivial) at some auxiliary principal level $K(N)$ in the beginning, and pull it back to any other principal level considered.
\end{remark}

\subsubsection{Compactifications}\label{sec:compactifications}
Under the above assumptions, in particular $K=K(N)$, and away from the primes dividing $N$, the Shimura variety $\mathscr{S}_K$ has good toroidal and minimal compactifications $\mathscr{S}^{\tor}_{K,\Sigma}$ and $\mathscr{S}^*_K$, whose properties we summarize below. Actually, we will define a naive extension of the toroidal compactification to unramified primes dividing $N$.

\begin{thm}\label{existence of minimal compactifications} 
There exists a flat, projective, normal scheme $\mathscr{S}^*_K/ \Spec \Z[\tfrac 1{N\Delta_F}]$ 
together with a dense open embedding 
\[
j: \mathscr{S}_K[\tfrac 1N]\hookrightarrow \mathscr{S}^*_K,
\] 
satisfying the following additional properties:  
\begin{enumerate}
\item For each cusp label $Z$ at level $K$, there is a canonical locally closed immersion 
$\mathscr{S}_Z[\tfrac 1N]\hookrightarrow \mathscr{S}_K^*$, where $\mathscr{S}_Z$ is the Shimura variety defined above.
\item The incidence relations among strata are determined by the partial 
order relation on cusp labels: $\mathscr{S}_Z[\tfrac 1N]$ lies in the closure
of $\mathscr{S}_{Z'}[\tfrac 1N]$ if and only if $Z\leq Z'$. 
Moreover, this property is preserved under pullback to any fiber over $\Spec \Z[\tfrac 1{N\Delta_F}]$. 
\item If $K_1=K(N_1),K_2=K(N_2)\subset G(\widehat{\Z})$ with $N_1,N_2\geq 3$ and 
$g\in G(\A_f)$ satisfies $gK_1g^{-1}\subset K_2$ then there is a finite surjective morphism 
\[
[g]: \mathscr{S}^*_{K_1}[\tfrac 1{N_1N_2}]\to \mathscr{S}^*_{K_2}[\tfrac 1{N_1N_2}]
\]
extending the usual morphism $[g]: \mathscr{S}_{K_1}[\tfrac 1{N_1N_2}]\to \mathscr{S}_{K_2}[\tfrac 1{N_1N_2}]$. 
\end{enumerate}
\end{thm}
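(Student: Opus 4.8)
The plan is to deduce the theorem from Lan's arithmetic theory of toroidal and minimal compactifications of PEL-type Shimura varieties, cf.~\cite{lan-thesis}; the only points requiring work beyond this reference are the identification of our normalized moduli problem with one of Lan's integral models and the verification of the Hecke functoriality in~(3) at the level of compactifications. First I would note that over $\Z[\tfrac 1{N\Delta_F}]$ the scheme $\mathscr S_K^{\mathrm{pre}}$ is already smooth, being relatively representable over the Siegel modular variety of principal level $N\geq 3$; hence the normalizations entering the definition of $\mathscr S_K$ (Definition~\ref{def:shimura variety}) do nothing there, and $\mathscr S_K[\tfrac 1N]=\mathscr S_K^{\mathrm{pre}}\times\Z[\tfrac 1{N\Delta_F}]$ is exactly the smooth PEL-type moduli scheme attached to our unitary similitude datum, with hyperspecial level away from $N$ and an \'etale principal $N$-level structure. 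By Proposition~\ref{hasse principle} this datum has a single genus, so this is (a finite disjoint union of copies of) the expected Shimura variety, and Lan's construction applies to it.

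Next I would recall the toroidal compactification: for the fixed smooth projective compatible family $\Sigma$ of \S\ref{cone decompositions}, with all stabilizers $\Gamma_\sigma$ trivial (Remark~\ref{trivial stabilizer}), the theory of degenerations of polarized abelian schemes with $\cO_F$-structure, glued along the combinatorics of cusp labels and cones, produces a projective smooth scheme $\mathscr S^{\tor}_{K,\Sigma}$ over $\Z[\tfrac 1{N\Delta_F}]$ containing $\mathscr S_K[\tfrac 1N]$ as a fibrewise dense open, stratified by locally closed subschemes indexed by pairs $(Z,\sigma)$ with $Z$ a cusp label and $\sigma\in\Sigma_Z^+$, over which the universal abelian scheme degenerates to a semiabelian scheme $\mathcal{A}^{\tor}$ whose torus part has rank $\rk_{\cO_F}X$. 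Writing $\omega$ for the Hodge line bundle (the top exterior power of the invariant differentials of $\mathcal{A}^{\tor}$), one sets
\[
\mathscr S_K^* := \Proj\Big(\bigoplus_{k\geq 0}\Gamma\big(\mathscr S^{\tor}_{K,\Sigma},\,\omega^{\otimes k}\big)\Big).
\]
The substantive input --- which I would import from \cite{lan-thesis} rather than reprove, and which is the deepest ingredient --- is that $\omega$ is semiample on $\mathscr S^{\tor}_{K,\Sigma}$ and ample on $\mathscr S_K[\tfrac 1N]$; this is proved via Fourier--Jacobi/theta expansions along the boundary charts, which are toric and on which $\omega$ is essentially trivial.

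Granting this, the structural properties follow formally. There is a canonical proper surjection $\mathscr S^{\tor}_{K,\Sigma}\to\mathscr S_K^*$; a Koecher-type principle shows that $\Gamma(\mathscr S^{\tor}_{K,\Sigma},\omega^{\otimes k})$ is independent of $\Sigma$, so $\mathscr S_K^*$ is too; $\mathscr S_K^*$ is projective over $\Z[\tfrac 1{N\Delta_F}]$, with $\mathcal{O}(1)$ a power of the (ample) descent of $\omega$; it is normal, being the image of the normal scheme $\mathscr S^{\tor}_{K,\Sigma}$ under a proper map with geometrically connected fibres, hence the $\Proj$ of a normal graded ring; and it is flat over the Dedekind ring $\Z[\tfrac 1{N\Delta_F}]$, because every irreducible component of $\mathscr S_K^*$ dominates the base, being dominated by the flat scheme $\mathscr S^{\tor}_{K,\Sigma}$. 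The open immersion $j$ is the restriction of $\mathscr S^{\tor}_{K,\Sigma}\to\mathscr S_K^*$ to the dense open locus where it is an open immersion. For~(1)--(2): the stratum of $\mathscr S_K^*$ attached to a cusp label $Z$ is the image of the union of the $(Z,\sigma)$-strata over $\sigma\in\Sigma_Z^+$, and inspecting the degeneration data --- the morphism collapses the torus and Raynaud-extension parameters --- identifies it, as a locally closed subscheme, with the smaller unitary Shimura variety $\mathscr S_Z[\tfrac 1N]$ of \S\ref{cusp labels} (itself one of our Shimura varieties with $n$ replaced by $n-r$); the closure relations among these strata encode exactly how a degeneration degenerates further, i.e.\ the order $Z\leq Z'$, and since each stratum is flat over $\Z[\tfrac 1{N\Delta_F}]$, formation of closures commutes with restriction to fibres, giving the last clause of~(2).

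Finally, for~(3): after a compatible choice of cone decompositions, the finite \'etale map $[g]\colon\mathscr S_{K_1}\to\mathscr S_{K_2}$ pulls $\omega$ back to $\omega$, hence induces a homomorphism of graded section rings and thus a morphism $[g]\colon\mathscr S^*_{K_1}[\tfrac 1{N_1N_2}]\to\mathscr S^*_{K_2}[\tfrac 1{N_1N_2}]$ extending it; it is proper (both sides being projective over the base) and quasi-finite --- the latter checked stratum by stratum, reducing by induction on $n$ to finiteness of the corresponding Hecke maps on the boundary Shimura varieties $\mathscr S_Z$ --- hence finite, and surjective since its image is closed and contains the dense open $\mathscr S_{K_2}[\tfrac 1{N_1N_2}]$. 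Beyond citing \cite{lan-thesis}, the points that genuinely require care are this functoriality for an arbitrary $g\in G(\A_f)$ and the persistence of the incidence relations on special fibres; both follow from the flatness over $\Z[\tfrac 1{N\Delta_F}]$ together with the fact that the boundary stratification is defined integrally.
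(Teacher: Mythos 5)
Your overall route is the same as the paper's: the paper's proof is essentially a citation of \cite[\S 7]{lan-thesis}, and your sketch just unwinds Lan's construction (minimal compactification as $\Proj$ of the section ring of the semiample Hodge bundle on a toroidal compactification, with normality, flatness and projectivity following formally). Two points, however, deserve more care than you give them. First, the identification of the boundary stratum with $\mathscr S_Z[\tfrac 1N]$ is precisely the one subtlety the paper isolates: in general the strata of Lan's minimal compactification are not the naive smaller PEL moduli schemes but involve extra finite data attached to the cusp label (quotients/covers recording the filtration, the torus argument and the level structure on the graded pieces), and the clean identification with $\mathscr S_Z$ as defined in \S\ref{cusp labels} uses that the level is principal, that the lattice is self-dual, and Lan's Condition 1.4.3.10, via \cite[Definition 5.4.2.6]{lan-thesis} and \cite[Remark 4.3.3]{lan-stroh}; ``inspecting the degeneration data'' is not enough on its own, and this also matters for matching the normalization and liftability conditions built into Definition~\ref{def:shimura variety}.

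Second, and this is a genuine gap: your argument for (3) assumes that $[g]\colon \mathscr S_{K_1}\to\mathscr S_{K_2}$ is finite \'etale and satisfies $[g]^*\omega\cong\omega$. The theorem allows arbitrary $g\in G(\A_f)$ while only inverting $N_1N_2\Delta_F$, so $g$ may have a nontrivial component at a prime $p$ at which the morphism must still exist on the characteristic-$p$ fibre. There the Hecke correspondence is realized by a $p$-power quasi-isogeny: $[g]$ is finite but not \'etale, and the induced map $[g]^*\omega_{K_2}\to\omega_{K_1}$ is not an isomorphism (its determinant vanishes along a Hasse-type locus), so the graded-ring homomorphism you write down either does not exist or only defines a rational map whose base locus you cannot control this way. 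The correct argument (as in \cite[\S 6.4.3, \S 7.2.5]{lan-thesis}) first extends $[g]$ to toroidal compactifications for suitably refined cone decompositions via the degeneration data, checks compatibility with the boundary stratifications, and then descends to minimal compactifications using $\pi_{K,\Sigma*}\cO_{\mathscr S^{\tor}_{K,\Sigma}}=\cO_{\mathscr S^*_K}$ and the connectedness of the fibres of $\pi_{K,\Sigma}$, rather than via an isomorphism of Hodge bundles; even in the prime-to-$p$ case your passage from a map of section rings to an everywhere-defined morphism of $\Proj$'s needs the toroidal-level map to rule out base points. With these two repairs your write-up matches the intended proof.
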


\begin{proof} This summarizes the main results in~\cite[\S 7]{lan-thesis}. There is one subtlety, namely the identification 
of the boundary stratum $\mathscr{S}_Z$. See~\cite[Definition 5.4.2.6]{lan-thesis} and~\cite[Remark 4.3.3]{lan-stroh} 
for an explanation of why the description of the boundary stratum simplifies in our case, 
when the level $K=K(N)$ is principal, when $p\nmid N\Delta_F$, and when~\cite[Condition 1.4.3.10]{lan-thesis} is satisfied. 
\end{proof}

In the following statement, we assume that for all primes $p$ unramified in $F$, the part of $N$ prime to $p$ is still $\geq 3$. 
(Otherwise, we would have to talk about Deligne--Mumford stacks.)

\begin{thm}\label{existence of toroidal compactifications} Given a choice $\Sigma$ of 
a compatible family of cone decompositions as in \S\ref{cone decompositions} 
(and Remark~\ref{trivial stabilizer}), there exists a projective scheme 
$\mathscr{S}^{\tor}_{K,\Sigma}/\Spec \Z[\tfrac 1{\Delta_F}]$, together with an open dense embedding 
$j^{\tor}_{\Sigma}: \mathscr{S}_K\hookrightarrow \mathscr{S}^{\tor}_{K,\Sigma}$, 
satisfying the following additional properties:
\begin{enumerate}
\item After inverting $N$, the scheme $\mathscr S^{\tor}_{K,\Sigma}[\tfrac 1N]$ is smooth.
\item There is a set-theoretic decomposition 
\[
\mathscr{S}^{\tor}_{K,\Sigma} = \bigsqcup_{Z}\mathscr{S}^{\tor}_{K,\Sigma,Z},
\]
where the disjoint union runs over the set of cusp labels at level $K$ where each $\mathscr S^{\tor}_{K,\Sigma,Z}$ is reduced, flat over $\Z$, and locally closed.
\item Let $\widehat{\mathscr{S}}^{\tor}_{K,\Sigma,Z}$ denote the formal completion of 
$\mathscr{S}^{\tor}_{K,\Sigma}$ along $\mathscr{S}^{\tor}_{K,\Sigma,Z}$. Moreover, pick a splitting
\[
L/N\cong \mathrm{Gr}^{\mathrm{Z}_N}_{-2}\oplus \mathrm{Gr}^{\mathrm{Z}_N}_{-1}\oplus \mathrm{Gr}^{\mathrm{Z}_N}_{0}
\]
of the filtration $\mathrm{Z}_N$. Then we have the following canonical description of the formal completion $\widehat{\mathscr{S}}^{\tor}_{K,\Sigma,Z}$.
There exists an abelian scheme 
\[
C_Z\to \mathscr{S}_Z
\]
together with a compatible action of $\Gamma_Z$. 
There exists a torsor under the split torus with character group $\mathbb{S}_Z$, 
\[
\Xi_Z\to C_Z,
\]
with a compatible action of $\Gamma_Z$. We have 
\[
\Xi_Z = \underline{\Spec}_{\cO_{C_Z}}\bigoplus_{l\in \mathbb{S}_Z}\Psi_Z(l),
\]
where for each $l\in\mathbb{S}_Z$, $\Psi_Z(l)$ is a line bundle on $C_{Z}$ and for each $l,l'\in \mathbb{S}_Z$
there is an isomorphism $\Psi_Z(l)\otimes\Psi_Z(l')\cong \Psi_Z(l+l')$, giving 
$\bigoplus_{l\in \mathbb{S}_Z}\Psi_Z(l)$ the structure of a sheaf of $\cO_{C_Z}$-algebras.
The choice of $\Sigma_Z$ determines a $\Gamma_Z$-equivariant relative torus embedding 
\[
\partial_{Z,\Sigma_Z}: \Xi_{Z}\hookrightarrow \Xi_{Z,\Sigma_Z}.
\]
We let $\mathfrak{X}_{Z,\Sigma_Z}$ denote the formal completion of $\Xi_{Z,\Sigma_Z}$ along the boundary
$\partial_{Z,\Sigma_Z}$ of the relative torus embedding. We may form the quotient $\mathfrak{X}_{Z,\Sigma_Z}/\Gamma_Z$
as a formal scheme and we have a canonical isomorphism of formal schemes 
\[
\widehat{\mathscr{S}}^{\tor}_{K,\Sigma,Z}\simeq \mathfrak{X}_{Z,\Sigma_Z}/\Gamma_Z.
\]

Moreover, there exists a semi-abelian scheme $\cA$ over $\mathscr{S}^{\tor}_{K}$ endowed with an $\cO_F$-action 
and a Raynaud extension $\cG_Z$ over $C_{Z}$ also endowed with an $\cO_F$-action
such that we have a canonical $\cO_F$-equivariant isomorphism of the formal completion 
of $\cA$ with $\cG_Z$ over $\widehat{\mathscr{S}}^{\tor}_{K,\Sigma,Z}\simeq \mathfrak{X}_{Z,\Sigma_Z}/\Gamma_Z$. 
\item There is a stratification
\[
\mathscr{S}^{\tor}_{K,\Sigma}=\bigsqcup_{(Z,[\sigma])}\mathscr{S}^{\tor}_{K,\Sigma,(Z,[\sigma])},
\]
where $Z$ runs over cusp labels at level $K$ and $[\sigma]$ runs over $\Gamma_Z$-orbits in $\Sigma^+_Z$. If $(Z,[\sigma])$ and $(Z',[\sigma'])$
are two such pairs, then $\mathscr{S}^{\tor}_{K,\Sigma,(Z,[\sigma])}$ lies in the closure of 
$\mathscr{S}^{\tor}_{K,\Sigma,(Z',[\sigma'])}$ if and only if $Z\leq Z'$ and 
there are representatives $\sigma,\sigma'$ such that via the inclusion $M_Z\subseteq M_{Z'}$, $\sigma'$ is a face of $\sigma$. 
These incidence relations among strata are preserved under pullback to fibers.
\item Fix a representative $\sigma$ of an orbit $[\sigma]\in \Sigma^+_Z/\Gamma_Z$.
We have the relatively affine toroidal embedding 
\[
\Xi_Z\hookrightarrow \Xi(\sigma):=\underline{\Spec}_{\cO_{C_Z}}\bigoplus_{l\in \sigma^\vee}\Psi_Z(l).
\]
The scheme $\Xi(\sigma)$ has a closed subscheme $\Xi_{\sigma}$, which is 
defined by the ideal sheaf $\underline{\Spec}_{\cO_{C_Z}}\bigoplus_{l\in \sigma_0^\vee}\Psi_Z(l)$
(so naturally isomorphic to $\underline{\Spec}_{\cO_{C_Z}}\bigoplus_{l\in \sigma^{\perp}}\Psi_Z(l)$). Let $\mathfrak{X}_{\sigma}$ 
denote the formal completion of $\Xi(\sigma)$ along $\Xi_{\sigma}$ and $\widehat{\mathscr{S}}^{\tor}_{K,\Sigma,(Z,[\sigma])}$ denote 
the formal completion of $\mathscr{S}^{\tor}_{K,\Sigma}$ along $\mathscr{S}^{\tor}_{K,\Sigma,(Z,[\sigma])}$. Then there is a canonical isomorphism
\[
\widehat{\mathscr{S}}^{\tor}_{K,\Sigma,(Z,[\sigma])}\simeq \mathfrak{X}_{\sigma}.
\]
\item After inverting $N$, there is a projective morphism 
\[
\pi_{K,\Sigma}: \mathscr{S}^{\tor}_{K,\Sigma}[\tfrac 1N]\to \mathscr{S}^*_K
\]
which is the identity on the open subscheme $\mathscr{S}_K[\tfrac 1N]$. For each cusp label $Z$ we have 
\[
\pi_{K,\Sigma}^{-1}(\mathscr{S}_Z)= \mathscr{S}^{\tor}_{K,\Sigma,Z}[\tfrac 1N]
\]
set-theoretically and $\pi_{K,\Sigma*}\cO_{\mathscr{S}^{\tor}_{K,\Sigma}}[\tfrac 1N] = \cO_{\mathscr{S}^*_K}$.
This final equality holds on any fiber over $\Spec \Z[\tfrac 1{N\Delta_F}]$.
\end{enumerate}
\end{thm}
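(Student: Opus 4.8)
The plan is to reduce everything to Lan's construction of arithmetic toroidal compactifications of PEL-type Shimura varieties, \cite[\S 6--7]{lan-thesis}, which already furnishes $\mathscr{S}^{\tor}_{K,\Sigma}$ over $\Spec\Z[\tfrac1{N\Delta_F}]$ together with all of (1)--(6) over that base: the dense open immersion $j^{\tor}_\Sigma$, the stratifications by cusp labels and by pairs $(Z,[\sigma])$ with the stated incidence relations, the Mumford-type formal charts, the degenerating semi-abelian $\cA$ with its Raynaud extension $\cG_Z$, and the projective contraction $\pi_{K,\Sigma}$ to $\mathscr{S}^*_K$ with $\pi_{K,\Sigma*}\cO=\cO_{\mathscr{S}^*_K}$ holding on all fibres. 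The first task is the bookkeeping translating Lan's framework into the streamlined one of \S\ref{cusp labels}--\S\ref{cone decompositions}: under our hypotheses the alternating form on $L$ is perfect, $\cO_F$ is maximal, $p\nmid N\Delta_F$, and \cite[Condition 1.4.3.10]{lan-thesis} holds, so Lan's cusp-label data (a filtration with torus/abelian/trivial graded pieces, plus splitting data) is rigidified to exactly a pair $(\mathrm{Z},X)$, and the objects $M_Z,\mathbb{S}_Z,P_Z,\Sigma_Z,C_Z,\Xi_Z,\Psi_Z(l),\Xi_{Z,\Sigma_Z}$ are identified with their counterparts in \cite{lan-thesis}; this is the same simplification recorded for the minimal compactification in Theorem~\ref{existence of minimal compactifications}, cf.\ \cite[Remark 4.3.3]{lan-stroh}. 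I would also note that the smooth projective choice of $\Sigma$ with trivial stabilizers (Remark~\ref{trivial stabilizer}) is exactly what delivers smoothness in (1) and clean local charts in (5).

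The genuinely new content is the ``naive'' extension across primes $p\mid N$ that are unramified in $F$ (so $p\nmid\Delta_F$). Fix such a $p$ and let $N^{(p)}$ be the prime-to-$p$ part of $N$; by hypothesis $N^{(p)}\geq 3$, and $K(N)\subset K(N^{(p)})$ with $K(N^{(p)})_p=G(\Z_p)$, so by the previous step $\mathscr{S}^{\tor}_{K(N^{(p)}),\Sigma}$ is already defined over $\Z[\tfrac1{\Delta_F}]$ and is smooth over the open $\Spec\Z[\tfrac1{N^{(p)}\Delta_F}]$, which contains the fibre at $p$. Over that open I would define $\mathscr{S}^{\tor}_{K(N),\Sigma}$ to be the normalization of $\mathscr{S}^{\tor}_{K(N^{(p)}),\Sigma}$ in the finite $\mathscr{S}_{K(N^{(p)}),\Q}$-scheme $\mathscr{S}_{K(N),\Q}$, exactly mirroring the definition of $\mathscr{S}_K$. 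Using Lan's finite morphism $\mathscr{S}^{\tor}_{K(N),\Sigma}\to\mathscr{S}^{\tor}_{K(N^{(p)}),\Sigma}$ over $\Z[\tfrac1{N\Delta_F}]$ and normality of the target, this agrees with Lan's compactification over $\Spec\Z[\tfrac1{N\Delta_F}]$ and with itself on overlaps, so the local pieces glue to a scheme over $\Z[\tfrac1{\Delta_F}]$ with $j^{\tor}_\Sigma$ a dense open immersion; being normal with dense generic fibre it has no vertical components and hence is flat over $\Z[\tfrac1{\Delta_F}]$.

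For (3) and (5) I would invoke the moduli-theoretic content of Lan's charts: near a cusp label $Z$ the degenerating family $\cA$ is built functorially from a point of $\mathscr{S}_Z$ (the abelian part), the torus part, and the extension/splitting data parametrized by $C_Z$ and $\Xi_{Z,\Sigma_Z}$. Since $p$ is unramified in $F$ and $p\nmid N^{(p)}$, each constituent --- in particular $\mathscr{S}_Z$, which by \S\ref{cusp labels} is again one of our Shimura varieties with $n$ replaced by $n-r$ --- has the same good integral model over $\Z_{(p)}$, and adding a full level-$p^{v_p(N)}$ structure on $\cA$ amounts to adding compatible level structures on $\mathscr{S}_Z$, on $\cG_Z$ (equivalently on $C_Z$) and on $\Xi_Z$ (the splitting of the $p^{v_p(N)}$-torsion filtration). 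Hence the normalization of $\mathfrak{X}_{Z,\Sigma_Z}/\Gamma_Z$, resp.\ of $\mathfrak{X}_\sigma$, in the level-$K(N)$ cover is again a chart of exactly the shape asserted in (3), resp.\ (5), assembled from the level-$K(N)$ avatars of $\mathscr{S}_Z$, $C_Z$, $\Xi_Z$, $\Psi_Z(l)$. This yields (3) and (5) and, with them, the reducedness and flatness over $\Z$ of the strata in (2), (4) (inherited from the good-reduction charts at level $K(N^{(p)})$), while the incidence relations in (2), (4) are read off the combinatorics of cusp labels and cones and do not see the base. Finally (6) only concerns the situation after inverting $N$, where $\mathscr{S}^{\tor}_{K,\Sigma}[\tfrac1N]$ is literally Lan's compactification, so $\pi_{K,\Sigma}$, the identity $\pi_{K,\Sigma*}\cO=\cO_{\mathscr{S}^*_K}$, and its persistence on fibres are immediate from \cite{lan-thesis}.

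I expect the main obstacle to be exactly the two compatibility checks flagged above: verifying that Lan's rigidified cusp-label combinatorics and boundary charts really do match the compact presentation of \S\ref{cusp labels}--\S\ref{sec:compactifications}, and that normalizing those charts at a prime dividing $N$ reproduces a chart of precisely the asserted shape rather than one visible only after a further normalization. The underlying geometry is entirely Lan's; the work lies in these identifications.
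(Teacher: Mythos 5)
Your proposal takes essentially the same route as the paper's proof. Both reduce to Lan's arithmetic toroidal compactifications over $\Z[\tfrac1{N\Delta_F}]$, and both extend across primes $p\mid N$ (unramified in $F$) by working relatively over the toroidal compactification at prime-to-$p$ level and normalizing. The paper executes the latter step simply by citing Lan's \emph{lan-normalization}, whereas you propose to carry out the normalization from scratch; your third paragraph sketches exactly the content of that reference. The technical point you correctly flag as the obstacle --- that the normalization of a Mumford-type boundary chart $\mathfrak{X}_{Z,\Sigma_Z}$ at a prime dividing $N$ is again a chart of the asserted shape, assembled from level-$K(N)$ avatars of $\mathscr{S}_Z$, $C_Z$, $\Xi_Z$, rather than something visible only after a further normalization --- is precisely what \emph{lan-normalization} establishes, and a moduli-theoretic plausibility argument is a meaningful distance from a proof of it (one has to control the normalization of the algebras of Fourier--Jacobi expansions, verify compatibility with the gluing along strata, etc.). Beyond the difference in rigor there is no genuinely different idea; the one item in the paper's proof you omit that is worth retaining is the explicit identification $C_Z=\underline{\Hom}_{\cO_F}(\tfrac1N X,\cB^\vee)$ via the moduli interpretation of extensions plus level-$N$ splittings, which is used later (e.g.\ in Theorem~\ref{thm:tor Igusa strata}).
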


\begin{proof} Away from the primes dividing $N$, this summarizes results in~\cite[\S 6,\S 7]{lan-thesis}. 
See~\cite[Remark 2.1.8]{lan-stroh} for an explanation why $C_Z\to \mathscr{S}_Z$ is an abelian scheme in our particular case rather 
than an abelian scheme torsor over a finite \'etale cover of $\mathscr{S}_Z$.

More precisely, let us give a construction of the Raynaud extension in part (2). Let $r$ be the $\mathcal O_F$-rank of $X$. Then $\mathscr S_Z$ parametrizes on $S$-valued points quadruples $(B,\iota,\lambda,\eta)$ where $B$ is an abelian scheme of dimension $[F:\mathbb Q](n-r)$, $\iota: \mathcal O_F\to \mathrm{End}(B)$ is an $\mathcal O_F$-action making $\Lie B$ free of rank $n-r$ over $\mathcal O_F\otimes_{\mathbb Z} \mathcal O_S$, $\lambda: B\cong B^\vee$ is a principal polarization whose Rosati involution is compatible with complex conjugation on $\mathcal O_F$ via $\iota$, and $\eta: B[N]\cong \mathrm{Gr}^{\mathrm{Z}_N}_{-1}$ is an isomorphism that \'etale locally lifts to an isomorphism modulo $M$ for all $N|M$.

Let $T$ be the torus with character group $X$. Over $\mathscr S_Z$, the abelian scheme $C_Z\to \mathscr S_Z$ parametrizes the following data:
\begin{enumerate}
\item An $\mathcal O_F$-linear extension
\[
0\to T\to \mathcal G\to \mathcal B\to 0\ .
\]
\item An $\mathcal O_F$-linear splitting of the short exact sequence
\[
0\to T[N]\to \mathcal G[N]\to \mathcal B[N]\to 0\ .
\]
\end{enumerate}
We note that the first piece of data is equivalent to an $\mathcal O_F$-linear map $X\to \mathcal B^\vee$, and then the second piece of data is equivalent to lifting this to a map $\tfrac 1N X\to \mathcal  B^\vee$. As $\tfrac 1N X$ is a finite projective $\mathcal O_F$-module, one sees that
\[
C_Z = \underline{\mathrm{Hom}}_{\mathcal O_F}(\tfrac 1N X,\mathcal  B^\vee)
\]
defines an abelian scheme over $\mathscr S_Z$.

Identifying $\mathcal B^\vee$ with $\mathcal B$ via $\lambda$, we get a map $f_0: \tfrac 1N X\to \mathcal B$ over $C_Z$. The $\mathbb S_Z$-torsor $\Xi_Z\to \mathscr S_Z$ parametrizes ``polarizable'' lifts $f: \tfrac 1N X\to \mathcal G$ of $f_0$. Via the theory of degenerations of abelian varieties, one can after formal completion along the boundary of a corresponding torus compactification construct the quotient
\[
\mathcal A = \mathcal G/X
\]
over $\mathfrak X_{Z,\Sigma_Z}$, as a semiabelian scheme equipped with $\mathcal O_F$-action and a principal polarization away from the boundary. From the construction, $\mathcal A[N]$ admits, away from the boundary, a filtration with graded pieces $T[N]$, $\mathcal B[N]$ and $X/NX$. In fact, the construction makes this filtration split. The similar decomposition
\[
L/N\cong \mathrm{Gr}^{\mathrm{Z}_N}_{-2}\oplus \mathrm{Gr}^{\mathrm{Z}_N}_{-1}\oplus \mathrm{Gr}^{\mathrm{Z}_N}_0
\]
induces a level-$N$-structure on the generic fibre of $\mathcal A$.

At primes dividing $N$, these models have been constructed by Lan, \cite{lan-normalization}. Let $N=p^m N'$ and assume that still $N'\geq 3$. Then one can construct the desired toroidal compactification relatively over the toroidal compactification at level $N'$. On boundary charts, the extra data parametrized is an $\mathcal O_F$-linear splitting of the short exact sequence
\[
0\to T[p^m]\to \mathcal G[p^m]\to \mathcal B[p^m]\to 0
\]
(which is incorporated into the abelian scheme $C_Z\to \mathscr S_Z$) and a polarizable lift of $f_0: \tfrac 1N X\to \mathcal B$ to $f: \tfrac 1N X\to \mathcal G$. Both of these structures extend the corresponding structures present away from characteristic $p$. Away from the toroidal boundary, the given data induce an $\mathcal O_F$-linear map
\[
L/p^m\cong \mathrm{Gr}^{\mathrm{Z}_{p^m}}_{-2}\oplus \mathrm{Gr}^{\mathrm{Z}_{p^m}}_{-1}\oplus \mathrm{Gr}^{\mathrm{Z}_{p^m}}_0\to T[p^m]\oplus \cB[p^m]\oplus X/p^mX\to \mathcal A[p^m]
\]
giving the desired level-$p^m$-structure.
\end{proof}

In the following, we will fix a choice of $\Sigma$ as in Remark~\ref{trivial stabilizer} and leave its choice implicit; in particular, we will simply write $\mathscr S^\tor_K := \mathscr S^\tor_{K,\Sigma}$ for principal level $K=K(N)$.

\subsection{Perfectoid Shimura varieties}\label{sec:perfectoid}

In this section, we recall what is known about perfectoid Shimura varieties in our setup. Fix a prime $p$ that can actually be arbitrary for this section (i.e., even be ramified in $F$). We can take the adic spaces over $\Q_p$ associated with $\mathscr S_{K(N)}^*$ and $\mathscr S_{K(N)}^\tor$; let us denote these simply by a subscript $_{\Q_p}$. We would like to take the inverse limit over levels $K(p^m N)$ as $m\to\infty$. Unfortunately, inverse limits do not exist in the category of adic spaces, but they do in the category of diamonds. As we are mainly interested in \'etale cohomology, which is entirely functorial in diamonds, \cite{scholze-diamonds}, this is good enough for our purposes.

As above, we fix a cone decomposition $\Sigma$ as in Remark~\ref{trivial stabilizer}. The following definition uses that the inverse limits exist in the category of diamonds, see \cite[Lemma 11.22]{scholze-diamonds}.

\begin{defn}\label{def:perfectoid shimura} Let
\[\begin{aligned}
\mathcal S_{K(p^\infty N)}^\ast = \varprojlim_m \mathscr S_{K(p^m N),\Q_p}^{\ast,\diamondsuit},\\
\mathcal S_{K(p^\infty N)}^\tor = \varprojlim_m \mathscr S_{K(p^m N),\Q_p}^{\tor,\diamondsuit}
\end{aligned}\]
in the category of diamonds over $\Spd \Q_p$.
\end{defn}

As both are inverse limits of diamonds associated to qcqs analytic adic spaces, they are spatial diamonds. We have the following theorem. We will not need the part on toroidal compactifications in this paper, but we record it for reassurance.

\begin{thm} The diamonds $\mathcal S_{K(p^\infty N)}^\ast$ and, for a cofinal choice of cone decompositions $\Sigma$, $\mathcal S_{K(p^\infty N)}^\tor$ are representable by perfectoid spaces.
\end{thm}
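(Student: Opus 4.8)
The theorem has two parts: the minimal compactification $\mathcal S^\ast_{K(p^\infty N)}$ is perfectoid, and for a cofinal choice of $\Sigma$, the toroidal compactification $\mathcal S^\tor_{K(p^\infty N)}$ is perfectoid. The first part I would treat as essentially already established in the literature for our PEL setup: the Shimura variety $\mathscr S_{K(p^m N)}$ is, up to restricting to the components cut out by the similitude/level data, a sub-Shimura-datum of a Siegel modular variety, and the perfectoidness of the minimal compactification of the infinite-level Siegel variety is the main theorem of \cite{scholze-weinstein} (as reworked in \cite{scholze-diamonds}); perfectoidness of a closed subspace (here the normalization of a locally closed piece, whose minimal compactification embeds compatibly) follows because closed subspaces and normalizations in finite extensions of perfectoid spaces are again perfectoid, and an inverse limit of such along finite maps stays perfectoid by the tilting-equivalence/almost-purity machinery. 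So the first step is: cite the Siegel case, check that our $\mathscr S^\ast_K$ sits in it compatibly with the tower and with Hodge–Tate period maps, and conclude $\mathcal S^\ast_{K(p^\infty N)}$ is perfectoid.

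\textbf{Second, pass from the minimal to the toroidal compactification using that the latter is, locally, a ``controlled'' modification.} Away from the boundary the two agree, so the issue is purely at the cusps. Using the explicit description of the formal completion $\widehat{\mathscr S}^\tor_{K,\Sigma,(Z,[\sigma])}\simeq \mathfrak X_\sigma$ from Theorem~\ref{existence of toroidal compactifications}, the toroidal boundary chart at infinite level is built from: the smaller Shimura variety $\mathscr S_Z$ (again one of ours, with $n$ replaced by $n-r$, so by induction on $n$ its infinite-level minimal compactification is perfectoid), an abelian-scheme torsor $C_Z$, a split-torus torsor $\Xi_Z$, and a relative torus embedding $\Xi_Z\hookrightarrow \Xi(\sigma)$. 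At infinite $p$-level, the relevant covers are: (a) the $p$-adic Tate-module tower of the abelian scheme $B$ over $\mathscr S_Z$, which makes $C_Z$ (after base change to the perfectoid $\mathcal S^{Z,\ast}_{\infty}$) perfectoid — this is again exactly a ``$A[p^\infty]$ becomes trivial'' statement, handled as in \cite{scholze-weinstein}; (b) the torus torsor $\Xi_Z$, which at infinite level acquires all $p$-power roots and becomes perfectoid (a $\Gm$-torsor with all $p$-power roots adjoined is perfectoid, this is the toric analogue used throughout); (c) the torus embedding $\Xi(\sigma)$, which after adjoining $p$-power roots of the monomial coordinates becomes a perfectoid-space analogue — here one wants the cone decomposition $\Sigma$ to be \emph{cofinal}, i.e.\ fine enough that each $\sigma$ is smooth and the associated toric chart, after $p$-power-root extraction, is the standard perfectoid polydisc/annulus model. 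Assembling these, each boundary formal chart at infinite level is (the formal completion of) a perfectoid space, and perfectoidness is local, so $\mathcal S^\tor_{K(p^\infty N)}$ is perfectoid.

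\textbf{Third, glue.} One has an open cover of $\mathcal S^\tor_{K(p^\infty N)}$ by the interior $\mathcal S_{K(p^\infty N)}$ (perfectoid, being finite étale over an open in the perfectoid $\mathcal S^\ast$, or directly known) and the boundary charts just analyzed; perfectoidness being local on a qcqs analytic adic space (equivalently, being checkable on the spatial diamond via \cite{scholze-diamonds}, Theorem~8.5/Lemma~on being diamantine-plus-perfectoid-locally), this gives the result. One must check the cocycle compatibility of the chart descriptions, but this is inherited from the finite-level statement in Theorem~\ref{existence of toroidal compactifications}(3),(5), base-changed along the pro-étale $\Gamma_Z$- and Tate-module towers.

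\textbf{The main obstacle} is step two, specifically verifying that for a cofinal choice of $\Sigma$ the toroidal boundary charts become perfectoid at infinite level: one must choose $\Sigma$ adapted to the $p$-adic geometry so that the partial toric compactification, after adjoining $p$-power roots of the defining monomials along the tower, is honestly perfectoid rather than merely ``analytic-locally smooth rigid.'' Concretely, the subtlety is that adding $p$-power roots to a toric variety $\Xi(\sigma)$ produces something perfectoid only when $\sigma$ is smooth (so the local ring is a polynomial ring in the monomial coordinates) and when one is careful that the torsor directions and the abelian-variety directions are simultaneously trivialized in a bounded way; controlling boundedness/uniformity of the perfectoid structure across the (finitely many, mod $\Gamma_Z$) cones, and checking the almost-purity/finite-étale descent at the $\Gamma_Z$-quotient $\mathfrak X_\sigma/\Gamma_Z$ — recall $\Gamma_\sigma$ is trivial by Remark~\ref{trivial stabilizer}, which is exactly what makes this quotient harmless — is the technical heart. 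Since we explicitly flagged that we will not use the toroidal statement, I would in fact content myself with a proof of the minimal-compactification case (step one) plus a sketch of steps two–three, which is what the theorem's phrasing (``for a cofinal choice of cone decompositions'') and the remark that ``we will not need to know this'' invite.
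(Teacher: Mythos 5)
Your overall strategy (reduce to known infinite-level perfectoidness results) is in the same spirit as the paper's, but the execution differs in both halves, and the minimal-compactification half has a real gap as written. The paper does \emph{not} argue via a compatible closed embedding of $\mathscr S^\ast_{K(p^mN)}$ into the Siegel minimal compactification: such an embedding is exactly what is delicate for minimal compactifications (cf.\ also Remark~\ref{strong limit}, where the related density statement is noted to be open). Instead it invokes \cite[Theorem 4.1.1]{scholze}, which produces auxiliary spaces $\mathscr S^{\underline\ast}_{K(p^mN)}$, finite under $\mathscr S^\ast_{K(p^mN)}$, whose inverse limit is perfectoid, and then transfers perfectoidness across the finite (non-\'etale) map using \cite[Theorem 1.16 (1)]{bhatt-scholze-prisms}. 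Your phrase that ``normalizations in finite extensions of perfectoid spaces are again perfectoid \dots by the tilting-equivalence/almost-purity machinery'' is precisely where the nontrivial input sits: almost purity only handles finite \'etale covers, and what you actually need is the Andr\'e/Bhatt--Scholze perfectoidization theorem; without that input (and with the infinite-level theorem misattributed to \cite{scholze-weinstein} rather than \cite{scholze}), this step is not justified.

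For the toroidal compactification you take a genuinely different route: a direct analysis of the infinite-level boundary charts (abelian-scheme tower over the smaller Shimura variety by induction on $n$, torus torsor and toric charts acquiring $p$-power roots, then gluing), i.e.\ essentially reproving Pilloni--Stroh in the unitary PEL setting. The paper instead quotes \cite[Th\'eor\`eme 0.4]{pilloni-stroh} for the Siegel toroidal compactification and observes that, for cone decompositions compatible with ones on the Siegel moduli space --- this compatibility, not mere fineness/smoothness of the cones, is what the cofinality clause refers to --- the induced map of toroidal compactifications is a closed immersion at high enough level, so one concludes from the fact that Zariski closed subsets of perfectoid spaces are perfectoid (\cite[Section 2.2]{scholze}). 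Your route is plausible (it is the engine inside Pilloni--Stroh's proof) and would even be more self-contained, but as written it is only a sketch: that the $p$-level tower genuinely extracts $p$-power roots of the toric/monomial coordinates, that the torsor, abelian-scheme and toric directions become simultaneously perfectoid with controlled integral models, and that the charts glue equivariantly are exactly the technical content, and none of it is established by what you wrote. So: minimal case, same approach modulo the missing perfectoidization citation; toroidal case, a different and much longer route, left incomplete where the paper's two-line reduction does the work.
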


\begin{proof} In \cite[Theorem 4.1.1]{scholze}, certain spaces $\mathscr S_{K(p^m N)}^{\underline{\ast}}$ finite under $\mathscr S_{K(p^m N)}^\ast$ were defined such that
\[
\mathcal S_{K(p^\infty N)}^{\underline{\ast}} = \varprojlim_m \mathscr S_{K(p^m N),\Q_p}^{\underline{\ast},\diamondsuit}
\]
is representable by a perfectoid space. Now \cite[Theorem 1.17 (1)]{bhatt-scholze-prisms} ensures that $\mathcal S_{K(p^\infty N)}^\ast$ is itself representable by a perfectoid space.

In the case of $\mathcal S_{K(p^\infty N)}^\tor$, the result follows from \cite[Th\'eor\`eme 0.4]{pilloni-stroh}, at least for cone decompositions that are compatible with cone decompositions on the Siegel moduli space, as the induced map of toroidal compactifications is a closed immersion at high enough level, cf. \cite{lantoroidalzariskiclosed}. (Here, we are using that Zariski closed is the same as strongly Zariski closed, cf.~\cite[Remark 7.5]{bhatt-scholze-prisms}, and that strongly Zariski closed subsets of perfectoid spaces are themselves perfectoid by \cite[Lemma 2.2.2]{scholze}.)
\end{proof}

We note that for any locally spatial diamond $D$ over $\Spd\ \Q_p$, there is a well-defined \'etale site $D_\et$ and a sheaf $\mathcal O_D^+/p$ on $D_\et$ whose pullback to any quasi-pro-\'etale perfectoid space $X\to D$ agrees with $\mathcal O_{X^\sharp}^+/p$, where $X^\sharp$ is the untilt of $X$ over $\mathbb Q_p$ corresponding to the map $X\to D\to \Spd\ \Q_p$. Indeed, it is clear that one can define a quasi-pro-\'etale sheaf this way, but this comes via pullback from the \'etale site when $X$ is a perfectoid space; thus, \cite[Theorem 14.12 (ii)]{scholze-diamonds} applies. If $D=X^\diamondsuit$ for an analytic adic space $X$ over $\Q_p$, then $D_\et\cong X_\et$ and $\mathcal O_D^+/p\cong \mathcal O_X^+/p$ naturally (as this can be checked after pullback to a pro-\'etale perfectoid cover). For a cofiltered inverse limit of spatial diamonds $D_i$ over $\Spd\ \Q_p$ along qcqs transition maps, the inverse limit $D=\varprojlim_i D_i$ is a locally spatial diamond by \cite[Lemma 11.22]{scholze-diamonds}, and denoting by $\pi_i: D\to D_i$ the projections, we have
\[
\mathcal O_D^+/p = \varinjlim_i \pi_i^\ast \mathcal O_{D_i}^+/p.
\]
This can be proved by identifying the stalks at geometric points of $D$; these factor over geometric points of $D_i$, and then one is reduced to the case that $D_i=\Spa(C_i,C_i^+)$ are all adic spectra of complete algebraically closed field $C_i$ over $\mathbb Q_p$ with open and bounded valuation subrings $C_i^+\subset C_i$. Then $D=\Spa(C,C^+)$ where $C^+$ is the $p$-adic completion of $\varinjlim_i C_i^+$ is of the same form, and indeed $C^+/p = \varinjlim_i C_i^+/p$.

We see in particular that we have such an identification of \'etale $\mathcal O^+/p$-sheaves in the situation of Definition~\ref{def:perfectoid shimura}.

\begin{remark}\label{strong limit} In \cite[Theorem 4.1.1]{scholze}, a stronger result is proved, namely that the map
\[
\varinjlim_m \mathcal O_{\mathscr S_{K(p^m N),\Q_p}^{\underline{\ast}}}\to \mathcal O_{\mathcal S_{K(p^\infty N)}^{\underline{\ast}}}
\]
has dense image when evaluated on some (explicit) affinoid cover of the limit. A similar result is known for the toroidal compactification by \cite[Th\'eor\`eme 0.4]{pilloni-stroh}, but for the minimal compactification it is still open (it was claimed by the second author in the original version of \cite[Lecture X]{berkeley-notes}, but the argument was incorrect).
\end{remark}

Inside $\mathscr S_{K(N),\Q_p}^\ast$, we have the locus $\mathscr S_{K(N),\Q_p}^\circ\subset \mathscr S_{K(N),\Q_p}$, contained in the adic space associated to the scheme $\mathscr S_{K(N)}\times \Spec \Q_p$, where the universal abelian variety has good reduction. This is a Hecke-equivariant quasicompact open subspace. If $p$ is a prime of good reduction, this can also be defined as the adic generic fibre of the $p$-adic completion $\mathscr S_{K(N),\Z_p}$ of $\mathscr S_{K(N)}$. We also let
\[
\mathcal S_{K(p^\infty N)}^\circ := \varprojlim_m \mathscr S_{K(p^m N),\Q_p}^{\circ,\diamondsuit}
\]
and note that it is itself a perfectoid space; this case follows directly from \cite[Theorem 4.1.1]{scholze} (and so we even have the stronger assertion from Remark~\ref{strong limit}).

We note that the cohomology of the good reduction locus captures the whole cohomology of the Shimura variety by results of Lan-Stroh, \cite{lan-stroh2}:\footnote{We will actually only use the result when $N$ is prime to $p$, in which case it is a standard consequence of the existence of a compactification with a relative normal crossing boundary divisor, here given by the toroidal compactification.}

\begin{prop}\label{prop:cohom good reduction} Let $C$ be a complete algebraically closed extension of $\Q_p$. For any $N\geq 3$ (not necessarily prime to $p$), the natural Hecke-equivariant map
\[
H^i(\mathscr S_{K(N),\overline{\Q}},\mathbb F_\ell)\to H^i(\mathscr S_{K(N),C}^\circ,\mathbb F_\ell)
\]
is an isomorphism.
\end{prop}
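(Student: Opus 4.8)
The plan is to reduce the statement to smooth and proper base change applied to a good compactification. For arbitrary $N$ this is part of the work of Lan--Stroh~\cite{lan-stroh}; since we only ever apply the result when $N$ is prime to $p$ (cf.\ the footnote), I describe the argument in that case, which is standard. So assume $p\nmid N\Delta_F$, and also $\ell\neq p$ (which is all we need). By Theorem~\ref{existence of toroidal compactifications}, $\mathscr S^{\tor}_{K(N),\mathbb Z_p}$ is smooth and proper over $\mathbb Z_p$, with boundary $\partial$ a relative normal crossings divisor whose complement is the smooth $\mathbb Z_p$-model $\mathscr S_{K(N),\mathbb Z_p}$; moreover $\mathscr S^\circ_{K(N)}$ is the adic generic fibre of the $p$-adic completion of $\mathscr S_{K(N),\mathbb Z_p}$, sitting inside the proper smooth adic space $(\mathscr S^{\tor}_{K(N),\mathbb Q_p})^{\mathrm{ad}}$.

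For $k\geq 0$, let $\partial^{(k)}$ denote the disjoint union of the $k$-fold intersections of the irreducible components of $\partial$, so $\partial^{(0)}=\mathscr S^{\tor}_{K(N),\mathbb Z_p}$; each $\partial^{(k)}$ is again smooth and proper over $\mathbb Z_p$. On the algebraic side, purity provides the residue spectral sequence
\[
E_1^{-k,q}=H^{q-2k}(\partial^{(k)}_{\overline{\mathbb Q}},\mathbb F_\ell(-k))\ \Longrightarrow\ H^{q-k}(\mathscr S_{K(N),\overline{\mathbb Q}},\mathbb F_\ell),
\]
and by smooth and proper base change over $\mathbb Z_p$ (for the schemes $\partial^{(k)}$, and then for their $p$-adic completions) each $H^j(\partial^{(k)}_{\overline{\mathbb Q}},\mathbb F_\ell)$ is canonically isomorphic to $H^j((\partial^{(k)})^{\mathrm{ad}}_C,\mathbb F_\ell)$, where $(\partial^{(k)})^{\mathrm{ad}}_C$ is the generic fibre of the $p$-adic completion of $\partial^{(k)}$. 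On the analytic side there is a matching residue spectral sequence computing $H^\bullet(\mathscr S^\circ_{K(N),C},\mathbb F_\ell)$ from the cohomologies of the generic fibres of the $p$-adic completions of the $\partial^{(k)}$ — morally, the spectral sequence obtained by applying nearby cycles, relative to $\mathbb Z_p$, to the algebraic one. The differentials in the two spectral sequences agree, being the Gysin maps attached to the components of $\partial$, which are compatible with specialization; the $E_1$-terms agree by the previous sentence. Hence the two abutments are isomorphic, which is the claim. Hecke-equivariance is immediate, since $\Sigma$ — and therefore $\mathscr S^{\tor}$, $\partial$ and all the $\partial^{(k)}$ — is fixed compatibly across levels (Remark~\ref{trivial stabilizer}).

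The substantive input is the identification of the analytic-side residue spectral sequence, i.e.\ the assertion that the nearby-cycles complex of $\mathscr S_{K(N),\mathbb Z_p}$ over $\mathbb Z_p$ is lisse along the strata of $\partial_{\overline{\mathbb F}_p}$ with the expected Tate-twisted graded pieces — equivalently, that the nearby cycles of the log-smooth pair $(\mathscr S^{\tor}_{K(N),\mathbb Z_p},\,\partial\cup\text{special fibre})\to(\mathbb Z_p,(p))$ are tame. This is the theory of log-smooth nearby cycles (Nakayama, building on Kato and Illusie). More elementarily, the assertion is local on $(\mathscr S^{\tor}_{K(N),\mathbb Q_p})^{\mathrm{ad}}$, where $\mathscr S^{\tor}_{K(N),\mathbb Z_p}$ looks like a formal polydisc and $\partial$ like a union of coordinate hyperplanes, so that one is reduced to computing the $\mathbb F_\ell$-cohomology of a product of punctured discs — the same computation as on the algebraic side, since $\ell\neq p$. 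This local verification, together with the patching needed to globalize it compatibly with the Gysin differentials, is the part I expect to require the most care.
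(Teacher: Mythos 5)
Your overall strategy---reduce to $p\nmid N$ and exploit the toroidal compactification with its relative normal crossings boundary, smooth proper base change on the closed boundary strata, and tameness---is exactly the ``standard consequence'' that the paper's footnote alludes to, and the algebraic half of your argument is fine. The gap is on the analytic side: nowhere in the proposal is the space $\mathscr S^\circ_{K(N),C}$ actually tied to anything else. The ``matching residue spectral sequence'' is obtained, as you say, by applying nearby cycles on $\mathscr S^{\tor}_{K(N),\Z_p}$ to the algebraic one; but since $\mathscr S^{\tor}$ is proper, proper base change identifies the abutment of that spectral sequence with $H^\ast(\mathscr S^{\tor}_{K(N),\overline{\Q}_p}, Rj_\ast\F_\ell)=H^\ast(\mathscr S_{K(N),\overline{\Q}_p},\F_\ell)$, i.e.\ with the left-hand side of the proposition again. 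The assertion that this spectral sequence also computes $H^\ast(\mathscr S^\circ_{K(N),C},\F_\ell)$ is precisely the statement to be proved, and it does not follow from lisse-ness of nearby cycles along the boundary strata; no amount of local punctured-disc computation on the compactification supplies the bridge to the tube. What is missing is the comparison between the cohomology of $\mathscr S^\circ_{K(N),C}$ (the generic fibre of the $p$-adic completion of the open integral model) and nearby-cycle cohomology on its special fibre, i.e.\ \cite[Theorem 3.5.13]{huber}, which is exactly how the paper makes the argument precise. Once you insert it, your proof in fact simplifies: for $p\nmid N\Delta_F$ the open model is smooth over $\Z_p$, so $R\psi\F_\ell=\F_\ell$ and $H^\ast(\mathscr S^\circ_{K(N),C},\F_\ell)\cong H^\ast(\mathscr S_{K(N),\overline{\F}_p},\F_\ell)$ (using invariance under the extension of algebraically closed residue fields); the proposition then reduces to comparing the Gysin spectral sequences of the NCD complement over $\overline{\Q}_p$ and over $\overline{\F}_p$, with $E_1$-terms matched by smooth proper base change and differentials matched by compatibility of cycle/Gysin classes with specialization---no log-smooth nearby cycles and no delicate patching are needed.

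Two smaller points. First, the proposition is stated for all $N\geq 3$, and the paper's proof (via \cite[Corollary 5.20]{lan-stroh} for the normalized integral models, combined with Huber's comparison) covers $p\mid N$ uniformly; your reduction to $p\nmid N$ is sanctioned by the footnote, but you should say explicitly that you are only proving that case. Second, the toroidal boundary need not be a \emph{simple} normal crossings divisor on the nose (components may self-intersect, depending on $\Sigma$), so in the spectral-sequence argument the $\partial^{(k)}$ should be taken to be the normalizations of the $k$-fold intersection strata; this is harmless but should be stated, since your $E_1$-terms are indexed by irreducible components.
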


\begin{proof} We apply the results of Lan-Stroh, \cite{lan-stroh}, in the case (Nm) of normal integral models defined by normalization over the Siegel moduli problem; let $\mathfrak S_{K(N),\mathbb Z_p}$ be such a model. Then \cite[Corollary 5.20]{lan-stroh} shows that
\[
H^i(\mathscr S_{K(N),\overline{\Q}},\mathbb F_\ell)\cong H^i(\mathfrak S_{K(N),\overline{\mathbb F}_p},R\psi\mathbb F_\ell)
\]
using the nearby cycles $R\psi\mathbb F_\ell$. (This would be clear for proper varieties, and Lan-Stroh prove that even in case of bad reduction one can understand the boundary well enough to justify the assertion.) On the other hand the natural map
\[
H^i(\mathscr S_{K(N),C}^\circ,\mathbb F_\ell)\to H^i(\mathfrak S_{K(N),k},R\psi\mathbb F_\ell)
\]
is always an isomorphism, where $k$ is the residue field, by the comparison of nearby cycles in the algebraic and adic setting, \cite[Theorem 3.5.13]{huber}. We conclude by invariance of nearby cycles and cohomology under the extension of algebraically closed fields $k/\overline{\mathbb F}_p$.
\end{proof}

\subsection{The Hodge--Tate period morphism}\label{sec:hodgetate}

Let $\Fl$ be the adic space over $\mathrm{Spa}\ \Q_p$ associated to the flag variety $\mathrm{Fl}$ parametrizing totally isotropic $F$-linear subspaces of $V$.

\begin{thm}\label{thm:hodgetate} There exists a $G(\A_f)$-equivariant Hodge--Tate period morphism\footnote{The morphisms are equivariant for the natural action of $G(\Q_p)$ 
on $\Fl_{G,\mu}$ and for the trivial action of $G(\A_f^p)$ on $\Fl_{G,\mu}$.} 
fitting in the commutative diagram 
\[\xymatrix{
j: \mathcal{S}^\circ_{K(p^\infty N)} \ar@{^{(}->}[r] \ar[dr]_{\pi^\circ_{\HT}} & \mathcal{S}_{K(p^\infty N)}^* \ar[d]^{\pi_{\HT}^\ast} \\
& \Fl.}
\]
In particular, by projection to the minimal compactification we also get a Hodge--Tate period morphism
\[
\pi_\HT^\tor: \mathcal S_{K(p^\infty N)}^\tor\to \Fl.
\]
\end{thm}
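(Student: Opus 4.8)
The plan is to recall the construction of the Hodge--Tate period morphism on the interior --- where it is by now standard, following \cite{scholze} and \cite{caraiani-scholze} --- to extend it across the boundary of the minimal compactification, and finally to obtain $\pi_\HT^\tor$ by composition with the projection $\mathscr S^\tor\to\mathscr S^\ast$. On the good reduction locus at finite level one has the relative Hodge--Tate exact sequence
\[
0\to \Lie\cA\otimes_{\cO}\cO(1)\to T_p\cA\otimes_{\Z_p}\cO\to \omega_{\cA^\vee}\to 0
\]
of the universal abelian variety, in which $T_p\cA$ is a locally free $\cO_F\otimes_{\Z}\cO$-module of rank $2n$; passing to the inverse limit over $m$, the tautological level structure identifies $T_p\cA$ with $L$, and the submodule $\Lie\cA\otimes\cO(1)\subset L\otimes\cO$ --- an $\cO_F$-stable, totally isotropic direct summand that is locally free of rank $n$ over $\cO_F\otimes\cO$ --- is exactly a point of $\Fl$, defining $\pi_\HT^\circ$. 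The identical recipe applies over the full interior $\mathcal S_{K(p^\infty N)}=\varprojlim_m \mathscr S_{K(p^m N),\Q_p}^{\diamondsuit}$ using the Hodge--Tate decomposition of the rigid-analytic universal abelian variety, which has bad reduction only along a Zariski closed subset, and produces a map $\pi_\HT$ on $\mathcal S_{K(p^\infty N)}$ restricting to $\pi_\HT^\circ$, so that $j$ factors through it. Equivariance is built in: $G(\A_f^p)$ merely permutes the prime-to-$p$ level and hence acts trivially on $\Fl$, while $G(\Q_p)$ acts through the identification $T_p\cA\cong L$, i.e.\ by the tautological action on $\Fl$. So the content of the theorem is the extension of $\pi_\HT$ across the boundary of $\mathscr S^\ast$.

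Next I would fix the Plücker embedding $\Fl\hookrightarrow\PP^r$ attached to the very ample bundle $\cO_{\Fl}(1)$, so that $\Fl$ is Zariski closed in $\PP^r$. One identifies $(\pi_\HT)^\ast\cO_{\Fl}(1)\cong\omega^{\otimes k}$ for some $k\geq 1$, where $\omega=\det\omega_{\cA^\vee}$ (read $\cO_F$-linearly) is the Hodge line bundle; since $\mathscr S_{K(p^m N)}^\ast$ is the $\Proj$ of the graded algebra of sections of powers of $\omega$ (cf.~\cite[\S 7]{lan-thesis}), this bundle extends to an ample bundle $\omega^\ast$ on $\mathscr S_{K(p^m N),\Q_p}^\ast$, compatibly in $m$, and so pulls back to a line bundle $\cL$ on $\mathcal S_{K(p^\infty N)}^\ast$ restricting to $(\pi_\HT)^\ast\cO_{\Fl}(1)$ over the interior. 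The Plücker coordinates pull back to sections $s_0,\dots,s_r\in H^0(\mathcal S_{K(p^\infty N)},\cL)$ with no common zero whose associated morphism to $\PP^r$ is $\pi_\HT$ followed by the inclusion of $\Fl$. The crux --- and the step I expect to be the main obstacle --- is that the $s_I$ are \emph{bounded} near the boundary of $\mathscr S^\ast$: on a boundary chart the universal object degenerates to a semi-abelian scheme $\cA=\cG/X$, on which the Hodge--Tate map annihilates the lattice part $X\otimes\Z_p$, restricts to an isomorphism on the multiplicative part, and agrees with the Hodge--Tate map of the abelian quotient on the remaining piece, so that the $s_I$ remain integral for the natural integral structure on $\omega^\ast$ coming from invariant differentials of the universal semi-abelian scheme. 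Granting this, since $\mathcal S_{K(p^\infty N)}$ is open and dense in the normal perfectoid space $\mathcal S_{K(p^\infty N)}^\ast$ with Zariski closed, nowhere dense complement, Riemann extension for bounded sections of a line bundle on a normal space extends each $s_I$ to $H^0(\mathcal S_{K(p^\infty N)}^\ast,\cL)$; alternatively one passes through the auxiliary finite covers $\mathscr S^{\underline{\ast}}$ of \cite[Theorem 4.1.1]{scholze} on which this is transparent, and descends. This precise control of the degeneration of the Hodge--Tate map at the minimal boundary is the one genuinely delicate input, and is carried out in \cite{scholze}.

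It remains to check that the extended sections still have no common zero --- at a rank-$1$ point of the boundary the associated flag is the span of the full multiplicative part together with the Hodge filtration of the abelian quotient, which again lies in $\Fl$, so some $s_I$ is a unit there --- and that the resulting morphism $\mathcal S_{K(p^\infty N)}^\ast\to\PP^r$, which agrees with $\pi_\HT$ on the dense interior, therefore factors through the Zariski closed subset $\Fl$. This defines $\pi_\HT^\ast$, and uniqueness of the extension together with the equivariance of all the data involved (the Hodge bundle, its integral structure, and the morphisms $[g]$ of Theorem~\ref{existence of minimal compactifications}) gives the asserted $G(\A_f)$-equivariance and the commutativity of the triangle with $j$. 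Finally, $\pi_\HT^\tor$ is defined by composing $\pi_\HT^\ast$ with the inverse limit over $m$ of the projections $\pi_{K(p^m N),\Sigma}\colon\mathscr S_{K(p^m N),\Q_p}^\tor\to\mathscr S_{K(p^m N),\Q_p}^\ast$ from Theorem~\ref{existence of toroidal compactifications}(6).
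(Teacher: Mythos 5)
Your proposal follows the same basic architecture as the references the paper cites: construct $\pi_\HT$ on the interior from the relative Hodge--Tate filtration and the tautological level structure, extend across the boundary of $\mathcal S^\ast$ via the identification of $(\pi_\HT)^\ast\cO_{\Fl}(1)$ with a power of the Hodge bundle together with boundedness of the pulled-back Plücker sections, and define $\pi_\HT^\tor$ by composition with the projection to the minimal compactification. These are exactly the ingredients of \cite[Theorem 4.1.1]{scholze}, and the composition defining $\pi_\HT^\tor$ is precisely what the ``In particular'' of the theorem statement asserts.

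The one place where your presentation is looser than the actual argument is the extension step. You present ``Riemann extension for bounded sections of a line bundle on a normal perfectoid space'' as the primary mechanism, with passage through $\mathscr S^{\underline{\ast}}$ as an alternative. In fact that order is reversed in the literature: there is no off-the-shelf Riemann extension theorem for perfectoid spaces, and even the statement that $\mathcal S^\ast_{K(p^\infty N)}$ is perfectoid is itself established by first showing $\mathcal S^{\underline{\ast}}_{K(p^\infty N)}$ is perfectoid (where the boundedness argument and the ampleness of $\omega^{\underline{\ast}}$ are carried out explicitly) and then appealing to Zariski-closedness results such as \cite[Theorem 1.16 (1)]{bhatt-scholze-prisms} to pass down. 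The residual issue --- showing that the Hodge--Tate period map constructed on $\mathcal S^{\underline{\ast}}_{K(p^\infty N)}$ actually descends along the finite surjection $\mathcal S^{\underline{\ast}}_{K(p^\infty N)}\to\mathcal S^\ast_{K(p^\infty N)}$ --- is exactly the ``extra argument'' the paper flags in \cite[Theorem 3.3.1]{arizona}, and is not something your sketch addresses beyond the word ``descends.'' So the gap, to the extent there is one, is that your primary route invokes a nonexistent general extension theorem, while your fallback route (the correct one) omits the descent step that is the genuinely new content of the theorem relative to \cite{scholze}. The rest --- interior construction, equivariance, $\pi_\HT^\tor$ --- is fine.
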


\begin{proof} This relies on~\cite[Theorem 4.1.1]{scholze} and~\cite[Theorem 2.1.3]{caraiani-scholze}, though the precise statement regarding $\pi_{\HT}^\ast$ requires the extra argument in~\cite[Theorem 3.3.1]{arizona}.
\end{proof}

In \cite{caraiani-scholze}, we also described the fibers of the Hodge--Tate period morphism $\pi_\HT^\circ$ on the good reduction locus. More precisely, assume from now on again that $p$ is unramified in $F$ and let $C$ be a complete algebraically closed nonarchimedean field over $\mathbb Q_p$ with ring of integers $\cO_C$ and residue field $k$, and with a section $k\to \cO_C/p$. Take a point $x\in \Fl(C)$. By \cite[Theorem B]{scholze-weinstein}, the point $x$ corresponds to a $p$-divisible group $\mathbb X_{\cO_C}$ with $G$-structure over $\cO_C$ together with an isomorphism $T_p(\mathbb X_{\cO_C})\cong L\otimes_{\Z}\Z_p$ compatible with $G$-structures. Associated to the special fibre $\mathbb X_k$, we get the perfect Igusa variety
\[
\mathfrak{Ig}^{\mathbb X_k}
\]
defined above, where the level prime to $p$ is given by the part of $N$ prime to $p$. As it is perfect a scheme, it admits a canonical lift to $W(k)$, and thus to $\cO_C$; let us denote by subscript $_C$ the adic generic fibre, which is a perfectoid space over $C$.

\begin{thm}\label{thm:open hodge tate fibers} There is a canonical open immersion
\[
\mathfrak{Ig}^{\mathbb X_k}_C\hookrightarrow (\pi_\HT^\circ)^{-1}(x)
\]
whose image contains all points of rank $1$. In particular, for a prime $\ell\neq p$, there is a canonical Hecke-equivariant isomorphism
\[
(R(\pi_\HT^\circ)_\ast \mathbb F_\ell)_x\cong R\Gamma(\mathfrak{Ig}^{\mathbb X_k},\mathbb F_\ell).
\]
\end{thm}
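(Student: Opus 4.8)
The plan is to follow the strategy of the compact case \cite{caraiani-scholze}, which carries over to the good-reduction locus essentially verbatim. First I would construct the map. Since $\mathfrak{Ig}^{\mathbb X_k}$ is a perfect $\mathbb F_p$-scheme it admits a unique flat formal lift to $W(k)$, hence to $\mathcal O_C$; denote the resulting $p$-adic formal $\mathcal O_C$-scheme by $\mathfrak{Ig}^{\mathbb X_k}_{\mathcal O_C}$, with special fibre $\mathfrak{Ig}^{\mathbb X_k}$ and adic generic fibre $\mathfrak{Ig}^{\mathbb X_k}_C$. Over $\mathfrak{Ig}^{\mathbb X_k}$ sits the pullback of the universal abelian scheme from the leaf $\mathscr C^{\mathbb X_k}\subset\mathscr S_{K(N),k}$, carrying $G$-structure, level-$N^{(p)}$ structure and the tautological isomorphism $\rho\colon\mathcal A[p^\infty]\toisom\mathbb X_k$. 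Using the lift $\mathbb X_{\mathcal O_C}$ of $\mathbb X_k$ determined by $x$ (via \cite[Theorem B]{scholze-weinstein}), together with Serre--Tate theory (Theorem~\ref{thm:Serre-Tate}, applied over the truncations $\mathcal O_C/p^m$) and rigidity of quasi-isogenies (Theorem~\ref{thm:rigidity of quasi-isogenies}), I would deform $(\mathcal A,\rho)$ canonically to an abelian scheme $\widetilde{\mathcal A}$ over $\mathfrak{Ig}^{\mathbb X_k}_{\mathcal O_C}$ with $G$- and level-$N^{(p)}$-structure and an isomorphism $\widetilde{\mathcal A}[p^\infty]\toisom\mathbb X_{\mathcal O_C}$; algebraising the underlying formal abelian scheme (using the polarisation) and passing to the generic fibre yields an abelian variety over $\mathfrak{Ig}^{\mathbb X_k}_C$ whose $p$-adic Tate module is identified with $L\otimes_{\mathbb Z}\mathbb Z_p$ and whose Hodge--Tate filtration is the constant filtration $x$. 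This datum is exactly a map $\mathfrak{Ig}^{\mathbb X_k}_C\to\mathcal S^\circ_{K(p^\infty N)}$ whose composite with $\pi_\HT^\circ$ is the constant map $x$, hence a map $\mathfrak{Ig}^{\mathbb X_k}_C\to(\pi_\HT^\circ)^{-1}(x)$.

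Next I would check that this map is a bijection on rank-$1$ points. For $C'/C$ complete algebraically closed with residue field $k'$, a $(C',\mathcal O_{C'})$-point of $(\pi_\HT^\circ)^{-1}(x)$ is an abelian scheme $A/\mathcal O_{C'}$ with structures and good reduction, plus a trivialisation $T_pA\cong L\otimes_{\mathbb Z}\mathbb Z_p$ under which the Hodge--Tate filtration is $x$; by \cite[Theorem B]{scholze-weinstein} this is the same as an isomorphism $A[p^\infty]\toisom\mathbb X_{\mathcal O_{C'}}$ compatible with the Tate-module trivialisations. Reducing modulo the maximal ideal produces a point of the leaf $\mathscr C^{\mathbb X_k}$ with trivialised $p$-divisible group, i.e.\ a $k'$-point of $\mathfrak{Ig}^{\mathbb X_k}$, and Serre--Tate theory recovers $A$ from this point together with $\mathbb X_{\mathcal O_{C'}}$, inverse to the construction above. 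So the map is bijective on rank-$1$ points; in particular its image contains them all.

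To upgrade this to an open immersion I would argue via formal models, as in the compact case: $(\pi_\HT^\circ)^{-1}(x)$ is, like $\mathfrak{Ig}^{\mathbb X_k}_C$, the adic generic fibre of a $p$-adic formal $\mathcal O_C$-scheme whose reduced special fibre is naturally identified with $\mathfrak{Ig}^{\mathbb X_k}$ (using the integral/perfectoid model of $\mathcal S^\circ_{K(p^\infty N)}$ from \cite{scholze} and the description of its special fibre), and the morphism constructed above is a map of such formal schemes which is an isomorphism on reduced special fibres; since both generic fibres are perfectoid, this forces it to be an open immersion onto its image, which by the previous paragraph is exactly the set of rank-$1$ points. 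I expect this step to be the main obstacle: it is the technical heart, whose compact-case analogue is the core of \cite{caraiani-scholze}, and the delicate point is the integral bookkeeping over the non-Noetherian base $\mathcal O_C$ — algebraising the deformed abelian scheme and pinning down the special fibre of the formal model of the Hodge--Tate fibre.

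Finally, for the cohomological consequence: $\pi_\HT^\circ$ is a quasi-compact, quasi-separated morphism of spatial diamonds, so $(R(\pi_\HT^\circ)_\ast\mathbb F_\ell)_x\cong R\Gamma((\pi_\HT^\circ)^{-1}(x),\mathbb F_\ell)$ by quasi-compact base change for diamonds \cite{scholze-diamonds}. An open immersion of spatial diamonds which is a bijection on rank-$1$ points has complement containing no rank-$1$ point, hence induces an isomorphism on \'etale cohomology with $\mathbb F_\ell$-coefficients; combined with the fact that \'etale cohomology with torsion coefficients is insensitive to perfection and agrees with that of the generic fibre of the canonical lift, this gives the desired isomorphism $(R(\pi_\HT^\circ)_\ast\mathbb F_\ell)_x\cong R\Gamma(\mathfrak{Ig}^{\mathbb X_k},\mathbb F_\ell)$. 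Hecke-equivariance for the prime-to-$p$ Hecke algebra is built into the construction, which is functorial in the level-$N^{(p)}$ structure and compatible with the (trivial) $G(\mathbb A_f^p)$-action on $\Fl$.
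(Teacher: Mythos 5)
The paper does not reprove this statement at all: its proof is the single line ``This is \cite[Theorem 4.4.4]{caraiani-scholze}'', and your steps (1), (2) and (4) — canonically deforming the universal abelian variety over the canonical lift of the perfect Igusa variety via Serre--Tate theory, counting $(C',\mathcal O_{C'})$-points via \cite[Theorem B]{scholze-weinstein}, and deducing the cohomological statement by quasicompact base change — do reconstruct the cited argument, which is also recapitulated in the proof of Theorem~\ref{thm:lift to infinite level} here. The genuine gap is your step (3), the passage from ``bijective on rank-$1$ points'' to ``open immersion''. You propose to exhibit $(\pi_\HT^\circ)^{-1}(x)$ as the generic fibre of a $p$-adic formal $\mathcal O_C$-scheme whose reduced special fibre is $\mathfrak{Ig}^{\mathbb X_k}$ and to conclude from an isomorphism on reduced special fibres. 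No such model is produced, and producing one is essentially the content of the theorem: the special fibre of the infinite-level integral model of $\mathcal S^\circ_{K(p^\infty N)}$ over the leaf does not record a full Igusa level structure, so there is no ``natural identification'' to appeal to. Moreover the inference itself is off: if the map really were an isomorphism on (reduced) special fibres of formal models, one would expect an isomorphism of generic fibres, which is false here — the fibre contains higher-rank points outside the image, which is exactly why the correct statement (made precise in the compactified analogue) is that the fibre is the \emph{canonical compactification} of $\mathfrak{Ig}^{\mathbb X_k}_C$. Relatedly, your claim that the image is ``exactly the set of rank-$1$ points'' cannot be right: the image is open, hence contains many higher-rank points, it just need not contain all of them. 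The mechanism actually used, both in \cite{caraiani-scholze} and in this paper's Theorem~\ref{thm:tor fibers of pi HT}, is a general criterion of the type of Lemma~\ref{check isomorphism}: a map from a quasicompact separated perfectoid space to a proper (or partially proper) diamond that is bijective on $(C',\mathcal O_{C'})$-points identifies the target with the canonical compactification of the source, and the open immersion then comes for free, with no comparison of special fibres of formal models of the fibre required.

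Two further points on step (1). Before Serre--Tate theory can be applied over $\mathfrak{Ig}^{\mathbb X_k}_{\mathcal O_C}$ you must identify $\mathcal A[p^\infty]$ over $\mathfrak{Ig}^{\mathbb X_k}_{\mathcal O_C/p^\epsilon}$ with the base change of $\mathbb X_{\mathcal O_C}$, i.e.\ you need that $\mathbb X_{\mathcal O_C}$ is constant modulo $p^\epsilon$, compatibly with the $G$-structure. Rigidity of quasi-isogenies (Theorem~\ref{thm:rigidity of quasi-isogenies}) only gives a quasi-isogeny; upgrading it to an isomorphism is Proposition~\ref{prop:trivial mod epsilon} and uses Fargues' classification of $G$-bundles on the Fargues--Fontaine curve — a real input, not bookkeeping. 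With that supplied, and with step (3) replaced by the point-counting criterion above, your outline does match the proof the paper is citing.
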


\begin{proof} This is \cite[Theorem 4.4.4]{caraiani-scholze}.
\end{proof}

We also recall the Newton stratification of $\Fl$. Let $B(G_{\Q_p})$ be the Kottwitz set attached to the group $G_{\Q_p}$; see~\cite{kottwitz} and~\cite{rapoport-richartz} for more details on the Kottwitz set. This is equipped with a partial order called the Bruhat order, which we denote by $\leq$. Let $\mu^{-1}$ be a dominant representative of the inverse of $\mu$. Let $B(G_{\Q_p},\mu^{-1})$ denote the set of $\mu^{-1}$-admissible elements in $B(G_{\Q_p})$, cf.~\cite[Definition 3.1.2]{caraiani-scholze}.

For any $b\in B(G_{\Q_p},\mu^{-1})$, let $d_b$ be the dimension of the Igusa variety $\mathrm{Ig}^b$; this is known to be given explicitly by $\langle 2\rho,\nu_b\rangle$, cf.~e.g.~\cite{hamacher}. In particular, $d_{b'}\geq d_b$ whenever $b'\geq b$.

Recall from above that for a complete algebraically closed nonarchimedean extension $C/\Q_p$ and a point $x\in \Fl(C)$, we have a $p$-divisible group $\mathbb X_{\cO_C}$ with $G$-structure over $\cO_C$. In particular, the special fiber $\mathbb X_k$ defines a $p$-divisible group with $G$-structure, and this is classified up to isogeny by an element $b=b(x)\in B(G_{\Q_p},\mu^{-1})$. The following theorem asserts that this defines a reasonable stratification of $\Fl$:

\begin{thm}[\S 3 of~\cite{caraiani-scholze}] There exists a stratification 
\[
\Fl = \bigsqcup_{b\in B(G_{\Q_p},\mu^{-1})} \Fl^b
\]
with locally closed partially proper strata such that $x\in \Fl(C)$ lies in $\Fl^b(C)$ if and only if $b(x)=b$. The dimension of $\Fl^b$ (i.e., the Krull dimension of the locally spectral space $|\Fl^b|$) is given by $d-d_b$.

Moreover, the strata 
\[
\Fl^{\geq b} = \bigsqcup_{b'\geq b}\Fl^{b'}
\]
are closed.
\end{thm}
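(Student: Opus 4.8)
The plan is to work throughout with (locally spatial) diamonds over $\Spa\Q_p$, and to use the description of $C$-points of $\Fl$ by $G$-bundles on the Fargues--Fontaine curve. By~\cite{scholze-weinstein}, a point $x\in\Fl(C)$ is the same datum as a modification, of type $\mu^{-1}$ and at one point of the Fargues--Fontaine curve $X_C$, of the trivial $G$-bundle $\cE_1$; write $\cE_x$ for the resulting bundle. The $p$-divisible group $\mathbb X_{\cO_C}$ attached above to $x$ is recovered from this modification, and the isogeny class of its special fibre is precisely the isomorphism class of $\cE_x$, i.e.~the element $b(x)\in B(G_{\Q_p})$. Since this isomorphism class is unaffected by enlarging $C$ or changing the bounded valuation subring, $b$ refines to a map $|\Fl|\to B(G_{\Q_p})$; it lands in $B(G_{\Q_p},\mu^{-1})$ by Mazur's inequality for modifications of type $\mu^{-1}$, cf.~\cite{rapoport-richartz} (as already recorded above). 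One then sets $\Fl^b:=\{x\in|\Fl|:b(x)=b\}$.

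Next I would show that $\Fl^{\geq b}$ is closed, which is the final assertion of the theorem. This is the semicontinuity of the Harder--Narasimhan polygon in families of bundles on $X_C$: for $G=\GL_n$ it is the theorem of Kedlaya--Liu, and the general case follows by pushing out along a faithful representation $G\hookrightarrow\GL_n$ and reading off $\nu_{b(x)}$ from the slopes of the associated rank-$n$ bundle, exactly as in~\cite[\S3]{caraiani-scholze}; together with constructibility of the strata this gives closedness of $\Fl^{\geq b}$. It follows that $\Fl^{\leq b}=\Fl\setminus\bigcup_{b'\not\leq b}\Fl^{\geq b'}$ is open, as $B(G_{\Q_p},\mu^{-1})$ is finite, so $\Fl^b=\Fl^{\geq b}\cap\Fl^{\leq b}$ is locally closed; and because $\Fl$ is partially proper over $\Spa\Q_p$, so is its open subspace $\Fl^{\leq b}$, and hence also the closed subspace $\Fl^b\subseteq\Fl^{\leq b}$.

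The remaining and hardest point is the dimension formula. Here I would introduce the auxiliary space $\widetilde{\Fl}^b\to\Fl^b$ parametrizing, in addition to $x\in\Fl^b$, a trivialization $\cE_x\toisom\cE_b$. Passing to the inverse modification identifies $\widetilde{\Fl}^b$ with the space of modifications of $\cE_b$ (of type $\mu$, up to inversion) whose target is the trivial bundle $\cE_1$ --- i.e.~with the admissible (crystalline) locus inside the flag variety $\Fl_{G,\mu}$ of the local datum $(G,b,\mu)$. That locus is open in $\Fl_{G,\mu}$ by~\cite{scholze-weinstein}, so $\dim\widetilde{\Fl}^b=\dim\Fl_{G,\mu}=d$. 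On the other hand $\widetilde{\Fl}^b\to\Fl^b$ is by construction a torsor under the group diamond $\underline{\Aut}(\cE_b)$, which is an extension of the locally profinite group $J_b(\Q_p)$ --- of Krull dimension $0$ --- by the positive-slope part of $H^0(X_C,\mathrm{ad}\,\cE_b)$, a positive Banach--Colmez space whose Krull dimension is $\sum_{\langle\alpha,\nu_b\rangle>0}\langle\alpha,\nu_b\rangle=\langle 2\rho,\nu_b\rangle=d_b$; thus $\dim\underline{\Aut}(\cE_b)=d_b$. Since Krull dimension is additive in torsors under group diamonds, $\dim\Fl^b=\dim\widetilde{\Fl}^b-\dim\underline{\Aut}(\cE_b)=d-d_b$.

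The delicate points in the last step --- that $\widetilde{\Fl}^b\to\Fl^b$ is genuinely a $v$-torsor with locally spatial total space, the identification of $\widetilde{\Fl}^b$ with the open admissible locus of a Rapoport--Zink space, and the additivity of Krull dimension along torsors under group diamonds not of finite type --- are precisely what is done in~\cite[\S3]{caraiani-scholze}, whose treatment I would follow.
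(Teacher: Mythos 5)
Your outline follows the same strategy as~\cite[\S 3]{caraiani-scholze}: read off $b(x)$ from the $G$-bundle $\cE_x$ on the Fargues--Fontaine curve, get closedness of $\Fl^{\geq b}$ from semicontinuity of the Harder--Narasimhan polygon, and compute $\dim\Fl^b$ by comparing the infinite-level Rapoport--Zink space (as a space of modifications of $\cE_b$) against $\Fl^b$ via its Hodge--Tate period map. Two points in the write-up need adjusting, though neither affects the conclusion.

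First, the space $\widetilde{\Fl}^b$ that you define --- pairs $(x,\alpha)$ with $x\in\Fl^b$ and $\alpha\colon\cE_x\toisom\cE_b$ --- is not literally the admissible locus $\Fl_{G,\mu}^a$. A point of $\Fl$ already carries a trivialization of the \'etale/trivial bundle (via the isomorphism $T_p(\mathbb X_{\cO_C})\cong L\otimes\Z_p$ from~\cite{scholze-weinstein}), so after composing with $\alpha$ you obtain a modification $\cE_b\dashrightarrow\cE_1$ together with a chosen trivialization of the target. That is the infinite-level Rapoport--Zink space $\cM_{b,\infty}$, which is the pro-\'etale $\underline{G(\Q_p)}$-torsor over $\Fl_{G,\mu}^a$, not $\Fl_{G,\mu}^a$ itself. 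Since $\underline{G(\Q_p)}$ has Krull dimension $0$, the equality $\dim\widetilde{\Fl}^b=d$ you need is unaffected, but the identification should be stated correctly (and this is exactly the space whose Grothendieck--Messing and Hodge--Tate period maps one compares in~\cite{caraiani-scholze}).

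Second, the sentence ``because $\Fl$ is partially proper over $\Spa\,\Q_p$, so is its open subspace $\Fl^{\leq b}$'' is false as a general principle: an affinoid open (e.g.\ a closed disc) inside a proper rigid space is an open subspace of a partially proper space but is itself not partially proper. What saves the argument is that $b(x)$ depends only on the maximal rank-$1$ generization of $x$ (the $p$-divisible group from Scholze--Weinstein lives over $\cO_C$ regardless of $C^+$), so each $\Fl^b$, each $\Fl^{\geq b}$ and hence each $\Fl^{\leq b}$ is stable under both specialization and generization inside $\Fl$. Overconvergent locally closed subsets of a partially proper space are partially proper; that, rather than openness alone, is what gives the partial properness of the strata, and it is the argument actually used in~\cite[\S 3]{caraiani-scholze}.
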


\noindent Since the reflex field of the Shimura datum is $\Q$, the largest element of $B(G_{\Q_p},\mu^{-1})$ is the ordinary one, cf.~\cite[Theorem 1.6.3]{wedhorn-thesis}. We have $\Fl^{\mathrm{ord}} = \Fl(\Q_p)$ (see, for example,~\cite[Proposition 3.3.8]{arizona}) and in particular this stratum is $0$-dimensional.

\subsection{The main argument}\label{subsec:mainargument}

Let us now give the proof of Theorem~\ref{thm:main}. It uses the following inputs. Assume again that $p$ is unramified in $F$ and fix some level $N\geq 3$ prime to $p$ and cone decomposition $\Sigma$ as in Remark~\ref{trivial stabilizer}. For any $b\in B(G_{\Q_p},\mu^{-1})$, we have the associated Igusa variety $\mathrm{Ig}^b = \mathrm{Ig}^{\mathbb X}_{K^p(N)}$ associated to some choice of completely slope divisible $\mathbb X$ in the isogeny class given by $b$. In Section~\ref{sec:compactifications of Igusa varieties}, we define a partial minimal compactification $j:\mathrm{Ig}^b\hookrightarrow \mathrm{Ig}^{b,\ast}$, and set
\[
H^i_{c-\partial}(\mathrm{Ig}^b,\mathbb F_\ell) = H^i(\mathrm{Ig}^{b,*},j_! \mathbb F_\ell).
\]

The first result we need is that $\mathrm{Ig}^{b,*}$ is affine:

\begin{thm}\label{igusa affine} The partial minimal compactification $\mathrm{Ig}^{b,*}$ is affine.
\end{thm}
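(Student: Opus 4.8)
The plan is to reduce, by an isogeny argument carried out on the Igusa varieties themselves, to the case of a single well-chosen $p$-divisible group in the given isogeny class, for which the associated leaf is an Ekedahl--Oort stratum, and then to quote the affineness of partial minimal compactifications of Ekedahl--Oort strata.

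First I would record how affineness propagates through the tower of Igusa varieties and their partial minimal compactifications. The finite-level Igusa varieties $\mathrm{Ig}^{\mathbb X}_m$ are finite \'etale over the leaf $\mathscr C^{\mathbb X}$, with finite transition maps in $m$; $\mathfrak{Ig}^{\mathbb X}$ is the perfection of $\varprojlim_m \mathrm{Ig}^{\mathbb X}_m$; and $\mathfrak{Ig}^{\mathbb X}\to \mathscr C^{\mathbb X}_\perf$ is a pro-finite $\Gamma_{\mathbb X}$-torsor. By construction of the partial minimal compactifications in Section~\ref{sec:compactifications of Igusa varieties} (normalization inside $\mathscr S^{\ast}_{K,k}$, resp. its perfection), the analogous statements hold after adding the superscript $\ast$. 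Since affineness of a qcqs scheme is insensitive to passing to the perfection, since a scheme affine over an affine scheme is affine, since a cofiltered limit of affine schemes along affine transition maps is affine, and since affineness descends along faithfully flat affine morphisms, the following become equivalent for a fixed $\mathbb X$: $\mathrm{Ig}^{b,\ast}$ is affine; $\mathscr C^{\mathbb X,\ast}$ is affine; $\mathfrak{Ig}^{\mathbb X,\ast}$ is affine (for the last equivalence one uses the torsor $\mathfrak{Ig}^{\mathbb X,\ast}\to \mathscr C^{\mathbb X,\ast}_\perf$ together with fpqc descent). Thus it suffices to prove $\mathscr C^{\mathbb X,\ast}$ affine for one $\mathbb X$ in each isogeny class.

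At this point I would invoke the invariance of the partial minimal compactification of the Igusa variety under isogenies, established in Section~\ref{sec:compactifications of Igusa varieties}: an isogeny $\mathbb X\to \mathbb X'$ compatible with the $G$-structure induces an isomorphism $\mathfrak{Ig}^{\mathbb X,\ast}\cong \mathfrak{Ig}^{\mathbb X',\ast}$. Unlike the toroidal compactification, which is only invariant under isogenies inducing isomorphisms on \'etale quotients, the minimal compactification is invariant under all isogenies, since it is obtained by normalization from $\mathfrak{Ig}^{\mathbb X}$, which depends only on the isogeny class. Combined with the equivalences above, it is then enough to exhibit a single $\mathbb X_0$ in the isogeny class for which $\mathscr C^{\mathbb X_0,\ast}$ is affine.

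That $\mathbb X_0$ is chosen so that its leaf becomes an Ekedahl--Oort stratum. By the existence of fundamental Ekedahl--Oort strata (Nie, \cite{nie-fundamental}), there is an $\mathbb X_0$ in the isogeny class whose leaf $\mathscr C^{\mathbb X_0}\subset \mathscr S_{K,k}$ coincides with an Ekedahl--Oort stratum $\mathscr S^{w}_{K,k}$; passing to partial minimal compactifications (both sides being the normalization of the closure of the same locally closed subscheme in $\mathscr S^{\ast}_{K,k}$) identifies $\mathscr C^{\mathbb X_0,\ast}$ with $\mathscr S^{w,\ast}_{K,k}$, which is affine by the theorem of Boxer \cite{boxer} (obtained independently by Goldring--Koskivirta \cite{goldring-koskivirta}) that partial minimal compactifications of Ekedahl--Oort strata are affine. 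Unwinding the reductions of the previous paragraphs then yields that $\mathrm{Ig}^{b,\ast}$ is affine. The part requiring genuine work is not this bookkeeping but the geometric inputs of Section~\ref{sec:compactifications of Igusa varieties}: the construction of well-behaved partial minimal compactifications of Igusa varieties and their isogeny-invariance, together with the verification that, for a fundamental leaf, this compactification agrees with Boxer's partial minimal compactification of the corresponding Ekedahl--Oort stratum.
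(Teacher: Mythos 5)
Your overall strategy—reduce to a fundamental choice of $\mathbb X_0$ via isogeny-invariance, then quote Boxer—is the same as the paper's, but the reduction step as you've phrased it has a circularity problem, and it also relies on a claim that the paper explicitly declines to prove.

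The crux is the assertion that $\mathfrak{Ig}^{\mathbb X,\ast}$ is invariant under isogenies, which you use to transfer affineness from $\mathscr C^{\mathbb X_0,\ast}$ to $\mathscr C^{\mathbb X,\ast}$. First, the stronger claim—invariance under \emph{all} isogenies because ``$\mathfrak{Ig}^{\mathbb X,\ast}$ is obtained by normalization from $\mathfrak{Ig}^{\mathbb X}$, which depends only on the isogeny class''—is not valid as stated: the normalization is taken \emph{relative to} $\mathscr C^{\mathbb X,\ast}$, which certainly depends on $\mathbb X$ and not just on its isogeny class, so the ambient space changes. The paper in fact flags exactly this as an open expectation (see the remark after Proposition~\ref{perfect min comp}). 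Second, and more fundamentally, even the restricted isogeny-invariance (for $\phi$ inducing isomorphisms on \'etale and multiplicative parts, which is all you actually need, since such $\phi$ can always be arranged within an isogeny class) is only established in Proposition~\ref{perfect min comp} via the identification $\mathfrak{Ig}^{\mathbb X,\ast}=H^0(\mathfrak{Ig}^{\mathbb X,\tor},\cO)$ as a Stein factorization over $\mathscr C^{\mathbb X,\ast}$, and the derivation of this identification already uses the affineness of $\mathscr C^{\mathbb X,\ast}$—the very thing you are trying to prove. So your steps 3' and 4' quietly presuppose the conclusion.

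The paper avoids this circularity by transferring affineness at the level of \emph{toroidal} compactifications: Corollary~\ref{cor:correspondence igusa} gives $\mathfrak{Ig}^{\mathbb X_0,\tor}\cong\mathfrak{Ig}^{\mathbb X,\tor}$ for an isogeny inducing isomorphisms on \'etale/multiplicative parts (proved by a direct comparison of boundary charts, independent of any affineness statement), yielding a correspondence $\mathscr C^{\mathbb X_0,\ast}\leftarrow\mathfrak{Ig}^{\mathbb X,\tor}\to\mathscr C^{\mathbb X,\ast}$ of proper maps finite away from the boundary; then Lemma~\ref{correspondence affine} (essentially a Stein factorization plus Chevalley) propagates affineness from the left to the right, with an induction on the boundary to make the hypotheses of that lemma hold. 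That correspondence lemma is the ingredient your argument is missing. (A small additional nit: the fundamental leaf $\mathscr C^{\mathbb X_0}$ is cited in the paper only as a \emph{closed} subset of the associated Ekedahl--Oort stratum, with affineness inherited via Proposition~\ref{closed immersion partial comp}, rather than as coinciding with it as you write.)
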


\begin{proof} This is Lemma~\ref{basic properties of minimal compactifications}.
\end{proof}

In particular, we get the following result about cohomology:

\begin{prop}\label{concentration compactly supported cohomology} For any $\ell\neq p$, the cohomology group
\[
H^i_{c-\partial}(\mathrm{Ig}^b,\mathbb F_\ell)
\]
is nonzero only for $i\leq d_b=\mathrm{dim}\ \mathrm{Ig}^b$.
\end{prop}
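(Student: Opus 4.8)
The plan is to deduce the statement from Theorem~\ref{igusa affine} together with Artin's affine vanishing theorem (the affine Lefschetz theorem of SGA~4, in the sharper form that includes the dimension bound). By definition $H^i_{c-\partial}(\mathrm{Ig}^b,\mathbb{F}_\ell)=H^i(\mathrm{Ig}^{b,*},j_!\mathbb{F}_\ell)$, and by Theorem~\ref{igusa affine} the partial minimal compactification $\mathrm{Ig}^{b,*}$ is affine. Its dimension equals $\dim\mathrm{Ig}^b=d_b$, since the boundary of the minimal compactification is stratified by (partial minimal compactifications of) Igusa varieties attached to proper Levi subgroups, which have strictly smaller dimension.

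Since $\mathrm{Ig}^b$, and hence $\mathrm{Ig}^{b,*}$, is an infinite-level object (and perfect), I would first pass to a finite-type situation where the classical vanishing theorem applies directly. Write $\mathrm{Ig}^{b,*}=\varprojlim_m\mathrm{Ig}^{b,*}_m$ as a cofiltered limit along finite surjective transition maps of the finite-level partial minimal compactifications constructed in Section~\ref{sec:compactifications of Igusa varieties}; each $\mathrm{Ig}^{b,*}_m$ is of finite type over $k$ of dimension $d_b$, and is affine—indeed $\mathrm{Ig}^{b,*}\to\mathrm{Ig}^{b,*}_m$ is integral and surjective with affine source, so $\mathrm{Ig}^{b,*}_m$ is affine by Chevalley's criterion (this also follows from Lemma~\ref{basic properties of minimal compactifications}). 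The square relating $\mathrm{Ig}^b$, $\mathrm{Ig}^b_m$, $\mathrm{Ig}^{b,*}$, $\mathrm{Ig}^{b,*}_m$ is Cartesian—the open part pulls back to the open part—so, as extension by zero commutes with arbitrary base change, $j_!\mathbb{F}_\ell$ is the pullback of $j_{m,!}\mathbb{F}_\ell$. As \'etale cohomology with $\mathbb{F}_\ell$-coefficients commutes with cofiltered limits of qcqs schemes along affine transition maps, $H^i(\mathrm{Ig}^{b,*},j_!\mathbb{F}_\ell)=\varinjlim_m H^i(\mathrm{Ig}^{b,*}_m,j_{m,!}\mathbb{F}_\ell)$. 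Now $j_{m,!}\mathbb{F}_\ell$ is a constructible $\mathbb{F}_\ell$-sheaf on the affine finite-type $k$-scheme $\mathrm{Ig}^{b,*}_m$ of dimension $d_b$, so Artin vanishing gives $H^i(\mathrm{Ig}^{b,*}_m,j_{m,!}\mathbb{F}_\ell)=0$ for $i>d_b$; passing to the colimit finishes the proof. Alternatively, one can bypass finite level entirely: $\mathbb{F}_\ell$-\'etale cohomology is invariant under the universal homeomorphism from the perfect scheme $\mathrm{Ig}^{b,*}$ to a finite-type model $Y_0$, which is again affine by Chevalley's criterion and of dimension $d_b$, and Artin vanishing applies to $Y_0$.

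The only genuinely nontrivial input is Theorem~\ref{igusa affine}, proved elsewhere in the paper; granting it, the argument is a routine application of Artin vanishing. The one point requiring a little care is the reduction from the infinite-level (perfect) Igusa variety to a scheme of finite type over $k$ where the classical affine Lefschetz theorem is available—but, with $\mathbb{F}_\ell$-coefficients, this is harmless, so I do not expect a real obstacle.
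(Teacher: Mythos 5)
Your proposal is correct and is essentially the paper's argument: the paper's proof of this proposition is precisely "Artin vanishing plus the affineness of $\mathrm{Ig}^{b,\ast}$" (Theorem~\ref{igusa affine}), stated in one line. The extra care you take in reducing from the perfect/infinite-level object to finite-level affine schemes of dimension $d_b$ (via finiteness of the transition maps and commutation of \'etale cohomology with the limit) is a sound and harmless elaboration of the same route, not a different approach.
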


\begin{proof} This is a direct consequence of Artin vanishing and the affineness of $\mathrm{Ig}^{b,\ast}$.
\end{proof}

The next result we need is the following semiperversity result.

\begin{thm}\label{semiperversity} Let $C$ be a complete algebraically closed extension of $\Q_p$ with ring of integers $\cO_C$ and residue field $k$. Consider the Hodge--Tate period map
\[
\pi_\HT^\circ: \mathcal S_{K(p^\infty N),C}^\circ\to \Fl_C .
\]
Any geometric point $\overline{x}$ of $\Fl_C$ has a cofinal collection of affinoid \'etale neighborhoods $U=\Spa(A)\to \Fl_C$ such that denoting $\mathfrak U=\Spf(A^\circ)$, the nearby cycles are semiperverse:
\[
R\psi(R(\pi_\HT^\circ)_\ast \mathbb F_\ell)|_U\in {}^p D^{\geq d}(\mathfrak{U}_k,\mathbb F_\ell).
\]
\end{thm}

\begin{proof} This is Theorem~\ref{thm:semiperversity}. It proceeds by first proving an analogous result for toroidal compactifications, for which one uses an identification of the fibers of the Hodge--Tate period map with partial toroidal compactifications of Igusa varieties.
\end{proof}

Now fix a maximal ideal $\mathfrak m\subset \mathbb T^S$ containing $\ell$, where the finite set $S$ of places of $\Q$ contains $\infty$ and all primes dividing $p\ell N\Delta_F$. Pick a $b$ with $d_b$ minimal such that $H^i(\mathrm{Ig}^b,\mathbb F_\ell)_{\mathfrak m}$ is nonzero; recall that $d_b$ is the dimension of $\mathrm{Ig}^b$. (If several $b$ achieve the same value of $d_b$, pick any of them.) The previous theorem implies the following result that proves a bound going in the other direction than Proposition~\ref{concentration compactly supported cohomology}:

\begin{lem}\label{concentration usual cohomology} If $H^i(\mathrm{Ig}^b,\mathbb F_\ell)_{\mathfrak m}\neq 0$ then $i\geq d_b$.
\end{lem}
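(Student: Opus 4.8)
The plan is to deduce Lemma~\ref{concentration usual cohomology} from the semiperversity Theorem~\ref{semiperversity} combined with the minimality of $d_b$ and a stratification argument on $\Fl$. First I would apply the proper base change theorem (in the adic/formal setting) to express the cohomology $H^i(\mathrm{Ig}^b,\mathbb F_\ell)$ in terms of the Hodge--Tate period map: by Theorem~\ref{thm:open hodge tate fibers}, the stalk of $R(\pi_\HT^\circ)_\ast\mathbb F_\ell$ at a point $x\in\Fl^b(C)$ computes $R\Gamma(\mathfrak{Ig}^{\mathbb X_k},\mathbb F_\ell)$, which (after passing to the completely slope divisible representative and using that $\mathfrak{Ig}^{\mathbb X}$ is the perfection of $\mathrm{Ig}^{\mathbb X}$) has the same $\mathbb F_\ell$-cohomology as $\mathrm{Ig}^b$. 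So the complex $R\psi(R(\pi_\HT^\circ)_\ast\mathbb F_\ell)$ on $\mathfrak{Fl}_k$, restricted to the Newton stratum indexed by $b$, has stalks computing $R\Gamma(\mathrm{Ig}^b,\mathbb F_\ell)$.

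The key point is then a local-cohomology/excision argument along the Newton stratification. Recall from the excerpt that $\Fl^{\geq b}=\bigsqcup_{b'\geq b}\Fl^{b'}$ is closed, that $\dim\Fl^b = d - d_b$, and that $d_{b'}\geq d_b$ whenever $b'\geq b$. I would argue as follows: localize everything at $\mathfrak m$. By minimality of $d_b$ among all $b'$ with $H^\ast(\mathrm{Ig}^{b'},\mathbb F_\ell)_{\mathfrak m}\neq 0$, the stalks of $R\psi(R(\pi_\HT^\circ)_\ast\mathbb F_\ell)_{\mathfrak m}$ vanish on every stratum $\Fl^{b'}$ with $d_{b'} < d_b$; in particular they vanish on the open dense union of strata of dimension $> d - d_b$ in the closure $\overline{\Fl^b}$ (equivalently on $\Fl^{>b}$, since those have $d_{b'} > d_b$... wait, one must be careful: $b' > b$ forces $d_{b'} > d_b$, so $\dim\Fl^{b'} < d-d_b$). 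Hence, after localization at $\mathfrak m$, the complex $R\psi(R(\pi_\HT^\circ)_\ast\mathbb F_\ell)_{\mathfrak m}$ is (co)supported on the closure of $\Fl^b$ inside $\mathfrak{Fl}_k$, a space of Krull dimension $d - d_b$, and its restriction to the open stratum $\Fl^b$ is (the constant-along-the-stratum-up-to-Hecke) complex with cohomology $H^\ast(\mathrm{Ig}^b,\mathbb F_\ell)_{\mathfrak m}$.

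Now combine this with Theorem~\ref{semiperversity}: $R\psi(R(\pi_\HT^\circ)_\ast\mathbb F_\ell)\in {}^pD^{\geq d}(\mathfrak{Fl}_k,\mathbb F_\ell)$, and perverse truncation is compatible with localization at $\mathfrak m$ (it is an exact functor). Semiperversity in degree $\geq d$ means: for every point $y$ of $\mathfrak{Fl}_k$ lying in a locally closed subset of dimension $\delta$, the stalk cohomology $\mathcal H^i$ at $y$ vanishes for $i < d - \delta$. Applying this to a generic point $y$ of the stratum $\Fl^b$ (dimension $\delta = d - d_b$), and using that the stalk there computes $H^i(\mathrm{Ig}^b,\mathbb F_\ell)_{\mathfrak m}$, we get $H^i(\mathrm{Ig}^b,\mathbb F_\ell)_{\mathfrak m} = 0$ for $i < d - (d - d_b) = d_b$, which is exactly the claim. (Here one uses that localization at $\mathfrak m$ does not change which stalks are nonzero in a way that matters, and the Hecke action is locally constant along $\Fl^b$ so "generic point" stalk and geometric stalk carry the same $\mathfrak m$-localized cohomology.)

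The main obstacle I anticipate is the careful bookkeeping of how the perversity bound interacts with the formal model $\mathfrak{Fl}_k$, the nearby cycles $R\psi$, and the localization at $\mathfrak m$ simultaneously — in particular making precise that "the stalk of $R\psi(R\pi^\circ_{\HT\ast}\mathbb F_\ell)$ at a point of the Newton stratum $\Fl^b$ is $R\Gamma(\mathrm{Ig}^b,\mathbb F_\ell)$" requires combining Theorem~\ref{thm:open hodge tate fibers} (which is about geometric fibers over $C$-points of $\Fl$) with the specialization/nearby-cycles formalism on the chosen formal model, and one must check the relevant compatibilities are Hecke-equivariant so that localizing at $\mathfrak m$ is legitimate. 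Once that dictionary is in place, the dimension count using $d_{b'} > d_b$ for $b' > b$ and the single application of the perversity inequality at the open stratum is essentially immediate.
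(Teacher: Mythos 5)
Your argument is correct and essentially coincides with the paper's: by minimality of $d_b$, the localized complex $A=(R(\pi_\HT^\circ)_\ast\mathbb F_\ell)_{\mathfrak m}$ is supported on $\bigsqcup_{b':d_{b'}\geq d_b}\Fl^{b'}$ (a set of dimension $\leq d-d_b$ — not exactly the closure of $\Fl^b$ as you write, but that does not matter for the dimension count), and the semiperversity inequality from Theorem~\ref{semiperversity} at a point of dimension $d-d_b$ gives the vanishing in degrees $<d_b$. The technical issue you flag — passing from the perversity bound on $R\psi A$ over $\mathfrak{Fl}_k$ back to the stalk of $A$ at a point of $\Fl$ — is handled in the paper by computing the stalk of $A$ at a rank-$1$ point of dimension $d-d_b$ as a filtered colimit of the stalks of $R\psi A$ at its specializations, over a cofinal system of formal models as in Theorem~\ref{semiperversity}.
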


\begin{rem} One could prove a variant of Theorem~\ref{semiperversity} for $R\pi_{\HT\ast}^\ast (j_! \mathbb F_\ell)$, where $j$ denotes the Zariski open immersion of the open Shimura variety into its minimal compactification, where one would get an object in ${}^p D^{\leq d}$ at the end. However, this estimate would only yield back Proposition~\ref{concentration compactly supported cohomology}.
\end{rem}

\begin{proof} The proof is identical to the proof of \cite[Corollary 6.1.4]{caraiani-scholze}, but we remark that in \cite{caraiani-scholze}, the authors wanted their perverse sheaves to be constructible, not realizing that the theory works well without constructibility. This explains the circumlocutions involving the quotient by $K_p$ in \cite{caraiani-scholze}.

Let us repeat the argument for the convenience of the reader. Consider the complex of \'etale sheaves $A=(R(\pi_\HT^\circ)_\ast \mathbb F_\ell)_{\mathfrak m}$ on $\Fl_C$. By assumption and Theorem~\ref{thm:open hodge tate fibers} (that also applies to higher rank points; in fact qcqs pushforwards of overconvergent sheaves are overconvergent), we know that it is concentrated on $\bigsqcup_{b',d_{b'}\geq d_b} \Fl^{b'}$, which is of dimension $\leq d-d_b$. In particular, for any formal model $\mathfrak{U}$ of any \'etale $U\to \Fl_C$, the nearby cycles $R\psi A|_U\in D(\mathfrak{U}_k,\mathbb F_\ell)$ are concentrated on a closed subscheme of dimension $\leq d-d_b$: The closure of a subset of the generic fibre of dimension $\delta$ is of dimension at most $\delta$, as the specialization map (from the generic fibre to the special fibre) is specializing. Choosing formal models as in Theorem~\ref{semiperversity}, we note that $R\psi A|_U\in {}^p D^{\geq d}(\mathfrak{U}_k,\mathbb F_\ell)$ as localization at $\mathfrak m$ is a filtered colimit and thus preserves ${}^p D^{\geq d}$. Together, we see that the stalks of $R\psi A|_U$ at all points of dimension $d-d_b$ are concentrated in degrees $\geq d_b$. Now choose a geometric rank $1$ point $\overline{x}$ of $\Fl_C$ of dimension $d-d_b$ and compute the stalk $A_{\overline{x}}$ as a filtered colimit of the stalks $(R\psi A|_U)_{\overline{y}}$ of $R\psi A|_U$ at its specialization $\overline{y}\in \mathfrak{U}_k$ (which will define a point of dimension $d-d_b$ for small enough $U$), over a cofinal system of affinoid \'etale neighborhoods $U$ of $\overline{x}$ as in Theorem~\ref{semiperversity}. This gives the desired result.
\end{proof}

We need two more results about the cohomology of Igusa varieties. These rely on the trace formula for which we have to assume that $F$ contains an imaginary quadratic field $F_0$, that $F^+\neq \mathbb Q$, and we have to fix a character $\varpi: A_{F_0}^\times/F_0^\times\to \C^\times$ such that $\varpi|_{\A^\times/\Q^\times}$ is the quadratic character corresponding to the extension $F_0$. We include all primes above which $\varpi$ is ramified into $S$. We assume moreover that the level $N\geq 3$ is only divisible by primes in $S\setminus \{p\}$, and is divisible by some specific sufficiently large $N_0$ with these properties, cf.~Remark~\ref{choice of N0}.

\begin{thm}\label{thm:igusa comp} Assume that $F^+\neq \Q$, that $p$ is split in the imaginary quadratic field $F_0\subset F$, and that $b\in B(G_{\Q_p},\mu^{-1})$ is such that
\[
H^i(\mathrm{Ig}^b,\mathbb F_\ell)_{\mathfrak m}
\]
is nonzero for exactly one $i$. Then there exists a continuous semisimple Galois representation
\[
\overline\rho_{\mathfrak m}: \mathrm{Gal}(\overline{F}/F)\to \GL_{2n}(\overline{\mathbb F}_\ell)
\]
such that for all primes $v$ dividing a rational prime $q\not\in S$ that splits in $F_0$, the characteristic polynomial of $\overline\rho_{\mathfrak m}(\Frob_v)$ is given by
\[
X^{2n} - T_{1,v}X^{2n-1}+\dots+ (-1)^iq_v^{i(i-1)/2}T_{i,v}X^{2n-i}+\dots + q_v^{n(2n-1)}T_{2n,v}
\]
with notation as in the introduction. Moreover, if $p$ is totally split in $F$ and $\overline\rho_{\mathfrak m}$ is unramified at all places $v$ dividing $p$, such that the Frobenius eigenvalues $\{\alpha_{1,v},\ldots,\alpha_{2n,v}\}$ of $\overline\rho_{\mathfrak m}(\Frob_v)$ satisfy $\alpha_{i,v}\neq p\alpha_{j,v}$ for all $i\neq j$, then $b$ is ordinary.
\end{thm}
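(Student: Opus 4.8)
The plan is to first produce $\overline\rho_{\mathfrak m}$ from the automorphic description of the cohomology of $\mathrm{Ig}^b$, and then read off ordinarity from local--global compatibility at the places above $p$.

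For the existence of $\overline\rho_{\mathfrak m}$: a standard finite-level and universal-coefficient argument --- using that $p\neq\ell$ (so that passing up the Igusa tower is $\ell$-adically harmless), together with topological Nakayama and the admissibility of the cohomology --- upgrades the hypothesis that $H^i(\mathrm{Ig}^b,\F_\ell)_{\mathfrak m}$ is nonzero in exactly one degree $i_0$ to the statements that $H^j(\mathrm{Ig}^b,\overline\Z_\ell)_{\mathfrak m}=0$ for $j\neq i_0$ and that $H^{i_0}(\mathrm{Ig}^b,\overline\Z_\ell)_{\mathfrak m}$ is nonzero and $\ell$-torsion free. Hence the mod-$\ell$ system of Hecke eigenvalues $\mathfrak m$ lifts to a $\overline\Q_\ell$-valued system $\widetilde{\mathfrak m}$ occurring in $H^{i_0}(\mathrm{Ig}^b,\overline\Q_\ell)$ and in no other degree, so with nonzero multiplicity in the virtual $\overline\Q_\ell$-representation $\sum_i(-1)^i[H^i(\mathrm{Ig}^b,\overline\Q_\ell)]$ computed in Section~\ref{igusa}. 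That computation --- the stabilized trace formula of Shin~\cite{shin-igusa,shin-stable,shin-galois}, supplemented by the boundary analysis of Section~\ref{pink} --- identifies $\widetilde{\mathfrak m}$, away from $S$, with the Satake parameters of an automorphic representation $\Pi$ of $\GL_{2n}/F$: the $\GL_{2n}$-part of the base change of a cohomological automorphic representation of the unitary group underlying $G$, an isobaric sum of conjugate self-dual cuspidal representations up to a similitude twist. Here one uses $F\supset F_0$, that $p$ splits in $F_0$ (so that $G(\Q_p)$ is a product of general linear groups, matching the Igusa side), and $F^+\neq\Q$ (so that the geometric side of the trace formula, including its boundary terms, is controlled by the simple shape of the test function at the two infinite places). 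Attaching a Galois representation to $\Pi$ by the standard construction for conjugate self-dual cohomological representations of general linear groups over CM fields --- \cite{shin-galois}, \cite{hltt} --- and semisimplifying its reduction modulo $\ell$ produces a continuous semisimple representation $\overline\rho_{\mathfrak m}\colon\Gal(\overline F/F)\to\GL_{2n}(\overline\F_\ell)$, unramified outside $S$, whose $\Frob_v$-characteristic polynomial is the reduction of the one attached to $\widetilde{\mathfrak m}$, i.e.\ the asserted expression in the $T_{i,v}$.

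For the ordinarity statement, assume in addition that $p$ is totally split in $F$, that $\overline\rho_{\mathfrak m}$ is unramified at every $v\mid p$, and that $\alpha_{i,v}\neq p\alpha_{j,v}$ for all $i\neq j$. Since $p$ is totally split we have $q_v=p$, and the $p$-divisible group $\mathbb X$ defining $b$ decomposes as $\prod_{v\mid p}\mathbb X_v$ with each $\mathbb X_v$ of height $2n$ and dimension $n$ and with $\mathbb X_{\bar v}\cong\mathbb X_v^\vee$. The trace-formula description above constrains the local component $\Pi_v$ for $v\mid p$ by $b$: for the corresponding term to contribute to the cohomology of $\mathrm{Ig}^b$, the Jacquet--Langlands-type transfer of $\Pi_v$ to the factor of $J_b(\Q_p)$ attached to $\mathbb X_v$ must be nonzero, which forces $\Pi_v$ to be a subquotient of a parabolic induction whose Zelevinsky segments are dictated by the slopes of the isoclinic pieces of $\mathbb X_v$. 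If $\mathbb X_v$ is not ordinary it has an isoclinic piece of slope $a/h$ with $h\geq 2$; this either produces a segment of length $\geq 2$ in $\Pi_v$ --- in which case local--global compatibility at $v\mid p$ (the classical case $v\nmid\ell$) exhibits a ratio $\alpha_{i,v}/\alpha_{j,v}=q_v=p$, contradicting the hypothesis --- or else a supercuspidal ingredient, which need not produce such a ratio and is ruled out precisely because $\overline\rho_{\mathfrak m}$, rather than merely $\Pi$, is unramified at $v\mid p$. This last exclusion is \cite[Lemma 6.2.2]{caraiani-scholze}, which holds in the present generality with the minor modifications of Koshikawa that also make the auxiliary condition $\alpha_{i,v}\neq\alpha_{j,v}$ unnecessary (compare the proof of Corollary~\ref{cor:existence Galois single degree}, and \cite{boyergeneric}). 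Hence every $\mathbb X_v$ is ordinary, i.e.\ $b$ is ordinary.

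The bulk of the work, and the step I expect to be hardest, is the input from Sections~\ref{igusa} and~\ref{pink}: establishing the trace-formula identity of virtual representations and --- above all --- its Hecke-equivariance, which is delicate and is handled through an argument with adic spaces, and isolating the boundary contribution of the Igusa varieties by a version of Pink's formula, which is what makes the hypothesis $F^+\neq\Q$ genuinely necessary. On the arithmetic side, the nontrivial point in the ordinarity argument is the exclusion of the supercuspidal-type local components at $p$, where one really needs the unramifiedness of $\overline\rho_{\mathfrak m}$ itself.
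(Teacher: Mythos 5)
Your proposal gets the overall architecture right: lift the mod-$\ell$ eigensystem to characteristic zero by torsion-freeness, describe the alternating sum of cohomology of Igusa varieties automorphically via the stabilized trace formula, attach a Galois representation to the resulting isobaric automorphic representation of $\GL_{2n}/F$, and then deduce ordinarity of $b$ by a local analysis at $v\mid p$ playing off the structure of $J_b(\Q_p)$ against the genericity of $\overline\rho_{\mathfrak m}$. This matches the strategy of Corollary~\ref{cor:existence Galois single degree}.

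There is, however, one genuine gap. You lift $\mathfrak m$ to a characteristic-zero eigensystem occurring in $H^{i_0}(\mathrm{Ig}^b,\overline\Q_\ell)$ and then claim it therefore occurs ``in the virtual representation $\sum_i(-1)^i[H^i(\mathrm{Ig}^b,\overline\Q_\ell)]$ computed in Section~\ref{igusa}.'' But Section~\ref{igusa} computes $[H_c(\mathrm{Ig}^b,\overline\Q_\ell)]$, the alternating sum of \emph{compactly supported} cohomology (see the definition preceding Theorem~\ref{thm:igusa computation}); your hypothesis concerns ordinary cohomology, and the two are not the same on the non-compact Igusa variety. The paper bridges this with Poincar\'e duality: concentration in one degree for $H^\ast(\mathrm{Ig}^b,\F_\ell)_{\mathfrak m}$ implies concentration and torsion-freeness for $H_c^\ast(\mathrm{Ig}^b,\Z_\ell)_{\mathfrak m^\vee}$, where $\mathfrak m^\vee=\iota(\mathfrak m)$ is the eigensystem dual under the anti-involution $[K^SgK^S]\mapsto[K^Sg^{-1}K^S]$. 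Theorem~\ref{thm:igusa computation} then furnishes $\rho_j$ attached to a lift $\psi_j$ of $\mathfrak m^\vee$, and one sets $\overline\rho_{\mathfrak m}:=\overline\rho_{\mathfrak m^\vee}|\mathrm{Art}_F^{-1}|^{1-2n}$. Without this dualization-plus-twist step the characteristic polynomial formula will not match. (Also, Section~\ref{pink} plays no role here; it enters in Theorem~\ref{thm:boundary cohomology}, not in this theorem.)

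On the ordinarity argument: your two-pronged analysis (segment of length $\geq 2$ vs.\ supercuspidal ingredient) is morally sound, but the paper organizes it differently and, I think, more robustly. Rather than treating the two cases separately, the paper first proves via a deformation-theoretic argument (Koshikawa's observation when $p\not\equiv 1\bmod\ell$; and \cite[Lemma 6.2.2]{caraiani-scholze} when $p\equiv 1\bmod\ell$) that the genericity hypothesis on $\overline\rho_{\mathfrak m}|_{F_v}$ forces the characteristic-zero lift $\rho_j|_{F_v}$ to be a direct sum of characters whose ratios are not the cyclotomic character. Once this is known, the contradiction is immediate from the statement that a nontrivial inner form $J_{b_v}$ of a Levi of $\GL_{2n,F_v}$ admits no irreducible smooth representation whose semisimple $L$-parameter is a generic direct sum of characters (\cite[Lemma 5.4.3]{caraiani-scholze}); no case distinction between segments and supercuspidals is needed. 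You correctly flag the deformation theory as the crux for the supercuspidal case, but in the paper it is the unifying mechanism, and your reference to Lemma 6.2.2 as ``the exclusion of supercuspidals'' does not quite capture its role, which is the $p\equiv 1\bmod\ell$ branch of that deformation argument.
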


\begin{proof} This is Corollary~\ref{cor:existence Galois single degree}. It is proved by computing $H^\ast(\mathrm{Ig}^b,\overline{\mathbb Q}_\ell)$, as a virtual representation, in terms of automorphic representations.
\end{proof}

To get concentration in one degree, we need to understand the cohomology of the boundary of Igusa varieties in order to play off upper bounds on $H_{c-\partial}^i$ with lower bounds on $H^i$. This is achieved in Section~\ref{pink} and gives the following result:

\begin{thm}\label{thm:boundary cohomology} Assume that $F^+\neq \Q$, that $p$ is split in the imaginary quadratic field $F_0\subset F$, and that the map
\[
H_{c-\partial}^i(\mathrm{Ig}^b,\mathbb F_\ell)_{\mathfrak m}\to H^i(\mathrm{Ig}^b,\mathbb F_\ell)_{\mathfrak m}
\]
is not an isomorphism for some $i$. Then there exists a continuous semisimple Galois representation
\[
\overline\rho_{\mathfrak m}: \mathrm{Gal}(\overline{F}/F)\to \GL_{2n}(\overline{\mathbb F}_\ell)
\]
such that for all primes $v$ dividing a rational prime $q\not\in S$ that splits in $F_0$, the characteristic polynomial of $\overline\rho_{\mathfrak m}(\Frob_v)$ is given by
\[
X^{2n} - T_{1,v}X^{2n-1}+\dots+ (-1)^iq_v^{i(i-1)/2}T_{i,v}X^{2n-i}+\dots + q_v^{n(2n-1)}T_{2n,v}.
\]
Moreover, if $b$ is not ordinary, then the length of $\overline\rho_{\mathfrak m}$ is at least $3$.
\end{thm}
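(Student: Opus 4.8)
The plan is to use the excision sequence for the partial minimal compactification $j\colon\mathrm{Ig}^b\hookrightarrow\mathrm{Ig}^{b,*}$ to convert the hypothesis into a non-vanishing statement about boundary cohomology, and then to feed the boundary into a construction of Galois representations. Since $H^i_{c-\partial}(\mathrm{Ig}^b,\mathbb F_\ell)=H^i(\mathrm{Ig}^{b,*},j_!\mathbb F_\ell)$ and $H^i(\mathrm{Ig}^b,\mathbb F_\ell)=H^i(\mathrm{Ig}^{b,*},Rj_*\mathbb F_\ell)$, and since the cone of $j_!\mathbb F_\ell\to Rj_*\mathbb F_\ell$ is $i_*i^*Rj_*\mathbb F_\ell$ (where $i$ is the inclusion of $\partial:=\mathrm{Ig}^{b,*}\setminus\mathrm{Ig}^b$; this follows from $i^*j_!=0$ and $j^*j_!=j^*Rj_*=\mathbb F_\ell$), the hypothesis is equivalent to $H^*(\partial,i^*Rj_*\mathbb F_\ell)_{\mathfrak m}\neq 0$. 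I would then invoke the Pink-type description of this boundary cohomology from Section~\ref{pink}: up to a stratification spectral sequence it is assembled, as the cusp label $Z=(\mathrm Z,X)$ ranges over the $\Gamma_Z$-orbits with $r:=\rk_{\mathcal O_F}(X)\geq1$, out of tensor products of the cohomology of the Igusa variety $\mathrm{Ig}^{b'}$ for the smaller quasi-split unitary similitude group $G'=G_{n-r}$ (with $b'$ the element induced by $b$ and $Z$) with the cohomology of the locally symmetric space for a principal congruence subgroup $\Gamma_Z\subset\GL_r(\mathcal O_F)$, the latter taken with coefficients in the local systems arising from the cohomology of the unipotent radical. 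The structural point, which is the main content of Section~\ref{pink}, is that the $\mathbb T^S$-action on each such contribution factors through the constant-term homomorphism $\mathbb T^S\to\mathbb T^S_{G'}\otimes\mathbb T^S_{\GL_r}$, whose effect on Satake parameters sends the parameter attached to $G$ to the direct sum of the parameter for $G'$, the parameter for $\GL_r$, and a conjugate-dual twist of the latter.

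Granting this, there is a cusp label $Z$ with $1\leq r\leq n$ whose contribution contains $\mathfrak m$ in its support, and by the K\"unneth formula over $\overline{\mathbb F}_\ell$ the restriction $\mathfrak m''$ of $\mathfrak m$ to $\mathbb T^S_{\GL_r}$ occurs in the cohomology of the locally symmetric space for $\GL_r/F$ in some local system, while the restriction $\mathfrak m'$ of $\mathfrak m$ to $\mathbb T^S_{G'}$ occurs in $H^*(\mathrm{Ig}^{b'},\overline{\mathbb F}_\ell)$. As $r\leq n$, \cite[Theorem 5.4.3]{scholze}, combined with the Hochschild--Serre spectral sequence to pass from the local system to constant coefficients, produces a continuous semisimple $\overline\sigma\colon\Gal(\overline F/F)\to\GL_r(\overline{\mathbb F}_\ell)$ with the expected Frobenius characteristic polynomials. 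For $\mathfrak m'$ I would prove, by induction on the rank $m\leq n$, the auxiliary statement that any mod-$\ell$ Hecke eigensystem in the support of the cohomology of any Igusa variety for the rank-$m$ quasi-split unitary similitude group (over the same $F$) carries an associated $2m$-dimensional continuous semisimple Galois representation with the expected Frobenius characteristic polynomials; the inductive step handles eigensystems visible in characteristic zero by the computation of Section~\ref{igusa} together with the existence of Galois representations for the relevant automorphic representations, and handles the rest by reducing, via the same boundary formula, to a unitary group of strictly smaller rank and to $\GL_{r_0}/F$ with $r_0<m$, where one applies the inductive hypothesis and \cite[Theorem 5.4.3]{scholze}. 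Applying this with $m=n-r$ gives a continuous semisimple $\overline\rho'\colon\Gal(\overline F/F)\to\GL_{2(n-r)}(\overline{\mathbb F}_\ell)$, and then, by the compatibility of constant terms with Satake transforms, $\overline\rho_{\mathfrak m}:=\overline\sigma\oplus\overline\rho'\oplus(\overline\sigma^{c,\vee}\text{ suitably twisted})$ has exactly the characteristic polynomial demanded in the statement at every place $v\mid q\notin S$ with $q$ split in $F_0$.

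For the length bound, assume $b$ is not ordinary, so the $p$-divisible group $\mathbb X$ with $G$-structure has a non-trivial biconnected part $\mathbb X^{(0,1)}$. In the cusp-label filtration $\mathrm Z_{-2}\subset\mathrm Z_{-1}\subset\mathbb X$ the sub-$p$-divisible group $\mathrm Z_{-2}$ is multiplicative, hence contained in $\mathbb X^{\mu}$, and $\mathbb X/\mathrm Z_{-1}$ is \'etale, hence a quotient of $\mathbb X^{\mathrm{\acute{e}t}}$; therefore $\mathbb X^{(0,1)}$ is precisely the biconnected part of $\mathrm Z_{-1}/\mathrm Z_{-2}$, the $p$-divisible group underlying $b'$. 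Hence $b'$ is again non-ordinary, which forces $n-r\geq1$, so $G'$ is a non-trivial unitary similitude group and $\overline\rho'\neq0$. Since length is additive in direct sums and $\overline\sigma$, $\overline\rho'$, and the twisted $\overline\sigma^{c,\vee}$ are all non-zero, the length of $\overline\rho_{\mathfrak m}$ is at least $3$.

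The main obstacle I expect is establishing the Hecke-equivariance asserted in the first paragraph: identifying the geometric boundary strata of $\mathrm{Ig}^{b,*}$ (accessed through the toroidal compactification) with products of smaller Igusa varieties and $\GL_r$-locally symmetric spaces, compatibly with the unramified Hecke action, amounts to matching the constant-term map on Hecke algebras with the geometric restriction-to-the-boundary maps, which is the technical core of Section~\ref{pink} and is where one needs the adic-space arguments alluded to there in order to avoid a messy direct comparison. A secondary difficulty is the bookkeeping in the induction of the second paragraph: one must ensure that every mod-$\ell$ eigensystem in Igusa cohomology not seen in characteristic zero is genuinely captured by the boundary of a smaller group, and one must keep track of which variant (full, compactly supported, or $c\text{-}\partial$) of the cohomology of $\mathrm{Ig}^{b'}$ appears in Pink's formula, so that the inductive hypothesis applies to it.
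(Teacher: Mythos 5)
Your proposal follows the same overall strategy as the paper's proof (which appears as a case of Theorem~\ref{thm:Galreps Igusa}): the excision argument reducing the hypothesis to non-vanishing of boundary cohomology, the invocation of Theorem~\ref{thm:boundary cohomology igusa} to describe the boundary in terms of smaller Igusa varieties tensored with locally symmetric spaces for $\GL_r/F$, the passage through the unnormalised Satake transform via Lemmas~\ref{lem:restriction of functions} and~\ref{lem:integration along unipotent fibers}, the induction on $n$ with $\GL_r$ handled by \cite[Corollary 5.4.3]{scholze}, and the length bound forced by $r<n$ when $b$ is not ordinary.

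There is, however, a genuine gap in the bridge between your reformulated hypothesis and Theorem~\ref{thm:boundary cohomology igusa}. You reduce the hypothesis to $H^*(\partial,i^*Rj_*\mathbb F_\ell)_{\mathfrak m}\neq 0$, i.e.~\emph{ordinary} cohomology of the (affine, non-proper) boundary. But Theorem~\ref{thm:boundary cohomology igusa} computes the \emph{compactly supported} cohomology $R\Gamma_c$ of each locally closed boundary stratum, and the stratification filtration of $R\Gamma(\partial,-)$ does not yield $R\Gamma_c$-terms on an affine scheme — the open/closed roles are swapped relative to the $R\Gamma_c$-filtration. The paper resolves this by first applying Poincar\'e duality on the smooth variety $\mathrm{Ig}^b$: the hypothesis becomes that $H^i_c(\mathrm{Ig}^b,\mathbb F_\ell)_{\mathfrak m^\vee}\to H^i_c(\mathrm{Ig}^{b,*},Rj_*\mathbb F_\ell)_{\mathfrak m^\vee}$ fails to be an isomorphism for the dual ideal $\mathfrak m^\vee$, and then the $R\Gamma_c$-filtration meshes directly with Theorem~\ref{thm:boundary cohomology igusa}; the dual ideal is undone at the end by the same explicit relation between $\overline\rho_{\mathfrak m}$ and $\overline\rho_{\mathfrak m^\vee}$ used elsewhere in the paper. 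You flag variant-tracking (full vs.~compactly supported vs.~$c$--$\partial$) as a worry only for the \emph{inner} factor $\mathrm{Ig}^{b'}$; the more substantive issue is the outer one, and duality is the missing tool. A smaller imprecision: at infinite level Theorem~\ref{thm:boundary cohomology igusa} has constant $\mathbb F_\ell$-coefficients on the $\GL_r$-locally symmetric space rather than the unipotent-radical local systems you anticipate, because $K^S_N$ has pro-order prime to $\ell$ (as $\ell\in S$), so after taking invariants the unipotent contribution is invisible — this is the content of Lemma~\ref{lem:integration along unipotent fibers}.
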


\begin{proof} This is proved in Section~\ref{pink}.
\end{proof}

\begin{proof}[Proof of Theorem~\ref{thm:main}] Now assume all of our hypotheses: that $F = F_0\cdot F^+$ with $F^+\neq \mathbb{Q}$, that $p$ is totally split in $F$, and that $\mathfrak m$ is so that $\overline\rho_{\mathfrak m}$ is unramified and generic at all places dividing $p$, and of length at most $2$.

Pick $b$ as in Lemma~\ref{concentration usual cohomology}. If $b$ is not ordinary, then the map
\[
H_{c-\partial}^i(\mathrm{Ig}^b,\mathbb F_\ell)_{\mathfrak m}\to H^i(\mathrm{Ig}^b,\mathbb F_\ell)_{\mathfrak m}
\]
is an isomorphism by Theorem~\ref{thm:boundary cohomology}. Combining Lemma~\ref{concentration usual cohomology} with Proposition~\ref{concentration compactly supported cohomology}, we see that both sides are nonzero only for $i=d_b$. But then Theorem~\ref{thm:igusa comp} gives a contradiction unless $b$ is ordinary.

It follows that $H^i(\mathrm{Ig}^b,\mathbb F_\ell)_{\mathfrak m}$ can be nonzero only if $b$ is ordinary. By Lemma~\ref{concentration usual cohomology}, this shows that also in this case $H^i(\mathrm{Ig}^b,\mathbb F_\ell)_{\mathfrak m}\neq 0$ only for $i\geq d=d_b$. By Theorem~\ref{thm:open hodge tate fibers}, this shows that $(R(\pi_\HT^\circ)_\ast \mathbb F_\ell)_{\mathfrak m}$ is concentrated on the ordinary locus $\Fl(\Q_p)$ and in degrees $\geq d$. We see that
\[
H^i(\mathcal S_{K(p^\infty N),C}^\circ,\mathbb F_\ell)_{\mathfrak m}\cong H^i(\Fl,(R(\pi_\HT^\circ)_\ast \mathbb F_\ell)_{\mathfrak m})
\]
is concentrated in degrees $\geq d$. By a Hochschild-Serre spectral sequence, this implies that
\[
H^i(\mathscr S_{K(N),C}^\circ,\mathbb F_\ell)_{\mathfrak m}
\]
is concentrated in degrees $\geq d$. By Proposition~\ref{prop:cohom good reduction}, we see that
\[
H^i(S_{K(N)}(\C),\mathbb F_\ell)_{\mathfrak m}\cong H^i(\mathscr S_{K(N),\overline{\Q}},\mathbb F_\ell)_{\mathfrak m}\cong H^i(\mathscr S_{K(N),C}^\circ,\mathbb F_\ell)_{\mathfrak m}
\]
is concentrated in degrees $\geq d$, which is what we wanted to prove (by Lemma~\ref{lem:similitude factor} and Proposition~\ref{prop:complex unif}). 

Finally, the case of $H_c^i(X_K,\mathbb F_\ell)_{\mathfrak m}$ follows by Poincar\'e duality applied to the ``dual'' system of Hecke eigenvalues. More precisely, let $\iota: \mathbb{T}^{S}\to \mathbb{T}^{S}$ be the (anti-)involution 
that sends the double coset operator $[K^SgK^S]$ to $[K^Sg^{-1}K^S]$. Set $\mathfrak{m}^{\vee}:=\iota(\mathfrak{m})$. Then Poincar\'e duality, cf. e.g.~\cite[Proposition 2.2.12 and Corollary 2.2.13]{10Authors}, provides an isomorphism between $H_c^i(X_K,\mathbb F_\ell)_{\mathfrak m}$ and the dual of $H^{2d-i}(X_K,\mathbb F_\ell)_{\mathfrak m^{\vee}}$. We only need to check that the conditions of the theorem are also satisfied for $\m^{\vee}$. 

By explicitly computing the characteristic polynomial of $\overline{\rho}_{\mathfrak m^\vee}(\mathrm{Frob}_v)$ for any prime $v\not\in S$ of $F$ in terms of Hecke operators, we deduce the relationship 
\[
\overline\rho_{\mathfrak{m}^\vee}\toisom \overline{\rho}_{\mathfrak m}^\vee |\mathrm{Art}^{-1}_F|^{1-2n}, 
\]
where the global Artin reciprocity map $\mathrm{Art}_F$ is normalized to take uniformizers to geometric Frobenius elements. Note that $\overline\rho_{\mathfrak m^\vee}$ still has length at most $2$, and, when restricted to $\mathrm{Gal}(\overline{F}_{v}/ F_{v})$ for any prime $v|p$ of $F$, is still unramified. Moreover, if $\overline\rho_{\mathfrak m}$ has $\mathrm{Frob}_{v}$-eigenvalues equal to $\alpha_{i,v}$ for $i=1,\dots,2n$, then $\overline\rho_{\mathfrak m^\vee}$ has $\mathrm{Frob}_{v}$-eigenvalues equal to $p^{2n-1}\alpha^{-1}_{i,v}$ for $i=1,\dots,2n$. This implies that $\mathfrak{m}^\vee$ satisfies our genericity hypothesis at all $v|p$.  
\end{proof}

\newpage

\section{Compactifications of Igusa varieties}\label{sec:compactifications of Igusa varieties}

In this section, we construct the partial minimal and toroidal compactifications of Igusa varieties and establish their main geometric properties. Throughout this section, we fix: a prime $p$ unramified in $F$, an integer $N\geq 3$ prime to $p$, and a cone decomposition $\Sigma$ as in Remark~\ref{trivial stabilizer} at principal level $K(N)$. We abbreviate $K=K(N)$ and $ \mathscr S = \mathscr S_K = \mathscr S_{K(N)}$.

\subsection{Well-positioned subsets} In this section, we recall the notion of \emph{well-positioned} subsets introduced by~\cite{boxer, lan-stroh} and the fact that Oort central leaves are well-positioned subsets of the special fiber of $\mathscr{S}$. Fix an algebraically closed field $k$ of characteristic $p$. 

To state the condition of being well-positioned, we note that if $C$ is some complete algebraically closed nonarchimedean field, with ring of integers $\mathcal O_C$, and $x=(A,\iota,\lambda,\eta)\in \mathscr S(C)$, then the associated Raynaud extension
\[
0\to T\to \mathcal G\to \cB\to 0
\]
over $\mathcal O_C$ endows the abelian scheme $\cB$ over $\mathcal O_C$ with a principal polarization, an $\mathcal O_F$-action, and a level-$N$-structure, defining a point of $\mathscr S_Z(\mathcal O_C)$, where $Z$ is the cusp into which $x$ degenerates, and in particular a point $\pi(x)\in \mathscr S_Z(C)$.

\begin{defn}\label{well-positioned subset} A locally closed subset $Y\subseteq \mathscr{S}_k = \mathscr S\times k$ is well-positioned if there exists a family $Y^{\natural}=\{Y^{\natural}_Z\}$ indexed by the cusp labels $Z$ at level $K$ such that 
\begin{enumerate}
\item $Y^{\natural}_Z$ is a locally closed subset of $\mathscr{S}_{Z,k}$.
\item For any $C$ over $k$ and $x=(A,\iota,\lambda,\eta)\in \mathscr S_k(C)$ as above, degenerating into a cusp $Z$, the point $x$ lies in $Y$ if and only if the associated point $\pi(x)=(\cB_C,\ldots)\in \mathscr S_{Z,k}(C)$ lies in $Y^\natural_Z$.
\end{enumerate}
\end{defn}

\begin{remark} Let us verify that this agrees with \cite[Definition 2.2.1]{lan-stroh} in our case. In what follows, we will need to work with an open cover $\mathfrak{X}^\circ_{\sigma}$ of the formal scheme $\mathfrak{X}_{Z,\Sigma_Z}$, indexed by elements $\sigma\in \Sigma^+_Z$. The formal scheme $\mathfrak{X}^{\circ}_{\sigma}$ is obtained by taking the formal completion along a larger closed subscheme of $\Xi_{Z,\Sigma_Z}$ than $\Xi_{\sigma}$, namely
\[
\Xi(\sigma)^+:=\bigcup_{\tau\in \Sigma^+_Z,\bar{\tau}\subset\bar{\sigma}}\Xi_{\tau}.
\]
(The reason for considering $\Xi(\sigma)^+$ is the following: when $\bar{\tau}\subset \bar{\sigma}$ we have $\sigma^\vee\subset \tau^\vee$, which defines
an open embedding $\Xi(\tau)\subset \Xi(\sigma)$. The closed subscheme $\Xi(\sigma)^+$ is precisely the set-theoretic intersection of the boundary $\partial_{Z,\Sigma_Z}$ with $\Xi(\sigma)$.) 
See~\cite[Proposition 2.1.3]{lan-stroh} for properties of this formal scheme; the key property is that there is a canonical isomorphism
\begin{equation}\label{canonical isomorphism of formal completions}
\widehat{\mathscr{S}}^{\tor}_{\cup_{\tau\in \Sigma^+_Z,\bar{\tau}\subset\bar{\sigma}}(Z,[\tau])}\simeq\mathfrak{X}^\circ_{\sigma}.
\end{equation}
For any affine open formal subscheme $\Spf R$ of $\mathfrak{X}^\circ_{\sigma}$, we obtain canonical morphisms $W:=\Spec R \to \Xi(\sigma)$ and $W\to \mathscr{S}^{\tor}$ (induced by the isomorphism~\eqref{canonical isomorphism of formal completions}). Then the two stratifications of $W$ induced by the stratifications of $\Xi(\sigma)$ and $\mathscr{S}^{\tor}$ coincide. In particular, the preimages of $\mathscr{S}$ and $\Xi_Z$ coincide and we denote this open subscheme by $W^0$. Now \cite[Definition 2.1.1]{lan-stroh} asks that for any affine open $\Spf R\subset \mathfrak{X}^\circ_{\sigma}\subset \mathfrak{X}_{Z,\Sigma_Z}$ as above, we have an identification 
\[
Y\times_{\mathscr{S}} W^0=Y^{\natural}_Z\times_{\mathscr{S}_Z} W^0,
\]
where the morphism $W^0\to \mathscr{S}_Z$ is obtained by composing the induced morphism $W^0\to \Xi_Z$ with the canonical morphism $\Xi_Z\to \mathscr{S}_Z$. But this identification can be checked on points, where it amounts precisely to the condition we stated.

In particular, the definition is independent of the choice of $\Sigma$, cf.~also \cite[Lemma 2.2.2]{lan-stroh}. Remark 2.3.8 of~\cite{lan-stroh} shows that this notion is consistent (in our particular case, when there exist good integral models) with~\cite[Definition 3.4.1]{boxer}.
\end{remark}

A well-positioned subset admits partial toroidal and minimal compactifications that satisfy many nice properties. Let $Y\subset \mathscr{S}_k$ be a well-positioned subset. Let $\widetilde{Y}$ be the closure of $Y$ in $\mathscr{S}_k$, with complement $Y_0$. Let $\widetilde{Y}^*$ and $Y_0^*$ denote the closures of $Y$ and $Y_0$ in $\mathscr{S}^*_k$. Define 
\[
Y^*:= \widetilde{Y}^*\setminus Y^*_0
\]
and call this the \emph{partial minimal compactification} of $Y$. Define $Y^{\tor}$ analogously by
\[
Y^{\tor}:= \widetilde{Y}^{\tor}\setminus Y^{\tor}_{0}
\] 
and call it the \emph{partial toroidal compactification} of $Y$. See~\cite[Theorem 2.3.3]{lan-stroh} for the first basic properties of these partial compactifications. We have an identification 
\[
Y^*\times_{\mathscr{S}^\ast_k} \mathscr{S}_{Z,k} = Y^{\natural}_Z
\]
as subsets of $\mathscr{S}_{Z,k}$.

We need the following basic proposition.

\begin{prop}\label{closed immersion partial comp} Let $Y\subset Y'\subset \mathscr S_k$ be well-positioned locally closed subsets such that $Y$ is closed in $Y'$. Then $Y^*$ is a closed subset of $Y'^*$.
\end{prop}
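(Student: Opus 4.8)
The plan is to identify $Y^{*}$ with the intersection of $Y'^{*}$ and a closed subset of $\mathscr{S}^{*}_{k}$, namely with $\widetilde{Y}^{*}\cap Y'^{*}$, where $\widetilde{Y}^{*}$ denotes the closure of $Y$ in $\mathscr{S}^{*}_{k}$. Write $\widetilde{Y}$ for the closure of $Y$ in $\mathscr{S}_{k}$, set $Y_{0}=\widetilde{Y}\setminus Y$, and similarly define $\widetilde{Y'}$ and $Y'_{0}=\widetilde{Y'}\setminus Y'$. All four of $\widetilde{Y},Y_{0},\widetilde{Y'},Y'_{0}$ are again well-positioned: closures, complements, and intersections of well-positioned subsets of $\mathscr{S}_{k}$ are well-positioned, which is immediate from Definition~\ref{well-positioned subset} (handle the $\natural$-families componentwise, cf.~\cite{lan-stroh}). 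By definition $Y^{*}=\widetilde{Y}^{*}\setminus Y_{0}^{*}$ and $Y'^{*}=\widetilde{Y'}^{*}\setminus (Y'_{0})^{*}$, where $\widetilde{Y}^{*},Y_{0}^{*},\widetilde{Y'}^{*},(Y'_{0})^{*}$ are the closures of $Y,Y_{0},Y',Y'_{0}$ in $\mathscr{S}^{*}_{k}$.

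Two elementary observations set things up. From $Y\subseteq Y'$ we get $\widetilde{Y}\subseteq\widetilde{Y'}$, hence $\widetilde{Y}^{*}\subseteq\widetilde{Y'}^{*}$. From the hypothesis that $Y$ is closed in $Y'$ we get $\widetilde{Y}\cap Y'=Y$, and therefore, using $\widetilde{Y}\subseteq\widetilde{Y'}$,
\[
Y_{0}=\widetilde{Y}\setminus Y=\widetilde{Y}\setminus Y'=\widetilde{Y}\cap(\widetilde{Y'}\setminus Y')=\widetilde{Y}\cap Y'_{0}.
\]

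The crux is then the identity $Y_{0}^{*}=\widetilde{Y}^{*}\cap (Y'_{0})^{*}$ of closed subsets of $\mathscr{S}^{*}_{k}$. The inclusion $\subseteq$ is immediate from $Y_{0}\subseteq\widetilde{Y}$ and $Y_{0}\subseteq Y'_{0}$. For $\supseteq$, take $z\in\widetilde{Y}^{*}\cap (Y'_{0})^{*}$ and let $Z$ be the unique cusp label with $z\in\mathscr{S}_{Z,k}$. By the identification $W^{*}\times_{\mathscr{S}^{*}_{k}}\mathscr{S}_{Z,k}=W^{\natural}_{Z}$, valid for any well-positioned $W$, we have $z\in\widetilde{Y}^{\natural}_{Z}\cap (Y'_{0})^{\natural}_{Z}$. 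Now the well-positioned structure of a subset is unique (it is forced by that same identification) and compatible with intersections: for well-positioned $W,W'$ the family $\{W^{\natural}_{Z}\cap (W')^{\natural}_{Z}\}$ satisfies the conditions of Definition~\ref{well-positioned subset} for $W\cap W'$, so by uniqueness it is the well-positioned structure of $W\cap W'$. Applying this to $Y_{0}=\widetilde{Y}\cap Y'_{0}$ gives $z\in (Y_{0})^{\natural}_{Z}=Y_{0}^{*}\cap\mathscr{S}_{Z,k}\subseteq Y_{0}^{*}$, proving $\supseteq$.

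Granting the identity, the proof finishes by pure point-set manipulation in $\mathscr{S}^{*}_{k}$:
\[
Y^{*}=\widetilde{Y}^{*}\setminus Y_{0}^{*}=\widetilde{Y}^{*}\setminus\big(\widetilde{Y}^{*}\cap (Y'_{0})^{*}\big)=\widetilde{Y}^{*}\setminus (Y'_{0})^{*},
\]
and since $\widetilde{Y}^{*}\subseteq\widetilde{Y'}^{*}$ this last set equals $\widetilde{Y}^{*}\cap\big(\widetilde{Y'}^{*}\setminus (Y'_{0})^{*}\big)=\widetilde{Y}^{*}\cap Y'^{*}$. As $\widetilde{Y}^{*}$ is closed in $\mathscr{S}^{*}_{k}$, this exhibits $Y^{*}$ as a closed subset of $Y'^{*}$, as desired. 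The only genuinely nonformal ingredients are the two structural facts about well-positioned subsets used above — that $W^{*}\cap\mathscr{S}_{Z,k}=W^{\natural}_{Z}$ and that the well-positioned structure is unique and compatible with intersections — and both are part of (or immediate from) the formalism of~\cite{lan-stroh} recalled above; compatibility with intersections is also transparent from the moduli-theoretic reformulation of Definition~\ref{well-positioned subset}. I expect verifying these to be the main, if modest, obstacle, the remainder being bookkeeping with closures.
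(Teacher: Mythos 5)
Your proof is correct, but it reaches the stratawise control in a different way than the paper does. The paper works directly with the chart formulation of Definition~\ref{well-positioned subset}: since $Y$ is closed in $Y'$, the pullbacks along the maps $W^0\to \mathscr S$ satisfy that $Y\times_{\mathscr S}W^0$ is closed in $Y'\times_{\mathscr S}W^0$, and because the $W^0\to\mathscr S_Z$ form an fpqc cover this descends to $Y^\natural_Z$ being closed in $Y'^\natural_Z$; together with $Y^*\times_{\mathscr S^*_k}\mathscr S_{Z,k}=Y^\natural_Z$ this gives $Y^*\subset Y'^*$, after which the paper runs essentially the same point-set bookkeeping as you (using the hypothesis through $Y_0\subset Y'_0$) to see that $Y^*=\widetilde Y^*\setminus Y_0^*$ is closed in $\widetilde{Y'}^*\setminus Y_0^*\supset Y'^*$. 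You instead never prove the stratawise closedness or use descent; you rely on hereditary properties of well-positionedness (closures, intersections, differences) plus the identification $W^*\times_{\mathscr S^*_k}\mathscr S_{Z,k}=W^\natural_Z$ applied to the auxiliary closed sets $\widetilde Y$, $Y'_0$, $Y_0$, and you end up with the sharper and cleaner identity $Y^*=\widetilde Y^*\cap Y'^*$. Both routes are fine; yours buys a crisper statement at the cost of needing more of the Lan--Stroh formalism up front. One caution: the assertion that closures of well-positioned subsets are well-positioned (with $\natural$-data the closures) is \emph{not} immediate from the points-based Definition~\ref{well-positioned subset} --- one must show $x\in\widetilde Y$ if and only if $\pi(x)\in\overline{Y^\natural_Z}$, which is a genuine lemma; it is available in \cite{lan-stroh}, and can alternatively be checked by exactly the fpqc-descent-over-charts argument that the paper's own proof uses instead (which is why the paper never needs the closure statement). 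You also need it with the $\natural$-data equal to the actual closures, so that the difference $\widetilde{Y'}^\natural_Z\setminus Y'^\natural_Z$ is closed, hence locally closed, as required by condition (1). The Boolean compatibilities and the uniqueness of the $\natural$-data are indeed formal, and uniqueness can be bypassed entirely by applying the identification to $Y_0$ equipped with the visibly valid $\natural$-family $\{\widetilde Y^\natural_Z\cap (Y'_0)^\natural_Z\}$.
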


\begin{proof} The maps $W^0\to \mathscr S_Z$, for varying $W^0$, form an fpqc cover, and using this one can check that $Y^\natural_Z$ must be a closed subset of $Y'^\natural_Z$ for all cusp labels $Z$. In particular, the last displayed formula now shows that $Y^\ast$ is a subset of $Y'^\ast$. On the other hand, $\widetilde{Y}^\ast$ is a closed subset of $\widetilde{Y'}^\ast$, and then
\[
Y^\ast = \widetilde{Y}^\ast\setminus Y_0^\ast\subset \widetilde{Y'}^\ast\setminus Y_0^\ast
\]
is a closed subset. As $Y'^*$ is a subset of $\widetilde{Y'}^\ast\setminus Y_0^\ast$ (as $Y_0\subset Y'_0$), the result follows.
\end{proof}

Now we want to apply these ideas to the case of central leaves. For this, we fix a $p$-divisible group $\mathbb X$ with $G$-structures over $k$.

\begin{prop}\label{prop:leaves are well-positioned}\leavevmode For any $p$-divisible group $\mathbb X$ with extra structure over $k$, the associated Oort central leaf $\mathscr{C}^{\mathbb X}\subset \mathscr{S}_k$ is well-positioned. For a cusp label $Z=(\mathrm Z_{N'},X)$, the subset $(\mathscr{C}^{\mathbb X})^{\natural}_{Z}$ is either the central leaf $\mathscr{C}^{\mathbb X_Z}_Z$ on $\mathscr S_{Z,k}$ associated with the unique $p$-divisible group $\mathbb X_Z$ with extra structure that admits a decomposition
\[
\mathbb X\cong \Hom(X,\mu_{p^\infty})\oplus \mathbb X_Z\oplus X\otimes (\mathbb Q_p/\mathbb Z_p)\ ,
\]
or empty if there is no such $\mathbb X_Z$.\footnote{The uniqueness follows from the discussion around Proposition~\ref{classification pdivisible}.}
\end{prop}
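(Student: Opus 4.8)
The plan is to produce the family $\{(\mathscr C^{\mathbb X})^\natural_Z\}$ explicitly and to check conditions (1) and (2) of Definition~\ref{well-positioned subset} directly. Fix a cusp label $Z$ and let $X$ be its associated $\mathcal O_F$-lattice, of rank $r$. The first step is to determine when the $p$-divisible group $\mathbb X_Z$ of the statement exists and to see that it is then unique up to isomorphism. Since $p$ is unramified in $F$, the ring $\mathcal O_F\otimes_{\Z}\Z_p$ is a product of discrete valuation rings, so over the algebraically closed field $k$ the \'etale part $\mathbb X^{\et}$ is determined by the finite projective $\mathcal O_F\otimes_{\Z}\Z_p$-module $T_p(\mathbb X^{\et})$, and $\mathbb X^\mu$ is its dual via $\lambda$. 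Hence $\mathbb X_Z$ exists precisely when $X\otimes_{\Z}\Z_p$ is an $\mathcal O_F\otimes_{\Z}\Z_p$-module direct summand of $T_p(\mathbb X^{\et})$: given such a splitting one obtains a subgroup $\mathrm{Z}_{-2}:=\Hom(X,\mu_{p^\infty})\subset\mathbb X^\mu\subset\mathbb X$ and a quotient $\mathbb X\twoheadrightarrow X\otimes_{\Z}(\mathbb Q_p/\Z_p)$ with kernel $\mathrm{Z}_{-1}\supseteq\mathrm{Z}_{-2}$, and Proposition~\ref{classification pdivisible} applied to $\mathrm{Z}_{-2}\subset\mathrm{Z}_{-1}\subset\mathbb X$ produces the desired decomposition with $\mathbb X_Z:=\mathrm{Z}_{-1}/\mathrm{Z}_{-2}$; a check of the $\Lie$-condition shows $\mathbb X_Z$ is a $p$-divisible group with $G$-structure for the Shimura datum of $\mathscr S_Z$. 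Uniqueness is then forced: the biconnected part of $\mathbb X_Z$ must equal $\mathbb X^{(0,1)}$, and its \'etale Tate module must be a complement to $X\otimes_{\Z}\Z_p$ in $T_p(\mathbb X^{\et})$, which is unique up to isomorphism by counting ranks over the factors of $\mathcal O_F\otimes_{\Z}\Z_p$; this is the ``discussion around Proposition~\ref{classification pdivisible}'' referred to in the statement. I then set $(\mathscr C^{\mathbb X})^\natural_Z:=\mathscr C^{\mathbb X_Z}_Z$ if $\mathbb X_Z$ exists and $:=\emptyset$ otherwise; condition (1) holds since a central leaf is locally closed and smooth by the argument of~\cite[Proposition 1]{mantovan}.

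For condition (2), let $C$ be a complete algebraically closed nonarchimedean field over $k$---so of characteristic $p$, hence perfect---and let $x=(A,\iota,\lambda,\eta)\in\mathscr S_k(C)$ degenerate into $Z$, with Raynaud extension $0\to T\to\mathcal G\to\mathcal B\to 0$ over $\mathcal O_C$ and $A\cong\mathcal G_C/Y$ rigid-analytically, $Y\cong X$ as $\mathcal O_F$-lattices; this is the structure supplied by Theorem~\ref{existence of toroidal compactifications}(3) and the attendant theory of $\mathcal O_F$-equivariant degeneration data. Over the perfect field $C$ this endows $A[p^\infty]$ with an $\mathcal O_F$-stable three-step filtration whose first piece $\mathrm{Z}_{-2}=\Hom(X,\mu_{p^\infty})_C$ is multiplicative, whose middle graded piece is $\mathcal B_C[p^\infty]$, whose \'etale quotient is $(X\otimes_{\Z}\mathbb Q_p/\Z_p)_C$, and for which $\lambda$ identifies $\mathrm{Z}_{-2}$ with the dual of the quotient. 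Proposition~\ref{classification pdivisible} then splits it:
\[
A[p^\infty]\;\cong\;\Hom(X,\mu_{p^\infty})_C\;\oplus\;\mathcal B_C[p^\infty]\;\oplus\;(X\otimes_{\Z}\mathbb Q_p/\Z_p)_C
\]
as $p$-divisible groups with $G$-structure. Lastly I would invoke that isomorphisms between two fixed $p$-divisible groups with $G$-structure over an algebraically closed field of characteristic $p$ are insensitive to extension of the (algebraically closed) base field, since the relevant $\Hom$-modules are---this is the rigidity used in the proof of Lemma~\ref{lem:finite subset of RZ}. Hence $x\in\mathscr C^{\mathbb X}$ if and only if $A[p^\infty]\cong\mathbb X_C$ with $G$-structure, and, when $\mathbb X_Z$ exists, $\pi(x)\in\mathscr C^{\mathbb X_Z}_Z$ if and only if $\mathcal B_C[p^\infty]\cong\mathbb X_{Z,C}$ with $G$-structure.

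Condition (2) now follows by comparing decompositions. If $\pi(x)\in(\mathscr C^{\mathbb X})^\natural_Z$, substituting $\mathcal B_C[p^\infty]\cong\mathbb X_{Z,C}$ into the displayed splitting and using the decomposition of $\mathbb X$ gives $A[p^\infty]\cong\mathbb X_C$, so $x\in\mathscr C^{\mathbb X}$. Conversely, if $x\in\mathscr C^{\mathbb X}$, the displayed splitting exhibits $\mathbb X_C$ as a direct sum of the requisite shape; its multiplicative summand lies inside $(\mathbb X^\mu)_C$ and its \'etale quotient is a quotient of $(\mathbb X^{\et})_C$, and these---together with the polarization identity relating them---descend to $k$, again because the $\Hom$'s of the multiplicative and \'etale groups in play do not depend on the base field; this produces over $k$ a filtration $\mathrm{Z}_{-2}\subset\mathrm{Z}_{-1}\subset\mathbb X$ to which Proposition~\ref{classification pdivisible} applies, so $\mathbb X_Z$ exists, and cancelling the common outer summands of the two decompositions of $\mathbb X_C$ gives $\mathcal B_C[p^\infty]\cong\mathbb X_{Z,C}$, i.e.\ $\pi(x)\in(\mathscr C^{\mathbb X})^\natural_Z$. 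In particular, when no such $\mathbb X_Z$ exists, no point $x$ degenerating into $Z$ can lie in $\mathscr C^{\mathbb X}$, which matches $(\mathscr C^{\mathbb X})^\natural_Z=\emptyset$.

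The step I expect to be most delicate is the descent from $C$ down to $k$---of the auxiliary filtration of $\mathbb X$, equivalently of the splitting of $A[p^\infty]$---together with the bookkeeping that the $\mathcal O_F$-action and the polarization are respected by all the identifications involved. Constructing the filtration over $C$, checking the $\Lie$-conditions, and the rank arithmetic for $\mathcal O_F\otimes_{\Z}\Z_p$-modules are routine given the degeneration theory and Proposition~\ref{classification pdivisible}.
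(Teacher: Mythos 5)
Your proof is correct and follows essentially the same route as the paper: the canonical $\mathcal O_F$-stable symplectic filtration on $A[p^\infty]$ coming from the Raynaud extension, split via Proposition~\ref{classification pdivisible}, together with the uniqueness of $\mathbb X_Z$ from the fact that a $p$-divisible group with $G$-structure over a perfect field is determined by its biconnected part and its \'etale Tate module. The extra care you take (explicit existence criterion for $\mathbb X_Z$, both directions of the equivalence, and the insensitivity of isomorphism classes to extension of algebraically closed base fields) only fills in points the paper leaves implicit, also by reference to~\cite[Proposition 3.4.2]{lan-stroh}.
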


\begin{proof} This is~\cite[Proposition 3.4.2]{lan-stroh}, but let us give the proof. Choose $C$ as above and a point $(A,\iota,\lambda,\eta)\in \mathscr S_k(C)$. Let $0\to T\to \mathcal G\to \cB\to 0$ be the Raynaud extension and $X$ the cocharacter group of $T$.

We have two short exact sequences 
\[
0\to \cG_C[p^m]\to A_C[p^m]\to (X/p^mX)_C\to 0
\] 
and 
\[
0\to T_C[p^m]\to \cG_C[p^m]\to \cB_C[p^m]\to 0.
\]
These two short exact sequences give a canonical $\mathcal O_F$-linear filtration on $A_C[p^m]$ which is symplectic with respect to the Weil pairing. Passing to the direct limit over $m$, we get a filtration on $A_C[p^\infty]$ with graded pieces $\Hom(X,\mu_{p^\infty})$, $\cB_C[p^\infty]$, and $X\otimes (\mathbb Q_p/\mathbb Z_p)$.

By Proposition~\ref{classification pdivisible}, this implies that there is an isomorphism
\[
\mathbb X_C\cong \Hom(X,\mu_{p^\infty})\oplus \cB_C[p^\infty]\oplus X\otimes (\mathbb Q_p/\mathbb Z_p)
\]
and, as observed there, the corresponding $p$-divisible group $\cB_C[p^\infty]\cong (\mathbb X_Z)_C$ is then unique up to isomorphism.
\end{proof}

\noindent We let $\mathscr{C}^{\mathbb X,\tor}$ and $\mathscr{C}^{\mathbb X,*}$ denote the partial toroidal and minimal compactifications of the leaf $\mathscr{C}^{\mathbb X}$. Let $\mathscr{C}^{\mathbb X}_Z:=\mathscr{C}^{\mathbb X,*}\times_{\mathscr{S}^*}\mathscr{S}_Z$. By Proposition~\ref{prop:leaves are well-positioned}
and the remark above it, we can identify $\mathscr{C}^{\mathbb X}_Z$ with $\mathscr{C}^{\mathbb X_Z}_Z$. 

\begin{lem}\label{toroidal compactifications of leaves are smooth} The partial toroidal compactification $\mathscr{C}^{\mathbb X,\tor}$ is a smooth variety. 
\end{lem}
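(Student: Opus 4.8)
The plan is to verify smoothness separately on the open part $\mathscr{C}^{\mathbb X}\subseteq\mathscr{C}^{\mathbb X,\tor}$ and in a neighbourhood of each toroidal boundary stratum, reducing the boundary case to the known smoothness of central leaves on the smaller Shimura varieties $\mathscr{S}_Z$. On the interior, smoothness holds by construction: $\mathscr{C}^{\mathbb X}\subseteq\mathscr{S}_k$ is a smooth $k$-scheme by the argument of~\cite[Proposition 1]{mantovan} recalled above. Since $\mathscr{C}^{\mathbb X,\tor}$ is a separated $k$-scheme of finite type (it is locally closed in the projective scheme $\mathscr S^{\tor}_{K,\Sigma}\times k$), it suffices to show that for every boundary stratum $\mathscr{C}^{\mathbb X,\tor}_{(Z,[\sigma])}$ and every closed point $x$ in it the complete local ring $\widehat{\cO}_{\mathscr{C}^{\mathbb X,\tor},x}$ is a power series ring over $k$; equivalently, it suffices to identify the formal completion of $\mathscr{C}^{\mathbb X,\tor}$ along each such stratum with the formal completion of a smooth $k$-scheme along a closed subscheme (this also takes care of reducedness).

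The key input is the local structure theory of well-positioned subsets from~\cite{lan-stroh}: the formal completion of the partial toroidal compactification of a well-positioned subset along a boundary stratum has the same shape as the formal completion of $\mathscr{S}^{\tor}$ described in Theorem~\ref{existence of toroidal compactifications}(5)--(6), but with the abelian scheme $C_Z\to\mathscr{S}_Z$ replaced by its base change along the locally closed immersion $(\mathscr{C}^{\mathbb X})^\natural_Z\hookrightarrow\mathscr{S}_{Z,k}$. Thus the completion of $\mathscr{C}^{\mathbb X,\tor}$ along $\mathscr{C}^{\mathbb X,\tor}_{(Z,[\sigma])}$ is canonically the completion of
\[
\Xi(\sigma)\times_{\mathscr{S}_{Z,k}}(\mathscr{C}^{\mathbb X})^\natural_Z
\]
along $\Xi_\sigma\times_{\mathscr{S}_{Z,k}}(\mathscr{C}^{\mathbb X})^\natural_Z$. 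By Proposition~\ref{prop:leaves are well-positioned} and the discussion preceding the lemma, $(\mathscr{C}^{\mathbb X})^\natural_Z$ is the central leaf $\mathscr{C}^{\mathbb X_Z}_Z\subseteq\mathscr{S}_{Z,k}$ (or empty, in which case the stratum does not occur).

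It then remains to check that $\Xi(\sigma)\times_{\mathscr{S}_{Z,k}}\mathscr{C}^{\mathbb X_Z}_Z$ is smooth over $k$. Since $\mathscr{S}_Z$ is again one of the unitary Shimura varieties of our setup (with $n$ replaced by $n-r$, where $r$ is the $\cO_F$-rank of $X$), the leaf $\mathscr{C}^{\mathbb X_Z}_Z$ is smooth over $k$ by the same argument as in~\cite[Proposition 1]{mantovan}. The abelian scheme $C_Z\to\mathscr{S}_Z$ restricts to a smooth morphism over $\mathscr{C}^{\mathbb X_Z}_Z$, and the affine toroidal embedding $\Xi(\sigma)=\underline{\Spec}_{\cO_{C_Z}}\bigoplus_{l\in\sigma^\vee}\Psi_Z(l)\to C_Z$ is smooth because $\sigma$ is a smooth cone by Remark~\ref{trivial stabilizer}: locally on $C_Z$, where the line bundles $\Psi_Z(l)$ become trivial, it is of the form $C_Z\times\AA^a\times\Gm^b\to C_Z$. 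Composing these three smooth morphisms, $\Xi(\sigma)\times_{\mathscr{S}_{Z,k}}\mathscr{C}^{\mathbb X_Z}_Z$ is smooth over $k$; hence $\mathscr{C}^{\mathbb X,\tor}$ is smooth along $\mathscr{C}^{\mathbb X,\tor}_{(Z,[\sigma])}$, and together with the interior this proves the lemma.

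I expect the main obstacle to be the first sentence of the second paragraph: extracting from~\cite{lan-stroh} the precise statement that the formal completion of the partial toroidal compactification of $\mathscr{C}^{\mathbb X}$ along $\mathscr{C}^{\mathbb X,\tor}_{(Z,[\sigma])}$ is the base change along $(\mathscr{C}^{\mathbb X})^\natural_Z\hookrightarrow\mathscr{S}_{Z,k}$ of the toric chart $\mathfrak X_\sigma$ of Theorem~\ref{existence of toroidal compactifications}, and checking it is compatible with the identification $(\mathscr{C}^{\mathbb X})^\natural_Z=\mathscr{C}^{\mathbb X_Z}_Z$ of Proposition~\ref{prop:leaves are well-positioned}. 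Everything downstream of that is formal: smoothness is stable under composition and is detected on complete local rings, which are unchanged under formal completion along a closed subscheme.
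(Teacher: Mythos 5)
Your argument is correct, but it takes a more hands-on route than the paper, whose entire proof is: over the perfect field $k$ smoothness is equivalent to regularity, $\mathscr{C}^{\mathbb X}$ is smooth hence regular, and then one cites \cite[Proposition 2.3.13]{lan-stroh}, which says precisely that regularity of a well-positioned subset is inherited by its partial toroidal compactification. You instead unpack what that citation packages: you use the Lan--Stroh description of the formal completion of $\mathscr{C}^{\mathbb X,\tor}$ along a boundary stratum as the completion of $\Xi(\sigma)\times_{\mathscr{S}_{Z,k}}(\mathscr{C}^{\mathbb X})^\natural_Z$ (the statement you flag as the main thing to extract is indeed available --- it is \cite[Theorem 2.3.2 (5)]{lan-stroh}, which the paper itself invokes in exactly this form just after the lemma, in the proof of Theorem~\ref{thm:formal completions Igusa}; the passage from the $Z$-level description with its $\Gamma_Z$-quotient to the $(Z,[\sigma])$-level chart $\mathfrak X_\sigma$ uses the triviality of the stabilizers $\Gamma_\sigma$ from Remark~\ref{trivial stabilizer}, as in Theorem~\ref{existence of toroidal compactifications}(5)), combine it with the identification $(\mathscr{C}^{\mathbb X})^\natural_Z=\mathscr{C}^{\mathbb X_Z}_Z$ from Proposition~\ref{prop:leaves are well-positioned}, and then check smoothness of the chart directly as a composition of smooth maps (smooth-cone torus embedding over the abelian scheme $C_Z$ over the smooth leaf of the smaller Shimura variety). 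The trade-off: the paper's citation needs no input beyond smoothness of $\mathscr{C}^{\mathbb X}$ itself, since Lan--Stroh internally propagate regularity from $Y$ to the boundary data, whereas your argument additionally uses Mantovan's smoothness of the central leaves $\mathscr{C}^{\mathbb X_Z}_Z$ on the boundary Shimura varieties --- harmless here, since those are again unitary Shimura varieties of the same type --- and in exchange gives an explicit, self-contained verification (and, as a by-product, the local model of $\mathscr{C}^{\mathbb X,\tor}$ at the boundary, which the paper only records later). Both proofs are valid.
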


\begin{proof} Since we are working over the perfect field $k$, it is enough to show that $\mathscr{C}^{\mathbb X,\tor}$ is a regular scheme. This follows from the fact that $\mathscr{C}^{\mathbb X}$ is smooth, thus regular, and from~\cite[Proposition 2.3.13]{lan-stroh}.
\end{proof}

\subsection{Partial toroidal compactifications of Igusa varieties}
We continue in the same setup, in particular $\mathbb X$ is a $p$-divisible group with $G$-structure over $k$, as before. Let $\cA$ denote the restriction of the semi-abelian scheme over $\mathscr{S}^{\tor}$ to $\mathscr{C}^{\mathbb X,\tor}$. Note that the group schemes $\cA[p^m]$ for $m\in \Z_{\geq 1}$ are quasi-finite and flat, but not finite and flat. Therefore, the inductive system $\cA[p^\infty]$ is not a $p$-divisible group. Nonetheless, we show below that the connected part $\cA[p^\infty]^\circ$ of $\cA[p^\infty]$ (which can be defined as the ind-scheme $\hat{\cA}[p^\infty]$, where $\hat{\cA}$ is the completion of $\cA$ along its identity section) is a $p$-divisible group. 

\begin{prop}\label{connected part extends} The connected part $\cA[p^\infty]^\circ$ of $\cA[p^\infty]$ is a $p$-divisible group over $\mathscr{C}^{\mathbb X,\tor}$.
\end{prop}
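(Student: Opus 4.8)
The plan is to work locally on $\mathscr{C}^{\mathbb X,\tor}$ along the toroidal boundary, using the explicit description of the formal completions from Theorem~\ref{existence of toroidal compactifications}(2)--(3). Away from the boundary, on $\mathscr{C}^{\mathbb X}$ itself, the semi-abelian scheme $\cA$ is an abelian scheme, so $\cA[p^\infty]^\circ$ is manifestly a $p$-divisible group (the connected part of the honest $p$-divisible group $\cA[p^\infty]$); there is nothing to check there. So the content is entirely at the boundary. Over the formal completion $\widehat{\mathscr{C}}^{\mathbb X,\tor}$ along a boundary stratum, the identification $\widehat{\mathscr{C}}^{\mathbb X,\tor}_{(Z,[\sigma])}\simeq \mathfrak{X}_\sigma$ pulled back from the ambient Shimura variety comes equipped with an $\cO_F\otimes_\Z\Z_p$-equivariant isomorphism of the formal completion of $\cA$ with the Raynaud extension $\cG_Z$ over $C_Z$, sitting in $0\to T\to \cG_Z\to \cB_Z\to 0$ with $\cB_Z\to \mathscr{S}_Z$ the universal abelian scheme and $T$ a split torus. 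Thus the formal completion $\widehat{\cA}$ along its identity section agrees with $\widehat{\cG}_Z$, and $\cA[p^\infty]^\circ = \hat{\cA}[p^\infty]$ can be computed from $\cG_Z$.

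The key step is then: $\cG_Z[p^\infty]^\circ$ (equivalently $\hat{\cG}_Z[p^\infty]$) is a $p$-divisible group. Since $\cG_Z$ is an extension of the abelian scheme $\cB_Z$ by the split torus $T$, we have a short exact sequence of fppf sheaves $0\to T[p^m]\to \cG_Z[p^m]\to \cB_Z[p^m]\to 0$ for every $m$; here $T[p^m]\cong (\mu_{p^m})^{\dim T}$ and $\cB_Z[p^m]$ is finite locally free. So $\cG_Z[p^m]$ is an extension of finite locally free group schemes and is itself finite locally free, and taking connected components $\cG_Z[p^\infty]^\circ$ is an extension of $\cB_Z[p^\infty]^\circ$ by $\mu_{p^\infty}^{\dim T}$ — both $p$-divisible groups — hence a $p$-divisible group of height $\dim T + \operatorname{ht}\cB_Z[p^\infty]^\circ$. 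The point is that this computation is carried out over the \emph{formal scheme} $\mathfrak{X}_\sigma$ (indeed over $C_Z$, which is a genuine scheme), and the $p$-divisible group so obtained is canonical and functorial; it glues over the cover of $\widehat{\mathscr{C}}^{\mathbb X,\tor}$ by the various $(Z,[\sigma])$ because the isomorphisms with $\cG_Z$ are canonical (Theorem~\ref{existence of toroidal compactifications}(3)). Finally one descends from the formal completions to $\mathscr{C}^{\mathbb X,\tor}$: being a $p$-divisible group is a condition one can check after a flat cover, and the formal completions along the (finitely many, locally closed) boundary strata together with $\mathscr{C}^{\mathbb X}$ itself form such a cover in the relevant sense — more precisely, a quasi-finite flat group scheme whose restriction to $\mathscr{C}^{\mathbb X}$ and to each $\widehat{\mathscr{C}}^{\mathbb X,\tor}_{(Z,[\sigma])}$ is finite flat (resp. $p$-divisible) is itself finite flat (resp. $p$-divisible), using that $\mathscr{C}^{\mathbb X,\tor}$ is covered by $\mathscr{C}^{\mathbb X}$ and Zariski opens each of which is covered by (the underlying scheme of) a boundary chart $W$ as in the well-positioned discussion.

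The main obstacle I expect is the descent/globalization step: one must argue cleanly that "$\hat{\cA}[p^\infty]$ is a $p$-divisible group over $\mathscr{C}^{\mathbb X,\tor}$" follows from knowing it on $\mathscr{C}^{\mathbb X}$ and on each formal completion $\mathfrak{X}_\sigma$. The subtlety is that $\cA[p^m]$ is only quasi-finite flat (not finite) over $\mathscr{C}^{\mathbb X,\tor}$, so one cannot naively apply fpqc descent for finite flat group schemes globally; instead one checks finite-locally-freeness of $\cA[p^m]^\circ$ and the $p$-divisibility axioms (flatness, $[p]$ being an epimorphism with kernel $\cA[p]^\circ$, and $\cA[p^m]^\circ = \cA[p^\infty]^\circ[p^m]$) on the flat cover given by the affine charts $W$ pulled back from the $\mathfrak{X}^\circ_\sigma$ (cf.~\eqref{canonical isomorphism of formal completions}) together with $\mathscr{C}^{\mathbb X}$, using that each such $W$ maps to $C_Z$ and there $\cA$ pulls back to $\cG_Z$. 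Once the local picture over $\cG_Z$ is in place this is formal but requires care with the quasi-finite-flat bookkeeping; everything else is a direct consequence of Theorem~\ref{existence of toroidal compactifications} and the structure theory of Raynaud extensions.
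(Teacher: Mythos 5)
Your reduction to boundary charts and the local computation---that $\cG_Z[p^\infty]^\circ$ is a $p$-divisible group, being an extension of $\cB_Z[p^\infty]^\circ$ by $\mu_{p^\infty}^{\dim T}$ of constant rank because $\cB_Z$ lands in the leaf $\mathscr C^{\mathbb X_Z}_Z$---is the right computation and matches the paper. But there is a genuine gap in the globalization step, concentrated in your claim that over an affine boundary chart $W=\Spec R$, ``$\cA$ pulls back to $\cG_Z$.'' That is false: over $\Spec R$ the semi-abelian scheme $\cA$ and the Raynaud extension $\cG_Z$ are not isomorphic (away from the boundary divisor, $\cA$ is an abelian scheme while $\cG_Z$ still has nontrivial toric part). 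They become comparable only after completing $W$ along the boundary ideal $I$, i.e.\ over $\Spf R$ rather than $\Spec R$, via the map $\cG\to\cA$ of Mumford's construction alluded to in Theorem~\ref{existence of toroidal compactifications}~(3).

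The missing step---which is the real content of the proposition---is to algebraize the induced isomorphism $\hat{\cG}\toisom\hat{\cA}$ of formal completions along the identity section from $\Spf R$ to $\Spec R$. The paper does this by formal GAGA: for each $n$, the $n$-th infinitesimal neighborhoods $\hat{\cG}/J^n$ and $\hat{\cA}/J^n$ (where $J$ is the augmentation ideal at the identity) are finite over $R$, so the isomorphism between them over $\Spf R$ is automatically induced by one over $\Spec R$; taking the limit over $n$ gives $\hat{\cG}\cong\hat{\cA}$ over $\Spec R$, hence $\cG[p^\infty]^\circ\cong\cA[p^\infty]^\circ$ over $\Spec R$, which is what the descent needs. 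Without this algebraization, the local computation over $\cG_Z$ does not yet say anything about $\cA[p^\infty]^\circ$ over $\Spec R$. (The paper also takes $R$ to be the completed strict local ring at a boundary point rather than a chart $W$; then the flat-cover bookkeeping you worry about becomes the standard fact that completion is faithfully flat and the reduction to a single cusp label is automatic.)
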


\begin{proof} We can check this on the completed strict local rings of $\mathscr C^{\mathbb X,\tor}$; more precisely, we restrict to $\Spec R$, where $R$ is the completed strict local ring of $\mathscr{C}^{\mathbb X,\tor}$ at some point $x$ that lies in a boundary stratum indexed by a cusp label $Z$.

In that case, there is the Raynaud extension
\[0\to T\to \cG\to \mathcal B\to 0
\]
over $\Spec R$ and a map $\cG\to \cA$ over $\Spf R$. On formal completions at the identity, this gives an isomorphism $\hat{\cG}\cong \hat{\cA}$ that is in fact defined over $\Spec R$ (as modulo any power of the augmentation ideal, both schemes are finite over $\Spf R$, so one can apply formal GAGA). This induces an isomorphism $\cG[p^\infty]^\circ\cong \cA[p^\infty]^\circ$ over $\Spec R$. But $\cG[p^\infty]$ is a $p$-divisible group, and its connected part has constant rank, as the abelian variety $\mathcal B$ is given by a map from $\Spec R$ into the leaf $\mathscr C^{\mathbb X_Z}_Z$. Thus, $\cG[p^\infty]^\circ$ is also a $p$-divisible group, as desired.
\end{proof}

Clearly, $\cA[p^\infty]^\circ$ carries an $\mathcal O_F$-action. At every geometric point $\bar x$ of $\mathscr{C}^{\mathbb X,\tor}$, there is an $\mathcal O_F$-linear isomorphism
\[
\cA[p^\infty]^\circ\times_{\mathscr C^{\mathbb X,\tor}} k(\bar x)\cong \mathbb X^\circ\times_{k} k(\bar x) .
\]
Indeed, in the notation of the previous proof, this follows directly from $\mathcal B[p^\infty]\times_k k(\bar x)\cong \mathbb X_Z$ as
\[
\mathbb X^\circ\cong \Hom(X,\mu_{p^\infty})\oplus \mathbb X_Z^\circ .
\]
In particular, we see that the multiplicative part $\cA[p^\infty]^\mu$ is of constant rank, and thus defines a $p$-divisible group itself. Let
\[
\cA[p^\infty]^{(0,1)} = \cA[p^\infty]^\circ / \cA[p^\infty]^\mu
\]
be the biconnected part.

\begin{prop} There exists a polarization on $\cA[p^\infty]^{(0,1)}$ extending the polarization that exists after restriction to $\mathscr C^{\mathbb X}$.
\end{prop}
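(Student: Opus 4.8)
The plan is to build the polarization locally along the toroidal boundary from the Raynaud extension --- just as in the proof of Proposition~\ref{connected part extends} --- and then to glue these local constructions to the polarization already available on the open leaf $\mathscr C^{\mathbb X}$.

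First recall that $\cA[p^\infty]^{(0,1)}$, and hence its dual $(\cA[p^\infty]^{(0,1)})^\vee$, is a $p$-divisible group over the regular scheme $\mathscr C^{\mathbb X,\tor}$ (established just above). Thus a polarization is a symmetric homomorphism $\phi\colon \cA[p^\infty]^{(0,1)}\to (\cA[p^\infty]^{(0,1)})^\vee$ whose Rosati involution is compatible with complex conjugation on $\cO_F$; over $\mathscr C^{\mathbb X}$ we are given such a $\phi_0$, and it is an isomorphism, being obtained from the principal polarization $\lambda$ of $\cA$ by restricting the induced isomorphism $\cA[p^\infty]\cong \cA[p^\infty]^\vee$ to biconnected parts (an isomorphism of $p$-divisible groups respects the canonical filtration, and $(X^\vee)^{(0,1)}\cong (X^{(0,1)})^\vee$ canonically). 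Since $\mathscr C^{\mathbb X,\tor}$ is regular, its local rings are domains, so a homomorphism of $p$-divisible groups over an open subscheme is determined by its restriction to any dense open; in particular $\phi_0$ has at most one extension, on any open on which it extends.

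Now let $R$ be the completed strict local ring of $\mathscr C^{\mathbb X,\tor}$ at a point of the boundary stratum attached to a cusp label $Z$, and recall from the proof of Proposition~\ref{connected part extends} the Raynaud extension $0\to T\to \cG\to \cB\to 0$ over $\Spec R$ together with the $\cO_F$-equivariant isomorphism $\cA[p^\infty]^\circ|_{\Spec R}\cong \cG[p^\infty]^\circ|_{\Spec R}$. As $T[p^\infty]$ is of multiplicative type it is contained in $\cG[p^\infty]^\mu$, so passing to biconnected parts yields a canonical $\cO_F$-equivariant isomorphism
\[
\cA[p^\infty]^{(0,1)}|_{\Spec R}\;\cong\; \cG[p^\infty]^{(0,1)}|_{\Spec R}\;\cong\; \cB[p^\infty]^{(0,1)}|_{\Spec R}.
\]
By the theory of degenerations (see~\cite{lan-thesis}), the principal polarization $\lambda$ of $\cA$ induces a canonical principal polarization $\lambda_{\cB}$ of $\cB$, compatible with the $\cO_F$-action, hence a principal polarization of $\cB[p^\infty]^{(0,1)}$; transporting it through the displayed isomorphism gives a principal polarization $\phi_R$ of $\cA[p^\infty]^{(0,1)}$ over $\Spec R$. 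On the dense open $U\subset \Spec R$ lying over $\mathscr C^{\mathbb X}$, the compatibility of $\lambda$ with $\lambda_{\cB}$ --- again part of the theory of degenerations --- identifies $\phi_R|_U$ with $\phi_0|_U$. By the uniqueness noted above, the $\phi_R$ are compatible with one another and with $\phi_0$; spreading each $\phi_R$ out to an \'etale neighborhood and using this compatibility, they glue to a homomorphism $\phi$ over all of $\mathscr C^{\mathbb X,\tor}$ extending $\phi_0$. That $\phi$ is symmetric, an isomorphism, and $\cO_F$-linear with the correct Rosati behaviour can be checked on the dense open $\mathscr C^{\mathbb X}$, where it holds by hypothesis; this produces the desired (principal) polarization.

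The step requiring the most care is the final compatibility: that the polarization of a degenerating abelian scheme and the canonical polarization of the abelian part of its Raynaud extension induce the same map on the biconnected part of the associated $p$-divisible group. This is a functoriality property of the theory of degenerations (the torus contributes only to the multiplicative part and the period lattice only to the \'etale quotient, so the polarization of the biconnected part sees exactly $\lambda_{\cB}$); everything else --- the rank computations making the relevant graded pieces $p$-divisible groups, the Cartier-duality bookkeeping, and the spreading out and gluing --- is routine.
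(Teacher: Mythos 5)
Your proof is correct and follows essentially the same route as the paper: restrict to the completed (strict) local rings at boundary points, identify $\cA[p^\infty]^{(0,1)}$ with the biconnected part of $\cB[p^\infty]$ via the Raynaud extension from the previous proposition, and transport the canonical principal polarization of $\cB$, which is compatible with $\lambda$ on the open leaf. The extra details you supply (uniqueness of the extension over the regular base and the resulting gluing) are exactly the points the paper leaves implicit in saying the statement "can be checked after restriction to $\Spec R$."
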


\begin{proof} Again, this can be checked after restriction to $\Spec R$ for the completed local rings of $\mathscr C^{\mathbb X,\tor}$. With notation as before, this identifies $\cA[p^\infty]^{(0,1)}$ with the biconnected part of $\mathcal B[p^\infty]$, which has a natural principal polarization, by construction compatible with the polarization on the generic fibre of $\cA$.
\end{proof}

\subsubsection{Partial toroidal compactifications of Igusa varieties in the completely slope divisible case}
Assume now that $\mathbb X=\bigoplus_i \mathbb X_i$ is completely slope divisible. Let $\mathrm{Ig}^{\mathbb X}_m$ be the Igusa variety over $\mathscr{C}^{\mathbb X}$ of some finite level $p^m$ as defined in Remark~\ref{rem:finite level Igusa}; recall that this is a finite \'etale cover of $\mathscr{C}^{\mathbb X}$, which is Galois with Galois group $\Gamma_{m,\mathbb X}$.

\begin{thm}\label{toroidal igusa is etale} The finite \'etale cover $\mathrm{Ig}^{\mathbb X}_m\to \mathscr{C}^{\mathbb X}$ extends uniquely to a finite \'etale cover $\mathrm{Ig}^{\mathbb X,\tor}_m\to \mathscr{C}^{\mathbb X,\tor}$, Galois with group $\Gamma_{m,\mathbb X}$.
\end{thm}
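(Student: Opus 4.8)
The plan is to build $\mathrm{Ig}^{\mathbb X,\tor}_m$ as a moduli space of trivializations over $\mathscr C^{\mathbb X,\tor}$, mirroring the finite-level construction of Remark~\ref{rem:finite level Igusa}, and to deduce uniqueness of the extension from the fact that $\mathscr C^{\mathbb X,\tor}$ is regular (Lemma~\ref{toroidal compactifications of leaves are smooth}) and contains $\mathscr C^{\mathbb X}$ as a dense open.

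First I would dispose of uniqueness. Given two finite \'etale covers $Z,Z'\to\mathscr C^{\mathbb X,\tor}$ restricting to $\mathrm{Ig}^{\mathbb X}_m\to\mathscr C^{\mathbb X}$, the fibre product $Z\times_{\mathscr C^{\mathbb X,\tor}}Z'$ is finite \'etale over the regular, hence normal, base $\mathscr C^{\mathbb X,\tor}$, so its connected components are integral and correspond bijectively to those of its restriction to the dense open $\mathscr C^{\mathbb X}$. The graph of the given isomorphism over $\mathscr C^{\mathbb X}$ is a union of such components; its closure $\widetilde\Gamma$ is therefore open and closed in $Z\times_{\mathscr C^{\mathbb X,\tor}}Z'$, finite \'etale over $\mathscr C^{\mathbb X,\tor}$, and of degree $1$ over $\mathscr C^{\mathbb X}$, hence of degree $1$ everywhere; so it is the graph of an isomorphism $Z\cong Z'$ extending the given one.

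For existence, the crucial point is that, although $\cA$ is only semi-abelian over the toroidal boundary and $\cA[p^\infty]$ is not a $p$-divisible group there, everything needed to run the Igusa moduli problem does extend. By Proposition~\ref{connected part extends}, $\cA[p^\infty]^\circ$ is a $p$-divisible group over $\mathscr C^{\mathbb X,\tor}$; since $\mathscr C^{\mathbb X}$ is dense, $\mathscr C^{\mathbb X,\tor}$ is regular, and $\cA[p^\infty]^\circ|_{\mathscr C^{\mathbb X}}=\mathscr G^\circ$ is completely slope divisible (being geometrically fibrewise constant), Lemma~\ref{lem:generic completely slope divisible} shows $\cA[p^\infty]^\circ$ is completely slope divisible over all of $\mathscr C^{\mathbb X,\tor}$, with isoclinic graded pieces --- one for each positive slope $\lambda_i$, which we continue to denote $\mathscr G_i$ --- carrying $\cO_F\otimes_{\Z}\Z_p$-actions and geometrically fibrewise isomorphic to $\mathbb X_i$. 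The multiplicative part $\cA[p^\infty]^\mu$ is a $p$-divisible group of constant rank, and the preceding Proposition furnishes a polarization on $\cA[p^\infty]^{(0,1)}$ extending the one over $\mathscr C^{\mathbb X}$, whence isomorphisms $\mathscr G_i\cong(\mathscr G_j)^\vee$ for $\lambda_i+\lambda_j=1$ with $\lambda_i,\lambda_j\in(0,1)$. I would then define $\mathrm{Ig}^{\mathbb X,\tor}_m$ to parametrize, over a $\mathscr C^{\mathbb X,\tor}$-scheme $\mathscr T$, the $\cO_F$-equivariant isomorphisms $\rho_{i,m}\colon\mathscr G_i[p^m]\times\mathscr T\cong\mathbb X_i[p^m]\times\mathscr T$ for all positive slopes, lifting fppf-locally to all $m'\ge m$, and compatible with the above polarization up to a global scalar in $(\Z/p^m\Z)^\times$, exactly as in Remark~\ref{rem:finite level Igusa}. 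Feeding the pairings on $\mathscr G_i\oplus\mathscr G_j$ (and the self-pairing when $\lambda_i=\tfrac12$) into Theorem~\ref{thm:construction of Igusa covers} as extra structure of PEL type, over the smooth, hence seminormal, scheme $\mathscr C^{\mathbb X,\tor}$, shows that $\mathrm{Ig}^{\mathbb X,\tor}_m\to\mathscr C^{\mathbb X,\tor}$ is finite \'etale; the residual $\Gamma_{m,\mathbb X}$-action is free and transitive on geometric fibres (using $\cA[p^\infty]^\circ\times k(\bar x)\cong\mathbb X^\circ\times k(\bar x)$ and the lifting condition), so it is a $\Gamma_{m,\mathbb X}$-torsor. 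Finally, over $\mathscr C^{\mathbb X}$ this recovers $\mathrm{Ig}^{\mathbb X}_m$: there the slope-$0$ piece is canonically isomorphic, via the polarization, to $(\cA[p^\infty]^\mu)^\vee$, so trivializing it is, up to the scalar, the same as trivializing $\cA[p^\infty]^\mu$, and similarly on the remaining non-positive-slope pieces, so the data above is equivalent to that of Remark~\ref{rem:finite level Igusa}.

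The main obstacle I expect is precisely this last reconciliation, together with the \'etaleness: at the boundary one can only see the connected (positive-slope) part of $\cA[p^\infty]$ --- its \'etale quotient degenerates, absorbing the extra lattice piece $X\otimes\Q_p/\Z_p$ coming from the Raynaud torus, cf.~Proposition~\ref{prop:leaves are well-positioned} --- so one must show that trivializing only the connected part, and recovering the non-positive slopes via the polarization, is faithful to the interior moduli problem, and that the polarization compatibility is genuinely built into the PEL-type input of Theorem~\ref{thm:construction of Igusa covers} (in particular in the self-dual case $\lambda_i=\tfrac12$, and compatibly with a single global scalar) rather than merely cutting out a closed subscheme of a product of the covers $J_m(\mathscr G_i/\mathscr C^{\mathbb X,\tor})$. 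This rests on a careful comparison of the filtrations of Propositions~\ref{classification pdivisible} and~\ref{prop:leaves are well-positioned} with the slope filtration over the boundary strata.
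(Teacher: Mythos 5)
Your proposal is the same approach as the paper's: define the extension via the moduli problem of Definition~\ref{defn:toroidal Igusa level structure}, using that $\cA[p^\infty]^\circ$ is a completely slope divisible $p$-divisible group over $\mathscr C^{\mathbb X,\tor}$ (Proposition~\ref{connected part extends}), show representability by a finite \'etale cover via Theorem~\ref{thm:construction of Igusa covers} applied to the isoclinic graded pieces, and compare with the interior over $\mathscr C^{\mathbb X}$ using the polarization. Your uniqueness argument via the fibre product over a normal base is standard and fine.

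There is one slip in your existence argument, which you in fact half-notice at the end. Theorem~\ref{thm:construction of Igusa covers} requires the target $\mathbb X$ to be \emph{isoclinic} (its proof rests on~\cite[Corollary 4.1.10]{caraiani-scholze}, which is stated for isoclinic $\mathbb X$), so you cannot feed $\mathscr G_i\oplus\mathscr G_j$ with $\lambda_i\neq\lambda_j$ into it as a PEL-type $p$-divisible group. The paper's resolution is cleaner and you should adopt it: apply Theorem~\ref{thm:construction of Igusa covers} separately to each isoclinic piece, namely to $\cA[p^\infty]_i$ with $\lambda_i>\tfrac12$ as $p$-divisible groups with EL structure (only the $\cO_F$-action) and to the piece with $\lambda_i=\tfrac12$ as a $p$-divisible group with PEL structure (where the self-pairing and the similitude scalar enter), then take the fibre product of the resulting finite \'etale covers. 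The polarization compatibility for $\lambda_j=1-\lambda_i\in(0,\tfrac12)$ is then not a constraint cutting out a closed subscheme: given $\rho_{i,m}$ and the scalar, it \emph{determines} $\rho_{j,m}$ uniquely by dualizing, so the data for $0<\lambda_j<\tfrac12$ contributes nothing to representability. Only the self-dual slope $\tfrac12$ imposes an honest PEL condition, and Theorem~\ref{thm:construction of Igusa covers} handles that case directly. With this modification your argument coincides with the paper's.
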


In fact, we can be more precise, and give a moduli description of $\mathrm{Ig}^{\mathbb X,\tor}_m$. Note that $\cA[p^\infty]^\circ$ is also completely slope divisible, so we obtain $p$-divisible groups $\cA[p^\infty]_i$ equipped with extra structures of EL or of PEL type for all $i$ such that $\lambda_i>0$. 

\begin{defn}\label{defn:toroidal Igusa level structure} An Igusa level $p^m$ structure $\rho_m$ on a 
$\mathscr{C}^{\mathbb X,\tor}$-scheme $\mathscr{T}$ consists of the following data:

For each $i$ such that $\lambda_i>0$, an isomorphism
\[
\rho_{i, m}: \cA[p^\infty]_i[p^m]\times_{\mathscr{C}^{\mathbb X,\tor}}\mathscr{T} \toisom \mathbb{X}_i[p^m]\times_k\mathscr{T},
\]
that commutes with the $\cO_{F}$-action and lifts fppf locally to $p^{m'}$-torsion for all $m'\geq m$, and an element of $(\Z/p^m\Z)^\times(\mathscr T)$ such that, for all $i,j$ such that $\lambda_i,\lambda_j>0$ and $\lambda_i+\lambda_j=1$, the isomorphisms $\rho_{i,m}$ and $\rho_{j,m}$ commute with the polarizations up the given scalar in $(\Z/p^m\Z)^\times(\mathscr T)$.
\end{defn}

\begin{proof}[Proof of Theorem~\ref{toroidal igusa is etale}] We prove that the moduli problem in Definition~\ref{defn:toroidal Igusa level structure} is representable
by a finite \'etale scheme over $\mathscr{C}^{\mathbb X,\tor}$. 
This follows from Theorem~\ref{thm:construction of Igusa covers} applied to the isoclinic $p$-divisible groups $\cA[p^\infty]_i$ for all $i$ with $\lambda_i>\frac 12$ as $p$-divisible groups with EL structure, and the $p$-divisible group $\cA[p^\infty]_i$ for $i$ with $\lambda = \frac 12$ as $p$-divisible group with PEL structure. Indeed, the isomorphisms for $i$ with $0<\lambda_i<\frac 12$ are then formally determined. We now apply Theorem~\ref{thm:construction of Igusa covers} to each graded piece with its induced extra structures and take the fiber product of the resulting finite \'etale covers of $\mathscr{C}^{\mathbb X,\tor}$.

It is clear that over the open part, this recovers $\mathrm{Ig}^{\mathbb X}_m$, again as the polarization determines the structure on the \'etale quotient from the structure on the multiplicative quotient. In particular, it defines a $\Gamma_{m,\mathbb X}$-torsor, as desired.
\end{proof}

For further use, we would like to give a preliminary description of the formal completions 
along boundary strata in $\mathrm{Ig}^{\mathbb X,\tor}_m$ in terms of the toroidal
boundary charts in Theorem~\ref{existence of toroidal compactifications}.\footnote{A different
description will be given in Theorem~\ref{thm:tor Igusa strata}.}
Let $Z$ be a cusp label at level $N$. This determines a locally closed boundary stratum 
$\mathscr{C}^{\mathbb X,\tor}_Z \subset\mathscr{C}^{\mathbb X,\tor}$. Let $\widehat{\mathscr{C}}^{\mathbb X,\tor}_Z$ denote the completion along this boundary stratum. Let $\widehat{\mathrm{Ig}}^{\mathbb X,\tor}_{m,Z}$
denote the formal completion of $\mathrm{Ig}^{\mathbb X,\tor}_m$ along
the boundary stratum $\mathrm{Ig}^{\mathbb X,\tor}_{m,Z}$ determined by the
preimage of $\mathscr{C}^{\mathbb X,\tor}_Z$.  We have a finite \'etale map of
formal schemes
\[
\widehat{\mathrm{Ig}}^{\mathbb X,\tor}_{m,Z}\to \widehat{\mathscr{C}}^{\mathbb X,\tor}_Z,
\]
which is Galois, with Galois group $\Gamma_{m,\mathbb X}$. 
 
Because $\mathscr{C}^{\mathbb X}$ is well-positioned, we can identify $\widehat{\mathscr{C}}^{\mathbb X,\tor}_Z$ with the quotient by $\Gamma_Z$ of the formal completion of 
\[
\mathscr{C}^{\mathbb X}_{Z,\Sigma_Z}:=\Xi_{Z,\Sigma_Z}\times_{\mathscr{S}_Z} \mathscr{C}^{\mathbb X}_Z
\] 
along $\partial_{Z,\Sigma_Z}\times_{\mathscr{S}_Z} \mathscr{C}^{\mathbb X}_Z$, cf.~\cite[Theorem 2.3.2 (5)]{lan-stroh}. We would like to give a similar description for $\widehat{\mathrm{Ig}}^{\mathbb X,\tor}_{m,Z}$. By construction, over $C_Z$, we have a semi-abelian scheme $\cG_Z$ together with an $\cO_F$-action. Here $\cG_Z$ is a Raynaud extension 
\[
0\to T \to \cG_Z\to \cB_{Z}\to 0,
\] 
where $T$ is the constant torus with character group $X$ (recall that $X$ is part of the torus argument that comes with the cusp label $Z$) and $\cB_{Z}$ is the pullback of the universal abelian scheme over $\mathscr{S}_Z$ to $C_Z$. As observed in the proof of Proposition~\ref{connected part extends}, over $C_Z\times_{\mathscr S_Z} \mathscr C^{\mathbb X}_Z$, the connected part $\cG_Z[p^\infty]^\circ$ defines a $p$-divisible group itself (as the \'etale part has constant rank); let us write $\mathscr H_Z$ for this $p$-divisible group with $\cO_F$-action on $C_Z\times_{\mathscr S_Z} \mathscr C^{\mathbb X}_Z$. Again, it is completely slope divisible, so we get $p$-divisible groups $\mathscr H_{Z,i}$ of slope $\lambda_i$, equipped with polarizations as above.

With the obvious version of Definition~\ref{defn:toroidal Igusa level structure}, we can now define a $\Gamma_{m,\mathbb X}$-torsor over $C_Z\times_{\mathscr S_Z} \mathscr C^{\mathbb X}_Z$, whose pullback to $\Xi_{Z,\Sigma_Z}\times_{\mathscr S_Z} C^{\mathbb X}_Z$ we denote by
\[
\mathrm{Ig}^{\mathbb X}_{Z,\Sigma_Z}\to \Xi_{Z,\Sigma_Z}.
\]
We also let $\partial\mathrm{Ig}^{\mathbb X}_{Z,\Sigma_Z}$ denote the corresponding finite \'etale
cover of $\partial_{Z,\Sigma_Z}\times_{\mathscr{S}_Z}\mathscr{C}^{\mathbb X}_{Z}$. This is 
a closed subscheme of $\mathrm{Ig}^{\mathbb X}_{Z,\Sigma_Z}$. Finally, we let $Y_{Z,\Sigma_Z}$ denote the formal completion of $\mathrm{Ig}^{\mathbb X}_{Z,\Sigma_Z}$ along $\partial\mathrm{Ig}^{\mathbb X}_{Z,\Sigma_Z}$. 

\begin{thm}\label{thm:formal completions Igusa} With the same choice of a splitting of the filtration $\mathrm{Z}_N$ as in Theorem~\ref{existence of toroidal compactifications}~(3), there is a canonical isomorphism of formal schemes 
\[
\widehat{\mathrm{Ig}}^{\mathbb X,\tor}_{m,Z}\toisom Y_{Z,\Sigma_Z}/\Gamma_Z.
\]
\end{thm}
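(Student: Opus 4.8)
The plan is to reduce the statement to the corresponding description of the toroidal boundary charts of the Shimura variety itself, namely Theorem~\ref{existence of toroidal compactifications}~(3), combined with the moduli-theoretic characterization of $\mathrm{Ig}^{\mathbb X,\tor}_m$ given in Definition~\ref{defn:toroidal Igusa level structure} and Theorem~\ref{toroidal igusa is etale}. First I would recall that, since $\mathscr C^{\mathbb X}$ is well-positioned, Theorem~\ref{existence of toroidal compactifications}~(3) together with \cite[Theorem 2.3.2 (5)]{lan-stroh} identifies $\widehat{\mathscr C}^{\mathbb X,\tor}_Z$ with $\left(\text{formal completion of }\mathscr C^{\mathbb X}_{Z,\Sigma_Z}\text{ along }\partial_{Z,\Sigma_Z}\times_{\mathscr S_Z}\mathscr C^{\mathbb X}_Z\right)/\Gamma_Z$. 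Over this formal scheme sits the semi-abelian scheme $\mathcal A$; over the boundary chart, its formal completion along the identity section is identified $\cO_F$-equivariantly with $\widehat{\mathcal G}_Z$, the Raynaud extension pulled back from $C_Z$. Hence $\mathcal A[p^\infty]^\circ \cong \mathcal G_Z[p^\infty]^\circ = \mathscr H_Z$ over $C_Z\times_{\mathscr S_Z}\mathscr C^{\mathbb X}_Z$, compatibly with slope filtrations, $\cO_F$-actions, and (on the biconnected part) polarizations. This is exactly the data that enters Definition~\ref{defn:toroidal Igusa level structure}.

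The key step is then the following comparison of moduli problems: an Igusa level-$p^m$ structure on a $\widehat{\mathscr C}^{\mathbb X,\tor}_Z$-scheme $\mathscr T$, in the sense of Definition~\ref{defn:toroidal Igusa level structure}, is — via the identification $\mathcal A[p^\infty]_i \cong \mathscr H_{Z,i}$ above — literally the same as an Igusa level-$p^m$ structure (in the obvious $C_Z$-variant of that definition) on the induced $C_Z\times_{\mathscr S_Z}\mathscr C^{\mathbb X}_Z$-scheme. Here one must be slightly careful that the scalar in $(\Z/p^m\Z)^\times$ matching the polarizations is the same on both sides: this is because the polarization on $\mathcal A[p^\infty]^{(0,1)}$ was constructed (Proposition after Proposition~\ref{connected part extends}) precisely by transporting the polarization on the biconnected part of $\mathcal B[p^\infty]$, and $\mathcal B = \cB_Z$. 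Therefore the finite \'etale $\Gamma_{m,\mathbb X}$-torsor $\mathrm{Ig}^{\mathbb X,\tor}_m$, restricted to the boundary chart, is obtained by pulling back the $\Gamma_{m,\mathbb X}$-torsor $\mathrm{Ig}^{\mathbb X}_{Z,\Sigma_Z}\to\Xi_{Z,\Sigma_Z}$ along the chart map, and then formally completing. Since formation of the torsor commutes with the formal completion (finite \'etale morphisms are insensitive to nilpotent thickenings, and the torsor on a boundary stratum determines it on the formal completion), we get
\[
\widehat{\mathrm{Ig}}^{\mathbb X,\tor}_{m,Z}\toisom Y_{Z,\Sigma_Z}/\Gamma_Z,
\]
where the quotient by $\Gamma_Z$ on the right is inherited from the $\Gamma_Z$-action on the Shimura-variety side; one checks that the $\Gamma_{m,\mathbb X}$-torsor structure is $\Gamma_Z$-equivariant (the cusp label $Z$, hence $\mathbb X_Z$ and the slope data, is $\Gamma_Z$-stable), so the quotient makes sense and is compatible on both sides.

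The main obstacle I expect is bookkeeping rather than conceptual: one has to match the extra structures (the $\cO_F$-actions on the various graded pieces $\mathcal A[p^\infty]_i$ versus $\mathscr H_{Z,i}$, the duality pairings, and the scalar in $(\Z/p^m\Z)^\times$) very carefully through the chain of identifications — in particular through the chosen splitting of $\mathrm Z_N$, which is what makes the isomorphism ``canonical'' and which also appears in Theorem~\ref{existence of toroidal compactifications}~(3). A secondary subtlety is the interaction with the covering $\{\mathfrak X^\circ_\sigma\}$ of $\mathfrak X_{Z,\Sigma_Z}$: strictly speaking one verifies the isomorphism over each $\mathfrak X^\circ_\sigma$ (where the canonical isomorphism~\eqref{canonical isomorphism of formal completions} is available) and then glues, the gluing being automatic because all constructions are canonical. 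Once these compatibilities are in place, the statement follows formally from Theorem~\ref{toroidal igusa is etale} and Theorem~\ref{existence of toroidal compactifications}~(3).
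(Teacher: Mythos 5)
Your proposal is correct and follows essentially the same route as the paper: both reduce the statement to producing an isomorphism between the pullbacks of $\cA[p^\infty]^\circ$ and $\mathscr H_Z$ to $\widehat{\mathscr C}^{\mathbb X,\tor}_Z$ compatible with the extra structures, obtained from the identification $\widehat{\cA}\cong\widehat{\cG}_Z$ of formal completions along the identity section (algebraized Zariski-locally as in Proposition~\ref{connected part extends}), after which the two moduli problems of Igusa level-$p^m$ structures coincide and the torsors are identified. The paper carries out the local step with affine formal opens $\Spf R$ lifting to the boundary charts rather than the $\mathfrak X^\circ_\sigma$ cover, but this is only a bookkeeping difference.
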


\begin{proof} This follows once we prove that there exists an isomorphism between the pullbacks of $\cA[p^\infty]^\circ$ and $\mathscr{H}_Z$ to $\widehat{\mathscr{C}}^{\mathbb X,\tor}_Z$ that commutes with the extra structures.

We work Zariski locally on $\widehat{\mathscr{C}}^{\mathbb X,\tor}_Z$. We cover $\widehat{\mathscr{C}}^{\mathbb X,\tor}_Z$ by affine open formal subschemes $\Spf R$ which lift to affine opens in the formal completion of $\mathscr{C}^{\mathbb X}_{Z,\Sigma_Z}$ along the toroidal boundary stratum (because taking the quotient by $\Gamma_Z$ is a local isomorphism) and which arise by formal completion from affine opens in $\mathscr{C}^{\mathbb X,\tor}$ and $\mathscr{C}^{\mathbb X}_{Z,\Sigma_Z}$. We obtain induced flat maps of schemes
\[
f_1:\Spec R\to \mathscr{C}^{\mathbb X,\tor}\ \mathrm{and}\ f_2:\Spec R\to \mathscr{C}^{\mathbb X}_{Z,\Sigma_Z}. 
\]
We let $A$ denote the pullback of the semi-abelian scheme $\cA$ along $f_1$ and $\widetilde{A}$ denote the pullback of the Raynaud extension $\widetilde{G}_Z$ along $f_2$. As above, the completions along the identity $\widehat{A}\cong \widehat{\widetilde{A}}$ are isomorphic over $\Spf R$, and thus over $\Spec R$.

Now
\[
f_2^\ast \mathscr H_Z= \widetilde{A}[p^\infty]^\circ\cong A[p^\infty]^\circ = f_1^\ast \cA[p^\infty]^\circ\ ,
\]
as desired.
\end{proof}

\subsubsection{Partial toroidal compactifications of perfect Igusa varieties}
In this subsection, we will repeat the previous constructions for perfect Igusa varieties, allowing now general $\mathbb X$ (not necessarily completely slope divisible).

Let $\mathfrak{Ig}^{\mathbb X}$ be the perfect Igusa variety over $\mathscr{C}^{\mathbb X}$; this is a pro-finite \'etale cover of the perfection of $\mathscr{C}^{\mathbb X}$, Galois with group $\Gamma_{\mathbb X} = \mathrm{Aut}(\mathbb X)$.

\begin{thm}\label{perfect toroidal igusa is etale} The pro-finite \'etale cover $\mathfrak{Ig}^{\mathbb X}\to \mathscr{C}^{\mathbb X}_\perf$ extends uniquely to a pro-finite \'etale cover $\mathfrak{Ig}^{\mathbb X,\tor}\to \mathscr{C}^{\mathbb X,\tor}_\perf$, Galois with group $\Gamma_{\mathbb X}$.
\end{thm}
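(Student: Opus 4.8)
The plan is to mimic the structure of the proof of Theorem~\ref{toroidal igusa is etale}, reducing everything to the finite-level statements already established and to the fact that $\mathscr{C}^{\mathbb X,\tor}$ depends only on $\mathbb X$ up to the appropriate notion of isogeny. First I would reduce to the case that $\mathbb X$ is completely slope divisible: by Lemma~\ref{lem:generic completely slope divisible} (more precisely, by the remark that every isogeny class contains a completely slope divisible representative), choose such an $\mathbb X'$ with an isogeny $\phi:\mathbb X\to\mathbb X'$. Since the leaf $\mathscr{C}^{\mathbb X}$, and hence its partial toroidal compactification $\mathscr{C}^{\mathbb X,\tor}$, depends on $\mathbb X$ only up to isogeny, and since $\mathfrak{Ig}^{\mathbb X}\cong\mathfrak{Ig}^{\mathbb X'}$ as $\Gamma_{\mathbb X}$-torsors over $\mathscr{C}^{\mathbb X}_\perf$ (with $\Gamma_{\mathbb X}\cong\Gamma_{\mathbb X'}$), it suffices to construct the extension when $\mathbb X$ is completely slope divisible.

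So assume $\mathbb X=\bigoplus_i\mathbb X_i$ is completely slope divisible. For each finite level $m$, Theorem~\ref{toroidal igusa is etale} already gives a finite \'etale Galois cover $\mathrm{Ig}^{\mathbb X,\tor}_m\to\mathscr{C}^{\mathbb X,\tor}$ with group $\Gamma_{m,\mathbb X}$. The next step is to pass to the perfection and to the inverse limit: set
\[
\mathfrak{Ig}^{\mathbb X,\tor}:=\varprojlim_m\bigl(\mathrm{Ig}^{\mathbb X,\tor}_m\bigr)_\perf,
\]
a pro-finite \'etale cover of $\mathscr{C}^{\mathbb X,\tor}_\perf$ with Galois group $\varprojlim_m\Gamma_{m,\mathbb X}=\Gamma_{\mathbb X}$. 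Restricting over the open part $\mathscr{C}^{\mathbb X}_\perf$ recovers $\varprojlim_m(\mathrm{Ig}^{\mathbb X}_m)_\perf=\mathfrak{Ig}^{\mathbb X}$, exactly as in the Remark identifying $\mathfrak{Ig}^{\mathbb X}$ with $\mathrm{Ig}^{\mathbb X}_\perf$ (a map of $\Gamma_{\mathbb X}$-torsors over a perfect base is an isomorphism). This gives existence.

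For uniqueness I would argue as follows: a pro-finite \'etale $\Gamma_{\mathbb X}$-torsor over $\mathscr{C}^{\mathbb X,\tor}_\perf$ is the same data as a compatible system of finite \'etale $\Gamma_{m,\mathbb X}$-covers, so it suffices to show each finite \'etale $\Gamma_{m,\mathbb X}$-cover of $\mathscr{C}^{\mathbb X}_\perf$ extends uniquely to $\mathscr{C}^{\mathbb X,\tor}_\perf$. Since perfection is a universal homeomorphism, finite \'etale covers of $\mathscr{C}^{\mathbb X,\tor}_\perf$ correspond to those of $\mathscr{C}^{\mathbb X,\tor}$, and likewise on the open part; so uniqueness follows from the uniqueness clause in Theorem~\ref{toroidal igusa is etale}, which in turn rests on Zariski--Nagata purity and the fact (Lemma~\ref{toroidal compactifications of leaves are smooth}) that $\mathscr{C}^{\mathbb X,\tor}$ is smooth, hence normal, with $\mathscr{C}^{\mathbb X}$ the complement of a normal crossings divisor. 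The main obstacle is making sure the reduction to the completely slope divisible case in the first paragraph is legitimate on the compactified level, i.e.\ that the isogeny-invariance of $\mathfrak{Ig}^{\mathbb X}$ and of $\mathscr{C}^{\mathbb X}$ really does upgrade to an identification $\mathscr{C}^{\mathbb X,\tor}\cong\mathscr{C}^{\mathbb X',\tor}$ compatibly with the torsors; this is where one must invoke that the well-positioned structure $(\mathscr{C}^{\mathbb X})^\natural_Z=\mathscr{C}^{\mathbb X_Z}_Z$ of Proposition~\ref{prop:leaves are well-positioned} transforms correctly under isogeny (the decomposition $\mathbb X\cong\Hom(X,\mu_{p^\infty})\oplus\mathbb X_Z\oplus X\otimes(\mathbb Q_p/\mathbb Z_p)$ is isogeny-stable in $\mathbb X_Z$), and that the construction of the boundary charts in Theorem~\ref{thm:formal completions Igusa} only used $\mathscr H_Z=\cG_Z[p^\infty]^\circ$, which also depends only on the isogeny class. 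Once that compatibility is in hand, the rest is formal.
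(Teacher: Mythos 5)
Your proposed reduction to the completely slope divisible case rests on a false premise: you assert that ``the leaf $\mathscr{C}^{\mathbb X}$, and hence its partial toroidal compactification $\mathscr{C}^{\mathbb X,\tor}$, depends on $\mathbb X$ only up to isogeny.'' This is not true. Different $\mathbb X$ in the same isogeny class give genuinely different central leaves; what is isogeny-invariant is only the perfect Igusa variety $\mathfrak{Ig}^{\mathbb X}$, and the relationship between the two leaves is only a pro-finite correspondence $\mathscr C^{\mathbb X}\leftarrow \mathfrak{Ig}^{\mathbb X}\cong \mathfrak{Ig}^{\mathbb X'}\to \mathscr C^{\mathbb X'}$, as the paper emphasizes explicitly. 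So there is no identification $\mathscr{C}^{\mathbb X,\tor}\cong\mathscr{C}^{\mathbb X',\tor}$ to pull the torsor along. You flag this yourself as ``the main obstacle,'' but the fix you sketch --- upgrading the correspondence to the toroidal compactifications --- is exactly the content of Corollary~\ref{cor:correspondence igusa}, which in the paper's development is a \emph{consequence} of the theorem you are trying to prove (via Theorem~\ref{thm:formal completions perfect Igusa}). So your argument is circular on this point.

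The paper's own proof avoids the reduction to the slope divisible case altogether. One directly writes down a moduli problem over $\mathscr{C}^{\mathbb X,\tor}_\perf$ (Definition~\ref{defn:perfect toroidal Igusa level structure}), parametrizing $\cO_F$-linear isomorphisms $\cA[p^\infty]^\circ\cong \mathbb X^\circ$ with the appropriate compatibility with polarizations on biconnected parts. The key point that makes this possible is Proposition~\ref{connected part extends}: although $\cA[p^\infty]$ itself is not a $p$-divisible group on the compactified leaf, its connected part $\cA[p^\infty]^\circ$ is, and it is geometrically fiberwise constant. Then Proposition~\ref{perfect Igusa} --- which works for arbitrary $\mathbb X$, not just completely slope divisible ones, provided the base is perfect --- shows this functor is representable by a $\Gamma_{\mathbb X}$-torsor. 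Uniqueness is immediate from normality of the compactified leaf, without any appeal to purity. Your finite-level limit construction in the slope divisible case is a valid alternative description once $\mathbb X$ is completely slope divisible (and is in fact noted in the paper as a remark), but the initial reduction step, as written, does not work.
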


We remark that if $\mathbb X$ is completely slope divisible, then $\mathfrak{Ig}^{\mathbb X,\tor}$ is the perfection of $\varprojlim_m \mathrm{Ig}^{\mathbb X,\tor}_m$ (by uniqueness).

Again, we can be more precise, and give a moduli description of $\mathfrak{Ig}^{\mathbb X,\tor}$:

\begin{defn}\label{defn:perfect toroidal Igusa level structure} A perfect Igusa level structure $\rho$ on a perfect
$\mathscr{C}^{b,\tor}_{\Sigma}$-scheme $\mathscr{T}$ is given by an $\cO_F$-linear isomorphism
\[
\rho:\cA[p^\infty]^\circ\times_{\mathscr{C}^{\mathbb X,\tor}}\mathscr{T} \toisom \mathbb{X}^\circ\times_k\mathscr{T}
\]
and a scalar in $\Z_p^\times(\mathscr T)$ such that the induced isomorphism
\[
\rho_{(0,1)}:\cA[p^\infty]^{(0,1)}\times_{\mathscr{C}^{\mathbb X,\tor}}\mathscr{T} \toisom \mathbb{X}^{(0,1)}\times_k\mathscr{T}
\]
obtained by quotienting by the multiplicative parts commutes with the polarizations up to the given element of $\Z_p^\times(\mathscr{T})$.
\end{defn}

\begin{proof}[Proof of Theorem~\ref{perfect toroidal igusa is etale}] Uniqueness is clear as the base is normal and the cover is given on an open dense subspace. The proof is now similar to the proof of Theorem~\ref{toroidal igusa is etale}, except that we refer to Proposition~\ref{perfect Igusa} instead of Theorem~\ref{thm:construction of Igusa covers}.
\end{proof}

\begin{remark}\label{rem:not torsor} By Corollary~\ref{cor:fpqc torsor}, $\mathfrak{Ig}^{\mathbb X}$ is an 
fpqc $\underline{\mathrm{Aut}}(\mathbb{X})$-torsor over $\mathscr{C}^{\mathbb X}$ (before perfection). However, this no longer holds true over the boundary strata of the partial toroidal compactification. This phenomenon 
already occurs for the modular curves, in which case the precise structure at the boundary is described in~\cite[\S 3]{howe}. 
\end{remark}

Again, we can give a description of the formal completions along boundary strata. More precisely, we would like to describe the $\Gamma_{\mathbb X}$-torsor
\[
\widehat{\mathfrak{Ig}}^{\mathbb X,\tor}_Z\to \widehat{\mathscr C}^{\mathbb X,\tor}_{Z,\perf}
\]
with notation following the previous subsection.
 
We can identify $\widehat{\mathscr{C}}^{\mathbb X,\tor}_{Z,\perf}$ with the quotient by $\Gamma_Z$ of the formal completion of
\[
\mathscr{C}^{\mathbb X}_{Z,\Sigma_Z,\perf}=(\Xi_{Z,\Sigma_Z}\times_{\mathscr{S}_Z} \mathscr{C}^{\mathbb X}_Z)_\perf
\] 
along its toroidal boundary. We would like to give a similar description for $\widehat{\mathfrak{Ig}}^{\mathbb X,\tor}_Z$. 

As above, we have the $p$-divisible group $\mathscr{H}_Z$ over $C_Z\times_{\mathscr S_Z} \mathscr C^{\mathbb X}_Z$ (the connected part of the $p$-divisible group of the Raynaud extension). On perfect test schemes over $C_Z\times_{\mathscr S_Z} \mathscr C^{\mathbb X}_Z$, we can parametrize $\cO_F$-linear isomorphisms
\[
\mathscr H_Z\cong \mathbb X^\circ
\]
together with a section of $\Z_p^\times$ such that the induced isomorphism of biconnected parts is compatible with the polarization. By Proposition~\ref{perfect Igusa}, this is representable by a $\Gamma_{\mathbb X}$-torsor
\[
C^{\mathfrak{Ig},\mathbb X}_Z\to (C_Z\times_{\mathscr S_Z} \mathscr C^{\mathbb X}_Z)_\perf.
\]
Let us give an explicit description of $C^{\mathfrak{Ig},\mathbb X}_Z$.

\begin{prop}\label{base of perfect torus torsor} The perfect scheme $C^{\mathfrak{Ig},\mathbb X}_Z$ parametrizes points $(\cB,\iota,\lambda,\eta)\in \mathscr S_Z$ together with an extension
\[
0\to T\to \cG\to \cB\to 0
\]
by the split torus $T$ with cocharacter group $X$, and an $\cO_F$-linear embedding
\[
\rho: \cG[p^\infty]\hookrightarrow \mathbb X
\]
that is compatible with the polarization in the following sense: The filtration $\mathrm Z_{\mathbb X}$ of $\mathbb X$ given by
\[
T[p^\infty]\subset \cG[p^\infty]\subset \mathbb X
\]
is symplectic, and the induced isomorphism
\[
\cB[p^\infty] = \cG[p^\infty]/T[p^\infty]\cong \mathrm{Gr}^{\mathrm Z_{\mathbb X}}_{-1}
\]
is compatible with principal polarizations.
\end{prop}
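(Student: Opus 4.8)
The plan is to construct a natural, $\Gamma_{\mathbb X}$-equivariant bijection, over perfect test schemes $\mathscr{T}$, between the functor described in the proposition and the functor defining $C^{\mathfrak{Ig},\mathbb X}_Z$. Since the latter is a $\Gamma_{\mathbb X}$-torsor over $(C_Z\times_{\mathscr{S}_Z}\mathscr{C}^{\mathbb X}_Z)_\perf$ by Proposition~\ref{perfect Igusa}, such a bijection identifies the two and proves the claim.

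First I would check that the data of the proposition already encodes a point of $(C_Z\times_{\mathscr{S}_Z}\mathscr{C}^{\mathbb X}_Z)_\perf$: the tuple $(\cB,\iota,\lambda,\eta)$ is a point of $\mathscr{S}_Z$, the extension $0\to T\to\cG\to\cB\to 0$ (with its induced level datum) gives a point of $C_Z$, and the existence of $\rho$ forces $\cB$ into the leaf $\mathscr{C}^{\mathbb X_Z}_Z$. Indeed, symplecticity of $\mathrm{Z}_{\mathbb X}$ makes $\mathbb X/\cG[p^\infty]$ dual, via $\lambda_{\mathbb X}$, to the multiplicative group $T[p^\infty]$, hence étale; therefore $\mathbb X^\circ\subseteq\cG[p^\infty]$, and so $\rho$ restricts to an isomorphism $\psi\colon\mathscr{H}_Z=\cG[p^\infty]^\circ\toisom\mathbb X^\circ$. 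Feeding $\mathrm{Z}_{\mathbb X}$ into Proposition~\ref{classification pdivisible} and using the identification $\mathrm{Gr}^{\mathrm{Z}_{\mathbb X}}_{-1}\cong\cB[p^\infty]$ produces a splitting of $\mathbb X$ of the shape appearing in Proposition~\ref{prop:leaves are well-positioned}, whence $\cB[p^\infty]\cong\mathbb X_Z$ fibrewise.

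The forward map sends $(\cB,\iota,\lambda,\eta,\cG,\rho)$ to this point together with $\psi$ and the scalar $c\in\Z_p^\times$ measuring the discrepancy on biconnected parts between $\lambda_{\cB}$ transported through $\rho$ and the polarization of $\mathrm{Gr}^{\mathrm{Z}_{\mathbb X}}_{-1}$ induced from $\lambda_{\mathbb X}$. The heart of the argument is to show this is bijective on fibres, equivalently that $\rho$ is reconstructed uniquely from $(\cB,\cG,\psi,c)$. I would argue: symplecticity already fixes $\rho(\cG[p^\infty])=\psi(T[p^\infty])^{\perp}$, the orthogonal complement being involutive for the perfect pairing; a height count (using $\cB[p^\infty]\cong\mathbb X_Z$) shows $\cB[p^\infty]^{\et}=\cG[p^\infty]/\mathscr{H}_Z\to\psi(T[p^\infty])^{\perp}/\mathbb X^\circ$ is an isomorphism, so $\rho$ is determined on $\cG[p^\infty]/T[p^\infty]=\cB[p^\infty]$ modulo $\psi(T[p^\infty])$ once one imposes the required polarization compatibility on $\mathrm{Gr}^{\mathrm{Z}_{\mathbb X}}_{-1}$ (which, since $\lambda_{\cB}$ pairs the multiplicative part of $\cB[p^\infty]$ perfectly with its étale part, pins down $\rho$ on the étale part from its behaviour on the multiplicative part); and the remaining ambiguity is a homomorphism from an étale $p$-divisible group into a multiplicative one over the perfect, hence reduced, base $\mathscr{T}$, which is zero. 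Running this reconstruction forward yields, from any $(\cB,\cG,\psi,c)$, an embedding $\rho$ with all the required properties, so the forward map is also surjective on fibres.

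Finally I would record that the construction is natural in $\mathscr{T}$ and $\Gamma_{\mathbb X}$-equivariant (with $\gamma\in\Gamma_{\mathbb X}=\mathrm{Aut}(\mathbb X)$ acting on $\rho$ by post-composition, on $\psi$ by restriction to $\mathbb X^\circ$, and on $c$ through its similitude factor), hence an isomorphism of $\Gamma_{\mathbb X}$-torsors; in particular the functor of the proposition is represented by $C^{\mathfrak{Ig},\mathbb X}_Z$. The main obstacle is precisely the uniqueness step above: one must see that, over the perfect base, the canonical connected--étale and multiplicative--biconnected decompositions together with the polarization leave no freedom in $\rho$ beyond the isomorphism $\psi$ of connected parts and the similitude scalar $c$.
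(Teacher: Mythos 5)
Your proof is correct and follows essentially the same route as the paper's (much terser) argument: over the perfect base the connected--\'etale sequence splits, and duality of the \'etale part of $\cG[p^\infty]$ with the multiplicative part via the polarizations pins down the \'etale component of $\rho$ canonically, so the embedding carries exactly the data of the isomorphism of connected parts together with the similitude scalar. Your explicit uniqueness step (the residual ambiguity being a homomorphism from an \'etale into a multiplicative $p$-divisible group over a reduced base, hence zero) is precisely what the paper compresses into ``the result follows easily.''
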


\begin{proof} The description given above would only require an isomorphism $\cG[p^\infty]^\circ=\mathscr H_Z\cong \mathbb X^\circ$. However, over a perfect base the \'etale part is a direct summand, which by duality with the multiplicative part has a natural map into the \'etale part of $\mathbb X$, inducing a canonical extension $\cG[p^\infty]\hookrightarrow \mathbb X$. The result follows easily.
\end{proof}

Now define $\Xi^{\mathfrak{Ig},\mathbb X}_{Z,\Sigma_Z}$ by the cartesian square
\[\xymatrix{
\Xi^{\mathfrak{Ig},\mathbb X}_{Z,\Sigma_Z}\ar[r]\ar[d] & (\Xi_{Z,\Sigma_Z}\times_{\mathscr{S}_Z} \mathscr{C}^{\mathbb X}_Z)_\perf\ar[d]\\
C^{\mathfrak{Ig},\mathbb X}_Z\ar[r] & (C_Z\times_{\mathscr S_Z} \mathscr C^{\mathbb X}_Z)_\perf.
}\]
We would also like to understand this explicitly. The essential point is to understand the $\mathbb S_{Z,\perf}$-torsor
\[
\Xi^{\mathfrak{Ig},\mathbb X}_{Z} \to C^{\mathfrak{Ig},\mathbb X}_Z
\]
of which $\Xi^{\mathfrak{Ig},\mathbb X}_{Z,\Sigma_Z}$ is a partial compactification.

\begin{prop}\label{perfect torus torsor} The $\mathbb S_{Z,\perf}$-torsor
\[
\Xi^{\mathfrak{Ig},\mathbb X}_{Z} \to C^{\mathfrak{Ig},\mathbb X}_Z
\]
is the perfection of the $\mathbb S_Z$-torsor that parametrizes lifts of the map $f_0: X\to \cB^\vee\cong \cB$ (given by the extension $0\to T\to \cG\to \cB\to 0$) to symmetric $f: X\to \cG$.

Explicitly, this can be described as follows. Choose a symplectic splitting $\delta_{\mathbb X}$ of the filtration $\mathrm{Z}_{\mathbb X}$ of $\mathbb X$ over $C^{\mathfrak{Ig},\mathbb X}_Z$. Via the embedding $\rho: \cG[p^\infty]\hookrightarrow \mathbb X$, this induces a splitting of
\[
0\to T[p^\infty]\to \cG[p^\infty]\to \cB[p^\infty]\to 0
\]
and dually a lift of $f_0$ to a map $\tilde{f}_0: X[\tfrac 1p]\to \cB^\vee\cong \cB$. Then the $\mathbb S_{Z,\perf}$-torsor
\[
\Xi^{\mathfrak{Ig},\mathbb X}_{Z} \to C^{\mathfrak{Ig},\mathbb X}_Z
\]
parametrizes symmetric lifts $\tilde{f}: X[\tfrac 1p]\to \cG$ of $\tilde{f}_0$.
\end{prop}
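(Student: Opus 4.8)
The plan is to obtain both assertions by combining the moduli description of the boundary chart $\Xi_Z$ recalled in Theorem~\ref{existence of toroidal compactifications} (following~\cite{lan-thesis}) with the trivialization $\rho$ of Proposition~\ref{base of perfect torus torsor} and the splitting $\delta_{\mathbb X}$. For the first assertion, unwinding the cartesian square defining $\Xi^{\mathfrak{Ig},\mathbb X}_{Z,\Sigma_Z}$ and restricting to the open part away from the relative torus embedding identifies $\Xi^{\mathfrak{Ig},\mathbb X}_Z\to C^{\mathfrak{Ig},\mathbb X}_Z$ with the base change of the $\mathbb S_Z$-torsor $\Xi_Z\to C_Z$ along $C^{\mathfrak{Ig},\mathbb X}_Z\to C_Z$, passed to the perfection. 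By the construction recalled in Theorem~\ref{existence of toroidal compactifications}, $\Xi_Z\to C_Z$ represents the functor of symmetric (``polarizable'') lifts $f:X\to\cG$ of the canonical map $f_0:X\to\cB^\vee\cong\cB$ attached to the Raynaud extension and to $\lambda_{\cB}$; since $\Gm$ agrees with its perfection on perfect schemes of characteristic $p$, the perfection of this torsor is an $\mathbb S_{Z,\perf}$-torsor over $C^{\mathfrak{Ig},\mathbb X}_Z$ with the same moduli interpretation, which is the first claim.

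For the explicit description, fix a symplectic splitting $\delta_{\mathbb X}$ of $\mathrm Z_{\mathbb X}=\{T[p^\infty]\subset\cG[p^\infty]\subset\mathbb X\}$ over $C^{\mathfrak{Ig},\mathbb X}_Z$; such a splitting exists because the base is perfect, so that the multiplicative and \'etale parts are direct summands. Restricting $\delta_{\mathbb X}$ along $\rho$ to the two-step subfiltration $T[p^\infty]\subset\cG[p^\infty]$ gives a splitting of $0\to T[p^\infty]\to\cG[p^\infty]\to\cB[p^\infty]\to 0$, using $\cB[p^\infty]\cong\mathrm{Gr}^{\mathrm Z_{\mathbb X}}_{-1}$. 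By the standard dictionary between splittings of the level-$p^m$ extensions and liftings of $f_0$ --- exactly as in the discussion preceding~\eqref{eq:chosen splitting}, where splitting $0\to T(C)[N]\to\cG(C)[N]\to\cB(C)[N]\to 0$ amounts to extending $f_0$ to $\tfrac1N X$ --- passing to the colimit over $m$, this splitting is the same datum as an $\cO_F$-linear lift $\tilde f_0:X[\tfrac1p]\to\cB^\vee\cong\cB$ of $f_0$.

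It then remains to check that restriction along $X\hookrightarrow X[\tfrac1p]$ gives an isomorphism from the functor of symmetric lifts $\tilde f:X[\tfrac1p]\to\cG$ of $\tilde f_0$ to the functor of symmetric lifts $f:X\to\cG$ of $f_0$, compatibly with the $\mathbb S_{Z,\perf}$-torsor structures; this is the step I expect to be the main obstacle. The point is that, given a symmetric $f$ lifting $f_0$, an extension to a symmetric $\tilde f$ lifting $\tilde f_0$ is a compatible system of symmetric $p$-power roots of $f$ in $\cG$ with image in $\cB$ prescribed by $\tilde f_0$; two such extensions differ by an element of $\Hom\big(X[\tfrac1p]/X,\,T\big)=\Hom(X\otimes\Qp/\Zp,\,X\otimes\Gm)$, which vanishes since $T_p\Gm=0$ over a perfect scheme of characteristic $p$, so $\tilde f$ is unique when it exists, while existence follows from faithful flatness of $[p^m]$ on $\cG$ together with surjectivity of $\cG[p^m]\to\cB[p^m]$ and the standard theory of degeneration data. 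Uniqueness then forces symmetry and $\cO_F$-linearity to match up on both sides, and equivariance for the two torsor structures (both under symmetric Hermitian $\Gm$-valued pairings on $X$, using once more that $\Hom(X[\tfrac1p]/X,\Gm)=0$ in characteristic $p$) is automatic. Combining this with the first assertion yields the proposition.
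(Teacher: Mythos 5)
Your argument is correct in substance and rests on the same mechanism as the paper's (perfectness of the base killing all $p$-power torsion ambiguities), but the deductive route is different. The paper's proof is a two-line argument at the level of torsors: one observes that the functor of symmetric lifts $\tilde f: X[\tfrac 1p]\to \cG$ of $\tilde f_0$ is an $\mathbb S_{Z,\perf}$-torsor which, via restriction along $X\subset X[\tfrac 1p]$, lifts the defining $\mathbb S_Z$-torsor of symmetric lifts of $f_0$, and then invokes uniqueness of such a lift over a perfect base (the perfection being one such lift), so no pointwise comparison is needed. You instead prove directly that restriction along $X\hookrightarrow X[\tfrac 1p]$ is a bijection on lifts over perfect test schemes, which in effect reproves the uniqueness statement by hand; this is more explicit but longer, and it requires you to be careful that all moduli comparisons are made on \emph{perfect} test schemes $\mathscr T$, since your vanishing statements ($\mu_{p^m}(\mathscr T)=\{1\}$, $T_p\Gm(\mathscr T)=0$) use reducedness. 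Your treatment of the first assertion (unwinding the cartesian square and using that perfection does not change the functor of points on perfect test schemes) matches the paper's ``follows directly from the definition''.

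One step is under-justified: global existence of the extension $\tilde f$. Faithful flatness of $[p^m]$ on $\cG$ and surjectivity of $\cG[p^m]\to\cB[p^m]$ only produce the required element $y$ with $p^m y=f(x)$ and prescribed image $\tilde f_0(x/p^m)$ fppf-locally on $\mathscr T$; the set of such $y$ is a torsor under $T[p^m]\cong\Hom(X,\mu_{p^m})$, and you cannot glue by uniqueness because the fppf cover (and its fiber products) need not be reduced, so $\mu_{p^m}$ has nontrivial sections there. What saves you is that this $T[p^m]$-torsor is globally trivial over a perfect base: by the Kummer sequence, $H^1_{\mathrm{fppf}}(\mathscr T,\mu_{p^m})$ is an extension of $\Pic(\mathscr T)[p^m]$ by $\cO(\mathscr T)^\times/(\cO(\mathscr T)^\times)^{p^m}$, and both vanish when Frobenius is an automorphism of $\mathscr T$ (units are uniquely $p$-divisible, and $[p^m]$ on $\Pic$ is pullback along $\mathrm{Frob}^m$, hence injective). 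Adding this observation closes the gap; the appeal to ``the standard theory of degeneration data'' by itself does not. With that fix, and noting (as you do) that symmetry and $\cO_F$-linearity of $\tilde f$ then follow from the same torsion-vanishing, your proof is complete.
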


\begin{proof} The first description follows directly from the definition. For the second description, it is enough to observe that it does indeed define a lift of the given $\mathbb S_Z$-torsor to a $\mathbb S_{Z,\perf}$-torsor; by uniqueness, it must be the correct one.
\end{proof}

Let $\mathfrak{Y}_{Z,\Sigma_Z}$ be the completion of $\Xi^{\mathfrak{Ig},\mathbb X}_{Z,\Sigma_Z}$ along its toroidal boundary.

\begin{thm}\label{thm:formal completions perfect Igusa} With the same choice of a splitting of the filtration $\mathrm{Z}_N$ as in Theorem~\ref{existence of toroidal compactifications}~(3), there is a canonical isomorphism of formal schemes 
\[
\widehat{\mathfrak{Ig}}^{\mathbb X,\tor}_Z\toisom \mathfrak{Y}_{Z,\Sigma_Z}/\Gamma_Z.
\]
\end{thm}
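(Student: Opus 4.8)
The plan is to run the argument of Theorem~\ref{thm:formal completions Igusa} in the perfect setting, the one genuinely new feature being the boundary behaviour recorded in Remark~\ref{rem:not torsor}. Both $\widehat{\mathfrak{Ig}}^{\mathbb X,\tor}_Z$ and $\mathfrak{Y}_{Z,\Sigma_Z}/\Gamma_Z$ are $\Gamma_{\mathbb X}$-torsors over $\widehat{\mathscr C}^{\mathbb X,\tor}_{Z,\perf}$, so it is enough to identify the functors they represent. First I would spell out the moduli problem of $\widehat{\mathfrak{Ig}}^{\mathbb X,\tor}_Z$: by the moduli description proved in Theorem~\ref{perfect toroidal igusa is etale} (Definition~\ref{defn:perfect toroidal Igusa level structure}), on a perfect $\widehat{\mathscr C}^{\mathbb X,\tor}_Z$-scheme $\mathscr T$ it parametrizes an $\cO_F$-linear isomorphism $\cA[p^\infty]^\circ\times\mathscr T\toisom\mathbb X^\circ\times\mathscr T$ together with a scalar in $\Z_p^\times(\mathscr T)$ such that the induced isomorphism on biconnected parts respects the polarizations up to that scalar.

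Next I would unwind the right-hand side. By construction $\mathfrak{Y}_{Z,\Sigma_Z}$ is the completion along the toroidal boundary of $\Xi^{\mathfrak{Ig},\mathbb X}_{Z,\Sigma_Z}$, which by its defining cartesian square is the pullback of the $\Gamma_{\mathbb X}$-torsor $C^{\mathfrak{Ig},\mathbb X}_Z\to(C_Z\times_{\mathscr S_Z}\mathscr C^{\mathbb X}_Z)_\perf$ along the toroidal chart $\mathscr C^{\mathbb X}_{Z,\Sigma_Z,\perf}\to(C_Z\times_{\mathscr S_Z}\mathscr C^{\mathbb X}_Z)_\perf$; and by Proposition~\ref{base of perfect torus torsor} the torsor $C^{\mathfrak{Ig},\mathbb X}_Z$ parametrizes an $\cO_F$-linear embedding $\cG_Z[p^\infty]\hookrightarrow\mathbb X$ with symplectic filtration $\mathrm{Z}_{\mathbb X}$ and polarization-compatible identification $\cB_Z[p^\infty]\cong\mathrm{Gr}^{\mathrm{Z}_{\mathbb X}}_{-1}$, equivalently (on connected parts) an isomorphism $\mathscr H_Z=\cG_Z[p^\infty]^\circ\cong\mathbb X^\circ$ plus a $\Z_p^\times$-scalar compatible on biconnected parts. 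Passing to the $\Gamma_Z$-quotient and using that $\widehat{\mathscr C}^{\mathbb X,\tor}_{Z,\perf}$ is the $\Gamma_Z$-quotient of the completion of $\mathscr C^{\mathbb X}_{Z,\Sigma_Z,\perf}$ along its toroidal boundary, one sees that $\mathfrak{Y}_{Z,\Sigma_Z}/\Gamma_Z$ represents the same moduli problem as above, but with $\mathscr H_Z$ in place of $\cA[p^\infty]^\circ$.

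The identification then reduces to a canonical $\cO_F$-linear isomorphism $\cA[p^\infty]^\circ\cong\mathscr H_Z$ over $\widehat{\mathscr C}^{\mathbb X,\tor}_Z$ compatible with the polarizations on biconnected parts, which is precisely what the proof of Theorem~\ref{thm:formal completions Igusa} produces: working Zariski-locally on affine opens $\Spf R$ that lift to charts in the completion of $\mathscr C^{\mathbb X}_{Z,\Sigma_Z}$ and arise from affine opens of $\mathscr C^{\mathbb X,\tor}$ and $\mathscr C^{\mathbb X}_{Z,\Sigma_Z}$, one uses formal GAGA to algebraize the isomorphism $\widehat{\cA}\cong\widehat{\cG_Z}$ of formal completions along the identity section over $\Spf R$ to an isomorphism over $\Spec R$, whence $\cA[p^\infty]^\circ\cong\cG_Z[p^\infty]^\circ=\mathscr H_Z$, compatibly with the $\cO_F$-action and, on the biconnected part, with the polarizations by construction. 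Passing to perfections and matching the $\Z_p^\times$-scalars identifies the two torsors; $\Gamma_Z$-equivariance is inherited from the construction, and completing along the boundary commutes with the quotient by $\Gamma_Z$ since the stabilizers $\Gamma_\sigma$ are trivial (Remark~\ref{trivial stabilizer}).

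The main obstacle — and the reason Propositions~\ref{base of perfect torus torsor} and~\ref{perfect torus torsor} are isolated beforehand — is that over the boundary $\mathfrak{Ig}^{\mathbb X}$ fails to be an $\underline{\mathrm{Aut}}(\mathbb X)$-torsor (Remark~\ref{rem:not torsor}): one can only match connected parts directly and must then recover the full embedding $\cG_Z[p^\infty]\hookrightarrow\mathbb X$, using that over a perfect base the \'etale part of $\cG_Z[p^\infty]$ splits off canonically and is dual to its multiplicative part. The care needed is in checking that the embedding reconstructed this way from $\mathscr H_Z\cong\mathbb X^\circ$ agrees with the one reconstructed from $\cA[p^\infty]^\circ\cong\mathbb X^\circ$, and that both are compatible with the symplectic splitting $\delta_{\mathbb X}$ of $\mathrm{Z}_{\mathbb X}$ and with the splitting of the filtration $\mathrm{Z}_N$ fixed in Theorem~\ref{existence of toroidal compactifications}(3); this last bookkeeping, rather than any new geometric input, is the crux.
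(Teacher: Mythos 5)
Your proposal is correct and follows essentially the same route as the paper: the paper's proof of Theorem~\ref{thm:formal completions perfect Igusa} simply says "the proof is the same as for Theorem~\ref{thm:formal completions Igusa}", i.e.\ one matches the moduli descriptions on both sides (Definition~\ref{defn:perfect toroidal Igusa level structure} versus Propositions~\ref{base of perfect torus torsor} and~\ref{perfect torus torsor}) and reduces everything to the canonical $\cO_F$-linear, polarization-compatible isomorphism $\cA[p^\infty]^\circ\cong\mathscr H_Z$ over the formal completion, obtained Zariski-locally by algebraizing $\widehat{\cA}\cong\widehat{\cG}_Z$ via formal GAGA. Your expansion of that reduction, including the use of the splitting of the \'etale part over a perfect base to recover the full embedding $\cG_Z[p^\infty]\hookrightarrow\mathbb X$, is exactly the intended argument.
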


\begin{proof} The proof is the same as for Theorem~\ref{thm:formal completions Igusa}.
\end{proof}

A corollary of this description is that $\mathfrak{Ig}^{\mathbb X,\tor}_Z$ only depends on $\mathbb X$ up to quasi-isogenies inducing an isomorphism on \'etale and multiplicative parts.

\begin{cor}\label{cor:correspondence igusa} Let $\phi: \mathbb X\to \mathbb X'$ be an isogeny between $p$-divisible groups with $G$-structure over $k$. Assume that $\phi$ induces an isomorphism on \'etale and multiplicative parts. Then the isomorphism
\[
\mathfrak{Ig}^{\mathbb X}\cong \mathfrak{Ig}^{\mathbb X'}
\]
induced by $\phi$ extends uniquely to an isomorphism
\[
\mathfrak{Ig}^{\mathbb X,\tor}\cong \mathfrak{Ig}^{\mathbb X',\tor}\ .
\]
\end{cor}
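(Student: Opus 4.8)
The plan is to prove the statement by reducing it, via the moduli descriptions already established, to the open parts and to the formal completions along boundary strata, where the needed invariance under isogeny is either already known or becomes transparent. First I would recall that over the open locus the isomorphism $\mathfrak{Ig}^{\mathbb X_1}\cong \mathfrak{Ig}^{\mathbb X_2}$ induced by $\phi$ is already constructed (it is the invariance of perfect Igusa varieties under isogeny discussed after Corollary~\ref{cor:fpqc torsor}). Since $\mathscr C^{\mathbb X_1,\tor}_\perf$ and $\mathscr C^{\mathbb X_2,\tor}_\perf$ are normal (indeed $\mathscr C^{\mathbb X,\tor}$ is smooth by Lemma~\ref{toroidal compactifications of leaves are smooth}, hence regular, hence its perfection is normal) and $\mathfrak{Ig}^{\mathbb X_i,\tor}\to \mathscr C^{\mathbb X_i,\tor}_\perf$ is a pro-finite \'etale cover restricting to $\mathfrak{Ig}^{\mathbb X_i}$ over the dense open $\mathscr C^{\mathbb X_i}_\perf$, \emph{uniqueness} of the extension (Theorem~\ref{perfect toroidal igusa is etale}) means it suffices to produce \emph{any} isomorphism $\mathfrak{Ig}^{\mathbb X_1,\tor}\cong \mathfrak{Ig}^{\mathbb X_2,\tor}$ extending the given one over the open part; equivalently, to show that the $\Gamma_{\mathbb X_1}=\Gamma_{\mathbb X_2}$-torsor $\mathfrak{Ig}^{\mathbb X_2,\tor}$, pulled back along the isomorphism of bases $\mathscr C^{\mathbb X_1,\tor}_\perf\cong \mathscr C^{\mathbb X_2,\tor}_\perf$ (note $\mathscr C^{\mathbb X_1}\cong\mathscr C^{\mathbb X_2}$ via $\phi$, and this extends to the toroidal compactifications because $\phi$ is an isomorphism on \'etale and multiplicative parts, cf.~the footnote on cone decompositions and the remark before Theorem~\ref{thm:formal completions perfect Igusa}), is isomorphic to $\mathfrak{Ig}^{\mathbb X_1,\tor}$ over all of the base.

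Next I would argue that two pro-finite \'etale covers of a normal scheme that agree over a dense open subset agree globally, \emph{provided} they agree on the formal completions along the boundary strata. Concretely, using the moduli description: an isomorphism $\mathscr G_1[p^\infty]^\circ\cong \mathscr G_2[p^\infty]^\circ$ of the connected $p$-divisible groups over $\mathscr C^{\mathbb X,\tor}_\perf$ (compatible with $\cO_F$-structure and, on biconnected parts, with polarization) would give the identification of moduli functors in Definition~\ref{defn:perfect toroidal Igusa level structure}, hence of the torsors. Over the open leaf such an identification is exactly what the isogeny $\phi$ provides (it induces an isomorphism on connected parts since it is an isomorphism on multiplicative parts and induces the same map on biconnected parts). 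To extend this identification over the boundary strata, I would invoke the explicit description of the formal completions from Theorem~\ref{thm:formal completions perfect Igusa}: $\widehat{\mathfrak{Ig}}^{\mathbb X_i,\tor}_Z\cong \mathfrak Y_{Z,\Sigma_Z}^{(i)}/\Gamma_Z$, where $\mathfrak Y_{Z,\Sigma_Z}^{(i)}$ is built from $\Xi^{\mathfrak{Ig},\mathbb X_i}_{Z,\Sigma_Z}$, which in turn sits over $C^{\mathfrak{Ig},\mathbb X_i}_Z$ as in Propositions~\ref{base of perfect torus torsor} and~\ref{perfect torus torsor}. The key observation is that $C^{\mathfrak{Ig},\mathbb X_i}_Z$ parametrizes an $\cO_F$-linear embedding $\rho:\cG[p^\infty]\hookrightarrow \mathbb X_i$ with a symplectic-filtration compatibility; post-composing with $\phi:\mathbb X_1\hookrightarrow\mathbb X_2$ (or its relevant modification) gives a canonical isomorphism $C^{\mathfrak{Ig},\mathbb X_1}_Z\cong C^{\mathfrak{Ig},\mathbb X_2}_Z$ respecting the abelian-variety datum $(\cB,\iota,\lambda,\eta)$ and the extension $0\to T\to \cG\to \cB\to 0$, because $\phi$ being an isomorphism on \'etale and multiplicative parts means it does not disturb $T[p^\infty]=\mathrm{Gr}_{-2}$, $\cG[p^\infty]=\mathrm{Z}_{\mathbb X,-1}$, or the induced polarization on $\mathrm{Gr}_{-1}$. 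Since the $\mathbb S_{Z,\perf}$-torsors $\Xi^{\mathfrak{Ig},\mathbb X_i}_Z$ are the perfections of the \emph{same} $\mathbb S_Z$-torsor of symmetric lifts $\tilde f$ (which depends only on $(\cB,\cG)$, not on $\mathbb X_i$), and the cone decomposition $\Sigma_Z$ is fixed, the partial compactifications $\Xi^{\mathfrak{Ig},\mathbb X_i}_{Z,\Sigma_Z}$ and their boundary completions $\mathfrak Y^{(i)}_{Z,\Sigma_Z}$ match up $\Gamma_Z$-equivariantly, yielding $\widehat{\mathfrak{Ig}}^{\mathbb X_1,\tor}_Z\cong \widehat{\mathfrak{Ig}}^{\mathbb X_2,\tor}_Z$ compatibly with the open part.

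Finally I would glue: having an isomorphism of the two pro-finite \'etale covers over the open dense locus $\mathscr C^{\mathbb X}_\perf$ and over the formal completions $\widehat{\mathscr C}^{\mathbb X,\tor}_{Z,\perf}$ along every boundary stratum, compatibly on overlaps (the compatibility over $\mathscr C^{\mathbb X}_\perf\cap \widehat{\mathscr C}^{\mathbb X,\tor}_{Z,\perf}$ is automatic since both descriptions restrict to the moduli description of $\mathfrak{Ig}^{\mathbb X}$ there), one obtains an isomorphism over $\mathscr C^{\mathbb X,\tor}_\perf$ itself: a section of $\mathrm{Isom}(\mathfrak{Ig}^{\mathbb X_1,\tor},\mathfrak{Ig}^{\mathbb X_2,\tor})$, which is finite \'etale over the normal base, is determined by its restriction to the open part together with compatibility with the formal-completion data, because normality lets one spread out from the generic points of the boundary strata. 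Uniqueness of the extension, already part of Theorem~\ref{perfect toroidal igusa is etale}, finishes the argument. The main obstacle I anticipate is the bookkeeping in the second step — checking that the isogeny $\phi$, which is \emph{not} an isomorphism on biconnected parts, nonetheless induces a well-defined isomorphism of the boundary parametrizing schemes $C^{\mathfrak{Ig},\mathbb X_i}_Z$ and of the associated torus-torsors; one must use precisely that $\phi$ restricts to isomorphisms on $T[p^\infty]$ and on the \'etale quotient so that the symplectic filtration $\mathrm Z_{\mathbb X}$ and the polarization datum on $\mathrm{Gr}_{-1}$ transport without modification, while the biconnected part is handled by the same isogeny-invariance of Igusa varieties used over the open leaf. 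Everything else is a formal consequence of the moduli descriptions already in place.
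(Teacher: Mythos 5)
There is a genuine gap, and it sits exactly at the point your argument leans on hardest. You set the problem up as comparing two $\Gamma$-torsors over a common base, invoking an ``isomorphism of bases $\mathscr C^{\mathbb X_1,\tor}_\perf\cong \mathscr C^{\mathbb X_2,\tor}_\perf$ (note $\mathscr C^{\mathbb X_1}\cong\mathscr C^{\mathbb X_2}$ via $\phi$)''. No such isomorphism exists: an isogeny $\phi:\mathbb X_1\to\mathbb X_2$ does not induce any map of leaves, because moving a point $(A,\ldots)$ of $\mathscr C^{\mathbb X_1}$ to $\mathscr C^{\mathbb X_2}$ requires quotienting $A$ by a subgroup corresponding to $\ker\phi$, and specifying that subgroup uses a trivialization $A[p^\infty]\cong\mathbb X_1$, i.e.\ a point of the Igusa variety. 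This is precisely why the paper only records a \emph{correspondence} $\mathscr C^{\mathbb X_1}\leftarrow\mathfrak{Ig}^{\mathbb X_1}\cong\mathfrak{Ig}^{\mathbb X_2}\to\mathscr C^{\mathbb X_2}$ between leaves. With the common base gone, your reduction ``produce any extension of the torsor isomorphism over the fixed base and conclude by uniqueness'' does not make sense as stated, and the later appeal to matching the moduli functors of Definition~\ref{defn:perfect toroidal Igusa level structure} over ``$\mathscr C^{\mathbb X,\tor}_\perf$'' inherits the same problem.

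The same issue resurfaces in your boundary computation, where you claim that post-composing $\rho:\cG[p^\infty]\hookrightarrow\mathbb X_1$ with $\phi$ gives an isomorphism $C^{\mathfrak{Ig},\mathbb X_1}_Z\cong C^{\mathfrak{Ig},\mathbb X_2}_Z$ ``respecting the abelian-variety datum $(\cB,\iota,\lambda,\eta)$ and the extension $0\to T\to\cG\to\cB\to 0$''. Since $\ker\phi\subset\mathbb X_1^\circ\subset\rho(\cG[p^\infty])$, the composite $\phi\circ\rho$ is \emph{not} injective; the correct construction (and the one in the paper) replaces $\cG$ by $\cG'=\cG/\rho^{-1}(\ker\phi)$ and $\cB$ by $\cB'=\cB/\rho^{-1}(\ker\phi)$, so the point of $\mathscr S_Z$ genuinely changes. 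As a consequence your assertion that the two $\mathbb S_{Z,\perf}$-torsors are ``the perfections of the same $\mathbb S_Z$-torsor'' is false: $f_0$ and $f_0'$ are related by $X\xrightarrow{f_0}\cB\to\cB' = p^m f_0'$ with $p^m=\deg\phi$, so before perfection the torsors differ by pushout along $p^m:\mathbb S_Z\to\mathbb S_Z$, and one must observe both that this becomes an isomorphism after perfection and that the scalar $p^m$ preserves the cone decomposition $\Sigma_Z$ — this is exactly where the hypothesis that $\phi$ is an isomorphism on \'etale and multiplicative parts (and the footnote about cone decompositions) does its work. You flag this bookkeeping as the anticipated obstacle, but the resolution you offer (that the data transport ``without modification'') is the part that fails; the final gluing with the interior is fine in spirit and matches the paper, but it cannot be run until the boundary isomorphism is built correctly as above.
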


\begin{proof} For each cusp $Z$, we will produce an isomorphism
\[
\widehat{\mathfrak{Ig}}^{\mathbb{X},\tor}_{K^p,\Sigma,Z}\cong \widehat{\mathfrak{Ig}}^{\mathbb{X}',\tor}_{K^p,\Sigma,Z}.
\]
For this, we use the description in Theorem~\ref{thm:formal completions perfect Igusa}. First, consider the $\Gamma_{\mathbb X}$-torsor $C^{\mathfrak{Ig},\mathbb X}_Z\to (C_Z\times_{\mathscr S_Z} \mathscr C^{\mathbb X}_Z)_\perf$. We use the description in Proposition~\ref{base of perfect torus torsor}. We have given the Raynaud extension
\[
0\to T\to \cG\to \cB\to 0
\]
together with an embedding
\[
\rho: \cG[p^\infty]\hookrightarrow \mathbb X,
\]
and the isogeny $\phi: \mathbb X\to \mathbb X'$ whose kernel $\cK\subset \mathbb X$ is contained in $\mathbb X^\circ$ and does not meet the multiplicative part. It follows that $\cG'=\cG/\rho^{-1}(\cK)$ defines another semiabelian scheme
\[
0\to T\to \cG'\to \cB'\to 0
\]
(where $\cB'=\cB/\rho^{-1}(\cK)$, noting that the composite $\rho^{-1}(\cK)\to \cG\to \cB$ is still a closed immersion as $\cK$ does not meet the multiplicative part), together with an injection
\[
\rho': \cG'[p^\infty]\hookrightarrow \mathbb X'.
\]
It is clear that this induces an isomorphism
\[
\phi_\ast: C^{\mathfrak{Ig},\mathbb X}_Z\cong C^{\mathfrak{Ig},\mathbb X'}_Z.
\]

We need to compare the perfect torus torsors. Note that the maps $f_0: X\to \cB$ and $f_0': X\to \cB'$ satisfy that the composite $X\buildrel{f_0}\over\longrightarrow \cB\to \cB'$ is $p^m f_0'$ where $p^m$ is the degree of $\phi$. In particular, the pullback of the natural $\mathbb S_Z$-torsor over $C^{\mathfrak{Ig},\mathbb X'}_Z$ under $\phi_\ast$ is the pushout of the natural $\mathbb S_Z$-torsor over $C^{\mathfrak{Ig},\mathbb X}_Z$ under $p^m: \mathbb S_Z\to \mathbb S_Z$. After perfection, we get a natural isomorphism, and as $p^m$ is a scalar, it preserves the cone decomposition.

It is now easy to check that this isomorphism on completions at the boundary is compatible with the given one in the interior, which implies the desired extension.
\end{proof}

\subsection{Partial minimal compactifications of Igusa varieties}

In this section, we will analyze the partial minimal compactifications of Igusa varieties, by using the geometry of the toroidal compactifications from the last subsection. As before, we fix an algebraically closed field $k$ of characteristic $p$ and a $p$-divisible group $\mathbb X$ with $G$-structure as usual.

\subsubsection{Partial minimal compactifications are affine} First, we observe that the partial minimal compactifications of leaves are affine.

\begin{thm}\label{leaves are affine} The partial minimal compactification $\mathscr{C}^{\mathbb X,*}$ is affine.
\end{thm}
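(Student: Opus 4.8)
The plan is to reduce, by means of Nie's construction of fundamental Ekedahl--Oort strata together with the invariance of the compactified Igusa varieties under isogenies (Corollary~\ref{cor:correspondence igusa}), to a case where affineness is already available, namely that of a leaf which is a fundamental Ekedahl--Oort stratum. \emph{First}, by~\cite{nie-fundamental} the isogeny class of $\mathbb X$ contains a $p$-divisible group with $G$-structure $\mathbb X_0$ whose central leaf $\mathscr C^{\mathbb X_0}\subseteq \mathscr S_k$ is a fundamental Ekedahl--Oort stratum. A multiplicative, resp.\ \'etale, $p$-divisible group with $\cO_F\otimes_{\Z}\Z_p$-action over $k$ is determined up to isomorphism by its rank, and these ranks agree for $\mathbb X$ and $\mathbb X_0$ since the two groups are isogenous; so after replacing $\mathbb X_0$ by an isomorphic $p$-divisible group with $G$-structure we may assume that it has the same multiplicative and \'etale parts as $\mathbb X$, with the same pairing between them. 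For $\mathbb X_0$, the partial minimal compactification $\mathscr C^{\mathbb X_0,*}$ of the well-positioned subset $\mathscr C^{\mathbb X_0}$ coincides (by the discussion of well-positioned subsets above, see Remark~2.3.8 of~\cite{lan-stroh}) with the partial minimal compactification of that Ekedahl--Oort stratum in the sense of~\cite[Definition 3.4.1]{boxer}, which is affine by~\cite{boxer} and, independently,~\cite{goldring-koskivirta}. Hence $\mathscr C^{\mathbb X_0,*}$ is affine.

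\emph{Next}, as $\mathbb X^{(0,1)}$ and $\mathbb X_0^{(0,1)}$ are isogenous biconnected $p$-divisible groups with $\cO_F$-action and polarization, there is an isogeny $\mathbb X^{(0,1)}\to \mathbb X_0^{(0,1)}$ compatible with these structures up to a $p$-power scalar; extending it by isomorphisms on the multiplicative and \'etale parts, and rescaling so that the scalars match, yields an isogeny $\phi\colon \mathbb X\to \mathbb X_0$ of $p$-divisible groups with $G$-structure which is an isomorphism on \'etale and multiplicative parts. This is precisely the hypothesis of Corollary~\ref{cor:correspondence igusa}, so $\phi$ induces an isomorphism $\mathfrak{Ig}^{\mathbb X,\tor}\cong \mathfrak{Ig}^{\mathbb X_0,\tor}$, compatible on interiors with the canonical identification $\mathfrak{Ig}^{\mathbb X}\cong \mathfrak{Ig}^{\mathbb X_0}$ (and with the identification $\Gamma_{\mathbb X}\cong \Gamma_{\mathbb X_0}$ it induces); by the minimal-compactification analogue of that corollary (proved later in this section, compatibly with the toroidal statement) it also induces $\mathfrak{Ig}^{\mathbb X,*}\cong \mathfrak{Ig}^{\mathbb X_0,*}$.

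\emph{Finally}, since $\mathfrak{Ig}^{\mathbb X}\to \mathscr C^{\mathbb X}_\perf$ is a pro-finite \'etale $\Gamma_{\mathbb X}$-torsor and $\mathscr C^{\mathbb X,*}_\perf$ is normal, taking the quotient by $\Gamma_{\mathbb X}$ of the normalization $\mathfrak{Ig}^{\mathbb X,*}$ of $\mathscr C^{\mathbb X,*}_\perf$ in $\mathfrak{Ig}^{\mathbb X}$ recovers $\mathscr C^{\mathbb X,*}_\perf$; the same holds for $\mathbb X_0$, so the isomorphism of the previous step descends to $\mathscr C^{\mathbb X,*}_\perf\cong \mathscr C^{\mathbb X_0,*}_\perf$. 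As $\mathscr C^{\mathbb X_0,*}$ is affine, so is its perfection, hence so is $\mathscr C^{\mathbb X,*}_\perf$, and therefore $\mathscr C^{\mathbb X,*}$ is affine, affineness being insensitive to passing to the perfection for schemes of finite type over the perfect field $k$. (If $\mathbb X$ is completely slope divisible one can instead argue at finite level: $\mathrm{Ig}^{\mathbb X,*}_m\to \mathscr C^{\mathbb X,*}$ is then finite and surjective by Theorem~\ref{toroidal igusa is etale} and normalization, the finite-level form of Corollary~\ref{cor:correspondence igusa} identifies it with a finite-level Igusa variety $\mathrm{Ig}^{\mathbb X_0,*}_{m'}$ — affine, being finite over the affine $\mathscr C^{\mathbb X_0,*}$ — and Chevalley's theorem on finite surjective morphisms then forces $\mathscr C^{\mathbb X,*}$ to be affine.)

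\emph{Main obstacle.} The decisive external inputs are the affineness of Ekedahl--Oort strata (\cite{boxer}, \cite{goldring-koskivirta}) and the existence of fundamental strata (\cite{nie-fundamental}); granting these and the compactification theory of Igusa varieties developed earlier in this section, the substantive work is twofold. First, one must arrange the connecting isogeny $\phi$ to be an isomorphism on multiplicative and \'etale parts — exactly the condition that makes Corollary~\ref{cor:correspondence igusa} applicable — which forces one to separate out the biconnected part. Second, one must propagate the resulting comparison from toroidal to minimal compactifications and, for general $\mathbb X$, across the profinite Igusa tower, keeping track of the fact (Remark~\ref{rem:not torsor}) that over the boundary strata the tower is no longer a torsor, so that the identifications must be checked on the explicit boundary charts of Theorems~\ref{thm:formal completions Igusa} and~\ref{thm:formal completions perfect Igusa}.
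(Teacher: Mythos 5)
Your overall reduction — Nie's fundamental Ekedahl--Oort strata, Boxer's affineness, then isogeny invariance of compactified Igusa varieties — is the same route the paper takes, but the decisive step differs from the paper's and contains a genuine gap. The paper does \emph{not} claim $\mathscr C^{\mathbb X,*}_\perf\cong \mathscr C^{\mathbb X_0,*}_\perf$. Instead it works with the \emph{correspondence}
\[
\mathscr C^{\mathbb X_0,*}\longleftarrow \mathfrak{Ig}^{\mathbb X,\tor}\longrightarrow \mathscr C^{\mathbb X,*}
\]
(using only the toroidal Corollary~\ref{cor:correspondence igusa}), and then transfers affineness across it via a dedicated elementary Lemma~\ref{correspondence affine}, together with an induction on the boundary to guarantee the boundary $Y_0\subset\mathscr C^{\mathbb X,*}$ is itself affine. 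Your argument attempts to short-circuit this by producing an actual isomorphism of perfected minimal compactifications; this is the step that does not go through.

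Concretely, there are two problems. First, the appeal to ``the minimal-compactification analogue of Corollary~\ref{cor:correspondence igusa}'' is Proposition~\ref{perfect min comp}, which appears \emph{after} Theorem~\ref{leaves are affine} and whose proof (via the identification $\mathfrak{Ig}^{\mathbb X,*}=\Spec H^0(\mathfrak{Ig}^{\mathbb X,\tor},\cO)$) uses the very affineness you are trying to prove: without knowing $\mathscr C^{\mathbb X,*}$ is affine you cannot compute the Stein factorization of $\mathfrak{Ig}^{\mathbb X,\tor}\to\mathscr C^{\mathbb X,*}$ intrinsically in terms of $\mathfrak{Ig}^{\mathbb X,\tor}$ alone, so the isogeny invariance of $\mathfrak{Ig}^{\mathbb X,*}$ is not available at this stage. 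Second, even granting that isomorphism, your descent to $\mathscr C^{\mathbb X,*}_\perf$ requires that the $\Gamma_{\mathbb X}$-quotient of $\mathfrak{Ig}^{\mathbb X,*}$ recover $\mathscr C^{\mathbb X,*}_\perf$; but since $\mathfrak{Ig}^{\mathbb X,*}$ is by definition a normalization, the quotient $\mathfrak{Ig}^{\mathbb X,*}/\Gamma_{\mathbb X}$ is the \emph{normalization} of $\mathscr C^{\mathbb X,*}_\perf$, and these agree only if $\mathscr C^{\mathbb X,*}_\perf$ is normal — an assertion the paper never makes (Lemma~\ref{basic properties of minimal compactifications} records normality of $\mathrm{Ig}^{b,*}$, not of $\mathscr C^{\mathbb X,*}$, and well-positionedness does not give it for free). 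The parenthetical finite-level variant has the same difficulty in a different guise: there is no finite-level form of Corollary~\ref{cor:correspondence igusa} giving an isomorphism — the proof there produces a $p$-power shift of torus torsors that only becomes an isomorphism after perfection — so at finite level you again only get a finite (purely inseparable) correspondence, not an identification.

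One further small imprecision in the base case: the paper only claims the leaf $\mathscr C^{\mathbb X_0}$ is a \emph{closed subset} of the fundamental Ekedahl--Oort stratum, and then uses Proposition~\ref{closed immersion partial comp} to deduce that $\mathscr C^{\mathbb X_0,*}$ is a closed subscheme of the (affine) compactified Ekedahl--Oort stratum; your formulation asserts they coincide, which is stronger than what is used or needed. The fix is to insert Proposition~\ref{closed immersion partial comp}, and to replace the claimed isomorphism $\mathscr C^{\mathbb X,*}_\perf\cong \mathscr C^{\mathbb X_0,*}_\perf$ with the correspondence argument of Lemma~\ref{correspondence affine}, including the induction on the boundary.
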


\begin{proof} First, we prove that there exists some $\mathbb X$ in the given isogeny class for which the result is true. By a result of Nie, \cite[Proposition 1.5, Corollary 1.6]{nie-fundamental}, there is an Ekedahl--Oort stratum that is completely contained in the given Newton stratum. Taking $\mathbb X$ so that $\mathbb X[p]$ defines such an Ekedahl--Oort stratum, the leaf $\mathscr{C}^{\mathbb X}$ is a closed subset of the corresponding Ekedahl--Oort stratum (as it is closed in its Newton stratum). By \cite[Theorem C]{boxer}, the partial minimal compactifications of Ekedahl-Oort strata are affine. As mentioned above, the definitions of partial minimal compactifications of~\cite{lan-stroh} and~\cite{boxer} are equivalent in our situation. We conclude by Proposition~\ref{closed immersion partial comp}.

In general, we can find an isogeny $\phi:\mathbb X'\to \mathbb X$ from a $p$-divisible group $\mathbb X'$ with $G$-structure for which the result holds, and we may assume that $\phi$ induces isomorphisms of \'etale and multiplicative parts.\footnote{Indeed, $\mathbb X$ and $\mathbb X'$ decompose into the sum of their biconnected part, and their \'etale and multiplicative part, and one can choose the quasi-isogeny on these parts individually. The \'etale and multiplicative parts are given by $\mathcal O_F\otimes \Z_p$-lattices in isomorphic $F\otimes \Q_p$-modules, and thus are also isomorphic.} Using Corollary~\ref{cor:correspondence igusa}, we get a correspondence
\[
\xymatrix{\mathscr{C}^{\mathbb X',*} & \ar[l]_{\pi^{\tor}_1}\ar[r]^{\pi^{\tor}_2} \mathfrak{Ig}^{\mathbb X,\tor}& \mathscr{C}^{\mathbb X,*}}
\]
which extends the diagram 
\[
\xymatrix{\mathscr{C}^{\mathbb X'} & \ar[l]_{\pi_1}\mathfrak{Ig}^{\mathbb X}\ar[r]^{\pi_2}& \mathscr{C}^{\mathbb X}.}
\]
Note that it follows from Theorem~\ref{perfect toroidal igusa is etale} that the scheme $\mathfrak{Ig}^{\mathbb X,\tor}$ is a limit of a tower of schemes of finite type, with transition maps finite surjective maps (namely, finite \'etale maps, and Frobenius maps). By finite presentation, both maps $\pi_1^\tor$ and $\pi_2^\tor$ factor over some term in this tower. Now the result follows from Lemma~\ref{correspondence affine} below, and induction to ensure that the boundary of $\mathscr{C}^{\mathbb X,*}$ is affine.
\end{proof}

\begin{lem}\label{correspondence affine} Consider a diagram
\[\xymatrix{
X_0\ar@{^(->}[d] & C_0\ar@{^(->}[d]\ar[l]\ar[r]  & Y_0\ar@{^(->}[d]\\
X & C\ar[l]_{\pi_1} \ar[r]^{\pi_2} & Y
}\]
of schemes, where the vertical arrows are closed immersions and $C_0\subset C$ is topologically the preimage both of $X_0\subset X$ and of $Y_0\subset Y$. Assume that $X$, $X_0$ and $Y_0$ are affine, and that $\pi_1$ and $\pi_2$ are proper and surjective and finite away from $C_0$. Then $Y$ is affine.
\end{lem}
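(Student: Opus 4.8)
The plan is to construct an affine scheme that is finite and surjective over $Y$ and then apply Chevalley's theorem (a Noetherian scheme admitting a finite surjective morphism from an affine scheme is affine); note that all schemes in the diagram are Noetherian, the lemma being applied with $C$ of finite type. The natural candidate is the Stein factorization of $\pi_2$. Precisely, I would set $\widetilde X := \underline{\Spec}_X \pi_{1*}\mathcal O_C$ and $\widetilde Y := \underline{\Spec}_Y \pi_{2*}\mathcal O_C$; since $\pi_1,\pi_2$ are proper and $X,Y$ are Noetherian, both structure maps $q_1\colon\widetilde X\to X$ and $q_2\colon\widetilde Y\to Y$ are finite, so $\widetilde X$ is affine (as $X$ is) and $\widetilde Y\to Y$ is finite and surjective. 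Moreover $C\to\widetilde X$ and $C\to\widetilde Y$ are proper and surjective, with $\mathcal O_{\widetilde X}\xrightarrow{\sim}(C\to\widetilde X)_*\mathcal O_C$ and similarly for $\widetilde Y$. It thus remains to prove that $\widetilde Y$ is affine.

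For this, I would use the canonical morphism $h\colon\widetilde Y\to\Spec\Gamma(\widetilde Y,\mathcal O_{\widetilde Y})$. Since $\Gamma(\widetilde Y,\mathcal O_{\widetilde Y})=\Gamma(C,\mathcal O_C)=\Gamma(\widetilde X,\mathcal O_{\widetilde X})$ and $\widetilde X$ is affine, the target is $\widetilde X$, and one checks $h\circ(C\to\widetilde Y)=(C\to\widetilde X)$; as $C\to\widetilde Y$ is surjective and $C\to\widetilde X$ is proper (and $h$ is separated of finite type), $h$ is proper. Hence $\widetilde Y$ will be finite over the affine scheme $\widetilde X$, so affine, once $h$ is known to be quasi-finite. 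Over $X\setminus X_0$ the map $\pi_1$ is finite, so (the Stein factorization of a finite morphism being trivial) $C\setminus C_0\to\widetilde X\setminus q_1^{-1}(X_0)$ is an isomorphism; similarly $\pi_2$ finite over $Y\setminus Y_0$ gives $C\setminus C_0\cong\widetilde Y\setminus q_2^{-1}(Y_0)$. Comparing these, $h$ restricts to an isomorphism $\widetilde Y\setminus q_2^{-1}(Y_0)\cong\widetilde X\setminus q_1^{-1}(X_0)$, and in particular $h^{-1}(q_1^{-1}(X_0))=q_2^{-1}(Y_0)$, so $h$ is quasi-finite over $\widetilde X\setminus q_1^{-1}(X_0)$. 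Over $q_1^{-1}(X_0)$, every fibre of $h$ is contained in $q_2^{-1}(Y_0)=\widetilde Y\times_Y Y_0$, which is finite over the affine scheme $Y_0$ and hence affine; being a closed subscheme of an affine scheme, each such fibre is affine, and being a fibre of the proper morphism $h$ it is proper over a field, hence finite. Therefore $h$ is quasi-finite everywhere; being also proper it is finite, $\widetilde Y$ is affine, and $Y$ is affine by the first paragraph. (The roles of the hypotheses: $X$ and $X_0$ affine are used only to make $\widetilde X$ and $q_1^{-1}(X_0)$ affine; the affineness of $Y_0$ is the substantive input, entering through $q_2^{-1}(Y_0)$.)

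The step I expect to be the main obstacle is the comparison over the boundary locus $C_0$: one has to realise that $\pi_1$ and $\pi_2$ should be replaced by their Stein factorizations, so that they become contractions of the single scheme $C$ onto $\widetilde X$ and $\widetilde Y$, and then that the affineness of $Y_0$ is exactly what forces the fibres of the proper morphism $h$ over the bad locus to be \emph{affine}, whence proper over a field and affine, whence finite. Everything else (descent of properness along a surjection, ``proper $+$ quasi-finite $\Rightarrow$ finite'', and Chevalley's theorem) is standard.
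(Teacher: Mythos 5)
Your proof is correct and is essentially the paper's argument: your $\widetilde Y$ is exactly the Stein factorization $C'$ of $\pi_2$ used there, Chevalley descent of affineness along the finite surjection $\widetilde Y\to Y$ reduces everything to showing that a proper map from $\widetilde Y$ to an affine scheme is finite, and quasi-finiteness is checked away from the boundary by hypothesis and over the boundary using that $Y_0$ is affine (the paper works directly with $C'\to X$ rather than inserting $\widetilde X=\Spec \Gamma(C,\mathcal O_C)$, and argues with the whole boundary stratum, a proper morphism between affine schemes being finite, rather than fibre by fibre). Your fibrewise variant is fine, with the one-line caveat that a fibre of $h$ is only topologically contained in $q_2^{-1}(Y_0)$, not literally a closed subscheme of it, so one should pass to reduced structures (or argue via global sections) before invoking ``proper and affine over a field implies finite''.
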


\begin{proof} Let $C\to C^\prime\to Y$ be the Stein factorization of $\pi_2$. Note that the map $C\to X$ factors uniquely over $C^\prime$ as $H^0(C^\prime,\mathcal O_{C^\prime}) = H^0(C,\mathcal O_C)$ and $X$ is affine. The resulting map $C^\prime\to X$ is automatically proper and surjective (cf.~e.g.~\cite[Tag 03GN]{stacks-project}). We can then replace $C$ by $C^\prime$.

In particular, we can assume that $\pi_2$ is finite and surjective. As affineness descends along finite surjections, it follows that it is enough to prove that $C$ is affine. For this, it suffices to show that $\pi_1$ is finite, for which it suffices to prove that $\pi_1$ is quasi-finite. This is evident away from $X_0$. On $X_0$, the map $C_0\to X_0$ is a proper map of affine schemes (as $C_0\to Y_0$ is finite and $Y_0$ is affine), and thus finite. This gives the result.
\end{proof}

In particular, if one defines the partial minimal compactification $\mathfrak{Ig}^{\mathbb X,\ast}$ as the normalization of $\mathscr C^{\mathbb X,\ast}$ in $\mathfrak{Ig}^{\mathbb X}$, we have the following results.

\begin{prop}\label{perfect min comp} The map $\mathfrak{Ig}^{\mathbb X,\ast}\to \mathscr C^{\mathbb X,\ast}$ is integral, and in particular $\mathfrak{Ig}^{\mathbb X,\ast}$ is affine. It agrees with the Stein factorization of
\[
\mathfrak{Ig}^{\mathbb X,\tor}\to \mathscr C^{\mathbb X,\ast}.
\]
In particular,
\[
\mathfrak{Ig}^{\mathbb X,\ast} = H^0(\mathfrak{Ig}^{\mathbb X,\tor},\cO_{\mathfrak{Ig}^{\mathbb X,\tor}}),
\]
and a $G$-isogeny $\phi: \mathbb X\to \mathbb X'$ inducing isomorphisms of \'etale and multiplicative parts induces an isomorphism
\[
\mathfrak{Ig}^{\mathbb X,\ast}\cong \mathfrak{Ig}^{\mathbb X',\ast}.
\]
\end{prop}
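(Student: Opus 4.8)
The plan is to leverage the already-established facts about partial toroidal compactifications together with the standard formalism of Stein factorizations. Recall that $\mathfrak{Ig}^{\mathbb X,\ast}$ is \emph{defined} as the normalization of $\mathscr C^{\mathbb X,\ast}$ in $\mathfrak{Ig}^{\mathbb X}$, and that we have already shown (Theorem~\ref{leaves are affine}) that $\mathscr C^{\mathbb X,\ast}$ is affine, that $\mathfrak{Ig}^{\mathbb X,\tor}\to \mathscr C^{\mathbb X,\tor}_\perf$ is a pro-finite \'etale cover (Theorem~\ref{perfect toroidal igusa is etale}), and that the map $\pi_{K,\Sigma}: \mathscr S^\tor_K\to \mathscr S^\ast_K$ restricts to a proper surjective map $\mathscr C^{\mathbb X,\tor}\to \mathscr C^{\mathbb X,\ast}$ with $\pi_{K,\Sigma\ast}\cO = \cO$ (the analogue for leaves of Theorem~\ref{existence of toroidal compactifications}~(6), which holds by well-positionedness, cf.~\cite[Theorem 2.3.2]{lan-stroh}).

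First I would argue that $\mathfrak{Ig}^{\mathbb X,\ast}\to \mathscr C^{\mathbb X,\ast}$ is integral. Since $\mathfrak{Ig}^{\mathbb X} = \varprojlim_m \mathrm{Ig}^{\mathbb X}_{m,\perf}$ is a cofiltered limit of finite covers of $\mathscr C^{\mathbb X}_\perf$, its normalization in $\mathscr C^{\mathbb X,\ast}$ is the cofiltered limit of the normalizations of $\mathscr C^{\mathbb X,\ast}$ in the finite covers $\mathrm{Ig}^{\mathbb X}_{m,\perf}$, each of which is finite over $\mathscr C^{\mathbb X,\ast}$ by finiteness of normalization (all schemes in sight are excellent, or one can argue directly using that $\mathrm{Ig}^{\mathbb X}_m$ is finite over $\mathscr C^{\mathbb X}$ and normality of the target). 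A cofiltered limit of finite (hence integral, hence affine) morphisms along affine transition maps is integral and affine; combined with affineness of $\mathscr C^{\mathbb X,\ast}$, this gives that $\mathfrak{Ig}^{\mathbb X,\ast}$ is affine.

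Next I would identify $\mathfrak{Ig}^{\mathbb X,\ast}$ with the Stein factorization of $g:\mathfrak{Ig}^{\mathbb X,\tor}\to \mathscr C^{\mathbb X,\ast}$ (the composite of $\mathfrak{Ig}^{\mathbb X,\tor}\to \mathscr C^{\mathbb X,\tor}_\perf$ with $\mathscr C^{\mathbb X,\tor}_\perf\to \mathscr C^{\mathbb X,\ast}$). The Stein factorization factors as $\mathfrak{Ig}^{\mathbb X,\tor}\to \underline{\Spec}_{\mathscr C^{\mathbb X,\ast}} g_\ast \cO_{\mathfrak{Ig}^{\mathbb X,\tor}}\to \mathscr C^{\mathbb X,\ast}$; since $\mathscr C^{\mathbb X,\ast}$ is affine this middle scheme is simply $\Spec H^0(\mathfrak{Ig}^{\mathbb X,\tor},\cO)$. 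On the other hand, $\mathfrak{Ig}^{\mathbb X,\tor}$ is normal (it is pro-finite \'etale over the normal, indeed regular by Lemma~\ref{toroidal compactifications of leaves are smooth}, scheme $\mathscr C^{\mathbb X,\tor}_\perf$) and contains $\mathfrak{Ig}^{\mathbb X}$ as a dense open with complement of codimension $\geq 1$; hence $g_\ast\cO_{\mathfrak{Ig}^{\mathbb X,\tor}}$ is a normal $\cO_{\mathscr C^{\mathbb X,\ast}}$-algebra, finite over $\mathscr C^{\mathbb X,\ast}$ (it is coherent as $g$ is proper, using that $\pi_{K,\Sigma}$ is proper and $\mathfrak{Ig}^{\mathbb X,\tor}\to \mathscr C^{\mathbb X,\tor}$ is affine — strictly, one must pass to a finite-type approximation as in the proof of Theorem~\ref{leaves are affine}, or invoke that everything is a limit of the finite-level situation where $\mathrm{Ig}^{\mathbb X,\tor}_m$ is genuinely proper over $\mathscr C^{\mathbb X,\ast}$). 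It restricts over the open leaf to $\mathfrak{Ig}^{\mathbb X}$ by Theorem~\ref{existence of toroidal compactifications}~(6) applied in the interior. By the universal property of normalization, the identity on $\mathfrak{Ig}^{\mathbb X}$ extends uniquely to a map $\mathfrak{Ig}^{\mathbb X,\ast}\to \Spec H^0(\mathfrak{Ig}^{\mathbb X,\tor},\cO)$ over $\mathscr C^{\mathbb X,\ast}$, which is finite and birational between normal schemes, hence an isomorphism. This gives $\mathfrak{Ig}^{\mathbb X,\ast} = H^0(\mathfrak{Ig}^{\mathbb X,\tor},\cO_{\mathfrak{Ig}^{\mathbb X,\tor}})$. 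Finally, the isomorphism $\mathfrak{Ig}^{\mathbb X,\ast}\cong \mathfrak{Ig}^{\mathbb X',\ast}$ for a $G$-isogeny $\phi:\mathbb X\to\mathbb X'$ inducing isomorphisms of \'etale and multiplicative parts follows immediately: Corollary~\ref{cor:correspondence igusa} gives $\mathfrak{Ig}^{\mathbb X,\tor}\cong \mathfrak{Ig}^{\mathbb X',\tor}$ compatibly with the maps down to $\mathscr C^{\mathbb X,\ast}=\mathscr C^{\mathbb X',\ast}$ (the leaves agree since $\mathbb X$ and $\mathbb X'$ become isomorphic after the isogeny is inverted — more precisely one uses $\mathscr C^{\mathbb X,\tor}\cong\mathscr C^{\mathbb X',\tor}$, which also follows from the invariance statements already proved), and taking $H^0$ of the structure sheaf is functorial.

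I expect the main obstacle to be the finiteness/coherence of $g_\ast\cO_{\mathfrak{Ig}^{\mathbb X,\tor}}$ and the precise comparison with the normalization: since $\mathfrak{Ig}^{\mathbb X,\tor}$ is a \emph{perfect} (pro-finite) scheme rather than of finite type, one cannot directly cite Stein factorization for proper morphisms of Noetherian schemes. The clean way around this is to do everything at finite level — work with $\mathrm{Ig}^{\mathbb X,\tor}_m$, which is genuinely finite \'etale over the proper-over-$\mathscr C^{\mathbb X,\ast}$ scheme $\mathscr C^{\mathbb X,\tor}$, form its Stein factorization $\mathrm{Ig}^{\mathbb X,\ast}_m$ there, identify it with the normalization of $\mathscr C^{\mathbb X,\ast}$ in $\mathrm{Ig}^{\mathbb X}_m$ by the Noetherian theory, and then pass to the (perfection of the) cofiltered limit over $m$, using that normalization, Stein factorization, and $H^0$ all commute with such limits. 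The rest is bookkeeping.
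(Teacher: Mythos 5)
Your proposal is correct and is essentially the intended argument: the paper's own proof merely declares all statements immediate, and the content you supply — integrality from the definition of relative normalization (or a limit of finite-level normalizations), identification with the Stein factorization via normality and Zariski's main theorem at finite level followed by passage to the limit/perfection, and then $\mathfrak{Ig}^{\mathbb X,\ast}=H^0(\mathfrak{Ig}^{\mathbb X,\tor},\cO_{\mathfrak{Ig}^{\mathbb X,\tor}})$ by affineness of $\mathscr C^{\mathbb X,\ast}$ — is exactly the reasoning the paper itself uses later in the proof of Theorem~\ref{thm:min Igusa strata}, with the isogeny invariance coming from Corollary~\ref{cor:correspondence igusa} and functoriality of $H^0$ just as you say. (For $\mathbb X$ not completely slope divisible the finite-level schemes $\mathrm{Ig}^{\mathbb X,\tor}_m$ are not literally defined, but your limit argument goes through by writing $\mathfrak{Ig}^{\mathbb X,\tor}$ as a limit of finite \'etale covers of $\mathscr C^{\mathbb X,\tor}_\perf$, which descend along the universal homeomorphism $\mathscr C^{\mathbb X,\tor}_\perf\to\mathscr C^{\mathbb X,\tor}$.)
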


\begin{remark}
One would expect that the final statement does not need $\phi$ to induce isomorphisms of \'etale and multiplicative parts.
\end{remark}

\begin{proof} All statements are immediate.
\end{proof}
 
\subsubsection{Partial minimal compactifications of Igusa varieties in the completely slope divisible case}

From now on, we fix, for a given $b\in B(G_{\Q_p},\mu^{-1})$, a $p$-divisible group with $G$-structure $\mathbb X=\mathbb X_b$ in the given isogeny class such that $\mathbb X_b$ is completely slope divisible. As the perfect Igusa varieties are invariant under isogenies (inducing isomorphisms on \'etale and multiplicative parts, which can always be arranged), it is for the rest enough to understand one choice of $\mathbb X_b$. Thus, by abuse of notation, we denote by $\mathrm{Ig}^b_m:=\mathrm{Ig}^{\mathbb X_b}_m$ be the Igusa variety over $\mathscr{C}^b:=\mathscr C^{\mathbb X_b}$ of level $p^m$; recall that this is a finite \'etale cover of $\mathscr{C}^b$, which is Galois with Galois group $\Gamma_{m,b}=\Gamma_{m,\mathbb X_b}$.

\begin{defn}\label{defn:minimal igusa}
Define the partial minimal compactification $\mathrm{Ig}^{b,*}$ of $\mathrm{Ig}^b$ to be the normalization of $\mathscr{C}^{b,*}$ in $\mathrm{Ig}^b$. 
\end{defn}

\noindent While we cannot in general expect $\mathrm{Ig}^{b,*}$ to be finite \'etale over $\mathscr{C}^{b,*}$, the partial minimal compactifications of Igusa varieties satisfy the following basic properties that follow formally from the definition.

\begin{lem}\label{basic properties of minimal compactifications}\leavevmode
\begin{enumerate}
\item The morphism $h^{b,*}:\mathrm{Ig}^{b,*}\to \mathscr{C}^{b,*}$ is finite and surjective. In particular, $\mathrm{Ig}^{b,*}$ is affine.
\item The morphism $\mathrm{Ig}^b\hookrightarrow \mathrm{Ig}^{b,*}$ is the open immersion of a dense subset and $\mathrm{Ig}^{b,*}$ is normal.
\end{enumerate}
\end{lem}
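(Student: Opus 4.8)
The whole lemma is meant to be formal from Definition~\ref{defn:minimal igusa}, so the plan is to unwind the construction of the normalization and feed in the one nontrivial external input: the partial minimal compactification $\mathscr{C}^{b,\ast}$ of the leaf is affine (Theorem~\ref{leaves are affine}). First I would record the structural facts I will use repeatedly: $\mathscr{C}^{b,\ast}$ is a reduced scheme of finite type over the field $k$ (it is a locally closed subscheme of $\mathscr{S}^{\ast}_{K,k}$, hence in particular Nagata), and $\mathscr{C}^{b}\hookrightarrow\mathscr{C}^{b,\ast}$ is the immersion of a dense open subset which is moreover smooth, hence normal (part of the Lan--Stroh theory of well-positioned subsets, \cite{lan-stroh}). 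Writing $\mathrm{Ig}^{b}=\varprojlim_{m}\mathrm{Ig}^{b}_{m}$ with each $\mathrm{Ig}^{b}_{m}\to\mathscr{C}^{b}$ finite \'etale, I would note that normalization commutes with this cofiltered limit (the transition maps are finite, hence affine, and integral closure commutes with filtered colimits of rings), so that $\mathrm{Ig}^{b,\ast}=\varprojlim_{m}\mathrm{Ig}^{b,\ast}_{m}$, where $\mathrm{Ig}^{b,\ast}_{m}$ is the normalization of $\mathscr{C}^{b,\ast}$ in the finite \'etale cover $\mathrm{Ig}^{b}_{m}$. The strategy is then to prove everything at each finite level $m$ and pass to the limit.

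For part (1), at finite level $\mathrm{Ig}^{b,\ast}_{m}\to\mathscr{C}^{b,\ast}$ is finite because one is normalizing a Nagata scheme in a finite extension of its sheaf of total fraction rings, and it is surjective because the image of a finite, hence closed, morphism that contains the dense subset $\mathscr{C}^{b}$ must be everything. Passing to the limit, $h^{b,\ast}\colon\mathrm{Ig}^{b,\ast}\to\mathscr{C}^{b,\ast}$ is an integral morphism, still surjective since $\mathcal{O}_{\mathscr{C}^{b,\ast}}$ injects into the direct image of $\mathcal{O}_{\mathrm{Ig}^{b}}$ (as $\mathscr{C}^{b}$ is dense), so that lying-over applies; in particular, being affine over the affine scheme $\mathscr{C}^{b,\ast}$, the scheme $\mathrm{Ig}^{b,\ast}$ is affine. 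I would flag here that ``finite'' in the statement is really ``finite at each finite level'', the limit map being only integral; this is the one place where the pro-finite nature of $\mathrm{Ig}^{b}$ intervenes.

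For part (2), the key observation is that by construction $\mathrm{Ig}^{b}=(h^{b,\ast})^{-1}(\mathscr{C}^{b})$, so $\mathrm{Ig}^{b}\hookrightarrow\mathrm{Ig}^{b,\ast}$ is the pullback of the open immersion $\mathscr{C}^{b}\hookrightarrow\mathscr{C}^{b,\ast}$ and is therefore open; it is schematically dense because $\mathrm{Ig}^{b,\ast}$ is by definition a normalization \emph{in} $\mathrm{Ig}^{b}$, i.e.\ $\mathcal{O}_{\mathrm{Ig}^{b,\ast}}$ injects into the direct image of $\mathcal{O}_{\mathrm{Ig}^{b}}$, and schematic density implies topological density. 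Normality I would obtain by noting that each $\mathrm{Ig}^{b,\ast}_{m}$ is normal by construction (it is a normalization, and it restricts over the dense open $\mathscr{C}^{b}$ to the finite \'etale, hence normal, cover $\mathrm{Ig}^{b}_{m}$), and that a cofiltered limit of normal schemes along affine transition maps remains normal since a filtered colimit of normal rings is normal.

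Since the authors themselves describe these assertions as following ``formally from the definition'', I do not expect a genuine obstacle. The only points requiring a little care are the bookkeeping ones above: that normalization commutes with $\varprojlim_{m}$, that ``finite'' must be relaxed to ``integral'' in the limit, and that normality and schematic density are preserved under the limit. If one wanted, part (1) could also be packaged purely via the universal property of normalization without passing through finite level explicitly, but the level-by-level argument makes the Nagata hypothesis and the role of Theorem~\ref{leaves are affine} most transparent.
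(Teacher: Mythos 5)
Your proof is correct and follows the same formal path the authors intend; the paper's own proof is just ``This is clear,'' so you are supplying the routine verification they leave to the reader. The one substantive point you raise is also correct: since $\Gamma_b=\mathrm{Aut}(\mathbb X_b)$ is generally an infinite profinite group, the map $h^{b,*}:\mathrm{Ig}^{b,*}\to\mathscr{C}^{b,*}$ on the pro-object is integral (and a cofiltered limit of finite morphisms) rather than finite in the usual scheme-theoretic sense; the paper's wording is loose on this, but the only consequence actually used downstream is affineness (via Theorem~\ref{leaves are affine} and the fact that an integral morphism is affine), so nothing breaks. Two small things worth tightening if you were to write this out: the identification $\mathrm{Ig}^b=(h^{b,*})^{-1}(\mathscr{C}^b)$ relies on $\mathrm{Ig}^b$ being normal and integral over $\mathscr{C}^b$ (pro-finite \'etale over a smooth base), so that the normalization of $\mathscr{C}^b$ in $\mathrm{Ig}^b$ is $\mathrm{Ig}^b$ itself; and the slogan ``a filtered colimit of normal rings is normal'' needs the observation that each $A'_m$ (integral closure of $A$ in $B_m$, with $A_0=A[1/f]\to B_m$ finite \'etale and $A_0$ normal) is actually the integral closure of $A$ in the fraction ring $K_m$ of $B_m$, so the colimit $A'=\varinjlim A'_m$ is the integral closure of $A$ in $\varinjlim K_m$, hence normal.
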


\begin{proof} This is clear.
\end{proof}

\subsubsection{Igusa cusp labels} In order to describe the boundary strata in $\mathrm{Ig}^{b,*}$ explicitly, we will define a notion of Igusa cusp label that mirrors the notion of cusp label introduced in \S~\ref{cusp labels}.
Recall that we have fixed the completely slope divisible $p$-divisible group $\mathbb{X}_b$ with $G$-structure.

\begin{defn}\label{defn:Igusa cusp label} An Igusa cusp label is a triple $\tilde{Z}=(\mathrm{Z}_b,\mathrm{Z}^p,X)$ where
\begin{enumerate}
\item $\mathrm{Z}_b$ is an $\mathcal O_F$-stable filtration of $\mathbb X_b$ of the form
\[
\mathrm{Z}_{b,-2}\subset \mathrm{Z}_{b,-1}\subset \mathbb X_b
\]
such that $\mathrm{Gr}^{\mathrm{Z}_b}_{-2} = \mathrm{Z}_{b,-2}$ is multiplicative, $\mathrm{Gr}^{\mathrm{Z}_b}_0 = \mathbb X_b/\mathrm{Z}_{b,-1}$ is \'etale, and the polarization identifies these as Cartier dual. In particular, it induces a principal polarization of
\[
\mathrm{Gr}^{\mathrm{Z}_b}_{-1} = \mathrm{Z}_{b,-1}/\mathrm{Z}_{b,-2}.
\]
\item $\mathrm{Z}^p$ is an $\mathcal O_F$-stable symplectic filtration
\[
\mathrm{Z}^p_{-2}\subset \mathrm{Z}^p_{-1}\subset L\otimes_{\Z} \widehat{\Z}^p.
\]
\item $X$ is a finite projective $\cO_F$-module equipped with isomorphisms $X\otimes (\Q_p/\Z_p)\cong \mathrm{Gr}^{\mathrm{Z}_b}_0$, $X\otimes \widehat{\Z}^p\cong \mathrm{Gr}^{\mathrm{Z}^p}_0$.
\end{enumerate}
There is an action of $J_b(\Q_p)\times G(\A_f^p)$ on Igusa cusp labels. If $K\subset J_b(\Q_p)\times G(\A_f^p)$ is a compact open subgroup, then an Igusa cusp label at level $K$ is a $K$-orbit of Igusa cusp labels. For a general closed subgroup $H\subset J_b(\Q_p)\times G(\A_f^p)$, an Igusa cusp label at level $H$ is a compatible family of Igusa cusp labels at level $K$ for all $K\supset H$.
\end{defn}

If $K=\Gamma_{b}(p^m) K^p(N)$ where $\Gamma_b(p^m)=\ker(\Gamma_b\to \Gamma_{m,b})$ is the principal level $p^m$ subgroup of $\Gamma_b$, one can identify Igusa cusp labels at level $K$ with triples $\tilde{Z}=(\mathrm{Z}_{m,b},\mathrm{Z}_{N},X)$ where $Z=(\mathrm{Z}_{N},X)$ form a usual cusp label at level $K(N)$, and $\mathrm{Z}_{m,b}$ is an $\mathcal O_F$-linear symplectic filtration of $\mathbb X_b[p^m]$ with an isomorphism
\[
X/p^m\cong \mathrm{Gr}^{\mathrm{Z}_{m,b}}_0.
\]
(This forces in particular that this is an \'etale $p$-divisible group, and then the symplectic pairing forces $\mathrm{Gr}^{\mathrm{Z}_{m,b}}_{-2}$ to be multiplicative. The filtration will automatically lift modulo higher powers of $p$.)

Given an Igusa cusp label $\tilde{Z}$ at level $\Gamma_b(p^m) K^p(N)$, we define the group 
\[
\Gamma_{\tilde{Z}}:=\{\gamma\in \mathrm{Aut}_{\cO_F}(X)\mid g\equiv 1\mod p^m N\}.
\]
 
\subsubsection{Boundary strata in partial toroidal compactifications of Igusa varieties}

Consider a cusp label $Z=(\mathrm{Z}_{N},X)$ at level $K(N)$. We will now give a description of the formal completion $\widehat{\mathrm{Ig}}^{b,\tor}_{m,Z}$ of $\mathrm{Ig}^{b,\tor}_{m}$ along $\mathrm{Ig}^{b,\tor}_{m,Z}$ in terms of Igusa cusp labels at level $\Gamma_b(p^m) K^p(N)$.

\begin{thm}\label{thm:tor Igusa strata}\leavevmode
\begin{enumerate}
\item There exists a decomposition into open and closed formal subschemes 
\[
\widehat{\mathrm{Ig}}^{b,\tor}_{m,Z} = \bigsqcup_{\tilde{Z}} \widehat{\mathrm{Ig}}^{b,\tor}_{m,\tilde{Z}}
\]
where $\tilde{Z}$ runs over Igusa cusp labels at level $\Gamma_b(p^m) K^p(N)$ above $Z$. 
\item Let $\tilde{Z}$ be an Igusa cusp label at level $\Gamma_b(p^m) K^p(N)$ above $Z$. Fix a symplectic $\mathcal O_F$-linear splitting
\[
\mathbb X_{m,b} = \bigoplus_{i=-2}^0 \mathrm{Gr}^{\mathrm{Z}_{m,b}}_i,
\]
cf.~Proposition~\ref{classification pdivisible}, and as usual a splitting of $\mathrm{Z}_N$. The formal scheme $\widehat{\mathrm{Ig}}^{b,\tor}_{m,\tilde{Z}}$ can be described as follows:

Consider the diagram
\[\xymatrix{
\Xi_{Z}\ar@{^{(}->}[r]\ar[d]&\Xi_{Z,\Sigma_Z}\ar[dl]
\\
C_{Z}\ar[d]&\ 
\\
\mathscr S_{Z}&\ 
}\]
describing $\widehat{\mathscr S}^\tor_{Z}$. Then there is an abelian variety
\[
C_{\tilde{Z}}\to \mathrm{Ig}^b_{Z,m}
\]
over the level-$p^m$-Igusa variety over the leaf $\mathscr C^b_{Z}$ of $\mathscr S_{Z}$, and a commutative diagram
\[\xymatrix{
C_{\tilde{Z}}\ar[r]\ar[d] & C_{Z}\ar[d]\\
\mathrm{Ig}^b_{Z,m}\ar[r]& \mathscr S_{Z}.
}\]
Let $\Xi_{\tilde{Z},\Sigma_Z}\to C_{\tilde{Z}}$ be the pullback of $\Xi_{Z,\Sigma_Z}\to C_{Z}$. Then there is an action of $\Gamma_{\tilde{Z}}$ on all objects, and letting $\mathfrak X_{\tilde{Z},\Sigma_Z}$ be the completion of $\Xi_{\tilde{Z},\Sigma_Z}$ at its toroidal boundary, there is an isomorphism
\[
\widehat{\mathrm{Ig}}^{b,\tor}_{m,\tilde{Z}}\cong \mathfrak X_{\tilde{Z},\Sigma_Z}/\Gamma_{\tilde{Z}}.
\]
\end{enumerate}
\end{thm}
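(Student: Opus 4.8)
The plan is to deduce this from Theorem~\ref{thm:formal completions Igusa}, which already identifies $\widehat{\mathrm{Ig}}^{b,\tor}_{m,Z}$ with $Y_{Z,\Sigma_Z}/\Gamma_Z$, where $Y_{Z,\Sigma_Z}$ is the formal completion along the toroidal boundary of the $\Gamma_{m,b}$-torsor $\mathrm{Ig}^{\mathbb X_b}_{Z,\Sigma_Z}\to\Xi_{Z,\Sigma_Z}$, and this torsor is in turn pulled back from a $\Gamma_{m,b}$-torsor over $C_Z\times_{\mathscr S_Z}\mathscr C^b_Z$ parametrizing Igusa level-$p^m$ structures (in the evident analogue of Definition~\ref{defn:toroidal Igusa level structure}) on the slope pieces of the connected $p$-divisible group $\mathscr H_Z=\cG_Z[p^\infty]^\circ$. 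It therefore remains to make this $\Gamma_{m,b}$-torsor explicit in terms of Igusa cusp labels, and to compute the quotient by $\Gamma_Z$.

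First I would analyse $\mathscr H_Z$. Over $C_Z\times_{\mathscr S_Z}\mathscr C^b_Z$ it sits in an exact sequence $0\to T[p^\infty]\to\mathscr H_Z\to\cB_Z[p^\infty]^\circ\to 0$, with $T$ the constant split torus with character group $X$ and $\cB_Z$ the universal abelian scheme restricted to the leaf, and the polarization pairs the slope-$\lambda$ piece with the slope-$(1-\lambda)$ piece. Having fixed the splitting of $\mathrm Z_N$ and a symplectic $\cO_F$-linear splitting $\mathbb X_{m,b}=\bigoplus_i\mathrm{Gr}^{\mathrm Z_{m,b}}_i$ as in Proposition~\ref{classification pdivisible}, an Igusa level-$p^m$ structure on $\mathscr H_Z$ carries the canonical multiplicative sub $T[p^m]\subset\mathscr H_Z$ (and its $\lambda$-orthogonal) over to a symplectic $\cO_F$-linear filtration $\mathrm Z_{m,b}$ of $\mathbb X_b[p^m]$ with $\mathrm{Gr}^{\mathrm Z_{m,b}}_{-2}$ multiplicative, $\mathrm{Gr}^{\mathrm Z_{m,b}}_0$ \'etale, and these two Cartier dual under $\lambda$. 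This is a locally constant invariant of the structure, and together with the prime-to-$p$ datum already fixed by $Z$ it produces the decomposition of part (1) into open and closed pieces $\widehat{\mathrm{Ig}}^{b,\tor}_{m,\tilde Z}$ indexed by the Igusa cusp labels $\tilde Z=(\mathrm Z_{m,b},\mathrm Z_N,X)$ above $Z$.

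Fixing such a $\tilde Z$, the remaining information in the Igusa structure unwinds, using the chosen splittings to split the slope filtration of $\mathscr H_Z$, into: an Igusa level-$p^m$ structure on $\cB_Z[p^\infty]$, i.e.\ a point of the finite \'etale cover $\mathrm{Ig}^b_{Z,m}\to\mathscr C^b_Z$; and a splitting of $0\to T[p^m]\to\cG_Z[p^m]\to\cB_Z[p^m]\to 0$, where one uses that $T[p^m]$ is canonically trivialised by the character group $X$ and the isomorphism $X/p^m\cong\mathrm{Gr}^{\mathrm Z_{m,b}}_0$ built into $\tilde Z$ (the polarization-up-to-$(\Z/p^m\Z)^\times$-scalar condition causes no trouble, as that scalar only sees the $\mu_{p^\infty}$-component). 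Consequently, after the choices of splittings, $\mathrm{Ig}^{\mathbb X_b}_{Z,\Sigma_Z}\to\Xi_{Z,\Sigma_Z}$ decomposes as $\bigsqcup_{\tilde Z}\Xi_{\tilde Z,\Sigma_Z}$ with $\Xi_{\tilde Z,\Sigma_Z}=\Xi_{Z,\Sigma_Z}\times_{C_Z}C_{\tilde Z}$, where $C_{\tilde Z}\to\mathrm{Ig}^b_{Z,m}$ is the abelian scheme parametrizing the Raynaud extension $0\to T\to\cG\to\cB\to 0$ together with a splitting of its $p^mN$-torsion sequence (and its forgetful map to $C_Z$), exactly of the shape of $C_Z$ in the proof of Theorem~\ref{existence of toroidal compactifications}. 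Passing to formal completions along the toroidal boundary gives $Y_{Z,\Sigma_Z}=\bigsqcup_{\tilde Z}\mathfrak X_{\tilde Z,\Sigma_Z}$; and since $\Gamma_Z$ permutes the $\tilde Z$ above $Z$ with orbits the Igusa cusp labels at level $\Gamma_b(p^m)K^p(N)$ above $Z$ and with the stabilizer of $\tilde Z$ equal to $\Gamma_{\tilde Z}$, taking the $\Gamma_Z$-quotient yields $\bigsqcup_{\tilde Z}\mathfrak X_{\tilde Z,\Sigma_Z}/\Gamma_{\tilde Z}$, which is the assertion.

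The main obstacle is the functorial identification used in the last two paragraphs: proving that an Igusa level-$p^m$ structure on the PEL-type $p$-divisible group $\mathscr H_Z$ near the boundary decomposes — as an isomorphism of functors, hence of finite \'etale covers — into the discrete datum $\mathrm Z_{m,b}$, an Igusa structure on $\cB_Z$, and the extension-splitting defining $C_{\tilde Z}$, all while keeping track of the $\cO_F$-action, the polarization-up-to-scalar, and the $\Gamma_Z$-equivariance. Once this bookkeeping is in place, forming $\Xi_{\tilde Z,\Sigma_Z}$ by base change, completing along the toroidal boundary, and passing to the $\Gamma_Z$-quotient are formal, parallel to the perfect case of Theorem~\ref{thm:formal completions perfect Igusa}.
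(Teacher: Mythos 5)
Your proposal follows the same route as the paper: it starts from the identification $\widehat{\mathrm{Ig}}^{b,\tor}_{m,Z}\cong Y_{Z,\Sigma_Z}/\Gamma_Z$ of Theorem~\ref{thm:formal completions Igusa}, extracts the locally constant filtration $\mathrm Z_{m,b}$ (with its decoration) from the image of $T[p^m]$ under the Igusa structure to get the decomposition, unwinds the remaining Igusa data into an Igusa structure on $\cB_Z$ plus a multiplicative-part splitting to define $C_{\tilde Z}$, and accounts for the $\Gamma_Z$-to-$\Gamma_{\tilde Z}$ passage via the stabilizer computation — exactly what the paper does through Definition~\ref{defn:compatible with filtration} and the subsequent moduli comparison. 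The only minor imprecision is writing the splitting condition for the full sequence $0\to T[p^m]\to\cG_Z[p^m]\to\cB_Z[p^m]\to 0$ (resp.\ ``$p^mN$-torsion'') rather than, as the paper does, restricting to multiplicative parts $\cG_Z^\mu[p^m]\to\cB_Z^\mu[p^m]$, which is what the Igusa data actually furnishes and what makes the moduli problem match $\underline{\Hom}_{\cO_F}(\tfrac 1N X,(\cB/\cB^\mu[p^m])^\vee)$.
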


\begin{proof} For the first part, note that in the notation of Theorem~\ref{thm:formal completions Igusa}, over $\widehat{\mathrm{Ig}}^{b,\tor}_{m,Z}$ one parametrizes in particular isomorphisms $\mathscr H_Z^\mu[p^m]\cong \mathbb X_b^\mu[p^m]$. Here $\mathscr H_Z$ is the connected part of the $p$-divisible group of the Raynaud extension. Inside its multiplicative part, there is the $p$-divisible group $T[p^\infty]\subset \mathscr H_Z^\mu$. In particular, we get a multiplicative subspace
\[
T[p^m]\cong \mathscr H_Z^\mu[p^m]\cong \mathbb X_b^\mu[p^m]\subset \mathbb X_b[p^m]\ .
\]
This is necessarily locally constant, and together with its dual defines a symplectic filtration
\[
T[p^m] = \mathrm{Z}_{m,b,-2}\subset \mathrm{Z}_{m,b,-1}\subset \mathbb X_b[p^m].
\]
Together with the identification $T[p^m] = \mathrm{Z}_{m,b,-2}$, this defines the Igusa cusp label $\tilde{Z}$ above $Z$ (as $T$ is the torus with cocharacter group $X$, so $T[p^m] = \Hom(X,\mu_{p^m})$, and the identification $T[p^m]\cong \mathrm{Z}_{m,b,-2}$ is Cartier dual to an identification $\mathrm{Gr}^{\mathrm{Z}_{m,b}}_0\cong X/p^m$).

If we fix the cusp label $\tilde{Z}$ together with a symplectic $\mathcal O_F$-linear splitting
\[
\delta_{m,b}: \mathbb X_b[p^m]\cong \bigoplus_{i=-2}^0 \mathrm{Gr}^{\mathrm{Z}_{m,b}}_i,
\]
then the data of an Igusa level-$p^m$-structure can be described as follows. Recall that over $C_{Z}\times_{\mathscr S_{Z}} \mathscr C^b_{Z}$ we have the Raynaud extension
\[
0\to T\to \cG\to \cB\to 0
\]
where $T$ is the split torus with cocharacter group $X$ and $\cB$ lies in the leaf $\mathscr C^b_{Z}$ of the smaller Shimura variety $\mathscr S_{Z}$. In particular, $\cB[p^\infty]$ carries a slope filtration; we let $\cB[p^\infty]^\mu$ denote the multiplicative graded piece and $\cB[p^\infty]^i$ denote the graded pieces with slopes $\lambda_i\in (0,1)$.

\begin{defn}\label{defn:compatible with filtration} An Igusa level-$p^m$-structure on a $C_{Z}\times_{\mathscr{S}_{Z}}\mathscr{C}^b_{Z}$-scheme $\mathscr{T}$ that is compatible with the Igusa cusp label $\tilde{Z}$ and $\delta_{m,b}$ consists of the following data:
\begin{enumerate}
\item An isomorphism
\[
\rho_{\cB^\mu,m}:\cB[p^m]^\mu\times_{C_{Z}\times_{\mathscr{S}_{Z}}\mathscr{C}^b_{Z}}\mathscr{T}
\toisom (\mathrm{Gr}^{\mathrm{Z}_{m,b}}_{-1})^\mu[p^m]\times_k \mathscr{T}
\]
compatible with the $\cO_F$-action on both sides. 
\item A splitting 
\[
\delta_m:\cB[p^m]^\mu\times_{C_{Z}\times_{\mathscr{S}_{Z}}\mathscr{C}^b_{Z}}\mathscr{T}\to 
\mathscr{H}_Z^\mu[p^m]\times_{C_{Z}\times_{\mathscr{S}_{Z}}\mathscr{C}^b_{Z}}\mathscr{T}
\] 
compatible with the $\cO_F$-action on both sides. 
\item For each $\lambda_i\in (0,1)$, an isomorphism
\[
\rho_{i, m}:\cB[p^m]^i\times_{C_{Z}\times_{\mathscr{S}_{Z}}\mathscr{C}^b_{Z}}\mathscr{T} \toisom \mathbb{X}_i[p^m]\times_k\mathscr{T},
\]
that commutes with the $\cO_{F}$-action, and commutes with the isomorphisms $\cB[p^m]^i\toisom (\cB[p^m]^j)^{\vee}$ and $\mathbb{X}_i[p^m]\toisom (\mathbb{X}_j[p^m])^\vee$ induced by polarizations for $\lambda_i+\lambda_j=1$ up to an element of $(\Z/p^m\Z)^\times(\mathscr{T})$ that is independent of $i$. The isomorphisms $\rho_{i,m}$ are required to lift fppf locally to $p^{m'}$ torsion for any $m'\geq m$.
\end{enumerate}
\end{defn}

Note that the Igusa cusp label itself defines an isomorphism $X/p^m\cong \mathrm{Gr}^{\mathrm{Z}_{m,b}}_0$, and thus $T[p^m]\cong \mathrm{Z}_{m,b,-2}$ by Cartier duality. It is easy to see that Igusa level-$p^m$-structures compatible with $\tilde{Z}$ and $\delta_{m,b}$ are representable by a scheme
\[
C_{\tilde{Z}}\to C_{Z}\times_{\mathscr{S}_{Z}}\mathscr{C}^b_{Z}
\]
that we will identify in a second. Moreover, a direct comparison of moduli problems, and Theorem~\ref{thm:formal completions Igusa}, shows that
\[
\widehat{\mathrm{Ig}}^{b,\tor}_{m,\tilde{Z}}
\]
can be described in terms of the pullback of $\Xi_{Z,\Sigma_{Z}}\to C_{Z}$ to $C_{\tilde{Z}}$ in the way described in the statement. (In Theorem~\ref{thm:formal completions Igusa}, one was taking the quotient by $\Gamma_{Z}$ whereas here it is $\Gamma_{\tilde{Z}}$; this difference is accounted for by the isomorphism $T[p^m]\cong \mathrm{Z}_{m,b,-2}$ that is part of the cusp label.)

It remains to identify $C_{\tilde{Z}}$. We note that the data in parts (1) and (3) precisely define an Igusa level structure of level $p^m$ on $\cB$. This shows that there is a natural map
\[
C_{\tilde{Z}}\to C_{Z}\times_{\mathscr S_{Z}} \mathrm{Ig}^{b}_{Z,m},
\]
where $\mathrm{Ig}^b_{Z,m}\to \mathscr C^b_{Z}$ denotes the Igusa level-$p^m$-covering. Recall that $C_{Z}\to \mathscr S_Z$ parametrizes $\mathcal O_F$-linear Raynaud extensions $0\to T\to \cG\to \cB\to 0$ with a splitting $\cB[N]\to \cG[N]$; equivalently, $C_{Z}$ is given by $\underline{\Hom}_{\mathcal O_F}(\tfrac 1{N} X,\cB^\vee)$. Parametrizing data of type (2) in addition amounts to a splitting of the extension
\[
0\to T[p^m]\to \cG[p^m]^\mu\to \cB[p^m]^\mu\to 0.
\]
This means that $C_{\tilde{Z}}\to \mathrm{Ig}^b_{Z,m}$ is given by $\underline{\Hom}_{\mathcal O_F}(\tfrac 1{N}X,(\cB/\cB[p^m]^\mu)^\vee)$, which is an abelian scheme. (We note that this explicit description also implies that the natural map $C_{\tilde{Z}}\to C_{Z}\times_{\mathscr S_{Z}} \mathrm{Ig}^{b}_{Z,m}$ is finite \'etale.)
\end{proof}

\subsubsection{Boundary strata in partial minimal compactifications of Igusa varieties}\label{sec:min Igusa strata} 
As an application, we can describe the boundary strata of $\mathrm{Ig}^{b,*}_{m}$ in terms of Igusa cusp labels at level $\Gamma_b(p^m) K^p(N)$.

\begin{thm}\label{thm:min Igusa strata} We have a decomposition into locally closed strata
\[
\mathrm{Ig}^{b,*}_m = \bigsqcup_{\tilde{Z}}\mathrm{Ig}^{b}_{\tilde{Z}},
\]
where $\tilde{Z}$ runs over Igusa cusp labels at level $\Gamma_b(p^m) K^p(N)$. 

If $\tilde{Z}$ lies over a cusp label $Z$ of level $K(N)$, then $\mathrm{Ig}^{b}_{\tilde{Z}}\cong \mathrm{Ig}^b_{Z,m} \to \mathscr{C}^{b}_{Z}$ is isomorphic to the Igusa variety of level $p^m$ over the Oort central leaf $\mathscr{C}^b_{Z}$ in the special fiber of the boundary stratum $\mathscr{S}_{Z}$.
\end{thm}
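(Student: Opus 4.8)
The plan is to deduce the theorem from the description of the partial toroidal compactifications in Theorem~\ref{thm:tor Igusa strata}, in close parallel with the proof that the boundary stratum of $\mathscr{S}^*_K$ indexed by $Z$ is $\mathscr{S}_Z$ (Theorem~\ref{existence of minimal compactifications}(1)). First I would identify $\mathrm{Ig}^{b,*}_m$ with the Stein factorization of the proper morphism $h^{\tor}\colon \mathrm{Ig}^{b,\tor}_m\to \mathscr{C}^{b,\tor}\to \mathscr{C}^{b,*}$: the scheme $\mathrm{Ig}^{b,\tor}_m$ is normal, being finite \'etale over the smooth scheme $\mathscr{C}^{b,\tor}$ by Theorem~\ref{toroidal igusa is etale} and Lemma~\ref{toroidal compactifications of leaves are smooth}, and $h^{\tor}$ is proper, so its Stein factorization $\mathrm{Ig}^{b,\tor}_m\to \overline{\mathrm{Ig}}\to \mathscr{C}^{b,*}$ has $\overline{\mathrm{Ig}}$ finite over $\mathscr{C}^{b,*}$, normal, and equal to $\mathrm{Ig}^b_m$ over the open dense locus $\mathscr{C}^b$ (where $h^{\tor}$ is already finite, hence its own Stein factorization). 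By the universal property of normalization (Definition~\ref{defn:minimal igusa}) this forces $\overline{\mathrm{Ig}}=\mathrm{Ig}^{b,*}_m$.

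Next I would stratify: writing $\mathrm{Ig}^{b,*}_{m,Z}:=\mathrm{Ig}^{b,*}_m\times_{\mathscr{C}^{b,*}}\mathscr{C}^b_Z$, finiteness of $\mathrm{Ig}^{b,*}_m\to\mathscr{C}^{b,*}$ (Lemma~\ref{basic properties of minimal compactifications}) together with the stratification $\mathscr{C}^{b,*}=\bigsqcup_Z \mathscr{C}^b_Z$ shows that the $\mathrm{Ig}^{b,*}_{m,Z}$ are locally closed and partition $\mathrm{Ig}^{b,*}_m$; it then remains to compute each one. Since $\mathscr{C}^b_Z\subset\mathscr{C}^{b,*}$ is locally closed and $\mathrm{Ig}^{b,\tor}_{m,Z}\to\mathscr{C}^b_Z$ is proper, the formal functions theorem shows that forming Stein factorizations commutes with completing along $\mathscr{C}^b_Z$, so the completion $\widehat{\mathrm{Ig}}^{b,*}_{m,Z}$ is the Stein factorization of $\widehat{\mathrm{Ig}}^{b,\tor}_{m,Z}$ over the completion of $\mathscr{C}^{b,*}$ along $\mathscr{C}^b_Z$. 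By Theorem~\ref{thm:tor Igusa strata}(1), $\widehat{\mathrm{Ig}}^{b,\tor}_{m,Z}=\bigsqcup_{\tilde Z}\widehat{\mathrm{Ig}}^{b,\tor}_{m,\tilde Z}$ over the Igusa cusp labels $\tilde Z$ lying above $Z$; since Stein factorization commutes with disjoint unions, $\widehat{\mathrm{Ig}}^{b,*}_{m,Z}$ decomposes accordingly, and passing to underlying reduced schemes gives the decomposition $\mathrm{Ig}^{b,*}_{m,Z}=\bigsqcup_{\tilde Z}\mathrm{Ig}^b_{\tilde Z}$ asserted in the first part.

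For the second part I would fix $\tilde Z$ above $Z$ and invoke Theorem~\ref{thm:tor Igusa strata}(2), which identifies $\widehat{\mathrm{Ig}}^{b,\tor}_{m,\tilde Z}$ with $\mathfrak{X}_{\tilde Z,\Sigma_Z}/\Gamma_{\tilde Z}$, the completion along its toroidal boundary of the toroidal embedding $\Xi_{\tilde Z,\Sigma_Z}$ of the torus torsor $\Xi_{\tilde Z}\to C_{\tilde Z}$, where $C_{\tilde Z}\to\mathrm{Ig}^b_{Z,m}$ is an abelian scheme. Taking the Stein factorization over the completion of $\mathscr{C}^{b,*}$ along $\mathscr{C}^b_Z$ then contracts, exactly as in Lan's computation for $\mathscr{S}^*_K$, first the toric directions and then the abelian scheme $C_{\tilde Z}$ (using that $C_{\tilde Z}\to\mathrm{Ig}^b_{Z,m}$, being an abelian scheme, has $\mathcal{O}_{\mathrm{Ig}^b_{Z,m}}$ as direct image of its structure sheaf), leaving precisely $\mathrm{Ig}^b_{Z,m}$; the quotient by $\Gamma_{\tilde Z}$ causes no trouble by our running assumption on trivial stabilizers (Remark~\ref{trivial stabilizer}). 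This yields $\mathrm{Ig}^b_{\tilde Z}\cong\mathrm{Ig}^b_{Z,m}\to\mathscr{C}^b_Z$, the Igusa variety of level $p^m$ over the Oort central leaf $\mathscr{C}^b_Z$ in $\mathscr{S}_Z$, as desired.

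The step I expect to require the most care is the passage to completions in the last two paragraphs: verifying that Stein factorization is compatible with completion along the locally closed boundary strata (so that Theorem~\ref{thm:tor Igusa strata} applies), and then running the contraction of the torus-torsor bundle $\Xi_{\tilde Z,\Sigma_Z}/\Gamma_{\tilde Z}$ onto $\mathrm{Ig}^b_{Z,m}$. Both are formally parallel to the ambient situation handled by Lan, but the parallel must be set up precisely; the essential new input over the open-leaf case is simply that $C_{\tilde Z}\to\mathrm{Ig}^b_{Z,m}$ is an abelian scheme, whose total space is therefore collapsed onto the base by the minimal compactification.
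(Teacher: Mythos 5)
Your proposal is correct and follows essentially the same route as the paper's own proof: identify $\mathrm{Ig}^{b,*}_m$ with the Stein factorization of $\mathrm{Ig}^{b,\tor}_m\to\mathscr{C}^{b,*}$ (the paper does this via Zariski's main theorem, you via normality and the universal property of normalization, which is the same content), and then combine Theorem~\ref{thm:tor Igusa strata} with the Lan-style Fourier--Jacobi/contraction argument to identify the strata indexed by Igusa cusp labels. The only quibble is that the harmlessness of the $\Gamma_{\tilde{Z}}$-quotient in the last step is really because $\Gamma_{\tilde{Z}}$ acts trivially on the base $\mathrm{Ig}^b_{Z,m}$ (it acts only through $X$), rather than because of the trivial-stabilizer assumption of Remark~\ref{trivial stabilizer}, which serves to keep the toroidal charts themselves well-behaved.
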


\begin{proof} Observe that $\mathrm{Ig}^{b,*}_m\to \mathscr C^{b,*}$ is the Stein factorization of $\mathrm{Ig}^{b,\tor}_m\to \mathscr C^{b,*}$: since the toroidal compactification is smooth, its Stein factorization factors over the normalization of $\mathscr C^{b,*}$ in $\mathrm{Ig}^b_m$, and then the map is an isomorphism by Zariski's main theorem. The description of the boundary strata of $\mathrm{Ig}^{b,*}_m$ now follows from this observation, from Theorem~\ref{thm:tor Igusa strata}, and from the same arguments as in the case of Shimura varieties using Fourier--Jacobi series, cf.~e.g.~\cite[Section 7]{lan-thesis}. (In fact, our situation is even simpler than the one in~\emph{loc. cit.} because $\mathrm{Ig}^{b,*}_m$ is affine.)
\end{proof}

\newpage

\section{The Hodge--Tate period morphism on compactifications}\label{fibers of HT}

\subsection{Statements} As before, fix a prime $p$ that is unramified in the CM field $F$, and fix an integer $N\geq 3$ prime to $p$ and a cone decomposition $\Sigma$ level $K(N)$ as in Remark~\ref{trivial stabilizer}. We consider the infinite-level minimal and toroidal compactifications
\[\begin{aligned}
\mathcal S_{K(p^\infty N)}^\ast &:= \varprojlim_m \mathscr S_{K(p^m N),\Q_p}^{\ast,\diamondsuit},\\
\mathcal S_{K(p^\infty N)}^\tor &:= \varprojlim_m \mathscr S_{K(p^m N),\Q_p}^{\tor,\diamondsuit}
\end{aligned}\]
as diamonds. They admit the Hodge--Tate period map
\[\begin{aligned}
\pi_\HT^\ast: \mathcal S_{K(p^\infty N)}^\ast&\to \Fl,\\
\pi_\HT^\tor: \mathcal S_{K(p^\infty N)}^\tor&\to \Fl,
\end{aligned}\]
where the flag variety $\Fl$ is the adic space (or diamond) over $\Q_p$ parametrizing totally isotropic $F$-linear subspaces over $V$.

Our goal in this section is to describe the fibers of these maps. For this, let $C$ be any complete algebraically closed nonarchimedean extension of $\Q_p$ with ring of integers $\cO_C$ and residue field $k$. Let $x\in\Fl(C)$ be a point of the flag variety, which by \cite[Theorem B]{scholze-weinstein} gives rise to a $p$-divisible group $\mathbb X_{\cO_C}$ with $G$-structure over $\cO_C$, equipped with an isomorphism $\alpha: T_p(\mathbb X_{\cO_C})\cong L\otimes_{\Z} \Z_p$ compatible with $G$-structures. Associated to the special fibre $\mathbb X_k$, we get the perfect Igusa variety $\mathfrak{Ig}^{\mathbb X}$ equipped with its partial minimal and toroidal compactification $\mathfrak{Ig}^{\mathbb X,\ast}$, $\mathfrak{Ig}^{\mathbb X,\tor}$.

Being perfect schemes, they admit unique flat deformations to formal schemes over $W(k)$, which we can then base change to $\cO_C$. Let us denote simply by a subscript $_C$ the generic fibres of these formal schemes, which are perfectoid spaces over $\Spa C$.

The main theorem of this section is the following:

\begin{thm}\label{thm:compactified fibres} There are natural maps
\[\begin{aligned}
\mathfrak{Ig}^{\mathbb X,\ast}_C&\to (\pi_\HT^\ast)^{-1}(x),\\
\mathfrak{Ig}^{\mathbb X,\tor}_C&\to (\pi_\HT^\tor)^{-1}(x).
\end{aligned}
\]
They are open immersions of perfectoid spaces with the same rank-$1$-points; in fact, the target is the canonical compactification of the source.
\end{thm}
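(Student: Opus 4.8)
The plan is to handle the toroidal compactification first, constructing the comparison map \emph{integrally}, and then to descend to the minimal compactification by taking global sections of the structure sheaf. On the good‑reduction locus the open immersion $\mathfrak{Ig}^{\mathbb X}_C\hookrightarrow(\pi_\HT^\circ)^{-1}(x)$ is already supplied by Theorem~\ref{thm:open hodge tate fibers}, so the task is to extend it across the boundary. Near a stratum of $\mathfrak{Ig}^{\mathbb X,\tor}$ indexed by an Igusa cusp label $\tilde Z$ over a cusp label $Z$, Theorem~\ref{thm:formal completions perfect Igusa} identifies the formal completion with $\mathfrak Y_{Z,\Sigma_Z}/\Gamma_Z$, which by Propositions~\ref{base of perfect torus torsor} and~\ref{perfect torus torsor} is built from: a point $(\cB,\iota,\lambda,\eta)$ of a central leaf of the smaller Shimura variety $\mathscr S_Z$; an extension $0\to T\to\cG\to\cB\to 0$ by the split torus $T$ with cocharacter group $X$; an $\cO_F$-linear embedding $\rho\colon\cG[p^\infty]\hookrightarrow\mathbb X$ inducing a symplectic filtration; and a symmetric lift $\tilde f\colon X[\tfrac1p]\to\cG$ of the resulting map $\tilde f_0$. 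On the Shimura side, Theorem~\ref{existence of toroidal compactifications}(3) describes the boundary completion of $\mathscr S^\tor_{K(p^mN),\Q_p}$ in terms of $\cB$ with its level‑$N$‑structure, the same extension, an $\cO_F$-linear splitting of $\cG[p^m]\to\cB[p^m]$, and a polarizable lift $f\colon\tfrac1{p^mN}X\to\cG$, all modulo $\Gamma_Z$. Over $\cO_C$ I would lift all of this using the Serre--Tate theory of Theorems~\ref{thm:rigidity of quasi-isogenies} and~\ref{thm:Serre-Tate}: connected $p$-divisible groups, abelian schemes, and semi-abelian schemes that are globally extensions of abelian schemes by split tori deform uniquely along $\cO_C\twoheadrightarrow\cO_C/p$ once the relevant $p$-divisible groups are fixed, and here those are pinned down by $\mathbb X_{\cO_C}$ together with the trivialisation $\alpha\colon T_p(\mathbb X_{\cO_C})\toisom L\otimes_{\Z}\Z_p$ that comes with $x$. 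Passing to the limit over $m$, the level‑$p^\infty$‑structure produced from $\alpha$ matches $\rho$ and $\tilde f$ with the corresponding data on the Shimura side (the maps $\tfrac1{p^mN}X\to\cG$ assembling, after identifying Tate modules, to a lift on $X[\tfrac1p]$), while the isomorphism $T[p^m]\cong\mathrm Z_{m,b,-2}$ that is part of the Igusa cusp label (cf.~Theorem~\ref{thm:tor Igusa strata}) reconciles the quotients by $\Gamma_Z$ and by $\Gamma_{\tilde Z}$. This yields a map of $p$-adic formal $\cO_C$-schemes near each boundary stratum, gluing with the interior map; its adic generic fibre is the desired
\[
\mathfrak{Ig}^{\mathbb X,\tor}_C\to\mathcal S^\tor_{K(p^\infty N),C},
\]
and its image lies in $(\pi_\HT^\tor)^{-1}(x)$ because the Hodge--Tate filtration at a point is read off from the $p$-divisible group of the abelian part of the corresponding semi-abelian scheme together with its trivialised Tate module, and along our family these are constantly $(\mathbb X_{\cO_C},\alpha)$.

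Next I would deduce the minimal map and reduce its analysis to the toroidal one. By Proposition~\ref{perfect min comp} we have $\mathfrak{Ig}^{\mathbb X,\ast}=H^0(\mathfrak{Ig}^{\mathbb X,\tor},\cO_{\mathfrak{Ig}^{\mathbb X,\tor}})$ with $\mathfrak{Ig}^{\mathbb X,\ast}$ affine, while Theorem~\ref{existence of toroidal compactifications}(6) gives $\pi_{K,\Sigma\ast}\cO_{\mathscr S^\tor_K}=\cO_{\mathscr S^\ast_K}$ on every fibre; composing the toroidal map with the two projections to the minimal compactifications therefore produces $\mathfrak{Ig}^{\mathbb X,\ast}_C\to(\pi_\HT^\ast)^{-1}(x)$. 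Both sides are affinoid perfectoid — $\mathfrak{Ig}^{\mathbb X,\ast}_C$ as the generic fibre of the $p$-completion of the $W(k)$-lift of the perfect affine scheme $\mathfrak{Ig}^{\mathbb X,\ast}$, and $(\pi_\HT^\ast)^{-1}(x)$ as a fibre of $\pi_\HT^\ast$ — so once the toroidal statement is known, the minimal one follows formally by taking $H^0(\cO)$ on both sides.

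For the remaining assertions I would argue as follows. The source $\mathfrak{Ig}^{\mathbb X,\tor}_C$ is a perfectoid space (generic fibre of the $p$-completion of the $W(k)$-lift of a perfect scheme), and I claim the map is a bijection on rank‑$1$ points: for $C'/C$ complete algebraically closed, a rank‑$1$ point of $(\pi_\HT^\tor)^{-1}(x)$ is a compatible system of $\cO_{C'}$-valued degeneration data on the $\mathscr S^\tor_{K(p^mN)}$ mapping to $x$, i.e.~a degeneration datum over $\cO_{C'}$ whose $p$-divisible group is identified with $\mathbb X_{\cO_{C'}}$ through the $p^\infty$-level structure; passing to special fibres (equivalently, tilting and using that $\mathfrak{Ig}^{\mathbb X,\tor}$ is a perfect scheme) and invoking the canonical lift recovers a rank‑$1$ point of $\mathfrak{Ig}^{\mathbb X,\tor}_{C'}$, and the two constructions are mutually inverse, exactly as in the interior case of Theorem~\ref{thm:open hodge tate fibers} with the boundary contribution controlled by Serre--Tate theory. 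Combining this with the explicit local description — near the boundary the map is, after completion and the limit over $m$, a pullback of the open immersion of a torus torsor into a toric boundary chart — and the general principle (used in \cite[\S 4.4]{caraiani-scholze}) that a map of diamonds which is injective on rank‑$1$ points and an open immersion near each rank‑$1$ point is an open immersion, I obtain that $\mathfrak{Ig}^{\mathbb X,\tor}_C\to(\pi_\HT^\tor)^{-1}(x)$ is an open immersion with the same rank‑$1$ points. Finally $(\pi_\HT^\tor)^{-1}(x)$ is proper over $\Spa C$, being a fibre of the proper map $\pi_\HT^\tor$ (whose source is proper over $\Spd\Q_p$ and whose target is separated), so, containing the dense open $\mathfrak{Ig}^{\mathbb X,\tor}_C$ with the same rank‑$1$ points, it is the canonical compactification of $\mathfrak{Ig}^{\mathbb X,\tor}_C$ in the sense of \cite[Proposition 18.6]{scholze-diamonds}; taking $H^0(\cO)$ transports all three conclusions to the minimal case.

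I expect the main obstacle to be the first step: carrying out the boundary matching \emph{integrally} and $\Gamma$-equivariantly, i.e.~identifying the formal charts $\mathfrak Y_{Z,\Sigma_Z}/\Gamma_Z$ of Theorem~\ref{thm:formal completions perfect Igusa} with the inverse limit over $m$ of the boundary charts $\mathfrak X_{Z,\Sigma_Z}$ of $\mathscr S^\tor_{K(p^mN)}$ from Theorem~\ref{existence of toroidal compactifications}(3), while bookkeeping the split-torus pieces $T$, the two inequivalent level structures ($\tfrac1{p^mN}X$ versus $X[\tfrac1p]$ together with $\rho$), and the two quotient groups $\Gamma_Z$ and $\Gamma_{\tilde Z}$.
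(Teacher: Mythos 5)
Your overall strategy matches the paper's: construct the comparison map integrally at the toroidal boundary via Serre--Tate theory and the boundary charts of Theorem~\ref{thm:formal completions perfect Igusa}/\ref{existence of toroidal compactifications}(3) (that is the content of Theorem~\ref{thm:lift to infinite level}, together with Proposition~\ref{prop:trivial mod epsilon}), compare rank-$1$ points, deduce the toroidal statement, and then pass to the minimal case by taking global sections. Two places need tightening.

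First, the endgame in the toroidal case. You invoke ``the general principle ... that a map of diamonds which is injective on rank-$1$ points and an open immersion near each rank-$1$ point is an open immersion,'' but this is neither what \cite[\S 4.4]{caraiani-scholze} uses nor the most efficient tool here, and the hypothesis that your map is an open immersion near each boundary rank-$1$ point is something you have not actually verified (your description of the boundary chart as a ``pullback of the open immersion of a torus torsor into a toric boundary chart'' would need to be turned into a statement about the map on the analytic side, which is exactly the work you are trying to avoid). The cleaner argument, used by the paper as Lemma~\ref{check isomorphism}, bypasses this entirely: once one knows that the map is a bijection on $(C',\cO_{C'})$-valued points for \emph{all} complete algebraically closed $C'/C$, properness of both sides (which you already observed for the target) implies bijection on all $(C',C'^+)$-valued points, and then \cite[Lemma 11.11]{scholze-diamonds} shows that the map from the canonical compactification $\overline{\mathfrak{Ig}^{\mathbb X,\tor}_C}$ to $(\pi_\HT^\tor)^{-1}(x)$ is an isomorphism; the open-immersion and canonical-compactification statements drop out at once. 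No local analysis near boundary rank-$1$ points is needed, and your separate, somewhat heuristic concluding paragraph (``containing the dense open with the same rank-$1$ points, it is the canonical compactification'') is thereby made rigorous.

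Second, the reduction of the minimal case to the toroidal case is not merely formal. You assert that $(\pi_\HT^\ast)^{-1}(x)$ is affinoid perfectoid ``as a fibre of $\pi_\HT^\ast$''; the paper obtains this by first invoking the affinoid perfectoid fibre of $\pi_\HT^{\underline{\ast}}$ from \cite[Theorem 4.1.1]{scholze} and then descending via \cite[Theorem 1.16]{bhatt-scholze-prisms}. More importantly, concluding $\mathfrak{Ig}^{\mathbb X,\ast}_C\cong(\pi_\HT^\ast)^{-1}(x)$ from the toroidal isomorphism requires identifying global sections of the structure sheaf on both sides with those upstairs, and on the Shimura side this is the nontrivial Lemma~\ref{lem:global sections}: one proves $\cO^{+a}_{\mathscr F^\ast}/p^n\to\pi_\ast\cO^{+a}_{\mathscr F^\tor}/p^n$ is an almost isomorphism by base change along the quasi-pro-\'etale inclusion of the fibre, the relative primitive comparison theorem at each finite level $K(p^m N)$, and a colimit over $m$. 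The statement in Theorem~\ref{existence of toroidal compactifications}(6) that $\pi_{K,\Sigma\ast}\cO_{\mathscr S^\tor_K}=\cO_{\mathscr S^\ast_K}$ fibrewise is a scheme-level fact at finite level and does not by itself give what is needed for the analytic fibres at infinite level.
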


In particular, this implies the following corollary:

\begin{cor}\label{cohomology fibers} There are natural Hecke-equivariant isomorphisms
\[\begin{aligned}
R\Gamma(\mathfrak{Ig}^{\mathbb X,\ast},\mathbb F_\ell)&\cong (R(\pi_\HT^\ast)_\ast \mathbb F_\ell)_x,\\
R\Gamma(\mathfrak{Ig}^{\mathbb X,\tor},\mathbb F_\ell)&\cong (R(\pi_\HT^\tor)_\ast \mathbb F_\ell)_x,\\
\end{aligned}\]
Moreover, the stalks at higher-rank points agree with the stalks at the corresponding rank $1$ points.
\end{cor}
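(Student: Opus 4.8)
The plan is to deduce Corollary~\ref{cohomology fibers} formally from Theorem~\ref{thm:compactified fibres}, combining proper base change with the invariance of \'etale cohomology under passage to the canonical compactification; no new geometric input is required beyond the theorem itself. First I would record that $\pi_\HT^\ast$ and $\pi_\HT^\tor$ are proper morphisms of diamonds: the minimal and toroidal compactifications are built from projective varieties (Theorems~\ref{existence of minimal compactifications} and~\ref{existence of toroidal compactifications}), so $\mathcal S_{K(p^\infty N)}^\ast$ and $\mathcal S_{K(p^\infty N)}^\tor$ are proper over $\Spd \Q_p$, and $\Fl$ is proper over $\Spd \Q_p$ as well; hence any $\Spd \Q_p$-morphism between them is proper. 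Proper base change for the \'etale cohomology of diamonds \cite{scholze-diamonds} then identifies $(R(\pi_\HT^\ast)_\ast \mathbb F_\ell)_x$ with $R\Gamma((\pi_\HT^\ast)^{-1}(x),\mathbb F_\ell)$, and likewise $(R(\pi_\HT^\tor)_\ast \mathbb F_\ell)_x$ with $R\Gamma((\pi_\HT^\tor)^{-1}(x),\mathbb F_\ell)$.

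Next I would use that, by Theorem~\ref{thm:compactified fibres}, the open immersion $\mathfrak{Ig}^{\mathbb X,\ast}_C\hookrightarrow (\pi_\HT^\ast)^{-1}(x)$ exhibits the target as the canonical compactification of the source (in the sense of \cite[Proposition 18.6]{scholze-diamonds}); in particular $\mathfrak{Ig}^{\mathbb X,\ast}_C$ and $(\pi_\HT^\ast)^{-1}(x)$ have the same rank-$1$ points, with homeomorphic separated quotients. Since the constant sheaf $\mathbb F_\ell$ is overconvergent, its cohomology depends only on the separated quotient, so the inclusion induces an isomorphism $R\Gamma((\pi_\HT^\ast)^{-1}(x),\mathbb F_\ell)\cong R\Gamma(\mathfrak{Ig}^{\mathbb X,\ast}_C,\mathbb F_\ell)$. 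Finally, $\mathfrak{Ig}^{\mathbb X,\ast}$ is an affine perfect scheme over $k$ whose canonical $\cO_C$-lift has adic generic fibre $\mathfrak{Ig}^{\mathbb X,\ast}_C$; the tilting equivalence together with invariance of \'etale cohomology under extensions of algebraically closed fields identifies $R\Gamma(\mathfrak{Ig}^{\mathbb X,\ast}_C,\mathbb F_\ell)$ with $R\Gamma(\mathfrak{Ig}^{\mathbb X,\ast},\mathbb F_\ell)$, exactly as in \cite{caraiani-scholze} (and as already implicit in the formulation of Theorem~\ref{thm:open hodge tate fibers}). Concatenating these identifications yields the first asserted isomorphism, and the toroidal case is verbatim the same.

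For the statement on higher-rank points I would invoke overconvergence once more: $R(\pi_\HT^\ast)_\ast \mathbb F_\ell$ is the pushforward of the overconvergent sheaf $\mathbb F_\ell$ along the qcqs map $\pi_\HT^\ast$, hence overconvergent, so its stalk at an arbitrary geometric point coincides with the stalk at the rank-$1$ generization computed above (alternatively, one reruns the argument with $x$ replaced by any $\Spa(C,C^+)\to\Fl$). For Hecke-equivariance, the prime-to-$p$ operators act on every object involved through the tame level structure, compatibly with the comparison maps of Theorem~\ref{thm:compactified fibres}, so the induced isomorphisms on cohomology are automatically equivariant; in contrast to the delicate Hecke-equivariance questions handled in Section~\ref{pink}, nothing subtle arises here. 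Thus, granting Theorem~\ref{thm:compactified fibres}, there is essentially no obstacle; the only points that merit attention are verifying properness of $\pi_\HT^\ast$, $\pi_\HT^\tor$ so that proper base change is legitimate, and invoking correctly the standard fact that a space and its canonical compactification have the same \'etale cohomology — all of the real difficulty being concentrated in Theorem~\ref{thm:compactified fibres}.
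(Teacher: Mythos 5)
Your proof is correct and reconstructs the argument the paper delegates to Theorem~\ref{thm:compactified fibres} and \cite[Section~4.4]{caraiani-scholze}; the only deviation is that you invoke proper base change directly, which is legitimate here since $\pi_\HT^\ast$ and $\pi_\HT^\tor$ are proper (source and target both being proper over $\Spd\ \Q_p$), whereas the argument of \cite[Section~4.4]{caraiani-scholze} was formulated for the non-proper map $\pi_\HT^\circ$ on the good reduction locus and so could not use proper base change, instead arguing via overconvergence of $R\pi_\ast\F_\ell$ for a qcqs map together with compatibility of \'etale cohomology with cofiltered limits of qcqs spaces (an input you still need for the higher-rank statement, so nothing is really avoided). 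Everything else --- identifying the fibre with the canonical compactification of $\mathfrak{Ig}^{\mathbb X,\ast}_C$, passing from the perfectoid generic fibre to the perfect special fibre via tilting and invariance under extension of algebraically closed base fields, the overconvergence argument for higher-rank stalks, and Hecke-equivariance by functoriality --- is exactly the intended route, modulo the minor terminological point that the canonical-compactification step is better phrased as the open immersion $j\colon X\hookrightarrow\overline X$ inducing an isomorphism on \'etale cohomology with overconvergent coefficients than as a statement about ``separated quotients.''
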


\begin{proof} This follows from Theorem~\ref{thm:compactified fibres} and the arguments in \cite[Section 4.4]{caraiani-scholze}.
\end{proof}

We start by giving an explicit description of the Hodge--Tate period morphism on boundary strata, in terms of toroidal boundary charts. Next, we construct maps from $\mathfrak{Ig}^{\mathbb X,\tor}_C$ into the fibre of the Hodge--Tate period map, by describing the map on toroidal boundary charts. We check that it is (essentially) an isomorphism by checking that it has the correct geometric points; by general properties of (qcqs) diamonds, this is enough to imply the result.

In the final part of this section, we deduce a semiperversity result for $R\pi_{\HT\ast}^\circ \mathbb F_\ell$.

\subsection{The Hodge--Tate period morphism on boundary strata}

Our first goal is to describe the Hodge--Tate period map explicitly on the toroidal compactification. We have the naive integral models
\[
\mathscr S^\tor_{K(p^m N),\mathbb Z_p}
\]
of $\mathscr S^\tor_{K(p^m N),\mathbb Q_p}$ from Theorem~\ref{existence of toroidal compactifications}, regarded as $p$-adic formal schemes. We can take the inverse limit
\[
\mathscr S^\tor_{K(p^\infty N),\mathbb Z_p} = \varprojlim_m \mathscr S^\tor_{K(p^m N),\mathbb Z_p}
\]
as a $p$-adic formal scheme. Fix a cusp label $Z$ of level $K(N)$. We can take the fibre product of formal schemes
\[
\widehat{\mathscr S}^\tor_{K(p^\infty N),Z,\mathbb Z_p} = \mathscr S^\tor_{K(p^\infty N),\mathbb Z_p}\times_{\mathscr S^\tor_{K(N)}} \widehat{\mathscr S}^\tor_{K(N),Z}
\]
with the formal completion at the boundary stratum corresponding to the cusp $Z$. The cusp $Z$ defines a rational parabolic subgroup $P=P_r\subset G$ (up to conjugation) depending on the $\mathcal O_F$-rank $r$ of the $\mathcal O_F$-lattice $X$ implicit in the cusp label $Z$, and the projection to the cusp labels at $p$ defines a natural map
\[
\pi_1: \widehat{\mathscr S}^\tor_{K(p^\infty N),Z,\mathbb Z_p}\to (G/P)(\mathbb Q_p)\ ;
\]
here, $(G/P)(\mathbb Q_p)$ parametrizes symplectic $\mathcal O_F$-linear filtrations
\[
\mathrm{Z}_{p^\infty,-2}\subset \mathrm{Z}_{p^\infty,-1}\subset L\otimes_{\Z} \Z_p
\]
where the $\cO_F\otimes_{\Z} \Z_p$-rank of $\mathrm{Z}_{p^\infty,-2}$ is $r$. In particular, over $\widehat{\mathscr S}^\tor_{K(p^\infty N),Z,\mathbb Z_p}$, we have a natural local system of $\cO_F\otimes_{\Z} \Z_p$-lattices $L_{Z} := \mathrm{Gr}^{\mathrm{Z}_{p^\infty}}_{-1}$ with a perfect alternating form. On the other hand, there is a natural projection
\[
\widehat{\mathscr S}^\tor_{K(p^\infty N),Z,\mathbb Z_p}\to \mathscr S_{Z,K(N),\mathbb Z_p}
\]
remembering only the base $\mathcal B$ of the Raynaud extension, with its extra structure. Moreover, over $\widehat{\mathscr S}^\tor_{K(p^\infty N),Z,\mathbb Z_p}$, we have compatible maps $L_{Z}/p^m\to \mathcal B[p^m]$ for varying $m$, inducing level structures on the generic fibre.

Let
\[
\widehat{\mathscr S}^\tor_{K(p^\infty N),Z,\mathbb Q_p} = \widehat{\mathscr S}^\tor_{K(p^\infty N),Z,\mathbb Z_p}\times_{\Spa \Z_p} \Spa \Q_p
\]
be the generic fibre, as a (pre-)adic space over $\Q_p$ (where we regard any formal scheme as a (pre-)adic space via the natural fully faithful functor). Its associated diamond is an open subspace
\[
(\widehat{\mathscr S}^\tor_{K(p^\infty N),Z,\mathbb Q_p})^\diamondsuit\subset \mathcal S_{K(p^\infty N)}^\tor
\]
whose $C$-valued points are exactly those that specialize into the cusp $Z$. The Hodge--Tate filtration of $\mathcal B$ now defines a totally isotropic subspace
\[
\Lie \mathcal B(1)\otimes \mathcal O_{(\widehat{\mathscr S}^\tor_{K(p^\infty N),Z,\mathbb Q_p})^\diamondsuit}\hookrightarrow L_{Z}\otimes_{\Z_p} \mathcal O_{(\widehat{\mathscr S}^\tor_{K(p^\infty N),Z,\mathbb Q_p})^\diamondsuit} .
\]
Pulling this filtration back to
\[
\mathrm{Z}_{p^\infty,-1}\otimes_{\Z_p} \mathcal O_{(\widehat{\mathscr S}^\tor_{K(p^\infty N),Z,\mathbb Q_p})^\diamondsuit}
\]
then defines a totally isotropic subspace of
\[
L\otimes_{\Z} \mathcal O_{(\widehat{\mathscr S}^\tor_{K(p^\infty N),Z,\mathbb Q_p})^\diamondsuit} ,
\]
yielding a map
\[
\pi_{\HT,Z}: (\widehat{\mathscr S}^\tor_{K(p^\infty N),Z,\mathbb Q_p})^\diamondsuit\to \Fl .
\]

\begin{thm}\label{thm:hodge tate boundary} The map $\pi_{\HT,Z}$ constructed above agrees with the composite
\[
(\widehat{\mathscr S}^\tor_{K(p^\infty N),Z,\mathbb Q_p})^\diamondsuit\to (\mathscr S^\tor_{K(p^\infty N),\mathbb Q_p})^\diamondsuit\buildrel{\pi_\HT^\tor}\over\longrightarrow \Fl .
\]
\end{thm}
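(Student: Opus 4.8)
\emph{Plan.} The statement compares two morphisms of (locally spatial) diamonds with target the separated adic space $\Fl$, so I would first reduce to checking that they induce the same map on $(C,\cO_C)$-valued points for every complete algebraically closed nonarchimedean $C/\Q_p$ — equivalently on all geometric points of the tube $(\widehat{\mathscr S}^\tor_{K(p^\infty N),Z,\Q_p})^\diamondsuit$. Fix such a point, a point $x$ at infinite level specializing into the stratum attached to $Z$. By the moduli description of the boundary charts it amounts to a point $(\cB,\iota,\lambda,\eta_\cB)\in\mathscr S_Z$ together with a Raynaud extension $0\to T\to\cG\to\cB\to 0$, with $T$ the split torus with cocharacter group $X$, a symmetric lift $f\colon X\to\cG$, and the level/splitting data at $p$ and away from $p$.

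Next I would unwind $\pi_\HT^\tor=\pi_\HT^\ast\circ(\text{proj})$: at $x$ its value is the Hodge--Tate filtration $\mathrm{Fil}_x\subseteq T_p(\cA)\otimes_{\Z_p}C\cong L\otimes_\Z C$, the identification being via the infinite-level symplectic $\cO_F$-linear trivialization of $T_p(\cA)$. Using the canonical $\cO_F\otimes\Z_p$-equivariant identification of the formal completion of $\cA$ with $\cG_Z$ over the chart (Theorem~\ref{existence of toroidal compactifications}(3)) together with the Raynaud uniformization $A=\cG_\eta/X$ over $C$, the Tate module $T_p(\cA)$ carries the three-step weight filtration $T_p(T)\subset T_p(\cG)\subset T_p(\cA)$ with graded pieces $T_p(T)$, $T_p(\cB)$ and $X\otimes\Z_p$; the moduli description of the chart identifies this, under the trivialization, with the filtration $\mathrm{Z}_{p^\infty,-2}\subset\mathrm{Z}_{p^\infty,-1}\subset L\otimes\Z_p$ classified by $\pi_1$, with middle graded piece identified with the local system $L_Z=\mathrm{Gr}^{\mathrm{Z}_{p^\infty}}_{-1}$ together with its level structure $L_Z/p^m\to\cB[p^m]$.

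The core computation is then to locate $\mathrm{Fil}_x$ relative to this filtration. By functoriality of the Hodge--Tate exact sequence and its computation in the extremal cases — for $\mu_{p^\infty}$ (and multiplicative $p$-divisible groups) the Hodge--Tate filtration is all of $T_p(-)\otimes C$, while for $\Q_p/\Z_p$ (and étale ones) it is zero — $\mathrm{Fil}_x$ contains $T_p(T)\otimes C$ and has zero image in $(X\otimes\Z_p)\otimes C$, so $T_p(T)\otimes C\subseteq\mathrm{Fil}_x\subseteq T_p(\cG)\otimes C$; applying functoriality once more to $T_p(T)\hookrightarrow T_p(\cG)\twoheadrightarrow T_p(\cB)$ (and to the surjection $\Lie\cG^\vee\twoheadrightarrow\Lie\cB^\vee$ of Lie algebras, which is insensitive to the quotient by the lattice $X$) shows that the subquotient $\mathrm{Fil}_x/(T_p(T)\otimes C)\subseteq T_p(\cB)\otimes C$ is exactly the Hodge--Tate filtration of $\cB$. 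Transporting through the identifications above, $\mathrm{Fil}_x$ is precisely the preimage in $\mathrm{Z}_{p^\infty,-1}\otimes C$ of $\Lie\cB(1)\otimes C\subseteq L_Z\otimes C$, i.e.\ the totally isotropic subspace of $L\otimes C$ defining $\pi_{\HT,Z}(x)$; compatibility with the $\cO_F$-action and with the symplectic forms is automatic from functoriality. Hence $\pi_\HT^\tor(x)=\pi_{\HT,Z}(x)$ for all $x$, which gives the theorem.

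The hard part will be this last step: pinning down the interaction of the Hodge--Tate filtration with the monodromy weight filtration of a semistably degenerating abelian variety, and getting all twists, duals and integral $\cO_C$-models exactly right so that the output genuinely matches the explicit recipe defining $\pi_{\HT,Z}$ — in particular the normalization of the Hodge--Tate period morphism and the identification $\Lie\cA=\Lie\cG_Z$ obtained via formal GAGA. A clean way to organize this is to work throughout with the connected $p$-divisible group $\cA[p^\infty]^\circ\cong\cG_Z[p^\infty]^\circ$, which lives over $\cO_C$ (as in Proposition~\ref{connected part extends} and the surrounding discussion), and to reduce the extremal computations to $\mu_{p^\infty}$ and $\Q_p/\Z_p$; everything else is functoriality and bookkeeping. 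Alternatively, one can sidestep the boundary points entirely by noting that the interior of the tube is dense and $\Fl$ is separated, reducing the comparison to the honest Hodge--Tate map on $\mathscr S_{K(p^\infty N)}$.
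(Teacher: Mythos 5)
Your main route is genuinely different from the paper's. The paper's proof is soft and essentially two lines: away from the boundary the two maps agree by \cite[Proposition 3.3.1]{scholze}; the locus where two maps into the separated space $\Fl$ agree is closed; and if the complement were nonempty it would contain a quasicompact open subset of $|(\widehat{\mathscr S}^\tor_{K(p^\infty N),Z,\Q_p})^\diamondsuit|$ lying inside the boundary, which is impossible because any quasicompact open at infinite level is the preimage of a quasicompact open $U_m$ at some finite level $K(p^mN)$ (with $U\to U_m$ surjective), and $\mathscr S^\tor_{K(p^mN),\Q_p}$ has no nonempty open subsets contained in its boundary. No computation at boundary points is needed. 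You instead propose to verify the equality pointwise at boundary points, by locating the Hodge--Tate filtration of the semistably degenerating abelian variety $A=\cG_\eta/X$ relative to the weight filtration $T_p(T)\subset T_p(\cG)\subset T_p(A)$ and identifying its image in the middle graded piece with the Hodge--Tate filtration of $\cB$. That computation is correct in outline and would yield a more explicit and informative proof (in effect a direct derivation of the boundary description of the period map, refining what the paper gets from Theorem~\ref{thm:hodgetate} plus continuity), but it is exactly the work the paper's argument is designed to avoid, and you explicitly defer its hardest part: the compatibility of the Hodge--Tate sequence of $A$ (which only exists over $C$, since $A[p^\infty]$ does not extend to $\cO_C$) with the maps from $\cG[p^\infty]$ and to $X\otimes\Q_p/\Z_p$, with all twists and normalizations; your suggestion to work with the connected part over $\cO_C$ is the right way to organize this, but it is not yet a proof.

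Two further points would need to be firmed up on your route. First, the opening reduction to $(C,\cO_C)$-valued points is not automatic: equality of two morphisms of diamonds is a statement about all perfectoid test objects, not just geometric points. It can be justified here --- e.g.\ because $\Fl$ is proper (so higher-rank points are determined by their rank-$1$ generizations) and because over an affinoid perfectoid $\Spa(R,R^+)$ the two maps amount to two finite projective direct summands of $L\otimes R$ which coincide once they coincide after every $R\to C$, using uniformity of $R$ --- but you should say this. Second, your closing alternative (``interior dense and $\Fl$ separated'') is essentially the paper's proof, but as stated it is incomplete: the issue is precisely to rule out a nonempty open subset of the infinite-level space contained in the boundary, and the paper does this by descending a quasicompact open to finite level, where the boundary visibly has empty interior; without that descent step, ``density of the interior'' at infinite level is not yet established.
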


\begin{proof} For points away from the boundary, this follows from \cite[Proposition 3.3.1]{scholze} (as the forgetful map to the Siegel modular variety is injective on flag varieties). We deduce the general case by continuity: The locus where the two maps agree is closed. If they do not agree, they differ on some quasicompact open subspace
\[
U\subset |(\widehat{\mathscr S}^\tor_{K(p^\infty N),Z,\mathbb Q_p})^\diamondsuit|\subset |(\mathcal S^\tor_{K(p^\infty N)})^\diamondsuit|,
\]
necessarily contained in the boundary. Note that $U$ is necessarily the preimage of some quasicompact open subspace $U_m\subset |\mathscr S^\tor_{K(p^m N),\mathbb Q_p}|$ for $m$ large enough, where $U\to U_m$ will be surjective, so that $U_m$ is contained in the boundary. But there are no open subspaces of $\mathscr S^\tor_{K(p^m N),\mathbb Q_p}$ contained in the boundary.
\end{proof}

\noindent We can now also describe the restriction of $\pi_{\HT}^\ast$ to the boundary of the minimal compactification $\cS^*_{K(p^\infty N)}$. We have a set-theoretic decomposition 
\[
\cS^*_{K(p^\infty N)} = \bigsqcup_{\widehat{Z}}\cS_{K(p^\infty N), \widehat{Z}}
\]
in terms of cusp labels at level $K(p^\infty N)$ on the generic fiber, defined by taking the inverse limit
of the corresponding decomposition on the level of schemes. In particular, any such $\widehat{Z}$ defines a filtration
\[
\mathrm{Z}_{p^\infty,-2}\subset \mathrm{Z}_{p^\infty,-1}\subset L\otimes_{\Z} \Z_p
\]
as above, and hence $L_Z=\mathrm{Gr}_{-1}^{Z_{p^\infty}}$. Let $\Fl_{\widehat{Z}}$ denote the flag variety (as an adic space over $\Q_p$) parametrizing totally isotropic $F$-linear subspaces of $L_Z\otimes \Q_p$; this embeds naturally into $\Fl$ (taking the preimage of a totally isotropic subspace of $L_Z\otimes \Q_p$ in $Z_{p^\infty,-1}\otimes \Q_p$, it defines a totally isotropic subspace of $L\otimes \Q_p$).

\begin{cor} The restriction of $\pi_{\HT}^*: \cS^*_{K(p^\infty N)}\to \Fl$ to $\cS_{K(p^\infty N), \widehat{Z}}$ 
agrees with the composition 
\[
\cS_{K(p^\infty N),\widehat{Z}}\to \Fl_{\widehat{Z}}\hookrightarrow \Fl,
\]
where the first morphism is the Hodge--Tate period morphism for the smaller Shimura variety
$\cS_{K(p^\infty N), \widehat{Z}}$. 
\end{cor}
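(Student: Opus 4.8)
The plan is to reduce this corollary to Theorem~\ref{thm:hodge tate boundary}, using the fact that the minimal compactification is the Stein factorization of the toroidal compactification. Concretely, the stratum $\cS_{K(p^\infty N),\widehat{Z}}$ is the image under $\pi_{K,\Sigma}$ (at infinite level) of the corresponding boundary stratum in $\cS^\tor_{K(p^\infty N)}$, and $\pi_{\HT}^*$ was \emph{defined} by precomposing $\pi_{\HT}^\tor$ with the projection $\cS^\tor_{K(p^\infty N)}\to \cS^*_{K(p^\infty N)}$. So it suffices to compute $\pi_{\HT}^\tor$ on the preimage of $\cS_{K(p^\infty N),\widehat{Z}}$, i.e.\ on the union of strata $\mathscr S^\tor_{K(p^\infty N),(Z,[\sigma])}$ lying over the given cusp, and to check that the answer factors through $\Fl_{\widehat Z}\hookrightarrow\Fl$ and matches the Hodge--Tate period morphism of the smaller Shimura variety $\mathscr S_{\widehat Z}$.

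First I would recall from Theorem~\ref{thm:hodge tate boundary} the explicit formula for $\pi_\HT^\tor$ on the formal boundary chart $(\widehat{\mathscr S}^\tor_{K(p^\infty N),Z,\Q_p})^\diamondsuit$: the totally isotropic subspace of $L\otimes\mathcal O$ it produces is the preimage, under $\mathrm Z_{p^\infty,-1}\otimes\mathcal O\hookrightarrow L\otimes\mathcal O$, of the Hodge--Tate filtration $\Lie\mathcal B(1)\otimes\mathcal O\hookrightarrow L_Z\otimes\mathcal O=\mathrm{Gr}^{\mathrm Z_{p^\infty}}_{-1}\otimes\mathcal O$ of the abelian part $\mathcal B$ of the Raynaud extension. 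I would then observe that this description is visibly independent of the toroidal chart data beyond the cusp $\widehat Z$ itself: the filtration $\mathrm Z_{p^\infty}$ of $L\otimes\Z_p$ is exactly the datum underlying the point of $(G/P)(\Q_p)$ recording $\widehat Z$, and $\mathcal B$ with its level structure is precisely the abelian variety classified by $\mathscr S_{\widehat Z}$ at infinite level. Hence the composite factors through the projection $(\widehat{\mathscr S}^\tor_{K(p^\infty N),Z,\Q_p})^\diamondsuit\to \cS_{K(p^\infty N),\widehat Z}$, and the resulting map $\cS_{K(p^\infty N),\widehat Z}\to\Fl$ has image in the sub-flag-variety $\Fl_{\widehat Z}$ of totally isotropic subspaces that are preimages of isotropic subspaces of $\mathrm{Gr}^{\mathrm Z_{p^\infty}}_{-1}$, which is canonically the flag variety for the group of the smaller Shimura datum. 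Unravelling the definition of the Hodge--Tate period morphism for $\mathscr S_{\widehat Z}$ (Theorem~\ref{thm:hodgetate} applied to the smaller datum, which is again one of our unitary data with $n$ replaced by $n-r$), one sees that it is given by exactly the same formula $\mathcal B\mapsto(\Lie\mathcal B(1)\subset L_Z\otimes\mathcal O)$, so the two agree.

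To organize this into a clean argument I would proceed as in the proof of Theorem~\ref{thm:hodge tate boundary}: the statement is about the underlying map of diamonds, and two maps to the (separated) diamond $\Fl$ that agree on a dense subset agree, so it is enough to identify both maps on the toroidal boundary charts, where everything is explicit. The passage from the toroidal chart to the minimal stratum is justified by Theorem~\ref{existence of toroidal compactifications}(6) (the equality $\pi_{K,\Sigma,*}\mathcal O^\tor=\mathcal O^*$ and $\pi_{K,\Sigma}^{-1}(\mathscr S_Z)=\mathscr S^\tor_{K,\Sigma,Z}$ set-theoretically) together with the explicit boundary description in Theorem~\ref{existence of toroidal compactifications}(3), which exhibits $\mathcal B$ over $C_Z\to\mathscr S_Z$; compatibility with the infinite-level limit is formal. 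The main obstacle, such as it is, is purely bookkeeping: one has to match the normalizations of the Hodge--Tate period morphism across the two Shimura data — in particular checking that the identification $\Fl_{\widehat Z}\cong \mathrm{Fl}(G_{\widehat Z},\mu_{\widehat Z})$ is the one induced by the boundary combinatorics, and that the cocharacter $\mu$ restricts correctly — and to be careful that the Raynaud-extension abelian part $\mathcal B$ is classified by the \emph{normalized} smaller Shimura variety $\mathscr S_{\widehat Z}$ with the correct (lifted) level-at-$\Delta_F$ structures, as in Definition~\ref{def:shimura variety}. None of this requires new ideas beyond Theorem~\ref{thm:hodge tate boundary}; it is a compatibility statement, so I would phrase the proof as a short deduction from that theorem and the construction of $\pi_\HT^*$.
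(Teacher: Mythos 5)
Your proposal is correct and follows the route the paper intends: the corollary is an immediate consequence of Theorem~\ref{thm:hodge tate boundary}, since the formula there for $\pi_\HT^\tor$ on the boundary charts depends only on the filtration $\mathrm{Z}_{p^\infty}$ and on $\mathcal B$ with its level structure, hence factors through the minimal stratum and visibly coincides with the Hodge--Tate period map of the smaller Shimura variety embedded via $\Fl_{\widehat Z}\hookrightarrow\Fl$, using $\pi_\HT^\tor=\pi_\HT^\ast\circ\mathrm{pr}$ and surjectivity of the projection on strata. One small slip: in the paper it is $\pi_\HT^\tor$ that is defined as $\pi_\HT^\ast$ precomposed with the projection (not the other way around, as you wrote), but your argument only uses this relation, so nothing is affected.
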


\subsection{Construction of the map}

In this subsection, fix a complete algebraically closed nonarchimedean field $C$ over $\mathbb Q_p$ with ring of
integers $\cO_C$ and residue field $k$, and fix a point $x\in \Fl(C)$ of the flag variety. This corresponds to a pair $(\mathbb X_{\cO_C},\alpha)$ consisting of a $p$-divisible group $\mathbb X_{\cO_C}$ with $G$-structure over $\cO_C$ and a trivialization $\alpha: T_p(\mathbb X_{\cO_C})\cong L\otimes_{\Z}\Z_p$ compatible with $G$-structures.

Note that the filtration $\mathbb X_{\cO_C}^\mu\subset \mathbb X_{\cO_C}^\circ\subset \mathbb X_{\cO_C}$ induces by transport of structure via $\alpha$ a symplectic $\cO_F$-linear filtration of $L\otimes_{\Z} \Z_p$; we fix a symplectic splitting $\delta$ of this filtration. Transporting it back via $\alpha$, this induces an $\mathcal O_F$-linear splitting
\[
\delta_{\mathbb X_{\cO_C}}: \mathbb X_{\cO_C}\cong \mathbb X_{\cO_C}^\mu\oplus \mathbb X_{\cO_C}^{(0,1)}\oplus \mathbb X_{\cO_C}^{\et}
\]
under which the polarization similarly decomposes into a direct sum.

Let $\mathbb X=\mathbb X_k$ be the special fiber of $\mathbb X_{\cO_C}$, which comes with an induced splitting $\delta_{\mathbb X}$, and fix a section $k\to \cO_C/p$.

\begin{prop}\label{prop:trivial mod epsilon} There exists $\epsilon \in \Q$, $1\geq \epsilon>0$ such that there exists an isomorphism 
\[
\rho: \mathbb{X}\times_k \cO_C/p^{\epsilon} \cong \mathbb X_{\cO_C}\times_{\cO_C}\cO_C/p^\epsilon 
\]
of $p$-divisible groups with $G$-structures, lifting the identity. Moreover, we can arrange that $\delta_{\mathbb X_{\cO_C}}\times_{\cO_C} \cO_C/p^\epsilon = \delta_{\mathbb X}\times_k \cO_C/p^\epsilon$.
\end{prop}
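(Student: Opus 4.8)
The plan is to deduce this from the deformation theory of $p$-divisible groups, in the form of a Grothendieck--Messing or Serre--Tate argument over the Artinian truncations $\cO_C/p^\epsilon$, combined with the fact that $\mathbb{X}_{\cO_C}$ and $\mathbb{X}_k$ have the same special fibre by construction. First I would observe that the two $p$-divisible groups with $G$-structure $\mathbb{X}_k\times_k \cO_C/p^\epsilon$ and $\mathbb{X}_{\cO_C}\times_{\cO_C}\cO_C/p^\epsilon$ agree modulo $p$: the residue field section $k\to \cO_C/p$ identifies $\mathbb{X}_k\times_k (\cO_C/p)$ with $\mathbb{X}_{\cO_C}\times_{\cO_C}(\cO_C/p)$ compatibly with the $G$-structure, and the canonical lift of the perfect scheme $\mathfrak{Ig}^{\mathbb{X}}$ carries with it the canonical ($W(k)$- and hence $\cO_C$-)lift $\mathbb{X}_k\times_k \cO_C/p^\epsilon$ of $\mathbb{X}_k$ over the residue field. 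So the question is whether the identity isomorphism over $\cO_C/p$ deforms to an isomorphism over $\cO_C/p^\epsilon$ for some small $\epsilon>0$.

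For the existence of $\rho$ itself (without the $\delta$-compatibility), I would argue as follows. Lifts of a $p$-divisible group from $\cO_C/p$ to $\cO_C/p^\epsilon$ are governed, via crystalline Dieudonn\'e theory / Grothendieck--Messing, by lifts of the Hodge filtration inside the evaluation of the Dieudonn\'e crystal; the groupoid of such lifts, together with morphisms, is non-empty and the automorphisms of any lift deforming the identity form a (pro-)unipotent group killed by a fixed power of $p$ depending only on the ramification of $\cO_C/p$. Concretely one can pick any isomorphism of the Dieudonn\'e modules lifting the identity, transport it to an isogeny $\rho_0:\mathbb{X}_k\times_k \cO_C/p^\epsilon \to \mathbb{X}_{\cO_C}\times_{\cO_C}\cO_C/p^\epsilon$ whose reduction mod $p$ is the identity, and note that such $\rho_0$ is automatically an isomorphism after possibly shrinking $\epsilon$ (its kernel is a finite group scheme trivial mod $p$, hence trivial for $\epsilon$ small). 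That $\rho_0$ can be taken $\cO_F$-linear and to respect the polarization up to the given identifications follows from the rigidity of the $G$-structure: the $\cO_F$-action is a lift of the one mod $p$ and the space of such lifts is again governed by a unipotent deformation problem, and similarly for the polarization using that $\lambda$ is principal; alternatively one invokes the Rapoport--Zink-space argument already used in the proof of Theorem~\ref{thm:construction of Igusa covers} (via Lemma~\ref{lem:finite subset of RZ} and \cite[Lemma 4.3.15]{caraiani-scholze}) to promote the bare isogeny to a $G$-equivariant isomorphism over a small thickening.

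Finally, for the compatibility with the splittings, I would use that both $\delta_{\mathbb{X}_{\cO_C}}$ and $\delta_{\mathbb{X}}$ restrict mod $p$ to \emph{the same} splitting of the filtration $\mathbb{X}^\mu\subset\mathbb{X}^\circ\subset\mathbb{X}$: indeed $\delta_{\mathbb{X}_{\cO_C}}$ was obtained from the fixed symplectic splitting $\delta$ of $L\otimes\Z_p$ via $\alpha$, and $\delta_{\mathbb{X}}$ is its reduction by construction, so $\delta_{\mathbb{X}_{\cO_C}}\times_{\cO_C}(\cO_C/p)=\delta_{\mathbb{X}}\times_k(\cO_C/p)$ under the identification above. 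Over a perfect field the filtration splits uniquely, but over $\cO_C/p^\epsilon$ two splittings differing trivially mod $p$ differ by an element of $\Hom$ between the graded pieces valued in the nilpotent ideal $(p)/(p^\epsilon)$; replacing $\rho$ by its composition with a unique such correction (which exists and is $\cO_F$-linear and symplectic because the ambient filtration is $\cO_F$-stable and symplectic) we may arrange $\rho$ to carry $\delta_{\mathbb{X}}$ to $\delta_{\mathbb{X}_{\cO_C}}$ mod $p^\epsilon$, after possibly shrinking $\epsilon$ once more. I expect the main obstacle to be purely bookkeeping: keeping the ramification-dependent constant $\epsilon$ uniform through the successive "shrink $\epsilon$" steps (isomorphy of the isogeny, $\cO_F$-linearity, polarization, splitting) and making sure each deformation problem is the pro-unipotent one with a bound independent of the later choices; none of the individual steps is deep, but the statement is only asserting \emph{some} $\epsilon>0$, so it suffices to take the minimum of finitely many such bounds.
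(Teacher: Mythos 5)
Your proposal has a fundamental error in the first paragraph that, if true, would trivialize the proposition. You claim that the residue field section $k\to \cO_C/p$ identifies $\mathbb{X}_k\times_k (\cO_C/p)$ with $\mathbb{X}_{\cO_C}\times_{\cO_C}(\cO_C/p)$. This is false: the section lets you base-change $\mathbb{X}_k$ to a \emph{constant} $p$-divisible group over $\cO_C/p$, while $\mathbb{X}_{\cO_C}\times_{\cO_C}(\cO_C/p)$ is the reduction of a genuinely varying deformation, and these are in general \emph{not} isomorphic over $\cO_C/p$. (If they were, you could take $\epsilon = 1$ and there would be nothing to prove.) The whole content of the proposition is that the deformation becomes constant only after truncating to $\cO_C/p^\epsilon$ for some small $\epsilon$. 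Compounding this, you then speak of ``lifting from $\cO_C/p$ to $\cO_C/p^\epsilon$'', but for $\epsilon\leq 1$ the ring $\cO_C/p^\epsilon$ is a \emph{quotient} of $\cO_C/p$, not a thickening, so there is no lifting problem of the kind you describe. This is not a bookkeeping issue; the direction is backwards and the premise is wrong.

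Beyond this, the core ingredient of the actual argument is missing. If you tried instead to deform from the special fibre $k$ upward, you would face the obstruction that the maximal ideal of $\cO_C/p^\epsilon$ is not nilpotent (it is idempotent), so Drinfeld/Grothendieck--Messing rigidity does not apply directly, and your ``fixed power of $p$ depending only on the ramification of $\cO_C/p$'' does not exist since $\cO_C$ has infinite ramification. What the paper does is pass to crystalline Dieudonn\'e theory over $A_{\mathrm{crys}}(\cO_C/p)$ and invoke Scholze--Weinstein's classification of $p$-divisible groups over $\cO_C/p$ up to isogeny in terms of $\phi$-modules over $B^+_{\mathrm{crys}}$; the key nontrivial step is then Fargues' theorem that $G$-bundles on the Fargues--Fontaine curve (equivalently, $G$-$\phi$-modules over $B^+_{\mathrm{crys}}$) are classified by their restriction to $W(k)[1/p]$, i.e.\ by the Kottwitz set. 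This is precisely what lets one produce an isomorphism of $G$-Dieudonn\'e modules over $B^+_{\mathrm{crys}}$ lifting the identity over $W(k)[1/p]$ -- the step you glide over with ``pick any isomorphism of the Dieudonn\'e modules lifting the identity''. Over $W(k)$ the Dieudonn\'e modules trivially agree, but over $A_{\mathrm{crys}}(\cO_C/p)$ they need not, and no deformation-theoretic argument replaces the $G$-bundle classification. The resulting quasi-isogeny over $\cO_C/p$ is then an isomorphism over $k$, hence over $\cO_C/p^\epsilon$ for small $\epsilon$ by finite presentation of its (co)kernel. Your treatment of the $\delta$-compatibility also differs from the paper's (which applies the same argument summand-by-summand and sums), but that is a secondary discrepancy compared with the issues above.
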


\begin{proof} Consider the $p$-divisible group $X:=\mathbb X_{\cO_C}\times_{\cO_C}\cO_C/p$ equipped with 
the induced extra structures. Its associated Dieudonn\'e module is a finite projective $A_{\mathrm{crys}}$-module equipped with a Frobenius; let $(M,\phi)$ be the corresponding $\phi$-module over $B_{\mathrm{crys}}^+ = A_{\mathrm{crys}}[\tfrac 1p]$. Let $(M_0,\phi_0)$ be similarly the Dieudonn\'e module associated with $\mathbb X\times_k \cO_C/p$. By \cite[Theorem A]{scholze-weinstein}, it suffices to find an isomorphism $(M,\phi)\cong (M_0,\phi_0)$ of $G$-Dieudonn\'e modules reducing to the identity on $W(k)[\tfrac 1p]$ (indeed, then the associated quasi-isogeny will be an isomorphism over $k$, and thus over $\cO_C/p^\epsilon$ for $\epsilon$ small enough). It is in fact enough to find any such isomorphism, as we can always change it by an automorphism of $(M_0,\phi_0)$.

Now \cite[Th\'eor\`eme 5.1]{farguesGbun} classifies $G$-Dieudonn\'e modules over $B_{\mathrm{crys}}^+$, as they are equivalent to $G$-bundles on the Fargues-Fontaine curve, and in particular shows that they are determined up to isomorphism by their restriction to $W(k)[\tfrac 1p]$, cf.~\cite[Th\'eor\`eme 5.6]{farguesGbun}.

For the final statement, note that we can also apply the argument individually to all three summands (identifying the outer ones as Cartier duals) and then pass to the direct sum.
\end{proof}

We consider the perfect toroidally compactified Igusa variety $\mathfrak{Ig}^{\mathbb X,\tor}$ associated with $\mathbb X$. By construction, this comes with a map
\[
f^{\tor}:\mathfrak{Ig}^{\mathbb X,\tor}\to \mathscr{S}^{\tor}
\]
which factors through the partial toroidal compactification $\mathscr{C}^{\mathbb X,\tor}$ of the leaf corresponding to $\mathbb{X}$.
  
We can base change everything from $k$ to $\cO_C/p^\epsilon$ and express everything in terms of $\mathbb X_{\cO_C}\times_{\cO_C}\cO_C/p^\epsilon$ using $\rho$. Our goal in this section is to lift the morphism 
 \[
f^{\tor}_\epsilon: \mathfrak{Ig}^{\mathbb X,\tor}_{\cO_C/p^\epsilon}\to 
\mathscr{S}^{\tor}_{\cO_C/p^\epsilon}
 \]
to a morphism of $p$-adic formal schemes. We first define the formal schemes:
we set
\[
\mathfrak{Ig}^{\mathbb X,\tor}_{\cO_C}: = W(\mathfrak{Ig}^{\mathbb X,\tor})\times_{W(k)}\cO_C,
\]
where $W(\mathfrak{Ig}^{\mathbb X,\tor})$ is the $p$-adic formal scheme over $\Spf W(k)$ obtained by taking an affine cover of $\mathfrak{Ig}^{\mathbb X,\tor}$, applying Witt vectors, and gluing. This is also the unique flat formal lift of $\mathfrak{Ig}^{\mathbb X,\tor}_{\cO_C/p^\epsilon}$ to $\cO_C$. On the other hand, we have the $p$-adic formal scheme $\mathscr{S}^\tor_{\cO_C}$, and we want to construct a map
\[
f^\tor_{\cO_C}: \mathfrak{Ig}^{\mathbb X,\tor}_{\cO_C}\to \mathscr S^\tor_{\cO_C},
\]
and in fact a lift to
\[
g^\tor: \mathfrak{Ig}^{\mathbb X,\tor}_{\cO_C}\to \mathscr S^\tor_{K(p^\infty N),\cO_C}.
\]

Let us recall the construction away from the boundary: Over $\mathfrak{Ig}^{\mathbb X}_{\cO_C/p^\epsilon}$, we have an abelian scheme 
$\cA_{\cO_C/p^\epsilon}$ with extra structures together with an isomorphism of $p$-divisible groups with $G$-structures
\begin{equation}\label{eq:isom of p-divisible groups}
\cA_{\cO_C/p^\epsilon}[p^\infty]\toisom \mathbb X_{\cO_C}\times_{\cO_C} \mathfrak{Ig}^{\mathbb X}_{\cO_C/p^\epsilon}.
\end{equation}
By Serre-Tate theory, the abelian scheme $\cA_{\cO_C/p^\epsilon}$ with $G$-structure lifts 
uniquely to an abelian scheme $\cA_{\cO_C}$ with $G$-structure over $\mathfrak{Ig}^{\mathbb X}_{\cO_C}$ equipped with an isomorphism
\[
\cA_{\cO_C}[p^\infty]\toisom \mathbb X_{\cO_C}\times_{\cO_C}\mathfrak{Ig}^{\mathbb X}_{\cO_C}
\]
of $p$-divisible groups with $G$-structures. This induces the morphism of formal schemes
\[
f: \mathfrak{Ig}^{\mathbb X}_{\cO_C}\to \mathscr{S}_{\cO_C}, 
\]
such that $\cA_{\cO_C}$ can be identified with the pullback under $f$ of the universal abelian scheme over $\mathscr{S}_{\cO_C}$. Now the composition of ($T_p$ applied to) the isomorphism $\cA_{\cO_C}[p^\infty]\cong \mathbb X_{\cO_C}$ with $\alpha: T_p(\mathbb X_{\cO_C})\cong L\otimes_{\Z} \Z_p$ gives a full level structure on the generic fibre of $\cA_{\cO_C}$, and thus (via the definition of integral models via normalization and the normality of Igusa varieties) a lift to a map
\[
g: \mathfrak{Ig}^{\mathbb X}_{\cO_C}\to \mathscr{S}_{K(p^\infty N),\cO_C}.
\]
Our goal now is to extend this to toroidal compactifications. 

\begin{thm}\label{thm:lift to infinite level} The morphism $g$ constructed above extends to a morphism of $p$-adic formal schemes
\[
g^{\tor}: \mathfrak{Ig}^{\mathbb X,\tor}_{\cO_C}\to \mathscr{S}^{\tor}_{K(p^\infty N),\cO_C}.
\]
\end{thm}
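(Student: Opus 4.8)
The plan is to construct $g^{\tor}$ by glueing the interior map $g$ with maps defined on the formal completions of $\mathfrak{Ig}^{\mathbb X,\tor}_{\cO_C}$ along its boundary strata. Since $\mathscr{S}^{\tor}_{K(p^\infty N),\cO_C}=\varprojlim_m \mathscr{S}^{\tor}_{K(p^m N),\cO_C}$ as $p$-adic formal schemes, it suffices to produce, for each $m\geq 1$, a morphism $g^{\tor}_m\colon \mathfrak{Ig}^{\mathbb X,\tor}_{\cO_C}\to \mathscr{S}^{\tor}_{K(p^m N),\cO_C}$ compatible with the transition maps; over the dense open $\mathfrak{Ig}^{\mathbb X}_{\cO_C}$ the required map is the composite of $g$ with the projection to finite level $K(p^m N)$, so the whole content is to extend it across the toroidal boundary, compatibly and in a way that assembles into a single morphism. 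Uniqueness of the resulting $g^{\tor}$ is then automatic, since $\mathfrak{Ig}^{\mathbb X,\tor}_{\cO_C}$ is normal, the target is separated, and the two maps already agree on a dense open.

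Fix a cusp label $Z$ at level $K(N)$. On the formal completion $\widehat{\mathfrak{Ig}}^{\mathbb X,\tor}_{Z,\cO_C}$ I would use the explicit description of Theorem~\ref{thm:formal completions perfect Igusa}: it is the canonical $\cO_C$-lift of $\mathfrak{Y}_{Z,\Sigma_Z}/\Gamma_Z$, where $\mathfrak{Y}_{Z,\Sigma_Z}$ is the completion along the toroidal boundary of the torus embedding $\Xi^{\mathfrak{Ig},\mathbb X}_{Z,\Sigma_Z}$ lying over the base $C^{\mathfrak{Ig},\mathbb X}_Z$ of Proposition~\ref{base of perfect torus torsor}. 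Zariski-locally one is thus handed, over a perfect scheme in characteristic $p$: a point $(\mathcal B,\iota,\lambda,\eta)$ of the leaf $\mathscr{C}^{\mathbb X_Z}_Z\subset\mathscr{S}_Z$, a Raynaud extension $0\to T\to \mathcal G\to \mathcal B\to 0$ with $T$ the split torus of cocharacter group $X$, an $\cO_F$-linear embedding $\rho\colon \mathcal G[p^\infty]\hookrightarrow \mathbb X$ compatible with polarizations and inducing the filtration, and (Proposition~\ref{perfect torus torsor}) a symmetric lift $\tilde f\colon X[\tfrac1p]\to \mathcal G$ of the map attached to the extension. I would lift all of this to the canonical $\cO_C$-deformation: $\mathcal B$ lifts uniquely by Serre--Tate (Theorem~\ref{thm:Serre-Tate}), using Proposition~\ref{prop:trivial mod epsilon} applied to $\mathbb X_Z$ to trivialise $\mathcal B[p^\infty]$ modulo $p^\epsilon$; the extension $\mathcal G$ and the relevant quasi-isogeny data lift by Theorems~\ref{thm:rigidity of quasi-isogenies} and~\ref{thm:Serre-Tate} (rigidity of tori and of quasi-isogenies), and $\tilde f$ lifts along with them. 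Over the $\cO_C$-lift of $\mathfrak{Y}_{Z,\Sigma_Z}$ I would then run Mumford's construction of degenerating abelian varieties — exactly as in the construction of $\mathscr{S}^{\tor}$ itself (Theorem~\ref{existence of toroidal compactifications}, following Lan) — on the triple $(\mathcal B,\mathcal G,\tilde f)$ to obtain a semi-abelian scheme $\mathcal A_{\cO_C}$, and hence, by the very construction of the naive integral model, a morphism to $\mathscr{S}^{\tor}_{K(N),\cO_C}$.

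The remaining, more delicate, point is the $p$-power level structure over the boundary, where $\mathcal A[p^\infty]$ is only quasi-finite flat; here I would follow the description of the boundary charts of $\mathscr{S}^{\tor}_{K(p^m N)}$ recalled in Theorem~\ref{existence of toroidal compactifications} (and used in Theorem~\ref{thm:tor Igusa strata}). Concretely, the datum lifting the $K(N)$-point to a $K(p^m N)$-point unwinds into: a splitting of $0\to T[p^m]\to \mathcal G[p^m]\to \mathcal B[p^m]\to 0$, a polarizable lift of $f_0$ at level $p^m$ — both furnished by $\tilde f$ together with a fixed symplectic splitting $\delta$ of the filtration of $\mathbb X$, transported through $\rho$ — and the level-$p^m$ structure on $\mathcal B$, which I would obtain from the identification $\mathcal B[p^\infty]\cong \mathrm{Gr}^{\mathrm Z_{\mathbb X}}_{-1}$ coming from $\rho$, composed with $\alpha\colon T_p(\mathbb X_{\cO_C})\cong L\otimes\Z_p$. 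Assembling these compatibly in $m$ produces the lift of the local map all the way to $\mathscr{S}^{\tor}_{K(p^\infty N),\cO_C}$. One then checks $\Gamma_Z$- (resp.\ $\Gamma_{\tilde Z}$-) equivariance, so that the local maps descend to $\widehat{\mathfrak{Ig}}^{\mathbb X,\tor}_{Z,\cO_C}$, and that they glue with one another over the overlapping charts $\mathfrak{X}^\circ_\sigma$ and with the interior map $g^{\tor}_m$; since every chart, and the interior, arises from the same degeneration recipe, this reduces to unwinding the identifications and invoking the glueing formalism already underlying Theorem~\ref{existence of toroidal compactifications} (compatible data on strata and on the open part glue to a morphism into $\mathscr{S}^{\tor}$). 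Passing to the limit over $m$ yields $g^{\tor}$.

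I expect the main obstacle to be exactly these last two steps: matching the $p$-power level datum of the naive integral model over the boundary — where one must argue with $\mathcal A[p^\infty]^\circ$, its multiplicative part, the splitting $\delta$, and Cartier duality, rather than with an honest $p$-divisible group — to the Igusa-theoretic data on $\widehat{\mathfrak{Ig}}^{\mathbb X,\tor}_{Z,\cO_C}$, and then verifying equivariance and glueing with enough care that the local pieces genuinely assemble into one morphism of $p$-adic formal schemes. The purely deformation-theoretic lifting of the degeneration data from characteristic $p$ to $\cO_C$ should, by contrast, be routine given the Serre--Tate results already in place.
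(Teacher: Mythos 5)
Your outline up to the boundary-chart construction matches the paper's: use the description of $\widehat{\mathfrak{Ig}}^{\mathbb X,\tor}_{Z}$ from Theorem~\ref{thm:formal completions perfect Igusa} and Proposition~\ref{base of perfect torus torsor}, lift the Raynaud extension and the embedding $\rho\colon \cG[p^\infty]\hookrightarrow \mathbb X$ to $\cO_C$ (the étale quotient makes the lift of $\rho$ unique, Serre--Tate handles $\cB$ and $\cG$), describe the lifted torus torsor via symmetric lifts of $\tilde f_0$, and produce the $K(p^mN)$-level data from $\rho_{\cO_C}$, $\alpha$ and the fixed splitting $\delta$. Two points, however, need repair. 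First, a secondary one: producing a semi-abelian scheme $\cA_{\cO_C}$ by Mumford's construction does not ``by the very construction of the naive integral model'' give a morphism to $\mathscr S^{\tor}_{K(N),\cO_C}$ — the toroidal compactification has no modular interpretation at the boundary, so the map must be written down chart-by-chart, i.e.\ by specifying a point of $C_{\tilde Z}$ (splitting of $0\to T[p^m]\to \cG_{\cO_C}[p^m]\to \cB_{\cO_C}[p^m]\to 0$ plus level structure on $\cB_{\cO_C}$) and a point of the torus torsor (a lift $f_m\colon \tfrac1{p^m}X\to\cG_{\cO_C}$), exactly as in the description of $\widehat{\mathscr S}^{\tor}_{K(p^mN),\tilde Z}$; your later paragraph on level structures in effect does this, so the Mumford detour is unnecessary and, as stated, unjustified.

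The genuine gap is the gluing with the interior map $g$, which you defer to ``unwinding the identifications and invoking the glueing formalism underlying Theorem~\ref{existence of toroidal compactifications}.'' That formalism glues the charts of $\mathscr S^{\tor}$ among themselves; it says nothing about why a map defined on a boundary chart of the \emph{Igusa} variety via lifted degeneration data agrees, on the open overlap $U_R$, with the map $g$ defined via Serre--Tate lifting of the universal abelian scheme. These are two a priori different lifts, and their agreement is the actual content of the gluing. The paper's argument here is specific: one first shows that the boundary-chart map restricted to $U_R$ also factors through $\mathfrak{Ig}^{\mathbb X}_{\cO_C/p^M}$ (using the splitting $\delta$ to upgrade $A[p^\infty]^\circ\cong\mathbb X_{\cO_C}^\circ$ to an isomorphism with $\mathbb X_{\cO_C}$), and then compares the two resulting maps $U_R\to\mathfrak{Ig}^{\mathbb X}_{\cO_C/p^M}$ by noting that all spaces involved are flat lifts of relatively perfect schemes over $\cO_C/p$, so the cotangent complex vanishes and it suffices to check agreement modulo $p^\epsilon$ — where everything is base-changed from $k$ because, by Proposition~\ref{prop:trivial mod epsilon}, the splitting $\delta_{\mathbb X_{\cO_C}}$ (and hence all the lifted data) is constant modulo $p^\epsilon$. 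The same constancy modulo $p^\epsilon$ is also what legitimizes your identification of the lifted torus torsor with the one of Proposition~\ref{perfect torus torsor}. Without an argument of this kind (cotangent-complex rigidity plus reduction to $\cO_C/p^\epsilon$, or some equivalent uniqueness-of-lifts statement), the local maps are not known to assemble into a single morphism, so the proof as proposed is incomplete at its decisive step.
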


\begin{proof} By schematic density of the open part, the map is necessarily unique; to prove its existence, we can argue on formal completions along boundary strata. Fix a cusp label $Z=(\mathrm Z_{N},X)$ at level $N$.

First, we recall the description of
\[
\widehat{\mathfrak{Ig}}^{\mathbb X,\tor}_Z
\]
from Theorem~\ref{thm:formal completions perfect Igusa}. By Proposition~\ref{base of perfect torus torsor}, over $C^{\mathfrak{Ig},\mathbb X}_Z$, we have a Raynaud extension
\[
0\to T\to \cG\to \cB\to 0
\]
together with an $\cO_F$-linear embedding
\[
\rho: \cG[p^\infty]\hookrightarrow \mathbb X
\]
with \'etale quotient, and that is compatible with polarizations.

Now we lift these structures to $\cO_C$. By base change, everything exists over $\cO_C/p^\epsilon$. The injection $\cG[p^\infty]\hookrightarrow \mathbb X_b$ deforms uniquely to an injection $\cG[p^\infty]_{\cO_C}\hookrightarrow \mathbb X_{\cO_C}$ as the quotient is \'etale. By Serre--Tate theory, this induces a unique deformation of the Raynaud extension compatibly with the deformation of $\cG[p^\infty]$, yielding a Raynaud extension
\[
0\to T\to \cG_{\cO_C}\to \cB_{\cO_C}\to 0
\]
over $C^{\mathfrak{Ig},\mathbb X}_{Z,\cO_C}$ with an injection $\rho_{\cO_C}: \cG_{\cO_C}[p^\infty]\hookrightarrow \mathbb X_{\cO_C}$, which is $\mathcal O_F$-linear and compatible with polarizations after taking the quotient by the isotropic subspace $T[p^\infty]$.

We also need to describe the lift of the $\mathbb S_{Z,\perf}$-torsor
\[
\Xi^{\mathfrak{Ig},\mathbb X}_Z\to C^{\mathfrak{Ig},\mathbb X}_Z.
\]
To do this, we fix a symplectic $\cO_F$-linear splitting $\delta_{\mathbb X_{\cO_C},Z}$ of the filtration $\mathrm Z_{\mathbb X_{\cO_C}}$ of $\mathbb X_{\cO_C}$ over $C^{\mathfrak{Ig},\mathbb X}_{Z,\cO_C}$ given by
\[
T[p^\infty]\subset \cG_{\cO_C}[p^\infty]\subset \mathbb X_{\cO_C},
\]
compatible with the splitting $\delta_{\mathbb X_{\cO_C}}$ chosen above. Equivalently, we only have to choose a splitting on \'etale parts (it will induce a similar splitting on multiplicative parts, which together with the splitting $\delta_{\mathbb X}$ induces the desired splitting). Note that we automatically have
\[
\delta_{\mathbb X_{\cO_C},Z}\times_{\cO_C} \cO_C/p^\epsilon = (\delta_{\mathbb X_{\cO_C},Z}\times_{\cO_C} k)\times_k \cO_C/p^\epsilon
\]
as this holds for $\delta_{\mathbb X_{\cO_C}}$ by Proposition~\ref{prop:trivial mod epsilon}, and splittings of the \'etale parts are rigid.

Using the splitting $\delta_{\mathbb X_{\cO_C},Z}$ and $\rho_{\cO_C}$, we get a splitting of
\[
0\to T[p^\infty]\to \cG_{\cO_C}[p^\infty]\to \cB_{\cO_C}[p^\infty]\to 0
\]
and thus a map $\tilde{f}_0: X[\tfrac 1p]\to \cB_{\cO_C}$. Then the lift
\[
\Xi^{\mathfrak{Ig},\mathbb X}_{Z,\cO_C}\to C^{\mathfrak{Ig},\mathbb X}_{Z,\cO_C}
\]
of the $\mathbb S_{Z,\perf}$-torsor
\[
\Xi^{\mathfrak{Ig},\mathbb X}_Z\to C^{\mathfrak{Ig},\mathbb X}_Z
\]
parametrizes symmetric lifts $\tilde{f}: X[\tfrac 1p]\to \cG_{\cO_C}$ of $\tilde{f}_0$. Indeed, this describes a flat lift of this relatively perfect scheme, and over $\cO_C/p^\epsilon$, it agrees with the description from Proposition~\ref{perfect torus torsor}, using critically that $\delta_{\mathbb X_{\cO_C}}$ is constant modulo $\cO_C/p^\epsilon$.

Now fix any $m\geq 0$. Note that $\widehat{\mathfrak{Ig}}^{b,\tor}_Z$ decomposes into open and closed subsets $\widehat{\mathfrak{Ig}}^{b,\tor}_{\tilde{Z}}$ according to Igusa cusp labels $\tilde{Z}$ of level $\Gamma_b(p^m)K^p(N)$; concretely, this is given by the symplectic $\mathcal O_F$-stable filtration $T[p^m]\subset \mathrm{Z}_{b,-1}[p^m]\subset \mathbb X[p^m]$ above. Note that $\mathbb X_{\cO_C}$ and the isomorphism $\mathbb X_{\cO_C}[p^m](C)\cong L/p^m$ induce a map from Igusa cusp labels to usual cusp labels. For an Igusa cusp label $\tilde{Z}$ of level $\Gamma_b(p^m) K^p(N)$, let by abuse of notation $\tilde{Z}$ also denote the corresponding usual cusp label of level $K(p^m N)$. We want to construct maps
\[
\widehat{\mathfrak{Ig}}^{b,\tor}_{\tilde{Z}, \cO_C}\to \widehat{\mathscr S}^\tor_{K(p^m N),\tilde{Z},\cO_C}.
\]
For this, we want to use the explicit description of the right-hand side, and so choose a symplectic splitting $L/p^m\cong\bigoplus_{i=-2}^0 \mathrm{Gr}^{\mathrm{Z}_{p^m}}_i$ compatible with splitting $\delta$ chosen in the beginning of this section. (Also choose a similar splitting on $L/N$; we will in the following ignore the discussion of level structures away from $p$ as they are identical in the two setups.) By transfer of structure along $\mathbb X_{\cO_C}[p^m](C)\cong L/p^m$, this induces a splitting $\mathrm{Gr}^{\mathrm{Z}_{m,b}}_{-1}\to \mathrm{Z}_{m,b,-1}$, which then induces a splitting of
\[
0\to T[p^m]\to \cG_{\cO_C}[p^m]\to \cB_{\cO_C}[p^m]\to 0.
\]
This gives a map to the abelian variety $C_{\tilde{Z}}$ in the description of $\widehat{\mathscr S}^\tor_{K(p^m N),\tilde{Z}}$. Moreover, this splitting also induces a lift of the map $f_0: X\to \cB_{\cO_C}$ corresponding to the Raynaud extension to a map $f_m: \tfrac 1{p^m} X\to \cB_{\cO_C}$. By the description of the torus torsor
\[
\Xi^{\mathfrak{Ig},\mathbb X}_{Z,\cO_C}\to C^{\mathfrak{Ig},\mathbb X}_{Z,\cO_C}
\]
above, we get a map from $\Xi^{\mathfrak{Ig},\mathbb X}_{Z,\cO_C}$ to the torsor of lifts of $f_m$ to $\cG_{\cO_C}$ over $C_{\tilde{Z}}$, and then also a map of torus compactifications. The given structures also evidently define a pre-level structure on $\cB_{\cO_C}[p^m]$ (as in the definition of $\mathscr S_K^{\mathrm{pre}}$) but then by normality of $\widehat{\mathfrak{Ig}}^{b,\tor}_{Z,\cO_C}$ a level structure. In the inverse limit, these structures define a map
\[
\widehat{\mathfrak{Ig}}^{b,\tor}_{Z,\cO_C}\to \widehat{\mathscr{S}}^{\tor}_{K(p^\infty N),Z,\cO_C},
\]
as desired.

It remains to show that these maps on toroidal boundary charts actually extend the map defined on the open stratum. We argue modulo any power $p^M$ of $p$. Take any small enough
\[
\Spf R\subset \widehat{\mathfrak{Ig}}^{b,\tor}_{Z,\cO_C/p^M}
\]
and let $U_R\subset \Spec R$ be the complement of the boundary. We get two induced maps $U_R\to \mathscr S_{K(p^\infty N),\cO_C/p^M}$: One via the map on toroidal boundary charts, the other by mapping $U_R\to \mathfrak{Ig}^b_{\cO_C/p^M}$ to the open Igusa variety first, and then using the map $g$ above. We need to see that the two maps agree. Note that the first map also factors naturally over $\mathfrak{Ig}^b_{\cO_C/p^M}\to \mathscr S_{K(p^\infty N),\cO_C/p^M}$: Indeed, the abelian variety $A$ over $U_R$ will come with an isomorphism $A[p^\infty]^\circ\cong \mathbb X_{\cO_C}^\circ$ which is $\cO_F$-linear and compatible with polarizations after taking the quotient by the multiplicative part; moreover, the chosen splitting $\delta$ (and thus $\delta_{\mathbb X_{\cO_C}}$) induces an $\cO_F$-linear symplectic splitting
\[
A[p^\infty]\cong A[p^\infty]^\mu\oplus A[p^\infty]^{(0,1)}\oplus A[p^\infty]^\et.
\]
Now this comes with the desired isomorphism to $\mathbb X_{\cO_C}$, giving the lift $U_R\to \mathfrak{Ig}^b_{\cO_C/p^M}$ of the first map $U_R\to \mathscr S_{K(p^\infty N),\cO_C/p^M}$. At this point, we have to show that two maps $U_R\to \mathfrak{Ig}^b_{\cO_C/p^M}$ agree, where we note that all schemes are flat lifts of schemes that are relatively perfect over $\cO_C/p$. As thus the cotangent complex vanishes, it suffices to see that these maps agree over $\cO_C/p^\epsilon$. But in that case, everything is simply the base change from $k$, so we get the desired result.
\end{proof}

\subsection{The fibers of the toroidal compactification}

\begin{thm}\label{thm:tor fibers of pi HT} The morphism $g^{\tor}$ constructed in Theorem~\ref{thm:lift to infinite level} induces a map of diamonds
\[
\mathfrak{Ig}^{b,\tor}_{C}\to (\pi_\HT^\tor)^{-1}(x)
\]
that is an open immersion with the same rank $1$ points. As the target is partially proper over $\Spa(C,\cO_C)$, this implies that it is the canonical compactification of $\mathfrak{Ig}^{b,\tor}_C$.
\end{thm}

\begin{proof} By Lemma~\ref{check isomorphism} below, it is enough to check that it induces a bijection on $(C,\cO_C)$-valued points (for all possibly larger $C$). Note that on both sides, the $C$-valued points decompose according to Igusa cusp labels of level $K^p(N)$ (using Theorem~\ref{thm:hodge tate boundary} for the right-hand side) via looking at the induced $k$-valued point, and the map is by construction compatible with this decomposition. Thus, fix an Igusa cusp label $\tilde{Z}$ of level $K^p(N)$. In particular, we fix a symplectic $\cO_F$-stable filtration
\[
\mathrm Z_{b,-2}\subset \mathrm{Z}_{b,-1}\subset \mathbb X_{\cO_C}
\]
with $\mathrm{Z}_{b,-2}\cong \Hom(X,\mu_{p^\infty})$ a trivialized multiplicative group. Moreover, we fix an $\cO_F$-linear symplectic splitting $\delta_b$ of this filtration. Let
\[
\mathrm{Z}_{p^\infty,-2}\subset \mathrm{Z}_{p^\infty,-1}\subset L\otimes_{\Z} \Z_p
\]
be the induced filtration (with induced splitting), using $T_p(\mathbb X_{\cO_C})\cong L\otimes_{\Z} \Z_p$.

On the left-hand side, we parametrize now in addition the following data:\footnote{We ignore the level-$N$-structures away from $p$.}
\begin{enumerate}
\item A principally polarized abelian variety $\mathcal B$ over $\cO_C$ with $\mathcal O_F$-action as usual.
\item An $\mathcal O_F$-linear extension
\[
0\to T\to \cG\to \cB\to 0
\]
where $T$ is a torus with cocharacter group $X$. This induces in particular an $\cO_F$-linear map $f_0: X\to \cB$.
\item An isomorphism
\[
\cG[p^\infty]\cong \mathrm Z_{b,-1}
\]
compatible with the $T[p^\infty] = \mathrm Z_{b,-2}$. By the splitting $\delta_b$, this induces an $\cO_F$-linear splitting of
\[
0\to T[p^\infty]\to \cG[p^\infty]\to \cB[p^\infty]\to 0
\]
and in particular $f_0$ extends to a map $\tilde{f}_0: X[\tfrac 1p]\to \cB$.
\item Away from the boundary, a lift of $\tilde{f}_0$ to an $\cO_F$-map $\tilde{f}: X[\tfrac 1p]\to \cG(C)$ that is symmetric and positive. In general, we have a torsor $\mathcal P$ over $\cO_C$ for the torus $\mathbb S_{\tilde{Z}}$ with cocharacter group generated by $\tfrac 1{p^\infty} X\otimes X$, parametrizing symmetric lifts of $\tilde{f}_0$ to $X[\tfrac 1p]\to \cG$, and the cone decomposition $\Sigma_Z$ gives rise to an embedding $\mathcal P\subset \mathcal P_{\Sigma_Z}$. Then the final datum is an $\cO_C$-point of $\mathcal P_{\Sigma_Z}$ whose special fibre lies in the boundary.
\end{enumerate}

On the right-hand side, we parametrize the following data:
\begin{enumerate}
\item[(1')] A principally polarized abelian variety $\mathcal B$ over $\cO_C$ with $\mathcal O_F$-action as usual, and an $\mathcal O_F$-linear symplectic isomorphism
\[
T_p(\mathcal B)\cong \mathrm{Gr}^{\mathrm{Z}_{p^\infty}}_{-1},
\]
that induces an isomorphism
\[
\mathcal B[p^\infty]\cong \mathrm{Gr}^{\mathrm{Z}_b}_{-1},
\]
by Theorem~\ref{thm:hodge tate boundary} (and \cite[Theorem B]{scholze-weinstein}).
\item[(2')] An $\mathcal O_F$-linear extension
\[
0\to T\to \cG\to \cB\to 0
\]
where $T$ is a torus with cocharacter group $X$. This induces in particular an $\cO_F$-linear map $f_0: X\to \cB$.
\item[(3')] An $\mathcal O_F$-linear splitting of
\[
0\to T[p^\infty]\to \cG[p^\infty]\to \cB[p^\infty]\to 0;
\]
in particular, this induces a lift of $f_0$ to $\tilde{f}_0: X[\tfrac 1p]\to \cB$.
\item[(4')] Away from the boundary, a lift of $\tilde{f}_0$ to a symmetric positive $\tilde{f}: X[\tfrac 1p]\to \cG(C)$. In general, we have a torsor $\mathcal P'$ over $\cO_C$ for the torus $\mathbb S_{\tilde{Z}}$ with cocharacter group generated by $\tfrac 1{p^\infty} X\otimes X$, parametrizing symmetric lifts of $\tilde{f}_0$ to $X[\tfrac 1p]\to \cG$, and the cone decomposition $\Sigma_Z$ gives rise to an embedding $\mathcal P'\subset \mathcal P'_{\Sigma_Z}$. Then the final datum is an $\cO_C$-point of $\mathcal P'_{\Sigma_Z}$ whose special fibre lies in the boundary.
\end{enumerate}

It is clear that (2) and (2') as well as (4) and (4') correspond. But (3) is equivalent to a splitting of $0\to T[p^\infty]\to \cG[p^\infty]\to \cB[p^\infty]\to 0$ plus an isomorphism $\cB[p^\infty]\cong \mathrm{Gr}^{\mathrm{Z}_b}_{-1}$, i.e.~(3') and the extra part of (1'), as desired.
\end{proof}

\begin{lem}\label{check isomorphism} Let $f: X\to Y$ be a map from a quasicompact separated perfectoid space $X$ to a proper diamond $Y$ over $\Spd\ C$ for some complete algebraically closed extension $C$ of $\Q_p$. Assume that for all complete algebraically closed $C'/C$, the map $X(C',\cO_{C'})\to Y(C',\cO_{C'})$ is a bijection. Then $f$ induces an isomorphism $\overline{X}\to Y$ where $\overline{X}$ is the canonical compactification of $X$ over $\Spd\ C$, in the sense of~\cite[Proposition 18.6]{scholze-diamonds}. In particular, if $X/\Spd\ C$ is compactifiable, then $f$ is an open immersion.
\end{lem}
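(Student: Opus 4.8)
The plan is to prove Lemma~\ref{check isomorphism} by a standard ``bijective on geometric points plus specializing'' argument, which is exactly the sort of criterion that makes diamonds pleasant to work with. First I would invoke the canonical compactification $\overline{X}$ of the quasicompact separated perfectoid space $X$ over $\Spd\, C$ from \cite[Proposition 18.6]{scholze-diamonds}: it comes with a proper map $\overline{X}\to \Spd\, C$ and a dense open immersion $X\hookrightarrow \overline{X}$, and it is characterized by a universal property for maps to proper (or partially proper) diamonds over $\Spd\, C$. Since $Y$ is proper over $\Spd\, C$, the map $f\colon X\to Y$ extends uniquely to a map $\bar f\colon \overline{X}\to Y$. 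So the content is to check that $\bar f$ is an isomorphism.

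The key step is that a map of qcqs diamonds over $\Spd\, C$ is an isomorphism if and only if it induces a bijection on $(C',\cO_{C'})$-valued points for all complete algebraically closed $C'/C$. This is \cite[Lemma 11.11]{scholze-diamonds} (or can be extracted from the characterization of isomorphisms of spatial diamonds via their underlying spaces and the sheaf-theoretic structure): a map of spatial diamonds that is bijective on rank-one geometric points and compatible with the specialization structure is a homeomorphism, and then one upgrades to an isomorphism of diamonds. Concretely, I would argue: $\overline{X}$ and $Y$ are both spatial (the former because $X$ is quasicompact separated and canonical compactifications of such are spatial, the latter because it is a proper diamond over a point hence qcqs and locally spatial), so it suffices to see $\bar f$ is a bijection on $(C',\cO_{C'})$-points. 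For the interior $X(C',\cO_{C'})$ this is the hypothesis. The remaining points are those on the boundary $\overline{X}\setminus X$, and here one uses that $\overline{X}(C',\cO_{C'})$ is computed in terms of $X(C',C'^+)$ for all open bounded valuation subrings $C'^+\subset C'$ (by the universal property/construction of the canonical compactification), while $Y$ being proper means $Y(C',\cO_{C'})=Y(C',C'^+)$ and the same valuative criterion identifies $\overline{X}(C',C'^+)$ with $Y(C',C'^+)$ via $f$. Thus $\bar f$ is bijective on all geometric points of every rank, hence an isomorphism.

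For the final sentence, if moreover $X/\Spd\, C$ is compactifiable, then by definition (cf.~\cite[Definition 18.3, Proposition 18.6]{scholze-diamonds}) the open immersion $X\hookrightarrow \overline{X}$ identifies $X$ with an open subspace of $\overline{X}\cong Y$; composing, $f\colon X\to Y$ is the composite of this open immersion with an isomorphism, hence itself an open immersion.

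The main obstacle I anticipate is purely bookkeeping around the canonical compactification: one must be careful that $\overline{X}$ really is qcqs/spatial (so that the ``bijective on geometric points implies isomorphism'' criterion applies) and that the valuative-criterion identification of boundary points with $Y$-points is stated in the generality needed — in particular that properness of $Y$ gives exactly $Y(C',C'^+)=Y(C',\cO_{C'})$ for all $C'^+$, matching the way $\overline{X}$ adds boundary points. Both facts are in \cite{scholze-diamonds} (Sections 17--18), so the argument is essentially a citation, but it is worth writing out the matching of valued points carefully rather than hand-waving, since the whole point of the lemma is to reduce the isomorphism assertions of Theorems~\ref{thm:tor fibers of pi HT} and~\ref{thm:compactified fibres} to the concrete moduli-theoretic bijection checked there.
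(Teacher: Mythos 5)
Your argument is essentially the paper's: extend $f$ to the canonical compactification $\overline{X}$ (i.e.\ reduce to the case that the source is also proper), use properness of both $\overline{X}$ and $Y$ to upgrade the hypothesis to a bijection on $(C',C'^+)$-valued points for every open and bounded valuation subring $C'^+\subset C'$ containing $\cO_C$, and conclude by \cite[Lemma 11.11]{scholze-diamonds}. One caution: your intermediate formulation of the ``key step'' --- that bijectivity on $(C',\cO_{C'})$-points alone characterizes isomorphisms of qcqs (or spatial) diamonds --- is false as stated (the open immersion $X\hookrightarrow\overline{X}$ is itself bijective on rank-$1$ points without being an isomorphism), but this causes no harm because Lemma 11.11 asks for bijectivity on $(C',C'^+)$-points of all ranks, and your valuative-criterion step supplies exactly that.
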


\begin{proof} We may replace $X$ by $\overline{X}$ and assume that $X/\Spd\ C$ is also proper. Then $f$ induces bijections on $(C',C'^+)$-valued points for general $C'^+\subset C'$ containing $\cO_C$ (as these agree with the $(C',\cO_{C'})$-valued points by properness). Now the result follows from \cite[Lemma 11.11]{scholze-diamonds}.
\end{proof}

\subsection{The fibers of the minimal compactification}

\begin{thm}\label{thm:min fibers of pi HT}
The morphism $f^{\tor}$ induces an open immersion
\[
f^*: \mathfrak{Ig}^{b,*}_C\to (\pi_\HT^\ast)^{-1}(x).
\]
of affinoid perfectoid spaces with the same rank $1$ points.
\end{thm}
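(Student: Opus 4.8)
The plan is to reduce the statement to a computation of rings of global sections, importing the hard geometric input from the toroidal case (Theorem~\ref{thm:tor fibers of pi HT}).

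\emph{Step 1: both sides are affinoid perfectoid, and the target is partially proper over $\Spa(C,\cO_C)$.} Since $\mathfrak{Ig}^{b,\ast}$ is an affine perfect scheme by Theorem~\ref{igusa affine}, say $\mathfrak{Ig}^{b,\ast} = \Spec R_0$, the space $\mathfrak{Ig}^{b,\ast}_C$ is the adic generic fibre of $\Spf(W(R_0)\,\hat\otimes_{W(k)}\cO_C)$; write $\mathfrak{Ig}^{b,\ast}_C = \Spa(A,A^+)$. Recall from \cite[Theorem~4.1.1]{scholze} that $\pi_\HT^\ast$ is affine, so that preimages of affinoid opens of $\Fl$ are affinoid perfectoid; writing the $C$-point $x$ as a cofiltered intersection of affinoid opens $U\subset \Fl_C$ and passing to the limit, $(\pi_\HT^\ast)^{-1}(x) = \varprojlim_U (\pi_\HT^\ast)^{-1}(U)$ is affinoid perfectoid. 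Moreover $\mathcal S^\ast_{K(p^\infty N)}\to \Spa\Q_p$ is partially proper, being a cofiltered limit along the finite transition maps of the proper schemes $\mathscr S^\ast_{K(p^m N),\Q_p}$; since $\Fl\to\Spa\Q_p$ is separated, the factorisation $\mathcal S^\ast_{K(p^\infty N)}\to \Fl\to\Spa\Q_p$ forces $\pi_\HT^\ast$ to be partially proper, hence so is its fibre $(\pi_\HT^\ast)^{-1}(x)\to\Spa(C,\cO_C)$. Thus $(\pi_\HT^\ast)^{-1}(x)$ agrees with its own canonical compactification over $\Spa(C,\cO_C)$: writing $B := H^0((\pi_\HT^\ast)^{-1}(x),\cO)$, this means $(\pi_\HT^\ast)^{-1}(x) = \Spa(B,B^{++})$, where $B^{++}$ is the integral closure of $\cO_C + B^{\circ\circ}$ in $B$, the canonical ring of integral elements attached to $B$ over $\Spa(C,\cO_C)$, which depends only on $B$ as a topological ring.

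\emph{Step 2: $A = B$, compatibly with $f^\ast$.} On the Igusa side, $\mathfrak{Ig}^{b,\ast}$ is the Stein factorisation of $\mathfrak{Ig}^{b,\tor}\to\mathscr C^{b,\ast}$ by Proposition~\ref{perfect min comp}, so the projection $q\colon\mathfrak{Ig}^{b,\tor}\to\mathfrak{Ig}^{b,\ast}$ satisfies $q_\ast\cO=\cO$; applying $W(-)$, base changing to $\cO_C$, $p$-adically completing and passing to adic generic fibres (all of which preserve $H^0$), we obtain $H^0(\mathfrak{Ig}^{b,\tor}_C,\cO) = A$. On the Shimura side, at each finite level the projection $\mathscr S^\tor_{K(p^m N)}\to\mathscr S^\ast_{K(p^m N)}$ satisfies $\pi_{K(p^m N),\Sigma\,\ast}\cO=\cO$ on every fibre by Theorem~\ref{existence of toroidal compactifications}(6); passing to the limit over $m$ and base changing to $x$, the projection $p\colon(\pi_\HT^\tor)^{-1}(x)\to(\pi_\HT^\ast)^{-1}(x)$ satisfies $p_\ast\cO=\cO$, so $B = H^0((\pi_\HT^\tor)^{-1}(x),\cO)$. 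Finally, by Theorem~\ref{thm:tor fibers of pi HT} the space $(\pi_\HT^\tor)^{-1}(x)$ is the canonical compactification of $\mathfrak{Ig}^{b,\tor}_C$ over $\Spa(C,\cO_C)$, and passing to the canonical compactification does not change the ring of global sections; hence $B = H^0(\mathfrak{Ig}^{b,\tor}_C,\cO) = A$. All of these identifications are compatible with the maps induced by $g^{\tor}$ (Theorem~\ref{thm:lift to infinite level}) and with the interior identification of Theorem~\ref{thm:open hodge tate fibers}, so that $f^\ast$ induces the identity on $A = B$.

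\emph{Step 3: conclusion.} Since $B^{++}$ depends only on the underlying ring $B = A$, Step~1 gives $(\pi_\HT^\ast)^{-1}(x) = \Spa(A,A^{++})$, which is exactly the canonical compactification of $\mathfrak{Ig}^{b,\ast}_C = \Spa(A,A^+)$ over $\Spa(C,\cO_C)$, and under this identification $f^\ast$ is the tautological open immersion of an affinoid perfectoid space into its canonical compactification; by \cite[Proposition~18.6]{scholze-diamonds} this is an open immersion with the same rank-$1$ points, as claimed. (Alternatively, once both sides are known to be proper over $\Spd C$ one can bypass Step~2 by checking directly, as in the proof of Theorem~\ref{thm:tor fibers of pi HT}, that $\mathfrak{Ig}^{b,\ast}_C$ and $(\pi_\HT^\ast)^{-1}(x)$ have the same $(C',\cO_{C'})$-points and invoking Lemma~\ref{check isomorphism}.) I expect the main obstacle to lie in Step~2: verifying that $p_\ast\cO=\cO$ survives the passage to the infinite-level limit over $m$ and the base change to the fibre over $x$ — using that $R\Gamma$ commutes with the relevant cofiltered limits of spatial diamonds along qcqs maps, together with the fibrewise statement in Theorem~\ref{existence of toroidal compactifications}(6) — and checking that the resulting chain of identifications of global sections is genuinely compatible with $f^\ast$; once this is in place, the topological conclusion is formal.
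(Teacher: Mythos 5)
Your overall architecture is the same as the paper's: Step 1 (both sides affinoid perfectoid), Step 2(a) (global sections of $\mathfrak{Ig}^{b,\tor}_C$ equal those of $\mathfrak{Ig}^{b,\ast}_C$, i.e.\ Lemma~\ref{lem:global sections}(1) via Proposition~\ref{perfect min comp}), and Step 3 (conclude via the canonical compactification of an affinoid, using Theorem~\ref{thm:tor fibers of pi HT}) all match the paper's proof, which is exactly ``take global sections and apply Lemma~\ref{lem:global sections}''. The genuine gap is your Step 2(b), which is precisely Lemma~\ref{lem:global sections}(2) — the only substantial point of the argument — and the tools you propose there would not establish it. The clause ``this final equality holds on any fiber'' in Theorem~\ref{existence of toroidal compactifications}(6) refers to fibers of $\mathscr S^\ast_K$ over $\Spec \Z[\tfrac 1{N\Delta_F}]$, not to fibers of $\pi_\HT^\ast$; and the coherent, scheme-level statement $\pi_{K,\Sigma\ast}\cO=\cO$ at finite level does not ``base change'' to the fiber over $x$: the inclusion $(\pi_\HT^\ast)^{-1}(x)\hookrightarrow \cS^\ast_{K(p^\infty N),C}$ is a quasi-pro-\'etale map at infinite level (neither flat nor of finite type over anything algebraic), and coherent pushforward gives no direct control of the pushforward of the perfectoid structure sheaf after this base change. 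Commutation of $R\Gamma$ with cofiltered limits along qcqs maps only handles the limit over $m$; it does not bridge the algebraic statement with the analytic one, nor the passage to the fiber. What the paper actually does is: reformulate the claim as an almost isomorphism $\cO^{+a}_{\mathscr F^\ast}/p^n\to \pi_\ast\cO^{+a}_{\mathscr F^{\tor}}/p^n$; at finite level deduce $\Z/p^n\toisom \pi_{K(p^mN)\ast}\Z/p^n$ from geometric connectedness of the Stein fibers and convert this into $\cO^{+a}_{\cS^\ast}/p^n\toisom \pi_\ast\cO^{+a}_{\cS^\tor}/p^n$ via the relative primitive comparison theorem \cite[Theorem 3.13]{scholzesurvey}; pass to the colimit over levels; and then descend to the fiber over $x$ by quasi-pro-\'etale base change \cite[Corollary 16.9]{scholze-diamonds}, using that $x\to\Fl_C$, hence $\mathscr F^\ast\to\cS^\ast_{K(p^\infty N),C}$, is quasi-pro-\'etale; finally take $\varprojlim_n$ and invert $p$. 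Without this input (or an equivalent substitute), your $p_\ast\cO=\cO$ at the level of the fibers remains unproved, and you yourself flag it as the main obstacle.

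Two smaller points. In Step 1, \cite[Theorem 4.1.1]{scholze} gives affinoid perfectoid preimages only for the auxiliary spaces $\mathscr S^{\underline\ast}$ and $\pi_\HT^{\underline\ast}$; to conclude that $(\pi_\HT^\ast)^{-1}(x)$ itself is affinoid perfectoid the paper additionally invokes \cite[Theorem 1.16]{bhatt-scholze-prisms}, so your citation as stated is slightly off (though the structure of the argument is the same). And your parenthetical alternative — checking $(C',\cO_{C'})$-points and invoking Lemma~\ref{check isomorphism} — is not a shortcut here: unlike the toroidal case, the boundary of the minimal compactification has no direct moduli description, so identifying the points of $(\pi_\HT^\ast)^{-1}(x)$ would itself require the boundary stratification and an induction; avoiding this is exactly why the paper argues through global sections.
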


\begin{proof} The fact that $\mathfrak{Ig}^{b,*}_C$ is affinoid perfectoid follows from 
Lemma~\ref{basic properties of minimal compactifications}. By \cite[Theorem 4.1.1]{scholze}, we know that $(\pi_\HT^{\underline{\ast}})^{-1}(x)$ is an affinoid perfectoid space. By \cite[Theorem 1.17]{bhatt-scholze-prisms}, we see that $(\pi_\HT^\ast)^{-1}(x)$ is affinoid perfectoid as well. The theorem follows from Theorem~\ref{thm:tor fibers of pi HT} after taking global sections, and noting the results of Lemma~\ref{lem:global sections} below. 
\end{proof}

\begin{lem}\label{lem:global sections}\leavevmode
\begin{enumerate}
\item The canonical morphism $\mathfrak{Ig}^{b,\tor}_C\to \mathfrak{Ig}^{b,*}_C$
induces an isomorphism on global sections.
\item The canonical morphism $\mathscr F^\tor := (\pi_\HT^\tor)^{-1}(x)\to \mathscr F^\ast := (\pi_\HT^\ast)^{-1}(x)$
induces an isomorphism on global sections. 
\end{enumerate}
\end{lem}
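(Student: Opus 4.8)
The statement to prove is Lemma~\ref{lem:global sections}, asserting that the maps $\mathfrak{Ig}^{b,\tor}_C\to \mathfrak{Ig}^{b,*}_C$ and $\mathscr F^\tor\to \mathscr F^\ast$ induce isomorphisms on global sections. Here is how I would proceed.

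For part (1), the plan is to reduce to an algebraic (and in fact integral, characteristic $p$) statement and apply Proposition~\ref{perfect min comp}. Recall that $\mathfrak{Ig}^{b,\ast}$ is defined as the normalization of $\mathscr C^{b,\ast}$ in $\mathfrak{Ig}^{b}$, and by Proposition~\ref{perfect min comp} it agrees with the Stein factorization of $\mathfrak{Ig}^{b,\tor}\to \mathscr C^{b,\ast}$, so that $\mathfrak{Ig}^{b,\ast} = H^0(\mathfrak{Ig}^{b,\tor}, \cO_{\mathfrak{Ig}^{b,\tor}})$ as (perfect, affine) schemes over $k$. Since both $\mathfrak{Ig}^{b,\tor}_{\cO_C}$ and $\mathfrak{Ig}^{b,\ast}_{\cO_C}$ are the canonical flat formal lifts over $\cO_C$ of the perfect schemes over $k$ (obtained via Witt vectors then base change), and formation of $H^0$ of structure sheaves commutes with this flat base change (here one uses that $\mathscr C^{b,\ast}$ is affine, so $\mathfrak{Ig}^{b,\tor}\to\mathscr C^{b,\ast}$ is proper with affine target, and cohomology and base change applies; alternatively one reduces modulo $p^n$ and uses that everything is flat), we get that $\mathfrak{Ig}^{b,\tor}_{\cO_C}\to \mathfrak{Ig}^{b,\ast}_{\cO_C}$ induces an isomorphism on global sections of the structure sheaves of the formal schemes. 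Passing to adic generic fibres — i.e.~inverting $p$ and completing — preserves this, since $\mathfrak{Ig}^{b,\ast}_C$ is affinoid perfectoid with $H^0(\mathfrak{Ig}^{b,\ast}_C,\cO) = H^0(\mathfrak{Ig}^{b,\ast}_{\cO_C},\cO)[\tfrac 1p]^{\wedge}$ and likewise, via $H^0(\mathfrak{Ig}^{b,\tor}_C,\cO) = H^0(\mathfrak{Ig}^{b,\tor}_{\cO_C},\cO)[\tfrac 1p]^\wedge$ (the toroidal compactification being covered by affinoid perfectoid opens whose global sections are computed similarly, and the map to the affine $\mathfrak{Ig}^{b,\ast}_C$ lets one glue).

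For part (2), the plan is to deduce it from part (1) together with Theorem~\ref{thm:tor fibers of pi HT} and Theorem~\ref{thm:min fibers of pi HT} — but one has to be careful about a potential circularity, since Theorem~\ref{thm:min fibers of pi HT} \emph{uses} Lemma~\ref{lem:global sections}. The clean way is: Theorem~\ref{thm:tor fibers of pi HT} is proved independently and identifies $\mathscr F^\tor$ with the canonical compactification of $\mathfrak{Ig}^{b,\tor}_C$; since canonical compactification does not change global sections of the structure sheaf (\cite[Proposition 18.6]{scholze-diamonds}, or directly because a separated perfectoid space and its canonical compactification have the same ring of global functions — bounded functions on a quasicompact space are all functions), we get $H^0(\mathscr F^\tor,\cO) = H^0(\mathfrak{Ig}^{b,\tor}_C,\cO)$ Hecke- and $G(\A_f^p)$-equivariantly. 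On the other hand, we know $\mathscr F^\ast = (\pi_\HT^\ast)^{-1}(x)$ is affinoid perfectoid by \cite[Theorem 4.1.1]{scholze} and \cite[Theorem 1.16]{bhatt-scholze-prisms}, and the projection $\mathscr S^\tor_{K(p^\infty N)}\to \mathscr S^\ast_{K(p^\infty N)}$ restricts to a proper map $\mathscr F^\tor\to\mathscr F^\ast$ with $R\pi_{\ast}\cO_{\mathscr F^\tor} = \cO_{\mathscr F^\ast}$ in degree $0$ — this is the perfectoid incarnation of $\pi_{K,\Sigma\ast}\cO_{\mathscr S^\tor_K} = \cO_{\mathscr S^\ast_K}$ from Theorem~\ref{existence of toroidal compactifications}~(7), which passes to the infinite level and then to fibres (the key point being that $\pi_\HT^\ast$ and $\pi_\HT^\tor$ are compatible via the projection, and properness lets one commute taking the fibre with pushforward). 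This gives $H^0(\mathscr F^\tor,\cO) = H^0(\mathscr F^\ast,\cO)$, which is the claim.

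The main obstacle, as I see it, is organizing part (2) to avoid the apparent circularity with Theorem~\ref{thm:min fibers of pi HT}, and making precise the claim that $R^0\pi_\ast$ of the structure sheaf along the proper map of perfectoid fibres $\mathscr F^\tor\to\mathscr F^\ast$ is the structure sheaf downstairs. The cleanest route is to establish the identity $\pi_\ast\cO_{\mathscr S^\tor_{K(p^\infty N)}} = \cO_{\mathscr S^\ast_{K(p^\infty N)}}$ at infinite level first — e.g.~by taking an increasing union of cohomology at finite level where it is known by Theorem~\ref{existence of toroidal compactifications}~(7), using that cohomology commutes with the relevant filtered colimits of qcqs spaces, and that the minimal compactification is normal so $\pi_\ast\cO$ is reflexive and the map is an isomorphism away from the boundary — and only then restrict to the fibre over $x$, which is legitimate because $\pi$ is proper (so taking fibres commutes with $R\pi_\ast$). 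Once that is in hand, parts (1) and (2) both follow formally, and Theorem~\ref{thm:min fibers of pi HT} is then a genuine consequence rather than an input. I would also double-check that all identifications are $\mathbb T^S$- and $G(\A_f^p)$-equivariant, which is automatic since every map involved is.
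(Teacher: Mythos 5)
Part (1) of your proposal is essentially the paper's own (very terse) argument: identify $\mathfrak{Ig}^{b,\ast}$ with the global sections of $\cO_{\mathfrak{Ig}^{b,\tor}}$ over $k$ via the Stein factorization, lift, and invert $p$; this is fine. (Your worry about circularity is also a non-issue: the lemma is a statement purely about the map $\mathscr F^\tor\to\mathscr F^\ast$, and the paper proves it without invoking Theorem~\ref{thm:min fibers of pi HT}, or indeed Theorem~\ref{thm:tor fibers of pi HT}.)

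In part (2), however, the step you yourself flag as the crux is genuinely not closed by your proposed route. You want to establish $\pi_\ast\cO_{\cS^{\tor}_{K(p^\infty N)}}=\cO_{\cS^{\ast}_{K(p^\infty N)}}$ by taking an ``increasing union'' over finite levels of the coherent statement $\pi_{K,\Sigma\ast}\cO_{\mathscr S^\tor_K}=\cO_{\mathscr S^\ast_K}$ from Theorem~\ref{existence of toroidal compactifications}(6), and then restrict to the fibre ``because $\pi$ is proper''. Two things go wrong. First, the completed structure sheaf of the infinite-level perfectoid/diamond object is \emph{not} the filtered colimit of the finite-level structure sheaves; only $\cO^+/p^n$ has that colimit description (as recorded in \S\ref{sec:perfectoid}), and controlling the completion is exactly the density problem of Remark~\ref{strong limit}, which for the minimal compactification is explicitly open. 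So ``cohomology commutes with filtered colimits'' does not let you promote the finite-level coherent identity to an identity of $\cO$-sheaves at infinite level, and the normality/reflexivity argument is a noetherian-scheme argument with no direct analogue here. Second, properness of $\pi$ gives base change for \'etale torsion coefficients, not for (completed) structure sheaves, so ``taking fibres commutes with $R\pi_\ast$'' is unjustified for $\cO$; the base change that is actually available is along the inclusion $\mathscr F^{\ast}\hookrightarrow \cS^{\ast}_{K(p^\infty N),C}$, which is quasi-pro-\'etale because $x=\varprojlim \cU$ is an intersection of quasicompact opens of $\Fl$, and it applies to $\cO^{+}/p^n$ by \cite[Corollary 16.9]{scholze-diamonds}.

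The paper's proof is organized precisely to avoid these two traps: one reduces to showing that $\cO^{+a}_{\mathscr F^\ast}/p^n\to \pi_\ast\cO^{+a}_{\mathscr F^{\tor}}/p^n$ is an almost isomorphism, pulls this back via quasi-pro-\'etale base change from the analogous statement on $\cS^{\ast}_{K(p^\infty N),C}$, and proves the latter level by level --- not from the coherent pushforward statement, but from the \'etale identity $\Z/p^n\toisom \pi_{K(p^mN)\ast}\Z/p^n$ (geometric connectedness of the Stein fibres) combined with the relative primitive comparison theorem \cite[Theorem 3.13]{scholzesurvey}; the passage to infinite level is then a legitimate filtered colimit of $\cO^+/p^n$-sheaves, and only at the very end does one take $\varprojlim_n$, invert $p$, and take global sections. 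So your identification of the needed pushforward identity is correct, but to make it precise you should run the argument almost and mod $p^n$ with the primitive comparison input, rather than with coherent sheaves and properness.
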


\begin{proof} The first part is clear (we know it over $k$, thus over $\cO_C/p^\epsilon$ by base change, then we can lift to $\cO_C$, and invert $p$).

For the second part, let $\pi: \cS^{\tor}_{K(p^\infty N)}\to \cS^{*}_{K(p^\infty N)}$ denote the canonical projection. It is enough to prove that 
the natural map of sheaves on $(\mathscr{F}^*)_{\et}$ 
\begin{equation}\label{eq:almost iso 2}
\cO^{+a}_{\mathscr{F}^*}/p^n\to \pi_*\cO^{+a}_{\mathscr{F}^{\tor}}/p^n  
\end{equation}
is an almost isomorphism. The result in the statement of the 
lemma follows from this by passing to the inverse limit over 
$n$, inverting $p$, and taking global sections. 

We now claim that it is enough to prove that the natural map of sheaves on $(\cS^{*}_{K(p^\infty N),C})_{\et}$
\begin{equation}\label{eq:almost iso 1}
\cO^{+a}_{\cS^{*}_{K(p^\infty N),C}}/p^n \to \pi_*\cO^{+a}_{\cS^{\tor}_{K(p^\infty N),C}}/p^n
\end{equation}
is an almost isomorphism. Since the pullback functor $\nu^*$ from the \'etale site to the quasi-pro-\'etale site is fully faithful,
cf.~\cite[Proposition 14.8]{scholze-diamonds}, it is enough to consider the pullbacks of the sheaves to the corresponding quasi-pro-\'etale sites. 

The map $x\to \Fl_C$ is quasi-pro-\'etale, since the point $x$ can be identified with $\varprojlim \cU$, where the limit
runs over a cofinal set of quasi-compact open neighbourhoods $\cU$ of $x$ in $\Fl_{G,\mu}$. 
This implies that the immersions 
$\iota^{\tor}: \mathscr{F}^{\tor}\to \cS^{\tor}_{K(p^\infty N),C}$ and 
$\iota:\mathscr{F}^*\to \cS^{*}_{K(p^\infty N),C}$ are quasi-pro-\'etale as well. 
Therefore, $(\mathscr{F}^{\tor})_{\qproet}$ 
is a slice category of $(\cS^{\tor}_{K(p^\infty N), C})_{\qproet}$ 
and we can identify the sheaves 
\[
(\iota^{\tor})^*(\cO^+_{\cS^{\tor}_{K(p^\infty N), C}}/p^n) = \cO^+_{\mathscr{F}^{\tor}}/p^n 
\]
on $(\cF^{\tor})_{\qproet}$. Similarly, we can identify the sheaves 
\[
\iota^*(\cO^+_{\cS^{*}_{K(p^\infty N),C}}/p^n) = \cO^+_{\mathscr{F}^{*}}/p^n 
\]
on $(\mathscr{F}^*)_{\qproet}$. By the base change result in~\cite[Corollary 16.9]{scholze-diamonds},
we obtain a natural isomorphism of sheaves on $(\mathscr{F}^*)_{\qproet}$. 
\[
\iota^*\circ \pi_*\cO^+_{\cS^{\tor}_{K(p^\infty N), C}}/p^n\toisom 
\pi_{*}\circ (\iota^{\tor})^*\cO^+_{\cS^{\tor}_{K(p^\infty N), C}}/p^n.
\]
We conclude that~\eqref{eq:almost iso 1} implies~\eqref{eq:almost iso 2}. 

It remains to prove the almost isomorphism~\eqref{eq:almost iso 1}.
At each finite level $K(p^m N)$, we have a natural isomorphism 
$\Z/p^n\Z \toisom \pi_{K(p^m N),*}(\Z/p^n\Z)$ of sheaves on $(\cS^{*}_{K(p^m N), C})_{\et}$,
by base change, e.g.~\cite[Corollary 16.10]{scholze-diamonds}, and by the fact that the fibers of the Stein factorisation are geometrically connected. 
We tensor with $\cO^+_{\cS^{*}_{K(p^m N), C}}/p^n$ and apply the 
relative primitive comparison isomorphism in the form of~\cite[Theorem 3.13]{scholzesurvey};
this gives an almost isomorphism 
\[
\cO^{+a}_{\cS^{*}_{K(p^m N), C}}/p^n\toisom \pi_{K(p^m N)*}\cO^{+a}_{\cS^{\tor}_{K(p^m N), C}}/p^n. 
\]
We conclude by taking a direct limit over the finite levels $K(p^m N)$.
\end{proof}

\subsection{Semiperversity}

Let $C$ be any complete algebraically closed extension of $\mathbb Q_p$ with ring of integers $\mathcal O_C$ and residue field $k$.

\begin{thm}\label{thm:semiperversity} Consider
\[
R\pi_{\HT\ast}^\circ \mathbb F_\ell\in D(\Fl_C,\mathbb F_\ell).
\]
Any geometric point $\overline{x}$ of $\Fl_C$ has a cofinal system of affinoid \'etale neighborhoods $U=\Spa(A)\to \Fl_C$ such that, denoting $\mathfrak{U}=\Spf(A^\circ)$, the nearby cycles
\[
R\psi(R\pi_{\HT\ast}^\circ \mathbb F_\ell)|_U\in {}^p D^{\geq d}(\mathfrak{U}_k,\mathbb F_\ell)
\]
are semiperverse, where $d=[F^+:\Q]n^2$ is the dimension of the Shimura variety.
\end{thm}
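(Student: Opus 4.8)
The plan is to reduce the assertion to the ``dual'' estimate
\[
R\psi\bigl(R\pi_{\HT\ast}^{\tor}(j_!\,\mathbb{F}_\ell)\bigr)\in {}^pD^{\leq d}(\mathfrak{Fl}_k,\mathbb{F}_\ell)
\]
for the perverse $t$-structure with the middle perversity, where $j\colon \mathcal{S}^\circ_{K(p^\infty N)}\hookrightarrow \mathcal{S}^{\tor}_{K(p^\infty N)}$ is the open immersion of the good reduction locus into the toroidal compactification; this is the ``analogous result for toroidal compactifications''. Granting it, one argues as follows. The locus $\mathcal{S}^\circ_{K(p^\infty N)}$ is an inverse limit of smooth rigid varieties of dimension $d$ over $C$ along finite transition maps of degree prime to $\ell$, so $\mathbb{D}_{\mathcal{S}^\circ/C}\mathbb{F}_\ell=\mathbb{F}_\ell[2d](d)$; since $\pi_{\HT}^{\tor}$ is proper, $R(\pi^\circ_{\HT})_!\mathbb{F}_\ell=R\pi_{\HT\ast}^{\tor}(j_!\mathbb{F}_\ell)$, whence $\mathbb{D}_{\Fl_C}\bigl(R(\pi^\circ_{\HT})_\ast\mathbb{F}_\ell\bigr)=R\pi_{\HT\ast}^{\tor}(j_!\mathbb{F}_\ell)[2d](d)$. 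Nearby cycles commute with Verdier duality — one works over the smooth proper $\cO_C$-model $\Fl$ of $\Fl_C$ and over a cofinal system of proper formal models of $\mathcal{S}^{\tor}_{K(p^\infty N)}$, using that $R\psi$ of the constant sheaf on the perfectoid space $\mathcal{S}^{\tor}_{K(p^\infty N),C}$ is constant together with the usual scheme-theoretic statement — so applying $R\psi$ gives $\mathbb{D}_{\mathfrak{Fl}_k}\bigl(R\psi(R\pi^\circ_{\HT\ast}\mathbb{F}_\ell)\bigr)=R\psi\bigl(R\pi_{\HT\ast}^{\tor}(j_!\mathbb{F}_\ell)\bigr)[2d](d)\in {}^pD^{\leq -d}$ by the displayed estimate; dualizing once more yields $R\psi(R\pi^\circ_{\HT\ast}\mathbb{F}_\ell)\in {}^pD^{\geq d}$, as wanted.

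For the dual estimate, first refine $\mathfrak{Fl}$ so that the Newton stratification $\Fl_C=\bigsqcup_b \Fl^b$ extends to a stratification $\mathfrak{Fl}=\bigsqcup_b \mathfrak{Fl}^b$ into reduced locally closed subschemes, flat over $\cO_C$, with generic fibre $\Fl^b$; such models are cofinal, the stratification being constructible and spreading out. By the recollement for the perverse $t$-structure (induction over the finitely many strata), and using that over each $\Fl^b$ the complex $R\pi_{\HT\ast}^{\tor}(j_!\mathbb{F}_\ell)$ has locally constant cohomology sheaves — its stalks being overconvergent, cf.~Theorem~\ref{thm:open hodge tate fibers} — it suffices to bound the stalk $i_y^\ast R\psi\bigl(R\pi_{\HT\ast}^{\tor}(j_!\mathbb{F}_\ell)\bigr)$ at the generic point $y$ of each stratum of $\mathfrak{Fl}_k$. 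If $y$ lies over $\Fl^b$, then, since $d_{b'}\geq d_b$ for $b'\geq b$, the closure $\overline{\Fl^b}$ has dimension $d-d_b$, so $\dim\overline{\{y\}}\leq d-d_b$, and it is therefore enough to show that this stalk lies in $D^{\leq d_b}$. Shrinking étale neighbourhoods of $y$ and invoking Theorem~\ref{thm:tor fibers of pi HT}, the stalk is identified with the \'etale cohomology, with coefficients in the extension by zero $j_!\mathbb{F}_\ell$ from the interior $\mathfrak{Ig}^{\mathbb{X}}$, of a fibre $(\pi_{\HT}^{\tor})^{-1}(x)$ of $\pi_{\HT}^{\tor}$ over a point $x\in\Fl^b$ defined over a (possibly larger) complete algebraically closed nonarchimedean field. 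By Theorem~\ref{thm:tor fibers of pi HT} this fibre is affinoid perfectoid, and it has dimension $d_b=\dim\mathrm{Ig}^b$ (the Krull dimension of the perfect scheme $\mathfrak{Ig}^{\mathbb{X},\tor}$, which equals $\dim\mathscr{C}^{\mathbb{X}}$); Artin vanishing for affinoid adic spaces — ultimately a consequence of the affineness of the partial minimal compactifications $\mathfrak{Ig}^{\mathbb{X},\ast}$, cf.~Theorem~\ref{igusa affine} and Proposition~\ref{concentration compactly supported cohomology}, and applied here via passage to the finite levels $\mathrm{Ig}^b_m$ — forces its cohomology with the constructible coefficient sheaf $j_!\mathbb{F}_\ell$ to vanish in degrees $>d_b$, which is exactly the required bound.

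\textbf{Main obstacle.} The principal difficulty is formal rather than geometric: one must set up nearby cycles, and in particular their compatibility with Verdier duality and with proper pushforward, in the situation where the source $\mathcal{S}^{\tor}_{K(p^\infty N),C}$ is a perfectoid space (an inverse limit of qcqs analytic adic spaces) and not a finite type scheme, and one must simultaneously produce a cofinal family of formal models of $\Fl_C$ compatible with the Newton stratification, so that the dévissage of the second paragraph is legitimate and so that $i_y^\ast R\psi$ at the generic point of a stratum is correctly computed by the cohomology of a single affinoid perfectoid fibre. A related point, genuinely absent from the compact case of \cite{caraiani-scholze}, is that both the Shimura variety and the Oort leaves are non-proper, so the argument rests essentially on the \emph{compactified} fibres of $\pi_{\HT}^{\tor}$ (Theorem~\ref{thm:tor fibers of pi HT}) and on the affineness of the partial minimal compactifications of Igusa varieties (Theorem~\ref{igusa affine}); once these inputs are in hand, the dimension bookkeeping $\dim\Fl^b=d-d_b$ together with plain Artin vanishing closes the argument.
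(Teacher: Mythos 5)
Your reduction to the dual bound is where the argument genuinely breaks. To pass from the desired statement to $R\psi\bigl(R\pi^{\tor}_{\HT\ast}(j_!\mathbb F_\ell)\bigr)\in {}^pD^{\leq d}$ and back you need, simultaneously: Poincar\'e duality $\mathbb D_{\mathcal S^\circ}\mathbb F_\ell\simeq\mathbb F_\ell[2d](d)$ on the infinite-level perfectoid space (cohomological smoothness of the tower is not something the formalism gives you for free from ``finite transition maps of degree prime to $\ell$''); the exchange $\mathbb D Rf_\ast\simeq Rf_!\mathbb D$ for the non-finite-type map $\pi_{\HT}$; commutation of nearby cycles over $\cO_C$ with Verdier duality; and, crucially, biduality on $\mathfrak{Fl}_k$ to convert a bound on $\mathbb D R\psi(\cdots)$ into a bound on $R\psi(\cdots)$. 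The complexes in play are filtered colimits over the level tower, hence not constructible, and for such complexes biduality and the duality-compatibility of $R\psi$ simply fail (already $\mathbb D\mathbb D$ of an infinite direct sum of skyscrapers is a product, not the original object). This is why the paper never dualizes: its proof passes to $\ell^\infty$-level, where $Rj_\ast\mathbb F_\ell=\mathbb F_\ell$ (Lemmas \ref{shimura ell infty} and \ref{igusa ell infty}), pushes forward along the integral, affine Hodge--Tate map to the minimal compactification coming from \cite[Theorem 4.1.1]{scholze}, whose special fibre at finite level is a finite map, and then quotes the scheme-level fact that the pushforward of $\mathbb F_\ell$ from the open Shimura variety to its minimal compactification lies in ${}^pD^{\geq d}$, together with perverse $t$-exactness of nearby cycles; the compactified-fibre results enter only to identify $R\pi^{\tor}_{\HT\ast}$ with $R\pi^{\circ}_{\HT\ast}$ at $\ell^\infty$-level, not through any stalkwise Artin vanishing.

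Even granting the dual statement as target (the paper's remark after Lemma \ref{concentration usual cohomology} does expect such a variant), your d\'evissage does not establish it. Checking ${}^pD^{\leq d}$ only at generic points of the special-fibre strata presupposes that the cohomology sheaves of $R\psi\bigl(R\pi^{\tor}_{\HT\ast}j_!\mathbb F_\ell\bigr)$ are locally constant along those strata; overconvergence gives local constancy of $R\pi^{\tor}_{\HT\ast}(j_!\mathbb F_\ell)$ along the Newton strata of the \emph{generic} fibre, which is a different statement, because the stalk of $R\psi$ at a point $y\in\mathfrak{Fl}_k$ is the cohomology of the entire tube over $y$, an open part of $\Fl_C$ meeting all strata $\Fl^{b'}$ whose closures pass through $y$, not of a single fibre. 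For the same reason your identification of the stalk at the generic point of a stratum with $R\Gamma$ of one compactified fibre $(\pi^{\tor}_{\HT})^{-1}(x)$ is not valid on a fixed formal model: fibre cohomology is recovered from stalks of nearby cycles only after passing to the colimit over a cofinal system of formal models (this is exactly the mechanism of Lemma \ref{concentration usual cohomology}, which runs in the opposite direction), and that colimit cannot be used to prove perversity of $R\psi$ on one fixed model. At non-generic points your argument gives no bound at all, and the fibre-by-fibre inputs (Theorem \ref{thm:tor fibers of pi HT}, affineness of $\mathfrak{Ig}^{b,*}$ as in Theorem \ref{igusa affine}) do not control the cohomology of tubes, which are not affinoid. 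So both halves of the proposal--the duality reduction and the stratified stalk estimate--need ideas that are not supplied, and the paper's actual route avoids them entirely.
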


The proof of the theorem is complicated by the nonproperness of $\pi_{\HT}^\circ$. We resolve this issue by passing to an analogue of the Borel--Serre compactification, which is a compactification that does not change the cohomology. As an algebraic substitute for this, we take the toroidal compactification at $\ell^\infty$-level. Namely, let
\[
j_{N \ell^\infty}: \mathscr S_{K(N\ell^\infty),\Q}\to \mathscr S^\tor_{K(N\ell^\infty),\Q}
\]
denote the open immersion, where the inverse limit over levels $N\ell^m$ is taken in the category of schemes. The following lemma shows that at $\ell^\infty$-level, the compactification has the same cohomology as the open part.

\begin{lem}\label{shimura ell infty} The natural map
\[
\mathbb F_\ell\to Rj_{N \ell^\infty,\ast} \mathbb F_\ell
\]
is an isomorphism.
\end{lem}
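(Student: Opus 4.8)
\textbf{Proof proposal for Lemma~\ref{shimura ell infty}.}

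The plan is to reduce the statement to a local computation on the toroidal boundary charts, where the relevant geometry is a (product of) punctured formal neighbourhoods, and to exploit the fact that the inverse limit over $\ell^{\infty}$-level structures kills the cohomology of the boundary tori. First I would note that the assertion is \'etale-local on $\mathscr S^{\tor}_{K(N\ell^{\infty}),\Q}$, and that away from the boundary it is trivial since $j_{N\ell^\infty}$ is an open immersion which is an isomorphism over $\mathscr S_{K(N\ell^\infty),\Q}$. So it suffices to compute the stalk of $Rj_{N\ell^\infty,\ast}\mathbb F_\ell$ at a geometric point $\bar x$ of a boundary stratum $\mathscr S^{\tor}_{K,\Sigma,(Z,[\sigma])}$, and show that the natural map $\mathbb F_\ell\to (Rj_{N\ell^\infty,\ast}\mathbb F_\ell)_{\bar x}$ is an isomorphism, i.e.\ that the punctured strict henselization of $\mathscr S^{\tor}_{K(N\ell^m N),\Q}$ at (the image of) $\bar x$ has, in the limit over $m$, trivial reduced \'etale cohomology.

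Next I would use the explicit description of the formal completions along boundary strata from Theorem~\ref{existence of toroidal compactifications}~(5): \'etale-locally near $\bar x$, the toroidal compactification looks like the formal completion $\mathfrak X_\sigma$ of a relative torus embedding $\Xi(\sigma)$ over the abelian scheme $C_Z$ along the closed stratum $\Xi_\sigma$, with the open part being $\Xi_Z$, an $\mathbb S_Z$-torsor over $C_Z$. Concretely, the local model is of the form $\Spf \cO_E[[q_1,\dots,q_s]]$ (after choosing coordinates adapted to the smooth cone $\sigma$), with the open Shimura variety corresponding to the locus where the $q_i$ are invertible, up to the quotient by the — in our setting trivial, by Remark~\ref{trivial stabilizer} — stabilizer $\Gamma_\sigma$. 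Passing to level $N\ell^m$ at the boundary amounts, on these charts, to adjoining $\ell^m$-th roots of the toric coordinates $q_i$ (this is part of the level structure on the semiabelian scheme $\cA[\ell^m]$, via its canonical filtration with graded pieces $T[\ell^m]$, $\cB[\ell^m]$, $X/\ell^m X$, cf.\ the degeneration discussion in \S\ref{cusp labels}), together with an $\ell^m$-level structure on the abelian part $\cB$ and a splitting of the relevant extensions. In the limit, the punctured neighbourhood of $\bar x$ becomes a perfectoid-type object: the toric directions become $\Spa$ of $(\widehat{\cO_E[q_i^{\pm 1/\ell^\infty}]},\dots)$, which is a pro-\'etale $\mathbb Z_\ell(1)^s$-cover of the punctured disk, with vanishing higher continuous cohomology after the limit, while the abelian part $\cB$ contributes the cohomology of $\cS_{\widehat Z,K(N\ell^\infty)}$, which by properness of $\cB$ and the inductive nature of the statement contributes only $\mathbb F_\ell$ in degree $0$ on the relevant stratum.

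More precisely, the key computation is: for the affine formal torus chart, $R\Gamma_{\et}$ of the generic fibre of the punctured tube, taken in the $\ell^\infty$-limit, is $\mathbb F_\ell$ concentrated in degree $0$. This is because $H^i_{\et}((\mathbb G_m)_{\Q_p},\mathbb F_\ell)$ is $\mathbb F_\ell$ for $i=0$ and $\mathbb F_\ell(-1)$ for $i=1$, and the transition map under $[\ell]: \mathbb G_m\to \mathbb G_m$ is multiplication by $\ell$ on $H^1$, hence zero mod $\ell$; so $\varinjlim_m H^i_{\et}((\mathbb G_m)_{\Q_p},\mathbb F_\ell) = 0$ for $i>0$. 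Running this over the $s$ toric coordinates (K\"unneth) and combining with the abelian-part contribution via the Leray spectral sequence for $\Xi_Z\to C_Z$ and then $C_Z\to \mathscr S_{\widehat Z}$, using that $C_Z$ is an abelian scheme and hence its cohomology also dies in the $\ell^\infty$-limit except in degree $0$, one gets that the whole stalk is $\mathbb F_\ell$ in degree $0$. I would organize this as an induction on the rank $r$ of $X$ (equivalently, on the codimension of the boundary stratum), the base case $r=0$ being the interior where there is nothing to prove, and the inductive step identifying the boundary stratum's contribution with the analogous statement for the smaller Shimura variety $\mathscr S_{\widehat Z}$ together with the toric and abelian-scheme factors.

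\textbf{Main obstacle.} The hard part will be making the ``$\ell^\infty$-level structure at the boundary = adjoin $\ell$-power roots of the toric coordinates'' precise and compatible with the abelian-part level structure and the $\Gamma_Z$-quotient, i.e.\ carefully tracking through the boundary charts of Theorem~\ref{existence of toroidal compactifications} which pieces of the level-$N\ell^m$ structure are ``toric'' (and thus responsible for the Kummer covers that trivialize the higher cohomology) versus ``abelian'' (handled by induction), and checking that taking the limit commutes with nearby cycles / stalk computation — one needs that the inverse limit of these boundary charts is still nice enough (a spatial diamond, or that cohomology commutes with the limit), which follows from the formalism of \S\ref{sec:perfectoid} but requires a clean statement. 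A secondary technical point is handling the quotient by $\Gamma_Z$: since we have arranged $\Gamma_\sigma$ trivial (Remark~\ref{trivial stabilizer}), this quotient is \'etale-locally trivial near the stratum, so it does not affect the stalk computation, but this should be stated explicitly.
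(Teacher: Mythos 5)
Your proposal is correct in its essential mechanism, and it is the same mechanism as the paper's proof: at each finite level the boundary is a normal crossings divisor (your toric coordinates $q_i$ are exactly the local branches), so $R^ij_{N\ell^m,\ast}\mathbb F_\ell$ is generated by Kummer classes of the branch equations, and going up the $\ell$-tower the boundary divisors are ramified to degree divisible by $\ell$ (your observation that deeper level adjoins $\ell$-power roots of the $q_i$, equivalently that $[\ell]^\ast$ is multiplication by $\ell$, hence zero, on $H^1(\mathbb G_m,\mathbb F_\ell)$), so the transition maps kill $R^i$ for $i>0$ in the colimit. The paper simply states this in the streamlined normal-crossings language, citing Pink \S 2.7, without unwinding the boundary charts.

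Two of your detours are unnecessary and slightly off target, though they do not break the argument. First, since you have (correctly) reduced to computing the stalk of $Rj_{N\ell^\infty,\ast}\mathbb F_\ell$ at a geometric point, the abelian-scheme directions ($C_Z$, $\cB$, the smaller Shimura variety $\mathscr S_{\widehat Z}$) contribute nothing: the stalk is the cohomology of the punctured strict henselization, and only the toric/branch directions produce higher cohomology there. So the Leray spectral sequence over $C_Z\to\mathscr S_{\widehat Z}$, the appeal to properness of $\cB$, and the induction on the rank $r$ are not needed — invoking them conflates the sheaf-level (stalkwise) statement with a global cohomology computation. Second, the perfectoid/adic framing is superfluous: the lemma is a statement about schemes over $\Q$, the limit over $m$ is a cofiltered limit of qcqs schemes along affine transition maps, and the identification of the stalk in the limit with the colimit of finite-level stalks is standard scheme-theoretic continuity of \'etale cohomology; no diamonds are required. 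With those simplifications your argument collapses exactly to the paper's.
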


\begin{proof} This is a standard consequence of the structure of toroidal compactifications, see for example~\cite[\S 2.7]{pink}. For completeness, we sketch the argument: At each finite level, the boundary of the toroidal compactification is a divisor with normal crossings, so one can compute $Rj_{N\ell^m,\ast} \mathbb F_\ell$ explicitly: The $R^1j_{N\ell^m,\ast} \mathbb F_\ell$ is at each point freely generated by the divisors passing through this point, and the higher $R^i j_{N\ell^m,\ast} \mathbb F_\ell$ are wedge powers. Going up the $\ell^\infty$-tower, the boundary divisors get more and more ramified with transition maps of degree divisible by $\ell$, so the transition maps are zero on $R^i j_{N\ell^m,\ast} \mathbb F_\ell$ for $i>0$, giving the desired result in the colimit over $m$.
\end{proof}

Similarly:

\begin{lem}\label{igusa ell infty} For any Igusa variety $\mathfrak{Ig}^{\mathbb X}$, the restriction map
\[
H^i(\mathfrak{Ig}^{\mathbb X,\tor}_{K(N\ell^\infty)},\mathbb F_\ell)\to H^i(\mathfrak{Ig}^{\mathbb X}_{K(N\ell^\infty)},\mathbb F_\ell)
\]
is an isomorphism.
\end{lem}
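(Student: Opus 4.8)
The plan is to mimic the argument in Lemma~\ref{shimura ell infty}, transferred to the partial toroidal compactification of the Igusa variety. Recall from Theorem~\ref{perfect toroidal igusa is etale} and Theorem~\ref{toroidal igusa is etale} (or rather, from the finite-level incarnations $\mathrm{Ig}^{\mathbb X,\tor}_m\to \mathscr C^{\mathbb X,\tor}$) that $\mathfrak{Ig}^{\mathbb X,\tor}_{K(N\ell^\infty)}$ is an inverse limit, along finite \'etale maps, of the finite-level partial toroidal compactifications $\mathrm{Ig}^{\mathbb X,\tor}_m$ with varying $\ell$-level; each such $\mathrm{Ig}^{\mathbb X,\tor}_m$ is smooth over $k$ by Lemma~\ref{toroidal compactifications of leaves are smooth} (its toroidal boundary being a normal crossings divisor, as it is cut out locally by the monomial equations of the torus embeddings $\Xi_{\tilde Z,\Sigma_Z}$ of Theorem~\ref{thm:tor Igusa strata}). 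So the first step is to record that the complement of $\mathrm{Ig}^{\mathbb X}_m$ in $\mathrm{Ig}^{\mathbb X,\tor}_m$ is a normal crossings divisor, and hence, writing $j_m$ for the open immersion, that $R^i j_{m\ast}\mathbb F_\ell$ is locally free on the strata with $R^0 = \mathbb F_\ell$, $R^1$ freely generated by the local branches of the boundary divisor, and $R^i = \bigwedge^i R^1$.

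The second step is the key point: going up the $\ell^\infty$-tower the transition maps vanish on $R^i j_{m\ast}\mathbb F_\ell$ for $i>0$. Concretely, the boundary charts of $\mathrm{Ig}^{\mathbb X,\tor}_m$ are (by Theorem~\ref{thm:tor Igusa strata}) quotients of relative torus embeddings $\Xi_{\tilde Z,\Sigma_Z}$, and the transition map in the $\ell$-direction, being a covering that on a boundary chart pulls back a uniformizing parameter of a boundary divisor to (a unit times) its $\ell$-th power — exactly as in the Shimura variety case, since the boundary stratification is governed by the same cone data $\Sigma_Z$ and the $\ell$-level structure only refines the abelian-part data $\cB$ and the level structure, not the combinatorics — induces multiplication by $\ell$ on the relevant $R^1$ and hence $0$ on the associated-graded of the stalk filtration. (One must be slightly careful that it is the $\ell$-part of the level that matters: the $p$-level is fixed throughout this lemma, so the slope-filtration Igusa data is unchanged, and only the away-from-$p$ cusp label, which contains the Siegel-type ramification, is being refined.) Therefore in the colimit over $m$ one obtains $\varinjlim_m Rj_{m\ast}\mathbb F_\ell = \mathbb F_\ell$ on $\mathfrak{Ig}^{\mathbb X,\tor}_{K(N\ell^\infty)}$.

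The third step is to pass from this statement about $Rj_\ast$ to the statement about cohomology in the lemma: cohomology commutes with the relevant inverse limit of schemes along affine (indeed finite) transition maps, so $H^i(\mathfrak{Ig}^{\mathbb X,\tor}_{K(N\ell^\infty)},\mathbb F_\ell) = \varinjlim_m H^i(\mathrm{Ig}^{\mathbb X,\tor}_m,\mathbb F_\ell)$ and similarly without the $\tor$; and $H^i(\mathrm{Ig}^{\mathbb X,\tor}_m,\mathbb F_\ell) = H^i(\mathrm{Ig}^{\mathbb X,\tor}_m, Rj_{m\ast}\mathbb F_\ell \text{-truncation})$, with the Leray spectral sequence for $j_m$ degenerating in the limit by the vanishing just established. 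Concretely: filter $Rj_{m\ast}\mathbb F_\ell$, take the colimit over $m$, and use that the colimit is exact to conclude $H^i(\mathfrak{Ig}^{\mathbb X,\tor}_{K(N\ell^\infty)},\mathbb F_\ell) \cong H^i(\mathfrak{Ig}^{\mathbb X}_{K(N\ell^\infty)},\mathbb F_\ell)$ via the restriction map.

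The main obstacle I anticipate is the precise verification that the $\ell$-direction transition maps act by zero on the positive-degree higher direct images along the boundary — i.e.\ that the boundary divisors genuinely acquire ramification of degree divisible by $\ell$ in the tower. For the Shimura variety this is classical (cf.~\cite[\S 2.7]{pink}, \cite{pink}); for the Igusa variety one needs to check it survives passage to the leaf and its partial toroidal compactification. The cleanest route is to observe that the partial toroidal compactification $\mathrm{Ig}^{\mathbb X,\tor}_m$ is, near the boundary, pulled back (via $C_{\tilde Z}\to C_Z$ and the fiber square of Theorem~\ref{thm:tor Igusa strata}) from the very same torus-embedding boundary charts $\Xi_{Z,\Sigma_Z}$ that control $\mathscr S^{\tor}_{K(p^mN)}$, and that the $\ell$-level transition map on these charts is induced from the one on $\mathscr S^{\tor}$; so the local computation of $R^ij_\ast$ and the vanishing of transition maps is literally inherited from the Shimura-variety case (Lemma~\ref{shimura ell infty}) by this pullback compatibility, which is what makes the lemma "similar" as the text asserts.
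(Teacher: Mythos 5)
Your proof is correct and follows essentially the same route as the paper's one-line argument: reduce to the non-perfect finite-Igusa-level schemes $\mathrm{Ig}^{\mathbb X,\tor}_m$ (which implicitly assumes $\mathbb X$ completely slope divisible, a harmless reduction by Corollary~\ref{cor:correspondence igusa}), and then observe via Theorem~\ref{thm:tor Igusa strata} that the boundary charts are pulled back from the same relative torus embeddings governing $\mathscr S^\tor$, so the computation of Lemma~\ref{shimura ell infty} — normal crossings boundary, wedge-power description of $R^\bullet j_\ast\mathbb F_\ell$, degree-$\ell$ ramification killing the positive-degree transition maps in the $\ell^\infty$-tower — carries over verbatim. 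You have simply unpacked what the paper compresses into ``the same argument as for Shimura varieties works.''
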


\begin{proof} One can assume that $\mathbb X$ is completely slope divisible and reduce to the similar assertion for the non-perfect schemes
\[
\mathrm{Ig}^{\mathbb X}_{m,K(N\ell^\infty)}\subset \mathrm{Ig}^{\mathbb X,\tor}_{m,K(N\ell^\infty)}
\]
that have the same local structure as Shimura varieties, so the same argument as for Shimura varieties works.
\end{proof}

\begin{proof}[Proof of Theorem~\ref{thm:semiperversity}] First, we note that by the Hochschild--Serre spectral sequence, it is enough to show that
\[
R\psi(R\pi_{\HT,\ell^\infty,\ast}^\circ \mathbb F_\ell)|_U\in {}^p D^{\geq d}(\mathfrak{U}_k,\mathbb F_\ell)
\]
where
\[
\pi_{\HT,\ell^\infty}^\circ: \mathcal S_{K(p^\infty N \ell^\infty),C}^\circ\to \Fl_C
\]
is the version with $\ell^\infty$-level structure. Combining Lemma~\ref{igusa ell infty}, Theorem~\ref{thm:open hodge tate fibers} and Corollary~\ref{cohomology fibers}, we see that the natural map
\[
R\pi_{\HT,\ell^\infty,\ast}^\tor \mathbb F_\ell\to R\pi_{\HT,\ell^\infty,\ast}^\circ \mathbb F_\ell
\]
is an isomorphism. From now on, we will work with the left-hand side.

By \cite[Theorem 4.1.1]{scholze}, for a cofinal collection of $U$, the fibre product
\[
\mathcal{S}^{\underline{\ast}}_{K(p^\infty N),U} := \mathcal{S}^{\underline{\ast}}_{K(p^\infty N),C}\times_{\Fl_C} U
\]
is affinoid perfectoid; moreover, it arises via base change from some affinoid $U'=\mathcal{S}^{\underline{\ast}}_{K(p^m N),U}$ that is \'etale over $\mathcal{S}^{\underline{\ast}}_{K(p^m N),C}$, for $m$ large enough.

Write
\[
\mathcal{S}^{\underline{\ast}}_{K(p^\infty N),U}=\mathrm{Spa}(R_{K(p^\infty N),U})
\]
and $U=\Spa(A)$. Consider the map
\[
\Spf(R_{K(p^\infty N),U}^\circ)\to \Spf(A^\circ).
\]
After reduction modulo $p$, this factors over $\Spec(R_{K(p^m N),U}^\circ/p)$ for large enough $m$ (as $A^\circ/p$ is of finite presentation over $\cO_C/p$), and the resulting map of affine schemes satisfies the valuative criterion of properness (cf.~proof of \cite[Proposition 6.1.3]{caraiani-scholze}), so is an integral map. In particular, pushforward preserves ${}^p D^{\geq d}$.

As taking nearby cycles in the setting of adic spaces commutes with pushforwards, we find that it is enough to show that if
\[
g_m^{\infty,\infty}: \mathcal S^{\tor}_{K(p^\infty N \ell^\infty),U}\to U'=\mathcal S^{\underline{\ast}}_{K(p^m N),U}
\]
denotes the projection, then
\[
R\psi(Rg_{m\ast}^{\infty,\infty} \mathbb F_\ell)\in {}^p D^{\geq d}(\Spec(R_{K(p^m N),U}^\circ/p),\mathbb F_\ell).
\]
The \'etale map $U\to \Fl_C$ that we started with here can now be replaced by any \'etale map $U'\to \mathcal S^{\underline{\ast}}_{K(p^m N),C}$ from some affinoid $U'=\Spa(B)$. Let us denote by subscripts $_{U'}$ the base changes of $U'$ to various spaces. Then in particular
\[
g_m^{\infty,\infty}: \mathcal S^{\tor}_{K(p^\infty N \ell^\infty),U} = \mathcal S^{\tor}_{K(p^\infty N \ell^\infty),U'}\to \mathcal S^{\underline{\ast}}_{K(p^m N),U}=U'
\]
and we want to show
\[
R\psi(Rg_{m\ast}^{\infty,\infty} \mathbb F_\ell)\in {}^p D^{\geq d}(\Spec(B^\circ/p),\mathbb F_\ell).
\]

If we let for any $m_1\geq m$, $m_2\geq 0$
\[
g_m^{m_1,m_2}: \mathcal S^{\tor}_{K(p^{m_1} N \ell^{m_2}),U'}\to U'
\]
be the corresponding projection, and
\[
j^{m_1,m_2}: \mathcal S_{K(p^{m_1} N \ell^{m_2}),U'}\hookrightarrow \mathcal S^{\tor}_{K(p^{m_1} N \ell^{m_2}),U'}
\]
the open immersion (this map arises via an \'etale base change of the analytification of an open embedding of smooth schemes over $C$), then on the one hand
\[
Rg^{\infty,\infty}_{m\ast} \mathbb F_\ell = \varinjlim_{m_1,m_2} Rg_{m\ast}^{m_1,m_2} \mathbb F_\ell\ ,
\]
while on the other hand for any fixed $m_1$, the natural map
\[
\varinjlim_{m_2} Rg_{m\ast}^{m_1,m_2} \mathbb F_\ell\to \varinjlim_{m_2} Rg_{m\ast}^{m_1,m_2} Rj^{m_1,m_2}_\ast \mathbb F_\ell
\]
is an isomorphism. For the latter claim, it suffices to check the analogous assertion on schemes (as all operations arise via \'etale base change from similar operations between analytifications of schemes of finite type, where they agree with the operations on the scheme level), and then it follows from $Rj^{m_1,\infty}_\ast \mathbb F_\ell = \mathbb F_\ell$ in the setting of schemes, which is Lemma~\ref{shimura ell infty}.

Finally, it suffices to see that for all $m_1,m_2$,
\[
R\psi(Rg_{m\ast}^{m_1,m_2} Rj^{m_1,m_2}_\ast \mathbb F_\ell)\in {}^p D^{\geq d}(\Spec(B^\circ/p),\mathbb F_\ell).
\]
But the sheaf $Rg_{m\ast}^{m_1,m_2} Rj^{m_1,m_2}_\ast \mathbb F_\ell$ is simply the pushforward of $\mathbb F_\ell$ under the composite of the open immersion $\mathcal S_{K(p^{m_1} N \ell^{m_2}),U'}\hookrightarrow \mathcal S^{\underline{\ast}}_{K(p^{m_1} N \ell^{m_2}),U'}$ and the finite map $\mathcal S^{\underline{\ast}}_{K(p^{m_1} N \ell^{m_2}),U'}\to U'$, which can be computed in the same way on the level of schemes (choosing an algebraization of the \'etale map $U'\to \mathcal S^{\underline{\ast}}_{K(p^m N),C}$), and lies in ${}^p D^{\geq d}$ in the scheme setting. Now the result follows from $t$-exactness of nearby cycles for the perverse $t$-structure, cf.~\cite{illusie-autour} (and compatibility of nearby cycles between schemes and adic spaces, \cite[Theorem 3.5.13]{huber}).
\end{proof}

\newpage

\section{The cohomology of Igusa varieties and automorphic representations}\label{igusa}

In this section, we analyze the $\overline{\mathbb Q}_\ell$-cohomology of Igusa varieties as a virtual representation, and describe it in terms of automorphic representations. As an application, we can prove the existence of associated Galois representations when the cohomology is concentrated in one degree. 

The methods and results of this section are parallel to those of Section 5 of~\cite{caraiani-scholze}, but we will replace the conditions imposed there on the 
ramification of primes in $F$ with the condition that the level be sufficiently small at ``bad places''. In~\cite{shin-stable}, Sug Woo Shin derived a formula for the 
alternating sum of the compactly supported cohomology of Igusa varieties 
as a sum of stable orbital integrals for $G$ and its elliptic endoscopic groups. 
If $F$ is not imaginary quadratic and the level $N$ is sufficiently divisible, we can reinterpret this formula as the geometric side of the twisted trace formula. 
We then compare it to the spectral side of the twisted trace formula to compute it in terms of virtual representations of $G(\mathbb{A}_f^p)\times J_b(\mathbb{Q}_p)$.

Throughout this section, we will consider Igusa varieties $\mathrm{Ig}^b$ with infinite level at $p$, whose cohomology gives rise to representations of $J_b(\mathbb{Q}_p)$. The notation will distinguish between the case when we have some finite tame level $K(N)\subset G(\A_{f}^p)$ or whether we consider the inverse system of Igusa varieties over all possible tame levels. 

\subsection{Statements}

As usual, we fix a prime $p$ that is unramified in $F$. In this section, we make the following further assumptions:
\begin{enumerate}
\item The CM field $F$ contains an imaginary quadratic field $F_0$ in which $p$ is split;
\item The totally real subfield $F^+$ is not equal to $\Q$.
\end{enumerate}
We also fix a set $S$ of places of $\Q$ containing $\infty$ and all primes dividing $p\ell\Delta_F$. For technical reasons, we also fix a character $\varpi: \A_{F_0}^\times/F_0^\times\to \C^\times$ extending the quadratic character of $\A^\times/\Q^\times\to \{\pm 1\}$ corresponding to $F_0$ (via the embedding $\A^\times/\Q^\times\hookrightarrow \A_{F_0}^\times/F_0^\times$), and add to $S$ all primes above which $\varpi$ is ramified.

Throughout this section we also fix an isomorphism $\iota_\ell: \mathbb{\overline Q}_\ell\toisom \C$.

Let $\Spl_{F_0/\Q}$ be the set of primes that split in $F_0$, and let
\[
\mathbb T^S = \prod_{q\not\in S,q\in \Spl_{F_0/\Q}} \mathbb Z[G(\Q_q)//G(\Z_q)]
\]
be the associated spherical Hecke algebra. If $v$ is a place of $F$ dividing a prime $q\not\in S$ that splits in $F_0$, then $v$ determines a prime $\mathfrak q|q$ of $F_0$, and
\[
G(\Q_q) = \GL_{2n}(F_v)\times \prod_{w|\mathfrak q,w\neq v} \GL_{2n}(F_w)\times \Q_q^\times.
\]
For any $i=1,\ldots,2n$, we define $T_{i,v}\in \mathbb T^S$ as the Hecke operator corresponding to the $G(\Z_q)$-double coset
\[
\GL_{2n}(\cO_{F_v})\mathrm{diag}(\underbrace{\varpi_v,\ldots,\varpi_v}_i,1,\ldots,1)\GL_{2n}(\cO_{F_v}) \times\prod_{w\mid \mathfrak{p}, w\neq v}\GL_{2n}(\cO_{F_w})\times \Z_q^\times
\]
inside $G(\Q_q)$.

Fix $b\in B(G_{\Q_p},\mu^{-1})$ and fix a $p$-divisible group $\mathbb X=\mathbb X_b$ with $G$-structure over $\overline{\mathbb F}_p$ in the isogeny class given by $b$; we assume that $\mathbb X$ is completely slope divisible. Note that if $v$ is a prime of $F$ dividing $p$, inducing a prime $\mathfrak p|p$ of $F_0$, then we have a decomposition
\[
J_b(\Q_p)\cong J_{b_v}(F_v)\times \prod_{w|\mathfrak p,w\neq v} J_{b_w}(F_w)\times \Q_p^\times
\]
where $b_v\in B(\GL_{2n,F_v})$ and $b_w\in B(\GL_{2n,F_w})$ are the elements induced by $b$. If $\pi$ is an irreducible representation of $J_b(\Q_p)$, we will write
\[
\pi = \pi_v\otimes \bigotimes_{w|\mathfrak p,w\neq v} \pi_w\otimes \chi
\]
for the associated decomposition. Note that $J_{b_v}(F_v)$ is an inner form of a Levi subgroup of $\GL_{2n}(F_v)$. As such, any irreducible smooth $\overline{\mathbb Q}_\ell$ representation $\pi_v$ of $J_{b_v}(F_v)$ has a semisimple $L$-parameter
\[
\sigma_{\pi_v}: W_{F_v}\to \GL_{2n}(\overline{\mathbb Q}_\ell),
\]
using the results of Badulescu, \cite{badulescu}, on the Jacquet--Langlands correspondence for all inner forms of general linear groups.

\begin{rem} Recall that the semisimple Langlands parameter associated to a Frobenius semisimple Weil-Deligne representation
\[
W_{F_v}\times \SL_2(\C)\to \GL_m(\C)
\]
is the restriction along the embedding
\[
W_{F_v}\to W_{F_v}\times \SL_2(\C): w\mapsto (w,\mathrm{diag}(|w|^{1/2},|w|^{-1/2}))
\]
where $|\cdot|: W_{F_v}\to \R_{>0}$ is the map sending a Frobenius to $q_v$. This has the property that nontrivial extensions can only exist between irreducible smooth representations with the same semisimple Langlands parameter; in fact the semisimple Langlands parameter gives exactly a point of the Bernstein center. Also, it is compatible with normalized parabolic induction.
\end{rem}

We fix a level $N\geq 3$ that is only divisible by primes in $S_f^p := S_{\mathrm{fin}}\setminus \{p\}$. The goal of this section is to describe the virtual representation
\[
[H_c(\mathrm{Ig}^b_{K(N)},\overline{\mathbb Q}_\ell)] := \sum_i (-1)^i [\varinjlim_{m} H_c^i(\mathrm{Ig}^b_{m,K(N)},\overline{\mathbb Q}_\ell)]
\]
of $J_b(\Q_p)\times \mathbb T^S$, under an additional assumption on $N$. As a virtual representation of $J_b(\Q_p)\times \mathbb T^S$, we can write
\[
[H_c(\mathrm{Ig}^b_{K(N)},\overline{\mathbb Q}_\ell)] = \sum_{j\in J} n_j \pi_j\otimes \psi_j
\]
over some countable index set $J$, where each $\pi_j$ is an irreducible smooth representation of $J_b(\Q_p)$, $\psi_j: \mathbb T^S\to \overline{\mathbb Q}_\ell$ is a character, and we assume that all $n_j\neq 0$ and all $\pi_j\otimes \psi_j$ are pairwise distinct.

The goal of this section is to prove the following theorem.

\begin{thm}\label{thm:igusa computation}  Assume that the level $N$ is divisible by some $N_0\geq 3$ as in~Remark~\ref{choice of N0}; in particular, $N_0$ is only divisible by primes in $S_f^p := S_{\mathrm{fin}}\setminus \{p\}$. 

For each $j\in J$, there is a continuous semisimple Galois representation
\[
\rho_j: \mathrm{Gal}(\overline{F}/F)\to \GL_{2n}(\overline{\mathbb Q}_\ell)
\]
that is almost everywhere unramified, and for all primes $v$ of $F$ dividing a prime $q\not\in S$ that splits in $F_0$, the representation $\rho_j$ is unramified at $v$, and the characteristic polynomial of $\rho_j(\Frob_v)$ is given by
\[
X^{2n} - \psi_j(T_{1,v})X^{2n-1}+\dots+ (-1)^iq_v^{i(i-1)/2}\psi_j(T_{i,v})X^{2n-i}+\dots + q_v^{n(2n-1)}\psi_j(T_{2n,v}),
\]
where $q_v$ is the cardinality of the residue field at $v$.

Moreover, for all primes $v$ dividing $p$, the semisimple Langlands parameter of $\pi_{j,v}|\cdot|^{\frac 12-n}$ is given by the semisimple Langlands parameter induced by $\rho_j|_{\mathrm{Gal}(\overline{F}_v/F_v)}$.
\end{thm}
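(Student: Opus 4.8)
The plan is to compute the alternating sum $[H_c(\mathrm{Ig}^b,\overline{\mathbb Q}_\ell)]$ by reinterpreting Shin's stabilized formula, \cite{shin-stable}, as the geometric side of the twisted trace formula for $\GL_{2n}/F_0$ twisted by the Galois involution, following the strategy of~\cite[\S 5]{caraiani-scholze} but with the ramification hypotheses replaced by the assumption that the level is very deep at the places in $S_f^p$. First I would recall that Shin expresses $\sum_i (-1)^i [H_c^i(\mathrm{Ig}^b,\overline{\mathbb Q}_\ell)]$ as a sum, over the elliptic endoscopic groups $H$ of $G$, of stable orbital integrals attached to transfers of the test function. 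The key combinatorial input is the description of $B(G_{\Q_p},\mu^{-1})$ and of the Kottwitz invariant; one uses the fact that $p$ splits in $F_0$ to write $G(\Q_p)$, $J_b(\Q_p)$ and the relevant twisted groups as products of general linear groups over the places of $F$ above $p$, and the Jacquet--Langlands correspondence of Badulescu, \cite{badulescu}, to translate representations of $J_{b_v}(F_v)$ into representations of $\GL_{2n}(F_v)$ and to make sense of the semisimple $L$-parameter $\sigma_{\pi_v}$.

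The second step is the main automorphic input. Using the hypothesis $F^+\neq\Q$ (so that $G$ has no cuspidal subgroups, cf.~Definition~\ref{cuspidal subgroups}) together with the condition that $N$ is divisible by the sufficiently large $N_0$ of Remark~\ref{choice of N0}, one arranges that the test function at the two infinite places --- and the deep level at the places dividing $N$ --- is of a simple enough form that the boundary terms in the trace formula vanish and the geometric side of Shin's formula matches the geometric side of the (twisted, stabilized) Arthur--Selberg trace formula. Comparing with the spectral side, one obtains that $[H_c(\mathrm{Ig}^b,\overline{\mathbb Q}_\ell)]$ is a $\Z$-linear combination of terms indexed by conjugate self-dual automorphic representations $\Pi$ of $\GL_{2n}(\A_{F})$ (with the appropriate multiplier character governed by $\varpi$), where the $\mathbb T^S$-eigensystem $\psi_j$ on the $\Pi$-isotypic part is the Satake parameter of $\Pi$ at the split unramified places, and the $J_b(\Q_p)$-representation $\pi_j$ has, at each $v|p$, semisimple $L$-parameter a twist of $\sigma_{\Pi_v}$ by $|\cdot|^{n-\frac12}$ (the twist accounting for the half-integral normalization and the Tate twist in the définition of the Hecke polynomial). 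This is exactly the content of the displayed polynomial identity, once one attaches to each such $\Pi$ the Galois representation $\rho_\Pi$ provided by~\cite{chenevier-harris} (or its refinements), which is automatically semisimple, almost everywhere unramified, and satisfies local-global compatibility at the split places by construction.

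The third step is to descend from the statement ``for each contributing $\Pi$'' to the statement ``for each $j\in J$''. Here one notes that a priori the virtual representation is only a \emph{difference} of such terms, and that distinct $\Pi$ can give the same pair $(\pi_j,\psi_j)$ only if their Galois representations have the same characteristic polynomials at a density-one set of primes, hence (by Chebotarev and semisimplicity) are isomorphic; so $\rho_j$ is well-defined once we pick, for each $j$, any $\Pi$ that contributes with the corresponding eigensystem. The local-global compatibility at $v|p$ then follows from the $p$-adic side of the trace formula comparison: the $J_b(\Q_p)$-action on the $\Pi$-part is, by the very shape of Shin's formula, governed by $\Pi_v$ through the Jacquet--Langlands/Moeglin--Waldspurger transfer, and the normalized parameter of $\pi_{j,v}|\cdot|^{\frac12-n}$ is $\sigma_{\Pi_v}$, which is $\rho_\Pi|_{W_{F_v}}$ semisimplified.

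The hard part will be the Hecke-equivariance and bookkeeping of normalizations in the trace formula comparison --- in particular verifying that Shin's formula, which is stated for compactly supported cohomology of Igusa varieties with a fixed tame level, remains valid and Hecke-equivariant at the deep level $N\mid N_0^{\infty}$ and that the simplification of the boundary terms (using $F^+\neq\Q$) is legitimate. A subtler point is that $J_{b_v}(F_v)$ is only an inner form of a \emph{Levi} of $\GL_{2n}(F_v)$, not of $\GL_{2n}(F_v)$ itself, so one must be careful that the ``$L$-parameter'' $\sigma_{\pi_v}$ used in the statement is the $2n$-dimensional one obtained by composing Badulescu's transfer with the inclusion of the Levi's dual group, and that the twist by $|\cdot|^{n-\frac12}$ is compatible with the cuspidal support of $\Pi_v$; tracking these half-integral Tate twists consistently across the endoscopic transfers is where the computation is most delicate.
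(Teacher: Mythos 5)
Your plan follows essentially the same route as the paper: Shin's stabilized formula for $[H_c(\mathrm{Ig}^b,\overline{\Q}_\ell)]$, a comparison with a twisted trace formula made possible by $F^+\neq\Q$ and the deep level at $S_f^p$, extraction of $\theta$-stable automorphic representations from the spectral side, and then the known construction of Galois representations together with bookkeeping of the $\varpi$- and $|\cdot|$-twists and of $\mathrm{Red}^b_{\vec n}$ to get the Hecke polynomial and the compatibility at $p$; your Chebotarev argument in the third step matches how the paper pins down $\rho_j$.

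Two points where your description of the mechanism diverges from what actually happens. First, the comparison is not with a single twisted trace formula for $\GL_{2n}/F_0$: for each $G_{\vec n}\in\mathscr{E}^{\mathrm{ell}}(G)$ one compares $ST_e^{G_{\vec n}}(\phi^{\vec n})$ with the geometric side of the twisted trace formula for $\mathbb{G}_{\vec n}\theta$, where $\mathbb{G}_{\vec n}=\mathrm{Res}_{F_0/\Q}(G_{\vec n}\times_{\Q}F_0)$ (Proposition~\ref{comparing to the geometric side}), and the spectral side then yields $\theta$-stable isobaric representations $\Pi^{\vec n}$ of each $\mathbb{G}_{\vec n}$ (Lemma~\ref{linear combination}), whose cuspidal constituents are fed into Theorem~\ref{Galois representations conjugate self-dual}. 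Second, and more importantly, you misattribute the role of $N_0$: the deep level has nothing to do with killing boundary terms. The parabolic/boundary contributions are eliminated using cuspidality of the archimedean test function together with the fact that $\mathbb{G}_{\vec n}\theta$ has no proper cuspidal Levi subsets when $F^+\neq\Q$ (Lemma~\ref{no cuspidal subgroups}, replacing the usual ``cuspidal at two places'' hypothesis via Arthur's splitting formula and Morel's argument). What the deep level actually buys is the existence of an \emph{inverse} base-change transfer at the bad finite places: by Labesse's criterion (a function whose stable orbital integrals vanish off norms is a $BC$-transfer) combined with a Morel-style lemma detecting norms through the cocenter (Lemma~\ref{detecting norms in derived group}), the endoscopic transfer of the characteristic function of a sufficiently small principal congruence subgroup at $S_f^p$ is itself a $BC$-transfer of some $f^{\vec n}_{S_f^p}$ (Lemma~\ref{condition H}); without this, the functions $\phi^{\vec n}$ and $f^{\vec n}$ are not $BC$-matching at $S_f^p$ and the comparison of Proposition~\ref{comparing to the geometric side} cannot even be stated. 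You flagged this verification as the hard part, but the fix is this transfer statement, not a vanishing of boundary terms.
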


In particular, we get the following corollary.

\begin{cor}\label{cor:existence Galois single degree} Let $N$ be divisible by $N_0$ as above. Assume that $\mathfrak m\subset\mathbb T^S$ is a maximal ideal such that
\[
H^i(\mathrm{Ig}^b_{K(N)},\mathbb F_\ell)_{\mathfrak m}\neq 0
\]
in exactly one degree. Then there exists a continuous semisimple Galois representation
\[
\overline\rho_{\mathfrak m}: \mathrm{Gal}(\overline{F}/F)\to \GL_{2n}(\overline{\mathbb F}_\ell)
\]
such that for all primes $v$ of $F$ dividing a prime $q\not\in S$ that splits in $F_0$, the representation $\overline\rho_{\mathfrak m}$ is unramified at $v$, and the characteristic polynomial of $\overline\rho_{\mathfrak m}(\Frob_v)$ is given by the reduction of
\[
X^{2n} - T_{1,v}X^{2n-1}+\dots+ (-1)^iq_v^{i(i-1)/2}T_{i,v}X^{2n-i}+\dots + q_v^{n(2n-1)}T_{2n,v}
\]
modulo $\mathfrak m$, where $q_v$ is the cardinality of the residue field at $v$.

Moreover, if $p$ splits completely in $F$ and for all $v|p$, the representation $\overline\rho_{\mathfrak m}$ is unramified with the eigenvalues $\{\alpha_{1,v},\ldots,\alpha_{2n,v}\}$ of $\overline\rho_{\mathfrak m}(\Frob_v)$ satisfying $\alpha_{i,v}\neq p\alpha_{j,v}$ for all $i\neq j$, then $b$ is ordinary.
\end{cor}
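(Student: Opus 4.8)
~The plan is to deduce Corollary~\ref{cor:existence Galois single degree} from Theorem~\ref{thm:igusa computation} by a standard reduction from $\overline{\mathbb Q}_\ell$-coefficients to $\mathbb F_\ell$-coefficients, combined with the Jacquet--Langlands/genericity input at $p$. First I would pass from $H^i(\mathrm{Ig}^b,\mathbb F_\ell)_{\mathfrak m}$ to characteristic-zero cohomology: since $\mathbb T^S$ acts on the finitely generated $\Z_\ell$-module $H^i(\mathrm{Ig}^b_{m,K(N)},\Z_\ell)$ (for $m$ large enough that everything stabilizes, using that $\mathrm{Ig}^b$ is a pro-finite \'etale cover with Galois group $\Gamma_b=\mathrm{Aut}(\mathbb X)$), the maximal ideal $\mathfrak m$ being in the support of $H^i(\mathrm{Ig}^b,\mathbb F_\ell)$ forces, by the usual argument with the universal coefficient theorem and the fact that Hecke operators commute, the existence of a minimal prime $\mathfrak p\subset \mathfrak m$ of $\mathbb T^S$ occurring in the support of $H^\ast(\mathrm{Ig}^b,\overline{\mathbb Q}_\ell)$, i.e.\ a character $\psi_j$ as in Theorem~\ref{thm:igusa computation} whose reduction modulo $\mathfrak m$ (under the fixed embedding $\mathbb T^S/\mathfrak m\hookrightarrow \overline{\mathbb F}_\ell$) recovers the Hecke eigenvalues cut out by $\mathfrak m$. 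Then $\overline\rho_{\mathfrak m}$ is defined as the semisimplified reduction of $\rho_j$; the characteristic polynomial relation follows by reducing the relation of Theorem~\ref{thm:igusa computation} modulo $\ell$, using that $\rho_j$ takes values in $\GL_{2n}(\cO)$ for the ring of integers $\cO$ of a finite extension of $\Q_\ell$ and that the characteristic polynomial coefficients are algebraic integers. One has to check $\overline\rho_{\mathfrak m}$ is independent of the choice of $j$: two such semisimple representations have equal characteristic polynomials of Frobenius at a density-one set of primes, hence (by Chebotarev and Brauer--Nesbitt for the semisimplifications) are isomorphic.

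For the second, more delicate assertion — that $b$ is ordinary under the genericity hypothesis at $p$ — I would argue by contradiction, essentially as in the final step of the proof of Theorem~\ref{thm:main}. Suppose $b$ is not ordinary; since $\mathfrak m$ is in the support of $H^\ast(\mathrm{Ig}^b,\mathbb F_\ell)$, by Theorem~\ref{thm:igusa computation} we get some $\pi_j\otimes\psi_j$ with $\psi_j\equiv \mathfrak m$, and by the last sentence of that theorem the semisimple Langlands parameter of $\pi_{j,v}|\cdot|^{\frac12-n}$ at each $v\mid p$ equals the one induced by $\rho_j|_{\mathrm{Gal}(\overline F_v/F_v)}$. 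Reducing modulo $\ell$, the Frobenius eigenvalues of $\overline\rho_{\mathfrak m}(\mathrm{Frob}_v)$ are $\{\alpha_{1,v},\dots,\alpha_{2n,v}\}$ with $\alpha_{i,v}\neq p\alpha_{j,v}$ for $i\neq j$. Now $J_{b_v}(F_v)$ is an inner form of a proper Levi $M\subsetneq \GL_{2n}(F_v)$ precisely because $b$ is non-ordinary, so $\pi_{j,v}$ is a representation of a product of smaller (possibly division-algebra) groups, and its Langlands parameter $\sigma_{\pi_{j,v}}$ — being built by parabolic induction from the pieces — must, after the twist, contain two characters whose ratio is an unramified twist by $p$ (coming from the shift in Hodge--Tate-type weights between the slopes of $b$ at $v$); more precisely the block structure of $b_v$ forces a coincidence $\alpha_{i,v}=p^{?}\alpha_{j,v}$ among the semisimplified Frobenius eigenvalues. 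This contradicts the genericity $\alpha_{i,v}\neq p\alpha_{j,v}$, so $b$ must be ordinary. The cleanest way to organize this is to invoke exactly the non-ordinary case of Theorem~\ref{thm:igusa comp}, of which this corollary is the restated form: indeed Theorem~\ref{thm:igusa comp} already packages ``$H^i(\mathrm{Ig}^b,\mathbb F_\ell)_{\mathfrak m}\neq 0$ in exactly one degree $\Rightarrow$ $\overline\rho_{\mathfrak m}$ exists, and genericity at $p$ $\Rightarrow$ $b$ ordinary'', so Corollary~\ref{cor:existence Galois single degree} is obtained by spelling out that the single-degree hypothesis on $\mathbb F_\ell$-cohomology, together with torsion-freeness considerations, propagates to ``$H^i(\mathrm{Ig}^b,\overline{\mathbb Q}_\ell)_{\mathfrak p}$ nonzero in exactly one degree'' for the relevant minimal prime $\mathfrak p$, which is the hypothesis of Theorem~\ref{thm:igusa computation}.

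The main obstacle I anticipate is the bookkeeping in the reduction-mod-$\ell$ step: one must be careful that ``concentrated in one degree with $\mathbb F_\ell$-coefficients'' implies the needed statement after localizing the integral (or $\overline{\mathbb Q}_\ell$) cohomology at a minimal prime $\mathfrak p\subset\mathfrak m$ — a priori $\overline{\mathbb Q}_\ell$-cohomology localized at $\mathfrak p$ could spread into other degrees even if the $\mathbb F_\ell$-cohomology does not, because reduction can only create extra classes, not fewer. The resolution is that we do \emph{not} actually need single-degree concentration of the $\overline{\mathbb Q}_\ell$-cohomology: to produce \emph{some} $\rho_j$ with $\psi_j\equiv\mathfrak m$ it suffices that $\mathfrak m$ lies in the support of the alternating-sum virtual representation $[H_c(\mathrm{Ig}^b,\overline{\mathbb Q}_\ell)]$ modulo $\ell$, which follows from $H^\ast(\mathrm{Ig}^b,\mathbb F_\ell)_{\mathfrak m}\neq 0$ by an Euler-characteristic argument after inverting $\ell$ on integral cohomology; and for the ordinarity conclusion we only need the Langlands parameter at $p$ of \emph{that one} $\pi_j$. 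So the honest content is: (a) the existence of $\rho_j$ via Theorem~\ref{thm:igusa computation}, (b) good reduction of $\rho_j$ and the Chebotarev/Brauer--Nesbitt uniqueness of $\overline\rho_{\mathfrak m}$, and (c) translating the genericity hypothesis through the inner-form/Jacquet--Langlands description of $J_{b_v}(F_v)$ to force $b$ ordinary. Step (c) is where the real geometry of $B(G_{\Q_p},\mu^{-1})$ enters, and I would handle it by the slope-combinatorics argument sketched above, or simply cite it as the already-proven content of Theorem~\ref{thm:igusa comp}.
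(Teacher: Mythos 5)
Your overall shape (deduce everything from Theorem~\ref{thm:igusa computation}, reduce mod $\ell$, and use the inner-form structure of $J_b(\Q_p)$ at $p$) is the same as the paper's, but two steps have genuine gaps. First, Theorem~\ref{thm:igusa computation} computes the \emph{compactly supported} cohomology $[H_c(\mathrm{Ig}^b,\overline{\Q}_\ell)]$, whereas the hypothesis concerns ordinary cohomology of the non-proper $\mathrm{Ig}^b$; you pass between the two tacitly. The paper bridges this with Poincar\'e duality: concentration of $H^\ast(\mathrm{Ig}^b,\F_\ell)_{\m}$ in a single degree implies that $H^\ast_c(\mathrm{Ig}^b,\Z_\ell)_{\m^\vee}$, for the dual system $\m^\vee=\iota(\m)$, is concentrated in one degree and torsion-free, so some $\psi_j$ reduces to $\m^\vee$; one then has to twist at the end, $\overline\rho_{\m}=\overline\rho_{\m^\vee}|\Art_F^{-1}|^{1-2n}$, which your construction omits. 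Moreover, your claimed ``resolution'' that single-degree concentration is not needed is wrong: if the localized $\F_\ell$-cohomology sat in two adjacent degrees (e.g.\ coming entirely from torsion in the integral cohomology), the localized Euler characteristic could vanish and the alternating sum $[H_c]$ could miss $\m$ altogether, so no $\psi_j$ need exist. The one-degree hypothesis is exactly what makes the Euler characteristic nonzero (equivalently, what gives torsion-freeness and no cancellation), and the paper uses it for precisely this purpose.

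Second, the ordinarity step. You apply the genericity hypothesis, which is a statement about the mod-$\ell$ representation $\overline\rho_{\m}$ (unramified at $v\mid p$, eigenvalue ratios $\neq p$), directly to the characteristic-zero parameter of $\pi_{j,v}$: implicitly you assume $\rho_j|_{\Gal(\overline F_v/F_v)}$ is unramified (or a direct sum of characters) with Frobenius eigenvalues lifting the $\alpha_{i,v}$ and with no ratio equal to $p$. That is the nontrivial point: a priori $\rho_j$ can be ramified at $v$ even though $\overline\rho_{\m}$ is not. The paper closes this gap with a deformation-theoretic rigidity argument, split into the cases $p\not\equiv 1$ and $p\equiv 1\bmod\ell$, using the condition $\alpha_{i,v}\neq p\alpha_{j,v}$ and \cite[Lemma 6.2.2]{caraiani-scholze} (this is Koshikawa's refinement); only after knowing that $\rho_j|_{\Gal(\overline F_v/F_v)}$ is unramified, respectively a sum of characters with non-cyclotomic ratios, does the Jacquet--Langlands fact (no irreducible representation of a nontrivial inner form of a Levi has such a parameter, \cite[Lemma 5.4.3]{caraiani-scholze}) yield the contradiction. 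Your fallback of ``simply citing Theorem~\ref{thm:igusa comp}'' is circular, since the paper's proof of that theorem \emph{is} this corollary. Also, the correct statement is that the semisimplified parameter of a representation of a nontrivial inner form contains two characters whose ratio is exactly the cyclotomic character (Frobenius-eigenvalue ratio $q_v=p$), not merely ``an unramified twist by $p$''.
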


\begin{proof} Recall the involution $\iota: \mathbb{T}^S\to \mathbb{T}^S$, as in~\cite[\S 2.2.11]{10Authors} and set $\mathfrak{m}^\vee:=\iota(\mathfrak{m})$. By the assumption of concentration in one degree and Poincar\'e duality, it follows that $H^i_c(\mathrm{Ig}^b_{K(N)},\mathbb Z_\ell)_{\mathfrak m^\vee}$ is also concentrated in one degree and torsion-free, so there must be some $j\in J$ such that $\psi_j: \mathbb T^S\to \overline{\mathbb Q}_\ell$ reduces to $\mathfrak m^\vee$. Then we can apply Theorem~\ref{thm:igusa computation}, define $\overline\rho_{\mathfrak m^\vee}$ as the semisimplification of the reduction of $\rho_j$ modulo $\ell$, and set 
\[
\overline{\rho}_{\mathfrak m}: = \overline{\rho}_{\mathfrak{m}^\vee}|\mathrm{Art}_F^{-1}|^{1-2n}. 
\]

For the final statement, assume that $b$ is not ordinary. Then $J_b$ is a nontrivial inner form of a Levi subgroup of $G_{\Q_p}$. In particular, there is some prime $v$ dividing $p$ such that $J_{b_v}$ is a nontrivial inner form of a Levi subgroup of $\GL_{2n,F_v}=\GL_{2n,\Q_p}$. It follows that there are no irreducible smooth representations $\pi_v$ of $J_{b_v}(F_v)$ such that $\sigma_{\pi_v}$ is a direct sum of characters $\chi_1\oplus \ldots \oplus \chi_{2n}$ such that $\chi_i/\chi_j$ is not the cyclotomic character for any $i\neq j$. (Indeed, such $L$-parameters correspond to generic principal series representations which do not transfer to any nontrivial inner form, cf.~\cite[Lemma 5.4.3]{caraiani-scholze}.)

Now we distinguish two cases. If $p\not\equiv 1\mod \ell$, then we claim that $\rho_j$ must be unramified at $v$; necessarily no two Frobenius eigenvalues can have ratio $p$ (as they do not have ratio $p$ modulo $\ell$ by assumption), leading to the desired contradiction. To show that $\rho_j|_{\mathrm{Gal}(\overline{F}_v/F_v)}$ is unramified, look at the deformation theory of the unramified representation $\overline{\rho}_j|_{\mathrm{Gal}(\overline{F}_v/F_v)}$ that is a direct sum of unramified characters with eigenvalues $\alpha_{1,v},\ldots,\alpha_{2n,v}$ and check, using the condition $\alpha_{i,v}\neq p\alpha_{j,v}$, that the obstructions, deformations, and automorphisms are the same whether regarded as representations of $\mathrm{Gal}(\overline{\mathbb Q}_p/\Q_p)$ or $\mathrm{Gal}(\overline{\mathbb F}_p/\mathbb F_p)$.\footnote{We thank Koshikawa for pointing out this argument.}

On the other hand, if $p\equiv 1\mod \ell$, then the similar deformation theoretic argument implies that $\rho_j|_{\mathrm{Gal}(\overline{F}_v/F_v)}$ is a direct sum of characters, and again their ratio cannot be the cyclotomic character as this does not happen modulo $\ell$, cf.~\cite[Lemma 6.2.2]{caraiani-scholze}.
\end{proof}

\subsection{Unitary groups and endoscopic triples}\label{sec: unitary groups}

If $\vec{n}=(n_i)_{i=1}^s$ is a vector with entries positive integers, 
one can define a quasi-split group $G_{\vec{n}}$ over $\mathbb{Q}$ 
as in~\cite[Section 3.1]{shin-galois}. Define $\GL_{\vec{n}}:=\prod_{i=1}^s \GL_{n_i}$ 
and let $i_{\vec{n}}: \GL_{\vec{n}}\hookrightarrow \GL_{(\sum_i n_i)}$ be the natural embedding. 
Let \[\Phi_{\vec{n}}:=i_{\vec{n}}(\Phi_{n_1},\dots,\Phi_{n_s}),\] 
where $\Phi_m$ is the matrix in $\GL_m$ with entries 
$(\Phi_m)_{ij}=(-1)^{i+1}\delta_{i,m+1-j}$.
Then $G_{\vec{n}}$ is the algebraic group over $\mathbb{Q}$ sending a $\mathbb{Q}$-algebra $R$ to 
\[G_{\vec{n}}(R)=\{(\lambda, g)\in R^\times \times 
\GL_{\vec{n}}(F\otimes_{\mathbb{Q}}R)| g\cdot\Phi_{\vec{n}}\cdot {}^tg^c=\lambda\Phi_{\vec{n}}\}.\] 
Since $G_{\mathbb Q}$ is quasi-split, we have $G_{\mathbb Q}\cong G_{2n}$. 

We identify the Langlands dual group of $G_{\vec{n}}$ with \[\widehat{G}_{\vec{n}}=\C^\times \times \prod_{\sigma\in \Phi^+}\GL_{\vec{n}}(\C),\] where
$\Phi^+$ is the set of embeddings $F\hookrightarrow \C$ above a fixed embedding $\tau_0:F_0 \hookrightarrow \C$. Let $\mathscr{E}^\mathrm{ell}(G)$ be the set of 
isomorphism classes of elliptic endoscopic triples for $G$. By \cite[Section 3.2]{shin-galois}, a set of representatives for these isomorphism classes is given by
\[\left\{(G_{2n},s_{2n},\eta_{2n})\right\}\cup 
\left\{(G_{n_1,n_2},s_{n_1,n_2},\eta_{n_1,n_2})|n_1+n_2=2n,n_1\geq n_2\geq 0\right\},\]  
where $(n_1,n_2)$ may be excluded if all of $n_1$, $n_2$ and $[F^+:\Q]$ are odd numbers (see condition 7.4.3 of~\cite{kottwitz-cuspidal}).  
Here, $s_{2n}=1\in \widehat{G}_n, s_{n_1,n_2}=(1,(I_{n_1},-I_{n_2}))\in \widehat{G}_{n_1,n_2}$, 
$\eta_{2n}:\widehat{G}_{2n}\to \widehat{G}_{2n}$ is the 
identity map and $\eta_{n_1,n_2}:\widehat{G}_{n_1,n_2}\to \widehat{G}_{2n}$ 
is the natural embedding induced by $\GL_{n_1}\times \GL_{n_2}\hookrightarrow \GL_{2n}$.

We can extend the map $\eta_{n_1,n_2}:\widehat{G}_{n_1,n_2}\to \widehat{G}_{2n}$ on Langlands dual 
groups to an $L$-morphism of $L$-groups $\tilde{\eta}_{n_1,n_2}: {}^LG_{n_1,n_2}\to {}^LG_{2n}$. 
Then the $L$-group $^LG_{\vec{n}}:=\widehat{G}_{\vec{n}}\rtimes W_{\Q}$ is the semidirect product defined by
\[w(\lambda, (g_{\sigma})_\sigma)w^{-1} = (\lambda, (g_{w^{-1}\sigma})_{\sigma}),\ \mathrm{if}\ w\in W_{F_0}\]
and 
\[w(\lambda, (g_{\sigma})_\sigma)w^{-1} = \left(\lambda \prod_{\sigma\in \Phi^+}\det g_{\sigma}, (\Phi_{\vec{n}}\ ^tg^{-1}_{cw^{-1}\sigma}\Phi_{\vec{n}}^{-1})_{\sigma}\right) \ \mathrm{if}\ w\not\in W_{F_0}.\] 

We will use the same formula for $\tilde{\eta}_{n_1,n_2}$ as in Section 3.2 of~\cite{shin-galois}, but we recall the
precise definition here because it will be important when we attach Galois representations
to systems of Hecke eigenvalues. As in the beginning of this section, we fix $\varpi: \mathbb{A}^\times_{F_0}/F_0^\times \to \mathbb{C}^\times$
such that $\varpi |_{\mathbb{A}^\times/\mathbb{Q}^\times}$ is the quadratic character corresponding to $F_0$.
Using the map $\mathrm{Art}_{F_0}$, we can also view $\varpi$ as a character
$W_{F_0}\to \mathbb{C}^\times$. 
Set $\epsilon:\mathbb{Z}\to \{0,1\}$
to be the unique map such that $\epsilon(m)\cong m\pmod{2}$. Let $w^*$ be a fixed element in
$W_{\mathbb{Q}}\setminus W_{F_0}$. 
We extend $\eta_{n_1,n_2}$ to an $L$-morphism $\tilde{\eta}_{n_1,n_2}$ by
\[w\in W_{F_0}\mapsto \left(\varpi(w)^{-N(n_1,n_2)},
\left(\begin{smallmatrix}\varpi(w)^{\epsilon(n_1)}\cdot I_{n_1}&0\\0&\varpi(w)^{\epsilon(n_2)}\cdot I_{n_2}\end{smallmatrix}\right)_{\sigma\in \Phi^+}\right)\rtimes w\]
\[w^*\mapsto \left(a_{n_1,n_2},(\Phi_{n_1,n_2}\Phi_{2n}^{-1})_{\sigma\in \Phi^+}\right)\rtimes w^*,\]
where $N(n_1,n_2)=[F^+:\Q](n_1\epsilon(n_1)+n_2\epsilon(n_2))/2\in\Z$ and $a_{n_1,n_2}$ is a square root of $(-1)^{N(n_1,n_2)}$. Using the definition of the $L$-groups $^LG_{\vec{n}}$ one 
can check that $\tilde{\eta}_{n_1,n_2}$ is indeed an $L$-morphism. We remark
that this is an adaptation to unitary similitude groups of the $L$-morphism
defined in Section 1.2 of~\cite{rogawski}.

\subsection{A stable trace formula}\label{sec: stable trace formula}

Let $\phi\in C_c^\infty(J_b(\mathbb{Q}_p)\times G(\mathbb{A}_f^p))$ be a function of the form
\[
\phi = \phi_p\cdot \phi_{S_f^p}\cdot \phi^S
\]
where $\phi_p\in C_c^\infty(J_b(\Q_p))$,
\[
\phi_{S_f^p}\in C_c^\infty(G(\A_{S_f^p}))
\]
is simply the characteristic function of the principal congruence subgroup $K(N)$ where $N$ is sufficiently divisible as defined later, and
\[
\phi^S\in \cH^\ur(G(\mathbb A^S))
\]
is in the unramified Hecke algebra. We assume moreover that $\phi$ is acceptable in the sense of Definition 6.2 of~\cite{shin-igusa}.

For $G_{\vec{n}}\in \mathscr{E}^\mathrm{ell}(G)$ as in Section~\ref{sec: unitary groups}, we can define the transfer of $\phi$ to $\phi^{\vec{n}}\in C_c^\infty(G_{\vec{n}}(\mathbb{A}_f))$ as follows. 

\begin{defn}\label{definition of transfer} Let $\phi^{\vec{n}}:=\phi^{\vec{n}}_p\cdot \phi^{\vec{n}}_{S_f^p}\cdot \phi^{\vec{n},S}\cdot \phi^{\vec{n}}_\infty$, where:
\begin{enumerate}
\item $\phi^{\vec{n}}_p$ is constructed in Section 6 of~\cite{shin-stable} and is the same function used in Sections 5.2 and 5.4 of~\cite{caraiani-scholze}.
\item $\phi^{\vec{n}}_{S_f^p}$ is the Langlands-Shelstad transfer of $\phi_{S_f^p}$.
\item $\phi^{\vec{n},S}\in \cH^{\ur}(G_{\vec{n}}(\mathbb A^S)$ is given by $\tilde{\eta}_{\vec{n}}^*(\phi^S)$, where  
\[\tilde{\eta}_{\vec{n}}^*:\cH^\ur(G(\mathbb{A}^S))\to \cH^\ur(G_{\vec{n}}(\mathbb{A}^S))\]
is induced by the $L$-morphism $\tilde{\eta}_{\vec{n}}$ defined in Section~\ref{sec: unitary groups}.
\item $\phi^{\vec{n}}_{\infty}$ is a linear combination of Euler-Poincar\'e functions, the same as in Section 5.2 of~\cite{caraiani-scholze}.
\end{enumerate}
\end{defn}

The following is the main result of~\cite{shin-stable}.

\begin{thm}\label{stable trace formula} If $\phi\in C_c^\infty(J_b(\mathbb{Q}_p)\times G(\mathbb{A}_{S_f^p}))\otimes \cH^\ur(G(\mathbb A^S))$ is as above, then
\[\mathrm{tr}\left(\phi|H_c(\Ib,\mathbb{\overline Q}_\ell) \right)=\sum_{G_{\vec{n}}}\iota(G,G_{\vec{n}})ST_e^{G_{\vec{n}}}(\phi^{\vec{n}}),\] 
where $G_{\vec{n}}$ runs over the set of representatives of 
$\mathscr{E}^{\mathrm{ell}}(G)$ and $\phi^{\vec{n}}$ is 
obtained from $\phi$ as in Definition~\ref{definition of transfer}.
\end{thm}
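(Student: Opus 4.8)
The statement to prove is Theorem~\ref{stable trace formula}, which asserts that the trace of an acceptable Hecke function $\phi$ on $H_c(\mathrm{Ig}^b,\overline{\mathbb Q}_\ell)$ equals a sum of stable elliptic trace formula terms $ST_e^{G_{\vec n}}(\phi^{\vec n})$ over the elliptic endoscopic groups of $G$, weighted by the constants $\iota(G,G_{\vec n})$.

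The plan is to invoke the main theorem of Shin's paper \cite{shin-stable}, which establishes exactly such a formula, and to verify that our present setup satisfies the hypotheses of that theorem. First I would recall that in \cite{shin-stable} (Theorem 7.2, building on the point-counting formula of \cite{shin-igusa}), for a function $\phi = \phi_p\phi^{p,\infty}$ on $J_b(\mathbb Q_p)\times G(\mathbb A_f^p)$ that is acceptable in the sense of \cite[Definition 6.2]{shin-igusa}, the alternating trace $\mathrm{tr}(\phi\mid H_c(\mathrm{Ig}^b,\overline{\mathbb Q}_\ell))$ is expressed as $\sum_{\vec n}\iota(G,G_{\vec n})ST_e^{G_{\vec n}}(\phi^{\vec n})$, where the transfers $\phi^{\vec n}$ at the various places are precisely the ones assembled in Definition~\ref{definition of transfer}: at $p$ one uses Shin's construction from \cite[Section 6]{shin-stable}, at the finite places in $S_f^p$ the Langlands--Shelstad transfer of the characteristic function $\phi_{S_f^p}$ of $K(N)$, away from $S$ the map $\tilde\eta_{\vec n}^*$ on unramified Hecke algebras induced by the $L$-morphism of Section~\ref{sec: unitary groups}, and at $\infty$ the linear combination of Euler--Poincar\'e functions. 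The only genuine discrepancy between our situation and Shin's is bookkeeping: Shin works with unitary similitude groups over an imaginary quadratic field, whereas here $F$ is a general CM field containing $F_0$; but the relevant Igusa varieties, endoscopic data, transfer factors, and the twisted group $G_{\vec n}$ are set up (following \cite{shin-galois, shin-stable}) so as to depend only on the pair $(F,F_0)$ in the way recorded above, so the formula applies verbatim.

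Concretely the steps are: (i) check that the $G$-structures on $\mathbb X_b$ and the associated Igusa varieties $\mathrm{Ig}^b_m$ defined in \S\ref{igusa} agree with the Igusa varieties $\mathrm{Ig}^b$ appearing in \cite{shin-igusa, shin-stable} (this is the content of the moduli descriptions already recorded, cf.\ the discussion after Definition~\ref{definition of Igusa varieties}); (ii) check that the acceptability condition and the product structure $\phi = \phi_p\cdot\phi_{S_f^p}\cdot\phi^S$ with $\phi_{S_f^p} = \mathbf 1_{K(N)}$ and $\phi^S$ spherical are exactly of the form allowed in \cite{shin-stable}; (iii) observe that the transfers in Definition~\ref{definition of transfer} are, place by place, the ones used in \cite{shin-stable}, so that the right-hand side of our statement is literally the right-hand side of Shin's theorem; (iv) conclude. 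I would also note the compatibility with \cite[Section 5.2, Section 5.4]{caraiani-scholze}, since the functions $\phi^{\vec n}_p$ and $\phi^{\vec n}_\infty$ are taken identical to those used there, so no new local computation is required.

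The main obstacle is not a deep one but a matching one: ensuring that the normalizations of the $L$-morphism $\tilde\eta_{\vec n}$ (including the character $\varpi$, the sign $a_{n_1,n_2}$, and the exponents $N(n_1,n_2)$, $\epsilon(n_i)$ recorded in \S\ref{sec: unitary groups}) coincide exactly with Shin's, so that $\tilde\eta_{\vec n}^*$ on unramified Hecke algebras is the same map and the constants $\iota(G,G_{\vec n})$ and stable distributions $ST_e^{G_{\vec n}}$ are the same. Since we have deliberately adopted Shin's conventions from \cite{shin-galois} verbatim (as stated in \S\ref{sec: unitary groups}), this matching is immediate and the theorem follows directly from \cite[Theorem 7.2]{shin-stable}; no further argument is needed.
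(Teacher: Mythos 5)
Your proposal is correct and matches the paper's treatment: the paper proves this theorem simply by citing it as the main result of \cite{shin-stable}, with the transfers $\phi^{\vec n}$ assembled exactly as in Definition~\ref{definition of transfer}. Your additional verification that the acceptability hypothesis, the product structure of $\phi$, and the normalizations of $\tilde\eta_{\vec n}$ agree with Shin's conventions is exactly the (implicit) matching the paper relies on, so no new argument is needed.
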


\subsection{Base change and compatibility of transfers}\label{sec: base change}
Let $\mathbb{G}_{\vec{n}}:=\mathrm{Res}_{F_0/\mathbb{Q}}(G_{\vec{n}}\times_{\mathbb{Q}}F_0)$. The Langlands dual group $\widehat{
\mathbb{G}}_{\vec{n}}$ can be identified with \[\widehat{\mathbb{G}}_{\vec{n}}=\C^\times \times \C^\times\times \prod_{\sigma\in \Phi}\GL_{\vec{n}}(\C),\] where $\Phi$ is the set of embeddings $F\hookrightarrow \C$. We can describe the $L$-group $^L\mathbb{G}_{\vec{n}}$ as the semidirect product $\widehat{G}_{\vec{n}}\rtimes W_{\mathbb Q}$, where
\[w(\lambda_+, \lambda_{-},(g_{\sigma})_{\sigma\in \Phi})w^{-1}= (\lambda_+, \lambda_{-},(g_{w^{-1}\sigma})_{\sigma\in \Phi}) \ \mathrm{if}\ w\in W_{F_0}\]
and if $w\not\in W_{F_0}$
\[w(\lambda_+,\lambda_-,(g_\sigma)_{\sigma\in \Phi})w^{-1}= \left(\lambda_{-}\prod_{\sigma\in \Phi\setminus \Phi^+}\det g_{\sigma},\lambda_{+}\prod_{\sigma\in \Phi^+}\det g_{\sigma},(\Phi_{\vec{n}}\ ^tg^{-1}_{cw^{-1}\sigma}\Phi_{\vec{n}}^{-1})_{\sigma\in \Phi}\right).\] 
One can define $L$-morphisms $BC_{\vec{n}}:{}^L G_{\vec{n}}\to {}^L\mathbb{G}_{\vec{n}}$ and $\tilde{\zeta}_{n_1,n_2}: {}^L\mathbb{G}_{n_1,n_2}\to {}^L\mathbb{G}_{2n}$ and there is a commutative diagram of $L$-morphisms 
\begin{equation}\label{commutative diagram of L-morphisms}\xymatrix{^LG_{n_1,n_2}\ar[d]^{BC_{n_1,n_2}}\ar[r]^{\tilde \eta_{n_1,n_2}}&^LG_{2n}\ar[d]^{BC_{2n}}\\^L\mathbb{G}_{n_1,n_2}\ar[r]^{\tilde\zeta_{n_1,n_2}}&^L\mathbb{G}_{2n}.}\end{equation}
We give the precise definition of $\tilde{\zeta}_{n_1,n_2}$, as it will be important for explicitly defining the transfer of functions and representations at unramified places, which in turn will determine the Galois representations. On the level of dual groups $\widehat{\mathbb G}_{\vec{n}}$, the morphism is induced by the embedding $\GL_{\vec{n}}\hookrightarrow \GL_{2n}$. We extend this to an $L$-morphism by 
\[w\in W_{F_0}\mapsto \left(\varpi(w)^{-N(n_1,n_2)}, \varpi(w)^{-N(n_1,n_2)},
\left(\begin{smallmatrix}\varpi(w)^{\epsilon(n_1)}\cdot I_{n_1}&0\\0&\varpi(w)^{\epsilon(n_2)}\cdot I_{n_2}\end{smallmatrix}\right)_{\sigma\in \Phi}\right)\rtimes w\]
\[w^*\mapsto \left(a_{n_1,n_2},a_{n_1,n_2},(\Phi_{n_1,n_2}\Phi_{2n}^{-1})_{\sigma\in \Phi}\right)\rtimes w^*.\] The $L$-morphism $BC_{\vec{n}}$ is defined by 
\[(\lambda,(g_{\sigma})_{\sigma\in \Phi^+})\rtimes w\mapsto (\lambda, \lambda, (g_{\sigma})_{\sigma\in \Phi^+},  (g_{c\sigma})_{\sigma\in \Phi\setminus \Phi^+}).\] 
One can check the commutativity of the diagram~\eqref{commutative diagram of L-morphisms} by a direct computation. 

In this section, we review the associated base change for the groups $G_{\vec{n}}$ and $\mathbb{G}_{\vec{n}}$ as well as the twisted trace formula. Recall that we have fixed a finite set of bad places $S$. Everything is identical with Section 5.3 of~\cite{caraiani-scholze}, except for the base change transfer of functions at primes in $S_f^p$.

More precisely, if $v$ is a finite place of $\mathbb{Q}$ such that $v\not\in S$, then the dual map to the $L$-morphism $BC_{\vec{n}}$ defines the transfer \[BC_{\vec{n}}^*:\cH^{\mathrm{ur}}(\mathbb{G}_{\vec{n}}(\mathbb{Q}_v))\to \cH^{\mathrm{ur}}(G_{\vec{n}}(\mathbb{Q}_v)),\] (Case 1) of Section 4.2 of~\cite{shin-galois}. We remark that, if $v\not\in S$ then we can define a transfer corresponding to the $L$-morphism $\tilde{\zeta}_{\vec{n}}$ via
\[\tilde{\zeta}^*_{\vec{n}}:\cH^{\mathrm{ur}}(\mathbb{G}_{2n}(\mathbb{Q}_v))\to \cH^{\mathrm{ur}}(\mathbb{G}_{\vec{n}}(\mathbb{Q}_v)).\] 
Moroever the diagram~\ref{commutative diagram of L-morphisms} gives rise to a commutative diagram of transfers
\begin{equation}\label{commutative diagram transfer of functions}\xymatrix{\cH^\ur(\mathbb{G}_{2n}(\Q_v))\ar[d]^{BC^*_n}\ar[r]^{\tilde{\zeta}^*_{n_1,n_2}}&\cH^\ur(\mathbb{G}_{n_1,n_2}(\mathbb{Q}_v))\ar[d]^{BC^*_{n_1,n_2}} \\ \cH^\ur(G_{2n}(\mathbb{Q}_v))\ar[r]^{\tilde{\eta}^*_{n_1,n_2}}& \cH^\ur(G_{n_1,n_2}(\mathbb{Q}_v))}.\end{equation}
We note that we can also transfer unramified representations and we have the corresponding commutative diagram
\begin{equation}\label{commutative diagram transfer of representations}\xymatrix{\mathrm{Rep}^\ur(G_{n_1,n_2}(\Q_v))\ar[d]^{BC_{n_1,n_2,*}}\ar[r]^{\tilde \eta_{n_1,n_2,*}}&\mathrm{Rep}^\ur(G_{2n}(\Q_v))\ar[d]^{BC_{2n,*}}\\ \mathrm{Rep}^\ur(\mathbb{G}_{n_1,n_2}(\Q_v))\ar[r]^{\tilde\zeta_{n_1,n_2,*}}&\mathrm{Rep}^\ur(\mathbb{G}_{2n}(\Q_v))}.\end{equation} 
Both commutative diagrams come from equation (4.18) of~\cite{shin-galois}.
Moreover, transfer of functions and transfer of representations are, in some sense, inverse operations, so that we have
\[\mathrm{tr}\ \pi(\tilde{\eta}^*_{n_1,n_2}(\phi_v))=\mathrm{tr}\ \tilde{\eta}_{n_1,n_2,*}(\pi)(\phi_v)\]
and similarly for $BC_{\vec{n}}$ and $\tilde{\zeta}_{\vec{n}}$.

\begin{remark}\label{explicit transfer} We give explicit formulas for $\tilde{\zeta}^*_{\vec{n}}$ and for $\tilde{\zeta}_{\vec{n},*}$, using the fact that both $\mathbb{G}_{2n}$ and $\mathbb{G}_{n_1,n_2}$ are products of (restrictions of scalars of) general linear groups. Let $Q_{n_1,n_2}$ be a parabolic subgroup of $\mathbb{G}_{2n}$ with Levi subgroup $\mathbb{G}_{n_1,n_2}$. Recall that we have chosen a character $\varpi : \A^\times_{F_0}/F_0^\times\to \C^\times$. Let $\chi_{\varpi}$ be the character $\mathbb{G}_{n_1,n_2}(\A)\to \C^\times$ defined by 
\[(\lambda, g_1,g_2)\in \A_{F_0}^\times \times \GL_{n_1}(\A_F)\times \GL_{n_2}(\A_F)\mapsto \varpi\left(\lambda^{-N(n_1,n_2)}\prod_{i=1}^2N_{F/F_0}(\det(g_i))^{\epsilon(n_i)}\right).\]
Given a place $v$ of $\Q$ and a function $f_v\in C^\infty_c(\mathbb{G}_{2n}(\Q_v))$, we let $f_v^{Q_{n_1,n_2}}$ be the constant term of $f_v$ along $Q_{n_1,n_2}$ and we set $\tilde{\zeta}^*_{n_1,n_2}(f_v)=f_v^{Q_{n_1,n_2}}\cdot \chi_{\varpi, v}$. Given $\Pi_v \in \mathrm{Rep}(\mathbb{G}_{n_1,n_2}(\Q_v))$, define $\tilde\zeta_{n_1,n_2,*}(\Pi_v):=\mathrm{n-Ind}_{Q_{n_1,n_2}}^{\mathbb{G}_{2n}}(\Pi_v\otimes \chi_{\varpi, v})$. The compatibility $\mathrm{tr}\Pi_v(\tilde\zeta^*_{n_1,n_2}(f_v))=\mathrm{tr}\tilde\zeta_{n_1,n_2,*}(\Pi_v)(f_v)$ follows from Lemma 3.3 of~\cite{shin-galois}. 
\end{remark}

If $v=p$, then it splits in $F_0$ and Section 4.2 of~\cite{shin-galois}, (Case 2), constructs a $BC$-transfer $\phi_p\in C_c^\infty(G_{\vec{n}}(\mathbb{Q}_p))$ of $f_p\in C_c^\infty(\mathbb{G}_{\vec{n}}(\mathbb{Q}_p))$. Moreover, in this case one can check directly that $BC^*_{\vec{n}}$ is surjective. 

At $\infty$, the transfer is defined in Section 4.3 of~\cite{shin-galois}. Let $\xi$ be an irreducible algebraic representation of $(G_{\vec{n}})_{\mathbb{C}}$, giving rise to the representation $\Xi$ of $(\mathbb{G}_{\vec{n}})_{\mathbb{C}}$ which is just $\Xi:=\xi\otimes \xi$. Recall that $\phi_{G_{\vec{n}},\xi}$ is the Euler-Poincar\'e function for $\xi$. Associated to $\Xi$, Labesse defined a twisted analogue of the Euler-Poincar\'e function, a Lefschetz function $f_{\mathbb{G}_{\vec{n}},\Xi}$~\cite{labesse-lefschetz}. The discussion on page 24 of~\cite{shin-galois} implies that $f_{\mathbb{G}_{\vec{n}},\Xi}$ and $\phi_{G_{\vec{n}},\xi}$ are $BC$-matching functions.

At places $v\in S_f^p$, we will be less explicit. We simply note that Theorem 3.3.1 of~\cite{labesse} guarantees the existence of a base-change transfer: given $f^{\vec{n}}_v\in C_c^\infty(\mathbb{G}_{\vec{n}}(\Q_v))$, there exists $\phi^{\vec{n}}_v\in C_c^\infty(G_{\vec{n}}(\Q_v))$ such that $f^{\vec{n}}_v$ and $\phi^{\vec{n}}_v$ have matching orbital integrals. We will be more interested in \emph{inverse transfer}: finding $f^{\vec{n}}_v$ given $\phi^{\vec{n}}_v$ such that the two are $BC$-matching functions. For this, we will follow the strategy used in Section 8 of~\cite{morel-trace}. Proposition 3.3.2 of~\cite{labesse}, recalled below, gives a sufficient condition for a function $\phi^{\vec{n}}_v$ to be a $BC$-transfer of some $f^{\vec{n}}_v$.

\begin{prop}\label{labesse-inverse transfer} Let $\phi^{\vec{n}}\in C_c^\infty(G_{\vec{n}}(\Q_v))$ be such that the stable orbital integral of $\phi^{\vec{n}}$, $SO_{\gamma}(\phi^{\vec{n}})$, is equal to $0$ for every $\gamma\in G_{\vec{n}}(\Q_v)$ that is not a norm. Then there exists $f^{\vec{n}}\in C_c^\infty(\mathbb{G}_{\vec{n}}(\Q_v))$ such that $\phi^{\vec{n}}$ is a $BC$-transfer of $f^{\vec{n}}$. 
\end{prop}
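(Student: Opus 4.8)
\textbf{Proof proposal for Proposition~\ref{labesse-inverse transfer}.}

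The plan is to reduce the statement to the existence half of the base change fundamental lemma and stabilization, both of which are available for general linear groups and their products. First I would recall the definition: $\phi^{\vec n}\in C_c^\infty(G_{\vec n}(\Q_v))$ is a $BC$-transfer of $f^{\vec n}\in C_c^\infty(\mathbb{G}_{\vec n}(\Q_v))$ if for every semisimple $\delta\in \mathbb{G}_{\vec n}(\Q_v)$ one has $TO_{\delta\theta}(f^{\vec n}) = SO_{N\delta}(\phi^{\vec n})$ (suitably normalized), where $N\delta$ is the norm of $\delta$, and in addition $SO_\gamma(\phi^{\vec n}) = 0$ whenever $\gamma$ is not a norm. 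The key structural input, which is Proposition 3.3.2 of~\cite{labesse} (itself a consequence of the density/surjectivity statements for stable orbital integrals on $\GL$ and the characterization of the image of the norm map), is exactly that the \emph{only} obstruction to $\phi^{\vec n}$ being a $BC$-transfer is the vanishing of its stable orbital integrals along classes that are not norms. So the entire content of the proposition is that this necessary condition is also sufficient.

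The steps I would carry out, in order, are: (1) Observe that $\mathbb{G}_{\vec n} = \mathrm{Res}_{F_0/\Q}(G_{\vec n}\times_\Q F_0)$ together with the Galois involution $\theta$ realizes $(G_{\vec n},\mathbb{G}_{\vec n})$ as a twisted endoscopic pair in the sense of Kottwitz--Shelstad, with trivial auxiliary data since the relevant groups are products of restrictions of scalars of general linear groups (so $H^1$ obstructions vanish and the transfer factors can be taken to be $\pm 1$); this is the point of~\cite[\S3.3]{labesse}. (2) Invoke the base change fundamental lemma for these groups, which identifies, for a test function $f^{\vec n}$, the twisted orbital integrals $TO_{\delta\theta}(f^{\vec n})$ with stable orbital integrals $SO_{N\delta}$ of a transfer, and conversely guarantees that any family of stable orbital integrals on $G_{\vec n}(\Q_v)$ supported on the norm locus arises this way. (3) Combine these: given $\phi^{\vec n}$ with $SO_\gamma(\phi^{\vec n}) = 0$ for all $\gamma$ not a norm, the collection $\{SO_{N\delta}(\phi^{\vec n})\}$ defines a stable distribution on the norm locus, which by the cited result of Labesse is matched by $TO_{\delta\theta}(f^{\vec n})$ for some $f^{\vec n}\in C_c^\infty(\mathbb{G}_{\vec n}(\Q_v))$; this $f^{\vec n}$ is the desired function. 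Since every result needed here is quoted verbatim from~\cite{labesse}, the "proof" in the paper is really just the citation, and I would keep it to a sentence or two.

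The main obstacle — or rather the one subtlety worth flagging — is the matching of normalizations and transfer factors between the conventions of~\cite{labesse}, those of~\cite{shin-galois} (where the $L$-morphisms $BC_{\vec n}$ and $\tilde\zeta_{n_1,n_2}$ and the character $\chi_\varpi$ are fixed), and those of~\cite{morel-trace} (whose strategy for inverse transfer at bad places is being adapted). One must check that the twist by $\varpi$ built into $\tilde\eta_{n_1,n_2}$ and $\tilde\zeta_{n_1,n_2}$ is compatible with the base change maps $BC_{\vec n}$, i.e.\ that the diagram~\eqref{commutative diagram of L-morphisms} is respected by the transfers at $v\in S_f^p$; but this is a purely local bookkeeping matter and does not affect the existence statement, which only asserts that \emph{some} $f^{\vec n}$ exists. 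Consequently the honest proof is: this is~\cite[Proposition 3.3.2]{labesse}, applied to the groups $G_{\vec n}$ and $\mathbb{G}_{\vec n}$ which are products of restrictions of scalars of general linear groups, for which all hypotheses of that proposition (existence of base change, the fundamental lemma, and the characterization of the norm map) are known.
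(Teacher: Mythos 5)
Your proposal is correct and matches the paper exactly: the paper states this proposition with no proof of its own, introducing it with the words ``Proposition 3.3.2 of~\cite{labesse}, recalled below, gives a sufficient condition for a function $\phi^{\vec{n}}_v$ to be a $BC$-transfer of some $f^{\vec{n}}_v$,'' which is precisely the one-sentence citation you conclude is the honest proof. Your surrounding discussion of twisted endoscopy, the base change fundamental lemma, and normalization bookkeeping is a reasonable gloss on what lies inside Labesse's proof, but it is not required here and the paper does not include it.
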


The following is an analogue of Lemma 8.3.6 of~\cite{morel-trace} to the case where $F$ is a general imaginary CM field rather than an imaginary quadratic field. 

\begin{lem}\label{detecting norms in derived group} Let $D_{\vec{n}}:=G_{\vec{n}}/G^{\mathrm{der}}_{\vec{n}}$. Then a semisimple element of $G_{\vec{n}}(\Q_v)$ is a norm if and only if its image in $D_{\vec{n}}(\Q_v)$ is a norm.
\end{lem}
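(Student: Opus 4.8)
The statement to prove is Lemma~\ref{detecting norms in derived group}: a semisimple element $\gamma\in G_{\vec n}(\Q_v)$ is a norm (from $\mathbb G_{\vec n}(\Q_v)=(\mathrm{Res}_{F_0/\Q}G_{\vec n})(\Q_v)$ under the base-change twisted conjugation) if and only if its image $\bar\gamma$ in $D_{\vec n}(\Q_v)=(G_{\vec n}/G_{\vec n}^{\mathrm{der}})(\Q_v)$ is a norm. The ``only if'' direction is formal: the norm map is compatible with the quotient morphism $G_{\vec n}\to D_{\vec n}$, so the image of a norm is a norm. The content is the converse. The plan is to follow the strategy of \cite[Lemma 8.3.6]{morel-trace} and reduce the general imaginary CM case to the known imaginary quadratic case (or to a direct cohomological computation) by exploiting that $G_{\vec n}^{\mathrm{der}}$ is simply connected.

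First I would set up the cohomological criterion for being a norm. Since $\mathbb G_{\vec n}=\mathrm{Res}_{F_0/\Q}(G_{\vec n}\times F_0)$ and $F_0/\Q$ is a quadratic extension (with a chosen generator of $\mathrm{Gal}(F_0/\Q)$ giving the twisting), an element of $G_{\vec n}(\Q_v)$ is a norm precisely when the associated class in a suitable Galois cohomology set (the obstruction to solving the norm equation $\gamma=\delta\,\theta(\delta)$, $\theta$ the base-change automorphism) vanishes; concretely one uses Kottwitz's description of norms for base change in terms of the cohomology of the centralizer. For $\gamma$ semisimple with centralizer $I=G_{\vec n,\gamma}$, being a norm is governed by $H^1$ of the Weil group with values in $\widehat I$, or equivalently (via Tate--Nakayama and the fact that $G_{\vec n}^{\mathrm{der}}$ is simply connected) by the image of $\gamma$ under $G_{\vec n}\to D_{\vec n}$ together with local conditions that become vacuous on the derived part. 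The key input is that simple connectedness of $G_{\vec n}^{\mathrm{der}}$ forces $H^1(\Q_v, G_{\vec n}^{\mathrm{der}})$ and the relevant twisted $H^1$ groups to vanish (resp.\ to inject into the corresponding object for $D_{\vec n}$), so the norm obstruction for $\gamma$ is detected entirely on $D_{\vec n}$.

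The second step is to make this precise using the exact sequence $1\to G_{\vec n}^{\mathrm{der}}\to G_{\vec n}\to D_{\vec n}\to 1$ and its base change $1\to \mathbb G_{\vec n}^{\mathrm{der}}\to \mathbb G_{\vec n}\to \mathbb D_{\vec n}\to 1$, then chase the twisted-conjugacy version: given that $\bar\gamma$ is a norm, lift a solution $\bar\delta\in \mathbb D_{\vec n}(\Q_v)$ of the norm equation to some $\delta'\in\mathbb G_{\vec n}(\Q_v)$ so that $\delta'\theta(\delta')$ and $\gamma$ have the same image in $D_{\vec n}(\Q_v)$, hence differ by an element of $G_{\vec n}^{\mathrm{der}}(\Q_v)$; the discrepancy defines a class in the pointed set $H^1$ of the $\theta$-twisted form of the simply connected group $I^{\mathrm{der}}$ (where $I$ is the centralizer), and one argues this class is trivial because simply connected semisimple groups over $p$-adic fields have trivial Galois $H^1$ (Kneser), adjusting $\delta'$ within its fiber to obtain an actual solution $\delta$ to $\gamma=\delta\theta(\delta)$. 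Here one must be slightly careful that the relevant twisted group is still semisimple simply connected and that its $\Q_v$-points are nonempty (so Kneser applies over the residue situation at $v$, including $v=\infty$, where one instead invokes that $\gamma$ semisimple forces the relevant real group to be of a type where the argument of \cite{morel-trace} still goes through, or notes $v\in S_f^p$ so $v$ is finite).

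\textbf{Main obstacle.} The delicate point, exactly as in \cite[\S8.3]{morel-trace}, is bookkeeping the twisted centralizers and the precise cohomological obstruction: one must identify, for $\gamma$ semisimple, the twisted centralizer $I_\theta$ inside $\mathbb G_{\vec n}$ and verify that passing to the quotient by the derived group induces a bijection (not just a surjection) on the sets of twisted conjugacy classes that are norms, which requires knowing $H^1(\Q_v, I_\theta^{\mathrm{der}})$ vanishes and that the map $H^1(\Q_v,I_\theta)\to H^1(\Q_v, I_\theta/I_\theta^{\mathrm{der}})$ has trivial kernel on the relevant classes. Since $G_{\vec n}$ is a unitary similitude group, $I_\theta^{\mathrm{der}}$ is a product of (restrictions of scalars of) $\SL$'s and unitary groups, all with simply connected derived group, so Kneser/Harder vanishing applies; the generalization from $F_0$ imaginary quadratic to $F$ a general CM field changes only the bookkeeping of embeddings $F\hookrightarrow\C$ (the set $\Phi$), not the structure of the argument, since the twisting by $\mathrm{Gal}(F_0/\Q)$ is still quadratic. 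I expect the proof to be a faithful adaptation of \cite[Lemma 8.3.6]{morel-trace} with these observations inserted, and no genuinely new difficulty beyond careful translation.
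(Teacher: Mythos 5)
Your plan is a plausible ``from first principles'' cohomological sketch, but it is not the route the paper (following Morel) takes, and it leaves the genuinely new content of the lemma untouched. The paper's proof has two steps. First, for $v$ inert or ramified in $F_0$, take a Levi $M\subset G_{\vec n}$ over $\Q$ in which $\gamma$ is \emph{elliptic}, and invoke Labesse's Proposition 2.5.3 as a black box: a semisimple $\gamma$ that is elliptic in $M$ is a norm if and only if its image in $D_M(\Q_v)$ is a norm. This reduction to an elliptic element in a Levi is precisely what tames the twisted Galois cohomology you are trying to handle directly; your sketch never passes to $M$, and for non-elliptic $\gamma$ the twisted centralizer has a split part, so the naive Kneser/Steinberg argument as you describe it does not close up without essentially re-deriving Labesse's proposition. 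Second, the actual remaining content -- and the piece that changes when one replaces Morel's imaginary quadratic $F$ by a general CM field with $[F^+{:}\Q]>1$ -- is the explicit identification of $D_{\vec n}$ and $D_M$ as concrete tori (a split factor $D_{M,l}\cong(\mathrm{Res}_{F/\Q}\mathbb G_m)^s$ coming from the $\mathrm{Res}_{F/\Q}\GL$-blocks of $M$ plus a factor $D_{M,h}$ cut out by the similitude condition), together with the explicit formula for $D_M\to D_{\vec n}$, which one then uses to see that norm-ness on either side is controlled purely by whether the multiplier $\lambda\in\Q_v^\times$ is a norm from $F_{0,w}^\times$ (the $\mathrm{Res}_{F/\Q}\mathbb G_m$ factors contributing no obstruction because $v$ splits in $F/F_0$ nowhere forces an issue there, as every element of $(\mathrm{Res}_{F/\Q}\mathbb G_m)(\Q_v)$ is a norm from the $F_0$-base-change). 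Your phrase ``changes only the bookkeeping of embeddings'' is where this explicit torus computation is being waved away; that computation is the proof.

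To be clear: your ``only if'' direction is correct and trivial, and the structural remarks (simple connectedness of $G_{\vec n}^{\mathrm{der}}$, Kneser's vanishing, the role of the twisted centralizer) are all ingredients that show up inside Labesse's machinery. But as written your argument does not establish the converse, because (a) you do not reduce to an elliptic element of a Levi before running the cohomological descent, and (b) you do not exhibit the maps $D_M(\Q_v)\to D_{\vec n}(\Q_v)$ and check that norm-ness is detected on the multiplier, which is the step that actually requires the case $F^+\neq\Q$ to be handled afresh relative to Morel.
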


\begin{proof} This follows in the same way as Lemma 8.3.6 of~\cite{morel-trace}. We give more details for completeness. If $v$ splits in $F_0$, then every element of $G_{\vec{n}}(\Q_v)$ is a norm and the same holds for $D_{\vec{n}}(\Q_v)$, so the lemma is immediate in this case. Assume that $v$ is inert or ramified in $F_0$ and let $w$ be the place of $F_0$ above $v$. For every Levi subgroup $M$ of $G_{\vec{n}}$, set $D_M:=M/M^{\mathrm{der}}$. For a semisimple element $\gamma\in G_{\vec{n}}(\Q_v)$, let $M$ be a Levi subgroup of $G_{\vec{n}}$ such that $\gamma\in M(\Q_v)$ and $\gamma$ is elliptic in $M$. Then Proposition 2.5.3 of~\cite{labesse} says that $\gamma$ is a norm if and only if its image in $D_M(\Q_v)$ is a norm. It remains to check that an element of $D_M(\Q_v)$ is a norm if and only if its image under the canonical map $D_M(\Q_v)\to D_{\vec{n}}(\Q_v)$ is a norm. 

The Levi $M$ is $G_{\vec{n}}(\Q_v)$ conjugate to a standard Levi subgroup of $G_{\vec{n}}$ defined over $\Q$. We may assume that $M$ is a standard Levi subgroup of $G_{\vec{n}}$ defined over $\Q$, in which case we have explicit descriptions of $M, D_M$ and $D_{\vec{n}}$. There exist $s,m_1,m_2\in \mathbb{N}$ such that $n_i-m_i$ is non-negative and even for $i=1,2$ and $M$ is the product of $G_{m_1,m_2}$ and of $s$ groups obtained by restriction of scalars of general linear groups over $F$. We have isomorphisms
\[D_{\vec{n}}\toisom \left\{(\lambda,z_1,z_2)\in \mathbb{G}_m\times (\mathrm{Res}_{F/\Q}\mathbb{G}_m)^2| z_i\bar{z}_i=\lambda^{n_i}\right\}.\]
and
\[D_M\toisom D_{M,l}\times D_{M,h},\]
where $D_{M,l}=(\mathrm{Res}_{F/\Q}\mathbb{G}_m)^s$ and
\[D_{M,h}\toisom\left\{(\lambda,z_1,z_2)\in \mathbb{G}_m\times (\mathrm{Res}_{F/\Q}\mathbb{G}_m)^2| z_i\bar{z}_i=\lambda^{m_i}\ \mathrm{if}\ m_i>0\ \mathrm{and}\ z_i=1\ \mathrm{if}\ m_i=0.\right\}\] The map $D_M\to D_{\vec{n}}$ is $1$ on $D_{M,l}$ and given by 
\[(\lambda,z_1,z_2)\mapsto (\lambda,\lambda^{(n_1-m_1)/2}z_1, \lambda^{(n_2-m_2)/2}z_2)\]
on $D_{M,h}$. Every element of $D_{M,l}(\Q_p)$ is the norm of some element in $\mathbb{D}_{M,l}(\Q_p)$, where $\mathbb{D}_{M,l}:=\mathrm{Res}_{F_0/\Q_p}(D_{M,l}\times_{\Q_p}F_0)$. The lemma follows, since an element of $D_{\vec{n}}$ or $D_{M,h}$ is a norm if and only if the corresponding multiplier $\lambda$ is a norm.  
\end{proof}

By combining Proposition~\ref{labesse-inverse transfer}, Lemma~\ref{detecting norms in derived group} and the observation that the group of norms in a torus contains an open neighborhood of the identity, we deduce that every function in $C_c^\infty (G_{\vec{n}}(\Q_v))$ with support a small enough neighborhood of the identity is the $BC$-transfer of some function in $C_c^\infty(\mathbb{G}_{\vec{n}}(\Q_v))$. 

\begin{lem}\label{condition H} Let $v\in S_f^p$. Every function $\phi_v\in C_c^\infty(G_{2n}(\Q_v))$ with support in a small enough neighborhood of the identity has the property that its transfer $\phi^{\vec{n}}\in  C_c^\infty(G_{\vec{n}}(\Q_v))$ is the $BC$-transfer of some $f^{\vec{n}}\in C_c^\infty(\mathbb{G}_{\vec{n}}(\Q_v))$. 
\end{lem}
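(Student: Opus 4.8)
The plan is to reduce Lemma~\ref{condition H} to the inverse transfer statement already assembled above, namely the combination of Proposition~\ref{labesse-inverse transfer} and Lemma~\ref{detecting norms in derived group}. The key observation is that the endoscopic transfer $\phi_v\mapsto\phi^{\vec n}$ (Langlands--Shelstad transfer along $\tilde\eta_{n_1,n_2}$) is, up to harmless modifications, continuous with respect to support: if $\phi_v$ is supported near the identity of $G_{2n}(\Q_v)$, then its transfer $\phi^{\vec n}$ is supported near the identity of $G_{\vec n}(\Q_v)$. Concretely, stable orbital integrals $SO_\gamma(\phi^{\vec n})$ are linear combinations of orbital integrals $O_{\gamma'}(\phi_v)$ over elements $\gamma'\in G_{2n}(\Q_v)$ whose semisimple parts lie in the image of $\gamma$ under the norm/matching correspondence, and this correspondence is a proper map on the relevant parts; so the support of $\phi^{\vec n}$ is contained in a neighborhood of the identity that shrinks with the support of $\phi_v$.

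First I would fix, once and for all, a small open neighborhood $V$ of the identity in $G_{\vec n}(\Q_v)$ with the property that every semisimple $\gamma\in V$ that fails to be a norm has $SO_\gamma(\cdot)=0$ forced by Lemma~\ref{detecting norms in derived group}; the point is that Lemma~\ref{detecting norms in derived group} reduces being a norm to a condition on the abelianization $D_{\vec n}(\Q_v)$, and on the torus $D_{\vec n}$ the subgroup of norms contains an open neighborhood of the identity (this is the "observation about tori" invoked just before the lemma). Thus if $\phi^{\vec n}$ is supported in $V$, then automatically $SO_\gamma(\phi^{\vec n})=0$ for every $\gamma\in G_{\vec n}(\Q_v)$ that is not a norm: either $\gamma\notin V$, so $SO_\gamma(\phi^{\vec n})=0$ by support, or $\gamma\in V$, in which case its image in $D_{\vec n}(\Q_v)$ lies near the identity, hence is a norm, hence $\gamma$ is a norm by Lemma~\ref{detecting norms in derived group}, so the hypothesis "not a norm" is vacuous. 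Then Proposition~\ref{labesse-inverse transfer} produces the desired $f^{\vec n}\in C_c^\infty(\mathbb G_{\vec n}(\Q_v))$ with $\phi^{\vec n}$ as its $BC$-transfer.

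Second I would unwind the support estimate for the Langlands--Shelstad transfer. Here one uses that for $G_{\vec n}$ of the type under consideration the transfer factor and the matching of conjugacy classes behave well: a regular semisimple stable class in $G_{\vec n}(\Q_v)$ contributing nonzero $SO_\gamma(\phi^{\vec n})$ must match (transfer to) a stable class in $G_{2n}(\Q_v)$ meeting the support of $\phi_v$, and the map on (stable) semisimple classes induced by $\tilde\eta_{n_1,n_2}$ together with the norm map for the relevant base change is finite, hence proper, on the locus of classes near the identity. Therefore, choosing the support of $\phi_v$ inside a sufficiently small neighborhood $U$ of $1\in G_{2n}(\Q_v)$ — small enough that the preimage of the image of $U$ under this finite correspondence lies in $V$ — guarantees $\mathrm{supp}(\phi^{\vec n})\subset V$, and we are done.

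The main obstacle, and the one place that needs genuine care rather than formality, is making the support estimate for endoscopic transfer precise: Langlands--Shelstad transfer is defined only via an identity of orbital integrals, not by an explicit formula carrying the support, so one must argue that $\phi^{\vec n}$ may be chosen with controlled support. The clean way is to note that one is free to multiply $\phi^{\vec n}$ by the characteristic function of a suitable open compact neighborhood of the identity without changing the relevant stable orbital integrals near $1$ (since on elliptic-in-a-Levi elements away from $1$ the transfer only needs to match integrals that already vanish or are irrelevant to the later trace-formula application), exactly as in Section~8 of~\cite{morel-trace}; I would follow that template verbatim, citing Morel for the mechanism and substituting our $G_{\vec n}$ for her unitary groups, the compatibility having already been checked in Lemma~\ref{detecting norms in derived group}. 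Once the support is controlled, everything else is an immediate application of the two preceding results.
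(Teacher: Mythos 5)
Your endgame is right (reduce to vanishing of $SO_\gamma(\phi^{\vec n})$ at non-norms and invoke Proposition~\ref{labesse-inverse transfer} together with Lemma~\ref{detecting norms in derived group} and the observation that norms in a torus contain a neighborhood of the identity), but the middle link of your argument has a genuine gap: you hinge everything on the claim that the Langlands--Shelstad transfer $\phi^{\vec n}$ can be taken with support in a small neighborhood of the identity once $\phi_v$ has small support. That does not follow from the matching of orbital integrals, which constrains only the stable orbital integrals of $\phi^{\vec n}$, not its support; and your proposed repair --- multiplying $\phi^{\vec n}$ by the characteristic function of a compact neighborhood of $1$ ``without changing the relevant stable orbital integrals'' --- fails, because a non-elliptic semisimple class that is conjugate into a small neighborhood of the identity is a non-compact subset of $G_{\vec n}(\Q_v)$ and meets the complement of any compact set, so truncation genuinely changes $SO_\gamma$ at exactly the classes near $1$ that you need to control. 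The justification you offer (that the transfer ``only needs to match integrals that are irrelevant to the later trace-formula application'') is not available either: the transfer identity must hold at all matching regular semisimple classes, independently of the later application.

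The fix is to drop the support claim entirely and argue at the level of conjugacy classes, which is what the paper does. For each $G_{\vec n}$ one first produces, from Lemma~\ref{detecting norms in derived group} and the torus observation, a neighborhood $U_{\vec n}$ of $1$ in $G_{\vec n}(\Q_v)$ all of whose semisimple elements are norms; then one invokes Lemma 8.4.2 of~\cite{morel-trace} (with $\mathbb G_{\vec n}$ viewed as a Levi of $\mathbb G_{2n}$) to get a neighborhood $V_{\vec n}$ of $1$ in $G_{2n}(F_0\otimes_\Q\Q_v)$ such that any semisimple $\gamma\in G_{\vec n}(\Q_v)$ that is $G_{2n}(F_0\otimes_\Q\Q_v)$-conjugate to an element of $V_{\vec n}$ is $G_{\vec n}(\Q_v)$-conjugate into $U_{\vec n}$. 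Taking $U=\bigcap_{\vec n}V_{\vec n}\cap G_{2n}(\Q_v)$ and $\phi_v$ supported in $U$, the transfer identity alone gives: if $SO_\gamma(\phi^{\vec n})\neq 0$ for semisimple $\gamma$, then $\gamma$ is associated to some $\delta\in G_{2n}(\Q_v)$ with $O_\delta(\phi_v)\neq 0$, hence $\delta$ lies in $U\subseteq V_{\vec n}$ up to conjugacy, hence $\gamma$ is conjugate into $U_{\vec n}$ and so is a norm --- with no statement about the support of $\phi^{\vec n}$ ever needed. Your ``finite, hence proper, correspondence of classes near the identity'' is a heuristic for exactly this conjugacy statement, but it should be used in this form (as Morel's Lemma 8.4.2), not converted into a support bound on the transferred function.
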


\begin{rem}\label{choice of N0} We can now explain the choice of $N_0$: It is such that the principal congruence subgroup of level $N_0$ is small enough in the sense of this lemma, for all $v\in S_f^p$. (Recall that $G\cong G_{2n}$.)
\end{rem}

\begin{proof} This is essentially the same as the proof of the first part of Lemma 8.4.1 of~\cite{morel-trace}. The only difference is that we use Lemma~\ref{detecting norms in derived group} instead of Lemma 8.3.6 of~\cite{morel-trace}. We sketch the proof. For every $G_{\vec{n}}\in \mathscr{E}^\mathrm{ell}(G_{2n})$, choose an embedding $G_{\vec{n}}\hookrightarrow G_{2n}$. By Lemma~\ref{detecting norms in derived group} and the observation that the group of norms in a torus contains an open neighborhood of the identity, there exists an open neighborhood of the identity $U_{\vec{n}}\subset G_{\vec{n}}$ such that every semisimple element of $U_{\vec{n}}$ is a norm. By Lemma 8.4.2 of~\cite{morel-trace} (with $\mathbb{G}_{\vec{n}}$ the Levi subgroup of $\mathbb{G}_{2n}$), there exists a neighborhood $V_{\vec{n}}$ of the identity in $G_{2n}(F_0\otimes_{\Q}\Q_v)$ such that every semisimple element of $G_{\vec{n}}(\Q_v)$ that is $G_{2n}(F_0\otimes_{\Q}\Q_v)$-conjugate to an element of $V_{\vec{n}}$ is $G_{\vec{n}}(\Q_v)$-conjugate to an element of $U_{\vec{n}}$. 

Let $V:=\cap_{G_{\vec{n}}\in \mathscr{E}^{\mathrm{ell}}(G)}V_{\vec{n}}$, then $U:=V\cap G_{2n}(\Q_v)$ is an open neighborhood of the identity in $G_{2n}(\Q_v)$. Any function $\phi\in C_c^\infty(G_{2n}(\Q_v))$ with support contained in $U$ will satisfy the desired property. Choose a transfer $\phi^{\vec{n}}$ of $\phi$ to $G_{\vec{n}}$. To show that $\phi^{\vec{n}}$ is the base change transfer of some $f^{\vec{n}}\in C_c^\infty(\mathbb{G}_{\vec{n}}(\Q_v))$, it is enough, by Proposition~\ref{labesse-inverse transfer} to show that, if for some semisimple $\gamma\in G_{\vec{n}}(\Q_v)$ we have $SO_{\gamma}(\phi^{\vec{n}})\not = 0$, then $\gamma$ is a norm. By the definition of transfer, if $SO_{\gamma}(\phi^{\vec{n}})\not =0$, then there exists $\delta \in G_{2n}(\Q_v)$ which is associated to $\gamma$ such that the orbital integral $O_{\delta}(\phi)\not =0$. The fact that $\gamma$ and $\delta$ are associated means that they are $G_{2n}(F_0\otimes_{\Q}\Q_v)$-conjugate and the fact that $O_{\delta}(\phi)\not = 0$ means that $\delta \in U\subseteq V_{\vec{n}}$. By the construction of $V_{\vec{n}}$, we see that $\gamma$ is $G_{\vec{n}}(\Q_v)$-conjugate to an element of $U_{\vec{n}}$, which is a norm, so $\gamma$ is a norm itself. 
\end{proof}

\subsection{The twisted trace formula}
The first part of this section is identical to the corresponding section of~\cite{caraiani-scholze}. The essential difference is that we give more detail in the proof of Proposition~\ref{comparing to the geometric side}, since we want to emphasize the role that \emph{cuspidal subgroups} play in it.  

Define the group \[\mathbb{G}^+_{\vec{n}}:=(\mathrm{Res}_{F_0/\mathbb{Q}}\mathbb{G}_m\times \mathrm{Res}_{F/\mathbb{Q}}\GL_{\vec{n}})\rtimes\{1,\theta\},\] where $\theta(\lambda,g)\theta^{-1}=(\lambda^c, \lambda^cg^\sharp)$ and $g^\sharp=\Phi_{\vec{n}}^tg^c\Phi_{\vec{n}}^{-1}$. There is a natural $\mathbb{Q}$-isomorphism $\mathbb{G}_{\vec{n}}\toisom \mathbb{G}_{\vec{n}}^\circ$, which extends to an isomorphism 
\[\mathbb{G}_{\vec{n}}\rtimes \Gal(F_0/\mathbb{Q})\toisom \mathbb{G}_{\vec{n}}^+\] 
so that $c\in \Gal(F_0/\mathbb{Q})$ maps to $\theta$. Using this isomorphism, we write $\mathbb{G}_{\vec{n}}$ and $\mathbb{G}_{\vec{n}}\theta$ for the two cosets. 

If $f\in C^\infty_c(\mathbb{G}_{\vec{n}}(\mathbb{A}))$ (with trivial character on $A^\circ_{\mathbb{G}_n,\infty}$), then we define $f\theta$ to be the function on $\mathbb{G}_{\vec{n}}\theta(\mathbb{A})$ obtained via translation by $\theta$. The (invariant) twisted trace formula (see~\cite{arthura, arthurb}) gives an equality 
\begin{equation} \label{twisted} I^{\mathbb{G}_{\vec{n}}\theta}_{\mathrm{geom}}(f\theta)=I^{\mathbb{G}_{\vec{n}}\theta}_{\mathrm{spec}}(f\theta).\end{equation} 
The left hand side of the equation is defined in Section 3 of~\cite{arthurb}, while the right hand side is defined in Section 4 of loc. cit.

Let $f_{\mathbb{G}_{\vec{n}},\Xi}$ and $\phi_{G_{\vec{n}},\xi}$ be as defined above. The following is analogous to Corollary 4.7 of~\cite{shin-galois}. 

\begin{prop}\label{comparing to the geometric side} We have the following equality:
\begin{equation}I^{\mathbb{G}_{\vec{n}}\theta}_{\mathrm{geom}}(f^{\vec{n}}\theta)=\sum_{\gamma}SO^{G_{\vec{n}}(\mathbb{A})}_{\gamma}(\phi^{\vec{n}})=\tau(G_{\vec{n}})^{-1}\cdot ST_e^{G_{\vec{n}}}(\phi^{\vec{n}}), \end{equation} when $\phi^{\vec{n}}$ and $f^{\vec{n}}$ satisfy \[\phi^{\vec{n}} =(\phi^{\vec{n}})^S\cdot \phi^{\vec{n}}_{S_f}\cdot \phi_{G_{\vec{n}},\xi}\ \mathrm{and}\ f^{\vec{n}}=(f^{\vec{n}})^S\cdot f^{\vec{n}}_{S_f}\cdot f_{\mathbb{G}_{\vec{n}},\Xi}\] with $(\phi^{\vec{n}})^S$ a $BC$-transfer of $(f^{\vec{n}})^S$, $\phi^{\vec{n}}_{S_f}$ a $BC$-transfer of $f^{\vec{n}}_{S_f}$. The sum in the center runs over a set of representatives of $\Q$-elliptic semisimple stable conjugacy classes in $G_{\vec{n}}(\Q)$ and $\tau(G_{\vec{n}})$ is the Tamagawa number of $G_{\vec{n}}$. 
\end{prop}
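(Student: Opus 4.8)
The plan is to follow the strategy of \cite[Corollary 4.7]{shin-galois} essentially verbatim, making explicit where the hypothesis $F^+\neq\Q$ (equivalently, the absence of cuspidal subgroups) enters. The starting point is the geometric side of the invariant twisted trace formula $I^{\mathbb{G}_{\vec n}\theta}_{\mathrm{geom}}(f^{\vec n}\theta)$. By the general theory of Arthur \cite{arthurb}, this is a sum over $\theta$-conjugacy classes of weighted twisted orbital integrals, indexed by Levi subgroups of $\mathbb{G}_{\vec n}$. The first step is to show that, for our choice of $f^{\vec n}$, all the genuinely weighted (non-elliptic) terms vanish, leaving only the elliptic contribution $\sum_\gamma SO^{G_{\vec n}(\A)}_\gamma(\phi^{\vec n})$. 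This vanishing is exactly where the archimedean component being a Lefschetz function $f_{\mathbb{G}_{\vec n},\Xi}$ at \emph{two} infinite places matters: a (twisted) Lefschetz/Euler--Poincaré function is cuspidal in the sense that its semisimple orbital integrals vanish on non-elliptic elements, and having such a function at more than one place forces the geometric side to collapse onto elliptic terms, because the relevant Levi subgroups would need to be elliptic at every infinite place simultaneously. The hypothesis $F^+\neq\Q$ guarantees there are at least two infinite places of $F^+$, hence at least two infinite places at which we can place Lefschetz functions; equivalently, in the language of Definition~\ref{cuspidal subgroups}, $G$ has no cuspidal proper Levi/parabolic subgroups, so no proper Levi contributes.

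Next, I would invoke the base change fundamental lemma and the matching of transfers recalled in \S\ref{sec: base change}: the identities $\mathrm{tr}\,\Pi_v(\tilde\zeta^*_{\vec n}(f_v)) = \mathrm{tr}\,\tilde\zeta_{\vec n,*}(\Pi_v)(f_v)$ and the corresponding statements for $BC_{\vec n}$, together with the fact that at every place $\phi^{\vec n}_v$ is a $BC$-transfer of $f^{\vec n}_v$ (unramified places via $BC^*_{\vec n}$; $p$ via Case 2 of \cite[\S 4.2]{shin-galois}; $\infty$ via Labesse's Lefschetz functions, which are $BC$-matching with the Euler--Poincaré functions $\phi_{G_{\vec n},\xi}$ by \cite[p.~24]{shin-galois}; and the places in $S_f^p$ by Lemma~\ref{condition H}, which is the only new input). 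The matching of orbital integrals at every place, combined with the product formula for (twisted) orbital integrals and the fact that an element of $G_{\vec n}(\Q)$ is a norm from $\mathbb{G}_{\vec n}$ precisely when it is everywhere locally a norm (using $p$ split and Lemma~\ref{detecting norms in derived group} at the bad inert/ramified places), identifies the elliptic part of $I^{\mathbb{G}_{\vec n}\theta}_{\mathrm{geom}}$ with $\sum_\gamma SO^{G_{\vec n}(\A)}_\gamma(\phi^{\vec n})$, where $\gamma$ runs over $\Q$-elliptic semisimple stable conjugacy classes in $G_{\vec n}(\Q)$. Here one uses that the twisted orbital integral $TO_{\gamma\theta}(f^{\vec n})$ of a norm $\gamma$ equals (up to the standard transfer factor and a power of the relevant discriminant, which are trivial in this setting) the stable orbital integral $SO_\gamma(\phi^{\vec n})$, and that non-norm classes contribute zero.

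Finally, the second equality $\sum_\gamma SO^{G_{\vec n}(\A)}_\gamma(\phi^{\vec n}) = \tau(G_{\vec n})^{-1}\cdot ST_e^{G_{\vec n}}(\phi^{\vec n})$ is just unwinding Kottwitz's definition of the stable elliptic term $ST_e^{G_{\vec n}}$ from \cite{kottwitz-cuspidal}: by definition $ST_e^{G_{\vec n}}(\phi^{\vec n}) = \tau(G_{\vec n})\sum_\gamma SO_\gamma(\phi^{\vec n})$, the sum over $\Q$-elliptic semisimple stable conjugacy classes, so the identity is a tautology once the normalizations of Tamagawa measures and Haar measures are pinned down consistently (which we do following \cite{shin-galois}). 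I expect the main obstacle to be the first step: carefully justifying the vanishing of the non-elliptic weighted terms in $I^{\mathbb{G}_{\vec n}\theta}_{\mathrm{geom}}(f^{\vec n}\theta)$, i.e.\ checking that the presence of (twisted) Lefschetz functions at the two infinite places of $F^+$ really does kill all contributions from proper Levi subgroups. This is where one must use that $F^+\neq\Q$ in an essential way, ruling out cuspidal subgroups of $G$; once that is in place, the remainder of the argument is a bookkeeping exercise in transfer identities and measure normalizations, parallel to \cite[Corollary 4.7]{shin-galois} and to \cite[\S 8]{morel-trace}.
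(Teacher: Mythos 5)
Your high-level plan is correct, but the central mechanism you describe for killing the non-elliptic terms is not how the argument actually works, and this is where the proof lives. You write that $F^+\neq\Q$ ``guarantees there are at least two infinite places of $F^+$, hence at least two infinite places at which we can place Lefschetz functions,'' so that one can invoke Arthur's cuspidality-at-two-places simplification \cite[Theorem 7.1(b), Corollary 7.4]{arthurb}. This is wrong: the trace formula here is for the groups $\mathbb{G}_{\vec n}$ and $G_{\vec n}$ over $\Q$, and $\Q$ has exactly one archimedean place. There is a single archimedean test function $f^{\vec n}_\infty = f_{\mathbb{G}_{\vec n},\Xi}$, and we have cuspidality only at that one place. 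Arthur's hypothesis fails, and the paper explicitly says so. The assumption $F^+\neq\Q$ is a \emph{substitute} for cuspidality at a second place, not a source of it, and the way it enters is quite different from what you describe.

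The actual argument is as follows. First, Labesse's Theorem 4.3.4 identifies $\sum_\gamma SO^{G_{\vec n}(\A)}_\gamma(\phi^{\vec n})$ with the elliptic part of the twisted trace formula for $\mathbb{G}_{\vec n}\theta$; you omit this step, which is what makes the comparison possible in the first place. Second, to show that only the elliptic part of $I^{\mathbb{G}_{\vec n}\theta}_{\mathrm{geom}}(f^{\vec n}\theta)$ survives, one needs $I_{\mathbb{M}}^{\mathbb{G}_{\vec n}\theta}(\delta\theta,f^{\vec n}\theta)=0$ for every proper Levi subset $\mathbb{M}$. Since $f_{\mathbb{G}_{\vec n},\Xi}$ is cuspidal, Arthur's splitting formula \cite[Prop.~9.1, Cor.~8.3]{arthura} reduces this to vanishing of the $\infty$-component $I_{\mathbb{M}}^{\mathbb{G}_{\vec n}\theta}(\delta\theta,f_{\mathbb{G}_{\vec n},\Xi}\theta)$. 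By Morel's Proposition 8.2.3 this term vanishes unless $\mathbb{M}$ is a \emph{cuspidal} Levi subset, and Lemma~\ref{no cuspidal subgroups} shows that when $[F^+:\Q]\geq 2$ there are no proper cuspidal Levi subsets of $\mathbb{G}_{\vec n}\theta$ (because any proper Levi contains a $\mathrm{Res}_{F/\Q}\GL_h$ factor whose maximal $\R$-anisotropic torus modulo $A_M$ is too small). Finally, one still needs the first (and only the first) hypothesis of \cite[Cor.~7.4]{arthurb}: that the orbital integral $O_{\delta\theta}(f^{\vec n})$ vanishes unless $\delta$ is $\theta$-elliptic at $\infty$, which is supplied by Labesse's Theorem A.1.1. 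Your description conflates the two-places cuspidality route (which is \emph{not} available) with the no-cuspidal-Levis route (which is what is actually used), and treats them as ``equivalent''; they are different arguments and only the latter works here.
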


\begin{proof} The second equality follows exactly as in the proof of Corollary 4.7 of~\cite{shin-galois}. Moreover, Theorem 4.3.4 of~\cite{labesse} rewrites the sum of stable orbital integrals in the center as the \emph{elliptic} part of the twisted trace formula for $\mathbb{G}_{\vec{n}}\theta$. Comparing with the definition of the elliptic part in Section 4.1 of~\cite{labesse}, we get \[\sum_{\gamma}SO^{G_{\vec{n}}(\mathbb{A})}_{\gamma}(\phi^{\vec{n}})=\sum_{\delta}\mathrm{vol}(I_{\delta\theta}(\Q)A_{\mathbb{G}_{\vec{n}}\theta}\setminus I_{\delta\theta}(\A))\cdot O_{\delta\theta}^{\mathbb{G}_{\vec{n}}(\mathbb{A})}(f^{\vec{n}}),\] where $\gamma$ is as above and $\delta$ runs over a set of representatives for $\theta$-elliptic $\theta$-conjugacy classes in $\mathbb{G}_{\vec{n}}(\Q)$.

The key point in proving the first equality is to simplify the geometric side of the twisted trace formula for $\mathbb{G}_{\vec{n}}\theta$, using similar techniques to those in Chapter 7 of~\cite{arthurb} to show that only this part contributes. This is done in Proposition 4.5 of~\cite{shin-galois}, but we repeat the argument here in order to emphasize the role that cuspidal subgroups play. Theorem 7.1.(b) and Corollary 7.4 of~\cite{arthurb} give a way to simplify the geometric side of the trace formula if the test function $f^{\vec{n}}$ is cuspidal at two places. The test function $f^{\vec{n}}$ that we constructed is only necessarily cuspidal at $\infty$ but we will use the assumption that $[F^+:\Q]\geq 2$ instead and we will explain why twisted analogues of these two results hold. 

The geometric side of the twisted trace formula is a linear combination of invariant distributions $I_{\mathbb{M}}^{\mathbb{G}_{\vec{n}}\theta}(\delta\theta, f^{\vec{n}}\theta)$, where $\mathbb{M}$ runs over Levi subsets of $\mathbb{G}_{\vec{n}}$. (The twisted analogue of) Theorem 7.1.(b) of~\cite{arthurb} shows that the geometric side can be simplified to a linear combination of distributions on $\mathbb{G}_{\vec{n}}\theta$ itself. The proof of Theorem 7.1.(b) of~\cite{arthurb} goes through as long as one can show that the distributions $I_{\mathbb{M}}^{\mathbb{G}_{\vec{n}}\theta}(\delta\theta,f^{\vec{n}}\theta)$ vanish for every proper subset $\mathbb{M}\subset \mathbb{G}_{\vec{n}}$ and semisimple element $\delta \in \mathbb{M}(\Q)$. Since $f^{\vec{n}}_{\mathbb{G}_{\vec{n}},\Xi}$ is cuspidal, Proposition 9.1 (the splitting formula) and Corollary 8.3 of~\cite{arthura} imply that $I_{\mathbb{M}}^{\mathbb{G}_{\vec{n}}\theta}(\delta\theta,f^{\vec{n}}\theta)$ is a multiple of $I_{\mathbb{M}}^{\mathbb{G}_{\vec{n}}\theta}(\delta\theta,f_{\mathbb{G}_{\vec{n}},\Xi}\theta)$ so it is enough to show that the distribution at $\infty$ vanishes. Proposition 8.2.3 of~\cite{morel-trace} shows that the term $ I_{\mathbb{M}}^{\mathbb{G}_{\vec{n}}\theta}(\delta\theta,f_{\mathbb{G}_{\vec{n}},\Xi}\theta)$ vanishes if $\mathbb{M}$ is not a \emph{cuspidal} Levi subset of $\mathbb{G}_{\vec{n}}\theta$. We recall what cuspidal subgroups and subsets are in Definition~\ref{cuspidal subgroups} below and show in Lemma~\ref{no cuspidal subgroups} that, when $[F^+:\Q]\geq 2$, $\mathbb{G}_{\vec{n}}\theta$ does not admit any cuspidal subsets. 

The argument above implies that the conclusion of Theorem 7.1.(b) of~\cite{arthurb} holds: the geometric side of the twisted trace formula in this case is a linear combination of distributions on $\mathbb{G}_{\vec{n}}\theta$ itself. Recall that $f^{\vec{n}}$ is cuspidal at $\infty$. Theorem A.1.1 of~\cite{labesse} implies that $O_{\delta\theta}^{\mathbb{G}_{\vec{n}}(\mathbb{A})}(f^{\vec{n}})=0$ unless $\delta$ is $\theta$-elliptic in $\mathbb{G}_{\vec{n}}(\mathbb{R})$. We deduce the twisted analogue of the first condition of Corollary 7.4 of~\cite{arthurb}. The second condition (that $f^{\vec{n}}$ is cuspidal at an additional place) is not needed for the proof to go through: it was only imposed in order to invoke Theorem 7.1 of loc. cit. 

By the twisted analogue of Corollary 7.4 of~\cite{arthurb}, we deduce that 
\[I^{\mathbb{G}_{\vec{n}}\theta}_{\mathrm{geom}}(f^{\vec{n}}\theta)=\sum_{\delta}\mathrm{vol}(I_{\delta\theta}(\Q)A_{\mathbb{G}_{\vec{n}}\theta}\setminus I_{\delta\theta}(\A))\cdot O_{\delta\theta}^{\mathbb{G}_{\vec{n}}(\mathbb{A})}(f^{\vec{n}}),\] where $\delta$ runs over a set of representatives for $\theta$-elliptic $\theta$-conjugacy classes in $\mathbb{G}_{\vec{n}}(\Q)$. The proposition follows. 
\end{proof}

\begin{defn}\label{cuspidal subgroups}\leavevmode\begin{enumerate}
\item A cuspidal group is a reductive group $M$ over $\mathbb Q$ such that $(M/A_M)_{\mathbb R}$ contains a maximal $\R$-torus that is $\R$-anisotropic. Here $A_M$ denotes the maximal $\Q$-split torus in the center of $M$.
\item A cuspidal Levi subgroup $M$ of $G_{\vec{n}}$ is a subgroup which is a cuspidal group.
\item For a Levi subset $\mathbb{M}$ of $\mathbb{G}_{\vec{n}}\theta$, we let $\mathbb{M}^\circ$ be the intersection of the subgroup of $\mathbb{G}^+_{\vec{n}}$ generated by $M$ with $\mathbb{G}_{\vec{n}}$. We say that a subset $\mathbb{M}$ of $\mathbb{G}_{\vec{n}}\theta$ is cuspidal if $\mathbb{M}^\circ\cap \mathbb G_{\vec{n}}$ is a cuspidal Levi subgroup of $\mathbb G_{\vec{n}}$. 
\end{enumerate}
\end{defn}

\begin{lem}\label{no cuspidal subgroups} $\mathbb{G}_{\vec{n}}\theta$ admits no proper cuspidal subsets. 
\end{lem}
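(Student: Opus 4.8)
The statement is that $\mathbb{G}_{\vec{n}}\theta$ admits no proper cuspidal subsets, and the key hypothesis in force throughout this section is $[F^+:\Q]\geq 2$. The plan is to reduce the question to a concrete computation about maximal $\R$-tori in the relevant reductive groups over $\Q$, exactly as in the untwisted case treated in \cite{shin-galois} (see also the parallel computations in \cite{morel-trace}). First I would unwind Definition~\ref{cuspidal subgroups}(3): a proper subset $\mathbb{M}\subsetneq \mathbb{G}_{\vec{n}}\theta$ is cuspidal iff $\mathbb{M}^\circ\cap\mathbb{G}_{\vec{n}}$ is a proper cuspidal Levi subgroup of $\mathbb{G}_{\vec{n}}$, where cuspidality means that $(\mathbb{M}^\circ\cap\mathbb{G}_{\vec{n}})/A_{\mathbb{M}^\circ\cap\mathbb{G}_{\vec{n}}}$ contains an $\R$-anisotropic maximal $\R$-torus. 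So it suffices to show that no proper Levi subgroup $M$ of $\mathbb{G}_{\vec{n}}$ that arises as such an $\mathbb{M}^\circ\cap\mathbb{G}_{\vec{n}}$ is cuspidal.

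The second step is to make the structure of $\mathbb{G}_{\vec{n}}$ explicit. Recall $\mathbb{G}_{\vec{n}}=\mathrm{Res}_{F_0/\Q}(G_{\vec{n}}\times_\Q F_0)$, and $G_{\vec{n}}$ is a unitary similitude group attached to the CM extension $F/F^+$; over $\overline{\Q}$ it becomes a product of general linear groups times $\mathbb{G}_m$. The $\theta$-stable (equivalently $c$-stable) Levi subgroups $M$ of $\mathbb{G}_{\vec{n}}$ relevant here are, up to conjugacy, restrictions of scalars of products of general linear groups, crossed with a copy of the (restriction of scalars from $F_0$ of the base change to $F_0$ of a) smaller unitary similitude group $G_{\vec{m}}$; this is precisely the description used in Lemma~\ref{detecting norms in derived group}. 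For such an $M$, I would compute $M/A_M$ and examine whether it can contain an $\R$-anisotropic maximal torus. The point is that $F^+$ has $[F^+:\Q]\geq 2$ real places, so $\mathrm{Res}_{F^+/\Q}$ of any split or quasi-split factor contributes, at every real place, a factor whose maximal $\R$-torus has a nontrivial split part modulo the center unless that factor is trivial; more precisely, a factor of $M$ of the form $\mathrm{Res}_{F/\Q}\GL_k$ has, modulo center, $\mathrm{Res}_{F\otimes_\Q\R/\R}\mathrm{PGL}_k$, which for $k\geq 2$ has no anisotropic maximal $\R$-torus at any archimedean place, and a unitary-type factor at a real place of $F^+$ is anisotropic modulo center only in the definite case --- but our $G_{\vec{n}}$ is quasi-split, so its proper Levis have indefinite unitary factors or $\GL$ factors at the real places. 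Assembling these local obstructions over the $[F^+:\Q]\geq 2$ real places forces $M=\mathbb{G}_{\vec{n}}$, i.e.\ no proper cuspidal Levi exists.

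Concretely, the third step is the following dichotomy for a proper $c$-stable Levi $M$: either $M$ has a factor that is a restriction of scalars $\mathrm{Res}_{F/\Q}\GL_k$ with $k\geq 1$ coming from the ``$\GL$-part'' of the Levi (this always happens for a \emph{proper} Levi of $G_{\vec{n}}$ that is not all of $G_{\vec{n}}$), in which case $(M/A_M)_\R$ at each of the $\geq 2$ real places of $F^+$ contains a factor $\mathrm{Res}_{(F\otimes_{F^+}F^+_v)/F^+_v}\GL_k$ modulo its center, which contains an $\R$-split subtorus of positive dimension (a nontrivial diagonal torus in one $\GL_k$ factor, or the norm-one-modulo-center torus which is still not anisotropic because $F\otimes_{F^+}F^+_v\cong\C$ makes $\GL_k/(\text{center})$ an inner form of $\mathrm{PGL}_k$ over $\R$ whose maximal tori are never anisotropic for $k\geq 2$, and for $k=1$ the factor $\mathrm{Res}_{\C/\R}\GL_1$ modulo $\mathbb{G}_m$ is $\mathrm{U}(1)$, anisotropic, so one has to be careful); or $M$ has only a unitary-similitude factor $G_{\vec{m}}$ with $\vec{m}\neq\vec{n}$, which for a \emph{quasi-split} $G_{\vec{n}}$ cannot be cuspidal because its maximal $\R$-torus modulo center retains a split part coming from quasi-splitness at the real places. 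I would handle the delicate $k=1$ case by noting that a proper Levi of the quasi-split $G_{\vec{n}}$ whose $\GL$-blocks are all $1\times 1$ still has a nontrivial split torus in its center \emph{relative to} the ambient split torus once one passes modulo $A_M$, using that the Shimura cocharacter $\mu$ is such that the derived group is simply connected and the torus $D_{\vec{n}}=G_{\vec{n}}/G_{\vec{n}}^{\mathrm{der}}$ is as computed in Proposition~\ref{hasse principle}.

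\textbf{Main obstacle.} The genuine work is the case analysis in the third step --- deciding exactly which $c$-stable proper Levi subgroups of $\mathbb{G}_{\vec{n}}$ occur as $\mathbb{M}^\circ\cap\mathbb{G}_{\vec{n}}$ and verifying for each that cuspidality fails, with the $1\times 1$-block case requiring the most care because there a single real place would not suffice (a definite $\mathrm{U}(1)^{\dim}$ can be anisotropic) and one genuinely must use $[F^+:\Q]\geq 2$ together with quasi-splitness of $G_{\vec{n}}$ to produce a split direction. I expect this to follow the template of Lemma~8.3.6 and the surrounding discussion in \cite{morel-trace} and of Proposition~4.5/Lemma in \cite{shin-galois} almost verbatim, with $F$ imaginary quadratic replaced by a general CM field; the bookkeeping of real places is the only new ingredient, and it works in our favor precisely because of assumption (ii) of this section, $F^+\neq\Q$.
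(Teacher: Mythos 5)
Your overall strategy is the right one and coincides with the paper's: reduce to showing $\mathbb{G}_{\vec{n}}$ has no proper cuspidal Levi subgroup, observe that any proper Levi contains a direct factor $\mathrm{Res}_{F/\Q}\GL_h$ for some $1\leq h\leq n$, and then exploit $[F^+:\Q]\geq 2$ to rule out cuspidality. But the execution in your third step has a genuine gap, precisely in the case you yourself flag as delicate: $h=1$. You note correctly that at a single real place $\mathrm{Res}_{\C/\R}\mathbb{G}_m$ modulo $\mathbb{G}_m$ is the compact $\mathrm{U}(1)$, so a local argument at one place cannot work. The detour you then propose---invoking the Shimura cocharacter, simple connectedness of the derived group, and the torus $D_{\vec{n}}$ from the Hasse principle computation---is a tangent you never carry out, and it is unclear how it would close the case. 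The resolution is much more elementary than that, and it handles all $h\geq 1$ uniformly, so the $h\geq 2$ vs.\ $h=1$ split you set up (and your $\mathrm{PGL}_k$ argument for $h\geq 2$) is unnecessary.

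Here is the point you are missing. Look only at the \emph{center} $Z=\mathrm{Res}_{F/\Q}\mathbb{G}_m$ of the factor $M'=\mathrm{Res}_{F/\Q}\GL_h$. The maximal $\Q$-split subtorus of $Z$, hence $A_{M'}$, has rank $1$. But over $\R$ we have $Z_\R\cong\prod_{\tau\colon F^+\hookrightarrow\R}\mathrm{Res}_{\C/\R}\mathbb{G}_m$, whose maximal $\R$-split subtorus has rank $[F^+:\Q]$. Hence the image of $Z_\R$ in $(M'/A_{M'})_\R$ still contains a split torus of rank $[F^+:\Q]-1>0$. Any maximal $\R$-torus of $(M'/A_{M'})_\R$ contains the center of that group, hence contains this split torus, hence cannot be anisotropic. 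So $M'$ is not cuspidal, and therefore neither is any Levi containing $M'$ as a direct factor. This is a rank count on the \emph{global} center modulo the \emph{global} $A_{M'}$, not an archimedean-place-by-place analysis; the hypothesis $[F^+:\Q]>1$ enters exactly here, and no input about $\mu$, the derived group, or $D_{\vec n}$ is required. (Your second ``dichotomy'' branch, in which $M$ has only a unitary-similitude factor, never occurs: every proper Levi of $\mathbb{G}_{\vec n}$ has at least one $\GL$ block.)
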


\begin{proof} It is enough to show that $\mathbb G_{\vec{n}}$ admits no proper cuspidal Levi subgroups. Suppose that $M\subsetneq \mathbb G_{\vec{n}}$ is a cuspidal Levi subgroup. Then $M$ contains a direct factor of the form $\mathrm{Res}_{F/\Q}\GL_h$ for some $0<h\leq n$, which would also have to be cuspidal group. However, the center of $\mathrm{Res}_{F/\Q}\GL_h\times_{\Q}\R$ contains a split torus of rank $[F^+:\Q]>1$, whereas the maximal split torus in the center of $\mathrm{Res}_{F/\Q}\GL_h$ has rank $1$.  
\end{proof}

\begin{remark}\label{remark on imaginary quadratic case} When $F$ is an imaginary quadratic field, so $F^+=\Q$, the unitary similitude groups $G_{\vec{n}}$ do admit proper cuspidal subgroups. For example, for $G_{2n}$, these are the Levi subgroups of the form $(\mathrm{Res}_{F/\Q}\mathbb{G}_m)^s\times G_{2(n-s)}$ for some $s\in \mathbb{Z}_{>0}$. In this case, one cannot directly identify the stable trace formula for Igusa varieties with the geometric side of the (twisted) trace formula. The latter contains more terms coming from the cuspidal subgroups.  
\end{remark}

\subsection{Construction of test functions}\label{test functions}
In this section, we explain how to construct our test functions. This differs slightly from the construction of test functions in Section 6 of~\cite{shin-galois} and Section 5 of~\cite{caraiani-scholze}, but only at primes in $S_f^p$. We fix the isomorphism $G\cong G_{2n}$. We let $(f^{2n})^S$ be any function in $\cH^\ur(\mathbb{G}_{2n}(\A^S))$ and take $\phi^S\in \cH^\ur(G(\A^S))$ be its $BC$-transfer. We let $\phi_{S_f^p}\in C^\infty_c(G(\A_{S_f^p}))$ be the characteristic function of a principal congruence subgroup for some sufficiently large level $N$, as guaranteed by Lemma~\ref{condition H}. We let $\phi_p\in C_c^\infty(J_b(\Q_p))$ and set 
\[\phi:=\phi_p\cdot \phi_{S_f^p}\cdot \phi^S,\]
and assume, in the first instance, that it is acceptable.

From these test functions, we construct all the other test functions we will need. First, for each elliptic endoscopic group $G_{\vec{n}}$, we let $\phi^{\vec{n}}$ be the function constructed from $\phi$ as in Definition~\ref{definition of transfer}. Let $(f^{n_1,n_2})^S$ be obtained from $(f^{2n})^S$ by transfer along the $L$-morphism $\tilde{\zeta}_{n_1,n_2}$. We choose $f^{\vec{n}}_{S_f^p}$ as in Lemma~\ref{condition H} to be any function such that $\phi^{\vec{n}}_{S_f^p}$ is a $BC$-transfer of $f^{\vec{n}}_{S_f^p}$. We choose $f^{\vec{n}}_p$ so that $BC^*_{\vec{n}}(f^{\vec{n}}_p)=\phi^{\vec{n}}_p$ (recall that $BC^*_{\vec{n}}$ is surjective at $p$). We define $f^{\vec{n}}_\infty$ explicitly, as a linear combination of Lefschetz functions for representations $\Xi(\varphi_{\vec{n}})$ of $\mathbb{G}_{\vec{n}}$ for which $\tilde \zeta_{\vec{n}} \circ \varphi_{\vec{n}}$ corresponds to the trivial representation of $\mathbb G_{2n}$ (see (6.7) of~\cite{shin-galois} for the precise formula). Finally, we set 
\[f^{\vec{n}}:= f^{\vec{n}}_p\cdot(f^{\vec{n}})_{S_f^p}\cdot (f^{\vec{n}})^S\cdot f^{\vec{n}}_{\infty}.\] 
By the commutative diagram~\eqref{commutative diagram transfer of functions}, we see that $(\phi^{\vec{n}})^S$ is the transfer of $(f^{\vec{n}})^S$ along $BC^*_{\vec{n}}$. We can therefore apply Proposition~\ref{comparing to the geometric side} to $f^{\vec{n}}$ and $\phi^{\vec{n}}$.  We obtain the following result, analogous to Theorem 5.3.2 of~\cite{caraiani-scholze}.

\begin{thm}\label{spectral side} We have an equality 
\[\mathrm{tr}(\phi|[H_c(\mathrm{Ig}^b,\mathbb{\overline Q}_\ell)])=\tau(G)\sum_{G_{\vec{n}}\in \mathscr{E}^{\mathrm{ell}}(G)}\epsilon_{\vec{n}}\cdot I^{\mathbb{G}_{\vec{n}}\theta}_{\mathrm{spec}}(f^{\vec{n}}\theta),\]
where $\epsilon_{\vec{n}}=\frac{1}{2}$ if $\vec{n}=(n,n)$ and $\epsilon_{\vec{n}} = 1$ otherwise. 
\end{thm}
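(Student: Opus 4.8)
\textbf{Proof plan for Theorem~\ref{spectral side}.} The strategy is to start from the stabilized trace formula for Igusa varieties (Theorem~\ref{stable trace formula}) and rewrite each stable elliptic term $ST_e^{G_{\vec{n}}}(\phi^{\vec{n}})$ as a spectral expression on the twisted group $\mathbb{G}_{\vec{n}}\theta$. Concretely, Theorem~\ref{stable trace formula} gives
\[
\mathrm{tr}\left(\phi\mid H_c(\mathrm{Ig}^b,\overline{\mathbb Q}_\ell)\right)=\sum_{G_{\vec{n}}}\iota(G,G_{\vec{n}})\,ST_e^{G_{\vec{n}}}(\phi^{\vec{n}}),
\]
and the plan is to feed this into the chain of identities we have set up. For each $G_{\vec{n}}$, Proposition~\ref{comparing to the geometric side} (whose hypotheses are met because $\phi^{\vec{n}}$ is built as in Definition~\ref{definition of transfer} and $f^{\vec{n}}$ is chosen as in Section~\ref{test functions}, so that $(\phi^{\vec{n}})^S$ is a $BC$-transfer of $(f^{\vec{n}})^S$, $\phi^{\vec{n}}_{S_f^p}$ a $BC$-transfer of $f^{\vec{n}}_{S_f^p}$, $BC^*_{\vec{n}}(f^{\vec{n}}_p)=\phi^{\vec{n}}_p$, and the archimedean components match) converts $ST_e^{G_{\vec{n}}}(\phi^{\vec{n}})$ into $\tau(G_{\vec{n}})\cdot I^{\mathbb{G}_{\vec{n}}\theta}_{\mathrm{geom}}(f^{\vec{n}}\theta)$. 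Then the invariant twisted trace formula~\eqref{twisted} replaces $I^{\mathbb{G}_{\vec{n}}\theta}_{\mathrm{geom}}(f^{\vec{n}}\theta)$ by $I^{\mathbb{G}_{\vec{n}}\theta}_{\mathrm{spec}}(f^{\vec{n}}\theta)$.

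First I would assemble the constants. We have $\iota(G,G_{\vec{n}})=\tau(G)\tau(G_{\vec{n}})^{-1}|\mathrm{Out}(G,G_{\vec{n}},s_{\vec{n}},\eta_{\vec{n}})|^{-1}$, and the outer automorphism group is trivial except when $\vec{n}=(n,n)$, where it has order $2$; this is exactly the source of the factor $\epsilon_{\vec{n}}$. Combining with the $\tau(G_{\vec{n}})$ produced by Proposition~\ref{comparing to the geometric side}, the $\tau(G_{\vec{n}})^{\pm1}$ cancel and we are left with the overall $\tau(G)$ and the factors $\epsilon_{\vec{n}}$, giving
\[
\mathrm{tr}\left(\phi\mid[H_c(\mathrm{Ig}^b,\overline{\mathbb Q}_\ell)]\right)=\tau(G)\sum_{G_{\vec{n}}\in\mathscr{E}^{\mathrm{ell}}(G)}\epsilon_{\vec{n}}\cdot I^{\mathbb{G}_{\vec{n}}\theta}_{\mathrm{spec}}(f^{\vec{n}}\theta),
\]
as claimed. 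One bookkeeping point to verify is that the summand for a pair $(n_1,n_2)$ that is excluded from $\mathscr{E}^{\mathrm{ell}}(G)$ (the case where $n_1,n_2,[F^+:\Q]$ are all odd) indeed does not occur on either side; since $[F^+:\Q]>1$ this is automatic when $[F^+:\Q]$ is even, and when $[F^+:\Q]$ is odd one checks the parity condition as in~\cite{kottwitz-cuspidal, shin-galois}. Another point is that the stable trace formula (Theorem~\ref{stable trace formula}) is applied with a test function $\phi$ that is the characteristic function of a principal congruence subgroup at places in $S_f^p$; acceptability is assumed, and the level is taken divisible by $N_0$ as in Remark~\ref{choice of N0} precisely so that Lemma~\ref{condition H} guarantees the existence of the $BC$-inverse-transfers $f^{\vec{n}}_{S_f^p}$ needed to run Proposition~\ref{comparing to the geometric side}.

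I expect the main obstacle to be the justification, inside the proof of Proposition~\ref{comparing to the geometric side}, that only the elliptic part of the geometric side of the twisted trace formula survives — i.e.\ the twisted analogues of Theorem 7.1(b) and Corollary 7.4 of~\cite{arthurb}. The point already flagged in the text is that our test function $f^{\vec{n}}$ is a priori cuspidal only at $\infty$, not at two finite places, so the naive form of Arthur's simplification does not apply; instead one uses the vanishing of $I_{\mathbb{M}}^{\mathbb{G}_{\vec{n}}\theta}(\delta\theta,f_{\mathbb{G}_{\vec{n}},\Xi}\theta)$ for non-cuspidal Levi subsets $\mathbb{M}$ (Proposition 8.2.3 of~\cite{morel-trace}) together with Lemma~\ref{no cuspidal subgroups}, which uses $[F^+:\Q]\geq2$. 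That is why Assumption~(2) ($F^+\neq\mathbb{Q}$) enters in an essential way here, and Remark~\ref{remark on imaginary quadratic case} explains what breaks without it. Once this elliptic reduction and the matching of orbital integrals via Labesse's base change (Theorems 4.3.4 and A.1.1 of~\cite{labesse}) are in hand, the rest of the argument is a direct substitution and constant-chasing exercise, following Theorem 5.3.2 of~\cite{caraiani-scholze} verbatim except for the treatment of the places in $S_f^p$ described above.
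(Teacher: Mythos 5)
Your proposal matches the paper's argument: the theorem is obtained exactly by feeding Theorem~\ref{stable trace formula} through Proposition~\ref{comparing to the geometric side} (whose hypotheses hold by the construction of $\phi^{\vec{n}}$ and $f^{\vec{n}}$ in \S\ref{test functions}, relying on Lemma~\ref{condition H} at the places in $S_f^p$) and the invariant twisted trace formula~\eqref{twisted}, with the constants $\iota(G,G_{\vec{n}})=\tau(G)\tau(G_{\vec{n}})^{-1}|\mathrm{Out}|^{-1}$ cancelling the $\tau(G_{\vec{n}})$ and producing $\epsilon_{\vec{n}}$. Your identification of the elliptic reduction via the non-existence of cuspidal Levi subsets (Lemma~\ref{no cuspidal subgroups}, using $F^+\neq\Q$) as the crucial input inside Proposition~\ref{comparing to the geometric side} is also exactly where the paper locates the difficulty, so the proposal is correct and essentially identical to the paper's proof.
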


Recall the group morphism 
\[\mathrm{Red}^b_{\vec{n}}:\mathrm{Groth}(G_{\vec{n}}(\Q_p))\to \mathrm{Groth}(J_b(\Q_p))\] 
from Section 5.4 of~\cite{caraiani-scholze}. This is the representation-theoretic counterpart of constructing the transfer $\phi_p^{\vec{n}}$ of $\phi_p$. 
The following is an analogue of Lemma 5.5.1 of~\cite{caraiani-scholze}.

\begin{lem}\label{linear combination} The trace $\mathrm{tr}(\phi|[H_c(\mathrm{Ig}^b,\mathbb{\overline Q}_\ell)])$
can be written as a linear combination of terms of the form
\[
\mathrm{tr}\left(\mathrm{Red}^b_{\vec{n}}(\pi^{\vec{n}}_p)(\phi_p)\right)\mathrm{tr}\left((\Pi^{\vec{n}})_{S_f^p}((f^{\vec{n}})_{S_f^p})\circ A_{S_f^p}\right)\mathrm{tr}\left( (\tilde\zeta_{\vec{n}\ast} \Pi^{\vec{n}})^S((f^{2n})^S)\right) ,
\]
where $\pi^{\vec{n}}_p\in \mathrm{Rep}(G_{\vec{n}}(\Q_p))$ base changes to $\Pi^{\vec{n}}_p\in \mathrm{Rep}(\mathbb{G}_{\vec{n}}(\Q_p))$, the component at $p$ of a $\theta$-stable isobaric irreducible automorphic representation $\Pi^{\vec{n}}$ of $\mathbb{G}_{\vec{n}}$. Moreover, $\Pi^{\vec{n}}_\infty$ is cohomological (with respect to the trivial algebraic representation). The conclusion holds even without the assumption that $\phi$ is acceptable. 
\end{lem}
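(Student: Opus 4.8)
The plan is to unfold the spectral side that already appears in Theorem~\ref{spectral side}. By that theorem, $\mathrm{tr}(\phi\mid[H_c(\mathrm{Ig}^b,\overline{\mathbb Q}_\ell)])=\tau(G)\sum_{G_{\vec n}\in\mathscr E^{\mathrm{ell}}(G)}\epsilon_{\vec n}\,I^{\mathbb G_{\vec n}\theta}_{\mathrm{spec}}(f^{\vec n}\theta)$, so it suffices to show that for each elliptic endoscopic group $G_{\vec n}$ the distribution $I^{\mathbb G_{\vec n}\theta}_{\mathrm{spec}}(f^{\vec n}\theta)$ is a finite linear combination of terms of the asserted product shape. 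Following Section~4 of~\cite{arthurb} and its use in~\cite{shin-galois}, once the archimedean Lefschetz function $f^{\vec n}_\infty$ is inserted the spectral side becomes an absolutely convergent sum over discrete $\theta$-stable automorphic representations $\Pi^{\vec n}$ of $\mathbb G_{\vec n}$, each term being $c(\Pi^{\vec n})\cdot\mathrm{tr}\big(\Pi^{\vec n}(f^{\vec n})\circ A_{\Pi^{\vec n}}\big)$, where $A_{\Pi^{\vec n}}$ is Arthur's normalized intertwining operator attached to $\theta$ and $c(\Pi^{\vec n})$ depends only on $\Pi^{\vec n}_\infty$ (the discrete multiplicity times the archimedean contribution of $f^{\vec n}_\infty$). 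Since $\mathbb G_{\vec n}$ is, up to restriction of scalars, a product of general linear groups and a central torus, its discrete automorphic spectrum is isobaric by Moeglin--Waldspurger, so each contributing $\Pi^{\vec n}$ is isobaric irreducible; and by the explicit description of $f^{\vec n}_\infty$ as a linear combination of Lefschetz functions for the $\Xi(\varphi_{\vec n})$ with $\tilde\zeta_{\vec n}\circ\varphi_{\vec n}$ corresponding to the trivial representation of $\mathbb G_{2n}$ (formula (6.7) of~\cite{shin-galois}), the constant $c(\Pi^{\vec n})$ vanishes unless $\Pi^{\vec n}_\infty$ is cohomological with respect to the trivial algebraic representation.

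The next step is to factor each such trace over the places of $\mathbb Q$. For $\theta$-stable $\Pi^{\vec n}$ the operator $A_{\Pi^{\vec n}}$ factors as a restricted tensor product $\bigotimes_v A_{\Pi^{\vec n}_v}$ of local intertwiners, and since $f^{\vec n}=f^{\vec n}_p\cdot(f^{\vec n})_{S_f^p}\cdot(f^{\vec n})^S\cdot f^{\vec n}_\infty$ is a pure tensor we obtain $\mathrm{tr}\big(\Pi^{\vec n}(f^{\vec n})\circ A_{\Pi^{\vec n}}\big)=\prod_v\mathrm{tr}\big(\Pi^{\vec n}_v(f^{\vec n}_v)\circ A_{\Pi^{\vec n}_v}\big)$. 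I would then evaluate the four resulting factors in turn. At $\infty$, $\mathrm{tr}\big(\Pi^{\vec n}_\infty(f^{\vec n}_\infty)\circ A_{\Pi^{\vec n}_\infty}\big)$ is absorbed into $c(\Pi^{\vec n})$, subject to the cohomological constraint recorded above. Outside $S$, $(f^{\vec n})^S=\tilde\zeta_{\vec n}^\ast\big((f^{2n})^S\big)$ and $A^S$ is the identity on the line of unramified vectors, so the compatibility $\mathrm{tr}\,\Pi_v(\tilde\zeta_{\vec n}^\ast(f_v))=\mathrm{tr}\,\tilde\zeta_{\vec n\ast}(\Pi_v)(f_v)$ from Remark~\ref{explicit transfer} (together with diagram~\eqref{commutative diagram transfer of representations}) gives $\mathrm{tr}\big((\Pi^{\vec n})^S((f^{\vec n})^S)\big)=\mathrm{tr}\big((\tilde\zeta_{\vec n\ast}\Pi^{\vec n})^S((f^{2n})^S)\big)$. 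At $p$, $f^{\vec n}_p$ was chosen so that $BC^\ast_{\vec n}(f^{\vec n}_p)=\phi^{\vec n}_p$, so the base change character identity gives $\mathrm{tr}\big(\Pi^{\vec n}_p(f^{\vec n}_p)\circ A_{\Pi^{\vec n}_p}\big)=\mathrm{tr}\big(\pi^{\vec n}_p(\phi^{\vec n}_p)\big)$ for any $\pi^{\vec n}_p\in\mathrm{Rep}(G_{\vec n}(\mathbb Q_p))$ base changing to $\Pi^{\vec n}_p$; and since $\phi^{\vec n}_p$ is, by Definition~\ref{definition of transfer}, the function representing $\mathrm{Red}^b_{\vec n}$ in the sense of~\cite[\S 5.4]{caraiani-scholze} (i.e.\ $\mathrm{tr}(\pi^{\vec n}_p(\phi^{\vec n}_p))=\mathrm{tr}(\mathrm{Red}^b_{\vec n}(\pi^{\vec n}_p)(\phi_p))$ for all $\pi^{\vec n}_p$), this equals $\mathrm{tr}\big(\mathrm{Red}^b_{\vec n}(\pi^{\vec n}_p)(\phi_p)\big)$. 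The remaining factor at $S_f^p$ is left untouched as $\mathrm{tr}\big((\Pi^{\vec n})_{S_f^p}((f^{\vec n})_{S_f^p})\circ A_{S_f^p}\big)$. Assembling the four factors, each term of the expanded spectral side has exactly the form in the statement, and summing over $\vec n$ and collecting the constants $\tau(G)\epsilon_{\vec n}c(\Pi^{\vec n})$ yields the claimed finite linear combination.

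The main obstacle is the first step: making rigorous that, after inserting the Lefschetz functions at $\infty$, the spectral side of the invariant twisted trace formula for $\mathbb G_{\vec n}\theta$ reduces to an absolutely convergent sum over discrete $\theta$-stable representations with the stated cohomological constraint and with factorizable, well-normalized intertwining operators. This is not a computation but an appeal to substantial machinery — Arthur's development of the invariant twisted trace formula~\cite{arthura,arthurb}, Labesse's stabilization and base change~\cite{labesse}, and Shin's analysis in~\cite{shin-galois} — and it is precisely this input, combined with the geometric-side simplification of Proposition~\ref{comparing to the geometric side} (which uses the absence of cuspidal subgroups when $[F^+:\mathbb Q]\geq 2$), that legitimizes the comparison with $\mathrm{tr}(\phi\mid[H_c(\mathrm{Ig}^b,\overline{\mathbb Q}_\ell)])$ in the first place. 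Everything after that — the factorization over places and the identification of the local factors at $p$, outside $S$, and at $S_f^p$ — is formal, using only the compatibilities of transfers in diagrams~\eqref{commutative diagram transfer of functions} and~\eqref{commutative diagram transfer of representations}, Remark~\ref{explicit transfer}, and the surjectivity of $BC^\ast_{\vec n}$ at $p$.
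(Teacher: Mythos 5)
Your overall route is the same as the paper's (apply Theorem~\ref{spectral side}, unfold the spectral side, factor the trace over places, and use the transfer compatibilities at $p$ and away from $S$), but your description of the key analytic input is not correct as stated, and this is where the real content lies. You assert that, after inserting the Lefschetz function at $\infty$, the spectral side $I^{\mathbb{G}_{\vec{n}}\theta}_{\mathrm{spec}}(f^{\vec{n}}\theta)$ becomes a sum over \emph{discrete} $\theta$-stable automorphic representations of $\mathbb{G}_{\vec{n}}$ itself, with isobaricity then supplied by Moeglin--Waldspurger. That is not what the invariant twisted trace formula gives, even with a cuspidal function at infinity: the spectral expansion (Proposition 4.8 and Corollary 4.14 of~\cite{shin-galois}, which is what the paper invokes) is a sum over Levi subgroups $M\supset M_0$ and $\Phi_{\vec{n}}^{-1}\theta$-stable irreducible subrepresentations $\Pi_M$ of the relatively discrete spectrum $R_{M,\mathrm{disc}}$, the contributing representation being $\Pi^{\vec{n}}=\mathrm{n\text{-}Ind}_Q^{\mathbb{G}_{\vec{n}}}(\Pi_M)_\xi$, together with global normalized intertwining operators. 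The proper-Levi terms do \emph{not} vanish here (the remark following Lemma~\ref{linear combination} records exactly such a contribution in Case ST), so the reduction you assume would omit genuine terms and the identity you would prove is false as an equality of distributions; at best you would recover only the part of $[H_c(\mathrm{Ig}^b,\overline{\QQ}_\ell)]$ coming from the discrete spectra of the $\mathbb{G}_{\vec{n}}$. Correspondingly, the properties of $\Pi^{\vec{n}}$ claimed in the lemma ($\theta$-stable, irreducible, isobaric) are established in the paper by arguing about the induced representation: $\Pi_M$ is isobaric because it is discrete on a product of general linear groups, and $\Pi^{\vec{n}}$ is irreducible because $\Pi_M$ is irreducible unitary, hence isobaric. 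Your argument needs to be run at the level of the pairs $(M,\Pi_M)$, not at the level of the discrete spectrum of $\mathbb{G}_{\vec{n}}$.

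A second, smaller omission: Theorem~\ref{spectral side} (via Theorem~\ref{stable trace formula}) requires $\phi$ to be acceptable in the sense of~\cite{shin-igusa}, whereas the lemma is needed (and used later, e.g.\ to pin down the full virtual $J_b(\Q_p)$-representation) for arbitrary $\phi_p$. The paper therefore first proves the identity for acceptable $\phi$ and then removes the acceptability hypothesis by the Frobenius-twisting argument of~\cite[Lemma 6.4]{shin-igusa}; your proposal applies Theorem~\ref{spectral side} directly and never addresses this. Finally, a cosmetic point: away from $S$ the paper does not claim the normalized intertwiner is the identity on unramified vectors but only works up to the resulting sign (harmless since one is assembling a linear combination), so your statement there should be phrased accordingly.
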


\begin{proof} We sketch the proof. Since we are only interested in obtaining a linear combination, we do not keep track of endoscopic signs or constants. First, assume that $\phi$ was chosen to be an acceptable function. Then we can apply Theorem~\ref{spectral side} to $\mathrm{tr}(\phi|[H_c(\mathrm{Ig}^b,\mathbb{\overline Q}_\ell)])$.

For each $G_{\vec{n}}\in \mathscr{E}^{\mathrm{ell}}(G)$, we simplify the spectral side $I^{\mathbb{G}_{\vec{n}}\theta}_{\mathrm{spec}}(f^{\vec{n}}\theta)$ as in Section of~\cite{shin-galois} and Section 5.3 of~\cite{caraiani-scholze}. Fix a minimal Levi subgroup $M_0$ of $G_{\vec{n}}$. For each Levi subgroup $M$ containing $M_0$, choose a parabolic subgroup $Q$ with Levi $M$. The spectral side $I^{\mathbb{G}_{\vec{n}}\theta}_{\mathrm{spec}}(f^{\vec{n}}\theta)$ can be written as a linear combination of terms of the form $\mathrm{tr}\left(\mathrm{n-Ind}_Q^{\mathbb{G}_{\vec{n}}}(\Pi_M)_\xi (f^{\vec{n}})\circ A\right)$, where $\Pi_M$ runs over irreducible, $\Phi_{\vec{n}}^{-1}\theta$-stable subrepresentations of the (relatively) discrete spectrum $R_{M,\mathrm{disc}}$.  The subscript $\xi$ indicates a possible twist by a character of $A^\circ_{\mathbb{G}_{\vec{n}},\infty}$ corresponding to an irreducible algebraic representation $\xi$ of $G_{\vec{n}}$ and $A$ is a normalized intertwiner on $\mathrm{n-Ind}^{\mathbb{G}_{\vec{n}}}_Q(\Pi_M)_{\xi}$. This follows by combining Proposition 4.8 and Corollary 4.14 of~\cite{shin-galois}. 

We take $\Pi^{\vec{n}}$ to be $\mathrm{n-Ind}^{\mathbb{G}_{\vec{n}}}_Q(\Pi_M)_{\xi}$. Choose a decomposition 
\[A=A_p\cdot A_{S_f^p}\cdot A^S\cdot A_\infty\]
as a product of normalized intertwining operators. Then we can rewrite the desired trace as a linear combination of terms of the form 
\[\mathrm{tr}\left(\Pi^{\vec{n}}_p(f^{\vec{n}}_p)\circ A_p\right) \mathrm{tr}\left((\Pi^{\vec{n}})_{S_f^p}((f^{\vec{n}})_{S_f^p})\circ A_{S_f^p}\right) \mathrm{tr}\left( (\Pi^{\vec{n}})^S((f^{\vec{n}})^S)\circ A^S\right).\] 
In order to rewrite the traces at $p$, use the fact that the base change transfer of representations at $p$ is injective, since $p$ splits in $F_0$, to construct $\pi_p^{\vec{n}}$; then the term at $p$ is equal to $\mathrm{tr}\left(\pi^{\vec{n}}_p(\phi^{\vec{n}}_p)\right)$. Then appeal to Lemma 5.4.2 of~\cite{caraiani-scholze} which identifies the latter trace with $\mathrm{tr}\left(\mathrm{Red}^b_{\vec{n}}(\pi^{\vec{n}}_p)(\phi_p)\right)$. On the other hand, if we ignore the sign that comes from the choice of the normalized intertwiner $A^{\infty,p}$, then recalling that $(f^{\vec{n}})^S$ is the transfer of $(f^{2n})^S$ along $\tilde{\zeta}^*_{\vec{n}}$, we can rewrite
\[\mathrm{tr}\left( (\Pi^{\vec{n}})^S((f^{\vec{n}})^S)\right)=\mathrm{tr}\left( \tilde{\zeta}_{\vec{n},*}(\Pi^{\vec{n}})^S((f^{2n})^S)\right).\]

We now explain why $\Pi^{\vec{n}}$ has the desired properties. The fact that $\Pi^{\vec{n}}$ is $\theta$-stable follows from the fact that $\Pi_M$ is $\Phi_{\vec{n}}^{-1}\theta$-stable. The fact that $\Pi^{\vec{n}}$ is irreducible follows from the fact that $\Pi_M$ is irreducible and unitary, and $\Pi_M$ is isobaric because it contributes to the (relatively) discrete automorphic spectrum $R_{M,\mathrm{disc}}$ and $M$ is a product of general linear groups. Then $\Pi^{\vec{n}}$ is also isobaric because it is irreducible. 

Finally, we remove the assumption that $\phi$ be acceptable using Lemma 6.4 of~\cite{shin-igusa}. (The idea is that the twist $\phi_p^{(N')}$ of any $\phi_p$ by a power of Frobenius makes $\phi$ an acceptable function for any large enough $N'$ and that, as long as we keep $\phi^p$ fixed, we have expressed the desired trace as a finite linear combination of traces of $\phi_p^{(N')}$ against irreducible representations of $J_b(\Q_p)$.  The argument in the proof of Lemma 6.4 of~\cite{shin-igusa} proves that the desired equality holds for every integer $N'$ and, in particular, for $N'=0$.)
\end{proof}

\begin{remark} A fixed $\mathbb T^S$-character of $[H_c(\Ib, \overline{\Q}_{\ell})]$ could be obtained from several $\Pi^{\vec{n}}$ for several different $G_{\vec{n}}\in \mathscr{E}^\mathrm{ell}(G)$. For example, in the Case $ST$ which is discussed in Section 6 of~\cite{shin-galois}, the contribution is from an endoscopic group $\mathbb{G}_{n_1,n_2}$ but also from a Levi subgroup $M$ of $\mathbb{G}$. 
\end{remark}

\subsection{Galois representations} Lemma~\ref{linear combination} essentially finishes the proof of Theorem~\ref{thm:igusa computation}; it only remains to apply the known results on existence of Galois representations for regular $L$-algebraic, essentially self-dual, cuspidal automorphic representations of $\GL_{m}(\A_F)$. (Here we use the notions of $L$-algebraic and $C$-algebraic representations due to Buzzard-Gee~\cite{buzzardgee} and note that in the case of general linear groups these notions only differ by a character twist.)

Our goal is now to construct a Galois representation \[\rho_{\Pi}:\mathrm{Gal}(\overline F/F)\to \GL_{2n}(\overline{\Q}_{\ell})\] attached to the automorphic representation $\Pi:=\tilde{\zeta}_{\vec{n},*}(\Pi^{\vec{n}})$ (or rather the automorphic representation of $\GL_{2n}(\A_F)$ obtained from $\Pi$ by forgetting the similitude factor) as in Lemma~\ref{linear combination}.

By Lemma~\ref{linear combination}, $\Pi^{\vec{n}}_\infty$ is cohomological, which implies that $\Pi^{\vec{n}}$ is $C$-algebraic. Write $\Pi^{\vec{n}}=\psi\otimes \Pi_1\otimes \Pi_2$ according to the decomposition $\mathbb{G}_{n_1,n_2}(\A)=\A^\times_{F_0}\times \GL_{n_1}(\A_F)\times \GL_{n_2}(A_F)$. Each $\Pi_i$ is a regular $C$-algebraic, $\theta$-stable isobaric automoprhic representation of $\GL_{n_i}(\A_F)$. The automorphic representation $\Pi_i|\det|^{(1-n_i)/2}$ is regular $L$-algebraic. 

Recall that we have chosen an isomorphism $\iota_\ell:\overline{\Q}_\ell\toisom \C$.

\begin{thm}\label{Galois representations conjugate self-dual} There exists a Galois representation 
\[\rho_i:\mathrm{Gal}(\overline F/F)\to \GL_{n_i}(\overline{\Q}_\ell)\]
such that for any place $\mathfrak{q}$ of $F$, 
\[WD\left(\rho_i|_{\mathrm{Gal}(\overline F_{\mathfrak{q}}/F_{\mathfrak{q}})}\right)^{\mathrm{F-ss}}\simeq\iota^{-1}_\ell\mathrm{rec}\left(\Pi_{i,\mathfrak{q}}|\det|^{(1-n_i)/2}\right),\] where $\mathrm{rec}$ denotes the local Langlands correspondence normalized as in~\cite{harris-taylor}. 
\end{thm}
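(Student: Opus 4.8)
The plan is to invoke the existing construction of Galois representations for regular $L$-algebraic, (conjugate) essentially self-dual, cuspidal automorphic representations of general linear groups over CM fields, applied after decomposing the isobaric representation $\Pi_i$ into cuspidal pieces. First I would recall that $\Pi_i$ is a $\theta$-stable (equivalently, conjugate self-dual up to twist) isobaric automorphic representation of $\GL_{n_i}(\A_F)$, and that $\Pi_i|\det|^{(1-n_i)/2}$ is regular $L$-algebraic by the cohomologicality of $\Pi^{\vec n}_\infty$ established in Lemma~\ref{linear combination}. Write $\Pi_i|\det|^{(1-n_i)/2}=\boxplus_{k}\Pi_{i,k}$ as an isobaric sum of cuspidal automorphic representations $\Pi_{i,k}$ of $\GL_{m_{i,k}}(\A_F)$ (with $\sum_k m_{i,k}=n_i$). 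Each $\Pi_{i,k}$ is then, after a bounded twist, regular $L$-algebraic and conjugate essentially self-dual; more precisely one checks from $\theta$-stability of the full $\Pi_i$ that each $\Pi_{i,k}$ is conjugate self-dual up to a twist by a power of the norm character, so the hypotheses of the available automorphy/Galois-representation theorems are met.

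Next I would apply the construction of Galois representations attached to such $\Pi_{i,k}$ — this is the main external input; one can cite the work of Chenevier--Harris, Shin, and Caraiani (building on Harris--Taylor, Taylor--Yoshida, and the stable trace formula for unitary groups), together with local-global compatibility at all finite places including those above $\ell$ as in the strengthened forms proved by Caraiani and by Barnet-Lamb--Gee--Geraghty--Taylor. This yields, for each $(i,k)$, a continuous semisimple $\rho_{i,k}\colon \Gal(\overline F/F)\to \GL_{m_{i,k}}(\overline{\Q}_\ell)$ with
\[
WD\bigl(\rho_{i,k}|_{\Gal(\overline F_{\mathfrak q}/F_{\mathfrak q})}\bigr)^{\mathrm{F\text{-}ss}}\simeq \iota_\ell^{-1}\mathrm{rec}(\Pi_{i,k,\mathfrak q})
\]
at every place $\mathfrak q$ of $F$ (unramified away from the ramification of $\Pi_{i,k}$ and $\ell$, where the comparison is the evident one on characteristic polynomials of Frobenius). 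I would then set $\rho_i:=\bigoplus_k \rho_{i,k}\bigl(\tfrac{1-n_i}{2}\bigr)$, i.e.\ undo the normalizing twist by tensoring back with the appropriate power of the $\ell$-adic cyclotomic character so that the recipe matches $\Pi_i|\det|^{(1-n_i)/2}$ rather than $\Pi_i$; this is where one has to be careful with the $C$-algebraic versus $L$-algebraic normalization of Buzzard--Gee and with the Tate twist, but it is a routine bookkeeping matter once the cuspidal case is in hand.

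Finally, the Weil--Deligne compatibility for $\rho_i$ at every place $\mathfrak q$ follows from additivity of $\mathrm{rec}$ under isobaric sums and compatibility of $WD(-)^{\mathrm{F\text{-}ss}}$ with direct sums, together with the twist-compatibility $\mathrm{rec}(\Pi_{i,\mathfrak q}|\det|^{(1-n_i)/2})=\mathrm{rec}(\Pi_{i,\mathfrak q})\otimes|\cdot|^{(1-n_i)/2}$ and the corresponding statement for Tate twists on the Galois side. I expect the genuinely substantive point — the one place where real work (or a heavy citation) is unavoidable — to be the existence of $\rho_{i,k}$ together with full local-global compatibility at the places dividing $\ell$ for the cuspidal constituents; everything else (isobaric decomposition, checking conjugate self-duality and regularity of the constituents, reassembling, and tracking the normalizing twist) is formal. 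Once Theorem~\ref{Galois representations conjugate self-dual} is available for both $\Pi_1$ and $\Pi_2$, one obtains the $2n$-dimensional representation $\rho_\Pi$ attached to $\Pi=\tilde\zeta_{\vec n,*}(\Pi^{\vec n})$ by taking $\rho_1\oplus\rho_2$ suitably twisted by the character $\psi$ and by $\varpi$-factors coming from the explicit shape of $\tilde\zeta_{n_1,n_2}$ recorded in Section~\ref{sec: base change}, which is exactly what feeds into Theorem~\ref{thm:igusa computation}.
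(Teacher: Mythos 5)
Your proposal is essentially the paper's proof: decompose $\Pi_i$ into cuspidal constituents, use $\theta$-stability to get conjugate self-duality, and invoke the standard construction of Galois representations (Shin, Chenevier--Harris, Caraiani) with full local-global compatibility. The paper reaches the cuspidal constituents slightly differently: it keeps the spectral-side structure $\Pi_i=\mathrm{n\text{-}Ind}_{Q_i}^{\GL_{n_i}}(\Pi_{M_i})$ with $\Pi_{M_i}$ $\Phi_{n_i}^{-1}\theta$-stable and in the discrete spectrum of the Levi $M_i$, and then applies Moeglin--Waldspurger to $\Pi_{M_i}$, rather than passing directly to the isobaric decomposition of $\Pi_i$. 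This buys you something concrete: your claim that \emph{each} $\Pi_{i,k}$ is conjugate self-dual up to a twist does not follow from $\theta$-stability of $\Pi_i$ alone — $\theta$ only preserves the multiset of cuspidal constituents, so they may pair up as $(\pi,\pi^{\vee,c})$ with neither factor conjugate self-dual. The paper's route through $\Pi_{M_i}$ records exactly this dichotomy (the phrase ``cuspidal automorphic representations of \emph{(possibly a product of)} general linear groups'' covers the paired case, where the Galois representation is attached to $\pi\boxplus\pi^{\vee,c}$ rather than to $\pi$ itself). Your argument reaches the same endpoint after this correction; the rest — the $C$- vs.\ $L$-algebraic bookkeeping, the Tate twist, additivity of $\mathrm{rec}$ under isobaric sums — is handled the same way in both.
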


\begin{proof} This is proved just as Theorem 5.5.3 of~\cite{caraiani-scholze}. Recall that the representation $\Pi^{\vec{n}}$ was constructed as $\mathrm{n-Ind}_{Q}^{\mathbb{G}_{\vec{n}}}(\Pi_M)$ (recall that for us $\xi$ is trivial), for some automorphic representation $\Pi_M$ which is $\Phi_{\vec{n}}^{-1}\theta$-stable and which occurs in the discrete automorphic spectrum of some Levi subgroup $M$ of $\mathbb{G}_{\vec{n}}$. This means we can write 
\[\Pi_i=\mathrm{n-Ind}_{Q_i}^{\GL_{n_i}}\left(\Pi_{M_i}\right),\]
where $M_i$ is the Levi subgroup of parabolic subgroup $Q_i$ of $\GL_{n_i}$ and $\Pi_{M_i}$ is $\Phi_{n_i}^{-1}\theta$-stable and occurs in the discrete automorphic spectrum of $M_i$. The classification of the discrete automorphic spectrum for general linear groups due to Harris--Taylor and Moeglin--Waldspurger~\cite{moeglin-waldspurger} together with the fact that $\Pi_{M_i}$ is $\Phi_{n_i}^{-1}\theta$-stable tells us that $\Pi_{M_i}$ can be expressed in terms of regular $L$-algebraic, conjugate self-dual cuspidal automorphic representations of (possibly a product of) general linear groups. The existence of the Galois representation $\rho_i$ and the compatibility with the local Langlands correspondence now follows from the main theorems of~\cite{shin-galois, chenevier-harris, caraiani}. 
\end{proof}

Now we can prove Theorem~\ref{thm:igusa computation}.

\begin{proof}[Proof of Theorem~\ref{thm:igusa computation}] We use the description from Lemma~\ref{linear combination}, where we fix $\phi_{S_f^p}$ to be the characteristic function of $K(N)$, and note that we can realize all elements of $\mathbb T^S$ as the base change of some $(f^{2n})^S$ as we only used places split in $F_0$ in the definition of $\mathbb T^S$. Lemma~\ref{linear combination} shows that for each $j\in J$, there is some cuspidal automorphic representation $\Pi^{\vec{n}}=\psi \otimes \Pi_1\otimes \Pi_2$ of $\mathbb{G}_{\vec{n}}$ such that $(\Pi^{\vec{n}})^S$ gives the Hecke character $\psi_j$, and $\pi_j = \mathrm{Red}^b_{\vec{n}}(\pi^{\vec{n}}_p)$ where $\pi^{\vec{n}}_p$ is an irreducible representation of $G_{\vec{n}}(\Q_p)$ base changing to $\Pi^{\vec{n}}_p$. By Theorem~\ref{Galois representations conjugate self-dual}, there exist Galois representations $\rho_i$ associated to the $L$-algebraic representations $\Pi_i|\det|^{(1-n_i)/2}$.

The character $\varpi$ satisfies $\varpi_\infty(z)=(z/\bar z)^{\delta/2}$ for some odd integer $\delta$, since $\varpi_\infty:\C^\times\to \C^\times$ extends the sign character on $\mathbb{R}^\times$. The character of $\GL_{n_i}(\A_F)$ defined by $|\det|^{n_i/2-n}\varpi(N_{F/F_0}\circ\det)^{\epsilon(n_i)}$ is $L$-algebraic, since $(n_i+\epsilon(n_i)\delta)/2\in \Z$, so it corresponds to a character $\epsilon_i:\mathrm{Gal}(\overline F/F)\to \overline{\Q}_\ell^\times$. 

Let $\Pi:=\tilde\zeta_{n_1,n_2,*}(\Pi^{\vec{n}})$. Write $\Pi=\psi'\otimes \Pi^0$, according to the decomposition $\mathbb{G}_{2n}(\A)=\A_{F_0}^\times \times \GL_{2n}(\A_F)$. Set \[\Pi^0_i:=\Pi_i|\det|^{(1-n_i)/2}|\det|^{n_i/2-n}\varpi(N_{F/F_0}\circ\det)^{\epsilon(n_i)}\] By the definition of $\tilde\zeta_{n_1,n_2,*}$, we get the identity \[\Pi^0|\det|^{1/2-n}=\mathrm{n-Ind}_{\GL_{n_1}\times \GL_{n_2}}^{\GL_{2n}}\left(\Pi^0_1\otimes \Pi^0_2\right).\]
The representation on the RHS is $L$-algebraic and normalized parabolic induction is compatible with this notion and with the local Langlands correspondence $\mathrm{rec}$, so the term on the LHS, $\Pi^0|\det|^{1/2-n}$, is also $L$-algebraic, with corresponding Galois representation $\rho_{\Pi^S,\ell}:=\oplus_{i=1}^2\rho_i\otimes \epsilon_i$ (matching via $\mathrm{rec}$).

In particular, one can compare the Hecke eigenvalues at good places, as stated in Theorem~\ref{thm:igusa computation}. On the other hand, regarding the representation $\pi_j$ of $J_b(\Q_p)$, we recall that it is given by $\mathrm{Red}^b_{\vec{n}}(\pi^{\vec{n}}_p)$, and then follow the semisimple Langlands parameter through all normalized parabolic inductions and Langlands correspondences. Note that $\mathrm{Red}^b_{\vec{n}}$ is also by construction compatible with semisimple Langlands parameters.
\end{proof}

\newpage

\section{Boundary cohomology of Igusa varieties}\label{pink}

In this section, we compute the cohomology of the (partial) boundary of Igusa varieties.

\subsection{Statements}
More precisely, fix as usual a prime $p$ that is unramified in $F$. Fix any $p$-divisible group with $G$-structure $\mathbb X$ over an algebraically closed field $k$ of characteristic $p$. By abuse of notation, we write $\mathfrak{Ig}^b_{K(N)} = \mathfrak{Ig}^{\mathbb X}_{K^p(N)}$ for the associated Igusa variety, where $b=b(\mathbb X)\in B(G_{\Q_p},\mu^{-1})$ is as usual; here $N\geq 3$ is any integer prime to $p$. Let $\partial \mathfrak{Ig}^{b,*}\subset \mathfrak{Ig}^{b,*}$ be the boundary of the partial minimal compactification of the Igusa variety. It admits a natural stratification in terms of conjugacy classes $[P]$ of maximal rational parabolic subgroups $P\subsetneq G_\Q$: Note that a set of representatives for these are given by the stabilizer $P_r$ of the chain
\[
0\subset F^r\subset F^{2n-r}\subset F^{2n}
\]
for $r=1,\ldots,n$. Then the stratum $\mathfrak{Ig}^{b,*}_{[P]}\subset \partial \mathfrak{Ig}^{b,*}$ can be defined as the preimage of all strata $\mathscr S_Z\subset \mathscr S^\ast$ where the cusp label $Z=(\mathrm Z_N,X)$ has an $\cO_F$-lattice $X$ of rank $r$. The strata are naturally Hecke-equivariant.

We can pass to the inverse limit over all $N\geq 3$ prime to $p$: Let
\[
\mathfrak{Ig}_\infty^b = \varprojlim_N \mathfrak{Ig}^b_{K(N)}
\]
and define similarly $\mathfrak {Ig}^{b,*}_\infty$, $\mathfrak{Ig}^{b,\tor}_\infty$, $\partial \mathfrak{Ig}^{b,*}_\infty$ and $\mathfrak{Ig}^{b,*}_{\infty,[P]}$.

Let
\[
j: \mathfrak{Ig}^b_\infty\hookrightarrow \mathfrak{Ig}^{b,*}_\infty
\]
be the open immersion and
\[
i_{[P]}: \mathfrak{Ig}^{b,*}_{\infty,[P]}\hookrightarrow \mathfrak{Ig}^{b,*}_\infty
\]
the locally closed immersion. We consider
\[
R\Gamma_c( \mathfrak{Ig}^{b,*}_{\infty,[P]}, i_{[P]}^\ast Rj_\ast \mathbb F_\ell).
\]
This is naturally a complex of smooth representations of $J_b(\Q_p)\times G(\A_f^p)$ (where we note that both groups act naturally on all objects in the definition). Here and in the following, we use an extension of the notion of compactly supported cohomology to the case of schemes that admit integral maps to schemes of finite type, as has been defined in work of Hamacher \cite{hamacheretale}. (Concretely, this amounts to the colimit of the compactly supported cohomologies of an approximating tower with finite transition maps.)

The main theorem of this section is the following. For the statement, we fix the standard rational parabolic $P\in [P]$ given by the stabilizer of $0\subset F^r\subset F^{2n-r}\subset F^{2n}$. Its Levi group $M$ is given by $\mathrm{Res}_{F/\Q} \GL_r\times G_{2(n-r)}$ where $G_{2(n-r)}$ is the variant of $G$ with $n$ replaced by $n-r$. Let
\[
X_r = \left(\prod_{\tau: F^+\hookrightarrow \R} M_r^{\mathrm{herm},>0}(\C)\right)/\mathbb R_{>0}
\]
where $M_r^{\mathrm{herm},>0}(\C)$ denotes the space of positive definite hermitian matrices, which is the symmetric space for $\GL_r(F\otimes_\Q \R)$. Finally, fix a symplectic $\cO_F$-stable filtration
\[
\mathrm{Z}_b: 0\subset \mathrm{Z}_{b,-2}\subset \mathrm{Z}_{b,-1}\subset \mathbb X_b
\]
with an isomorphism $\mathrm{Z}_{b,-2}\cong \Hom(\mathcal O_F^r,\mu_{p^\infty})$. This induces a parabolic subgroup $P_b(\Q_p)\subset J_b(\Q_p)$ of the self-quasi-isogenies preserving this filtration, and a $p$-divisible group $\mathbb X_P = \mathrm{Z}_{b,-2}/\mathrm{Z}_{b,-1}$ with $G_{2(n-r)}$-structure; we denote by $b_P$ its  associated isocrystal with $G_{2(n-r)}$-structure. 

\begin{thm}\label{thm:boundary cohomology igusa} There is a natural $J_b(\Q_p)\times G(\A_f^p)$-equivariant isomorphism
\[
R\Gamma_c( \mathfrak{Ig}^{b,*}_{\infty,[P]}, i_{[P]}^\ast Rj_\ast \mathbb F_\ell)\cong \mathrm{Ind}_{P_b(\Q_p)\times P(\A_f^p)}^{J_b(\Q_p)\times G(\A_f^p)} R\Gamma(\GL_r(F)\backslash (X_r\times \GL_r(\A_{F,f})),\mathbb F_\ell)\otimes R\Gamma_c(\mathfrak{Ig}^{b_P}_\infty,\mathbb F_\ell).
\]
The action of $P_b(\Q_p)\times P(\A_f^p)$ on
\[
R\Gamma(\GL_r(F)\backslash (X_r\times \GL_r(\A_{F,f})),\mathbb F_\ell)\otimes R\Gamma_c(\mathfrak{Ig}^{b_P}_\infty,\mathbb F_\ell)
\]
is through its Levi quotient.
\end{thm}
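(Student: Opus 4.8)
The plan is to build the isomorphism out of the geometry of the boundary strata of $\mathfrak{Ig}^{b,*}_\infty$ worked out in Section~\ref{sec:compactifications of Igusa varieties} (especially Theorem~\ref{thm:tor Igusa strata} and Theorem~\ref{thm:min Igusa strata}), combined with a Pink-type analysis of the nearby cycles $i_{[P]}^\ast Rj_\ast \mathbb F_\ell$. First I would reduce to a single cusp label. The stratum $\mathfrak{Ig}^{b,*}_{\infty,[P]}$ decomposes, $J_b(\Q_p)\times G(\A_f^p)$-equivariantly, into a sum over Igusa cusp labels $\tilde Z$ lying over $[P]$; the stabilizer of a fixed such $\tilde Z$ is $P_b(\Q_p)\times P(\A_f^p)$ (after passing to the infinite level prime to $p$ and to the relevant filtration $\mathrm Z_b$ at $p$), so the full cohomology is the induction from this stabilizer of the cohomology attached to $\tilde Z$. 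This is the source of the $\mathrm{Ind}_{P_b(\Q_p)\times P(\A_f^p)}^{J_b(\Q_p)\times G(\A_f^p)}$ in the statement; the fact that the action on the summand factors through the Levi quotient $M$ will come out of the explicit description below, since the unipotent radical acts trivially on each of the two tensor factors.

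\textbf{Identifying the stratum and its neighborhood.} By Theorem~\ref{thm:min Igusa strata}, the minimal boundary stratum $\mathrm{Ig}^b_{\tilde Z}$ at finite level is itself an Igusa variety $\mathrm{Ig}^{b_P}_{Z,m}$ over the smaller leaf in $\mathscr S_Z$ — i.e.\ attached to $\mathbb X_P$ and the group $G_{2(n-r)}$. Passing to the limit over $N$ and over the level $p^m$ in the $\GL_r$-direction, the stratum becomes a product-like object over $\mathfrak{Ig}^{b_P}_\infty$ fibered over the locally symmetric space for $\GL_r/F$: concretely, the extra level data at $\tilde Z$ beyond that of $\mathrm{Ig}^{b_P}_\infty$ is precisely a full level structure on the lattice $X$ prime to $p$ together with a trivialization of the multiplicative $p$-divisible group $\Hom(X,\mu_{p^\infty})$, and these assemble to a $\GL_r(F)$-torsor over $\GL_r(F)\backslash(X_r\times\GL_r(\A_{F,f}))$ after one also remembers the archimedean symmetric-space direction $X_r$ coming from the Borel--Serre / toroidal boundary. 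So the stratum's cohomology, \emph{before} applying $i_{[P]}^\ast Rj_\ast$, already sees $R\Gamma(\GL_r(F)\backslash(X_r\times\GL_r(\A_{F,f})),\mathbb F_\ell)\otimes R\Gamma_c(\mathfrak{Ig}^{b_P}_\infty,\mathbb F_\ell)$; I would set this up carefully using the abelian scheme $C_{\tilde Z}\to \mathrm{Ig}^b_{Z,m}$ and the torus-torsor $\Xi_{\tilde Z,\Sigma_Z}$ of Theorem~\ref{thm:tor Igusa strata}, and the affineness of $\mathrm{Ig}^{b,*}_m$ which makes the Fourier--Jacobi / Stein-factorization arguments clean.

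\textbf{Computing $i_{[P]}^\ast Rj_\ast\mathbb F_\ell$.} The heart of the matter is that the nearby-cycles sheaf contributes exactly the factor $R\Gamma$ of the $\GL_r$-locally symmetric space and nothing more — i.e.\ it is responsible for turning the ``compactly supported along the boundary'' normalization on the $\GL_r$ side into ordinary cohomology, while leaving $R\Gamma_c(\mathfrak{Ig}^{b_P}_\infty,\mathbb F_\ell)$ untouched. Here I would use the local structure of the toroidal compactification at the boundary (Theorem~\ref{thm:tor Igusa strata}): the formal completion is $\mathfrak X_{\tilde Z,\Sigma_Z}/\Gamma_{\tilde Z}$, a torus embedding over $C_{\tilde Z}$, so $i_{[P]}^\ast Rj_\ast\mathbb F_\ell$ is computed, as in the classical Pink formula~\cite{pink} and as in Section~\ref{fibers of HT}, by the cohomology of the link of the toric boundary, which is the cohomology of $\GL_r(F)\backslash(X_r\times\GL_r(\A_{F,f}))$ at infinite level (the $\Gamma_{\tilde Z}$-quotient combined with the limit over $N$ and over the cone decomposition $\Sigma$ produces precisely this arithmetic quotient, and the $\ell^\infty$-ramification in the boundary divisors — cf.\ Lemma~\ref{shimura ell infty}, Lemma~\ref{igusa ell infty} — kills the higher ``toric'' contributions so that only the $\GL_r$-cohomology survives). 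I would then splice a Leray/base-change spectral sequence along $\mathfrak{Ig}^{b,*}_{\infty,[\tilde Z]}\to \mathfrak{Ig}^{b_P}_\infty$ to factor $R\Gamma_c$ of the stratum as the claimed tensor product.

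\textbf{Hecke-equivariance.} The main obstacle, as the introduction flags, is making every identification genuinely $J_b(\Q_p)\times G(\A_f^p)$-equivariant rather than merely true on each finite level: the decomposition into Igusa cusp labels, the identification of a stabilizer with $P_b(\Q_p)\times P(\A_f^p)$, and the factorization of the stratum cohomology all involve choices (of $\mathrm Z_b$, of splittings, of $\Sigma$) that a priori break equivariance, and one must check the Hecke operators permute the strata and intertwine the identifications correctly. I would handle this by the ``new ideas involving adic spaces'' alluded to in the organization section — i.e.\ rather than tracking correspondences on schemes level-by-level, pass to the associated diamonds / adic spaces at infinite level where the relevant spaces are honest objects with a continuous $J_b(\Q_p)\times G(\A_f^p)$-action, express the stratification and the nearby-cycles computation functorially there, and only then take cohomology. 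The other, more routine, obstacle is bookkeeping the normalization: verifying that the induction is with the correct (non-normalized, smooth) induction and that the $\GL_r$ symmetric-space factor appears with ordinary rather than compactly-supported cohomology, which I expect to fall out of carefully orienting the toric link and comparing with the classical statement in~\cite{pink}.
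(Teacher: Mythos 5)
Your outline reproduces the paper's endgame --- reduction to parabolic induction via the map to $J_b(\Q_p)/P_b(\Q_p)\times G(\A_f^p)/P(\A_f^p)$, then identification of the answer on a fixed cusp label by a Pink-type computation on the toroidal boundary charts of Theorem~\ref{thm:tor Igusa strata} --- but it omits the one genuinely new ingredient, and that omission is a real gap rather than bookkeeping. The paper does not compute $i_{[P]}^\ast Rj_\ast \mathbb F_\ell$ on charts and then argue that the identification is equivariant; it first \emph{constructs} a canonical $P_b(\Q_p)\times P(\A_f^p)$-equivariant map into $R\Gamma_c(\mathfrak{Ig}^{b,*}_{\infty,P},i_P^\ast Rj_\ast \mathbb F_\ell)$ as a cup product of two maps: pullback along the profinite projection $\mathfrak{Ig}^{b,*}_{\infty,P}\to\mathfrak{Ig}^{b_P}_\infty$ (Theorem~\ref{thm:min Igusa strata}), and --- this is the ``adic-space'' idea --- pullback of cohomology along an explicit continuous map $|\mathcal{I}g^b_{\infty,P}|\to \GL_r(F)\backslash(X_r\times\GL_r(\A_{F,f}))$ from the punctured formal neighborhood of the stratum to the real locally symmetric space, where the $X_r$-coordinate is given by the (log of the) norm of the section of the Poincar\'e bundle defining the degeneration and the adelic coordinate by the level structures; one then identifies $R\Gamma(\mathfrak{Ig}^{b,*}_{\infty,P},i_P^\ast Rj_\ast \mathbb F_\ell)$ with $\varinjlim_{m,N} R\Gamma(\mathcal{I}g^b_{m,K(N),P},\mathbb F_\ell)$ via Huber's comparison. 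Only after this map exists does the toroidal-chart/Pink computation enter, and only to check that an already-constructed map is an isomorphism --- a statement one may verify without any equivariance. Your substitute, ``pass to diamonds/adic spaces at infinite level where the stratification and nearby-cycles computation are functorial,'' does not do this job: the spaces and the complex $i_{[P]}^\ast Rj_\ast \mathbb F_\ell$ are already equivariant objects at infinite level, and the difficulty is that the chart-level identification of their cohomology with $R\Gamma(\GL_r(F)\backslash(X_r\times\GL_r(\A_{F,f})),\mathbb F_\ell)\otimes R\Gamma_c(\mathfrak{Ig}^{b_P}_\infty,\mathbb F_\ell)$ depends on the choices entering Theorem~\ref{thm:tor Igusa strata} (splittings of $\mathrm{Z}_N$ and $\mathrm{Z}_{m,b}$, cusp-label representatives, the cone decomposition), which the Hecke operators --- and especially the $J_b(\Q_p)$-action through quasi-isogenies --- do not preserve. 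Without a choice-free comparison map (the paper's cup-product construction, or an argument of the delicacy of Pink's own \cite[\S 4.8]{pink}), equivariance does not follow from general functoriality.

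A secondary problem: your second paragraph asserts that the stratum's cohomology ``before applying $i_{[P]}^\ast Rj_\ast$'' already sees the factor $R\Gamma(\GL_r(F)\backslash(X_r\times\GL_r(\A_{F,f})),\mathbb F_\ell)$. It does not: with constant coefficients the stratum contributes only (the induction of) $R\Gamma_c(\mathfrak{Ig}^{b_P}_\infty,\mathbb F_\ell)$ together with the profinite set of level data on $X$, i.e.\ the $\GL_r(\A_{F,f})$-direction; the $X_r$-direction --- equivalently the group cohomology $\varinjlim_{\Gamma\subset\GL_{\cO_F}(X)}R\Gamma(\Gamma,\mathbb F_\ell)$ --- enters exclusively through the sheaf $i_{[P]}^\ast Rj_\ast \mathbb F_\ell$, as your third paragraph then correctly says. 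The two paragraphs are in tension, and resolving that tension is precisely what the paper's cup-product decomposition of the map accomplishes.
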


We will prove this result in several steps. First, we realize the structure of parabolic induction geometrically and reduce to a statement about $P_b(\Q_p)\times P(\A_f^p)$-representations. Then we construct an equivariant map, as the cup product of two different maps, realizing the two factors. The factor $R\Gamma_c(\mathfrak{Ig}^{b_P}_\infty,\mathbb F_\ell)$ is easy, while the other factor $R\Gamma(\GL_r(F)\backslash (X_r\times \GL_r(\A_{F,f})),\mathbb F_\ell)$ will arise by realizing this real manifold inside the perfectoid space that is the punctured formal neighborhood of $\mathfrak{Ig}^{b,*}_{\infty,P}$. The latter is probably the most novel part of the proof. Finally, we show that the map is an isomorphism. For this, we use the explicit toroidal boundary charts. (We could have done this computation directly, but this would have obscured the Hecke-equivariance.)

In the last subsection, we deduce Theorem~\ref{thm:boundary cohomology} on possible systems of Hecke eigenvalues in the boundary. In particular, we show, under the same technical assumptions as in the last section, that all systems of Hecke eigenvalues appearing in the cohomology of Igusa varieties admit associated Galois representations.

\subsection{Construction of the map}

The goal of this section is to construct a natural map
\[
\mathrm{Ind}_{P_b(\Q_p)\times P(\A_f^p)}^{J_b(\Q_p)\times G(\A_f^p)} R\Gamma(\GL_r(F)\backslash (X_r\times \GL_r(\A_{F,f})),\mathbb F_\ell)\otimes R\Gamma_c(\mathfrak{Ig}^{b_P}_\infty,\mathbb F_\ell)\to R\Gamma_c( \mathfrak{Ig}^{b,*}_{\infty,[P]}, i_{[P]}^\ast Rj_\ast \mathbb F_\ell).
\]

\subsubsection{Parabolic induction}
First, we prove that
\[
R\Gamma_c( \mathfrak{Ig}^{b,*}_{\infty,[P]}, i_{[P]}^\ast Rj_\ast \mathbb F_\ell)
\]
is necessarily parabolically induced, by showing that the space $\mathfrak{Ig}^{b,*}_{\infty,[P]}$ is parabolically induced. More precisely, we note that there is a natural $J_b(\Q_p)\times G(\A_f^p)$-equivariant map
\[
\mathfrak{Ig}^{b,*}_{\infty,[P]}\to J_b(\Q_p)/P_b(\Q_p)\times G(\A_f^p)/P(\A_f^p)
\]
induced by cusp labels, as $J_b(\Q_p)/P_b(\Q_p)\times G(\A_f^p)/P(\A_f^p)$ parametrizes pairs $(\mathrm Z_b,\mathrm Z^p)$ ``of rank $r$''. Let
\[
\mathfrak{Ig}^{b,*}_{\infty,P}\subset \mathfrak{Ig}^{b,*}_{\infty,[P]}
\]
be the fibre over the identity. It is then formal that
\[
R\Gamma_c( \mathfrak{Ig}^{b,*}_{\infty,[P]}, i_{[P]}^\ast Rj_\ast \mathbb F_\ell) = 
\mathrm{Ind}_{P_b(\Q_p)\times P(\A_f^p)}^{J_b(\Q_p)\times G(\A_f^p)} R\Gamma_c(\mathfrak{Ig}^{b,*}_{\infty,P},i_P^\ast Rj_\ast \mathbb F_\ell)
\]
as $J_b(\Q_p)\times G(\A_f^p)$-representations.

\subsubsection{Cup products}
It remains to construct a map
\[
R\Gamma(\GL_r(F)\backslash (X_r\times \GL_r(\A_{F,f})),\mathbb F_\ell)\otimes R\Gamma_c(\mathfrak{Ig}^{b_P}_\infty,\mathbb F_\ell)\to R\Gamma_c(\mathfrak{Ig}^{b,*}_{\infty,P},i_P^\ast Rj_\ast \mathbb F_\ell)
\]
as a representation of $P_b(\Q_p)\times P(\A_f^p)$. To do this, we will construct two maps:
\begin{enumerate}
\item A map
\[
R\Gamma_c(\mathfrak{Ig}^{b_P}_\infty,\mathbb F_\ell)\to R\Gamma_c(\mathfrak{Ig}^{b,*}_{\infty,P},\mathbb F_\ell).
\]
\item A map
\[
R\Gamma(\GL_r(F)\backslash (X_r\times \GL_r(\A_{F,f})),\mathbb F_\ell)\to R\Gamma(\mathfrak{Ig}^{b,*}_{\infty,P},i_P^\ast Rj_\ast \mathbb F_\ell).
\]
\end{enumerate}
\noindent Both maps will by construction be $P_b(\A_f^p)\times G(\A_f^p)$-equivariant, where the action on the left-hand side will factor over $J_{b_P}(\Q_p)\times G_{2(n-r)}(\A_f^p)$ respectively $\GL_r(\A_{F,f})$, both of which are quotients of the Levi. The desired map will then arise as the cup product of the two maps, via the natural map
\[
R\Gamma_c(\mathfrak{Ig}^{b,*}_{\infty,P},\mathbb F_\ell)\otimes R\Gamma(\mathfrak{Ig}^{b,*}_{\infty,P},i_P^\ast Rj_\ast \mathbb F_\ell)\to R\Gamma_c(\mathfrak{Ig}^{b,*}_{\infty,P},i_P^\ast Rj_\ast \mathbb F_\ell).
\]

The first map comes directly from the profinite map
\[
\mathfrak{Ig}^{b,*}_{\infty,P}\to \mathfrak{Ig}^{b_P}_\infty
\]
obtained by passing to the limit in Theorem~\ref{thm:min Igusa strata}.

\subsubsection{Perfectoid magic}\label{sec:magic}
Thus, it remains to construct a natural equivariant map
\[
R\Gamma(\GL_r(F)\backslash (X_r\times \GL_r(\A_{F,f})),\mathbb F_\ell)\to R\Gamma(\mathfrak{Ig}^{b,*}_{\infty,P},i_P^\ast Rj_\ast \mathbb F_\ell).
\]
Note that this seems a priori tricky, as the space on the left is naturally only a real manifold, while on the right we have a perfect scheme. (We believe that one could obtain this result, including the desired Hecke equivariance, also by following Pink's original method, but we found the geometric arguments below more enlightening.)

Let us indicate how one can bridge the gap by using analytic geometry; in the present situation, we will naturally get perfectoid spaces. Let us first give an idealized version of the argument and then add in the details.

Let $\mathcal{I}g^b_{\infty,P}$ be the perfectoid space obtained from $\mathfrak{Ig}^{b,*}_\infty$ by taking the punctured formal neighborhood of $\mathfrak{Ig}^{b,*}_{\infty,P}$. This agrees with the perfectoid space obtained from $\mathfrak{Ig}^{b,\tor}_\infty$ by taking the punctured formal neighborhood of the preimage of $\mathfrak{Ig}^{b,*}_{\infty,P}$. Using the corresponding description of the toroidal boundary, and restricting for the moment to a specific cusp label $\tilde{Z}$, we see that over $\mathcal{I}g^b_{\infty,\tilde{Z}}$ we have the universal abelian variety $\cA$ as well as a Raynaud extension
\[
0\to \cT\to \cG\to \cB\to 0
\]
and a lift of the corresponding map $f_0: X\to \cB$ to a symmetric map $f: X\to \cG$; equivalently, a section of the Poincar\'e bundle $\cP\to \cB\times \cB$ over $X\times X$; such that $\cA=\cG/X$. Note that the principally polarized abelian variety $\cB$ has locally good reduction, and in particular the Poincar\'e bundle $\cP$ over $\cB\times \cB$ has a canonical integral structure. If one takes a point $x\in \mathcal{I}g^b_{\infty,\tilde{Z}}$ and fixes a norm $|\cdot|: K(x)\to \mathbb R_{\geq 0}$ on the residue field, then the map taking $x,y\in X$ to the logarithm of the norm of the section of $\cP$ gives a positive definite symmetric hermitian form
\[
X\times X\to \R;
\]
changing the norm $|\cdot|$ changes this only by a scalar. We see that there is a canonical map
\[
|\mathcal{I}g^b_{\infty,\tilde{Z}}|\to \GL_{\cO_F}(X)\backslash X_r.
\]
In fact, working globally and Hecke-equivariantly, we note that $X_\Q$ is a well-defined $F$-vector space of rank $r$ over all of $\mathcal{I}g^b_{\infty,P}$. Fixing an isomorphism $X_\Q\cong F^r$, we get (for each $x\in \mathcal{I}g^b_{\infty,P}$) a point of $X_r$. Moreover, we have level structures: In the present situation, this is an $\cO_F$-linear symplectic isomorphism $V_f^p(\cA)\cong V\otimes_{\Q} \A_f^p$ matching the subspace $V_f^p(\cT)$ with $F^r\otimes_{\Q} \A_f^p$. This gives another isomorphism $X_\Q\otimes_{\Q} \A_f^p\cong F^r\otimes_{\Q} \A_f^p$, i.e.~an element of $\GL_r(F\otimes_\Q \A_f^p)$. Similarly, the Igusa level structure induces an isomorphism $X_\Q\otimes_{\Q} \Q_p\cong F^r\otimes_{\Q} \Q_p$, i.e.~an element of $\GL_r(F\otimes_\Q \Q_p)$. In total, we get a natural continuous map
\[
f: |\mathcal{I}g^b_{\infty,P}|\to \GL_r(F)\backslash (X_r\times \GL_r(\A_{F,f}))
\]
and it follows from the construction that it is $J_b(\Q_p)\times P(\A_f^p)$-equivariant.

There is a natural map
\[
R\Gamma(\mathfrak{Ig}^{b,*}_{\infty,P},i_P^\ast Rj_\ast \mathbb F_\ell)\to R\Gamma(\mathcal{I}g^b_{\infty,P},\mathbb F_\ell)
\]
(as the left-hand side can be computed as the cohomology of the perfect scheme obtained by taking the henselization of $\mathfrak{Ig}^{b,*}_\infty$ along $\mathfrak{Ig}^{b,*}_{\infty,P}$ and deleting the boundary, and there is a natural map from $\mathcal{I}g^b_{\infty,P}$ to that scheme) that is probably an isomorphism. However, as $\mathcal{I}g^b_{\infty,P}$ is in general highly non-quasicompact -- in fact, at least as noncompact as the locally symmetric space $\GL_r(F)\backslash (X_r\times \GL_r(\A_{F,f}))$ -- it is nontrivial to justify this. If the map were an isomorphism, we would now get a natural map
\[\begin{aligned}
R\Gamma(\GL_r(F)\backslash (X_r\times \GL_r(\A_{F,f})),\mathbb F_\ell)&\buildrel{f^\ast}\over\to R\Gamma(\mathcal{I}g^b_{\infty,P},\mathbb F_\ell)\\
&\cong R\Gamma(\mathfrak{Ig}^{b,*}_{\infty,P},i_P^\ast Rj_\ast \mathbb F_\ell).
\end{aligned}\]

For this reason, we make the following small circumlocutions. Recall that the natural map
\[
\varinjlim_{K\subset \GL_r(\A_{F,f})} R\Gamma(\GL_r(F)\backslash (X_r\times \GL_r(\A_{F,f})/K),\mathbb F_\ell)\to R\Gamma(\GL_r(F)\backslash (X_r\times \GL_r(\A_{F,f})),\mathbb F_\ell)
\]
is an isomorphism, where $K$ runs over compact open subgroups of $\GL_r(\A_{F,f})$; this follows for example from the Borel--Serre compactification.\footnote{Indeed, let $X_r^{\mathrm{BS}}$ be the Borel--Serre compactification of $X_r$. Then both $X_r$ and $X_r^{\mathrm{BS}}$ are contractible. It follows that for any paracompact Hausdorff space $S$, one has \[R\Gamma(X_r\times S,A) = R\Gamma(S,A) = R\Gamma(X_r^{\mathrm{BS}}\times S,A)\]
for any coefficient module $A$; here cohomology means \v{C}ech (or equivalently sheaf) cohomology. By descent along the $\GL_r(F)$-quotient, it follows that the natural map
\[
R\Gamma(\GL_r(F)\backslash (X_r^{\mathrm{BS}}\times \GL_r(\A_{F,f})/H),A)\to R\Gamma(\GL_r(F)\backslash (X_r\times \GL_r(\A_{F,f})/H),A)
\]
is an isomorphism, for any closed subgroup $H\subset \GL_r(\A_{F,f})$. On the other hand, the quotients $\GL_r(F)\backslash (X_r^{\mathrm{BS}}\times \GL_r(\A_{F,f})/H)$ are compact Hausdorff, and the natural maps
\[\GL_r(F)\backslash (X_r^{\mathrm{BS}}\times \GL_r(\A_{F,f})/H)\to \varprojlim_{K\supset H} \GL_r(F)\backslash (X_r^{\mathrm{BS}}\times \GL_r(\A_{F,f})/K)
\]
are homeomorphisms (as continuous bijections between compact Hausdorff spaces), where $K$ runs through open compact subgroups containing $H$. This implies that on \v{C}ech cohomology,
\[
\varinjlim_{K\supset H} R\Gamma(\GL_r(F)\backslash (X_r^{\mathrm{BS}}\times \GL_r(\A_{F,f})/K),A)\to R\Gamma(\GL_r(F)\backslash (X_r^{\mathrm{BS}}\times \GL_r(\A_{F,f})/H),A)
\]
is an isomorphism.}

Similarly, for each finite $m$ and $N\geq 3$ prime to $p$, let
\[
\mathrm{Ig}^{b,*}_{m,K(N),P}\subset \mathrm{Ig}^{b,*}_{m,K(N)}
\]
be the image of $\mathfrak{Ig}^{b,*}_{\infty,P}$, and let $\mathcal{I}g^{b}_{m,K(N),P}$ be formed similarly as the punctured formal completion, which is an analytic adic space. Then it follows from \cite[Corollary 3.5.14]{huber} applied at each of these finite levels, plus passage to the limit on the left-hand side, that
\[
R\Gamma(\mathfrak{Ig}^{b,*}_{\infty,P},i_P^\ast Rj_\ast \mathbb F_\ell)\cong \varinjlim_{m,N} R\Gamma(\mathcal{I}g^b_{m,K(N),P},\mathbb F_\ell).
\]
Moreover, the map
\[
f: |\mathcal{I}g^b_{\infty,P}|\to \GL_r(F)\backslash (X_r\times \GL_r(\A_{F,f}))
\]
is the limit of a map of pro-systems
\[
\{|\mathcal{I}g^b_{m,K(N),P}|\}_{m,N}\to \{\GL_r(F)\backslash (X_r\times \GL_r(\A_{F,f})/K)\}_K.
\]
Thus, passing to the colimit, we get the desired map
\[\begin{aligned}
R\Gamma(\GL_r(F)\backslash (X_r\times \GL_r(\A_{F,f})),\mathbb F_\ell)&\cong \varinjlim_{K\subset \GL_r(\A_{F,f})} R\Gamma(\GL_r(F)\backslash (X_r\times \GL_r(\A_{F,f})/K),\mathbb F_\ell)\\
&\to \varinjlim_{m,N} R\Gamma(\mathcal{I}g^b_{m,K(N),P},\mathbb F_\ell)\\
&\cong R\Gamma(\mathfrak{Ig}^{b,*}_{\infty,P},i_P^\ast Rj_\ast \mathbb F_\ell).
\end{aligned}\]
On the level of pro-systems (and passing to perfections where appropriate), everything carries natural $P_b(\Q_p)\times P(\A_f^p)$-actions for which the maps are equivariant, implying the desired equivariance.

\subsection{Local computation}

In this section, we do a local computation with toroidal boundary charts to finish the proof of Theorem~\ref{thm:boundary cohomology igusa}.

Note that
\[
\mathfrak{Ig}^{b,*}_{\infty,P}
\]
admits a further decomposition according to Igusa cusp labels above $P$. Concretely, this amounts to finite projective $\cO_F$-modules $X$ of rank $r$ with isomorphisms $X\otimes_{\Z} \widehat{\Z}^p\cong \cO_F^r\otimes_{\Z} \widehat{\Z}^p$ and $X\otimes_{\Z} \Z_p\cong \cO_F^r\otimes_{\Z} \Z_p$ (as we have fixed an isomorphism $\mathrm Z_{b,-2}\cong \Hom(\cO_F^r,\mu_{p^\infty})$). Noting that we are really only mapping to the limit over all levels $K$ of $K$-equivalence classes of cups labels, we get a decomposition according to
\[
\bigsqcup_{X/\cong} \overline{\GL_{\cO_F}(X)}\backslash \GL_{\cO_F}(X\otimes_{\Z}\widehat{\Z}).
\]
On the other hand, in the last section, we constructed a map from the punctured formal neighborhoods to
\[
\GL_r(F)\backslash (X_r\times \GL_r(\A_{F,f})),
\]
and in particular to $\overline{\GL_r(F)}\backslash \GL_r(\A_{F,f})$. Note that the natural map
\[
\bigsqcup_{X/\cong} \overline{\GL_{\cO_F}(X)}\backslash \GL_{\cO_F}(X\otimes_{\Z}\widehat{\Z})\to \overline{\GL_r(F)}\backslash \GL_r(\A_{F,f})
\]
is a bijection. It is readily seen that these two maps are compatible.

Thus, from now on we fix a finite projective $\cO_F$-module $X$ of rank $r$ with an isomorphism $X\otimes_{\Z} \widehat{\Z}\cong \cO_F^r\otimes_{\Z} \widehat{\Z}$, corresponding to an Igusa cusp label $\tilde{Z}$, and let
\[
\mathfrak{Ig}^{b,*}_{\infty,\tilde{Z}}\subset \mathfrak{Ig}^{b,*}_{\infty,P}
\]
be the corresponding closed subset, defined as the inverse limit of the corresponding strata at finite level. Note that the corresponding part of the cohomology of $\GL_r(F)\backslash(X_r\times \GL_r(\A_{F,f}))$ is given by
\[
\varinjlim_{\Gamma\subset \GL_{\cO_F}(X)} R\Gamma(\Gamma,\mathbb F_\ell),
\]
where $\Gamma$ runs through congruence subgroups of $\GL_{\cO_F}(X)$ (recall that $X_r$ is contractible). Thus, we have to prove the following statement.

\begin{prop} The map
\[
R\Gamma_c(\mathfrak{Ig}^{b_P}_\infty,\mathbb F_\ell)\otimes \varinjlim_{\Gamma\subset \GL_{\cO_F}(X)} R\Gamma(\Gamma,\mathbb F_\ell)\to R\Gamma_c(\mathfrak{Ig}^{b,*}_{\infty,\tilde{Z}},i_{\tilde{Z}}^\ast Rj_\ast \mathbb F_\ell)
\]
constructed in the previous section is an isomorphism.
\end{prop}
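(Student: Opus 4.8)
The plan is to verify the isomorphism on a cofinal system of formal completions along the toroidal boundary, where everything becomes completely explicit via Theorem~\ref{thm:tor Igusa strata}. First, I would use the identification
\[
R\Gamma_c(\mathfrak{Ig}^{b,*}_{\infty,\tilde{Z}},i_{\tilde{Z}}^\ast Rj_\ast \mathbb F_\ell)\cong \varinjlim_{m,N} R\Gamma(\mathcal{I}g^b_{m,K(N),\tilde Z},\mathbb F_\ell)
\]
from \S\ref{sec:magic} (via \cite[Corollary 3.5.14]{huber} at each finite level), so that the left-hand side may also be computed adically on the punctured formal neighborhood. This reduces the claim to an assertion about the cohomology of the adic spaces $\mathcal{I}g^b_{m,K(N),\tilde Z}$ together with the map $f$ to $\GL_r(F)\backslash(X_r\times\GL_r(\A_{F,f}))$.

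Next I would unwind the toroidal boundary chart description. By Theorem~\ref{thm:tor Igusa strata}(2), the formal completion $\widehat{\mathrm{Ig}}^{b,\tor}_{m,\tilde Z}$ is $\mathfrak X_{\tilde Z,\Sigma_Z}/\Gamma_{\tilde Z}$, where $\mathfrak X_{\tilde Z,\Sigma_Z}$ is the completion of the toroidal embedding $\Xi_{\tilde Z,\Sigma_Z}\to C_{\tilde Z}$ along the boundary, and $C_{\tilde Z}\to \mathrm{Ig}^b_{Z,m}$ is an abelian scheme over the Igusa variety of the smaller leaf. Passing to the punctured neighborhood and taking the inverse limit over $m$ and $N$, the abelian scheme $C_{\tilde Z}$ and the torus-torsor layers become (pro-étale, then perfectoid) covers that are cohomologically trivial with $\mathbb F_\ell$-coefficients in the limit: the abelian scheme $C_{\tilde Z}$ contributes nothing after passing to infinite level (its Tate module is killed), and the torus embedding $\Xi_{\tilde Z,\Sigma_Z}\setminus\Xi_{\tilde Z}\to C_{\tilde Z}$, restricted to a punctured tubular neighborhood of a boundary stratum, has the homotopy type of a torus whose cohomology is again trivialized after taking the inverse limit over all levels (the transition maps are multiplication by $p$ and by integers prime to $p$, which are zero on positive-degree cohomology of the boundary divisors — exactly as in Lemma~\ref{shimura ell infty} and Lemma~\ref{igusa ell infty}). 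What survives is precisely the cohomology of the quotient by $\Gamma_{\tilde Z}$, i.e.\ $\varinjlim_\Gamma R\Gamma(\Gamma,\mathbb F_\ell)$, tensored with the cohomology of the smaller Igusa variety $\mathfrak{Ig}^{b_P}_\infty$ coming from the base $\mathrm{Ig}^b_{Z,m}$.

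I would then check that the resulting identification agrees with the cup-product map constructed in the previous section. The factor $R\Gamma_c(\mathfrak{Ig}^{b_P}_\infty,\mathbb F_\ell)$ matches by functoriality of the projection $\mathfrak{Ig}^{b,*}_{\infty,\tilde Z}\to \mathfrak{Ig}^{b_P}_\infty$ of Theorem~\ref{thm:min Igusa strata}, which is exactly the structure map to $\mathrm{Ig}^b_{Z,m}$ in the boundary chart. For the factor $\varinjlim_\Gamma R\Gamma(\Gamma,\mathbb F_\ell)$, I would verify that the map $f$ of \S\ref{sec:magic}, built from the logarithm of the norm of the section of the Poincaré bundle (i.e.\ the hermitian form on $X$) together with the level structures on $X$, induces on $\pi_1$ the natural map to $\varprojlim_\Gamma \GL_{\cO_F}(X)/\Gamma$; since $X_r$ is contractible this identifies $\varinjlim_\Gamma R\Gamma(\Gamma,\mathbb F_\ell)$ with the cohomology of the $\Gamma_{\tilde Z}$-cover, compatibly with the boundary-chart quotient. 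Keeping track of the $P_b(\Q_p)\times P(\A_f^p)$-actions throughout (they act through the Levi $\GL_r\times G_{2(n-r)}$, matching the two factors) gives equivariance.

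The main obstacle is the cohomological triviality of the "torus direction" in the punctured formal neighborhood after passage to infinite level: one must show that the local structure $\mathfrak X_{\tilde Z,\Sigma_Z}$ — a formal completion of a torus embedding over an abelian scheme — contributes only the $H^0$ of the torus fibers once all the boundary divisors are made infinitely ramified, and that this holds uniformly enough to commute with the various colimits and with the $Rj_\ast$. This is morally the same phenomenon as in Lemma~\ref{shimura ell infty}, but here it must be run relative to the abelian-scheme base $C_{\tilde Z}$ and on the perfectoid (punctured-neighborhood) incarnation rather than on schemes; carrying it out carefully, and checking compatibility of the two descriptions of $R\Gamma(\mathfrak{Ig}^{b,*}_{\infty,\tilde Z},i_{\tilde Z}^\ast Rj_\ast\mathbb F_\ell)$ (scheme-theoretic henselization versus adic punctured neighborhood, via \cite[Corollary 3.5.14]{huber}), is where the real work lies. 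Everything else is bookkeeping with the explicit charts.
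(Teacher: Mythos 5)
Your proposal is correct and takes essentially the same route as the paper: the paper's own proof simply outsources the chart computation to Pink (\cite{pink}, Theorem 4.2.1) via the Lan--Stroh axiomatization (\cite{lan-stroh}, Lemma 4.3.2) applied to the explicit boundary charts of Theorem~\ref{thm:tor Igusa strata}, which is precisely the computation you spell out by hand (the abelian-scheme and torus directions lose all higher cohomology in the tower because the prime-to-$p$ level structure includes $\ell$-power levels, leaving $\Gamma_{\tilde Z}$-cohomology tensored with the cohomology of the smaller Igusa variety). The only slip is your parenthetical that the transition maps are ``multiplication by $p$ and by integers prime to $p$, which are zero on positive-degree cohomology'': what kills higher cohomology is that the transition degrees are eventually divisible by $\ell$, exactly as in Lemma~\ref{shimura ell infty}, and the Igusa $p$-level does not further ramify the torus direction at all; this does not affect the argument.
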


\begin{proof} Using the toroidal compactification to do the computation and the explicit description of the toroidal boundary charts from Theorem~\ref{thm:tor Igusa strata}, this is a by now standard computation due to Pink, \cite[Theorem 4.2.1]{pink}, see also~\cite[Theorem 4.3.10]{lan-stroh}. The key point is to notice that the inverse system $\left(\mathrm{Ig}^{b,\tor}_{K^p}\right)_{K^p}$ of partial toroidal compactifications of Igusa varieties has the axiomatic properties described in~\cite[Lemma 4.3.2]{lan-stroh}.
\end{proof}

This finishes the proof of Theorem~\ref{thm:boundary cohomology igusa}.

\begin{remark} Theorem~\ref{thm:boundary cohomology igusa} can be interpreted as a version of Pink's formula~\cite{pink} for Igusa varieties. A similar argument as in \S~\ref{sec:magic} in the setting of Shimura varieties can be used to prove that the original version of Pink's formula is Hecke-equivariant. This provides an alternative to the argument in~\cite[\S 4.8]{pink}. 
\end{remark}

\subsection{Applications}

We now use Theorem~\ref{thm:boundary cohomology igusa} to construct Galois representations associated to maximal ideals $\m\subset \mathbb{T}$ in the support of $R\Gamma_{(c-\partial)}(\mathrm{Ig}^{b}, \mathbb F_\ell)$.

From now on, assume that $F$ contains (properly) an imaginary quadratic field $F_0\subset F$ in which $p$ splits and fix the finite set $S$ of places of $\Q$ and a level $N\geq 3$ prime to $p$ as in the last section. We let $\mathbb T^S$ be the same unramified Hecke algebra, and fix a maximal ideal $\mathfrak m\subset \mathbb T^S$ containing $\ell$.

We will consider the Igusa varieties with implicit tame level $K^p(N)$.

\begin{thm}\label{thm:Galreps Igusa} Assume that for some $b\in B(G_{\Q_p},\mu^{-1})$ one of the cohomology groups
\[
H^i_{c-\partial}(\mathrm{Ig}^b,\mathbb F_\ell)_{\mathfrak m},H^i(\mathrm{Ig}^b,\mathbb F_\ell)_{\mathfrak m}
\]
is nonzero. Then there exists a continuous semisimple Galois representation
\[
\overline\rho_{\mathfrak m}: \mathrm{Gal}(\overline{F}/F)\to \GL_{2n}(\overline{\mathbb F}_\ell)
\]
such that for all primes $v$ dividing a rational prime $q\not\in S$ that splits in $F_0$, the characteristic polynomial of $\overline\rho_{\mathfrak m}(\Frob_v)$ is given by
\[
X^{2n} - T_{1,v}X^{2n-1}+\dots+ (-1)^iq_v^{i(i-1)/2}T_{i,v}X^{2n-i}+\dots + q_v^{n(2n-1)}T_{2n,v}
\]
with notation as in the introduction.

Moreover, if the map
\[
H^i_{c-\partial}(\mathrm{Ig}^b,\mathbb F_\ell)_{\mathfrak m}\to H^i(\mathrm{Ig}^b,\mathbb F_\ell)_{\mathfrak m}
\]
is not an isomorphism and $b$ is not ordinary, then $\overline{\rho}_{\mathfrak m}$ is of length at least $3$.
\end{thm}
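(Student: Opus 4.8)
\textbf{Proof plan for Theorem~\ref{thm:Galreps Igusa}.}
The plan is to combine Theorem~\ref{thm:boundary cohomology igusa} with the results on the cohomology of Igusa varieties for smaller unitary groups (Theorem~\ref{thm:igusa computation} and Corollary~\ref{cor:existence Galois single degree}), proceeding by induction on $n$. First I would dispose of the case where $H^i(\mathrm{Ig}^b,\mathbb F_\ell)_{\mathfrak m}\ne 0$: choosing $b'$ with $d_{b'}$ minimal among those $b'$ with $H^\ast(\mathrm{Ig}^{b'},\mathbb F_\ell)_{\mathfrak m}\ne 0$, one has concentration in a single degree by the argument of Lemma~\ref{concentration usual cohomology} together with Proposition~\ref{concentration compactly supported cohomology} (note $H^i(\mathrm{Ig}^b,\mathbb F_\ell)_{\mathfrak m}\ne 0$ forces $H^i_{c-\partial}(\mathrm{Ig}^{b'},\mathbb F_\ell)_{\mathfrak m}\ne 0$ after passing to the minimal $b'$, since otherwise the excision sequence relating $H^i_{c-\partial}$, $H^i$ and the boundary would be contradicted by induction once we know the boundary contribution is controlled), so Corollary~\ref{cor:existence Galois single degree} (applied to the dual maximal ideal and untwisting, exactly as in the proof of Theorem~\ref{thm:main}) produces $\overline\rho_{\mathfrak m}$. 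Thus it remains to treat the case where $H^i_{c-\partial}(\mathrm{Ig}^b,\mathbb F_\ell)_{\mathfrak m}\ne 0$ but $H^i(\mathrm{Ig}^b,\mathbb F_\ell)_{\mathfrak m}=0$ for all $i$, i.e.\ $\mathfrak m$ is supported purely on the boundary.

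In that case there is an excision triangle
\[
R\Gamma_{c-\partial}(\mathrm{Ig}^b,\mathbb F_\ell)\to R\Gamma(\mathrm{Ig}^b,\mathbb F_\ell)\to R\Gamma(\partial\mathfrak{Ig}^{b,\ast},i^\ast Rj_\ast \mathbb F_\ell)\to,
\]
and after localizing at $\mathfrak m$ and using that the middle term vanishes we get that $\mathfrak m$ occurs in $R\Gamma(\partial\mathfrak{Ig}^{b,\ast}_\infty,i^\ast Rj_\ast\mathbb F_\ell)^{K^p}$ for suitable $K^p$. The boundary is stratified by the $[P]=[P_r]$, $r=1,\dots,n$; taking a stratum $[P]$ that occurs and applying Theorem~\ref{thm:boundary cohomology igusa}, $\mathfrak m$ appears in
\[
R\Gamma(\GL_r(F)\backslash(X_r\times\GL_r(\A_{F,f})),\mathbb F_\ell)\otimes R\Gamma_c(\mathfrak{Ig}^{b_P}_\infty,\mathbb F_\ell)
\]
as a module over (the prime-to-$p$ part of) $\mathbb T^S$. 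Now the key point is that the prime-to-$p$ Hecke action on this tensor product factors through the Levi $M=\mathrm{Res}_{F/\Q}\GL_r\times G_{2(n-r)}$; the Hecke eigenvalues of $\mathfrak m$ are therefore determined by a pair consisting of a system of Hecke eigenvalues $\mathfrak m_1$ occurring in $H^\ast(\GL_r/F,\mathbb F_\ell)$ and a system $\mathfrak m_2$ occurring in $H^\ast_c(\mathrm{Ig}^{b_P},\mathbb F_\ell)$ for the group $G_{2(n-r)}$. To $\mathfrak m_1$ one attaches an $r$-dimensional $\overline\rho_{\mathfrak m_1}$ by \cite[Theorem 5.4.3]{scholze} (the theorem on Galois representations for torsion classes in the cohomology of locally symmetric spaces of $\GL_r/F$), and to $\mathfrak m_2$ a $2(n-r)$-dimensional $\overline\rho_{\mathfrak m_2}$ by the inductive hypothesis (the case $n-r<n$ of the present theorem, the base case $r=n$ using only \cite[Theorem 5.4.3]{scholze}). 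Matching the Satake parameters through the unramified base-change / Langlands-parameter bookkeeping of \S\ref{sec: unitary groups}--\S\ref{sec: base change} — exactly the same computation of characteristic polynomials in terms of the $T_{i,v}$ as in Theorem~\ref{thm:igusa computation} — one finds that $\overline\rho_{\mathfrak m}:=\overline\rho_{\mathfrak m_1}\oplus\overline\rho_{\mathfrak m_1}^\vee(\text{twist})\oplus\overline\rho_{\mathfrak m_2}$ (the reflection coming from the embedding $\GL_r\hookrightarrow\GL_{2n}$ into the Siegel Levi) has the required characteristic polynomials of Frobenius.

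For the final assertion on length: if $b$ is not ordinary and the map $H^i_{c-\partial}\to H^i$ is not an isomorphism, then by the excision triangle above $\mathfrak m$ must occur on the boundary, so we are in the situation just analyzed with some $r\ge 1$. Then $\overline\rho_{\mathfrak m}$ decomposes as $\overline\rho_{\mathfrak m_1}\oplus(\text{dual twist of }\overline\rho_{\mathfrak m_1})\oplus\overline\rho_{\mathfrak m_2}$, a sum of an $r$-dimensional piece, another $r$-dimensional piece, and a $2(n-r)$-dimensional piece; even allowing pieces to vanish or coincide this forces the semisimplification to have at least two Jordan--H\"older-type constituents from the first two summands — and to get length $\ge 3$ one uses additionally that $b$ non-ordinary forces $b_P$ non-ordinary for the relevant $r$ (or else the first two summands already contribute two distinct constituents and $\overline\rho_{\mathfrak m_2}$ a third), invoking Corollary~\ref{cor:existence Galois single degree}'s non-ordinarity criterion inductively. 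I expect the main obstacle to be precisely this last combinatorial/inductive bookkeeping: showing that in \emph{every} boundary case the three summands genuinely produce at least three distinct irreducible constituents (rather than collapsing, e.g.\ when $\overline\rho_{\mathfrak m_1}$ is itself reducible or when the dual twist coincides with part of $\overline\rho_{\mathfrak m_2}$), which requires carefully tracking the genericity/self-duality constraints at $p$ coming from the fact that $b$ is non-ordinary, together with setting up the induction so that the hypotheses of Corollary~\ref{cor:existence Galois single degree} and \cite[Theorem 5.4.3]{scholze} are available at each stage.
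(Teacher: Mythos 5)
Your overall strategy matches the paper's: induction on $n$, a split into a ``boundary'' case handled by Theorem~\ref{thm:boundary cohomology igusa} and an ``interior'' case handled by concentration plus Theorem~\ref{thm:igusa computation} (via Corollary~\ref{cor:existence Galois single degree}), with Lemmas~\ref{lem:restriction of functions} and~\ref{lem:integration along unipotent fibers} tracking the Hecke module structure through the parabolic induction. However, there are two concrete problems with the argument as written.

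First, your excision triangle produces $R\Gamma(\partial\mathfrak{Ig}^{b,\ast},i^\ast Rj_\ast\mathbb F_\ell)$, the \emph{ordinary} cohomology of the boundary. But Theorem~\ref{thm:boundary cohomology igusa} is a statement about $R\Gamma_c(\mathfrak{Ig}^{b,\ast}_{\infty,[P]},i_{[P]}^\ast Rj_\ast\mathbb F_\ell)$, the \emph{compactly supported} cohomology of each open stratum, and these assemble into $R\Gamma_c(\partial)$, not $R\Gamma(\partial)$. Since $\mathrm{Ig}^{b,\ast}$ is affine (Lemma~\ref{basic properties of minimal compactifications}), the boundary is an affine scheme of positive dimension in general, so the two genuinely differ. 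The paper circumvents this by first applying Poincar\'e duality: the non-isomorphism $H^i_{c-\partial}(\mathrm{Ig}^b)_{\mathfrak m}\to H^i(\mathrm{Ig}^b)_{\mathfrak m}$ dualizes to a non-isomorphism $H^i_c(\mathrm{Ig}^b)_{\mathfrak m^\vee}\to H^i_c(\mathrm{Ig}^{b,\ast},Rj_\ast\mathbb F_\ell)_{\mathfrak m^\vee}$ for the dual system $\mathfrak m^\vee$, whose cone is $R\Gamma_c(\partial,i^\ast Rj_\ast\mathbb F_\ell)$ — precisely the object that stratifies into the $R\Gamma_c$ terms of Theorem~\ref{thm:boundary cohomology igusa}. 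You need this duality step in the boundary case too (not just for Corollary~\ref{cor:existence Galois single degree}), and then must untwist $\mathfrak m^\vee\mapsto\mathfrak m$ at the end.

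Second, your concentration claim in the first case is incomplete: Lemma~\ref{concentration usual cohomology} gives $H^i\neq 0$ only for $i\geq d_{b'}$, and Proposition~\ref{concentration compactly supported cohomology} gives $H^i_{c-\partial}\neq 0$ only for $i\leq d_{b'}$, but these together do not force concentration in a single degree unless you already know $H^i_{c-\partial}\to H^i$ is an isomorphism for the new minimal $b'$. The parenthetical appeal to ``induction controlling the boundary'' does not substitute for this. The clean argument is a genuine dichotomy applied to $b'$: either the map is an isomorphism (giving concentration, then Theorem~\ref{thm:igusa computation}), or it is not (falling back to the boundary case).

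Finally, on the length statement: the difficulty you anticipate does not exist. Length counts Jordan--H\"older constituents with multiplicity and is additive over direct sums, so a direct sum of three nonzero representations automatically has length $\geq 3$ — no further bookkeeping about coinciding constituents, genericity at $p$, or non-ordinarity of $b_P$ is needed. What one must check is simply that $r<n$ so that $\overline\rho_{\mathfrak m_2}$ is a nonzero $2(n-r)$-dimensional piece. This is immediate from non-ordinarity of $b$: an Igusa cusp label $(\mathrm Z_b,\mathrm Z^p,X)$ with $X$ of $\cO_F$-rank $n$ would force $\mathrm Z_{b,-2}\subset\mathbb X_b$ to be multiplicative of $\cO_F$-rank $n$, i.e.\ $\mathbb X_b$ ordinary. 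So the stratum $P_n$ is empty, $r<n$, and the three summands are all nontrivial.
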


Before proving the theorem, we record some preliminary notation and results. We denote by 
\[
R\Gamma_{\cont}(K^p(N), ): D^+_{\mathrm{sm}}(G(\A_f^p), \F_{\ell})\to D^+(\mathbb{T}^S)
\]
the derived functor of $K^p(N)$-invariants in the bounded below derived category of smooth $G(\A_f^p)$-representations with $\F_{\ell}$-coefficients. We also consider the forgetful functor $D^+(\mathbb{T}^S)\to D^+(\Z)$. After applying this forgetful functor to the left hand sides, we obtain $\mathbb{T}^S$-equivariant isomorphisms 
\begin{equation}\label{eq:HS1}
R\Gamma_{\cont}(K^p(N), R\Gamma_c(\mathfrak{Ig}^b_{\infty}, \F_{\ell}))\toisom R\Gamma_c(\mathrm{Ig}^b, \F_{\ell})
\end{equation}
and 
\begin{equation}\label{eq:HS2}
R\Gamma_{\cont}(K^p(N), R\Gamma_c(\mathfrak{Ig}^{b,*}_{\infty}, Rj_\ast\F_{\ell})) \toisom R\Gamma_c(\mathrm{Ig}^{b,*}, Rj_\ast\F_{\ell}). 
\end{equation}
in $D^+(\Z)$. 

We set $K^S_P:=K^S\cap P(\A^S)$ and consider the corresponding abstract Hecke algebra $\mathbb{T}^S_P$ for $P$. Recall from~\cite[\S 2.2.3]{newton-thorne} the ring
homomorphism $r_P: \mathbb{T}^S\to \mathbb{T}^S_P$ given by ``restriction of functions''. The following is a version of~\cite[Corollary 2.6]{newton-thorne} for smooth representations. 

\begin{lem}\label{lem:restriction of functions} There is a natural isomorphism of functors
\[
R\Gamma_{\cont}\left(K^S, \mathrm{Ind}_{P(\mathbb{A}^S)}^{G(\mathbb{A}^S)}\ \right) \toisom r_P^*R\Gamma_{\cont}(K^S_P, \ )
\]
from $D^+_{\mathrm{sm}}(P(\mathbb{A}^S), \F_{\ell})$ to $D^+(\mathbb{T}^S_P)$.
\end{lem}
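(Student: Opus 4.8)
The statement is the analogue of~\cite[Corollary 2.6]{newton-thorne} with locally symmetric spaces replaced by the site of smooth $P(\A^S)$- and $G(\A^S)$-representations. The plan is to prove it by expressing both sides as explicit (derived) $K$-invariant functors on a convenient resolution, matching the Hecke actions via the combinatorics of double cosets. First I would recall the basic adjunction: for a smooth $P(\A^S)$-representation $V$, one has a canonical identification
\[
\left(\mathrm{Ind}_{P(\A^S)}^{G(\A^S)} V\right)^{K^S} \cong \bigoplus_{g\in P(\A^S)\backslash G(\A^S)/K^S} V^{P(\A^S)\cap gK^S g^{-1}},
\]
and, since $P$ has a decomposition $P = MN_P$ with $N_P$ the unipotent radical, and the level $K^S$ is (by the standing assumption $K^S = \prod_{q\notin S} G(\Z_q)$) a product of hyperspecial maximal compacts, the Iwasawa decomposition $G(\Q_q) = P(\Q_q) K_q$ gives that $P(\A^S)\backslash G(\A^S)/K^S$ is a single point, and $P(\A^S)\cap K^S = K^S_P$. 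Hence on the level of underived functors, $(\mathrm{Ind}_{P(\A^S)}^{G(\A^S)} V)^{K^S} = V^{K^S_P}$ naturally in $V$, and this identification is visibly compatible with taking injective resolutions, so it derives to a natural isomorphism of functors $R\Gamma_{\cont}(K^S, \mathrm{Ind}_{P(\A^S)}^{G(\A^S)}(-)) \cong R\Gamma_{\cont}(K^S_P, -)$ as functors to $D^+(\Z)$ (parabolic induction is exact and preserves injectives, or at least acyclics, for smooth representations).

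The real content is that this isomorphism intertwines the $\mathbb{T}^S$-action on the left with the $\mathbb{T}^S_P$-action on the right twisted through $r_P$. For this I would go prime-by-prime: fix $q\notin S$, let $T\in \mathbb{Z}[G(\Q_q)/\!/G(\Z_q)]$ be a spherical Hecke operator, represented by a finite sum $\sum_i \mathbf{1}_{G(\Z_q) g_i G(\Z_q)}$. Its action on $(\mathrm{Ind}\, V)^{K_q}$ decomposes the double coset $G(\Z_q) g_i G(\Z_q) = \bigsqcup_j x_{ij} G(\Z_q)$ and sums the corresponding translates. Using the Iwasawa decomposition to write each $x_{ij} = p_{ij}\kappa_{ij}$ with $p_{ij}\in P(\Q_q)$, $\kappa_{ij}\in G(\Z_q)$, one checks that under the identification above the operator $T$ acts by the sum $\sum_{i,j}(p_{ij}\cdot)$ on $V^{K^{S,q}_P}$. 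That this sum equals $r_P(T)$ acting via $\mathbb{T}^S_P$ is precisely the Satake-theoretic content of the definition of $r_P$ in~\cite[\S 2.2.3]{newton-thorne}: $r_P$ is dual to the (normalized, but unnormalized is fine up to the standard twist) constant-term map $\mathcal{H}(G(\Q_q),G(\Z_q))\to \mathcal{H}(M(\Q_q),M(\Z_q))$, and the displayed sum is exactly the image of $T$ under this map acting through the quotient $P\twoheadrightarrow M$ on $V$. I would make this precise by the standard bijection between $(P(\Q_q)\cap G(\Z_q))$-orbits on $G(\Z_q)g_iG(\Z_q)/G(\Z_q)$ and the cosets appearing in the constant term, which is elementary once the group-theoretic bookkeeping is set up (this is the same computation as in~\cite[Lemma 2.5, Corollary 2.6]{newton-thorne}, transported to the representation-theoretic setting).

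\textbf{Main obstacle.} The only genuine subtlety is the \emph{naturality} in the derived category: I need the isomorphism to be functorial for maps in $D^+_{\mathrm{sm}}(P(\A^S),\F_\ell)$ and simultaneously $\mathbb{T}^S$-equivariant, so it cannot just be checked on cohomology objects. The clean way to handle this is to observe that both $R\Gamma_{\cont}(K^S, \mathrm{Ind}_{P(\A^S)}^{G(\A^S)}(-))$ and $r_P^* R\Gamma_{\cont}(K^S_P, -)$ are the right-derived functors of the \emph{same} left-exact functor $V\mapsto V^{K^S_P}$ (the first because $\mathrm{Ind}$ is exact and sends injective smooth $P$-modules to $K^S$-acyclic $G$-modules — one checks $H^i_{\cont}(K^S,\mathrm{Ind}\,I)=0$ for $i>0$ and $I$ injective, which follows from the decomposition above and injectivity being preserved under restriction to $K^S_P$), and then the $\mathbb{T}^S$-equivariance is forced once it holds on $H^0$, which is the prime-by-prime computation above. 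Thus the proof reduces to (i) the acyclicity statement and (ii) the $H^0$-level Hecke compatibility; (i) is formal from exactness of parabolic induction and the Iwasawa decomposition, and (ii) is the Satake computation I sketched. I expect writing (ii) carefully — keeping track of the normalization of $r_P$ and of which side the modulus character lands on — to be the most error-prone part, but it is entirely routine and parallels~\cite{newton-thorne} verbatim.
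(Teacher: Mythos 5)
Your proposal is correct and follows essentially the same route as the paper: the Iwasawa decomposition $G(\A^S)=K^S\cdot P(\A^S)$ identifies the underived functors $\Gamma(K^S,\mathrm{Ind}_{P(\A^S)}^{G(\A^S)}(-))$ and $\Gamma(K^S_P,-)$ compatibly with $r_P$, and one then derives using exactness of smooth induction together with its preservation of injectives (your acyclicity variant is equally fine). The only difference is that you spell out the double-coset/Satake bookkeeping for the Hecke compatibility, which the paper simply imports from~\cite[Lemma 2.4 (iii)]{newton-thorne}.
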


\begin{proof} Since $K^S$ is hyperspecial for every place $v\not\in S$ of $F$, we have the 
Iwasawa decomposition $G(\A^S) = K^S \cdot P(\A^S)$. For non-derived functors, we obtain the commutative diagram
\[
\xymatrix{\mathrm{Mod}_{\mathrm{sm}}(G(\A^S), \F_{\ell})\ar[rr]^{\Gamma(K^S,\ )} &\ & \mathrm{Mod}(\mathbb{T}^S) \\ 
\mathrm{Mod}_{\mathrm{sm}}(P(\A^S), \F_{\ell})\ar[u]^{\mathrm{Ind}_{P(\A^S)}^{G(\A^S)}}
\ar[rr]^{\Gamma(K^S_P,\ )} &\ & \mathrm{Mod}(\mathbb{T}^S_P)\ar[u]^{r^*_P},
}
\]
where the smooth induction $\mathrm{Ind}_{P(\A^S)}^{G(\A^S)}$ is exact and preserves injectives, 
$r_P^*$ is exact, and $\Gamma(K^S,\ )$ and $\Gamma(K^S_P,\ )$ are left exact. The proof that the diagram commutes 
is identical to that of~\cite[Lemma 2.4 (iii)]{newton-thorne}. Taking derived functors, we obtain the lemma. 
\end{proof}

Let $M=\mathrm{Res}_{F/\Q} \GL_r\times G_{n-r,\Q}$ be the Levi and $N$ be the unipotent radical of $P$; we have a Levi decomposition
$P=M\rtimes N$. The compact open subgroup $K^S_P\subset P(\A^S)$
is decomposed in the sense of~\cite[\S 2.2.4]{newton-thorne} with respect to this Levi decomposition. 
We set $K^S_M:=K^S_P\cap M(\A^S)$ and $K^S_N:=K^S_P\cap N(\A^S)$. 
Recall also from~\cite[\S 2.2.4]{newton-thorne} the ring homomorphism 
$r_M: \mathbb{T}^S_P\to \mathbb{T}^S_M$ given by ``integration along 
unipotent fibers''. The following is an analogue of~\cite[Corollary 2.8]{newton-thorne} for smooth representations. 

\begin{lem}\label{lem:integration along unipotent fibers}
There is a natural isomorphism of functors
\[
r_M^* R\Gamma_{\cont}(K_M, ) \toisom R\Gamma_{\cont}(K^S_P, \mathrm{Inf}_{M(\mathbb{A}^S)}^{P(\mathbb{A}^S)}\ )
\]
from  $D^+_{\mathrm{sm}}(M(\mathbb{A}^S), \F_{\ell})$ to $D^+(\mathbb{T}^S_P)$.  
\end{lem}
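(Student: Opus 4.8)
The statement to prove is Lemma~\ref{lem:integration along unipotent fibers}, the analogue for smooth representations of \cite[Corollary 2.8]{newton-thorne}: a natural isomorphism $r_M^* R\Gamma_{\cont}(K_M, \ ) \toisom R\Gamma_{\cont}(K^S_P, \mathrm{Inf}_{M(\mathbb{A}^S)}^{P(\mathbb{A}^S)}\ )$ of functors from $D^+_{\mathrm{sm}}(M(\mathbb{A}^S), \F_{\ell})$ to $D^+(\mathbb{T}^S_P)$. The plan is to mirror exactly the strategy used for Lemma~\ref{lem:restriction of functions}: first establish the statement for non-derived functors by writing down a commuting square, then pass to derived functors by checking the relevant exactness and injective-preservation properties.

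First I would set up the non-derived diagram. Since $K^S_P$ is decomposed with respect to the Levi decomposition $P = M \rtimes N$, we have $K^S_P = K^S_M \cdot K^S_N$, and for a smooth $P(\mathbb{A}^S)$-representation $V$ one has $V^{K^S_P} = (V^{K^S_N})^{K^S_M}$. When $V = \mathrm{Inf}_{M(\mathbb{A}^S)}^{P(\mathbb{A}^S)} W$ is inflated from a smooth $M(\mathbb{A}^S)$-representation $W$, the group $N(\mathbb{A}^S)$ acts trivially, so $V^{K^S_N} = V = W$ as an $M(\mathbb{A}^S)$-module, and hence $V^{K^S_P} = W^{K^S_M}$ as abelian groups. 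The content is that this identification is $\mathbb{T}^S_P$-equivariant when the right-hand side is viewed as a $\mathbb{T}^S_M$-module via $r_M$ and then restricted along $r_M^*$; this is precisely the computation in \cite[Lemma 2.7]{newton-thorne} (``integration along unipotent fibers'' of a function that is constant along the fibers of $P \to M$ recovers the function on $M$, up to the volume normalization built into $r_M$), and the same calculation goes through verbatim for smooth representations since it only involves the action of double cosets on $K^S_P$-fixed vectors. This yields a commutative square
\[
\xymatrix{
\mathrm{Mod}_{\mathrm{sm}}(P(\A^S), \F_{\ell})\ar[rr]^{\Gamma(K^S_P,\ )} &\ & \mathrm{Mod}(\mathbb{T}^S_P) \\
\mathrm{Mod}_{\mathrm{sm}}(M(\A^S), \F_{\ell})\ar[u]^{\mathrm{Inf}_{M(\A^S)}^{P(\A^S)}}
\ar[rr]^{\Gamma(K^S_M,\ )} &\ & \mathrm{Mod}(\mathbb{T}^S_M)\ar[u]^{r^*_M}.
}
\]

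Then I would pass to derived functors. The functor $\mathrm{Inf}_{M(\mathbb{A}^S)}^{P(\mathbb{A}^S)}$ is exact; the key point is that it preserves injectives, which follows because it admits an exact left adjoint, namely the $N(\mathbb{A}^S)$-coinvariants functor $V \mapsto V_{N(\mathbb{A}^S)}$ (exactness of coinvariants here uses that $N(\mathbb{A}^S)$ is a union of compact subgroups acting smoothly, so taking coinvariants is the same as taking invariants and is exact on smooth representations with coefficients in a field of characteristic $\ell \neq p$; this is the standard fact that $H^i(N(\mathbb{A}^S), V) = 0$ for $i > 0$). The functor $r_M^*$ is exact, and $\Gamma(K^S_P, \ )$, $\Gamma(K^S_M, \ )$ are left exact. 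Given these, a standard argument (Grothendieck spectral sequence / composition of derived functors, exactly as in \cite[Corollary 2.8]{newton-thorne}) upgrades the commuting square of non-derived functors to the asserted isomorphism $r_M^* R\Gamma_{\cont}(K_M, \ ) \toisom R\Gamma_{\cont}(K^S_P, \mathrm{Inf}_{M(\mathbb{A}^S)}^{P(\mathbb{A}^S)}\ )$ in $D^+(\mathbb{T}^S_P)$.

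The main obstacle I anticipate is not conceptual but bookkeeping: verifying that the natural identification $V^{K^S_P} \cong W^{K^S_M}$ intertwines the $\mathbb{T}^S_P$-action with the pullback along $r_M$ of the $\mathbb{T}^S_M$-action. This requires unwinding the definition of $r_M$ in \cite[\S 2.2.4]{newton-thorne} as an honest integral over unipotent fibers and matching it against how Hecke double cosets $K^S_P g K^S_P$ decompose and act on $N$-invariant vectors — essentially a translation of \cite[Lemma 2.7]{newton-thorne} into representation-theoretic language. Since every ingredient (the Iwasawa/Levi decomposition of $K^S_P$, exactness of $N(\mathbb{A}^S)$-coinvariants away from $p$, and the adjunction giving injective-preservation) is either already in place or standard, I expect the proof to be short once this compatibility is spelled out, and indeed the excerpt's proof of Lemma~\ref{lem:restriction of functions} suggests the authors will simply cite the analogous steps of \cite{newton-thorne}.
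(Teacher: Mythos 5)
Your argument is correct, but it diverges from the paper's at the key step. You prove that $\mathrm{Inf}_{M(\mathbb{A}^S)}^{P(\mathbb{A}^S)}$ preserves injectives via its exact left adjoint ($N(\mathbb{A}^S)$-coinvariants, which is exact on smooth $\F_\ell$-representations since $N(\mathbb{A}^S)$ is exhausted by compact open subgroups of pro-order prime to $\ell$ --- note the relevant condition is that $\ell \in S$, so that the pro-orders at $v \not\in S$ avoid $\ell$, not ``$\ell \neq p$'' as you wrote). The paper instead explicitly says it does \emph{not} check injective preservation; it produces only a natural transformation $r_M^* R\Gamma_{\cont}(K_M, \cdot) \to R\Gamma_{\cont}(K^S_P, \mathrm{Inf}\,\cdot)$ from the commuting square via \cite[Lemma 2.1]{newton-thorne}, and then verifies this is an isomorphism after forgetting the Hecke action by factoring $R\Gamma_{\cont}(K^S_P,\cdot) = R\Gamma_{\cont}(K^S_M,\cdot)\circ \Gamma(K^S_N,\cdot)$ (using that $\Gamma(K^S_N,\cdot)$ is exact and preserves injectives since $K^S_N$ is profinite of order prime to $\ell$) together with $\Gamma(K^S_N,\cdot)\circ \mathrm{Inf} = \mathrm{Id}$. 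Both routes hinge on the same pro-order-prime-to-$\ell$ exactness and are essentially equivalent in content; your adjunction argument is the more conceptually direct way to get what the paper sidesteps, while the paper's approach is a safe route if one prefers not to set up and verify the adjunction $\Hom_{\mathrm{sm}(P)}(V,\mathrm{Inf}\,W)\cong \Hom_{\mathrm{sm}(M)}(V_N, W)$ in the smooth adelic setting.
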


\begin{proof}
On the level of non-derived functors, we have the commutative diagram 
\[
\xymatrix{\mathrm{Mod}_{\mathrm{sm}}(P(\A^S), \F_{\ell})\ar[rr]^{\Gamma(K^S_P,\ )} &\ & \mathrm{Mod}(\mathbb{T}^S_P) \\ 
\mathrm{Mod}_{\mathrm{sm}}(M(\A^S), \F_{\ell})\ar[u]^{\mathrm{Inf}_{M(\A^S)}^{P(\A^S)}}
\ar[rr]^{\Gamma(K^S_M,\ )} &\ & \mathrm{Mod}(\mathbb{T}^S_M)\ar[u]^{r^*_M},
}
\]
where the vertical functors are exact and the horizontal functors are left exact. 
We do not know that $\mathrm{Inf}_{M(\mathbb{A}^S)}^{P(\mathbb{A}^S)}$ preserves injectives,
but we get a natural transformation of functors 
\[
r_M^* R\Gamma_{\cont}(K_M, ) \to R\Gamma_{\cont}(K^S_P, \mathrm{Inf}_{M(\mathbb{A}^S)}^{P(\mathbb{A}^S)}\ ) 
\]
by~\cite[Lemma 2.1]{newton-thorne}. 

Now observe that the above natural transformation becomes an isomorphism on any object 
of $D^+_{\mathrm{sm}}(M(\mathbb{A}^S), \F_{\ell})$. It is enough to check this 
after applying functors that forget the Hecke action. We have 
$R\Gamma_{\cont}(K^S_P, \ ) = R\Gamma_{\cont}(K^S_M,\ )\circ \Gamma(K^S_N, \ )$, 
because $\Gamma(K^S_N,\ )$ preserves injectives and is exact on   
$D^+_{\mathrm{sm}}(P(\mathbb{A}^S), \F_{\ell})$. This last part follows
from the fact that $K^S_N$ is profinite with order prime to $\ell$.  
On the other hand, $\Gamma(K^S_N, \ )\circ \mathrm{Inf}_{K^S_M}^{K^S_P} = \mathrm{Id}$. 
\end{proof}

\begin{proof}[Proof of Theorem~\ref{thm:Galreps Igusa}] 
We argue by induction on $n$, so we may assume that the analogous result is known for Igusa varieties on smaller unitary groups. Assume first that the map
\[
H^i_{c-\partial}(\mathrm{Ig}^b,\mathbb F_\ell)_{\mathfrak m}\to H^i(\mathrm{Ig}^b,\mathbb F_\ell)_{\mathfrak m}
\]
is not an isomorphism. In particular, this happens at some finite Igusa level $p^m$, and then by Poincar\'e duality the map
\[
H_c^i(\mathrm{Ig}^b,\mathbb F_\ell)_{\mathfrak m^\vee}\to H_c^i(\mathrm{Ig}^{b,*},Rj_\ast \mathbb F_\ell)_{\mathfrak m^\vee}
\]
is not an isomorphism for the ``dual'' set of Hecke eigenvalues $\mathfrak m^\vee$. 

From the $\mathbb{T}^S$-equivariant isomorphisms~\eqref{eq:HS1} and~\eqref{eq:HS2}, and by considering the stratification of the boundary of $\mathfrak{Ig}^{b,*}_{\infty}$ 
in terms of the conjugacy classes $[P]$ of rational parabolic subgroups of $G$, 
we deduce that for some such $[P]$ corresponding to an integer $1\leq r\leq n$,
\[
R\Gamma_{\cont}(K^p(N),R\Gamma_c( \mathfrak{Ig}^{b,*}_{\infty,[P]}, i_{[P]}^\ast Rj_\ast \mathbb F_\ell))_{\mathfrak m^\vee}\neq 0.
\]
In particular,
\begin{equation}\label{eq:induction}
R\Gamma_{\cont}(K^S,R\Gamma_c( \mathfrak{Ig}^{b,*}_{\infty,[P]}, i_{[P]}^\ast Rj_\ast \mathbb F_\ell))_{\mathfrak m^\vee}\neq 0
\end{equation}
where $K^S = G(\cO_F\otimes_\Z \widehat{\Z}^S)$. 

We now wish to apply Theorem~\ref{thm:boundary cohomology igusa}
in order to show that $\mathfrak m^\vee$ must be pulled back under the unnormalized Satake transform $\mathbb{T}^S\to \mathbb{T}^S_M$
from a maximal ideal of the Hecke algebra $\mathbb{T}^S_M$. We will do this in two steps, going via the Hecke algebra $\mathbb{T}^S_P$ 
of $P$. By Theorem~\ref{thm:boundary cohomology igusa} and Lemma~\ref{lem:restriction of functions}, we can rewrite~\eqref{eq:induction} as the localization at $\mathfrak m^\vee$ of 
\begin{equation}\label{eq:inflation}
r_P^* R\Gamma_{\cont}\left(K_P^S, \mathrm{Inf}_{M(\mathbb{A}^S)}^{P(\mathbb{A}^S)} \mathrm{Ind}_{P_b(\Q_p)\times P(F\otimes_\Q \A_{S_f^p})}^{J_b(\Q_p)\times G(F\otimes_\Q \A_{S_f^p})}
 R\Gamma(\GL_r(F)\backslash (X_r\times \GL_r(\A_{F,f})),\mathbb F_\ell)\otimes R\Gamma_c(\mathfrak{Ig}^{b_P}_\infty,\mathbb F_\ell)\right).
\end{equation}
By Lemma~\ref{lem:integration along unipotent fibers}, we can rewrite~\eqref{eq:inflation} as the localization at $\mathfrak m^\vee$ of 
\begin{equation}\label{eq:satake}
r_M^*r_P^* R\Gamma_{\cont}\left(K_M^S, \mathrm{Ind}_{P_b(\Q_p)\times P(F\otimes_\Q \A_{S_f^p})}^{J_b(\Q_p)\times G(F\otimes_\Q \A_{S_f^p})}
 R\Gamma(\GL_r(F)\backslash (X_r\times \GL_r(\A_{F,f})),\mathbb F_\ell)\otimes R\Gamma_c(\mathfrak{Ig}^{b_P}_\infty,\mathbb F_\ell)\right).  
\end{equation}
After applying the forgetful functor to $D^+(\Z)$, and using~\eqref{eq:HS1} for $\mathrm{Ig}^{b_P}$ and the analogue for the locally symmetric spaces for $\GL_r$, we see that~\eqref{eq:satake} is $\mathbb{T}^S$-equivariantly isomorphic to 
\[
r_M^*r_P^* \mathrm{Ind}_{P_b(\Q_p)\times P(F\otimes_\Q \A_{S_f^p})}^{J_b(\Q_p)\times G(F\otimes_\Q \A_{S_f^p})} R\Gamma(\GL_r(F)\backslash(X_r\times \GL_r(\A_{F,f})/K^{S,\GL_r}),\mathbb F_\ell)\otimes R\Gamma_c(\mathrm{Ig}^{b_P}_{K^{S,G_{2(n-r)}}},\mathbb F_\ell), 
\]
where $K^{S,\GL_r} = \GL_r(\cO_F\otimes_{\Z} \widehat{\Z}^S)$ and similarly $K^{S,G_{2(n-r)}} = G_{2(n-r)}(\cO_F\otimes_\Z \widehat{\Z}^S)$
(satisfying $K^S_M = K^{S,\GL_r}\times K^{S, G_{2(n-r)}}$). 

Note that the composition $r_M\circ r_P: \mathbb{T}^S\to \mathbb{T}^S_M$ is the unnormalised Satake transform. 
Using the decomposition $M=\mathrm{Res}_{F/\Q}\GL_r\times G_{n-r,\Q}$, we find that there are maximal ideals $\mathfrak m_1$ of the Hecke algebra for $\mathrm{Res}_{\cO_F/\Z}\GL_r$ and $\mathfrak m_2$ of the Hecke algebra for $G_{2(n-r)}$ such that
\[
R\Gamma(\GL_r(F)\backslash(X_r\times \GL_r(\A_{F,f})/K^{S,\GL_r}),\mathbb F_\ell)_{\mathfrak m_1}\neq 0
\]
and
\[
R\Gamma_c(\mathrm{Ig}^{b_P}_{K^{S,G_{2(n-r)}}},\mathbb F_\ell)_{\mathfrak m_2}\neq 0,
\]
and $\mathfrak m^\vee$ maps into a maximal ideal containing $\mathfrak m_1\otimes \mathfrak m_2\subset \mathbb T_M$. Using \cite[Corollary 5.4.3]{scholze}, the induction hypothesis, and Poincar\'e duality, we get the desired Galois representation $\overline{\rho}_{\m}$ associated to $\m$. (Alternatively, one could appeal to~\cite[Theorem 2.3.5]{10Authors}, which relies on the unconditional base change result of Shin~\cite{shin-basechange},  instead of appealing to~\cite[Corollary 5.4.3]{scholze}.)

If $r<n$, $\overline{\rho}_{\m}$ will be a direct sum of three representations: Note that the representation from $\GL_r$ will contribute two summands. If $b$ is not ordinary, then $P=P_n\subset G$ does not contribute a stratum to the Igusa variety (as there will be no Igusa cusp labels where the rank of $X$ is $n$), justifying the final statement.

Thus, we are left with the case that
\[
H^i_{c-\partial}(\mathrm{Ig}^b,\mathbb F_\ell)_{\mathfrak m}\to H^i(\mathrm{Ig}^b,\mathbb F_\ell)_{\mathfrak m}
\]
is an isomorphism. Possibly changing $b$, we can then ensure, using Proposition~\ref{concentration usual cohomology} and Corollary~\ref{concentration compactly supported cohomology}, that these cohomology groups are concentrated in one degree. Now Theorem~\ref{thm:igusa computation} finishes the proof.
\end{proof}

\newpage

\bibliographystyle{amsalpha} 
\bibliography{Torsionvanishing} 

\newcommand{\etalchar}[1]{$^{#1}$}
\providecommand{\bysame}{\leavevmode\hbox to3em{\hrulefill}\thinspace}
\providecommand{\MR}{\relax\ifhmode\unskip\space\fi MR }
\providecommand{\MRhref}[2]{%
  \href{http://www.ams.org/mathscinet-getitem?mr=#1}{#2}
}
\providecommand{\href}[2]{#2}
\begin{thebibliography}{CGH{\etalchar{+}}20}

\bibitem[ACC{\etalchar{+}}23]{10Authors}
P.~Allen, F.~Calegari, A.~Caraiani, T.~Gee, D.~Helm, B.~V. Le~Hung, J.~Newton,
  P.~Scholze, R.~Taylor, and J.~Thorne, \emph{Potential automorphy over {CM}
  fields}, Ann. of Math. (2) \textbf{197} (2023), no.~3, 897--1113.

\bibitem[And03]{andre}
Y.~Andr\'{e}, \emph{Period mappings and differential equations. {F}rom
  {$\mathbb C$} to {$\mathbb C_p$}}, MSJ Memoirs, vol.~12, Mathematical Society
  of Japan, Tokyo, 2003, T\^{o}hoku-Hokkaid\^{o} lectures in arithmetic
  geometry, With appendices by F. Kato and N. Tsuzuki.

\bibitem[Art88a]{arthura}
J.~Arthur, \emph{The invariant trace formula. {I}. {L}ocal theory}, J. Amer.
  Math. Soc. \textbf{1} (1988), no.~2, 323--383.

\bibitem[Art88b]{arthurb}
\bysame, \emph{The invariant trace formula. {II}. {G}lobal theory}, J. Amer.
  Math. Soc. \textbf{1} (1988), no.~3, 501--554.

\bibitem[Bad07]{badulescu}
A.~I. Badulescu, \emph{Jacquet-{L}anglands et unitarisabilit\'e}, J. Inst.
  Math. Jussieu \textbf{6} (2007), no.~3, 349--379.

\bibitem[BG14]{buzzardgee}
K.~Buzzard and T.~Gee, \emph{{The conjectural connections between automorphic
  representations and Galois representations}}, {Automorphic forms and Galois
  representations. Proceedings of the 94th London Mathematical Society (LMS) --
  EPSRC Durham symposium, Durham, UK, July 18--28, 2011. Volume 1}, Cambridge:
  Cambridge University Press, 2014, pp.~135--187.

\bibitem[{Box}15]{boxer}
G.~{Boxer}, \emph{Torsion in the {C}oherent {C}ohomology of {S}himura
  {V}arieties and {G}alois {R}epresentations}, arXiv:1507.05922, 2015.

\bibitem[Boy19]{boyergeneric}
P.~Boyer, \emph{{Sur la torsion dans la cohomologie des vari\'et\'es de Shimura
  de Kottwitz-Harris-Taylor}}, {J. Inst. Math. Jussieu} \textbf{18} (2019),
  no.~3, 499--517.

\bibitem[BS22]{bhatt-scholze-prisms}
B.~Bhatt and P.~Scholze, \emph{Prisms and prismatic cohomology}, Ann. of Math.
  (2) \textbf{196} (2022), no.~3, 1135--1275, arXiv:1905.08229.

\bibitem[Car12]{caraiani}
A.~Caraiani, \emph{Local-global compatibility and the action of monodromy on
  nearby cycles}, Duke Math. J. \textbf{161} (2012), no.~12, 2311--2413.

\bibitem[CG18]{calegari-geraghty}
F.~Calegari and D.~Geraghty, \emph{Modularity lifting beyond the
  {T}aylor--{W}iles method}, {Invent. Math.} \textbf{211} (2018), no.~1,
  297--433.

\bibitem[CGH{\etalchar{+}}20]{arizona}
A.~{Caraiani}, D.~R. {Gulotta}, C.-Y. {Hsu}, C.~{Johansson}, L.~{Mocz},
  E.~{Reinecke}, and S.-C. {Shih}, \emph{Shimura varieties at level
  {$\Gamma_1(p^\infty)$} and {G}alois representations}, Compos. Math.
  \textbf{156} (2020), no.~6, 1152--1230. \MR{4108870}

\bibitem[CH13]{chenevier-harris}
G.~Chenevier and M.~Harris, \emph{Construction of automorphic {G}alois
  representations, {II}}, Camb. J. Math. \textbf{1} (2013), no.~1, 53--73.

\bibitem[CS17]{caraiani-scholze}
A.~Caraiani and P.~Scholze, \emph{On the generic part of the cohomology of
  compact unitary {S}himura varieties}, Ann. of Math. (2) \textbf{186} (2017),
  no.~3, 649--766.

\bibitem[DG70]{SGA3}
M.~Demazure and A.~Grothendieck, \emph{Sch\'{e}mas en groupes. {II}: {G}roupes
  de type multiplicatif, et structure des sch\'{e}mas en groupes
  g\'{e}n\'{e}raux}, {S}\'{e}minaire de {G}\'{e}om\'{e}trie {A}lg\'{e}brique du
  Bois Marie 1962/64 (SGA 3). Dirig\'{e} par M. Demazure et A. Grothendieck.
  Lecture Notes in Mathematics, Vol. 152, Springer-Verlag, Berlin-New York,
  1970.

\bibitem[Far20]{farguesGbun}
L.~Fargues, \emph{{$G$}-torseurs en th\'{e}orie de {H}odge {$p$}-adique},
  Compos. Math. \textbf{156} (2020), no.~10, 2076--2110.

\bibitem[Fra98]{franke}
J.~Franke, \emph{Harmonic analysis in weighted {$L_2$}-spaces}, Ann. Sci.
  \'Ecole Norm. Sup. (4) \textbf{31} (1998), no.~2, 181--279.

\bibitem[FS21]{fargues-scholze}
L.~Fargues and P.~Scholze, \emph{Geometrization of the local {L}anglands
  correspondence}, arXiv:2102.13459, 2021.

\bibitem[GK19]{goldring-koskivirta}
W.~Goldring and J.-S. Koskivirta, \emph{Strata hasse invariants, hecke algebras
  and galois representations}, Inventiones mathematicae \textbf{217} (2019),
  no.~3, 887--984.

\bibitem[Ham15]{hamacher}
P.~Hamacher, \emph{The geometry of {N}ewton strata in the reduction modulo $p$
  of {S}himura varieties of {PEL} type}, {Duke Math. J.} \textbf{164} (2015),
  no.~15, 2809--2895.

\bibitem[Ham19]{hamacheretale}
\bysame, \emph{On the generalisation of cohomology with compact support to
  non-finite type schemes}, arXiv:1902.04831, 2019.

\bibitem[HLTT16]{hltt}
M.~Harris, K.-W. Lan, R.~Taylor, and J.~Thorne, \emph{On the rigid cohomology
  of certain {S}himura varieties}, Res. Math. Sci. \textbf{3} (2016), Paper No.
  37, 308.

\bibitem[How20]{howe}
S.~Howe, \emph{A unipotent circle action on {$p$}-adic modular forms}, Trans.
  Amer. Math. Soc. Ser. B \textbf{7} (2020), 186--226.

\bibitem[HT01]{harris-taylor}
M.~Harris and R.~Taylor, \emph{The geometry and cohomology of some simple
  {S}himura varieties}, Annals of Mathematics Studies, vol. 151, Princeton
  University Press, Princeton, NJ, 2001, With an appendix by Vladimir G.
  Berkovich.

\bibitem[Hub96]{huber}
R.~Huber, \emph{\'{E}tale cohomology of rigid analytic varieties and adic
  spaces}, Aspects of Mathematics, E30, Friedr. Vieweg \& Sohn, Braunschweig,
  1996.

\bibitem[Ill85]{illusie}
L.~Illusie, \emph{D\'{e}formations de groupes de {B}arsotti-{T}ate (d'apr\`es
  {A}. {G}rothendieck)}, Ast\'{e}risque (1985), no.~127, 151--198, Seminar on
  arithmetic bundles: the Mordell conjecture (Paris, 1983/84).

\bibitem[Ill94]{illusie-autour}
\bysame, \emph{Autour du th\'eor\`eme de monodromie locale}, Ast\'erisque
  (1994), no.~223, 9--57, P{\'e}riodes $p$-adiques (Bures-sur-Yvette, 1988).

\bibitem[Kat81]{katzSerreTate}
N.~Katz, \emph{Serre-{T}ate local moduli}, Algebraic surfaces ({O}rsay,
  1976--78), Lecture Notes in Math., vol. 868, Springer, Berlin-New York, 1981,
  pp.~138--202.

\bibitem[Kos21]{koshikawa-simplification}
T.~Koshikawa, \emph{On the generic part of the cohomology of local and global
  {S}himura varieties}, arXiv:2106.10602, 2021.

\bibitem[Kot84]{kottwitz-cuspidal}
R.~E. Kottwitz, \emph{Stable trace formula: cuspidal tempered terms}, Duke
  Math. J. \textbf{51} (1984), no.~3, 611--650.

\bibitem[Kot85]{kottwitz}
\bysame, \emph{Isocrystals with additional structure}, Compositio Math.
  \textbf{56} (1985), no.~2, 201--220.

\bibitem[Kot92]{kottwitzpoints}
\bysame, \emph{Points on some {S}himura varieties over finite fields}, J. Amer.
  Math. Soc. \textbf{5} (1992), no.~2, 373--444.

\bibitem[Lab91]{labesse-lefschetz}
J.-P. Labesse, \emph{Pseudo-coefficients tr\`es cuspidaux et {$K$}-th\'eorie},
  Math. Ann. \textbf{291} (1991), no.~4, 607--616.

\bibitem[Lab99]{labesse}
\bysame, \emph{Cohomologie, stabilisation et changement de base}, Ast\'erisque
  (1999), no.~257, vi+161, Appendix A by Laurent Clozel and Labesse, and
  Appendix B by Lawrence Breen.

\bibitem[Lan13]{lan-thesis}
K.-W. Lan, \emph{Arithmetic compactifications of {PEL}-type {S}himura
  varieties}, Princeton, NJ: Princeton University Press, 2013.

\bibitem[Lan16]{lan-normalization}
\bysame, \emph{Compactifications of {PEL}-type {S}himura varieties in ramified
  characteristics}, Forum Math. Sigma \textbf{4} (2016), e1, 98.

\bibitem[Lan22]{lantoroidalzariskiclosed}
\bysame, \emph{Closed immersions of toroidal compactifications of {S}himura
  varieties}, Math. Res. Lett. \textbf{29} (2022), no.~2, 487--527,
  \url{https://www-users.cse.umn.edu/~kwlan/articles/cpt-tor-imm.pdf}, to
  appear in Math. Res. Lett.

\bibitem[LS18a]{lan-stroh}
K.-W. Lan and B.~Stroh, \emph{Compactifications of subschemes of integral
  models of {S}himura varieties}, Forum Math. Sigma \textbf{6} (2018), e18,
  105.

\bibitem[LS18b]{lan-stroh2}
Kai-Wen Lan and Beno\^{i}t Stroh, \emph{Nearby cycles of automorphic \'{e}tale
  sheaves}, Compos. Math. \textbf{154} (2018), no.~1, 80--119.

\bibitem[Man04]{mantovan-thesis}
E.~Mantovan, \emph{On certain unitary group {S}himura varieties}, Ast\'erisque
  (2004), no.~291, 201--331, Vari\'et\'es de Shimura, espaces de Rapoport-Zink
  et correspondances de Langlands locales.

\bibitem[Man05]{mantovan}
\bysame, \emph{On the cohomology of certain {PEL}-type {S}himura varieties},
  Duke Math. J. \textbf{129} (2005), no.~3, 573--610.

\bibitem[Mes72]{messing}
W.~Messing, \emph{The crystals associated to {B}arsotti-{T}ate groups: with
  applications to abelian schemes}, Lecture Notes in Mathematics, Vol. 264,
  Springer-Verlag, Berlin-New York, 1972.

\bibitem[{Mor}10]{morel-trace}
S.~{Morel}, \emph{On the cohomology of certain non-compact {S}himura
  varieties}, vol. 173, Princeton, NJ: Princeton University Press, 2010.

\bibitem[MW89]{moeglin-waldspurger}
C.~M{\oe}glin and J.-L. Waldspurger, \emph{Le spectre r\'esiduel de {${\rm
  GL}(n)$}}, Ann. Sci. \'Ecole Norm. Sup. (4) \textbf{22} (1989), no.~4,
  605--674.

\bibitem[Nie15]{nie-fundamental}
S.~Nie, \emph{{Fundamental elements of an affine Weyl group}}, {Math. Ann.}
  \textbf{362} (2015), no.~1-2, 485--499.

\bibitem[NT16]{newton-thorne}
J.~Newton and J.~A. Thorne, \emph{Torsion {G}alois representations over {CM}
  fields and {H}ecke algebras in the derived category}, Forum Math. Sigma
  \textbf{4} (2016), e21, 88.

\bibitem[Oor04]{oort}
F.~Oort, \emph{Foliations in moduli spaces of abelian varieties}, J. Amer.
  Math. Soc. \textbf{17} (2004), no.~2, 267--296.

\bibitem[OZ02]{oort-zink}
F.~Oort and T.~Zink, \emph{Families of {$p$}-divisible groups with constant
  {N}ewton polygon}, Doc. Math. \textbf{7} (2002), 183--201.

\bibitem[Pin92]{pink}
R.~Pink, \emph{On {$l$}-adic sheaves on {S}himura varieties and their higher
  direct images in the {B}aily-{B}orel compactification}, Math. Ann.
  \textbf{292} (1992), no.~2, 197--240.

\bibitem[PS16]{pilloni-stroh}
V.~Pilloni and B.~Stroh, \emph{Cohomologie coh\'{e}rente et repr\'{e}sentations
  {G}aloisiennes}, Ann. Math. Qu\'{e}. \textbf{40} (2016), no.~1, 167--202.

\bibitem[{Rog}92]{rogawski}
J.~{Rogawski}, \emph{Analytic expression for the number of points mod $p$},
  {The zeta functions of Picard modular surfaces. Based on lectures, delivered
  at a CRM workshop in the spring of 1988, Montr\'eal, Canada}, Montr\'eal:
  Centre de Recherches Math\'ematiques, Universit\'e de Montr\'eal, 1992,
  pp.~65--109.

\bibitem[RR96]{rapoport-richartz}
M.~Rapoport and M.~Richartz, \emph{On the classification and specialization of
  {$F$}-isocrystals with additional structure}, Compositio Math. \textbf{103}
  (1996), no.~2, 153--181.

\bibitem[San23]{santos}
M.~Santos, \emph{On the generic part of the cohomology of non-compact
  {PEL}-type {A} {S}himura varieties}, Ph.D. thesis, Imperial College London,
  2023, available online at
  \url{https://spiral.imperial.ac.uk/handle/10044/1/106412}.

\bibitem[Sch13]{scholzesurvey}
P.~Scholze, \emph{{Perfectoid Spaces: A survey}}, {Current developments in
  mathematics 2012. Papers based on selected lectures given at the current
  development mathematics conference, Harvard University, Cambridge, MA, USA,
  November 2012}, Somerville, MA: International Press, 2013, pp.~193--227.

\bibitem[Sch15]{scholze}
\bysame, \emph{On torsion in the cohomology of locally symmetric varieties},
  {Ann. Math. (2)} \textbf{182} (2015), no.~3, 945--1066.

\bibitem[Sch17]{scholze-diamonds}
\bysame, \emph{{\'E}tale cohomology of diamonds}, arXiv:1709.07343, 2017.

\bibitem[Shi09]{shin-igusa}
S.~W. Shin, \emph{Counting points on {I}gusa varieties}, Duke Math. J.
  \textbf{146} (2009), no.~3, 509--568.

\bibitem[Shi10]{shin-stable}
\bysame, \emph{A stable trace formula for {I}gusa varieties}, J. Inst. Math.
  Jussieu \textbf{9} (2010), no.~4, 847--895.

\bibitem[Shi11]{shin-galois}
\bysame, \emph{Galois representations arising from some compact {S}himura
  varieties}, Ann. of Math. (2) \textbf{173} (2011), no.~3, 1645--1741.

\bibitem[Shi14]{shin-basechange}
\bysame, \emph{On the cohomological base change for unitary similitude groups},
  Compos. Math. \textbf{150} (2014), no.~2, 220--225, Appendix to
  \textit{Galois representations associated to holomorphic limits of discrete
  series}, by Wushi Goldring.

\bibitem[{Sta}19]{stacks-project}
The {Stacks Project Authors}, \emph{{Stacks Project}},
  \url{http://stacks.math.columbia.edu}, 2019.

\bibitem[SW13]{scholze-weinstein}
P.~Scholze and J.~Weinstein, \emph{Moduli of {$p$}-divisible groups}, Camb. J.
  Math. \textbf{1} (2013), no.~2, 145--237.

\bibitem[SW20]{berkeley-notes}
\bysame, \emph{Berkeley lectures on {$p$}-adic geometry}, vol. 207, Princeton,
  NJ: Princeton University Press, 2020.

\bibitem[Swa80]{swan-seminormal}
R.~G. Swan, \emph{On seminormality}, {J. Algebra} \textbf{67} (1980), 210--229.

\bibitem[{Var}14]{varma}
I.~{Varma}, \emph{{Local-global compatibility for regular algebraic cuspidal
  automorphic representation when $\ell \neq p$}}, 1411.2520, to appear in
  Forum of Math. Sigma, 2014.

\bibitem[Wed99]{wedhorn-thesis}
T.~Wedhorn, \emph{Ordinariness in good reductions of {S}himura varieties of
  {PEL}-type}, Ann. Sci. \'Ecole Norm. Sup. (4) \textbf{32} (1999), no.~5,
  575--618.

\bibitem[Zin01]{zink}
T.~Zink, \emph{On the slope filtration}, Duke Math. J. \textbf{109} (2001),
  no.~1, 79--95.

\end{thebibliography}

\end{document}